\DeclareMathAlphabet{\mathpzc}{OT1}{pzc}{m}{it}
\newcommand{\adj}[4]{#1\negmedspace: #2\rightleftarrows #3:\negmedspace #4}
\newtheorem{thm}{Theorem}[section]
\newtheorem{cor}[thm]{Corollary}
\newtheorem{example}[thm]{Example}
\newtheorem{lem}[thm]{Lemma}
\newtheorem{prop}[thm]{Proposition}
\theoremstyle{definition}
\newtheorem{defn}[thm]{Definition}
\newtheorem{rem}[thm]{Remark}
\numberwithin{equation}{section}
\newtheorem{notation}[thm]{Notation}
\DeclareFontFamily{U}{rsf}{} \DeclareFontShape{U}{rsf}{m}{n}{
  <5> <6> rsfs5 <7> <8> <9> rsfs7 <10->  rsfs10}{}
\DeclareMathAlphabet{\mathscr}{U}{rsf}{m}{n}
\newcommand*{\defeq}{\mathrel{\vcenter{\baselineskip0.5ex \lineskiplimit0pt
                     \hbox{\scriptsize.}\hbox{\scriptsize.}}}%
                     =}
\renewcommand{\imath}{\sqrt{-1}}
\DeclareMathOperator{\colim}{colim}
\def\llp{\mathrel{\ooalign{\hss$\square$\hss\cr$\diagup$}}}
\numberwithin{equation}{section}
\begin{document}

\title[]{Flat model structures for accessible exact categories}
\author{Jack Kelly}\thanks{}%
\address{Jack Kelly,
Lincoln College,
Turl Street,
Oxford,
OX1 3DR}

\dedicatory{}
\subjclass{}%
\thanks{}
\keywords{}%

\begin{abstract}
We develop techniques for constructing model structures on chain complexes valued in accessible exact categories, and apply this to show that for a closed symmetric monoidal, locally presentable exact category $\mathpzc{E}$ with exact filtered colimits and enough flat objects, the flat cotorsion pair on $\mathpzc{E}$ induces an exact model structure on $\mathrm{Ch}(\mathpzc{E})$. Further we show that when enriched over $\mathbb{Q}$ such categories furnish convenient settings for homotopical algebra - in particular that they are Homotopical Algebra Contexts, and admit powerful Koszul duality theorems. As an example, we consider categories of sheaves valued in monoidal locally presentable exact categories. 
\noindent 
\end{abstract}
\maketitle 
\tableofcontents

\section{Introduction}

Let $\mathpzc{E}$ be a monoidal elementary exact category - that is $\mathpzc{E}$ has a generating set consisting of compact projective objects, and is equipped with a closed symmetric monoidal structure such that projectives are flat, the monoidal unit is projective, and the tensor product of two projectives is projective. In \cite{kelly2016homotopy} we showed that the categories $\mathrm{Ch}_{\ge0}(\mathpzc{E})$ and $\mathrm{Ch}(\mathpzc{E})$ have projective model structures, and admit a very rich theory of homotopical algebra. Precisely, when $\mathpzc{E}$ is enriched over $\mathbb{Q}$ they are homotopical algebra contexts in the sense of To\"{e}n-Vezzosi. This was later expanded upon in \cite{kelly2019koszul}, where it is shown that deep Koszul duality theorems hold in such categories. In particular, the fact that the category $\mathrm{Ind(Ban_{k})}$ for $k$ a Banach field, or more generally a Banach ring, is a monoidal elementary exact category (in fact it is quasi-abelian) means that it as a convenient setting for derived analytic geometry. In applications to analytic geometry, for example \cite{reconstruction}, one is led to consider the homological algebra of categories of sheaves on a space $X$ valued in $\mathrm{Ind(Ban_{k})}$. Although the category $\mathrm{Shv}(X,\mathrm{Ind(Ban_{k})})$ of sheaves on $X$ is quasi-abelian, it will not in general have enough projectives. However as we will show in this work it has enough flat objects, and this will be enough to endow $\mathrm{Ch}(\mathrm{Shv}(X,\mathrm{Ind(Ban_{k})}))$ with flat model structures, so that they become homotopical algebra contexts. More generally we will prove the following.

\begin{thm}
Let $\mathpzc{E}$ be a purely $\lambda$-accessible closed symmetric monoidal exact category with a generator such that 
\begin{enumerate}
\item
$\lambda$-pure monomorphisms are admissible
\item
colimits of transfinite sequences of acyclic complexes are acyclic (i.e. $\mathpzc{E}$ is weakly elementary in the terminology of the present paper)
\item
$\mathpzc{E}$ has enough flat objects.
\end{enumerate}
Then there is a model structure on both $\mathrm{Ch}(\mathpzc{E})$ and $\mathrm{Ch}_{\ge0}(\mathpzc{E})$ where
\begin{enumerate}
\item
the weak equivalences are the quasi-isomorphisms.
\item
the cofibrations are the degree-wise admissible monomorphisms whose cokernels are $dg$-flat.
\end{enumerate}
If $\mathpzc{E}$ is $\lambda$-presentable and $\mathpzc{E}$ is enriched over $\mathbb{Q}$, then with these model structures $\mathrm{Ch}(\mathpzc{E})$ and $\mathrm{Ch}_{\ge0}(\mathpzc{E})$ are homotopical algebra contexts.
\end{thm}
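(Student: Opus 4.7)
The plan is to realize the claimed model structure via the exact-category version of the Hovey--Gillespie correspondence, induced by the flat cotorsion pair $(\mathcal{F}, \mathcal{F}^{\perp})$ on $\mathpzc{E}$. The starting observation is that this pair on $\mathpzc{E}$ itself is complete: a left approximation by a flat object is produced by a small-object argument against a set of test morphisms cooked up from the $\lambda$-presentable generator, and a right approximation by a cotorsion object exists because $\mathpzc{E}$ has enough flats. Pure $\lambda$-accessibility together with the hypothesis that $\lambda$-pure monomorphisms are admissible is what keeps the transfinite compositions produced by the small-object argument inside the class of admissible monomorphisms, so the machinery stays inside the exact structure.

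Having this pair, I would lift it to $\mathrm{Ch}(\mathpzc{E})$ following Gillespie's procedure, producing two complete cotorsion pairs: a \textbf{dg} pair $(\mathrm{dg}\widetilde{\mathcal{F}}, (\mathrm{dg}\widetilde{\mathcal{F}})^{\perp})$ whose left class consists of the degree-wise flat complexes with dg-flatness in the sense of the statement, and an \textbf{acyclic} pair $(\widetilde{\mathcal{F}}, \widetilde{\mathcal{F}}^{\perp})$ whose left class is acyclic complexes of flats with flat cycles. Completeness of both is again a small-object argument, and this is where the assumption that $\mathpzc{E}$ is weakly elementary enters crucially: it is exactly what is needed so that the transfinite colimits used in the argument preserve acyclicity, hence the acyclic lifts actually stay acyclic. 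The identification $\widetilde{\mathcal{F}} = \mathrm{dg}\widetilde{\mathcal{F}} \cap W$, with $W$ the class of acyclic complexes, is then proved by the usual mapping-cone/K\"unneth manipulation, reduced to the exact setting by using admissibility of $\lambda$-pure monomorphisms so that the short exact sequences of complexes in play are genuinely admissible.

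The subtlest point, and the one I expect to be the real obstacle, is verifying that the class of weak equivalences produced by Hovey's correspondence coincides with the class of quasi-isomorphisms. In an exact (as opposed to abelian) category one does not automatically have long exact sequences of homology for every short exact sequence of complexes: one must check that admissible short exact sequences of complexes induce long exact sequences in the left heart, and that mapping cones of chain maps fit into admissible short exact sequences. Here the admissibility of $\lambda$-pure monomorphisms pays off a second time, as it allows the Yoneda embedding into the left heart to be used compatibly with filtered colimits, so that acyclicity can be tested there. The bounded-below version then follows by restriction together with the standard truncation arguments.

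For the final claim, once the model structure is established it is cofibrantly generated (the generators being the maps produced by the small-object argument), monoidal because flats are closed under $\otimes$ and the pushout-product of two admissible monomorphisms with dg-flat cokernel stays in the class, and satisfies the monoid axiom because filtered colimits of quasi-isomorphisms are quasi-isomorphisms in the weakly elementary setting. The remaining To\"en--Vezzosi HAC axioms amount to transferring the model structure to commutative monoids and their modules: over $\mathbb{Q}$ this is automatic because the norm maps from symmetric powers on cofibrant objects become weak equivalences, so the usual obstructions to rectification vanish.
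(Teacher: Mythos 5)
Your overall strategy is the same as the paper's: induce the model structure from the flat cotorsion pair via the Hovey--Gillespie correspondence, lift it to complexes using the two classes $\widetilde{\mathcal{F}}$ and $\widetilde{dg\mathcal{F}}$, and use weak elementarity to keep the transfinite constructions acyclic. Most of the downstream reasoning (monoidality from $\otimes$-purity of admissible monomorphisms with flat cokernel, the monoid axiom from exactness of transfinite compositions, the HA context from the rational strong commutative monoid axiom) matches the paper in outline, though your treatment of the HA axioms compresses a genuinely nontrivial left-properness argument for $\mathrm{Alg}_{\mathfrak{Comm}}$ into one sentence.

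The genuine gap is at the very first step: you assert that completeness of $(\mathcal{F},\mathcal{F}^{\perp})$ follows from ``a small-object argument against a set of test morphisms cooked up from the $\lambda$-presentable generator,'' but producing that \emph{set} is precisely the hard part, and nothing in your proposal supplies it. In a general exact category there is no reason the class of flat objects should be cogenerated by a set, and having enough flats only gives admissible epimorphisms from flat objects, not special precovers. The paper's resolution is the deconstructibility machinery: one shows that the class of flat objects is strongly $\lambda$-pure subobject stable (a $3\times 3$-lemma argument using that any admissible monomorphism with flat cokernel is $\otimes$-pure, so $\lambda$-pure subobjects and quotients of flats are flat), and then uses the accessible-category factorization theorem --- every map from a $\gamma$-presentable object into $M$ factors through a $\lambda$-pure monomorphism with $\gamma$-presentable domain --- to filter an arbitrary flat object by $\gamma$-presentable flat subobjects indexed by $\mathrm{Hom}(G,M)$. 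Only then does one have a set of generating monomorphisms to feed the small object argument. The same issue recurs one level up: to get the acyclic half of the Hovey triple on $\mathrm{Ch}(\mathpzc{E})$ you need the class of acyclic complexes with flat components (equivalently, after the hereditary identification, $\widetilde{\mathcal{F}}$) to be deconstructible in $\mathrm{Ch}(\mathpzc{E})$, which requires showing that $\lambda$-pure subobjects of acyclic complexes are acyclic --- a separate argument you do not address. Without these two deconstructibility inputs the small object argument cannot be run and neither cotorsion pair is known to be complete, so the proof does not go through as written.
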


The model structure is an exact model structure in the sense of \cite{hovey}, i.e. it arises from a Hovey triple $(\mathfrak{C},\mathfrak{W},\mathfrak{F})$ where $\mathfrak{C}$, $\mathfrak{W}$, and $\mathfrak{F}$ are the classes of cofibrant, acyclic, and fibrant objects respectively. This Hovey triple is cooked up from the flat cotorsion pair $(\mathrm{Flat},\mathrm{Flat}^{\perp})$ on $\mathpzc{E}$ using the recipe of Gillespie \cite{gillespie2006flat}. Although Ding and Yang \cite{yang2014question} showed that Gillespie's method always produces Hovey triple on $\mathrm{Ch}(\mathpzc{E})$ from a complete cotorsion pair on $(\mathfrak{L},\mathfrak{R})$ in the case that $\mathpzc{E}$ is abelian, there is no guarantee that this will work for arbitrary exact $\mathpzc{E}$. 

We also prove a version of this theorem for $\mathrm{Ch}_{\le0}(\mathpzc{E})$. In Subsection \ref{subsubsec:DoldKan} we consider the simplicial objects $\mathrm{s}\mathpzc{E}$ and cosimplicial objects $\mathrm{cs}\mathpzc{E}$, and prove Dold-Kan equivalences.

To connect with very recent work of \cite{estrada2023k}, we show that their methods generalise easily to prove the existence of the $K$-flat model structure, and the corresponding recollement.

\begin{thm}[Theorem \ref{thm:Kflat}]
Let $\mathpzc{E}$ be a purely locally $\lambda$-presentable closed symmetric monoidal exact category which is weakly elementary, has enough flat objects, and has a flat tensor unit. Let $K\mathcal{F}$ denote the class of $K$-flat objects. Then 
\begin{enumerate}
\item
Let $\mathfrak{W}$ denote the class of quasi-isomorphisms in $\mathrm{Ch}(\mathpzc{E})$. Then $(K\mathcal{F},\mathfrak{W},(K\mathcal{F}\cap\mathfrak{W})^{\perp_{\otimes}})$ is a Hovey triple on $\mathrm{Ch}(\mathpzc{E}_{\otimes})$, where $\mathpzc{E}_{\otimes}$ denotes the $\otimes$-pure exact structure. The induced model structure on $\mathrm{Ch}(\mathpzc{E}_{\otimes})$ is monoidal and satisfies the monoid axiom.
\item
$(\widetilde{dg\mathrm{Flat}},\mathcal{W},\widetilde{\mathrm{Flat}}^{\perp})$ is a Hovey triple on $\mathrm{Ch}(\mathpzc{E})$. The induced model structure on $\mathrm{Ch}(\mathpzc{E})$ is monoidal and satisfies the monoid axiom. Moreover it is left Quillen equivalent to the one from Part i) through the identity functor.
\item
$X$ is acyclic and $K$-flat if and only if $X$ is $\otimes$-pure acyclic. In particular there is a recollement.
\begin{displaymath}
\begin{tikzcd}
\mathbf{K}(\mathpzc{E})\big\slash K\mathcal{F}\arrow[r,bend right=25,shift right=0.2ex]\arrow[r,bend left=25,shift left=0.2ex]  &\mathbf{Ch}(\mathpzc{E}_{\otimes})\arrow[l,"\perp" {inner sep=0.3ex,rotate=180},
    "\perp"' {inner sep=0.3ex,rotate=180}] 
\arrow[l,"\perp" {inner sep=0.3ex,rotate=180},
    "\perp"' {inner sep=0.3ex,rotate=180}]\arrow[r,bend right=25,shift right=0.2ex]\arrow[r,bend left=25,shift left=0.2ex]  & 
    \mathbf{Ch}(\mathpzc{E}) \arrow[l,"\perp" {inner sep=0.3ex,rotate=180},
    "\perp"' {inner sep=0.3ex,rotate=180}]& \\
    \end{tikzcd}
    \end{displaymath}
\end{enumerate}
\end{thm}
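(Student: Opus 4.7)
The plan is to adapt the arguments of \cite{estrada2023k} from the Grothendieck abelian setting to the accessible exact setting, replacing pure-exact sequences by $\otimes$-pure admissible conflations and using the deconstructibility machinery that already underlies the main flat model structure theorem of the paper. The three parts are tightly intertwined: I would first establish the characterization in (iii) of acyclic $K$-flat complexes as $\otimes$-pure acyclic complexes, and then deduce the Hovey triples in (i) and (ii), the Quillen equivalence, and the recollement from it.

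The first step is to show that $K\mathcal{F}$ is deconstructible in $\mathrm{Ch}(\mathpzc{E}_{\otimes})$. Because $\mathpzc{E}$ is purely locally $\lambda$-presentable, every chain complex admits a continuous $\lambda$-filtration by $\lambda$-presentable subcomplexes along $\otimes$-pure admissible monomorphisms; at the same time, $K$-flatness is closed under transfinite extensions along such monomorphisms, since the tensor product commutes with the relevant filtered colimits and acyclics are closed under transfinite composition by the weakly elementary hypothesis. These facts yield a set $\mathcal{S}$ of $\lambda$-presentable $K$-flat complexes whose transfinite extensions exhaust $K\mathcal{F}$, and the \v{S}\v{t}ov\'{\i}\v{c}ek-style deconstructibility argument developed earlier in the paper then produces a functorially complete cotorsion pair $(K\mathcal{F},(K\mathcal{F})^{\perp_{\otimes}})$ in $\mathrm{Ch}(\mathpzc{E}_{\otimes})$.

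The main obstacle is the characterization of acyclic $K$-flat complexes. One direction is straightforward: a $\otimes$-pure acyclic complex is automatically acyclic, and tensoring with any complex preserves its $\otimes$-pure filtration, so it is $K$-flat. For the converse, suppose $X$ is acyclic and $K$-flat; I would argue that the cycle short exact sequences $0\to Z_n(X)\to X_n\to Z_{n-1}(X)\to 0$ remain exact after tensoring with any object $M$ of $\mathpzc{E}$, i.e.\ that they are $\otimes$-pure. This uses $K$-flatness to keep $X\otimes M$ acyclic, combined with the weakly elementary hypothesis and the admissibility of $\otimes$-pure monomorphisms to propagate exactness through the cycle truncation. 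I expect this step to require the most care because in the non-abelian exact setting one must track admissibility of the relevant morphisms at every stage, rather than relying on kernels and images being available as free constructions.

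With the characterization in hand, the Hovey triple in (i) is assembled via Hovey's correspondence: $K\mathcal{F}\cap\mathfrak{W}$ coincides with the $\otimes$-pure acyclic class $\widetilde{K\mathcal{F}}$, and the required cotorsion pairs on $\mathrm{Ch}(\mathpzc{E}_{\otimes})$ are complete by the deconstructibility step. The monoid axiom and monoidality reduce to checking that tensoring a $K$-flat complex with a $\otimes$-pure acyclic complex produces a $\otimes$-pure acyclic complex (immediate from the definition of $K$-flatness) together with the flat-unit hypothesis. The cotorsion pair in (ii) is produced by the main flat theorem of the paper; since every dg-flat complex is $K$-flat and both model structures have quasi-isomorphisms as weak equivalences, the identity functor $\mathrm{Ch}(\mathpzc{E}_{\otimes})\to\mathrm{Ch}(\mathpzc{E})$ is left Quillen, and the characterization upgrades it to a Quillen equivalence. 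The recollement in (iii) is then assembled formally: the right-hand adjoint triple is the identity Quillen equivalence together with its inverse, the left-hand adjoint triple comes from Bousfield localization at $\widetilde{K\mathcal{F}}$, and the kernel of the composite is exactly $\mathbf{K}(\mathpzc{E})/K\mathcal{F}$ by the characterization from the earlier step.
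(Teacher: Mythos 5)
Your overall architecture (prove the characterisation of acyclic $K$-flat complexes first, then deduce the Hovey triples) inverts the paper's order --- the paper builds the cotorsion pairs in (i) directly from strong $\lambda$-pure subobject stability of $K\mathcal{F}$ and $K\mathcal{F}\cap\mathfrak{W}$ together with the injective-cotorsion-pair machinery, and only afterwards proves (iii) --- but that inversion is not itself the problem. The problem is the converse direction of (iii), which is the real content of the theorem and for which your sketch does not contain a workable argument. You write that for $X$ acyclic and $K$-flat you would show the cycle sequences $0\to Z_nX\to X_n\to Z_{n-1}X\to 0$ are $\otimes$-pure, ``using $K$-flatness to keep $X\otimes M$ acyclic.'' But $K$-flatness only asserts that $X\otimes W$ is acyclic when $W$ is an \emph{acyclic complex}; the complex $S^0(M)$ is not acyclic, so $K$-flatness says nothing a priori about $X\otimes M$ for an object $M$ of $\mathpzc{E}$. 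Establishing that $X\otimes M$ is acyclic for every $M$ is precisely what has to be proved, and it cannot be extracted from weak elementarity and admissibility of $\otimes$-pure monomorphisms alone: this is exactly where the hypothesis that $\mathpzc{E}$ has \emph{enough flat objects} must enter. The route the paper follows (after Estrada et al.) uses the complete cotorsion pair $(\widetilde{dg\mathrm{Flat}},\widetilde{\mathrm{Flat}}^{\perp})$ from part (ii) to resolve $X$ by a $dg$-flat complex, observes that an acyclic $dg$-flat complex lies in $\widetilde{\mathrm{Flat}}$ and is therefore $\otimes$-pure acyclic, and transfers the conclusion to $X$. Your sketch never invokes enough flats at this point, so as written this step fails.

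Two smaller points. For the deconstructibility of $K\mathcal{F}$ in $\mathrm{Ch}(\mathpzc{E}_{\otimes})$, closure under transfinite extensions is not enough: the mechanism of Lemma \ref{lem:accdec} also needs $K\mathcal{F}$ and $K\mathcal{F}\cap\mathfrak{W}$ to be \emph{strongly $\lambda$-pure subobject stable}, i.e.\ stable under passage to $\lambda$-pure subobjects and quotients, which is what guarantees that the canonical $\lambda$-filtration of a $K$-flat complex has $K$-flat subquotients; this follows because tensoring preserves $\lambda$-pure monomorphisms, but it has to be said. Finally, the identity functor is left Quillen from the flat model structure on $\mathrm{Ch}(\mathpzc{E})$ \emph{to} the $K$-flat model structure on $\mathrm{Ch}(\mathpzc{E}_{\otimes})$ (an admissible monomorphism with $dg$-flat cokernel is $\otimes$-pure with $K$-flat cokernel), not the other way around; and since this Quillen equivalence identifies both of those model categories with $\mathbf{Ch}(\mathpzc{E})$, it is not the right-hand adjoint triple of the recollement --- the middle term $\mathbf{Ch}(\mathpzc{E}_{\otimes})$ there is the $\otimes$-pure derived category, presented by the injective model structure on $\mathrm{Ch}(\mathpzc{E}_{\otimes})$, and the functor to $\mathbf{Ch}(\mathpzc{E})$ is a further localisation, not an equivalence. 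Your description of that triple as ``the identity Quillen equivalence together with its inverse'' conflates these two presentations.
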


The techniques we use in the present paper were pioneered in particular  by Estrada, Gillespie, Saorin, \v{S}t'ov\'{\i}\v{c}ek, and others (\cite{Gillespie2}, \cite{gillespie2006flat},\cite{estrada2014derived}, \cite{saorin2011exact}, \cite{vst2012exact}). The central concept here is that of \textit{deconstructibility}, namely whether given a class of objects $\mathpzc{A}$, there is a \textit{set} of objects $\mathpzc{S}$ such that $\mathpzc{A}$ is precisely the class of transfinite extensions of objects in $\mathpzc{S}$. After slightly refining the definition of deconstructibility from \cite{vst2012exact}, we prove the following

\begin{lem}[Lemma \ref{lem:accdec}]
Let $\mathpzc{E}$ be a purely $\lambda$-accessible exact category with a generator $G$. Let $\mathpzc{A}$ a class of objects in $\mathpzc{E}$ such that transfinite extensions by $\lambda$-pure monomorphisms of objects in $\mathpzc{A}$ exist and are in $\mathpzc{A}$.  Suppose further that $\mathpzc{A}$ is strongly $\lambda$-pure subobject stable, and that transfinite compositions of admissible monomorphisms with cokernel in $\mathpzc{A}$ are admissible. Then $\mathpzc{A}$ is presentably deconstructible in itself relative to $\mathbf{AdMon}$.
\end{lem}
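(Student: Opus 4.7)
The plan is to follow the standard pure-filtration strategy pioneered by \v{S}t'ov\'{\i}\v{c}ek and Saor\'{\i}n, adapted to the present exact-categorical setting. The three hypotheses are tailored to match the three nontrivial steps: strong $\lambda$-pure subobject stability will produce a bounded-size set of ``bricks'' in $\mathpzc{A}$ and will propagate membership in $\mathpzc{A}$ through intermediate stages of the filtration, hypothesis (1) will protect membership at limit stages, and hypothesis (3) will guarantee that the resulting transfinite composition is genuinely admissible in $\mathpzc{E}$.

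First I would assemble the test set. By pure $\lambda$-accessibility of $\mathpzc{E}$, the $\lambda$-presentable objects form, up to isomorphism, a set; let $\mathpzc{S}$ be the subset consisting of those which lie in $\mathpzc{A}$. Now fix an arbitrary $A \in \mathpzc{A}$. Pure $\lambda$-accessibility expresses $A$ as the $\lambda$-filtered colimit of its $\lambda$-pure, $\lambda$-presentable subobjects, and strong $\lambda$-pure subobject stability places each such subobject in $\mathpzc{A}$, hence (up to isomorphism) in $\mathpzc{S}$.

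Next I would run the transfinite construction. Enumerate a cofinal family $(P_\beta)_{\beta<\kappa}$ of such $\lambda$-pure, $\lambda$-presentable subobjects of $A$ and build a continuous ascending chain by setting $A_0 \defeq 0$, $A_{\beta+1} \defeq A_\beta \vee P_\beta$ (join taken inside $A$) at successor stages, and $A_\mu \defeq \colim_{\beta<\mu} A_\beta$ at limit stages. The verification then runs as follows. (a) Unions of chains of $\lambda$-pure admissible subobjects, and pairwise joins of two such, remain $\lambda$-pure and admissible, so each $A_\beta$ is a $\lambda$-pure admissible subobject of $A$ and hence lies in $\mathpzc{A}$ by strong $\lambda$-pure subobject stability. (b) The successor quotient $A_{\beta+1}/A_\beta$ is, via an exact-categorical second isomorphism theorem, a quotient of the $\lambda$-presentable $P_\beta$ by the admissible subobject $A_\beta \cap P_\beta$, hence $\lambda$-presentable; since it is the cokernel of a $\lambda$-pure admissible inclusion between objects of $\mathpzc{A}$, strong pure subobject stability applied inside $A_{\beta+1}$ forces it into $\mathpzc{A}$, and therefore into $\mathpzc{S}$. (c) At limit stages hypothesis (1) secures $A_\mu \in \mathpzc{A}$. (d) Hypothesis (3) then promotes the whole transfinite composition $0 \to A$ to an admissible monomorphism, and cofinality of the enumeration ensures $\colim_\beta A_\beta = A$.

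The main technical obstacle is step (b). In a general exact (as opposed to abelian) category one must check carefully that the ``join modulo intersection'' quotient exists, is $\lambda$-presentable, and inherits membership in $\mathpzc{A}$ rather than just in $\mathpzc{E}$. This is precisely where the refinement from ordinary to \emph{strong} $\lambda$-pure subobject stability is needed: it ensures not only that the individual $P_\beta$ and $A_\beta$ lie in $\mathpzc{A}$, but also that the relative pure-subobject cokernel produced at each successor stage does so.
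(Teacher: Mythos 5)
Your overall strategy (a transfinite filtration with small quotients in $\mathpzc{A}$) is in the right family, but the specific construction has a genuine gap at the successor step, and it is not the route the paper takes. The problem is your steps (a)/(b): in a general exact category admissible subobjects need not admit joins or intersections, and even in the module case the join $A_\beta \vee P_\beta$ of two $\lambda$-pure subobjects need not be $\lambda$-pure (in $\mathbb{Z}^2$ the subgroups generated by $(1,0)$ and $(1,2)$ are pure, but their sum is not). Consequently you cannot conclude that $A_{\beta+1}$ lies in $\mathpzc{A}$ by pure subobject stability, nor that $A_\beta\to A_{\beta+1}$ is a $\lambda$-pure inclusion whose cokernel inherits membership in $\mathpzc{A}$ --- the latter is exactly what you would need and exactly what fails. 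The ``second isomorphism theorem'' identification $A_{\beta+1}/A_\beta\cong P_\beta/(A_\beta\cap P_\beta)$ is likewise unavailable in a general exact category, and even where it holds (abelian case) a quotient of $P_\beta\in\mathpzc{A}$ by an arbitrary admissible subobject has no reason to lie in $\mathpzc{A}$ (take $\mathpzc{A}$ to be the flat objects).

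The paper's proof circumvents all of this by never forming joins inside $M$. It well-orders $\mathrm{Hom}(G,M)$ for the generator $G$, and at the successor stage factors the composite $G\to M\to M\big\slash M_{\alpha}$ through a $\lambda$-pure monomorphism $\overline{M}_{\alpha+1}\to M\big\slash M_{\alpha}$ with $\gamma$-presentable domain (using Theorem \ref{thm:deconstructpres}); then $M_{\alpha+1}$ is defined as the pullback of $M\to M\big\slash M_{\alpha}\leftarrow\overline{M}_{\alpha+1}$. Purity, and hence membership in $\mathpzc{A}$, is extracted from the quotient $M\big\slash M_{\alpha}\in\mathpzc{A}$ rather than from subobjects of $M$, and the successor quotient $M_{\alpha+1}\big\slash M_{\alpha}\cong\overline{M}_{\alpha+1}$ is automatically $\gamma$-presentable and in $\mathpzc{A}$; this is precisely what the notion of almost $(\mathpzc{A},\lambda)$-pure monomorphism and the ``strong'' form of $\lambda$-pure subobject stability are designed to handle. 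The generator is also what guarantees that $\colim M_{\alpha}\to M$ is simultaneously an admissible epimorphism and an admissible monomorphism, hence an isomorphism. If you want to repair your argument, restructure it along these lines rather than via joins of pure subobjects.
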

The proof can be seen as a significant generalisation of \cite{Gillespie2} Proposition 4.9. As in \cite{vst2012exact}, the main utility of this theorem is to show that $(\mathpzc{A},\mathpzc{A}^{\perp})$ is a complete cotorsion pair when $\mathpzc{A}$ also contains a generator.

We also analyse homotopical algebra using the flat model structure. In particular we prove the following.

 \begin{thm}[Corollary \ref{cor:modelstruturealg}]
 Let $\mathpzc{E}$ be a closed symmetric monoidal purely locally presetanble exact category with enough flat object which is weakly elementary. Let $\mathpzc{M}$ be any of the model categories $\mathrm{Ch}(\mathpzc{E}),\mathrm{Ch}_{\ge0}(\mathpzc{E}),\mathrm{Ch}_{\le0}(\mathpzc{E}),\mathrm{s}\mathpzc{E}$, or $\mathrm{cs}\mathpzc{E}$ equipped with the flat model structure. Let $\mathpzc{P}$ be any operad in $\mathpzc{M}$. Then the transferred model structure exists on $\mathrm{Alg}_{\mathfrak{P}}(\mathpzc{M})$. If $\mathpzc{E}$ is enriched over $\mathbb{Q}$ then for any symmetric operad $\mathfrak{P}$ the transferred model structure exists on $\mathrm{Alg}_{\mathfrak{P}}(\mathpzc{M})$.
\end{thm}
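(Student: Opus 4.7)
The plan is to verify the hypotheses of a Quillen transfer theorem (in the style of Crans/Schwede-Shipley) applied to the free-forgetful adjunction $F \dashv U : \mathpzc{M} \rightleftarrows \mathrm{Alg}_{\mathfrak{P}}(\mathpzc{M})$. The routine inputs are as follows. By its construction via a complete, deconstructible cotorsion pair (Lemma \ref{lem:accdec} together with the Gillespie/Hovey machine used throughout the paper), the flat model structure on each of $\mathrm{Ch}(\mathpzc{E}),\mathrm{Ch}_{\ge0}(\mathpzc{E}),\mathrm{Ch}_{\le0}(\mathpzc{E}),\mathrm{s}\mathpzc{E},\mathrm{cs}\mathpzc{E}$ is cofibrantly generated with presentable domains and codomains. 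The local presentability hypothesis passes to $\mathrm{Alg}_{\mathfrak{P}}(\mathpzc{M})$ since $\mathfrak{P}$-algebras are algebras for the accessible monad induced by $\mathfrak{P}$, so $\mathrm{Alg}_{\mathfrak{P}}(\mathpzc{M})$ is bicomplete and the small-object argument produces the proposed generating sets $F(I)$ and $F(J)$.

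The content is then to show that every relative $F(J)$-cell complex in $\mathrm{Alg}_{\mathfrak{P}}(\mathpzc{M})$ has underlying morphism a weak equivalence in $\mathpzc{M}$. Following the standard filtration argument (as in Berger-Moerdijk or Spitzweck), one decomposes the pushout of an algebra along $F(j)$ for $j \in J$ into a transfinite composition whose associated graded pieces are built from $\mathfrak{P}(n) \otimes j^{\Box n}$ in the non-symmetric case, and from $(\mathfrak{P}(n) \otimes j^{\Box n})_{\Sigma_n}$ in the symmetric case. The proof therefore reduces to verifying three properties of the flat model structure: \textbf{(a)} the pushout-product of two (acyclic) cofibrations is an (acyclic) cofibration; \textbf{(b)} the monoid axiom, i.e.\ transfinite compositions of pushouts of maps $X \otimes j$ with $j \in J$ are weak equivalences for arbitrary $X$; and \textbf{(c)} passage to $\Sigma_n$-coinvariants preserves weak equivalences on admissible monomorphisms with $dg$-flat cokernels.

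Property (a) follows from the fact that the tensor product of $dg$-flats is $dg$-flat and that admissibility of monomorphisms is preserved by tensoring with flats. Property (b) is a consequence of the weak elementarity hypothesis (transfinite compositions of admissible monomorphisms with acyclic cokernel are acyclic) combined with enough-flats, which guarantees that tensoring with a flat sends acyclics to acyclics. Property (c) is vacuous in the non-symmetric setting, yielding the first assertion. In the symmetric setting, the $\mathbb{Q}$-enrichment supplies Maschke's theorem: the averaging projector exhibits $(-)_{\Sigma_n}$ as a retract of the identity on $\Sigma_n$-objects, so quasi-isomorphisms of $dg$-flats descend to $\Sigma_n$-orbits.

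The main obstacle will be implementing the pushout filtration carefully enough in the exact (rather than abelian) setting so that each stage is an admissible monomorphism with $dg$-flat cokernel; this is required to apply the monoid axiom within the class of cofibrations and not merely on the level of weak equivalences. The remaining cases $\mathrm{Ch}_{\ge0}(\mathpzc{E}),\mathrm{Ch}_{\le0}(\mathpzc{E}),\mathrm{s}\mathpzc{E},\mathrm{cs}\mathpzc{E}$ then follow either by the same filtration argument internal to the relevant full subcategory, or by transferring through the Dold-Kan equivalences established in Subsection \ref{subsubsec:DoldKan}.
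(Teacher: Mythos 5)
Your proposal is correct and, for the main part of the argument, follows essentially the same route as the paper: the paper also reduces to the filtration of a pushout of a free algebra map (Proposition \ref{prop:pushoutalg}, Harper's version of the Berger--Moerdijk/Spitzweck filtration), identifies the graded stages as pushouts of tensor products with iterated pushout-products of (acyclic) cofibrations, and concludes from the pp-monoid axiom, which for the flat model structure is supplied by Theorem \ref{exactmonoidal} and Corollary \ref{prop:tensacyclic} exactly as your items (a) and (b) describe. Where you genuinely diverge is the symmetric case: you handle the $\Sigma_n$-coinvariants appearing in the filtration directly, using the $\mathbb{Q}$-enrichment and the averaging idempotent to exhibit $(-)_{\Sigma_n}$ as a retract of the underlying object, so that each stage remains a retract of a map already known to be in the good class (this needs, and has, closure of admissible monomorphisms with flat acyclic cokernel under direct summands in a weakly idempotent complete exact category). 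The paper instead never touches the coinvariants: it applies Maschke at the level of the operad itself, writing any symmetric operad $\mathfrak{P}$ over $\mathbb{Q}$ as a retract of the free symmetric operad $\Sigma\otimes\mathfrak{P}_{ns}$ on its underlying non-symmetric operad, whose algebras coincide with $\mathfrak{P}_{ns}$-algebras, and then deduces admissibility by the retract argument for transferred model structures. The two are equally standard in characteristic $0$; yours keeps the analysis inside a single category of algebras, while the paper's is shorter once the non-symmetric case is in hand. Your remark about the remaining cases ($\mathrm{Ch}_{\le0}$, $\mathrm{s}\mathpzc{E}$, $\mathrm{cs}\mathpzc{E}$) via the Dold--Kan equivalences of Subsection \ref{subsubsec:DoldKan} matches what the paper leaves implicit.
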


Specialising to the commutative operad, we prove the following.

\begin{thm}
 Let $\mathpzc{E}$ be a closed symmetric monoidal purely locally presentable weakly elementary exact category with enough flat objects, enriched over $\mathbb{Q}$. Let $\mathpzc{M}$ be any of the model categories $\mathrm{Ch}(\mathpzc{E})$ or $\mathrm{Ch}_{\ge0}(\mathpzc{E})$. Then $\mathpzc{M}$ is a HA context in the sense of \cite{toen2004homotopical}.
\end{thm}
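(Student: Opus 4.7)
The plan is to verify the Toën-Vezzosi HA context axioms, splitting them into three bundles: combinatoriality and properness, the symmetric monoidal model structure axioms, and the conditions on commutative algebras and their modules.

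First, I would verify that $\mathrm{Ch}(\mathpzc{E})$ and $\mathrm{Ch}_{\ge 0}(\mathpzc{E})$ with the flat model structure are combinatorial and proper. Combinatoriality is immediate from the cotorsion-pair construction via Lemma \ref{lem:accdec} and local presentability of $\mathpzc{E}$, which is inherited by $\mathrm{Ch}(\mathpzc{E})$. Left properness follows because cofibrations are degree-wise admissible monomorphisms, so pushouts along them sit in short exact sequences of complexes; the associated long exact sequence in homology then gives preservation of quasi-isomorphisms. Right properness is dual, using that fibrations are degree-wise admissible epimorphisms.

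Next, for the symmetric monoidal model structure axioms: the pushout-product of two cofibrations $f,g$, characterised as degree-wise admissible monomorphisms with $dg$-flat cokernels, has $\coker(f)\otimes\coker(g)$ as cokernel, which is $dg$-flat since the tensor product of $dg$-flats is $dg$-flat. For the acyclic-cofibration case one uses that a $dg$-flat tensored with an acyclic complex is acyclic. The monoid axiom reduces to closure of weak equivalences under transfinite composition of pushouts of trivial cofibrations tensored with arbitrary objects, for which weak elementarity is precisely the hypothesis needed. The unit axiom uses that the monoidal unit, being a retract of any flat resolution of itself in the model category, is $dg$-flat and hence cofibrant.

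Finally, the HA context requires well-behaved transferred structures on commutative algebras and modules. Existence of these transfers is already the content of Corollary \ref{cor:modelstruturealg}, which uses the $\mathbb{Q}$-enrichment to handle symmetric operads. The principal remaining step, and the expected main obstacle, is showing that a cofibrant commutative algebra has $dg$-flat underlying complex, which is what ensures base change along weak equivalences of commutative algebras is a Quillen equivalence. Over $\mathbb{Q}$ this reduces to the standard cell-attachment filtration: a pushout of commutative algebras along a free map $\Sym(X)\to\Sym(Y)$ with $X\to Y$ a cofibration in $\mathrm{Ch}(\mathpzc{E})$ admits a natural filtration whose associated graded pieces are symmetric powers of the cone of $X\to Y$ tensored with the source algebra. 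In characteristic zero, symmetric powers of cofibrations remain cofibrations because they are retracts of the corresponding tensor powers, and using weak elementarity to control the transfinite composition of filtration steps, one concludes that cofibrant commutative algebras are $dg$-flat. The base-change Quillen equivalences and the remaining HA context axioms then follow from this flatness statement together with the monoidal properties already established.
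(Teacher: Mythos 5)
Your overall strategy matches the paper's: the paper proves a general statement (Theorem \ref{thm:dgHA}) for any strongly monoidally $dg$-compatible hereditary cotorsion pair and then specialises to the flat one, checking the axioms in essentially the same three bundles you use. The key inputs are the same too: cofibrant $A$-modules are transfinite extensions of objects $A\otimes S^{n}(G)$ with $G$ flat (hence flat over $A$), and over $\mathbb{Q}$ cofibrant commutative algebras have cofibrant, hence $K$-flat, underlying modules. Two steps in your write-up are genuinely problematic, however.

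First, your unit axiom argument is wrong as stated: a cofibrant resolution $C\rightarrow S^{0}(k)$ is a quasi-isomorphism, not a split epimorphism, so $S^{0}(k)$ is not a retract of $C$ in $\mathrm{Ch}(\mathpzc{E})$ and you cannot conclude cofibrancy of the unit this way. What is actually needed --- and what the paper's notion of monoidal $dg$-compatibility builds in as the hypothesis $k\in\mathfrak{L}$ --- is that $k$ is itself flat, so that $S^{0}(k)$ is a bounded-below complex of flats, hence $dg$-flat and cofibrant, making the unit axiom immediate (compare Corollary \ref{cor:flatmodelstructureexists}, where flatness of $k$ is an explicit hypothesis for monoidality).

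Second, you do not actually establish left properness of the transferred model structure on $\mathrm{Alg}_{\mathfrak{Comm}}({}_{A}\mathrm{Mod})$, which is part of axiom (5) of the HA context and which the paper singles out as the hardest point. Your cell-attachment filtration (whose graded pieces are symmetric powers of the \emph{cokernel} of $X\rightarrow Y$ tensored with the source algebra, not of its cone) does show that cofibrations of commutative algebras are built from $dg$-flat pieces, and this is the essential input; but deducing admissibility of $\mathfrak{Comm}$ together with left properness from it is precisely the content of Theorem \ref{thm:Commadm}, whose remaining hypotheses --- the strong commutative monoid axiom (valid over $\mathbb{Q}$), monoidal left properness of cofibrations, $K$-flatness of cofibrant objects, and cofibrancy of the domains of generating cofibrations, this last using that the flat class is hereditary --- need to be verified rather than absorbed into ``the remaining axioms then follow.''
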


As we explain, very strong Koszul duality results also hold in this setting. 

\subsection{Layout of the Paper}

The strucure of the paper is as follows. In Section \ref{sec:gencon} we give our notation and conventions for this paper. In particular we recall some terminology and strandard results from the theories of monoidal model categories and exact categories.

In Section \ref{sec:accexact} our work begins in earnerst. We recall some general facts about locally presentable categories, as well as the $\lambda$-pure exact structure on a $\lambda$-accessible category. We provide some refined concepts of deconstructiblity of classes of objects, and explain how one can use these to apply the small object argument in the construction of pre-covering classes and cotorsion pairs. We define purely $\lambda$-accessible exact categories, and prove deconstructibility results therein. 

In Section \ref{sec:exactmodelstructures} we begin by recalling the correspondence between exact weak factorisation systems/ exact model structures and cotorsion pairs/ Hovey triples. We use some of our deconstructibility results from Section \ref{sec:accexact} to construct and modify model structures on exact categories. We also establish some basic properties of exact model structures, and explain how to construct model structures on chain complexes. We also consider monoidal model structures on chain complexes, and their relation to so-called monoidally compatible cotorsion pairs. As an application, we show that often one can extend monoidal structures on an exact category $\mathpzc{E}$ to monoidal structures on the left heart $\mathrm{LH}(\mathpzc{E})$. We conclude this section with discussion of the Dold-Kan correspondences.

In Section \ref{ref:modelstructuresonacc} we again specialise to accessible exact categories. We prove general results establishing when Gillespie's method for producing model structures on $\mathrm{Ch}(\mathpzc{E})$ from cotorsion pairs on $\mathpzc{E}$ works. As an application we show that for $\mathpzc{E}$ a purely $\lambda$-presentable exact category with strongly exact filtered colimits, its left heart is Grothendieck abelian. We also establish the existence of certain injective cotorsion pairs in the sense of \cite{MR3459032}, and use these to give an $(\infty,1)$-categorical formulation of results of \cite{MR3459032} concerning recollements. 

In Section \ref{sec:flatKflat} we finally establish the existence of the flat and $K$-flat model structures, and generalise the recollement of \cite{estrada2023k}.

In Section \ref{sec:homotopicalalgebra} we study homotopical algebra in exact categories. We give minor generalisation of existence theorems for model structures on algebras over operads from \cite{kelly2016homotopy}. We also estlabish when model structures on chain complexes arising from cotorsion pairs give rise to HA contexts. Finally, for such model structures we also explain how \cite{kelly2019koszul} can be used to prove srong Koszul duality results. 

Finally in Section \ref{sec:sheaves} we analyse in detail our main motivating example: the category of sheaves valued in an exact category. We prove the existence and functoriality of the flat model structure on complexes of sheaves, and explain how to construct three- and six-functor formalisms. As a consequence, we explain how to generalise results of \cite{spaltenstein} concerning the six operations.

\subsection{Acknoweldgements}

The author would like to thank James Gillespie for discussions related to the work in this paper. 

\section{Generalities and Conventions}\label{sec:gencon}

In this first section we mainly introduce our conventions for the main platers of this article - namely monoidal model categories and exact categories. 

\subsection{Notation and Conventions}

Throughout this work we will use the following notation.
\begin{itemize}
\item
$1$-categories will be denoted using the mathpzc font $\mathpzc{C},\mathpzc{D},\mathpzc{E}$, etc. In particular we denote by $\mathpzc{Ab}$ the category of abelian groups and ${}_{\mathbb{Q}}\mathpzc{Vect}$ the category of $\mathbb{Q}$-vector spaces. If $\mathpzc{M}$ is a model category, or a category with weak equivalences, its associated $(\infty,1)$-category will be denoted $\textbf{M}$.
\item 
Operads will be denoted using capital fractal letters $\mathfrak{C},\mathfrak{P}$, etc. Algebras over an operad will generally be denoted using  capital letters $X,Y$, etc. The category of algebras over an operad will be denoted $\mathrm{Alg}_{\mathfrak{P}}$.
\item
We denote the operads for unital associative algebras, unital commutative algebras, non-unital commutative algebras, and Lie algebras by $\mathfrak{Ass},\mathfrak{Comm},\mathfrak{Comm}^{nu}$, and $\mathfrak{Lie}$ respectively. 
\item
 For the operad $\mathfrak{Ass},\mathfrak{Comm},\mathfrak{Lie}$ we will denote the corresponding free algebras by $T(V),S(V)$, and $L(V)$ respectively. We also denote by $U(L)$ the universal enveloping algebra of a Lie algebra $L$.
\item
Unless stated otherwise, the unit in a monoidal category will be denoted by $k$, the tensor functor by $\otimes$, and for a closed monoidal category the internal hom functor will be denoted by $\underline{\mathrm{Hom}}$. Monoidal categories will always be assumed to be symmetric, with symmetric braiding $\sigma$.
\item
Filtered colimits will be denoted by $\lim_{\rightarrow}$. Projective limits will be denoted $\lim_{\leftarrow}$.
\item
The first infinite ordinal will be denoted $\aleph_{0}$.
\end{itemize}

Let us now introduce some conventions for chain complexes. We will use homological grading.

\begin{defn}\label{chaincat}
A \textit{chain complex} in a pre-additive category $\mathpzc{E}$ is a sequence

\begin{displaymath}
\xymatrix{K_{\bullet}=\ldots\ar[r] & K_{n}\ar[r]^{d_{n}} & K_{n-1}\ar[r]^{d_{n-1}} & K_{n-2}\ar[r] &\ldots}
\end{displaymath}

where the $K_{i}$ are objects and the $d_{i}$ are morphisms such that $d_{n-1}\circ d_{n}=0$. The morphisms are called \textit{differentials}. A \textit{morphism of chain complexes} $f_{\bullet}:K_{\bullet}\rightarrow L_{\bullet}$ is a collection of morphisms $f_{n}:K_{n}\rightarrow L_{n}$ such that the following diagram commutes for each $n$:

\begin{displaymath}
\xymatrix{\ldots\ar[r] & K_{n+1}\ar[d]_{f_{n+1}}\ar[r]^{d^{K}_{n+1}} & K_{n}\ar[d]_{f^{n}}\ar[r]^{d^{K}_{n}} & K_{n-1}\ar[d]_{f^{n-1}}\ar[r] &\ldots\\
\ldots\ar[r] & L_{n+1}\ar[r]^{d^{L}_{n+1}} & L_{n}\ar[r]^{d^{L}_{n}} & L_{n-1}\ar[r] &\ldots}
\end{displaymath}
\end{defn}

The category whose objects are chain complexes and whose morphisms are as described above is called the \textit{category of chain complexes in} $\mathpzc{E}$, denoted $\mathrm{Ch}(\mathpzc{E})$. We also define $\mathrm{Ch}_{\ge0}(\mathpzc{E})$ to be the full subcategory of $\mathrm{Ch}(\mathpzc{E})$ on complexes $A_{\bullet}$ such that $A_{n}=0$ for $n<0$, $\mathrm{Ch}_{\le0}(\mathpzc{E})$ to be the full subcategory of $\mathrm{Ch}(\mathpzc{E})$ on complexes $A_{\bullet}$ such that $A_{n}=0$ for $n>0$, $\mathrm{Ch}_{+}(\mathpzc{E})$, the full subcategory of chain complexes $A_{\bullet}$ such that $A_{n}=0$ for $n<<0$, $\mathrm{Ch}_{-}(\mathpzc{E})$, the full subcategory of chain complexes $A_{\bullet}$ such that $A_{n}=0$ for $n>>0$ and $\mathrm{Ch}_{b}(\mathpzc{E})$  to be the full subcategory of $\mathrm{Ch}(\mathpzc{E})$ on complexes $A_{\bullet}$ such that $A_{n}\neq 0$ for only finitely many $n$. A lot of the statements in the rest of this document apply to several of these categories at once. In such cases we will write $\mathrm{Ch}_{*}(\mathpzc{E})$, and specify that $*$ can be any element of some subset of $\{\ge0,\le0,+,-,b,\emptyset\}$, where by definition $Ch_{\emptyset}(\mathpzc{E})=\mathrm{Ch}(\mathpzc{E})$.\newline
\\
We will  frequently use the following special chain complexes.

\begin{defn}
If $E$ is an object of a pointed category $\mathpzc{E}$ we let $S^{n}(E)\in \mathrm{Ch}(\mathpzc{E})$ be the complex whose $n$th entry is $E$,  with all other entries being $0$. We also denote by $D^{n}(E)\in \mathrm{Ch}(\mathpzc{E})$ the complex whose $n$th and $(n-1)$st entries are $E$, with all other entries being $0$, and the differential $d_{n}$ being the identity.
\end{defn}

Let us also introduce some notation for truncation functors.

\begin{defn}
Let $\mathpzc{E}$ be an additive category which has kernels. For a complex $X_{\bullet}$ we denote by $\tau_{\ge n}X$ the complex such that $(\tau_{\ge n}X)_{m}=0$ if $m<n$, $(\tau_{\ge n}X)_{m}= X_{m}$ if $m>n$ and $(\tau_{\ge n}X)_{n}=\textrm{Ker}(d_{n})$. The differentials are the obvious ones. The construction is clearly functorial. Dually we define the truncation functor $\tau_{\le k}$. 
\end{defn}

All of the above categories are naturally enriched over $Ch(\mathpzc{Ab})$. We denote the enriched hom by $\textbf{Hom}(-,-)$. For notational clarity we recall its definition here.

\begin{defn}
Let $X_{\bullet},Y_{\bullet}\in \mathrm{Ch}(\mathpzc{E})$. We define $\textbf{Hom}(X_{\bullet},Y_{\bullet})\in Ch(\mathpzc{Ab})$ to be the complex with
$$\textbf{Hom}(X_{\bullet},Y_{\bullet})_{n}=\prod_{i\in\mathbb{Z}}\textrm{Hom}_{\mathpzc{E}}(X_{i},Y_{i+n})$$
and differential $d_{n}$ defined on $\textrm{Hom}_{\mathpzc{E}}(X_{i},Y_{i+n})$ by
$$df=d^{Y}_{i+n}\circ f-(-1)^{n}f\circ d^{X}_{i}$$
\end{defn}

Let $(\mathpzc{E},\otimes,k)$ be a monoidal additive category, i.e. $\otimes$ is an additive bifunctor. There is an induced monoidal structure on $\mathrm{Ch}_{*}(\mathpzc{E})$ for $*\in\{\ge0,\le0,+,-,b,\emptyset\}$. The unit is $S^{0}(k)$. If $X_{\bullet}$ and $Y_{\bullet}$ are chain complexes then we set

$$(X_{\bullet}\otimes Y_{\bullet})_{n}=\bigoplus_{i+j=n}X_{i}\otimes Y_{j}$$
If $i+j=n$, then we define the differential on the summand $X_{i}\otimes Y_{j}$ of $(X_{\bullet}\otimes Y_{\bullet})_{n}$ by
$$d^{X_{\bullet}\otimes Y_{\bullet}}_{n}|_{X_{i}\otimes Y_{j}}=d^{X_{\bullet}}_{i}\otimes id_{Y_{\bullet}}+(-1)^{i}id_{X_{\bullet}}\otimes d_{j}^{Y_{\bullet}}$$
If $*\in\{\ge0,\le0,+,-,b,\emptyset\}$ then $(\mathrm{Ch}_{*}(\mathpzc{E}),\otimes,S^{0}(k))$ is a monoidal additive category. 

If $(\mathpzc{E},\otimes,k,\underline{\textrm{Hom}})$ is a closed monoidal additive category then we define a functor
$$\underline{\textrm{Hom}}(-,-):\mathrm{Ch}(\mathpzc{E})^{op}\times \mathrm{Ch}(\mathpzc{E})\rightarrow \mathrm{Ch}(\mathpzc{E})$$

$$\underline{\textrm{Hom}}(X_{\bullet},Y_{\bullet})_{n}=\prod_{i\in\mathbb{Z}}\underline{\textrm{Hom}}_{\mathpzc{E}}(X_{i},Y_{i+n})$$
and differential $d_{n}$ defined on $\textrm{Hom}_{\mathpzc{E}}(X_{i},Y_{i+n})$ by

$$d=\underline{\textrm{Hom}}(d_{i}^{X_{\bullet}},id)+(-1)^{i}\underline{\textrm{Hom}}(id,d_{i+n}^{Y_{\bullet}})$$
This does define an internal hom on the monoidal category 

$$(\mathrm{Ch}(\mathpzc{E}),\otimes,S^{0}(k))$$
The internal hom on chain complexes also restricts to a bifunctor

$$\underline{\textrm{Hom}}(-,-):\mathrm{Ch}_{b}(\mathpzc{E})^{op}\times \mathrm{Ch}_{b}(\mathpzc{E})\rightarrow \mathrm{Ch}_{b}(\mathpzc{E})$$
Then

$$(\mathrm{Ch}_{b}(\mathpzc{E}),\otimes,S^{0}(k),\underline{\textrm{Hom}})$$
is a closed monoidal additive category. In fact, in both of these categories there are natural isomorphisms of chain complexes of abelian groups.
$$\textbf{Hom}(X_{\bullet},\underline{\textrm{Hom}}(Y_{\bullet},Z_{\bullet}))\cong\textbf{Hom}(X_{\bullet}\otimes Y_{\bullet},Z_{\bullet})$$

The categories $\mathrm{Ch}_{*}(\mathpzc{E})$ for $*\in\{+,-,b,\emptyset\}$ also come equipped with a shift functor. It is given on objects by $(A_{\bullet}[1])_{i}=A_{i+1}$ with differential $d_{i}^{A[1]}=-d^{A}_{i+1}$. The shift of a morphism $f^{\bullet}$ is given by $(f_{\bullet}[1])_{i}=f_{i+1}$. $[1]$ is an auto-equivalence with inverse $[-1]$. We set $[0]=\textrm{Id}$ and $[n]=[1]^{n}$ for any integer $n$.\newline
\\
 Finally, we define the mapping cone as follows.

\begin{defn}
Let $X_{\bullet}$ and $Y_{\bullet}$ be chain complexes in an additive category $\mathpzc{E}$ and $f_{\bullet}:X_{\bullet}\rightarrow Y_{\bullet}$. The \textit{mapping cone of} $f_{\bullet}$, denoted $\textrm{cone}(f_{\bullet})$ is the complex whose components are
$$\textrm{cone}(f_{\bullet})_{n}=X_{n-1}\oplus Y_{n}$$
and whose differential is
 \begin{displaymath}
d^{\textrm{cone}(f)}_{n}  = \left(
     \begin{array}{lr}
       -d_{n-1}^{X} &0\\
       -f_{n-1} & d^{Y}_{n}

   \end{array}
            \right)
\end{displaymath} 
\end{defn}
There are natural morphisms $\tau:Y_{\bullet}\rightarrow \textrm{cone}(f)$ induced by the injections $Y_{i}\rightarrow X_{i-1}\oplus Y_{i}$, and $\pi:\textrm{cone}(f)\rightarrow X_{\bullet}[-1]$ induced by the projections $X_{i-1}\oplus Y_{i}\rightarrow X_{i-1}$. The sequence
$$Y_{\bullet}\rightarrow\textrm{cone}(f)\rightarrow X_{\bullet}[-1]$$
is split exact in each degree.

\subsection{Conventions for Monoidal Model Categories}

Now we recall some of our conventions for monoidal model categories, mostly from \cite{kelly2019koszul}. The point of the weaker conditions below is that a lot of the axioms for monoidal model categories, as well as the monoid axiom, are overdetermined and in many cases too strict.

\begin{defn}\label{defn:lprop}
\begin{enumerate}
\item
Let $\mathpzc{M}$ be a model category. A map $f:X\rightarrow Y$ in $\mathpzc{M}$ is said to be \textit{left proper } if any pushout diagram
\begin{displaymath}
\xymatrix{
X\ar[d]^{f}\ar[r] & A\ar[d]\\
Y\ar[r] & P
}
\end{displaymath}
 is a homotopy pushout. One defines relative right properness dually.
\item
Let $\mathpzc{M}$ be a model category equipped with a symmetric monoidal structure. For $\mathcal{C}$ a class of objects in $\mathpzc{M}$, a map $f:X\rightarrow Y$ in $\mathpzc{M}$ is said to be $\mathcal{C}$-\textit{monoidally left proper } if $C\otimes f$ is left proper for any $C\in\mathcal{C}$. 
\end{enumerate}
\end{defn}

\begin{defn}
A weak monoidal model category $\mathpzc{M}$ is said to be an \textit{almost monoidal model category} if any pushout-product of cofibrations is left proper. 
\end{defn}

\begin{defn}
Let $\mathpzc{M}$ be a model category equipped with a monoidal structure, and let $\mathcal{C}\subset\mathpzc{M}$ An object $X$ of $\mathpzc{M}$ is said to be $K$-\textit{transverse to} $\mathcal{C}$ if for any weak equivalence $f:A\rightarrow B$ in $\mathcal{C}$, the map $X\otimes A\rightarrow X\otimes B$ is a weak equivalence. $X$ is said to be $K$-\textit{flat} if it is $K$-transverse to $\mathpzc{M}$.
\end{defn}

Clearly $K$-flatness is a model category-theoretic version of flatness in exact categories.

\begin{defn}
Let $\mathpzc{M}$ be a model category which is equipped with a symmetric monoidal structure. Let $\mathcal{S}$ be a class of maps in $\mathpzc{M}$.
\begin{enumerate}
\item
$\mathcal{S}$ is said to \textit{satisfy the pushout-product axiom} if it is closed under arbitrary pushout-products.
\item
$\mathcal{S}$ is said to \textit{satisfy the weak pushout-product axiom} if whenever $s_{1}\Box\ldots\Box s_{n}$ is an iterated pushout-product of maps in $\mathcal{S}$, and one of the $s_{i}$ is a weak equivalence, then $s_{1}\Box\ldots\Box s_{n}$ is a weak equivalence. 
\item
$\mathpzc{M}$ is said to be a \textit{weak monoidal model category} if cofibrations satisfy the weak pushout-product axiom.
\item
$\mathpzc{M}$ is said to be a \textit{weakly unital monoidal model category} if for any acyclic finration $C\rightarrow k$, with $C$ cofibrant, andy any $X$, the map $C\otimes X\rightarrow X$ is an equivalence.
\item
$\mathpzc{M}$ is said to be $C$-\textit{monoidal} if cofibrations satisfy the pushout-product axiom and the weak pushout-product axiom.
\item
$\mathpzc{M}$ is said to be a \textit{monoidal model category} if it is $C$-monoidal and weakly unital.
\item
$\mathpzc{M}$ is said to be \textit{K-monoidal} if coifbrant objects are $K$-flat.
\item
$\mathpzc{M}$ is said to be $KC$-\textit{monoidal} if it is both $K$-monoidal and $C$-monoidal.
\end{enumerate}
\end{defn}

\begin{rem}
If $\mathpzc{M}$ is a weak monoidal model category then the tensor product functor
$$\otimes:\mathpzc{M}\times\mathpzc{M}\rightarrow\mathpzc{M}$$
is left derivable in the sense of homotopical categories (see. e.g \cite{Riehl}). Thus we can make sense of $\otimes^{\mathbb{L}}$.
\end{rem}

\begin{defn}
\begin{enumerate}
\item
A class $\mathcal{S}$ of in $\mathpzc{C}$ is said to \textit{satisfy the monoid axiom} if any transfinite composition of pushouts of maps of the form $X\otimes f$ for $X\in\mathpzc{C}$ and $f\in\mathcal{S}$ is a weak equivalence.
\item
$\mathpzc{C}$ is said to satisfy the \textit{pp-monoid axiom} if the class of maps consisting of as iterated pushout-products of acyclic cofibrations satisfies the monoid axiom.
\end{enumerate}
\end{defn}

\begin{defn}[\cite{white2017model} Definition 3.4]
A monoidal model category $\mathpzc{M}$ is said to satisfy the \textit{strong commutative monoid axiom} if whwnever $f$ is a (trivialy) cofibration in $\mathpzc{M}$, then $h^{\boxtimes n}\big\slash_{\Sigma_{n}}$ is a (trivial) cofibration in $\mathpzc{M}$ for all $n>0$.
\end{defn}

Note that if $\mathcal{S}$ satisfies the monoid axiom then it must consist of weak equivalences. Moreover if $\mathpzc{C}$ satisfies the pp-monoid axiom then it is an almost monoidal model category. Finally if $\mathpzc{C}$ is a monoidal model category then the validity of the pp-monoid axiom is equivalent to the validity of the usual monoid axiom.

\subsection{Exact Category Generalities}

Next we recall some general theory of exact categories. Recall that if $\mathpzc{E}$ is an additive category then a \textit{kernel-cokernel pair} in $\mathpzc{E}$ is a sequence
\begin{displaymath}
\xymatrix{
0\ar[r] & X\ar[r]^{f} & Y\ar[r]^{g} & Z\ar[r] &0
}
\end{displaymath}
where $g$ is a cokernel of $f$ and $f$ is a kernel of $g$. A \textit{Quillen exact category} is a pair $(\mathpzc{E},\mathcal{Q})$ where $\mathpzc{E}$ is an additive category, and $\mathcal{Q}$ is a class of kernel-cokernel pairs in $\mathpzc{E}$ satisfying some axioms which make many of the constructions of homological algebra possible. For details one can consult \cite{Buehler}. In particular, there is a sensible definition of what it means for a map $f:X_{\bullet}\rightarrow Y_{\bullet}$ of complexes in $\mathrm{Ch}(\mathpzc{E})$ to be a quasi-isomorphism. The class $\mathcal{W}_{\mathcal{Q}}$ of quasi-isomorphisms satisfy the $2$-out-of-$6$ property, and therefore $(\mathrm{Ch}(\mathpzc{E}),\mathcal{W}_{\mathcal{Q}})$ is a homotopical category in the sense of \cite{Riehl} Definition 2.1.1.

\begin{notation}
Let $(\mathpzc{E},\mathcal{Q})$ be a category with a distinguished class of exact seqiences.
\begin{enumerate}
\item
If $g$ appears as a cokernel in a short exact sequence in $\mathcal{Q}$, then $g$ is said to be an \textit{admissible epimorphism}. The class of all admissible epimorphisms is denoted $\mathbf{AdEpi}$.
\item
If $f$ appears as a kernel in a short exact sequence in $\mathcal{Q}$, then $f$ is said to be an \textit{admissible monomorphism}. The class of all admissible monomorphisms is denoted $\mathbf{AdMon}$.
\item
$\mathbf{SplitMon}$ is the class of split monomorphisms, i.e. morphisms $i$ for which there existence a map $p$ such that $p\circ i$ is the identity. 
\end{enumerate}
\end{notation}

\begin{defn}[Definition 3.1, Definition 3.2 \cite{bazzoni2013one}]
A \textit{left exact category} is a pair $(\mathpzc{E},\mathcal{Q})$ where $\mathpzc{E}$ is an additive category, and $\mathcal{Q}$ a class of kernel-cokernel pairs in $\mathpzc{E}$ such that 
\begin{enumerate}
\item
The identity map $1_{0}:0\rightarrow 0$ is an admissible epimorphism.
\item
The composition of two admissible epimorphisms is an epimorphism.
\item
The pullback of an admissible epimorphism along an arbitrary morphisms exists and is an admissible epimorphism.
\end{enumerate}
A left exact category $(\mathpzc{E},\mathcal{Q})$ is said to be \textit{strongly left exact} if whenever $i:A\rightarrow B$ and $p:B\rightarrow C$ are morphisms in $\mathpzc{E}$ such that $p$ has a kernel and $p\circ i$ is an admissible epimorphism, then $p$ is an admissible epimorphism.

\textit{(Strongly) right exact categories} are defined dually. $(\mathpzc{E},\mathpzc{Q})$ is said to be an \textit{exact category} if it is both left and right exact.
\end{defn}

\begin{example}
Let $(\mathpzc{E},\mathcal{Q})$ be a Quillen exact category. Then by \cite{Buehler} Proposition 2.16 it is both a strongly right exact category and a strongly left exact category. 
\end{example}

When it is clear, when referring to an exact category we will often suppress the $\mathcal{Q}$, and just refer to an exact category $\mathpzc{E}$.

\begin{defn}
A subcategory $\mathpzc{D}\subseteq\mathpzc{E}$ of an exact category $\mathpzc{E}$ is said to be a \textit{generating subcategory of }$\mathpzc{E}$, or to \textit{generate} $\mathpzc{E}$, if for any $E\in\mathpzc{E}$ there is a $D\in\mathpzc{D}$ and an admissible epimorphism $D\rightarrow\mathpzc{E}$.
\end{defn}

\subsubsection{The Left Heart}

 In \cite{henrard2021left} Henrard, Kvamme, Van Roosmalen, and Wegner construct the \textit{left heart} $\mathrm{LH}(\mathpzc{E})$ of a left exact category $\mathpzc{E}$. This is essentially a left exact abelian envelope of $\mathpzc{E}$. First they define the so-called \textit{left} $t$-\textit{structure} on the derived category $\mathrm{D}(\mathpzc{E})$ for which the truncation functor $\tau_{\ge0}$, which sends a complex $X_{\bullet}$ to the complex
 $$\ldots\rightarrow X_{n}\rightarrow\ldots\rightarrow X_{1}\rightarrow\mathrm{Ker}(d^{X}_{0})\rightarrow0\rightarrow 0\rightarrow\ldots$$
 
 $\mathrm{LH}(\mathpzc{E})$ is then defined to be the heart of this $t$-structure. By \cite{henrard2021left} Corollary 3.10, an object of $\mathrm{LH}(\mathpzc{E})$ can be described as a complex of the form
 \begin{displaymath}
 \xymatrix{
 \ldots\ar[r] & 0\ar[r] & \mathrm{Ker}(f)\ar[r] & X\ar[r]^{f} & Y\ar[r] 0\ar[r] & \ldots
 }
 \end{displaymath}
 with $Y$ in degree $0$.  There is a natural exact functor $\phi:\mathpzc{E}\rightarrow\mathrm{LH}(\mathpzc{E})$ sending an object $E$ to the complex consisting of $E$ concentrated in degree $0$. By \cite{henrard2021left} Theorem 3.12 the induced functor
 $$\phi:\mathrm{D}(\mathpzc{E})\rightarrow\mathrm{D}(\mathrm{LH}(\mathpzc{E}))$$
 is an equivalence for $\mathpzc{E}$ strongly left exact.
 
We will denote the homology functors for the left $t$-structure by $\mathrm{LH}_{n}$.

\subsubsection{Monoidal Exact Categories and the $\otimes$-Pure Exact Structure}

\begin{defn}
A \textit{symmetric monoidal exact category} is an exact category $\mathpzc{E}$ equipped with a unital syemmtric monoidal structure $(\otimes ,k)$ such that $\otimes$ is an additive functor. A \textit{closed symmetric monoidal exact category} is an exact category $\mathpzc{E}$ equipped with a unital closed syemmtric monoidal structure $(\otimes ,k,\underline{\mathrm{Hom}})$.
\end{defn}

\begin{defn}
Recall that an object $F$ in a closed symmetric monoidal exact category $\mathpzc{E}$ is said to be \textit{flat}, if whenever 
$$0\rightarrow X\rightarrow Y\rightarrow Z\rightarrow 0$$
is an exact sequence in $\mathpzc{E}$,
$$0\rightarrow F\otimes X\rightarrow F\otimes Y\rightarrow F\otimes Z\rightarrow0$$
is an exact sequence in $\mathpzc{E}$. $F$ is said to be \textit{strongly flat} if it is flat and $F\otimes(-)$ commutes with kernels.
\end{defn}

\begin{defn}[\cite{kelly2016homotopy} Definition 2.4.75]
Let $\mathcal{S}\subset\mathpzc{E}$ be a full subcategory. A short exact sequence
$$0\rightarrow A\rightarrow B\rightarrow C\rightarrow 0$$
is said to be $\mathcal{S}$-\textit{pure} if 
$$0\rightarrow S\otimes A\rightarrow S\otimes B\rightarrow S\otimes C\rightarrow 0$$
is a short exact sequence for any $S\in\mathcal{S}$. When $\mathcal{S}=\mathpzc{E}$, we say that a $\mathcal{S}$-pure exact sequence is $\otimes$-\textit{pure}.
\end{defn}

The class of all $\mathcal{S}$-pure monomorphisms is denoted $\mathbf{PureMon}_{\mathcal{S}}$. The class of all $\otimes$-pure admissible monomorphisms is denoted $\mathbf{PureMon}_{\otimes}$.

\begin{example}[\cite{kelly2016homotopy} Lemma 2.4.76]\label{example:flatpure}
If $\mathpzc{E}$ is weakly idempotent complete and $Z$ is flat then any short exact sequence
$$0\rightarrow X\rightarrow Y\rightarrow Z\rightarrow 0$$
is $\otimes$-pure.
\end{example}

\begin{prop}[\cite{kelly2016homotopy} Proposition 2.4.77]
When $\mathpzc{E}$ is weakly idempotent complete the class of $\mathcal{S}$-pure exact sequences defines an exact structure on $\mathpzc{E}$. 
\end{prop}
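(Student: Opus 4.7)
My plan is to verify the axioms of a Quillen exact structure for the class of $\mathcal{S}$-pure short exact sequences as a subclass of the given exact structure on $\mathpzc{E}$. Since $\mathpzc{E}$ is weakly idempotent complete I can appeal to Bühler's streamlined axiomatisation (see \cite{Buehler}): it suffices to show that the class of $\mathcal{S}$-pure admissible monomorphisms (resp.\ admissible epimorphisms) contains identities, is closed under composition, and is stable under pushout (resp.\ pullback) along arbitrary morphisms of $\mathpzc{E}$.

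The key structural observation is that for each $S \in \mathcal{S}$ the functor $S \otimes (-) \colon \mathpzc{E} \to \mathpzc{E}$ is additive, and moreover a left adjoint by the closed monoidal assumption; hence it preserves finite biproducts, split short exact sequences, and all pushouts. Identities are $\mathcal{S}$-pure admissible monomorphisms and admissible epimorphisms (they sit in split sequences), isomorphism-invariance is immediate, and given an $\mathcal{S}$-pure admissible monomorphism $A \to B$ with cokernel $C$ and an arbitrary morphism $A \to A'$, the pushout $A' \to B'$ has the same cokernel $C$; applying $S \otimes (-)$ to the pushout square yields a pushout square in $\mathpzc{E}$, from which a short diagram chase delivers $\mathcal{S}$-purity of the new sequence. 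Closure under composition of admissible monomorphisms (resp.\ epimorphisms) is obtained by applying Bühler's nine-lemma to the two composable tensored short exact sequences.

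The main obstacle is pullback-stability of $\mathcal{S}$-pure admissible epimorphisms, since $S \otimes (-)$ is a left adjoint and need not commute with pullbacks. I would handle this by realising the pullback $B' = B \times_C C'$ of an admissible epimorphism $g \colon B \to C$ along $h \colon C' \to C$ as the kernel of the map $B \oplus C' \to C$, $(b, c') \mapsto g(b) - h(c')$, whose admissibility as an epimorphism follows from the obscure axiom guaranteed by weak idempotent completeness applied to the split inclusion $B \to B \oplus C'$ (whose composite with the difference map recovers the admissible epi $g$). This fits the pulled-back sequence into a $3 \times 3$ diagram whose other rows and columns have already been shown to be $\mathcal{S}$-pure; propagating purity through the tensored diagram via the nine-lemma and the preservation of biproducts shows that $0 \to K \to B' \to C' \to 0$ remains short exact after tensoring with each $S \in \mathcal{S}$, completing the verification of the exact-structure axioms.
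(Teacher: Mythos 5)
Your overall strategy (verify B\"uhler's axioms for the subclass, with pullback-stability of $\mathcal{S}$-pure admissible epimorphisms as the crux and weak idempotent completeness entering through the obscure axiom) is the right one; note the paper gives no proof of its own here, citing \cite{kelly2016homotopy}. However, your treatment of the crucial pullback step has a genuine gap. The natural $3\times 3$ diagram you describe has rows $(K,B',C')$, $(B,\,B\oplus C',\,C')$, $(C,C,0)$ and columns $(K,B,C)$, $(B',\,B\oplus C',\,C)$, $(C',C',0)$. After tensoring with $S$, the outer columns, the middle row and the bottom row are exact for the reasons you give, but to conclude exactness of the top row from the nine-lemma you also need exactness of the tensored \emph{middle column} $0\to S\otimes B'\to S\otimes(B\oplus C')\to S\otimes C\to 0$, i.e.\ $\mathcal{S}$-purity of $0\to B'\to B\oplus C'\to C\to 0$. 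This is not among the sequences ``already shown to be pure''; given the rest of the diagram its purity is in fact \emph{equivalent} to the purity of the pulled-back sequence, so the argument is circular. Your appeal to the obscure axiom only shows that $B\oplus C'\to C$ is an admissible epimorphism in $\mathpzc{E}$ (needed to form that sequence at all); it says nothing after tensoring, where the kernel of $S\otimes(B\oplus C')\to S\otimes C$ is the pullback $(S\otimes B)\times_{S\otimes C}(S\otimes C')$ and the canonical map from $S\otimes B'$ to it need not be an isomorphism a priori --- that is precisely the failure of $S\otimes(-)$ to preserve pullbacks which you yourself flagged.

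The repair is to apply the obscure axiom \emph{after} tensoring, directly to $S\otimes K\to S\otimes B'$ (where $K=\mathrm{Ker}(g)\cong\mathrm{Ker}(g')$): its composite with $S\otimes h'\colon S\otimes B'\to S\otimes B$ is $S\otimes(K\to B)$, an admissible monomorphism by purity of the original sequence, so by \cite{Buehler} Proposition 7.6 (valid since $\mathpzc{E}$ is weakly idempotent complete) $S\otimes K\to S\otimes B'$ is an admissible monomorphism; since $S\otimes(-)$ is a left adjoint it preserves the cokernel $C'=\mathrm{Coker}(K\to B')$, whence $0\to S\otimes K\to S\otimes B'\to S\otimes C'\to 0$ is short exact. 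A smaller instance of the same issue occurs in your composition step for admissible epimorphisms: the nine-lemma there also requires purity of the kernel sequence $0\to\mathrm{Ker}(g_1)\to\mathrm{Ker}(g_2\circ g_1)\to\mathrm{Ker}(g_2)\to 0$, which is not one of ``the two composable tensored short exact sequences'' but is itself the pullback of the sequence for $g_1$ along $\mathrm{Ker}(g_2)\to C$; it must therefore be deduced from pullback-stability first, which fixes the order of your verifications. (The composition step for admissible monomorphisms is fine, and easier than you make it: the composite of the tensored admissible monomorphisms is an admissible monomorphism, and $S\otimes(-)$ preserves its cokernel.)
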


We denote by $\mathpzc{E}_{\otimes}$ the category $\mathpzc{E}$ equipped with the exact structure given by $\otimes$-pure exact sequences, and call it the $\otimes$-pure exact structure.  

\begin{prop}
Let $\mathpzc{E}$ be a weakly idempotent complete symmetric monoidal exact category, and let
$$0\rightarrow X\rightarrow Y\rightarrow Z\rightarrow 0$$
be a short exact sequence with $Y$ and $Z$ flat. Then $X$ is flat.
\end{prop}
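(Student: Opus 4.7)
The plan is a direct application of the nine lemma in an exact category. Let
$$0\to A\to B\to C\to 0$$
be an arbitrary short exact sequence in $\mathpzc{E}$; I must show that tensoring with $X$ yields a short exact sequence.

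First I would use Example \ref{example:flatpure}: since $\mathpzc{E}$ is weakly idempotent complete and $Z$ is flat, the given short exact sequence $0\to X\to Y\to Z\to 0$ is $\otimes$-pure. Consequently, tensoring it with each of $A$, $B$, $C$ yields three short exact sequences, which I arrange as the three columns of a commutative $3\times 3$ diagram. The middle and bottom rows of this diagram, namely
$$0\to Y\otimes A\to Y\otimes B\to Y\otimes C\to 0\quad\text{and}\quad 0\to Z\otimes A\to Z\otimes B\to Z\otimes C\to 0,$$
are short exact because $Y$ and $Z$ are flat. The morphisms in the top row are induced by the universal property of the kernels in each column, so commutativity of the diagram is automatic from bifunctoriality of $\otimes$.

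Now I would invoke the nine lemma for exact categories: in any weakly idempotent complete exact category, if the three columns and the middle and bottom rows of a $3\times 3$ diagram are short exact, then the top row is short exact as well (see e.g.\ B\"{u}hler, \emph{Exact Categories}, Corollary 3.6). Applying this conclusion here gives that
$$0\to X\otimes A\to X\otimes B\to X\otimes C\to 0$$
is short exact. Since the ambient short exact sequence $0\to A\to B\to C\to 0$ was arbitrary, $X$ is flat.

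The only real subtlety is the invocation of the nine lemma; unlike the abelian case, one must verify it at the level of exact categories, and this is precisely where weak idempotent completeness of $\mathpzc{E}$ is used (both here and implicitly in Example \ref{example:flatpure}). No other step poses a genuine obstacle.
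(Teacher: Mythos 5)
Your proposal is correct and is essentially the paper's own proof: the paper builds the same $3\times 3$ diagram (with your rows and columns transposed), uses Example \ref{example:flatpure} to get $\otimes$-purity of $0\to X\to Y\to Z\to 0$ from flatness of $Z$, uses flatness of $Y$ and $Z$ for the remaining two lines, and concludes by the $3\times 3$ lemma.
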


\begin{proof}
Let $0\rightarrow A\rightarrow B\rightarrow C\rightarrow 0$ be an exact sequence. Consider the diagram
\begin{displaymath}
\xymatrix{
 & 0\ar[d] & 0\ar[d] & 0\ar[d] &\\
 0\ar[r] & A\otimes X\ar[d]\ar[r] & A\otimes Y\ar[d]\ar[r] & A\otimes Z\ar[d]\ar[r] & 0\\
 0\ar[r] & B\otimes X\ar[d]\ar[r] & B\otimes Y\ar[d]\ar[r] & B\otimes Z\ar[d]\ar[r] & 0\\
 0\ar[r] & C\otimes X\ar[d]\ar[r] & C\otimes Y\ar[d]\ar[r] & C\otimes Z\ar[d]\ar[r] & 0\\
 & 0 & 0 & 0 & 
}
\end{displaymath}
Since $Y$ and $Z$ are flat the second and third columns are exact. Since $Z$ is flat the rows are short exact sequences by Example \ref{example:flatpure}. Thus by the $3\times 3$ lemma the first column is short exact. 
\end{proof}

\begin{prop}\label{prop:pushoutprodpure}
Let  $f:U\rightarrow V$ and $g:X\rightarrow Y$ be $\otimes$-pure monomorphisms with respective cokernels $C$ and $D$. Then $U\otimes Y\oplus_{U\otimes X}V\otimes X\rightarrow V\otimes Y$ is an $\otimes$-pure monomorphism with cokernel $C\otimes D$.
\end{prop}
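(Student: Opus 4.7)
The plan is to exploit the $3\times 3$ diagram obtained from the two defining short exact sequences $0\to U\to V\to C\to 0$ and $0\to X\to Y\to D\to 0$. Tensoring the first with each of $X$, $Y$, $D$ produces three short exact sequences (which form the columns), and tensoring the second with each of $U$, $V$, $C$ produces three short exact sequences (the rows), all by $\otimes$-purity of $f$ and $g$. The nine objects $A\otimes B$ for $A\in\{U,V,C\}$ and $B\in\{X,Y,D\}$ assemble into a commuting $3\times 3$ grid whose rows and columns are all short exact.

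First I would construct the pushout $P := U\otimes Y\oplus_{U\otimes X} V\otimes X$. Since $U\otimes X\to V\otimes X$ is an admissible monomorphism (it is the leftmost column), this pushout exists in $\mathpzc{E}$, and by standard properties of pushouts of admissible monomorphisms it fits into two short exact sequences
\begin{displaymath}
0\to U\otimes Y\to P\to C\otimes X\to 0 \quad\text{and}\quad 0\to V\otimes X\to P\to U\otimes D\to 0,
\end{displaymath}
with cokernels inherited respectively from the first column and first row. The canonical map $P\to V\otimes Y$ then fits into the commutative diagram
\begin{displaymath}
\xymatrix{
0\ar[r] & U\otimes Y\ar[r]\ar@{=}[d] & P\ar[r]\ar[d] & C\otimes X\ar[r]\ar[d]^{C\otimes g} & 0\\
0\ar[r] & U\otimes Y\ar[r] & V\otimes Y\ar[r] & C\otimes Y\ar[r] & 0
}
\end{displaymath}
whose right vertical $C\otimes g$ is an admissible monomorphism with cokernel $C\otimes D$ (by $\otimes$-purity of $g$ tensored with $C$). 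A standard $3\times 3$-lemma argument for exact categories, namely that a morphism of short exact sequences whose left vertical is the identity and whose right vertical is an admissible monomorphism forces the middle vertical to be an admissible monomorphism with the same cokernel, then shows $P\to V\otimes Y$ is an admissible monomorphism with cokernel $C\otimes D$.

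For $\otimes$-purity of this sequence I would tensor the entire $3\times 3$ diagram with an arbitrary $W$. Associativity of $\otimes$ immediately implies the tensored diagram still has all rows and columns short exact: $\otimes$-purity of $f$ (respectively $g$) tensored with any fixed object is automatic, since tensoring $f\otimes Z$ with $W'$ equals tensoring $f$ with $Z\otimes W'$. Repeating the previous construction inside the $W$-tensored diagram produces a pushout $P' := (W\otimes U\otimes Y)\oplus_{W\otimes U\otimes X}(W\otimes V\otimes X)$ together with a short exact sequence $0\to P'\to W\otimes V\otimes Y\to W\otimes C\otimes D\to 0$. A mutual-universal-property argument identifies $P'$ with $W\otimes P$: the map $P'\to W\otimes P$ comes from the universal property of $P'$ as a pushout applied to the tensored structure maps $W\otimes V\otimes X\to W\otimes P$ and $W\otimes U\otimes Y\to W\otimes P$, while conversely $W\otimes P\to P'$ arises because the composite $W\otimes P\to W\otimes V\otimes Y\to W\otimes C\otimes D$ vanishes and $P'$ is the kernel of $W\otimes V\otimes Y\to W\otimes C\otimes D$. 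Uniqueness in these universal properties makes the two maps mutual inverses.

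The main obstacle is the identification $P'\cong W\otimes P$ in the last step; everything else is an assembly of standard facts from exact category theory. I expect this identification to go through cleanly because both objects have the same universal characterisation once the tensored $3\times 3$ diagram is in place, but a careful diagram chase is needed to avoid circularity between purity of $0\to V\otimes X\to P\to U\otimes D\to 0$ and purity of the final sequence.
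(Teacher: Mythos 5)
Your argument is a genuinely self-contained route: the paper itself gives no proof, deferring to the proof of \cite{vst2012exact} Theorem 8.11, so an explicit $3\times 3$-diagram argument is welcome. The first half is fine: all nine rows and columns of the grid are exact by purity of $f$ and $g$, the pushout $P$ exists because $U\otimes X\to V\otimes X$ is an admissible monomorphism, the two short exact sequences through $P$ come from the bicartesian square (\cite{Buehler} Proposition 2.12), and the short-five/$3\times3$ lemma applied to the morphism of extensions with identity on the left and the admissible monomorphism $C\otimes g$ on the right gives that $P\to V\otimes Y$ is an admissible monomorphism with cokernel $C\otimes D$.

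The gap is exactly where you suspected it, and your proposed repair does not close it. Your mutual-universal-property argument produces $\alpha\colon P'\to W\otimes P$ (pushout universal property) and $\beta\colon W\otimes P\to P'$ (kernel universal property), and the composite $\beta\circ\alpha$ is indeed forced to be $\mathrm{id}_{P'}$ because it commutes with the pushout structure maps. But $W\otimes P$ carries no universal property of its own, so the only handle on $\alpha\circ\beta$ is the identity $(W\otimes\iota_{P})\circ\alpha\circ\beta=(W\otimes\iota_{P})$, where $\iota_{P}\colon P\to V\otimes Y$; concluding $\alpha\circ\beta=\mathrm{id}$ from this requires $W\otimes\iota_{P}$ to be a monomorphism, which is essentially the statement being proved. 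What you have actually shown is only that $P'$ is a retract of $W\otimes P$, and the complementary summand $\ker\beta$ need not vanish in a bare monoidal exact category. The correct fix is to use that the monoidal structure is \emph{closed} (as it is in every application the paper makes of this proposition, e.g.\ Theorem \ref{exactmonoidal}): then $W\otimes(-)$ is a left adjoint, hence preserves the pushout and the cokernel presentation $0\to U\otimes X\to (U\otimes Y)\oplus(V\otimes X)\to P\to 0$, giving $W\otimes P\cong P'$ on the nose; the exactness of $0\to P'\to W\otimes V\otimes Y\to W\otimes C\otimes D\to 0$ established inside the tensored grid then finishes the proof. You should either add closedness as a hypothesis or replace the mutual-inverse step with this colimit-preservation argument.
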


\begin{proof}
This statement is essentially contained in the proof of \cite{vst2012exact} Theorem 8.11,
\end{proof}

 \subsubsection{Smallness Conditions and Exactness of Colimits}
 
 Here we introduce some notation and conventions for smallness of objects in categories. We also recall some terminology for smallness of objects exactness of certain colimits in exact categories from \cite{kelly2016homotopy}.

 \begin{defn}
Suppose that $\mathpzc{E}$ has $(\mathcal{I};\mathpzc{S})$-colimits. We say that $(\mathcal{I};\mathpzc{S})$-colimits are exact in $\mathpzc{E}$ if for any functor
$F\in \mathpzc{Fun}_{\mathpzc{S}}(\mathcal{I};\mathrm{Ch}(\mathpzc{E}))$
such that $F(i)$ acyclic for any object $i$ in $\mathcal{I}$, the colimit $\textrm{lim}_{\rightarrow_{\mathcal{I}}}F(i)$
is acyclic. Similarly one defines exactness of $(\mathcal{I};\mathpzc{S})^{cocont}$-colimits, $(\mathcal{I};\mathpzc{S})$-limits, and $(\mathcal{I};\mathpzc{S})^{cont}$-limits.
\end{defn}

\begin{defn}[\cite{kelly2016homotopy} Definiton 2.6.96]\label{defsmallnesscond}
Let $\mathpzc{E}$ be a category, $\mathcal{S}$ a class of morphisms in $\mathpzc{E}$, and $\kappa$ a cardinal. An object $E$ of $\mathpzc{E}$ is said to be
\begin{enumerate}
\item
$(\kappa,\mathcal{S})$-\textit{small} if the canonical map 
$$\varinjlim_{\beta\in\lambda}\textrm{Hom}(E,F_{\beta})\rightarrow\textrm{Hom}(E,\varinjlim_{\beta\in\mathcal{\lambda}}F_{\beta})$$ is an isomorphism for any cardinal $\lambda$ with $cofin(\lambda)\ge\kappa$ and any $\lambda$-indexed transfinite sequence where $F_{i}\rightarrow F_{i+1}$ is in $\mathcal{S}$.
\item
$\mathcal{S}$-\textit{small} if it is $(\kappa,\mathcal{S})$-\textit{small} for some cardinal $\kappa$
\item
$(\kappa,\mathcal{S})$-\textit{compact} if the canonical map 
$$\varinjlim_{\beta\in\lambda}\textrm{Hom}(E,F_{\beta})\rightarrow\textrm{Hom}(E,\varinjlim_{\beta\in\mathcal{\lambda}}F_{\beta})$$ 
 is an isomorphism for any regular cardinal $\lambda\ge\kappa$ and any $\lambda$-indexed transfinite sequence where $F_{i}\rightarrow F_{i+1}$ is in $\mathcal{S}$.
\item
$\mathcal{S}$-\textit{compact} if it is $(\kappa,\mathcal{S})$-\textit{compact} for some cardinal $\kappa$
\item
$(\kappa,\mathcal{S})$-\textit{presented} if the natural map 
$$\varinjlim_{i\in\mathcal{I}}\textrm{Hom}(E,F_{i})\rightarrow\textrm{Hom}(E,\varinjlim_{i\in\mathcal{I}}F_{i})$$
 is an isomorphism for any $\lambda$-filtered inductive system $F:\mathcal{I}\rightarrow\mathpzc{E}$ whose colimit exists where $\lambda\ge\kappa$ is regular, and such that $F(\alpha)\in\mathcal{S}$ for any morphism $\alpha$ in $\mathcal{I}$.
 \item
 $\mathcal{S}$-\textit{presented} if it is $(\kappa,\mathcal{S})$-presented for some cardinal $\kappa$.
 \item
 $\mathcal{S}$-\textit{tiny} if it is $(0,\mathcal{S})$-presented, where $0$ is the first ordinal.
 \item
 \textbf{tiny} if it is $\mathcal{S}$-\textit{tiny} for $\mathcal{S}=\mathpzc{Mor}(\mathpzc{E})$.
\end{enumerate}
\end{defn}

\begin{defn}
Given a set of maps $I$ in a category $\mathpzc{E}$, we denoted by $\mathrm{cell}(I)$ the class of morphisms obtained as transfinite compositions of pushouts of elements of $I$.
\end{defn}

\begin{defn}[\cite{kelly2016homotopy} Definition 2.6.96]\label{weaklyelementary}
Let $\mathpzc{E}$ be an exact category and $\mathpzc{S}$ a collection of morphisms in $\mathpzc{E}$. $\mathpzc{E}$ is said to be
\begin{enumerate}
\item
\textit{weakly} $(\lambda;\mathcal{S})$-\textit{elementary} for an ordinal $\lambda$ if $\mathpzc{E}$ has $(\lambda;\mathpzc{S})^{cocont}$-colimits and $(\lambda;\mathpzc{S})^{cocont}$-colimits are exact. 
\item
\textit{weakly }$\mathpzc{S}$-\textit{elementary} if for any ordinal $\lambda$ $\mathpzc{E}$ is weakly $(\lambda;\mathcal{S})$-elementary. 
\item
\textit{weakly }$\mathbf{AdMon}$-\textit{elementary} if it is weakly $\mathpzc{S}$-elementary for the class $\mathpzc{S}=\mathbf{AdMon}$ of admissible monomorphisms.
\item
\textit{weakly elementary} if it is weakly $\mathpzc{S}$-elementary for $\mathpzc{S}=\mathpzc{Mor}(\mathpzc{E})$.
\end{enumerate}
\end{defn}

We introduce one more definition. For a class of objects $\mathpzc{A}$, let $\mathbf{AdMon}_{\mathpzc{A}}$ denote the class of admissible monomorphisms with cokernel in $\mathpzc{A}$. In particular, the condition of being weakly $\mathbf{AdMon}_{\mathpzc{A}}$-elementary will appear frequently. The same condition appears in forthcoming work of Gillespie \cite{Gillespiebook} wherein a class $\mathpzc{A}$ such that $\mathpzc{E}$ is weakly $\mathbf{AdMon}_{\mathpzc{A}}$-elementary in the terminology of the present paper, is called an \textit{efficient} class in loc. cit. This in turn is a riff on \cite{saorin2011exact}, where an exact category $\mathpzc{E}$ is called \textit{efficient} if it is $\mathbf{AdMon}$-elementary in the terminology of the presnet paper.

%

The important relationship between smallness of objects and exactness of colimits occurs when there are enough projectives of a certain size.

\begin{prop}[\cite{kelly2016homotopy} Proposition 2.6.101]\label{elementinduct} 
Let $\mathpzc{E}$ be a complete and cocomplete exact category, $\mathcal{I}$ a  filtered category, and 
$$0\rightarrow F\rightarrow G\rightarrow H\rightarrow 0$$
a null sequence of functors $\mathcal{I}\rightarrow\mathpzc{E}$ such that for each $i\in\mathcal{I}$,
$$0\rightarrow F(i)\rightarrow G(i)\rightarrow H(i)\rightarrow 0$$
is exact. Suppose there is a class $\mathcal{P}$ of projective generators of $\mathpzc{E}$ such that the maps
$$\varinjlim_{i\in\mathcal{I}}\textrm{Hom}(P,F(i))\rightarrow\textrm{Hom}(P,\varinjlim_{i\in\mathcal{I}}F(i))$$
$$\varinjlim_{i\in\mathcal{I}}\textrm{Hom}(P,G(i))\rightarrow\textrm{Hom}(P,\varinjlim_{i\in\mathcal{I}}G(i))$$
$$\varinjlim_{i\in\mathcal{I}}\textrm{Hom}(P,H(i))\rightarrow\textrm{Hom}(P,\varinjlim_{i\in\mathcal{I}}H(i))$$
are isomorphisms for any $P\in\mathcal{P}$. Then the sequence 
$$0\rightarrow \varinjlim_{i\in\mathcal{I}}F(i)\rightarrow \varinjlim_{i\in\mathcal{I}}G(i)\rightarrow \varinjlim_{i\in\mathcal{I}}H(i)\rightarrow 0$$
is exact.
\end{prop}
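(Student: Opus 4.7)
The plan is to use the class $\mathcal{P}$ as a collection of test objects, transferring the exactness problem in $\mathpzc{E}$ to the much easier question of exactness of filtered colimits in $\mathpzc{Ab}$. For each $P\in\mathcal{P}$, since $P$ is projective, applying $\mathrm{Hom}(P,-)$ pointwise to the given SES of functors produces, for every $i\in\mathcal{I}$, a short exact sequence of abelian groups
$$0\to\mathrm{Hom}(P,F(i))\to\mathrm{Hom}(P,G(i))\to\mathrm{Hom}(P,H(i))\to 0.$$
Filtered colimits in $\mathpzc{Ab}$ are exact, so taking $\varinjlim_{i}$ preserves short exactness. The three hypothesised isomorphisms then identify the resulting sequence with
$$0\to\mathrm{Hom}(P,\varinjlim F)\to\mathrm{Hom}(P,\varinjlim G)\to\mathrm{Hom}(P,\varinjlim H)\to 0.$$

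Next I would transfer this Hom-level exactness back to $\mathpzc{E}$. Denote by $\alpha$ and $\beta$ the colimits of $F\to G$ and $G\to H$. For the admissible epimorphism, use that $\mathcal{P}$ generates to pick an admissible epimorphism $\pi\colon \bigoplus_{j}P_{j}\to\varinjlim H$ with $P_{j}\in\mathcal{P}$. Surjectivity of each $\mathrm{Hom}(P_{j},\beta)$ lifts the components $\pi_{j}$ to $\tilde\pi_{j}\colon P_{j}\to\varinjlim G$, assembling to $\tilde\pi\colon \bigoplus_{j}P_{j}\to\varinjlim G$ with $\beta\circ\tilde\pi=\pi$. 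Since $\pi$ is an admissible epimorphism, the obscure axiom (available under weak idempotent completeness) forces $\beta$ to be an admissible epimorphism. For the kernel part, set $K\defeq\ker(\beta)$; the identity $\beta\circ\alpha=\varinjlim(\beta_{i}\alpha_{i})=0$ produces a canonical factorisation $\varinjlim F\to K$. Combining the previous step with $\mathrm{Hom}(P,K)=\ker\mathrm{Hom}(P,\beta)$ shows this map is an isomorphism on $\mathrm{Hom}(P,-)$ for every $P\in\mathcal{P}$. A standard argument now concludes: injectivity on $\mathrm{Hom}(P,-)$ together with the generation property implies the map is monic, surjectivity combined with the obscure axiom makes it an admissible epimorphism, and an admissible epimorphism that is also monic is an isomorphism. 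Thus $\varinjlim F\xrightarrow{\sim} K$ and the colimit sequence is the desired SES.

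The main obstacle is precisely this final translation from Hom-level exactness to honest exactness in $\mathpzc{E}$: in an abelian category it is tautological, but in a general exact category neither ``monomorphism'' nor ``epimorphism'' coincides with its admissible counterpart. One must lean on two complementary ingredients, namely (a) the generating property of $\mathcal{P}$, to produce enough projective test maps to probe both $\varinjlim H$ and $K$, and (b) weak idempotent completeness via the obscure axiom, both to extract the admissible epimorphism $\beta$ from a factorisation and to promote the Hom-level isomorphism $\varinjlim F\to K$ to an honest isomorphism. Note also that the hypotheses are stated only for the specific diagrams $F,G,H$ rather than for arbitrary filtered colimits—this is exactly the flexibility needed when $\mathcal{P}$ is only a class of projective generators, without any assumption of compactness or presentability that would make $\mathrm{Hom}(P,-)$ commute with all filtered colimits a priori.
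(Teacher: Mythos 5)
Your proof is correct and is essentially the standard argument that the cited reference uses: test exactness against the projective generators, use exactness of filtered colimits of abelian groups together with the three hypothesised isomorphisms, and transfer back to $\mathpzc{E}$ via the generating property and the obscure axiom (which applies here since completeness of $\mathpzc{E}$ supplies the needed kernels). The paper itself only imports this statement from \cite{kelly2016homotopy} without reproducing the proof, but your route is the intended one and all the steps check out.
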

 
 \section{Accessible Exact Categories}\label{sec:accexact}
 
In this chapter we consider exact categories whose underlying category is accessible. Recall that for a cardinal $\lambda$, a category $\mathpzc{C}$ is $\lambda$-\textit{accessible} if there is a set $\mathcal{G}$ of $\lambda$-presented objects such that every object of $\mathpzc{C}$ can be written as a $\lambda$-filtered colimit of objects of $\mathcal{G}$. If in addition $\mathpzc{C}$ is cocomplete then it is said to be \textit{locally} $\lambda$-\textit{presentable}. Importantly, we will use accessibility to prove that certain classes of objects deconstructible in themselves in the sense of \cite{vst2012exact} Definition 3.9. This will be crucial for constructing cotorsion pairs. Gillespie also studies accessible exact categories in the forthcoming work \cite{Gillespiebook}, in which it is shown that exact model structures on such categories have well-generated homotopy categories.
 
 \subsection{The General Theory}
 
  We begin by recalling some basic facts about locally presentable categories from \cite{adamek1994locally}. Let $\lambda$ be a cardinal.

 \subsubsection{$\lambda$-Pure Morphisms}

 \begin{defn}[\cite{adamek1994locally} Definiton 2.27 (for Part (1))]
 Let $\mathpzc{E}$ be any category
 \begin{enumerate}
 \item
 A morphism $f:A\rightarrow B$ is said to be $\lambda$-\textit{pure} if each commutative square
 \begin{displaymath}
 \xymatrix{
 A'\ar[d]^{f'}\ar[r]^{U} & A\ar[d]^{f}\\
 B'\ar[r]^{v} & B
 }
 \end{displaymath}
 with $A'$ and $B'$ $\lambda$-presented, there is a morphism $\overline{u}:B'\rightarrow A$ such that $u=\overline{u}\circ f'$. 
 \item
  A morphism $f:X\rightarrow Y$ in $\mathpzc{C}$ is said to be a $\lambda$-\textit{pure epimorphism} if for all $\lambda$-presented objects $E$, $\mathrm{Hom}(E,f):\mathrm{Hom}(E,X)\rightarrow\mathrm{Hom}(E,Y)$ is an epimorphism of sets. 
 \end{enumerate}
 \end{defn}
 
   \begin{prop}[\cite{adamek1994locally} Proposition 2.30 (2)]\label{prop:lambdadirected}
Let $\mathpzc{C}$ be a locally $\lambda$-presentable category. A morphism $f:X\rightarrow Y$ in $\mathpzc{C}$ is a $\lambda$-pure monomorphism if it is a $\lambda$-directed colimit in $\mathrm{Mor}(\mathpzc{C})$ of split monomorphisms. 
\end{prop}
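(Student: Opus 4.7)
The plan is to verify the defining lifting property of a $\lambda$-pure monomorphism directly, using the splittings available at each stage together with the interaction between $\lambda$-presented objects and $\lambda$-directed colimits. Write $f\colon X\to Y$ as a $\lambda$-directed colimit in $\mathrm{Mor}(\mathpzc{C})$ of split monomorphisms $f_{i}\colon X_{i}\to Y_{i}$, with colimit cocones $\alpha_{i}\colon X_{i}\to X$ and $\beta_{i}\colon Y_{i}\to Y$ satisfying $f\alpha_{i}=\beta_{i}f_{i}$, and pick for each $i$ an (a priori non-natural) retraction $r_{i}\colon Y_{i}\to X_{i}$ of $f_{i}$. Given a commutative square
\[
\xymatrix{A'\ar[r]^{u}\ar[d]_{f'} & X\ar[d]^{f}\\ B'\ar[r]_{v} & Y}
\]
with $A'$ and $B'$ both $\lambda$-presented, the aim is to produce $\overline{u}\colon B'\to X$ with $\overline{u}f'=u$.

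First I would use $\lambda$-presentability of $A'$ and $B'$, together with $\lambda$-directedness of the indexing diagram, to factor $u=\alpha_{i}u_{i}$ and $v=\beta_{j}v_{j}$ for some $u_{i}\colon A'\to X_{i}$ and $v_{j}\colon B'\to Y_{j}$; passing to a common upper bound we may assume $i=j$. The relevant square at stage $i$ need not commute yet, but the two composites $f_{i}u_{i},\,v_{i}f'\colon A'\to Y_{i}$ become equal after post-composition with $\beta_{i}$, since both map to $fu=vf'$. A second application of $\lambda$-presentability of $A'$ then produces some $\ell\geq i$ in the index category at which, writing $u_{\ell}$ and $v_{\ell}$ for the images of $u_{i}$ and $v_{i}$ under the transition maps, we already have $f_{\ell}u_{\ell}=v_{\ell}f'$ in $Y_{\ell}$.

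Finally the splittings enter: the morphism $\overline{u}:=\alpha_{\ell}\circ r_{\ell}\circ v_{\ell}\colon B'\to X$ satisfies $\overline{u}f'=\alpha_{\ell}r_{\ell}v_{\ell}f'=\alpha_{\ell}r_{\ell}f_{\ell}u_{\ell}=\alpha_{\ell}u_{\ell}=u$, as required, so $f$ is $\lambda$-pure. The main obstacle is purely combinatorial bookkeeping: applying $\lambda$-presentability of $A'$ \emph{twice} — once to factor $u$, once to equalise the two composites at a common stage — together with $\lambda$-presentability of $B'$ to factor $v$, all within a single index. The genuinely structural input is confined to the last step, which only uses the splitting of one $f_{\ell}$ and does not require coherence of the retractions $r_{i}$ with the transition maps.
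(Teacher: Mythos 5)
Your argument is correct and is precisely the standard proof of this fact: the paper itself gives no proof but cites \cite{adamek1994locally}, Proposition 2.30, whose argument is the one you reproduce (factor $u$ and $v$ through a common stage, use $\lambda$-presentability of $A'$ a second time to equalise $f_{i}u_{i}$ and $v_{i}f'$ at some later stage $\ell$, then compose with a retraction of $f_{\ell}$). The only point left implicit is that $f$ is then a \emph{monomorphism}, which follows either from the standard fact that $\lambda$-pure morphisms in $\lambda$-accessible categories are monomorphisms, or from the fact that $\lambda$-directed colimits of monomorphisms in a locally $\lambda$-presentable category are monomorphisms.
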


\begin{thm}[\cite{adamek1994locally} Theorem 2.33]\label{thm:deconstructpres}
Let $\mathcal{K}$ be a $\lambda$-accessible category. There exist arbitrary large regular cardinals $\gamma\ge\lambda$ such that every map $A\rightarrow B$ in $\mathcal{K}$ with $A$ $\gamma$-presentable factors through a $\lambda$-pure monomorphism $\overline{f}:\overline{A}\rightarrow B$ with $\overline{A}$ $\gamma$-presentable.
\end{thm}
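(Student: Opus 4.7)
The plan is to follow the approach of Ad\'{a}mek-Rosick\'{y}: represent $B$ as a canonical $\lambda$-filtered colimit of $\lambda$-presentable objects, extract a $\gamma$-small $\lambda$-directed subsystem through which $f$ factors, and then verify that the resulting map is $\lambda$-pure by exhibiting it as a directed colimit of split monomorphisms in the arrow category. To make this work I first fix $\gamma$ to be a regular cardinal \emph{sharply larger} than $\lambda$, written $\gamma\triangleright\lambda$; this standard strengthening of $\gamma\geq\lambda$ permits the key combinatorial manipulation that every $\gamma$-small subset of a $\lambda$-directed poset admits a $\gamma$-small $\lambda$-directed ``closure''. The existence of arbitrarily large such $\gamma$ is a basic lemma of the theory of accessible categories.

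Next, present $B$ as its canonical $\lambda$-filtered colimit $B=\colim_{i\in I}B_{i}$, with each $B_{i}$ being $\lambda$-presentable. Given $f\colon A\to B$ with $A$ being $\gamma$-presentable, I would organize the maps $P\to A$ from $\lambda$-presentable $P$ into a $\gamma$-small $\lambda$-filtered diagram whose colimit is $A$. For each such $P$, the composite $P\to A\to B$ factors through some $B_{i(P)}$ by $\lambda$-presentability of $P$; invoking $\gamma\triangleright\lambda$, I enlarge the collection of indices $\{i(P)\}$ to a $\gamma$-small $\lambda$-directed subdiagram $J\subseteq I$ closed under the relevant compatibility data. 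Define $\overline{A}:=\colim_{j\in J}B_{j}$; this is $\gamma$-presentable, being a $\gamma$-small $\lambda$-filtered colimit of $\lambda$-presentables, and the compatible factorizations assemble into a factorization $A\to\overline{A}\xrightarrow{\overline{f}}B$ of $f$.

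To show that $\overline{f}$ is a $\lambda$-pure monomorphism, I would appeal to Proposition \ref{prop:lambdadirected}: it suffices to realize $\overline{f}$ as a $\lambda$-directed colimit in $\mathrm{Mor}(\mathcal{K})$ of split monomorphisms. The idea is to consider the system of $\gamma$-small $\lambda$-directed enlargements $J\subseteq J'\subseteq I$ and the comparison maps $\overline{A}\to\colim_{j\in J'}B_{j}$; these can be shown to split by absorbing retractions into further enlargements, again using $\gamma\triangleright\lambda$, and the cofinal family of such $J'$ then exhausts $I$ in the required $\lambda$-directed fashion, presenting $\overline{f}$ as the corresponding colimit.

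The main obstacle is precisely this last step: the combinatorial bookkeeping needed to simultaneously keep every enlargement $\gamma$-small, to choose compatible retractions coherently across a $\lambda$-directed family, and to guarantee cofinality in $I$ is subtle. Sharp inequality is introduced specifically to keep all three requirements tractable at once; without it the naive construction of $J'$ either loses $\gamma$-smallness or fails to support simultaneous splittings. The argument is essentially the one carried out in \cite{adamek1994locally}, and I would cite it rather than reproduce the delicate combinatorics in detail.
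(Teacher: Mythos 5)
The paper does not prove this statement at all: it is imported verbatim from \cite{adamek1994locally} (Theorem 2.33) and used as a black box, so your decision to ultimately cite that reference rather than reproduce its proof is exactly what the paper does, and at that level the two treatments coincide.

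Concerning the sketch you give of the underlying argument: the first two steps are a faithful outline of the Ad\'amek--Rosick\'y proof (choose $\gamma$ sharply larger than $\lambda$, write $B=\colim_{i\in I}B_i$ as a $\lambda$-filtered colimit of $\lambda$-presentables, use the sharp inequality to close the factorisation data for $f$ into a $\gamma$-small $\lambda$-directed subposet $J\subseteq I$, and set $\overline{A}=\colim_{j\in J}B_j$). The purity step is where your proposed mechanism departs from theirs and becomes doubtful. You want to exhibit $\overline{f}$ as a $\lambda$-directed colimit of split monomorphisms with \emph{constant} domain $\overline{A}$, by splitting the comparison maps $\overline{A}\rightarrow\colim_{j\in J'}B_{j}$ for a family of $\gamma$-small enlargements $J'$. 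This is strictly stronger than the $\lambda$-purity you are after: it requires $\overline{A}$ to be a retract of each $\gamma$-presentable stage $B_{J'}$, whereas $\lambda$-purity only provides diagonal fillers against $\lambda$-presentable test objects (indeed, if the $B_{J'}$ are cofinal among $\gamma$-presentable factorisations then such retractions are equivalent to $\gamma$-purity of $\overline{f}$, which is not what the theorem asserts and is not available at presentability rank $\gamma$). Moreover ``absorbing retractions into further enlargements'' does not obviously produce them: enlarging $J'$ does not create a retraction onto the fixed $\overline{A}$, and enlarging $J$ changes $\overline{A}$. The converse direction of Proposition \ref{prop:lambdadirected} really does require the domains of the split monomorphisms to vary along the diagram; the proof in \cite{adamek1994locally} accordingly does not manufacture retractions at all, but verifies the lifting condition defining $\lambda$-purity directly, by an iterative $\gamma$-small closure of $J$ that adjoins fillers for the $\lambda$-presentable test squares. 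Since you explicitly defer to the reference for precisely this step, the proof-by-citation stands, but the mechanism you describe for it would not go through as written.
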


\begin{cor}
Let $\mathpzc{E}$ be a locally $\lambda$-presentable category. There is a cardinal $\gamma\ge\lambda$ such that any object $E$ of $\mathpzc{E}$ can be written as a $\gamma$-filtered colimit $E\cong\colim_{\mathcal{I}}\overline{E}_{i}$ where 
\begin{enumerate}
\item
each $\overline{E}_{i}$ is $\gamma$-presentable,
\item
each map $\overline{E}_{i}\rightarrow E$ is a $\lambda$-pure monomorphism. 
\end{enumerate}
\end{cor}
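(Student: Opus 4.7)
The plan is to combine the factorization property of Theorem \ref{thm:deconstructpres} with a cofinality argument inside the canonical diagram of $\gamma$-presentable objects over $E$. Since $\mathpzc{E}$ is $\lambda$-accessible, Theorem \ref{thm:deconstructpres} yields arbitrarily large regular cardinals $\gamma\ge\lambda$ such that every map from a $\gamma$-presentable object into an object of $\mathpzc{E}$ factors through a $\lambda$-pure monomorphism with $\gamma$-presentable source. Fix such a $\gamma$. Because $\mathpzc{E}$ is locally $\lambda$-presentable it is also locally $\gamma$-presentable, so for any $E\in\mathpzc{E}$ the slice category $\mathcal{I}\defeq\mathpzc{E}_{\gamma}/E$ of morphisms $X\to E$ with $X$ $\gamma$-presentable is $\gamma$-filtered and the tautological cocone exhibits $E$ as $\colim_{\mathcal{I}}X$.

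Next, let $\mathcal{J}\subseteq\mathcal{I}$ be the full subcategory spanned by those $f\colon\overline{X}\to E$ for which $\overline{X}$ is $\gamma$-presentable \emph{and} $f$ is a $\lambda$-pure monomorphism. The goal is to show that the inclusion $\mathcal{J}\hookrightarrow\mathcal{I}$ is cofinal, which via the formula above identifies $E$ with the $\gamma$-filtered colimit $\colim_{\mathcal{J}}\overline{X}$ indexed exactly by the data required in the statement. By the usual criterion, cofinality amounts to showing that for each $(X,f)\in\mathcal{I}$ the slice $(X,f)\downarrow\mathcal{J}$ is nonempty and connected.

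Nonemptyness is immediate from Theorem \ref{thm:deconstructpres}: applied to $f\colon X\to E$ it produces a factorization $X\to\overline{X}\to E$ with $\overline{X}$ $\gamma$-presentable and the second map $\lambda$-pure, which is exactly an object of $(X,f)\downarrow\mathcal{J}$. For connectedness, given two such factorizations $(X,f)\to(\overline{X}_{1},\overline{f}_{1})$ and $(X,f)\to(\overline{X}_{2},\overline{f}_{2})$, use the $\gamma$-filteredness of $\mathcal{I}$ to find a common upper bound $(Y,g)\in\mathcal{I}$ receiving maps from both; then apply Theorem \ref{thm:deconstructpres} to $g\colon Y\to E$ to obtain an object $(\overline{Y},\overline{g})\in\mathcal{J}$ under $(Y,g)$. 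Composing produces arrows $(\overline{X}_{i},\overline{f}_{i})\to(\overline{Y},\overline{g})$ in $\mathcal{J}$ for $i=1,2$, giving the required zigzag.

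The key step that needs care is the connectedness half of cofinality, since nonemptyness is a one-shot application of Theorem \ref{thm:deconstructpres}; connectedness requires a second invocation of that theorem on top of a common $\gamma$-filtered upper bound, and it is here that one uses that $\gamma\ge\lambda$ is regular so that $\mathcal{I}$ is genuinely $\gamma$-filtered. Apart from this point the argument is essentially bookkeeping: Theorem \ref{thm:deconstructpres} allows the same $\gamma$ to simultaneously control both the local $\gamma$-presentability of $\mathpzc{E}$ and the approximation by $\lambda$-pure monomorphisms, and no further hypotheses on $\mathpzc{E}$ are needed.
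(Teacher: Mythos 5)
Your proposal is correct and follows essentially the same route as the paper: write $E$ as the canonical $\gamma$-filtered colimit of $\gamma$-presentable objects over it and apply Theorem~\ref{thm:deconstructpres} to factor each leg through a $\lambda$-pure monomorphism with $\gamma$-presentable domain. The only difference is that you spell out the cofinality/connectedness bookkeeping showing the factored objects again form a $\gamma$-filtered diagram with colimit $E$, which the paper leaves implicit.
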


\begin{proof}
$\mathpzc{E}$ is also $\kappa$-presentable folr any $\kappa\ge\lambda$. By Theorem \ref{thm:deconstructpres} there are arbitrarlily large regular cardinals $\gamma$ such that whenever $D\rightarrow E$ is a map with $D$ $\gamma$-presentable, then $D\rightarrow E$ factors as $D\rightarrow\overline{D}\rightarrow E$ where $\overline{D}$ is $\gamma$-presentable and $\overline{D}\rightarrow E $ is pure. Now $\mathpzc{E}$ is also $\gamma$-presentable. Thus we may write $E\cong\colim_{\mathcal{I}}E_{i}$ as a $\gamma$-filtered colimit with each $E_{i}$ $\gamma$-presentable. Each $E_{i}\rightarrow E$ factors through a $\lambda$-pure monomorphism $\overline{E}_{i}\rightarrow E$ with $\overline{E}_{i}$ $\gamma$-presentable.
\end{proof}

\subsection{The $\lambda$-Pure Exact Structure}

Let $\mathpzc{E}$ be a locally $\lambda$-accessible additive category for some regular cardinal $\lambda$. Following \cite{krause2012approximations}, say that a null sequence
 $$0\rightarrow X\rightarrow Y\rightarrow Z\rightarrow 0$$
 in $\mathpzc{E}$ is $\lambda$-\textit{pure exact} if for any $\lambda$-presentable object $H$, the sequence of abelian groups
  $$0\rightarrow Hom(H,X)\rightarrow Hom(H,Y)\rightarrow Hom(H,Z)\rightarrow 0$$
  is exact. 

In \cite{positselski2023locally} Proposition 4.4 the following is shown, which improves upon Proposition \ref{prop:lambdadirected} in the case of additive categories.

\begin{prop}[\cite{positselski2023locally} Proposition 4.4]
Let $\mathpzc{E}$ be a $\lambda$-accessible additive category. A morphism $f:X\rightarrow Y$ in $\mathpzc{E}$ is  a $\lambda$-pure monomorphism if and only if it is a $\lambda$-directed colimit in $\mathrm{Mor}(\mathpzc{E})$ of split monomorphisms. 
\end{prop}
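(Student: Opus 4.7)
The plan is to prove the two implications separately, with the nontrivial content residing in the ``only if'' direction where additivity is essential.

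For the ``if'' direction, first note that any split monomorphism $s$ with retraction $p$ is $\kappa$-pure for every regular cardinal $\kappa$: given a commutative square with $s$ on the right, composing the bottom map with $p$ supplies the required diagonal. Second, the class of $\lambda$-pure monomorphisms is closed under $\lambda$-directed colimits in $\mathrm{Mor}(\mathpzc{E})$: if $f = \colim_{i\in I} f_i$ with $I$ $\lambda$-directed and each $f_i$ $\lambda$-pure, then a test square over $f$ with $\lambda$-presentable source and target has both legs factoring through some $f_i$ by $\lambda$-presentability, and the $\lambda$-purity of $f_i$ furnishes the diagonal fill.

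For the ``only if'' direction, let $f:X\to Y$ be a $\lambda$-pure monomorphism. Using $\lambda$-accessibility of $\mathrm{Mor}(\mathpzc{E})$ (whose $\lambda$-presentable objects are precisely the arrows between $\lambda$-presentable objects of $\mathpzc{E}$), write $f = \colim_{i\in I} f_i$ as a $\lambda$-directed colimit of arrows $f_i:A_i\to B_i$ with $A_i, B_i$ $\lambda$-presentable, and denote by $a_i:A_i\to X$, $b_i:B_i\to Y$ the canonical maps. Applying $\lambda$-purity of $f$ to the square with vertical arrows $f_i, f$ and horizontal arrows $a_i, b_i$ produces $r_i:B_i\to X$ with $r_i f_i = a_i$. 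Since $B_i$ is $\lambda$-presentable and $X = \colim_j A_j$, the map $r_i$ factors through some $A_{j(i)}$ as $\tilde r_i:B_i\to A_{j(i)}$; enlarging $j(i)$ if necessary, one may further arrange $\tilde r_i f_i = \alpha_{i,j(i)}$ and $f_{j(i)}\tilde r_i = \beta_{i,j(i)}$, where $\alpha$ and $\beta$ denote the transition maps on the $A$ and $B$ sides respectively.

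I would then use this data to construct a cofinal sub-diagram of the comma category of $\lambda$-presentable arrows over $f$ consisting of objects whose underlying arrow is a split monomorphism. Concretely, for each $i$ define $g_i':A_{j(i)}\to A_{j(i)}\oplus B_{j(i)}$ by $a\mapsto(a, f_{j(i)}(a))$; this is a split monomorphism with retraction the first projection. Equipping $g_i'$ with $u_i' = a_{j(i)}$ and $v_i' = b_{j(i)}\circ\pi_2$ yields an object of the comma category, and the pair $\phi_A = \alpha_{i,j(i)}$, $\phi_B = (\tilde r_i, \beta_{i,j(i)})$ furnishes a morphism in the comma category from $(f_i, a_i, b_i)$ to $(g_i', u_i', v_i')$. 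The main obstacle is then to verify that the collection of such split-mono objects is cofinal in the comma category and remains $\lambda$-directed, which requires arranging the choices of $j(i)$ and $\tilde r_i$ to be compatible under transitions; this can be done by a standard colimit-elimination argument using $\lambda$-presentability of the $A_i$ and $B_i$. Once cofinality is established, the colimit over this sub-diagram exhibits $f$ as a $\lambda$-directed colimit of split monomorphisms in $\mathrm{Mor}(\mathpzc{E})$. Additivity is indispensable here: the construction of $g_i'$ as a direct sum, and of $\phi_B$ as a pair of components, has no non-additive analogue, which explains why this direction fails in the purely $\lambda$-accessible setting of Adam\'ek--Rosick\'y without additive structure.
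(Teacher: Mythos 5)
The paper itself gives no proof of this statement---it is quoted directly from \cite{positselski2023locally}---so your argument can only be measured against the standard one, which it essentially reproduces: the canonical $\lambda$-filtered diagram of $\lambda$-presentable arrows over $f$, partial retractions $r_i$ supplied by purity, and the additive ``graph'' trick $A_{j(i)}\to A_{j(i)}\oplus B_{j(i)}$ to manufacture split monomorphisms receiving the $f_i$. The skeleton is right, and the ``if'' direction is fine. Two points in the ``only if'' direction need repair. First, the claim that after enlarging $j(i)$ one may arrange $f_{j(i)}\tilde r_i=\beta_{i,j(i)}$ is false: composing with $b_{j(i)}$, this would force $f\circ r_i=b_i$, which is the second triangle of the lifting square and is precisely what $\lambda$-purity does \emph{not} provide. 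Fortunately your verification that $(\alpha_{i,j(i)},(\tilde r_i,\beta_{i,j(i)}))$ is a morphism of the comma category uses only the first identity $\tilde r_i f_i=\alpha_{i,j(i)}$---which is achievable, since both sides are equalized by $a_{j(i)}$ into $X=\colim_j A_j$ and $A_i$ is $\lambda$-presentable---so the unachievable condition should simply be deleted. Second, the ``main obstacle'' you defer, namely arranging the choices of $j(i)$ and $\tilde r_i$ compatibly under transitions, is not actually an obstacle: no compatibility is needed. Once every object of the $\lambda$-filtered canonical diagram admits \emph{some} morphism into the full subcategory $\mathcal{S}$ of objects whose underlying arrow is a split monomorphism (between $\lambda$-presentables, noting $A_{j}\oplus B_{j}$ is again $\lambda$-presentable), it is formal that $\mathcal{S}$ is $\lambda$-filtered and final---cocones and coequalizing morphisms for $<\lambda$-sized data are found in the ambient comma category and then pushed into $\mathcal{S}$---hence $\colim_{\mathcal{S}}=f$, and a $\lambda$-filtered category admits a cofinal $\lambda$-directed poset. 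With those two adjustments your sketch becomes a complete proof along the intended lines.
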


As a consequence it is explained in \cite{positselski2023locally} Section 4 that, in much the same way as for locally presentable exact categories (\cite{krause2012approximations}, \cite{gillespie2016derived}, \cite{kelly2016homotopy} Proposition 3.3.53), the $\lambda$-pure exact structure does define an exact structure on $\mathpzc{E}$. We also have the following.
%
%

\begin{prop}[\cite{estrada2017pure}, Proposition 2.9]\label{prop:colimitslambdapure}
 Let $P:\mathcal{I}\rightarrow\mathpzc{E}$ be a $\lambda$-directed system in a $\lambda$-accessible additive category $\mathpzc{E}$. Then the map $\bigoplus_{i\in\mathcal{I}}P_{i}\rightarrow\textrm{colim}_{\mathcal{I}}P_{i}$ is a $\lambda$-pure epimorphism.
  \end{prop}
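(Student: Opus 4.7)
The plan is to verify the defining property of a $\lambda$-pure epimorphism directly. Let $E$ be any $\lambda$-presentable object of $\mathpzc{E}$, and let $g \colon E \to \colim_{\mathcal{I}} P_i$ be an arbitrary morphism. I need to exhibit a lift of $g$ along the canonical map $\pi \colon \bigoplus_{i\in\mathcal{I}} P_i \to \colim_{\mathcal{I}} P_i$.

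First, since $E$ is $\lambda$-presentable and $\mathcal{I}$ is a $\lambda$-directed (hence $\lambda$-filtered) index category, the universal property of $\lambda$-presentability applied to $\mathrm{Hom}(E,-)$ and the colimit cocone gives a factorization $g = \varphi_{i_0} \circ \tilde{g}$ for some $i_0 \in \mathcal{I}$, where $\tilde{g} \colon E \to P_{i_0}$ and $\varphi_{i_0} \colon P_{i_0} \to \colim_{\mathcal{I}} P_i$ is the structure map into the colimit. Then I would form the composite $\hat{g} := \iota_{i_0} \circ \tilde{g} \colon E \to \bigoplus_{i\in\mathcal{I}} P_i$, where $\iota_{i_0}$ is the canonical coproduct inclusion of the $i_0$-summand.

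To conclude, I would check that $\pi \circ \hat{g} = g$. This comes down to observing that, by the universal property of the coproduct, the map $\pi \colon \bigoplus_{i\in\mathcal{I}} P_i \to \colim_{\mathcal{I}} P_i$ is uniquely characterized by the requirement $\pi \circ \iota_i = \varphi_i$ for each $i$. In particular $\pi \circ \iota_{i_0} = \varphi_{i_0}$, so $\pi \circ \hat{g} = \varphi_{i_0} \circ \tilde{g} = g$, as required. Since $E$ and $g$ were arbitrary, $\mathrm{Hom}(E,\pi)$ is a surjection of sets for every $\lambda$-presentable $E$, which is precisely the defining property of a $\lambda$-pure epimorphism.

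There is no real obstacle here beyond unwinding the definitions; the entire content of the argument is the factorization of maps out of $\lambda$-presentable objects through some stage of a $\lambda$-directed diagram. The only mild tacit assumption is that the coproduct $\bigoplus_{i\in\mathcal{I}} P_i$ exists in $\mathpzc{E}$, which is implicit in the statement of the proposition.
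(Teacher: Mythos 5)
Your proof is correct and is exactly the standard argument: the paper itself gives no proof, simply citing \cite{estrada2017pure}, and the argument there is the same factorization of a map out of a $\lambda$-presentable object through a stage of the $\lambda$-directed diagram followed by the coproduct inclusion. Your remark about the tacit existence of the coproduct is also apt, since the paper only assumes arbitrary direct sums in the subsequent corollary.
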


In particular we have the following.

  \begin{cor}
  Let $\mathpzc{E}$ be a $\lambda$-accessible additive category. The collection of all $\lambda$-pure exact sequences defines an exact structure on $\mathpzc{E}$. If $\mathpzc{E}$ has aribtrary direct sums then the $\lambda$-pure exact structure has enough projectives.
  \end{cor}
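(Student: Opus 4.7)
The plan is to dispatch the two claims independently. For the first claim—that $\lambda$-pure exact sequences form an exact structure on $\mathpzc{E}$—I would simply invoke the characterization of $\lambda$-pure monomorphisms from \cite{positselski2023locally} recalled just above: in a $\lambda$-accessible additive category, these are precisely the $\lambda$-directed colimits in $\mathrm{Mor}(\mathpzc{E})$ of split monomorphisms. With this presentation in hand, Quillen's exact-category axioms (composition, pushout along arbitrary morphisms, duals for admissible epis) follow by transporting the standard verifications across the $\lambda$-filtered colimit, using that such colimits commute with finite limits in the arrow category. Rather than redo the bookkeeping, I would cite Section~4 of \cite{positselski2023locally}, which carries this out and is the reference already signalled in the discussion preceding the corollary.

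The substantive content is the existence of enough projectives under the additional hypothesis of arbitrary direct sums. Fix an object $E$ of $\mathpzc{E}$. Since $\mathpzc{E}$ is $\lambda$-accessible, I can write
$$E \;\cong\; \colim_{i \in \mathcal{I}} P_i$$
as a $\lambda$-directed colimit of $\lambda$-presentable objects $P_i$. Forming $\bigoplus_{i \in \mathcal{I}} P_i$ (which exists by assumption) and invoking Proposition~\ref{prop:colimitslambdapure} gives that the canonical comparison map
$$\bigoplus_{i \in \mathcal{I}} P_i \;\longrightarrow\; \colim_{i \in \mathcal{I}} P_i \;=\; E$$
is a $\lambda$-pure epimorphism. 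This provides the desired covering of $E$, provided the source is $\lambda$-pure projective.

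It remains to verify that $\bigoplus_{i \in \mathcal{I}} P_i$ is projective in the $\lambda$-pure exact structure, and this is formal. Each summand $P_i$, being $\lambda$-presentable, has the property that $\mathrm{Hom}(P_i,-)$ sends $\lambda$-pure epimorphisms to surjections of abelian groups—this is the very definition of $\lambda$-pure epi recalled above. Since
$$\mathrm{Hom}\!\left(\bigoplus_{i \in \mathcal{I}} P_i,\,-\right) \;\cong\; \prod_{i \in \mathcal{I}} \mathrm{Hom}(P_i,\,-),$$
and products of surjections of abelian groups are surjections, $\bigoplus_{i} P_i$ is $\lambda$-pure projective. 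The hard part, such as it is, lies entirely in the first claim—namely confirming the exact-structure axioms in the weaker setting of a $\lambda$-accessible (rather than locally $\lambda$-presentable) additive category—but the cited work of Positselski already addresses this; the projectivity argument is then a one-line consequence of the definitions together with Proposition~\ref{prop:colimitslambdapure}.
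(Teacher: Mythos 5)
Your proposal is correct and follows the same route the paper intends: the exact-structure claim is delegated to \cite{positselski2023locally} Section 4 (exactly as the paper does in the preceding paragraph), and enough projectives follows from Proposition \ref{prop:colimitslambdapure} together with the observation that a direct sum of $\lambda$-presentable objects is $\lambda$-pure projective since $\mathrm{Hom}(\bigoplus_i P_i,-)\cong\prod_i\mathrm{Hom}(P_i,-)$ preserves surjectivity. The paper states the corollary without proof as an immediate consequence of these two ingredients, so your write-up supplies precisely the argument being left implicit.
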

  
  The class of $\lambda$-pure monomorphisms is denoted $\mathbf{PureMon}_{\lambda}$.
     
  \subsection{Deconstructibility and Structure Theorems} 
 
 $\lambda$-accessibility will allow us to prove deconstructiblity results for objects in exact categories, in the sense of \cite{vst2012exact}, thus allowing us to apply the results therein to construct cotorsion pairs. We begin by giving a slightly generalised definition of deconstructibility. 
  
\begin{defn}
Let $\mathpzc{C}$ be a category and $\mathcal{M}$ a set of morphisms in $\mathpzc{C}$. A class of objects $\mathpzc{A}$ in $\mathpzc{C}$ is said to be $\mathcal{M}$-\textit{pre-deconstructible} if 
\begin{enumerate}
\item
it can be written as a transfinite composition
$$A\cong\colim_{\alpha<\gamma}A_{\alpha}$$
where for each $\alpha<\gamma$ the map $A_{\alpha}\rightarrow A_{\alpha+1}$ is in $\mathcal{M}$ and each $A_{\alpha}\in\mathpzc{A}$. 
\item
every object in the domain of a map in $\mathcal{M}$ is small relative to $\mathcal{M}$.
\end{enumerate}
\end{defn}

For a left exact category $\mathpzc{E}$, a class of objects $\mathpzc{A}\subset\mathpzc{E}$, and a class of admisstible monomorphisms $\mathbf{I}$, we denote by $\mathbf{I}_{\mathpzc{A}}$ the class of maps in $\mathbf{I}$ with cokernel in $\mathpzc{A}$.
  
 \begin{defn}[c.f. \cite{vst2012exact} Definition 3.9]
Let $\mathbf{I}$ be a class of admissible monomorphisms. 
\begin{enumerate}
\item
A class of objets $\mathpzc{A}$ in $\mathpzc{E}$ is said to be \textit{pre-deconstructible in itself relative to }$\mathbf{I}$ if it is $\mathbf{J}_{\mathpzc{A}}$-pre-deconstructible for some set of admissible monomorphisms $\mathbf{J}\subset\mathbf{I}$ whose domains and codomains are in $\mathpzc{A}$. 
\item
A class of objets $\mathpzc{A}$ in $\mathpzc{E}$ is said to be \textit{deconstructible in itself relative to }$\mathbf{I}$. if it is $\mathbf{J}_{\mathpzc{A}}$-pre-deconstructible for some set of admissible monomorphisms $\mathbf{J}\subset\mathbf{I}$ whose domains and codomains are in $\mathpzc{A}$, and transfinite compositions of pushouts of maps in $\mathbf{J}_{\mathpzc{A}}$ are in $\mathbf{I}_{\mathpzc{A}}$. 
\end{enumerate}
\end{defn}

In applications we will typically require the following.

\begin{defn}\label{defn:deconstrexact}
Let $\mathpzc{E}$ be an exact category and $\mathpzc{A}$ a class of objects in $\mathpzc{E}$. $\mathpzc{A}$ is said to be \textit{presentably (pre-)deconstructible in itself} relative to a class of monomorphisms $\mathbf{I}$, if it is $\mathbf{J}_{\mathpzc{A}}$-(pre-)deconstructible where $\mathbf{J}\subset\mathbf{I}$, and the domain and codomian of objects in $\mathbf{J}$ are small relative to transfinite compositions of pushouts of maps in $\mathbf{J}$.
\end{defn}

Note that if $\mathpzc{E}$ is accessible then all classes which are (pre-)deconstructible in themselves relative to some class $\mathbf{I}$ are in fact presentably (pre-)deconstructible in themselves relative to $\mathbf{I}$.

Many examples of deconstructible classes of objects which are of interest to us consist of strongly $\lambda$-pure subobject stable classes in accessible categories.

\begin{defn}
\begin{enumerate}
\item
A \textit{purely} $\lambda$-\textit{accessible exact category} is a complete exact category $(\mathpzc{E},\mathpzc{Q})$ such that $\mathpzc{E}$ is $\lambda$-accessible, and such that $\lambda$-pure monomorphisms are admissible.
\item
A \textit{purely locally} $\lambda$-\textit{presentable exact category}  is an exact category $(\mathpzc{E},\mathpzc{Q})$ such that $\mathpzc{E}$ is $\lambda$-presentable, and such that $\lambda$-pure monomorphisms are admissible. 
\end{enumerate}
\end{defn}

\begin{rem}
Let $(\mathpzc{E},\mathpzc{Q})$ be an exact category whose underlying category is $\lambda$-accessible/ locally $\lambda$-presentable. Suppose further that it has a generating set consisting of $\lambda$-presentable projectives. Then $(\mathpzc{E},\mathpzc{Q})$ is purely $\lambda$-accessible/ lcoally purely $\lambda$-presentable.
\end{rem}

\begin{rem}
As pointed out in \cite{positselski2023locally}, \cite{adamek1994locally} Observation 2.4 and Proposition 1.16 imply that $\lambda$-accessible additive categories are weakly idempotent complete.
\end{rem}

\begin{defn}
Let $\mathpzc{E}$ be a purely $\lambda$-accessible exact category  and $\mathpzc{A}$ a class of objects in $\mathpzc{E}$. 
\begin{enumerate}
\item
A map $f:N\rightarrow M$ is said to be an \textit{almost-}$(\mathpzc{A},\lambda)$-\textit{pure monomorphism} if there is a fibre product diagram
\begin{displaymath}
\xymatrix{
N\ar[d]^{f}\ar[r]^{g} & A\ar[d]^{i}\\
M\ar[r]^{h} & B
}
\end{displaymath}
where $A,B\in\mathpzc{A}$, $i$ is a $\lambda$-pure monomorphism with cokernel in $\mathpzc{A}$, and $h$ is an admissible epimorphism.
\item
A map $f:N\rightarrow M$ is said to be a \textit{pseudo-}$(\mathpzc{A},\lambda)$-\textit{pure monomorphism} if it is an admissible monomorphism which can be written as follows. There is a $\Gamma$-indexed transfinite sequence
\begin{displaymath}
\xymatrix{
M_{0}\ar[r]\ar[d] & M_{\alpha}\ar[d]\ar[r] & M_{\alpha'}\ar[r]\ar[d] & \ldots\\
M\ar[d]\ar[r] & M\ar[r]\ar[d] & M\ar[r]\ar[d] & \ldots\\
M\big\slash M_{0}\ar[r] &M\big\slash M_{\alpha}\ar[r] & M\big\slash M_{\alpha'}\ar[r] &\ldots
}
\end{displaymath}
where for each successor ordinal $\alpha+1\in \Gamma$, $M_{\alpha+1}\rightarrow M$ is an almost $(\mathpzc{A},\lambda)$-pure monomorphism, and $f$ is the colimit of the maps $M_{\alpha}\rightarrow M$ in $\mathrm{Mor}(\mathpzc{E})$. The class of all pseudo-$(\mathpzc{A},\lambda)$-pure monomorphisms will be denoted $\mathbf{pureMon}_{\mathpzc{A},\lambda}$. 
\end{enumerate}
\end{defn}

\begin{defn}
\begin{enumerate}
\item
Let $\mathcal{S}$ be a class of admissible monomorphisms in an exact category $\mathpzc{E}$. We say that a class $\mathpzc{A}$ in $\mathpzc{E}$ is $\mathcal{S}$-\textit{subobject stable} if whenever $M\rightarrow N$ is a map in $\mathcal{S}$ with $N\in\mathpzc{A}$, then $M$ and $N\big\slash M$ are in $\mathpzc{A}$.
\item
Let $\mathpzc{E}$ be a purely $\lambda$-accessible exact category. A class of objects $\mathpzc{A}$ is said to be $\lambda$-\textit{pure subobject stable} if it is $\mathbf{PureMon}_{\lambda}$-subobject stable.
\item
Let $\mathpzc{E}$ be a purely $\lambda$-accessible exact category. A class of objects $\mathpzc{A}$ is said to be \textit{strongly} $\lambda$-\textit{pure subobject stable} if
\begin{enumerate}
\item
 it is $\mathbf{pureMon}_{\mathpzc{A},\lambda}$-subobject stable.
 \item
for any $\Gamma$-indexed transfinite sequence
\begin{displaymath}
\xymatrix{
M_{0}\ar[r]\ar[d] & M_{\alpha}\ar[d]\ar[r] & M_{\alpha'}\ar[r]\ar[d] & \ldots\\
M\ar[d]\ar[r] & M\ar[r]\ar[d] & M\ar[r]\ar[d] & \ldots\\
M\big\slash M_{0}\ar[r] &M\big\slash M_{\alpha}\ar[r] & M\big\slash M_{\alpha'}\ar[r] &\ldots
}
\end{displaymath}
where for each successor ordinal $\alpha+1\in \Gamma$, $M_{\alpha+1}\rightarrow M$ is an almost $(\mathpzc{A},\lambda)$-pure monomorphism, the colimit sequence
$$0\rightarrow\colim M_{\alpha}\rightarrow M\rightarrow\colim M\big\slash M_{\alpha}\rightarrow0$$
is exact and $M\big\slash M_{\alpha}$ is in $\mathpzc{A}$. In particular $\colim M_{\alpha}\rightarrow M$ is in $\mathbf{pureMon}_{\mathpzc{A},\lambda}$.
 \end{enumerate}
\end{enumerate}
\end{defn}

\begin{rem}\label{rem:pushpull}
\begin{enumerate}
\item
Let 
\begin{displaymath}
\xymatrix{
M\ar[d]^{f}\ar[r]^{g} & A\ar[d]^{i}\\
N\ar[r]^{h} & B
}
\end{displaymath}
be a fibre-product diagram with $i$ an admissilble monomorphism and $h$ an admissible epimorphism. Then $f$ is an admissible monomorphism and $g$ an admissible epimorphism by \cite{Buehler} Proposition 2.15. Moreover by \cite{Buehler} Proposition 2.12 the right-hand square is also a pushout, so $N\big\slash M\cong B\big\slash A$. Thus almost $(\mathpzc{A},\lambda)$-pure monomorphisms are automatically admissible monomorphisms with cokernel in $\mathpzc{A}$.
\item
Suppose that $\mathpzc{E}$ is weakly elementary. Then pseudo $(\mathpzc{A},\lambda)$-pure monomorphisms are admissible monomorphisms. Thus in this instance, if $\mathpzc{A}$ is $\lambda$-pure subobject stable and is closed under colimits of transfinite sequences of objects in $\mathpzc{A}$, then $\mathpzc{A}$ is strongly $\lambda$-pure subobject stable 
\end{enumerate}
 \end{rem}

The following result can be seen as a generalisation of \cite{Gillespie2} Proposition 4.9.

\begin{lem}\label{lem:accdec}
Let $\mathpzc{E}$ be a purely $\lambda$-accessible exact category with a generator $G$. Let $\mathpzc{A}$ be a class of objects in $\mathpzc{E}$ which is strongly $\lambda$-pure subobject stable and closed under transfinite extensions. Suppose further that  $\mathpzc{E}$ is weakly $\mathbf{AdMon}_{\mathpzc{A}}$-elementary. Then there exist arbitrarily large regular cardinals $\gamma$ such that for map $N\rightarrow M\in\mathbf{AdMon}_{\mathpzc{A}}$, there is a transfinite sequence
$$M_{\alpha\in A}\rightarrow M$$ 
such that
\begin{enumerate}
\item
$M_{0}=N$
\item
for each successor ordinal $\alpha+1$, $M_{\alpha+1}\rightarrow M$ is an almost $(\mathpzc{A},\lambda)$-pure monomorphism.
\item
for any $\alpha\in A$ $M_{\alpha+1}\big\slash M_{\alpha}$ is $\gamma$-presentable and is in $\mathpzc{A}$.
\item
for any limit ordinal $\alpha$, $M_{\alpha}\rightarrow M$ is in $\mathbf{pureMon}_{\mathpzc{A},\lambda}$.
\item
for any $\alpha<\beta$, $M_{\alpha}\rightarrow M_{\beta}\in\mathbf{AdMon}_{\mathpzc{A}}$.
\item
$\colim_{\alpha}M_{\alpha}\rightarrow M$
is an isomorphism.
\end{enumerate}
In particular taking $N=0$, $\mathpzc{A}$  is presentably deconstructible in itself relative to $\mathbf{AdMon}$.
\end{lem}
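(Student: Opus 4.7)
The strategy is a transfinite construction in the style of Hill's Lemma: build the filtration $(M_\alpha)$ by successively adjoining a $\gamma$-presentable $\lambda$-pure piece to $M_\alpha$, maintained with the invariant that $M_\alpha \to M$ already lies in $\mathbf{pureMon}_{\mathpzc{A},\lambda}$. Then strong $\lambda$-pure subobject stability automatically yields $M/M_\alpha \in \mathpzc{A}$, and the quotient will remain in $\mathpzc{A}$ along the whole construction.

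To set things up I would first use Theorem \ref{thm:deconstructpres} to pick a regular cardinal $\gamma \ge \lambda$ large enough that (i) the generator $G$ is $\gamma$-presentable, and (ii) every morphism from a $\gamma$-presentable object to an arbitrary object of $\mathpzc{E}$ factors through a $\lambda$-pure monomorphism whose source is also $\gamma$-presentable. The base case $M_0 = N$ is handled by observing that, since $M/N \in \mathpzc{A}$, the fibre-product diagram with $A = 0$ and $B = M/N$ exhibits $N \to M$ as an almost $(\mathpzc{A},\lambda)$-pure monomorphism, hence as a pseudo-$(\mathpzc{A},\lambda)$-pure monomorphism of length one.

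For the successor step, assume $M_\alpha \to M \in \mathbf{pureMon}_{\mathpzc{A},\lambda}$, so that $B := M/M_\alpha \in \mathpzc{A}$. If $B = 0$ we stop; otherwise the generator supplies a nonzero map $G \to B$. Using the choice of $\gamma$, factor it as $G \to A \to B$ with $A$ $\gamma$-presentable and $A \hookrightarrow B$ a $\lambda$-pure monomorphism. Then define $M_{\alpha+1}$ as the pullback of $A \hookrightarrow B$ along the admissible epimorphism $M \to B$. By \cite{Buehler} Propositions 2.12 and 2.15 the square is bicartesian and so $M_{\alpha+1}/M_\alpha \cong A$, while $M/M_{\alpha+1} \cong B/A$. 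The square then exhibits $M_{\alpha+1} \to M$ as an almost $(\mathpzc{A},\lambda)$-pure monomorphism provided $A, B/A \in \mathpzc{A}$; this membership is exactly what strong $\lambda$-pure subobject stability buys after embedding $A \hookrightarrow B$ into the running pseudo-pure structure. For a limit ordinal $\alpha$ I would set $M_\alpha := \colim_{\beta<\alpha} M_\beta$; the colimit clause of strong $\lambda$-pure subobject stability (part (b) of the definition) plus weak $\mathbf{AdMon}_{\mathpzc{A}}$-elementarity imply that $M_\alpha \to M$ remains in $\mathbf{pureMon}_{\mathpzc{A},\lambda}$ and that the short exact sequence $0 \to M_\alpha \to M \to \colim M/M_\beta \to 0$ is exact. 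Termination follows by a cardinality argument: every successor step strictly enlarges $M_\alpha$ by a nonzero $\gamma$-presentable quotient, and the $\gamma$-accessibility of $\mathpzc{E}$ bounds the length of such a chain, so for some ordinal $\kappa$ we have $M_\kappa = M$. Taking $N = 0$ then gives the desired presentable deconstructibility, since the set of isomorphism classes of $\gamma$-presentable objects of $\mathpzc{A}$ is a set and each resulting admissible monomorphism has $\gamma$-presentable source.

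\textbf{Main obstacle.} The delicate point is the successor step: Theorem \ref{thm:deconstructpres} only yields a bare $\lambda$-pure monomorphism $A \hookrightarrow B$, and it is not a priori an almost $(\mathpzc{A},\lambda)$-pure monomorphism -- one has to upgrade it, showing $A$ and $B/A$ lie in $\mathpzc{A}$. This is where strong $\lambda$-pure subobject stability has to be used with care, together with the closure of $\mathpzc{A}$ under transfinite extensions, in order to bootstrap the construction without losing the invariant. This interplay between the ambient pure-exact structure, the generator, and the $\mathpzc{A}$-relative pure monomorphisms is exactly what makes this a genuine generalisation of \cite{Gillespie2} Proposition 4.9.
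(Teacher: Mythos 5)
Your successor and limit steps coincide with the paper's: factor a map from the generator into $M/M_\alpha$ through a $\gamma$-presentable $\lambda$-pure subobject using Theorem \ref{thm:deconstructpres}, pull back along the admissible epimorphism $M\to M/M_\alpha$ to define $M_{\alpha+1}$, use the bicartesian square to identify $M_{\alpha+1}/M_\alpha$ with that subobject and $M/M_{\alpha+1}$ with its cokernel, and apply $\lambda$-pure subobject stability of $\mathpzc{A}$ to the inclusion into $M/M_\alpha\in\mathpzc{A}$ to preserve the inductive invariant; limits are handled by clause (b) of strong $\lambda$-pure subobject stability. The one place you genuinely diverge is termination, and it is also the one place your argument is under-justified. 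You choose an \emph{arbitrary} nonzero map $G\to M/M_\alpha$ at each stage and assert that ``the $\gamma$-accessibility of $\mathpzc{E}$ bounds the length of such a chain.'' Accessibility alone does not obviously bound strictly increasing chains of admissible subobjects; what you need is well-poweredness. This can be rescued here because a purely $\lambda$-accessible exact category is by definition complete, and a complete accessible category is locally presentable, hence well-powered --- but that needs to be said, since it is the entire content of the termination step.

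The paper sidesteps this with a cleaner device: it fixes a well-ordering of the set $\mathrm{Hom}(G,M)$ and at successor stage $\alpha+1$ processes exactly one prescribed map $f_\alpha$, factoring the composite $G\to M\to M/M_\alpha$ rather than an arbitrarily chosen map. Since $f_\alpha$ then lifts to $M_{\alpha+1}$ by the universal property of the pullback, at the end of the induction $\mathrm{Hom}(G,\colim M_\alpha)\to\mathrm{Hom}(G,M)$ is surjective, so $\colim M_\alpha\to M$ is an admissible epimorphism as well as an admissible monomorphism, hence an isomorphism. This fixes the length of the transfinite sequence in advance and removes any appeal to well-poweredness. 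Either route can be made to work; you should either adopt the paper's indexing by $\mathrm{Hom}(G,M)$ or state and use the well-poweredness of $\mathpzc{E}$ explicitly. Finally, clause (5) of the statement deserves a sentence: each $M_\alpha\to M_\beta$ is a transfinite composition of almost $(\mathpzc{A},\lambda)$-pure monomorphisms, hence lies in $\mathbf{AdMon}_{\mathpzc{A}}$ by the weak $\mathbf{AdMon}_{\mathpzc{A}}$-elementarity hypothesis and closure of $\mathpzc{A}$ under transfinite extensions.
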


\begin{proof}
To begin, let $G$ be a generator for $\mathpzc{E}$, and consider the set $\mathrm{Hom}(G,M)$. Fix a well-order $ A$ on $\mathrm{Hom}(G,M)$. Then we may write $\mathrm{Hom}(G,M)=\bigcup_{ \alpha\in A}X_{\alpha}$ as an increasing union of subsets of $\mathrm{Hom}(G,M)$, such that $X_{\alpha+1}\setminus X_{\alpha}$ is a singleton. Now $G$ is $\overline{\gamma}$-compact for some $\overline{\gamma}$. Let $\gamma\ge\overline{\gamma}$ be such that every map $A\rightarrow B$ in $\mathpzc{E}$ with $A$ $\gamma$-presentable factors through a $\lambda$-pure monomorphism $\overline{A}\rightarrow B$ with $\overline{A}$ $\gamma$-presentable. 

We construct each $M_{\alpha}$ by transfinite induction. Define $M_{0}=N$.  Let $\alpha\in A$ and suppose that for all $\alpha'<\alpha$, $M_{\alpha'}\rightarrow M$ has been constructed.

Suppose $\alpha+1$ is a successor ordinal. Consider $M\big\slash M_{\alpha}$ and write $\{f_{\alpha}\}=X_{\alpha+1}\setminus X_{ \alpha}$. The map $f_{\alpha}:G\rightarrow M\rightarrow M\big\slash M_{\alpha}$ factors through some $\overline{M_{\alpha+1}}\rightarrow M\big\slash M_{\alpha}$ with $\overline{M_{\alpha+1}}\rightarrow M$ being a $\lambda$-pure monomorphism and $\overline{M_{\alpha+1}}$ being $\gamma$-presentable. Now $M\big\slash M_{\alpha}$ is in $\mathpzc{A}$, and since $\overline{M}_{\alpha+1}\rightarrow M\big\slash M_{\alpha}$ is $\lambda$-pure, $\overline{M}_{\alpha+1}$ and $(M\big\slash M_{\alpha})\big\slash\overline{M}_{\alpha+1}$ are in $\mathpzc{A}$.

We have a diagram
\begin{displaymath}
\xymatrix{
M_{\alpha}\ar[d]\ar[r] & M_{\alpha+1}\ar[d]\ar[r] & \overline{M_{\alpha+1}}\ar[d]\\
M_{\alpha}\ar[r] & M\ar[r] & M\big\slash M_{\alpha}
}
\end{displaymath}
where the right-hand square is a pullback, and both rows are exact. Then $M_{\alpha+1}\rightarrow M$ is an almost $(\mathpzc{A},\lambda)$-pure monomorphism, and $M\big\slash M_{\alpha}\cong(M\big\slash M_{\alpha})\big\slash\overline{M}_{\alpha+1}\in\mathpzc{A}$ by Remark \ref{rem:pushpull}.

If $\alpha$ is a limit ordinal, define $M_{\alpha}\defeq\colim_{\alpha'<\alpha}M_{\alpha'}$. Since $M_{\alpha'}\rightarrow M_{\alpha'+1}$ is almost $(\mathpzc{A},\lambda)$-pure for any $\alpha'<\alpha$, and $\mathpzc{A}$ is strongly $\lambda$-pure subobject stable by assmption, the map $M_{\alpha}\rightarrow M$ is in $\mathbf{AdMon}_{\mathpzc{A}}$, and in fact is in $\mathbf{pureMon}_{\mathpzc{A},\lambda}$.


Now by construction
$$\mathrm{Hom}(G,\colim M_{\alpha})\rightarrow \mathrm{Hom}(G,M)$$
is an epimorphism. Therefore $\colim M_{\alpha}\rightarrow M$ is an admissible epimorphism. Moreover the map $\colim M_{\alpha}\rightarrow M$ is an admissible monomorphism, so it must be an isomorphism. 

Note that each $M_{\alpha}\rightarrow M_{\beta}$ is a transfinite composition of almost maps in $\mathbf{AdMon}_{\mathpzc{A}}$, and thus is in $\mathbf{AdMon}_{\mathpzc{A}}$. In particular if $N=0$ then each $M_{\alpha}$ is in $\mathpzc{A}$. 

%
%
\end{proof}


\begin{cor}
Let $\mathpzc{E}$ be a purely $\lambda$-accessible exact category with a generator for the $\lambda$-pure exact structure which is weakly $\mathbf{AdMon}$-elementary. Then $\mathpzc{E}$ is of Grothendieck type. In particular it has enough injectives.
\end{cor}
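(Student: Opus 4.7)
The plan is to reduce the statement to a direct application of Lemma \ref{lem:accdec}, taking $\mathpzc{A}$ to be the class $\mathrm{Ob}(\mathpzc{E})$ of all objects of $\mathpzc{E}$. First I would verify the hypotheses of the lemma. Since every object of $\mathpzc{E}$ lies in $\mathpzc{A}$, the conditions of being strongly $\lambda$-pure subobject stable and closed under transfinite extensions are trivially satisfied (the latter using that $\mathpzc{E}$ is weakly $\mathbf{AdMon}$-elementary, so that the relevant colimits exist and are admissible). The condition that $\mathpzc{E}$ be weakly $\mathbf{AdMon}_{\mathpzc{A}}$-elementary reduces to the hypothesis that it is weakly $\mathbf{AdMon}$-elementary. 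One small point to check: the generator $G$ is postulated only for the $\lambda$-pure exact structure, but since $\lambda$-pure monomorphisms are admissible by the definition of purely $\lambda$-accessible, $\lambda$-pure admissible epimorphisms are admissible epimorphisms in the full structure, and so $G$ is in particular a generator for the ambient exact structure.

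Applying Lemma \ref{lem:accdec} then yields a regular cardinal $\gamma$ and a set $\mathcal{S}$ of $\gamma$-presentable objects of $\mathpzc{E}$ such that every object of $\mathpzc{E}$ admits a transfinite filtration by admissible monomorphisms whose successive quotients lie in $\mathcal{S}$. In other words, $\mathrm{Ob}(\mathpzc{E})$ is presentably deconstructible in itself relative to $\mathbf{AdMon}$. Together with (i) weak idempotent completeness, automatic for $\lambda$-accessible additive categories as noted in the remark following the definition of purely $\lambda$-accessible, (ii) the existence of transfinite compositions of admissible monomorphisms (encoded in weak $\mathbf{AdMon}$-elementariness), (iii) smallness of every object relative to $\mathbf{AdMon}$ obtained from $\lambda$-accessibility, and (iv) the presence of a generator, this is exactly the package of conditions defining an exact category of Grothendieck type in the sense of \cite{vst2012exact}.

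Finally, I would invoke the standard consequence of Grothendieck type, namely that the set $\mathcal{S}$ cogenerates a complete cotorsion pair via the small object argument (\cite{vst2012exact}, Theorem 5.16 or the analogous efficient-exact version). Because every object of $\mathpzc{E}$ is a transfinite extension of members of $\mathcal{S}$, we have $\mathcal{S}^{\perp}=\mathrm{Inj}(\mathpzc{E})$, so that the right class of this cotorsion pair is the class of injective objects. Completeness of the cotorsion pair then immediately furnishes, for every $X\in\mathpzc{E}$, an admissible monomorphism $X\rightarrow I$ with $I\in\mathrm{Inj}(\mathpzc{E})$, i.e. enough injectives.

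The main obstacle will be bookkeeping: namely verifying that the definition of ``Grothendieck type'' used here coincides, up to notational differences, with the hypotheses required by the small object argument cited from \cite{vst2012exact}. Once the deconstructibility lemma has been applied, no further non-trivial argument is needed; everything else is a repackaging of accessibility and weak elementariness against the standard cotorsion-pair machinery.
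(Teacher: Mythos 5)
Your proposal is correct and is precisely the argument the paper intends (the corollary is stated without proof immediately after Lemma \ref{lem:accdec}): take $\mathpzc{A}=\mathrm{Ob}(\mathpzc{E})$, note the stability and elementariness hypotheses become trivial or reduce to the stated assumptions, and feed the resulting deconstructibility of the whole category into the small object argument of \cite{vst2012exact} to obtain the complete cotorsion pair $(\mathrm{All},\mathrm{Inj})$. Your side remarks — that a generator for the $\lambda$-pure structure is a generator for the ambient structure because $\lambda$-pure monomorphisms are admissible, and that $\mathcal{S}^{\perp}=\mathrm{Inj}(\mathpzc{E})$ via the Eklof-type argument — are exactly the right bookkeeping.
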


\begin{rem}
Let $\mathpzc{E}$ be finitely accessible and additive. With the $\aleph_{0}$-pure exact structure $\mathpzc{E}$ is in fact an elementary exact category. In particular this exact structure has a generator given as the sum of compact objects.  Suppose $(\mathpzc{E},\mathpzc{Q})$ is a purely $\aleph_{0}$-exact structure which is weakly elementary (for example the $\aleph_{0}$-pure exact structure itself). Again the sum of compact objects is a generator. Any $\aleph_{0}$-pure stable class of objects which is closed under transfinite colimits is presentably deconstructible in itself relative to $\mathbf{AdMon}$ by Remark \ref{rem:pushpull}. In particular, in this case $(\mathpzc{E},\mathpzc{Q})$ is of Grothendieck type. This recovers \cite{positselski2023locally} Theorem 5.3.
\end{rem}

\begin{lem}[c.f. \cite{gillespie2006flat} Lemma 4.9.]\label{lem:subepi}
Let $\mathpzc{E}$ be a purely locally $\lambda$-presentable exact category. Let $\gamma>\lambda$ be such that any map $A\rightarrow B$ with $A$ $\gamma$-presentable factors through a pure monomorphism $\overline{A}\rightarrow B$ with $\overline{A}$ $\gamma$-presentable. Let $f:X\rightarrow Y$ be an admissible epimorphism, and $j:T\rightarrow Y$ an admissible monomorphism with $T$ $\gamma$-presentable. Then there is an admissible monomorphism $i:S\rightarrow X$ with $S$ $\gamma$-presentable, and a commutative diagram
\begin{displaymath}
\xymatrix{
S\ar[d]\ar[r]^{i} & X\ar[d]^{f}\\
T\ar[r]^{j} & Y
}
\end{displaymath}
with $S\rightarrow T$ an admissible epimorphism.
\end{lem}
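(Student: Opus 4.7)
The plan is to pull $T$ back along $f$ and then carve out a $\gamma$-presentable admissible subobject of the pullback that still surjects onto $T$, using the factorization hypothesis on $\gamma$ to produce small admissible subobjects and the $\gamma$-presentability of $T$ to exhaust it.

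First I would form the pullback $P\defeq T\times_Y X$ in $\mathpzc{E}$. Since $f$ is an admissible epimorphism and $j$ is an admissible monomorphism, the pullback exists and the standard exact-category axioms (\cite{Buehler}, Propositions 2.12 and 2.15) imply that the projection $g\colon P\to T$ is an admissible epimorphism while the projection $i'\colon P\to X$ is an admissible monomorphism. Thus it suffices to produce a $\gamma$-presentable object $S$ together with an admissible monomorphism $\iota\colon S\to P$ such that $g\circ\iota\colon S\to T$ is an admissible epimorphism; composing $\iota$ with $i'$ then yields the required admissible monomorphism $S\to X$.

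Next I would use local $\gamma$-presentability of $\mathpzc{E}$ (which follows from local $\lambda$-presentability since $\gamma\ge\lambda$) together with the factorization hypothesis to present $P$ as a $\gamma$-directed union of $\gamma$-presentable admissible subobjects. Writing $P\cong\colim_\alpha A_\alpha$ as a $\gamma$-filtered colimit of $\gamma$-presentable objects $A_\alpha$ mapping into $P$, each $A_\alpha\to P$ factors as $A_\alpha\to \bar P_\alpha\to P$ with $\bar P_\alpha$ $\gamma$-presentable and $\bar P_\alpha\to P$ a $\lambda$-pure monomorphism, hence admissible because $\mathpzc{E}$ is purely locally $\lambda$-presentable. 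Closing the $\bar P_\alpha$ under $\gamma$-small unions gives a $\gamma$-directed family $\{\bar P_\alpha\hookrightarrow P\}$ of $\gamma$-presentable admissible subobjects whose colimit is $P$.

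Finally I would exhibit an index $\alpha_0$ for which $g_{\alpha_0}\defeq g\circ(\bar P_{\alpha_0}\to P)$ is an admissible epimorphism. Let $K\defeq\ker g$, which is admissibly embedded in $P$, and for each $\alpha$ set $K_\alpha\defeq K\times_P \bar P_\alpha$ and $T_\alpha\defeq \bar P_\alpha/K_\alpha$; the universal property of the cokernel provides an admissible monomorphism $T_\alpha\hookrightarrow T$. These $T_\alpha$ form a $\gamma$-directed family of admissible subobjects of $T$. The key point is that the inclusions $\bar P_\alpha\hookrightarrow P$ are $\lambda$-pure, which makes the formation of $K_\alpha$ and $T_\alpha$ compatible with the $\gamma$-filtered colimit: testing against $\Hom(G,-)$ for $G$ $\lambda$-presentable (together with Proposition \ref{prop:colimitslambdapure}) shows that $T=\colim_\alpha T_\alpha$ as a $\gamma$-directed union of admissible subobjects. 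Since $T$ is $\gamma$-presentable, $\id_T$ factors through some $T_{\alpha_0}\hookrightarrow T$, and this section combined with the admissible monomorphism $T_{\alpha_0}\hookrightarrow T$ forces $T_{\alpha_0}\cong T$; hence $g_{\alpha_0}$ is an admissible epimorphism with kernel $K_{\alpha_0}$, and $S\defeq\bar P_{\alpha_0}$ has all the required properties.

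The principal obstacle is the verification in the third step that the admissible subobjects $T_\alpha$ genuinely exhaust $T$. This is not automatic in a general exact category, because $\gamma$-filtered colimits of admissible short exact sequences need not be exact; the hypothesis that the embeddings $\bar P_\alpha\hookrightarrow P$ are $\lambda$-pure is exactly what allows the short exact sequence $0\to K\to P\to T\to 0$ to be ``propagated'' through the $\gamma$-filtered colimit, since $\lambda$-pure exactness is detected by $\Hom(G,-)$ for $G$ $\lambda$-presentable and these functors commute with $\gamma$-filtered colimits. Once this exhaustion is secured, the rest of the argument is a direct emulation of the module-theoretic proof of \cite{gillespie2006flat}.
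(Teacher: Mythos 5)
Your overall architecture coincides with the paper's: reduce to the case $Y=T$ via the pullback $P=T\times_Y X$ (using that pulling back the admissible epimorphism $f$ along $j$ yields an admissible epimorphism $P\to T$ and an admissible monomorphism $P\to X$), exhaust $P$ by $\gamma$-presentable $\lambda$-pure subobjects, and invoke $\gamma$-presentability of $T$ to find one subobject that already covers $T$. The gap is in your third step. You define $K_\alpha=K\times_P\bar P_\alpha$ and $T_\alpha=\bar P_\alpha/K_\alpha$, and you need (a) $K_\alpha\to\bar P_\alpha$ to be an admissible monomorphism, so that $\bar P_\alpha\to T_\alpha$ is an admissible epimorphism, and (b) $T_\alpha\to T$ to be an admissible monomorphism. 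Neither is justified: in a general exact category the pullback of the admissible monomorphism $K\to P$ along the admissible monomorphism $\bar P_\alpha\to P$ need not be an admissible subobject (one cannot run the purity lifting argument for $K_\alpha\to K$ either, since the lift supplied by purity of $\bar P_\alpha\to P$ is not forced to agree with the given map into $K$), and ``the universal property of the cokernel'' only produces a morphism $T_\alpha\to T$, not an admissible one. Without (a) the final assertion that $g_{\alpha_0}$ is an admissible epimorphism does not follow, even after you have shown $T_{\alpha_0}\cong T$.

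The paper's proof sidesteps exactly this point by never pulling $K$ back: it writes both $K=\ker(f)$ and $X$ (your $P$) as $\gamma$-filtered colimits of $\gamma$-presentable objects along $\lambda$-pure monomorphisms and passes to a cofinal diagram so that $K\to X$ is the colimit of maps $K_i\to X_i$. Each $K_i\to X_i$ is then an admissible monomorphism by the obscure axiom (its composite with the admissible monomorphism $X_i\to X$ is the admissible monomorphism $K_i\to K\to X$), so $C_i=X_i/K_i$ sits in a genuine conflation, and $\colim C_i\cong Y$ follows merely because colimits commute with cokernels -- no exactness of filtered colimits and no purity-propagation argument of the kind you outline is needed. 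The conclusion is then reached by factoring $\mathrm{Id}$ through some $C_i$, making $C_i\to C\cong Y$ a split hence admissible epimorphism and $X_i\to C_i\to Y$ a composite of admissible epimorphisms. If you replace your pullback construction of $K_\alpha$ with this compatible-presentation argument for the arrow $K\to P$, your proof becomes the paper's.
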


\begin{proof}
By pulling back to $T$ we may assume that $Y=T$ and that $Y$ is $\gamma$-presentable. Let $K=\mathrm{Ker}(f)$. Write $K\cong\colim_{i\in\mathcal{I}}K_{i}$ as a $\gamma$-filtered colimit with each $K_{i}$ being $\gamma$-presented and $K_{i}\rightarrow K$ being a $\lambda$-pure monomorphism, and $X\cong\colim_{j\in\mathcal{J}}X_{j}$ with each $X_{j}$ being $\gamma$-presented and $X_{j}\rightarrow X$ being a $\lambda$-pure monomorphism. By passing to a cofinal diagram we may assume that $\mathcal{I}=\mathcal{J}$, and that $K\rightarrow X$ is a colimit of maps $K_{i}\rightarrow X_{i}$ which by the obscure axiom are admisisble monomorphisms. Let $C_{i}= X_{i}\big\slash K_{i}$. We have $C\defeq\colim_{i\in\mathcal{I}}C_{i}\cong Y$ with each $C_{i}$ being $\gamma$-presentable. Thus $C$ is $\gamma$-presentable, and the map $\mathrm{Id}:C\rightarrow C$ factors through some $C_{i}$. In particular $C_{i}\rightarrow C$ is an admissible epimorphism. Thus $X_{i}\rightarrow C_{i}\rightarrow C\rightarrow Y$ is an admissible epimorphism.
\end{proof}

\begin{lem}[c.f. \cite{MR2139915} Theorem 4.1]
Let $\mathpzc{E}$ be a purely locally $\lambda$-presentable exact category, and let $\mathpzc{A}$ be a strongly $\lambda$-pure subobject stable class closed under transfinite extensions such that $\mathpzc{E}$ is weakly $\mathbf{AdMon}_{\mathpzc{A}}$-elementary. Let $\kappa\ge\lambda$ be such that any morphism $X\rightarrow Y$ with $X$ $\kappa$-presentable factors through a $\lambda$-pure monomorphism $\overline{X}\rightarrow Y$ with $\overline{X}$ $\kappa$-presentable. Let $\gamma\ge\kappa$ be a regular cardinal such that $\gamma$-presentable objects are closed under finite limits. Denote by $\mathbf{I}^{\gamma}_{\mathpzc{A}}$ the class of admissible monomorphisms $A\rightarrow B$ with cokernel in $\mathpzc{A}$, such that $B$ is $\gamma$-presented. Then
 $\mathbf{AdMon}_{\mathpzc{A}}\subseteq\mathbf{I}_{\mathpzc{A}}^{\gamma}$-cof.
\end{lem}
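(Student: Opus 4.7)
The plan is to combine the filtration produced by Lemma \ref{lem:accdec} with the lifting provided by Lemma \ref{lem:subepi}: first reduce an arbitrary $f \colon N \to M$ in $\mathbf{AdMon}_{\mathpzc{A}}$ to a transfinite composition of admissible monomorphisms with $\gamma$-presentable cokernels in $\mathpzc{A}$, then realise each successor step as a single pushout of a map in $\mathbf{I}^{\gamma}_{\mathpzc{A}}$. Concretely, Lemma \ref{lem:accdec} (applied with our $\gamma$, enlarged if necessary so that the generator $G$ is $\gamma$-compact) produces a transfinite sequence $\{M_{\alpha}\}_{\alpha < \mu}$ with $M_{0} = N$, $\colim_{\alpha} M_{\alpha} \cong M$, each transition $M_{\alpha} \to M_{\beta}$ lying in $\mathbf{AdMon}_{\mathpzc{A}}$, and each successor cokernel $M_{\alpha+1}\big\slash M_{\alpha}$ being $\gamma$-presentable and in $\mathpzc{A}$. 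Since $\mathbf{I}^{\gamma}_{\mathpzc{A}}$-cof is closed under transfinite composition by definition, it suffices to show that each successor step $M_{\alpha} \to M_{\alpha+1}$ is a pushout of a map in $\mathbf{I}^{\gamma}_{\mathpzc{A}}$.

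For a fixed successor step, apply Lemma \ref{lem:subepi} to the admissible epimorphism $p \colon M_{\alpha+1} \to M_{\alpha+1}\big\slash M_{\alpha}$ together with the identity admissible monomorphism $\mathrm{Id} \colon M_{\alpha+1}\big\slash M_{\alpha} \to M_{\alpha+1}\big\slash M_{\alpha}$, which is legal since $M_{\alpha+1}\big\slash M_{\alpha}$ is $\gamma$-presentable. This produces a $\gamma$-presentable admissible subobject $i \colon S \hookrightarrow M_{\alpha+1}$ such that $p \circ i \colon S \to M_{\alpha+1}\big\slash M_{\alpha}$ is an admissible epimorphism. Setting $S' \defeq \mathrm{Ker}(p \circ i)$, we obtain an admissible monomorphism $S' \to S$ with cokernel $M_{\alpha+1}\big\slash M_{\alpha} \in \mathpzc{A}$, and hence a member of $\mathbf{I}^{\gamma}_{\mathpzc{A}}$. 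Since $i(S') \subseteq \mathrm{Ker}(p) = M_{\alpha}$, the inclusion $i$ restricts to a morphism $S' \to M_{\alpha}$, yielding a commutative square
\[
\xymatrix{
S' \ar[d] \ar[r] & S \ar[d]^{i} \\
M_{\alpha} \ar[r] & M_{\alpha+1}
}
\]
whose rows are both admissible monomorphisms with the same cokernel $M_{\alpha+1}\big\slash M_{\alpha}$, and whose induced map on cokernels is the identity.

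It remains to verify that this square is a pushout. Forming the pushout $P \defeq M_{\alpha} \sqcup_{S'} S$ in $\mathpzc{E}$, the axioms for a Quillen exact category guarantee that the induced map $M_{\alpha} \to P$ is an admissible monomorphism with cokernel $S\big\slash S' \cong M_{\alpha+1}\big\slash M_{\alpha}$. The canonical comparison $P \to M_{\alpha+1}$ then fits into a morphism of short exact sequences inducing the identity on both ends, so by the five lemma in exact categories it is an isomorphism. Hence $M_{\alpha} \to M_{\alpha+1}$ is the pushout of the map $S' \to S$ in $\mathbf{I}^{\gamma}_{\mathpzc{A}}$, completing the inductive step. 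Taking the transfinite composition across $\alpha < \mu$ puts $f$ in $\mathbf{I}^{\gamma}_{\mathpzc{A}}$-cof.

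The main obstacle is verifying the pushout property of the square and making sure every piece of the construction remains inside the exact structure; the essential input is the standard fact that a commutative square of admissible monomorphisms in a weakly idempotent complete exact category is bicartesian as soon as it induces an isomorphism on cokernels. The hypothesis that $\gamma$-presentable objects be closed under finite limits plays a supporting role: it ensures $S' = S \times_{M_{\alpha+1}} M_{\alpha}$ is itself $\gamma$-presentable, which while not strictly required for the containment being proved here, keeps all smallness bookkeeping symmetric and will be convenient in later applications to the small object argument.
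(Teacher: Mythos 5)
Your proof is correct and follows essentially the same route as the paper's: decompose the admissible monomorphism via Lemma \ref{lem:accdec} into a transfinite composition of steps with small cokernel in $\mathpzc{A}$, then use Lemma \ref{lem:subepi} to produce a $\gamma$-presentable $S\rightarrow M_{\alpha+1}$ surjecting onto the cokernel, whose kernel gives a map in $\mathbf{I}^{\gamma}_{\mathpzc{A}}$ of which the step is a pushout (the paper phrases this same bicartesian square as the exact sequence $0\rightarrow A'\rightarrow B'\oplus A\rightarrow B\rightarrow 0$ and deduces the lifting property directly). The only bookkeeping point is that Lemma \ref{lem:accdec} should be invoked at the cardinal $\kappa$ (which carries the pure-factorisation property) rather than at $\gamma$, yielding $\kappa$-presentable successor quotients, which are a fortiori $\gamma$-presentable, so nothing in your argument is affected.
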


\begin{proof}
Since $\mathpzc{E}$ is weakly $\mathbf{AdMon}_{\mathpzc{A}}$-elementary and $\mathpzc{A}$ is closed under transfinite extensions, we clearly have $\mathbf{I}_{\mathpzc{A}}^{\gamma}$, and hence $\mathbf{I}_{\mathpzc{A}}^{\gamma}$-cell, is contained in $\mathbf{AdMon}_{\mathpzc{A}}$. 

Let $A\rightarrow B$ be an admissible monomorphism with $B\big\slash A$ in $\mathpzc{A}$, and $|B\big\slash A|\le\kappa$. By Lemma \ref{lem:subepi} there is an admissible $B'\rightarrow B$ with $|B'|\le\kappa$ and such that $B'\rightarrow B\rightarrow B\big\slash A$ is an admissible epimorphism. Let $A'=\mathrm{Ker}(B'\rightarrow B\big\slash A)$. Then $|A'|\le\gamma$, so that $A'\rightarrow B'$ is in $\mathbf{I}^{\gamma}_{\mathpzc{A}}$. Let
\begin{displaymath}
\xymatrix{
A\ar[d]\ar[r] & M\ar[d]^{p}\\
B\ar[r] & N
}
\end{displaymath}
be a commutative diagram with $p\in\mathbf{I}^{\gamma}_{\mathpzc{A}}$-inj. Since $A'\rightarrow B'\in\mathbf{I}^{\gamma}_{\mathpzc{A}}$ we find a lift in the diagram

\begin{displaymath}
\xymatrix{
A'\ar[d]\ar[r] & M\ar[d]^{p}\\
B'\ar[ur]\ar[r] & N
}
\end{displaymath}
Now by \cite{Buehler} Proposition 2.12 there is an exact sequence
$$0\rightarrow A'\rightarrow B'\oplus A\rightarrow B\rightarrow 0$$
This induces a lift in the first diagram.

 Write $B\cong\colim_{\alpha<\Gamma}B_{\alpha}$ with $B_{0}=A$, where $B_{\alpha}\rightarrow B$ is an admissiblbe monomorphism, and each $B_{\alpha+1}\big\slash B_{\alpha}$ is $\kappa$-presentable and in $\mathpzc{A}$. Thus $A\rightarrow B$ is a transfinite extension of maps in $\mathbf{I}_{\mathpzc{A}}^{\gamma}$-cof, and hence is in  $\mathbf{I}_{\mathpzc{A}}^{\gamma}$-cof.
 
\end{proof}
%

\subsubsection{Deconstructiblity in Chain Complexes}

Let $\mathpzc{E}$ be a purely $\lambda$-accessible exact category in which transfinite extensions of $\lambda$-pure monomorphisms are admissible monomorphisms. Then  when equipped with the degreewise exact structure, $\mathrm{Ch}(\mathpzc{E})$ is also a purely $\lambda$-accessible exact category in which transfinite extensions of $\lambda$-pure monomorphisms are admissible monomorphisms. A particularly useful observation here is that acyclic complexes are strongly $\lambda$-pure subobject stable. Let $\mathfrak{W}$ denote the class of acyclic sequences in $\mathrm{Ch}(\mathpzc{E})$.

\begin{lem}\label{lem:Znsubexact}
Let $\mathpzc{E}$ be a weakly idempotent complete exact category, and let
$$0\rightarrow X\rightarrow Y\rightarrow Z\rightarrow 0$$
be a degreewise exact sequence in $\mathrm{Ch}(\mathpzc{E})$ such that for each $n\in\mathbb{Z}$, 
$$0\rightarrow Z_{n}X\rightarrow Z_{n}Y\rightarrow Z_{n}Z\rightarrow 0$$
is an exact sequence. If $Y$ is acyclic then both $X$ and $Z$ are acyclic.
\end{lem}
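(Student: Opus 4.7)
The plan is to first establish the acyclicity of $Z$ by a direct obscure-axiom argument, and then deduce the acyclicity of $X$ via a comparison of kernels obtained from the $3\times3$ lemma. First, since $Y$ is acyclic, each differential $d_n^Y$ admits the canonical admissible factorization $Y_n\twoheadrightarrow Z_{n-1}Y\hookrightarrow Y_{n-1}$, producing a short exact sequence $0\to Z_nY\to Y_n\to Z_{n-1}Y\to 0$ for each $n$.

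To show $Z$ is acyclic, I will compare two factorizations of the natural map $Y_n\to Z_{n-1}Z$ obtained by composing $d_n^Y$ with appropriate projections. On the one hand this map factors as $Y_n\twoheadrightarrow Z_{n-1}Y\twoheadrightarrow Z_{n-1}Z$; both arrows are admissible epimorphisms (the second by hypothesis), so the composite is an admissible epimorphism. On the other hand it factors as $Y_n\twoheadrightarrow Z_n \xrightarrow{\overline{d_n^Z}} Z_{n-1}Z$, where $\overline{d_n^Z}$ is the corestriction of $d_n^Z$ through the admissible mono $Z_{n-1}Z\hookrightarrow Z_{n-1}$, available since $d^Z\circ d^Z=0$. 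As $Y_n\twoheadrightarrow Z_n$ is an admissible epimorphism (from the degreewise short exact sequence), the obscure axiom forces $\overline{d_n^Z}:Z_n\to Z_{n-1}Z$ to be an admissible epimorphism. Its kernel is $Z_nZ$ by construction, so $d_n^Z$ admits the admissible factorization $Z_n\twoheadrightarrow Z_{n-1}Z\hookrightarrow Z_{n-1}$ with kernel $Z_nZ$, exhibiting $Z$ as acyclic.

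For the acyclicity of $X$, I will apply the $3\times 3$ lemma to the diagram whose columns are the short exact sequences $0\to Z_nX\to X_n\to C_n\to 0$, $0\to Z_nY\to Y_n\to Z_{n-1}Y\to 0$, and $0\to Z_nZ\to Z_n\to D_n\to 0$ (with $C_n:=X_n/Z_nX$ and $D_n:=Z_n/Z_nZ$), and whose top two rows are the hypothesized short exact sequences. The $3\times 3$ lemma yields the bottom row $0\to C_n\to Z_{n-1}Y\to D_n\to 0$. The previous step identifies $D_n\cong Z_{n-1}Z$ via the map induced by $d_n^Z$, and chasing the $3\times 3$ commutativity shows that the composite $Z_{n-1}Y\twoheadrightarrow D_n\cong Z_{n-1}Z$ coincides with the hypothesized quotient $Z_{n-1}Y\twoheadrightarrow Z_{n-1}Z$ (both arise as the restriction of $Y_{n-1}\twoheadrightarrow Z_{n-1}$ to $Z_{n-1}Y$). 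By uniqueness of kernels of short exact sequences, $C_n=Z_{n-1}X$, so $d_n^X$ factors as $X_n\twoheadrightarrow Z_{n-1}X\hookrightarrow X_{n-1}$ with kernel $Z_nX$, proving $X$ is acyclic.

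The main obstacle will be the careful application of the obscure axiom in the second step to promote the a priori morphism $\overline{d_n^Z}$ to an admissible epimorphism; this is the crucial exact-categorical input that allows the entire argument to take place within $\mathpzc{E}$ without passing to the left heart.
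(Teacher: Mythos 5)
Your proof is correct, but it takes a genuinely different route from the paper's. The paper disposes of this lemma in one line: choose a right abelianisation $I:\mathpzc{E}\rightarrow\mathpzc{A}$ and reduce to the classical statement for abelian categories. You instead argue entirely inside $\mathpzc{E}$: the dual obscure axiom (cancellation of admissible epimorphisms in a weakly idempotent complete exact category) upgrades the corestriction $\overline{d_n^Z}:Z_n\rightarrow Z_{n-1}Z$ to an admissible epimorphism with kernel $Z_nZ$, giving acyclicity of $Z$, and the $3\times3$ lemma then identifies $X_n/Z_nX$ with $Z_{n-1}X$ inside $Z_{n-1}Y$, giving acyclicity of $X$. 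What each approach buys: yours is self-contained and makes visible exactly where weak idempotent completeness is used, whereas the paper's is shorter but silently requires that the chosen embedding preserves the kernels $Z_n(-)$ and reflects both exactness of short sequences and acyclicity of complexes. One point you should make explicit: you assert that $0\rightarrow Z_nX\rightarrow X_n\rightarrow C_n\rightarrow 0$ is short exact, but a priori $Z_nX\rightarrow X_n$ is only a kernel. It is in fact an admissible monomorphism by the same cancellation principle you use for $Z$: the composite $Z_nX\rightarrow X_n\rightarrow Y_n$ equals $Z_nX\rightarrow Z_nY\rightarrow Y_n$, a composite of admissible monomorphisms (the first by hypothesis, the second since $Y$ is acyclic), so $Z_nX\rightarrow X_n$ is an admissible monomorphism by the obscure axiom. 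With that supplied, the three columns of your $3\times3$ diagram are short exact, the bottom row is a complex because $X_n\rightarrow C_n$ is epic and $X_n\rightarrow Y_n\rightarrow Z_n$ vanishes, and the rest of your argument goes through.
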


\begin{proof}
Pick a right abelianisation $I:\mathpzc{E}\rightarrow\mathpzc{A}$, and use the fact that this is true for abelian categories. 
\end{proof}

\begin{cor}[c.f. \cite{estrada2023k} Lemma 3.2]
Let $\mathpzc{E}$ be a purely $\lambda$-accessible exact category in which transfinite compositions of $\lambda$-pure monomorphisms are admissible monomorphisms. Let 
$$0\rightarrow X\rightarrow Y\rightarrow Z\rightarrow 0$$
be a $\lambda$-pure exact sequence in $\mathrm{Ch}(\mathpzc{E})$. If $Y$ is acyclic then both $X$ and $Z$ are acyclic.
\end{cor}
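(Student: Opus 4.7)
The plan is to apply Lemma \ref{lem:Znsubexact}: we must verify that the sequence is degreewise exact in $\mathpzc{E}$ and that applying the cycle functor $Z_n$ termwise yields another short exact sequence in $\mathpzc{E}$. Since evaluation in degree $n$ preserves split monomorphisms and $\lambda$-filtered colimits, it sends $\lambda$-pure monomorphisms to $\lambda$-pure monomorphisms; by the standing hypothesis that $\lambda$-pure monos are admissible, each $X_n\to Y_n$ is thus admissible, and the sequence $0\to X_n\to Y_n\to Z_n\to 0$ is exact in $\mathpzc{E}$ for every $n$.

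For the second condition, we exploit that $\mathrm{Ch}(\mathpzc{E})$ is itself a $\lambda$-accessible additive category, since chain complexes in a $\lambda$-accessible additive category inherit $\lambda$-accessibility through the degreewise description of colimits. By the analog of Proposition 4.4 of \cite{positselski2023locally} applied in $\mathrm{Ch}(\mathpzc{E})$, the $\lambda$-pure monomorphism $X\to Y$ is a $\lambda$-directed colimit in $\mathrm{Mor}(\mathrm{Ch}(\mathpzc{E}))$ of split monomorphisms of chain complexes. Since cokernels commute with colimits, the whole short exact sequence is realised as a $\lambda$-directed colimit of split short exact sequences $0\to X_i\to Y_i\to Z_i\to 0$ of chain complexes. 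Each such split SES of complexes is degreewise split, and since the cycle functor $Z_n$ is additive it sends each to a split short exact sequence $0\to Z_nX_i\to Z_nY_i\to Z_nZ_i\to 0$ in $\mathpzc{E}$.

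Finally, because $\lambda$-filtered colimits commute with finite limits (hence with the cycle functor $Z_n$, which is a kernel) in the $\lambda$-accessible additive category $\mathrm{Ch}(\mathpzc{E})$, taking the colimit recovers $0\to Z_nX\to Z_nY\to Z_nZ\to 0$ as a $\lambda$-directed colimit of split short exact sequences in $\mathpzc{E}$. This is thus a $\lambda$-pure exact sequence, and by hypothesis it is admissible. Lemma \ref{lem:Znsubexact} then immediately gives that $X$ and $Z$ are acyclic. The main subtlety to verify is the commutation of the cycle functor with $\lambda$-directed colimits in this setting and the fact that $\lambda$-directed colimits of split short exact sequences are $\lambda$-pure; both are standard properties of the $\lambda$-pure exact structure on an additive $\lambda$-accessible category, and follow from the discussion preceding Proposition \ref{prop:colimitslambdapure}.
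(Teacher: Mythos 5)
Your proof is correct in substance, but it takes a different route from the paper's, and one step deserves a more careful justification. The paper's argument is shorter: it observes that for $E$ a $\lambda$-presentable object of $\mathpzc{E}$, the sphere complex $S^{n}(E)$ is $\lambda$-presentable in $\mathrm{Ch}(\mathpzc{E})$ and satisfies $\mathrm{Hom}_{\mathrm{Ch}(\mathpzc{E})}(S^{n}(E),W)\cong\mathrm{Hom}_{\mathpzc{E}}(E,Z_{n}W)$; applying $\mathrm{Hom}(S^{n}(E),-)$ to the $\lambda$-pure sequence of complexes therefore directly exhibits $0\to Z_{n}X\to Z_{n}Y\to Z_{n}Z\to 0$ as $\lambda$-pure in $\mathpzc{E}$, hence exact, and Lemma \ref{lem:Znsubexact} finishes. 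You instead use the characterisation of $\lambda$-pure monomorphisms as $\lambda$-directed colimits of split monomorphisms and then pass the splitting through $Z_{n}$. Both work, but your route leans on the claim that $Z_{n}$ commutes with $\lambda$-directed colimits in $\mathpzc{E}$, and this does \emph{not} follow from the discussion preceding Proposition \ref{prop:colimitslambdapure}, which concerns only the pure exact structure. What saves you is that a purely $\lambda$-accessible exact category is by definition complete, and a complete $\lambda$-accessible category is locally $\lambda$-presentable, in which $\lambda$-directed colimits commute with $\lambda$-small limits (Ad\'amek--Rosick\'y); kernels are finite limits, so $Z_{n}$ does commute with the relevant colimits. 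You should cite that fact explicitly rather than attributing it to the pure exact structure. Note that the paper's test-object argument sidesteps this entirely, since the only commutation of finite limits with filtered colimits it needs takes place in $\mathpzc{Ab}$ after applying $\mathrm{Hom}(E,-)$, not in $\mathpzc{E}$ itself; this is why it is the more economical proof. Your first paragraph (degreewise exactness via evaluation functors) and the identification of the cokernel via commutation of cokernels with colimits are both fine.
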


\begin{proof}
As in \cite{estrada2023k} Lemma 3.2, let $E$ be a $\lambda$-presentable object of $\mathpzc{E}$. Then for each $n$ $S^{n}(E)$ is $\lambda$-presentable objects of $\mathrm{Ch}(\mathpzc{E})$. Thus 
$$0\rightarrow X\rightarrow Y\rightarrow Z\rightarrow 0$$
is a pure exact sequence in $\mathrm{Ch}(\mathpzc{E})$ then 
$$0\rightarrow\mathrm{Hom}(E,Z_{n}X)\rightarrow\mathrm{Hom}(E,Z_{n}Y)\rightarrow\mathrm{Hom}(E,Z_{n}Z)\rightarrow0$$
is an exact sequence, so
$$0\rightarrow Z_{n}X\rightarrow Z_{n}Y\rightarrow Z_{n}Z\rightarrow 0$$
is a pure exact sequence. The claim now follows from Lemma \ref{lem:Znsubexact}.  
\end{proof}

\begin{prop}\label{prop:acyclicintersctstronglypure}
Let $\mathpzc{E}$ be a purely $\lambda$-accessible exact category $\mathpzc{A}$ be a strongly $\lambda$-pure subobject stable class in $\mathrm{Ch}(\mathpzc{E})$. Suppose that the class $\mathpzc{A}\cap\mathfrak{W}$ of acyclic complexes in $\mathpzc{A}$ is closed under transfinite extensions. Then $\mathpzc{A}\cap\mathfrak{W}$ is also a strongly $\lambda$-pure subobject stable class.
\end{prop}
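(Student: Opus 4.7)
We verify the two axioms of strong $\lambda$-pure subobject stability for the class $\mathpzc{A}\cap\mathfrak{W}$. The immediate reduction is that since $\mathpzc{A}\cap\mathfrak{W}\subseteq\mathpzc{A}$, every almost (resp.\ pseudo) $(\mathpzc{A}\cap\mathfrak{W},\lambda)$-pure monomorphism is also of the corresponding type for $\mathpzc{A}$. Strong $\lambda$-pure subobject stability of $\mathpzc{A}$ therefore automatically provides $\mathpzc{A}$-membership of all subobjects, cokernels, and transfinite colimits appearing in the statement, together with exactness of the limiting short exact sequence; thus we only need to propagate acyclicity.

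The base case is immediate. For an almost $(\mathpzc{A}\cap\mathfrak{W},\lambda)$-pure monomorphism $f\colon N\to M$, Remark \ref{rem:pushpull}(1) identifies $M/N\cong B/A\in\mathpzc{A}\cap\mathfrak{W}$, which is in particular acyclic. If moreover $M$ is acyclic, then the short exact sequence $0\to N\to M\to M/N\to 0$ combined with the three-out-of-two property for acyclic complexes (applied through the long exact sequence in the left heart $\mathrm{LH}(\mathpzc{E})$) forces $N$ to be acyclic.

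For the general transfinite case, take a sequence $\{M_\alpha\to M\}_{\alpha\in\Gamma}$ with each $M_{\alpha+1}\to M$ almost $(\mathpzc{A}\cap\mathfrak{W},\lambda)$-pure, and assume $M\in\mathpzc{A}\cap\mathfrak{W}$ (the case relevant for condition (i); condition (ii) is proved in parallel). We show by transfinite induction on $\alpha$ that each $M_\alpha$ and each $M/M_\alpha$ lies in $\mathpzc{A}\cap\mathfrak{W}$. The successor case follows from the base case applied to $M_{\alpha+1}\to M$. At a limit ordinal $\alpha$, we view $M_\alpha=\colim_{\alpha'<\alpha} M_{\alpha'}$ as a transfinite extension, with successive cokernels $M_{\alpha'+1}/M_{\alpha'}$. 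These cokernels are acyclic by the three-out-of-two property applied to $0\to M_{\alpha'}\to M_{\alpha'+1}\to M_{\alpha'+1}/M_{\alpha'}\to 0$ (using inductive acyclicity of $M_{\alpha'}$ and $M_{\alpha'+1}$), and they lie in $\mathpzc{A}$ via the $3\times 3$-lemma identification $M_{\alpha'+1}/M_{\alpha'}\cong\ker(M/M_{\alpha'}\to M/M_{\alpha'+1})$ together with strong purity of $\mathpzc{A}$ applied to an induced pseudo-pure structure on $M_{\alpha'}\to M_{\alpha'+1}$. The closure hypothesis on $\mathpzc{A}\cap\mathfrak{W}$ under transfinite extensions then yields $M_\alpha\in\mathpzc{A}\cap\mathfrak{W}$, whence $M/M_\alpha\in\mathpzc{A}\cap\mathfrak{W}$ via the short exact sequence $0\to M_\alpha\to M\to M/M_\alpha\to 0$. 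The main anticipated obstacle is securing $\mathpzc{A}$-membership of the intermediate cokernels $M_{\alpha'+1}/M_{\alpha'}$ at the limit stage; this requires carefully extracting from the original data a compatible pseudo-pure structure on the restricted subsequences, so that strong $\lambda$-pure subobject stability of $\mathpzc{A}$ can be invoked.
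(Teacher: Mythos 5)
Your proof follows essentially the same route as the paper's: membership in $\mathpzc{A}$ and exactness of the colimit sequence come for free from strong $\lambda$-purity of $\mathpzc{A}$, the colimit $\colim M_{\alpha}$ lands in $\mathpzc{A}\cap\mathfrak{W}$ by the transfinite-extension hypothesis, and acyclicity propagates by thickness of $\mathfrak{W}$ (two-out-of-three for quasi-isomorphisms). The obstacle you flag at limit stages --- securing that the intermediate cokernels $M_{\alpha'+1}\big\slash M_{\alpha'}$ lie in $\mathpzc{A}\cap\mathfrak{W}$ --- is handled in the paper simply by including, as part of the data of the sequences it considers, that each $M_{\alpha}\rightarrow M_{\alpha+1}$ is a $\lambda$-pure monomorphism with cokernel in $\mathpzc{A}\cap\mathfrak{W}$, which is the form in which such sequences actually arise in Lemma \ref{lem:accdec}.
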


\begin{proof}
Since $\mathrm{Ch}(\mathpzc{E})$ is weakly $\mathbf{AdMon}_{\mathpzc{A}}$-elementary, it is certainly weakly $\mathpzc{A}\cap\mathfrak{W}$-elementary. Let $M\in\mathpzc{A}\cap\mathfrak{W}$, and $M_{\alpha\in A}\rightarrow M$ be a transfinite sequence of objects in $\mathpzc{E}_{\big\slash M}$ such that 
\begin{enumerate}
\item
for each successor ordinal $\alpha+1$, $M_{\alpha+1}\rightarrow M$ is an almost $(\mathpzc{A}\cap\mathfrak{W},\lambda)$-pure monomorphism.
\item
each $M_{\alpha}\rightarrow M_{\alpha+1}$ is a $\lambda$-pure monomorphism with cokernel in $\mathpzc{A}\cap\mathfrak{W}$.
\end{enumerate}
Now by the assumption that $\mathpzc{A}$ is strongly $\lambda$-pure subobject stable, we get an exact sequence
$$0\rightarrow\colim M_{\alpha}\rightarrow M\rightarrow\colim M\big\slash M_{\alpha}\rightarrow 0$$
with $M\big\slash M_{\alpha}\in\mathpzc{A}$. Now as a transfinite extension of objects in $\mathpzc{A}\cap\mathfrak{W}$, $\colim M_{\alpha}$ is in $\mathpzc{A}\cap\mathfrak{W}$. $M$ is also acyclic. By \cite{kelly2016homotopy} Lemma 4.2.36 $\colim M\big\slash M_{\alpha}$ is also acyclic. 
\end{proof}

\subsection{Cotorsion Pairs and Pre-Covering Classes}

In this Section we give the main application of our work on deconstructibility above, namely the construction of functorially complete cotorsion pairs. 

\subsubsection{Cotorsion Pairs}
We begin by recalling what cotorsion pairs are. As in \cite{kelly2016homotopy} we will largely follow the terminology and notation of \cite{vst2012exact}. Let $\mathcal{S}$ be a class of objects in an exact category $(\mathpzc{E},\mathpzc{Q})$. We denote by ${}^{\perp_{\mathpzc{Q}}}\mathcal{S}$ the class of all objects $X$ such that $\mathrm{Ext}^{1}(X,S)=0$ for all $S\in\mathcal{S}$, and by $\mathcal{S}^{\perp_{\mathpzc{Q}}}$ the class of all objects $X$ such that $\mathrm{Ext}^{1}(S,X)=0$ for all $S\in\mathcal{S}$. When it does not cause confusion we will just write these as ${}^{\perp}\mathcal{S}$ and $\mathcal{S}^{\perp}$ respectively. We also denote by $\mathbf{AdMon}_{\mathcal{S}}$ the class of admissible monomorphisms with cokernel in $\mathcal{S}$, and by $\mathbf{AdEpi}_{\mathcal{S}}$ the class of admissible epimorphisms with kernel in $\mathcal{S}$.

\begin{defn}
Let $\mathpzc{E}$ be an exact category. A \textit{cotorsion pair} on $\mathpzc{E}$ is a pair of families of objects $(\mathfrak{L},\mathfrak{R})$ of $\mathpzc{E}$ such that $\mathfrak{L}=\;^{\perp}\mathfrak{R}$ and $\mathfrak{R}=\mathfrak{L}^{\perp}$.
\end{defn}

\begin{defn}
A cotorsion pair $(\mathfrak{L},\mathfrak{R})$ is said to have \textit{enough (functorial) projectives} if for every $X\in\mathpzc{E}$ there is an admissible epic $p:Y\rightarrow X$, (functorial in $X$), such that $Y\in\mathfrak{L}$ and $\textrm{Ker}(p)\in\mathfrak{R}$. It is said to have \textit{enough (functorial) injectives} if, for every $X$, there is an admissible monic $i:X\rightarrow Z$, (functorial in $X$), such that  $Z\in\mathfrak{R}$ and $\textrm{Coker}(i)\in\mathfrak{L}$. A cotorsion pair is said to be \textit{(functorially) complete} if it has enough (functorial) projectives and enough (functorial) injectives.
\end{defn}

Constructing functorially complete cotorsion pairs is in general a difficult task. Fortunately \cite{saorin2011exact} and \cite{vst2012exact} have provided useful technqiues for doing so.

\subsubsection{Cotorsion Pairs Determined by a Set}

Let $\mathcal{S}$ be a set of objects in an exact category $\mathpzc{E}$. Suppose that $\mathpzc{E}$ is weakly $\mathbf{AdMon}_{\mathcal{S}}$ elementary. Let $\mathrm{Filt}(\mathcal{S})$ denote the class of objects obtained as transfinite extensions of objects in $\mathcal{S}$. 

For a given class $\mathcal{I}$ of admissible monomorphisms, denote by $\mathrm{Coker}(\mathcal{I})$ the class of objects obtained as cokernels of maps in $\mathcal{I}$. 

\begin{defn}[\cite{saorin2011exact} Definition 2.3]
Let $\mathpzc{E}$ be an exact category and $\mathcal{I}$ a class of admissible monomorphisms. We say that $\mathcal{I}$ is
\begin{enumerate}
\item
is \textit{homological} if the following conditions are equivalent for any object $T\in\mathpzc{E}$:
\begin{enumerate}
\item
$\mathrm{Ext}^{1}(S,T)=0$ for all $S\in\mathrm{coker}(\mathcal{I})$
\item
The map $T\rightarrow0$ belongs to $\mathcal{I}$-inj.
\end{enumerate}
\item
\textit{strongly homological} if given any $j:A\rightarrow B\in\mathbf{AdMon}_{\mathrm{coker}(\mathcal{I})}$, there is a morphism $i:A'\rightarrow B'$ in $\mathcal{I}$ giving rise to a commutative diagram whose rows are exact sequences
\begin{displaymath}
\xymatrix{
0\ar[r] & A'\ar[r]\ar[d] & B'\ar[r]\ar[d] & S\ar[d]\ar[r] & 0\\
0\ar[r] & A\ar[r] & B\ar[r] & S\ar[r] & 0
}
\end{displaymath}
\end{enumerate}
\end{defn}

The following is shown in \cite{saorin2011exact}.

\begin{prop}[\cite{saorin2011exact} Proposition 2.7 (2)]\label{prop:gens}
Let $\mathpzc{E}$ be an exact category with a set of generators aribtrary coproducts of which exist, and which is weakly idempotent complete. Then for each set of objects $\mathpzc{A}$ there is a strongly homological set of inflations $\mathcal{I}$ such that $\mathrm{Coker}(\mathcal{I})$ is the class of objects isomorphism to objects of $\mathpzc{A}$. Moreover if $\mathpzc{A}$ is closed under kernels of admissible epimorphisms between objects in $\mathpzc{A}$, and $\mathpzc{A}$ contains a generator, then $\mathcal{I}$ can be chosen such that the domains and codomains of maps in $\mathcal{I}$ are in $\mathpzc{A}$. Moreover, any admissible monomorphism with cokernel in $\mathpzc{A}$ is a pushout of a map in $\mathcal{I}$.
\end{prop}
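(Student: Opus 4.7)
The plan is to construct $\mathcal{I}$ as a sufficiently rich set of admissible inflations presenting every object of $\mathpzc{A}$ as the cokernel of a kernel-inflation arising from an admissible epimorphism whose source is a coproduct of fixed generators. Fix a generating set $\mathcal{G}$ for $\mathpzc{E}$. For each isomorphism class representative $A \in \mathpzc{A}$ and each admissible epimorphism $p \colon P \twoheadrightarrow A$ with $P$ a coproduct of copies of objects of $\mathcal{G}$, the kernel $K_{p} \hookrightarrow P$ is an admissible monomorphism, and we place it in $\mathcal{I}$. This collection forms a set, since the cardinalities of the relevant coproducts $P$ may be bounded in terms of $|\mathcal{G}|$ and $\sup_{G \in \mathcal{G}} |\mathrm{Hom}(G,A)|$, and by construction $\mathrm{Coker}(\mathcal{I})$ is precisely $\mathpzc{A}$ up to isomorphism.

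To verify the strongly homological property, let $j \colon A \hookrightarrow B$ be an admissible monomorphism with cokernel $\pi \colon B \twoheadrightarrow S$ and $S \in \mathpzc{A}$. Using that $\mathcal{G}$ generates $\mathpzc{E}$, choose an admissible epimorphism $q \colon Q \twoheadrightarrow B$ with $Q$ a coproduct of copies of generators. Since composites of admissible epimorphisms are admissible, $\pi \circ q \colon Q \twoheadrightarrow S$ is an admissible epimorphism, and its kernel $K$ yields a sequence $0 \to K \to Q \to S \to 0$ belonging to $\mathcal{I}$ by the previous paragraph. The universal property of the kernel $A = \ker(\pi)$ now induces a unique morphism $K \to A$ fitting into the commutative diagram with identity on $S$ demanded by the definition of strongly homological.

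For the refinement when $\mathpzc{A}$ contains a generator $G$ and is closed under kernels of admissible epimorphisms between its own objects, one carries out the same construction with $\mathcal{G} = \{G\} \subseteq \mathpzc{A}$; provided the relevant coproducts $P$ lie in $\mathpzc{A}$ (which is the technical pinch-point, requiring an appropriate coproduct-closure of $\mathpzc{A}$), the closure under kernels then guarantees $K_{p} \in \mathpzc{A}$ as well. The closing pushout assertion follows by observing that the morphism of short exact sequences produced in the strongly homological step induces the identity on the common cokernel $S$; a standard exact-category argument (an application of \cite{Buehler} Proposition 2.12) then identifies the left square
\[
\begin{array}{ccc} K & \to & Q \\ \downarrow & & \downarrow \\ A & \to & B \end{array}
\]
as a pushout, exhibiting $j$ as the pushout of $K \hookrightarrow Q \in \mathcal{I}$ along $K \to A$. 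The main obstacle of the argument is the coproduct-closure needed to keep $P$ inside $\mathpzc{A}$ in the refined statement; the remainder reduces to the universal property of kernels, the composition axiom for admissible epimorphisms, and the classical pushout lemma in exact categories.
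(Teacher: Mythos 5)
The paper offers no proof of this statement---it is imported wholesale from \cite{saorin2011exact}, with only a remark that the extra clauses follow from ``the constructive proof''---so the only question is whether your reconstruction is sound, and it is not: there is a genuine gap at the set-theoretic step, which is the real content of the proposition. Your $\mathcal{I}$ is a set only because you bound the size of the coproducts $P$ deflating onto each $A\in\mathpzc{A}$, a bound depending on $A$ alone. But in your verification of the strongly homological property you choose an admissible epimorphism $q\colon Q\twoheadrightarrow B$ from a coproduct of generators onto the \emph{arbitrary} object $B$, and the size of $Q$ is governed by $B$, not by $S=\mathrm{coker}(j)$; taking $B=S\oplus G^{(\kappa)}$ for large $\kappa$ shows it cannot be bounded in terms of $S$. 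So either $\mathcal{I}$ is a proper class or the conflation $0\to K\to Q\to S\to 0$ is not in $\mathcal{I}$, and as written your argument founders on the second horn. The repair is to fix the middle term once and for all: let $u_S\colon G_S=\bigoplus_{(G,h)}G\to S$ be the canonical deflation indexed by $\coprod_{G\in\mathcal{G}}\mathrm{Hom}(G,S)$, and, given $\pi\colon B\twoheadrightarrow S$, define $v\colon G_S\to B$ by lifting those components $h$ that factor through $\pi$ and sending the remaining components to $0$. The composite $\pi v$ is still an admissible epimorphism: the canonical deflation onto $B$ followed by $\pi$ factors through the ``liftable part'' of $u_S$, so that partial map is a deflation by the obscure axiom (this is exactly where weak idempotent completeness enters---a hypothesis your argument never uses), and $\pi v$ is that deflation precomposed with a split epimorphism. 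Since the admissible epimorphisms $G_S\to S$ form a subset of the hom-set $\mathrm{Hom}(G_S,S)$, taking $\mathcal{I}_S$ to be their kernel inclusions yields a genuine set, after which your remaining steps (the induced map on kernels, and the identification of the left-hand square as a bicartesian square via \cite{Buehler} Proposition 2.12) go through verbatim.

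Two smaller points. For the refinement you correctly isolate the need for the coproducts $G_S$ to lie in $\mathpzc{A}$, but you leave it unresolved; the stated hypotheses do not literally supply it, and in the paper's applications it is supplied because $\mathpzc{A}$ is always closed under transfinite extensions, hence under coproducts of its objects, whereupon closure under kernels of admissible epimorphisms between objects of $\mathpzc{A}$ puts $\ker(w)$ in $\mathpzc{A}$ for every deflation $w\colon G_S\to S$. A proof that flags its central difficulty as a ``technical pinch-point'' without resolving it is not yet a proof; here both pinch-points (the cardinality bound and the coproduct closure) need the arguments above to be closed.
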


\begin{rem}
The second claim of the above proposition concerning the domain and codomain being in $\mathpzc{A}$ is not stated explicitly in \cite{saorin2011exact} Proposition 2.7 (2), but may be immediately deduced from the constructive proof. 
\end{rem}

Strongly homological sets of morphisms are closely related to smallness of cotorsion pairs.
The following definitions are from \cite{kelly2016homotopy}, but as mentioned in loc. cit, they already appeared in much the same form in \cite{hovey} Section 6 for abelian categories, and \cite{gillespie2016derived} Section 5.2 for the $G$-exact structure on a Grothendieck abelian category.

\begin{defn}[\cite{kelly2016homotopy} Definition 4.1.13]
Let $\mathpzc{E}$ be an exact category. A cotorsion pair $(\mathfrak{L},\mathfrak{R})$ on $\mathpzc{E}$ is said to be \textit{cogenerated by a set} if there is a set of objects $\mathcal{G}$ in $\mathfrak{L}$ such that $X\in\mathfrak{R}$ if and only if $\textrm{Ext}^{1}(G,X)=0$ for all $G\in\mathcal{G}$.
\end{defn}

\begin{thm}[\cite{vst2012exact} Theorem 5.16]\label{thm:maincotorsexist}
Let $\mathcal{S}$ be a set of objects in an exact category $\mathpzc{E}$. Suppose that $\mathpzc{E}$ is weakly $\mathbf{AdMon}_{\mathcal{S}}$ elementary, and that there exists a strongly homological set of morphisms $\mathcal{I}$ such that
\begin{enumerate}
\item
$\mathrm{Filt}(\mathcal{S})$ contains a generator for $\mathpzc{E}$.
\item
$\mathrm{Coker}(\mathcal{I})=\mathcal{S}$
\item
domains and codomains of maps in $\mathcal{I}$ are small relative to $\mathcal{I}$-cell.
\end{enumerate}
Then $(\mathrm{Filt}(\mathcal{S}),\mathcal{S}^{\perp})$ is a functorially complete small cotorsion pair
\end{thm}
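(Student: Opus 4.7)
The plan is to construct the cotorsion pair by applying Quillen's small object argument to the set $\mathcal{I}$ and then reading off approximations for both sides. The entire proof can be organised around one key correspondence: because $\mathcal{I}$ is strongly homological (hence homological), an object $Y$ lies in $\mathcal{S}^{\perp}$ precisely when the terminal map $Y\to 0$ belongs to $\mathcal{I}\text{-inj}$. So the SOA factorization of $X\to 0$ will automatically produce the injective half of the cotorsion pair.

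First I would set up the small object argument. Condition (3) supplies the required smallness of domains and codomains of $\mathcal{I}$ relative to $\mathcal{I}\text{-cell}$, so Quillen's SOA provides functorial factorizations $X\to Y = X\to Y'\to Y$ with $X\to Y'\in\mathcal{I}\text{-cell}$ and $Y'\to Y\in\mathcal{I}\text{-inj}$. I would then verify the inclusion $\mathcal{I}\text{-cell}\subseteq\mathbf{AdMon}_{\mathrm{Filt}(\mathcal{S})}$: a single map in $\mathcal{I}$ is an admissible monomorphism with cokernel in $\mathrm{Coker}(\mathcal{I})=\mathcal{S}$; pushouts preserve admissibility of monomorphisms together with the cokernel object (standard in exact categories); and the weak $\mathbf{AdMon}_{\mathcal{S}}$-elementariness assumption ensures that transfinite compositions of such pushouts are admissible monomorphisms with cokernel in $\mathrm{Filt}(\mathcal{S})$.

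Enough functorial injectives now follow by factoring $X\to 0$: the map $X\to Y'$ lies in $\mathbf{AdMon}_{\mathrm{Filt}(\mathcal{S})}$ and $Y'\in\mathcal{S}^{\perp}$ by the homological characterization above. For enough functorial projectives, I would use condition (1): fix a generator $G\in\mathrm{Filt}(\mathcal{S})$, and for each $X$ take the canonical admissible epimorphism $\phi\colon G^{(\mathrm{Hom}(G,X))}\twoheadrightarrow X$; the source lies in $\mathrm{Filt}(\mathcal{S})$ as a coproduct of copies of $G$ (assembled by a transfinite filtration). Writing $K=\ker\phi$, apply the injective factorization already constructed to obtain $K\hookrightarrow Z$ with $Z\in\mathcal{S}^{\perp}$ and cokernel $C\in\mathrm{Filt}(\mathcal{S})$, then form the pushout
\begin{displaymath}
\xymatrix{
K\ar[r]\ar[d] & G^{(\mathrm{Hom}(G,X))}\ar[r]\ar[d] & X\ar@{=}[d]\\
Z\ar[r] & P\ar[r] & X
}
\end{displaymath}
The bottom row is exact with kernel in $\mathcal{S}^{\perp}$, while $P$ is an extension of $C$ by $G^{(\mathrm{Hom}(G,X))}$, so $P\in\mathrm{Filt}(\mathcal{S})$. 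Functoriality is inherited from the functorial SOA.

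The last step is to upgrade this precovering/preenveloping pair to a genuine cotorsion pair, i.e.\ to verify $\mathrm{Filt}(\mathcal{S})={}^{\perp}(\mathcal{S}^{\perp})$. The inclusion $\subseteq$ is Eklof's lemma in exact form, proved in \cite{vst2012exact}. For the reverse: given $X\in{}^{\perp}(\mathcal{S}^{\perp})$, the enough-projectives sequence $0\to Z\to P\to X\to 0$ satisfies $\mathrm{Ext}^{1}(X,Z)=0$, hence splits, so $X$ is a direct summand of some $P\in\mathrm{Filt}(\mathcal{S})$. The genuine obstacle of the theorem lives exactly here: one must show $\mathrm{Filt}(\mathcal{S})$ is closed under direct summands. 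I would quote the standard ``summand lemma'' from the deconstructibility literature (generalised Eklof--Trlifaj), which in the setting of Saorin--\v{S}t'ov\'\i\v{c}ek is built into Theorem 5.16 of \cite{vst2012exact}; all remaining verifications (functoriality of the approximations, and the ``small'' qualifier, which follows automatically from $\mathcal{S}$ being a set) are immediate from the construction.
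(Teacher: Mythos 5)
Your proof is correct and follows essentially the same route as the paper, whose own proof simply defers to Theorem 5.16 of \cite{vst2012exact}: the small object argument on $\mathcal{I}$, the homological characterisation of $\mathcal{S}^{\perp}$ as the objects $T$ with $T\to 0$ in $\mathcal{I}$-inj, the Salce pushout trick for special precovers, and Eklof's lemma together with the summand argument for $\mathrm{Filt}(\mathcal{S})={}^{\perp}(\mathcal{S}^{\perp})$. The one caveat, shared with the paper's one-line proof, is that the cited theorem identifies ${}^{\perp}(\mathcal{S}^{\perp})$ with the class of \emph{direct summands} of objects of $\mathrm{Filt}(\mathcal{S})$, so the pair is literally $(\mathrm{Filt}(\mathcal{S}),\mathcal{S}^{\perp})$ only once $\mathrm{Filt}(\mathcal{S})$ is known to be summand-closed; you correctly isolate this as the residual point rather than glossing over it.
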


\begin{proof}
The fact that it is a functorially complete cotorsion pair is proved identically to \cite{vst2012exact} Theorem 5.16. The fact that it is small is tautological.
\end{proof}

Since in an accessible category all objects are small with repsect to all classes of morphisms, we have the following immediate corollary using Proposition \ref{prop:gens}.

\begin{cor}
Let $\mathcal{S}$ be a set of objects in a purely $\lambda$-accessible exact category $\mathpzc{E}$. Suppose that $\mathpzc{E}$ is weakly $\mathbf{AdMon}_{\mathcal{S}}$ elementary and that $\mathrm{Filt}(\mathcal{S})$ contains a generator. Then $(\mathrm{Filt}(\mathcal{S}),\mathcal{S}^{\perp})$ is a functorially complete small cotorsion pair.
\end{cor}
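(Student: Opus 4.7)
The plan is to deduce this statement as an immediate corollary of Theorem \ref{thm:maincotorsexist} combined with Proposition \ref{prop:gens}, with $\lambda$-accessibility handling the smallness hypothesis in Theorem \ref{thm:maincotorsexist} for free.

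First I would verify that $\mathpzc{E}$ satisfies the running hypotheses of Proposition \ref{prop:gens}. By definition a purely $\lambda$-accessible exact category is complete, and a complete $\lambda$-accessible category is locally presentable (hence cocomplete, so in particular has arbitrary coproducts). The fact that $\mathrm{Filt}(\mathcal{S})$ contains a generator gives $\mathpzc{E}$ a set of generators (a singleton suffices). Finally, as noted in the remark on $\lambda$-accessible additive categories, $\mathpzc{E}$ is weakly idempotent complete. Applying Proposition \ref{prop:gens} to the class $\mathcal{S}$, I obtain a strongly homological set $\mathcal{I}$ of admissible monomorphisms such that $\mathrm{Coker}(\mathcal{I})=\mathcal{S}$ up to isomorphism.

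Next, since $\mathpzc{E}$ is $\lambda$-accessible, every object of $\mathpzc{E}$ is $\kappa$-presentable for some regular cardinal $\kappa$. In particular each domain and codomain of a morphism in the set $\mathcal{I}$ is presentable, and hence small with respect to any class of morphisms, including $\mathcal{I}$-cell. Thus condition (3) of Theorem \ref{thm:maincotorsexist} is automatic.

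The remaining hypotheses of Theorem \ref{thm:maincotorsexist} are either given in the statement or just established: $\mathpzc{E}$ is weakly $\mathbf{AdMon}_{\mathcal{S}}$-elementary by assumption, $\mathrm{Filt}(\mathcal{S})$ contains a generator by assumption, and $\mathrm{Coker}(\mathcal{I})=\mathcal{S}$ by construction. Theorem \ref{thm:maincotorsexist} then concludes that $(\mathrm{Filt}(\mathcal{S}),\mathcal{S}^{\perp})$ is a functorially complete small cotorsion pair. Since there is no genuinely novel construction required --- all the work has been done in Proposition \ref{prop:gens} and Theorem \ref{thm:maincotorsexist} --- there is no real obstacle; the only thing to be careful about is that accessibility genuinely supplies the smallness condition uniformly, which is the reason the corollary holds without needing to check smallness hypotheses by hand.
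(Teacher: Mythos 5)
Your proposal is correct and matches the paper's (unwritten) argument exactly: the paper derives this corollary immediately from Proposition \ref{prop:gens} and Theorem \ref{thm:maincotorsexist}, observing that in an accessible category all objects are small relative to every class of morphisms, so the smallness hypothesis is automatic. Your additional verification of the hypotheses of Proposition \ref{prop:gens} (cocompleteness, weak idempotent completeness, existence of a generator) is a sensible bit of extra care but does not change the route.
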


\begin{cor}\label{thm:cotorsexist}
Suppose that a class $\mathpzc{A}$ is presentably deconstructible in itself relative to $\mathbf{AdMon}$, contains a generator, and is closed under transfinite extensions by admissible monomorphisms. Then $(\mathpzc{A},\mathpzc{A}^{\perp})$ is a small functorially complete cotorsion pair.
\end{cor}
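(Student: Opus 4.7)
The plan is to deduce the statement by applying Theorem \ref{thm:maincotorsexist}. Let $\mathbf{J} \subseteq \mathbf{AdMon}$ be a set of admissible monomorphisms with domains and codomains in $\mathpzc{A}$ witnessing that $\mathpzc{A}$ is presentably deconstructible in itself relative to $\mathbf{AdMon}$. Thus every object of $\mathpzc{A}$ is a transfinite composition of pushouts of maps in $\mathbf{J}_{\mathpzc{A}}$ starting from $0$, and the domains and codomains of these maps are small relative to transfinite compositions of such pushouts. Define $\mathcal{S} := \mathrm{Coker}(\mathbf{J}_{\mathpzc{A}})$, which is a set of objects contained in $\mathpzc{A}$.

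I would first verify that $\mathrm{Filt}(\mathcal{S}) = \mathpzc{A}$. The inclusion $\mathpzc{A} \subseteq \mathrm{Filt}(\mathcal{S})$ is immediate from pre-deconstructibility: each object of $\mathpzc{A}$ is a transfinite composition of maps in $\mathbf{J}_{\mathpzc{A}}$ starting from $0$, whose successive quotients, being cokernels of maps in $\mathbf{J}_{\mathpzc{A}}$, lie in $\mathcal{S}$. The reverse inclusion $\mathrm{Filt}(\mathcal{S}) \subseteq \mathpzc{A}$ follows from $\mathcal{S} \subseteq \mathpzc{A}$ together with the hypothesis that $\mathpzc{A}$ is closed under transfinite extensions by admissible monomorphisms. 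In particular $\mathrm{Filt}(\mathcal{S})$ contains a generator of $\mathpzc{E}$, and the same closure hypothesis (applied to sequences whose cokernels lie in $\mathcal{S} \subseteq \mathpzc{A}$) ensures that $\mathpzc{E}$ is weakly $\mathbf{AdMon}_{\mathcal{S}}$-elementary.

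Next I would invoke Proposition \ref{prop:gens} applied to $\mathcal{S}$ to obtain a strongly homological set of admissible monomorphisms $\mathcal{I}$ with $\mathrm{Coker}(\mathcal{I}) = \mathcal{S}$. The main obstacle is to verify the smallness hypothesis of Theorem \ref{thm:maincotorsexist}, namely that domains and codomains of $\mathcal{I}$ are small relative to $\mathcal{I}$-cell. Here one exploits the explicit construction used in Proposition \ref{prop:gens}: the domains and codomains of maps in $\mathcal{I}$ can be chosen to be built, via finite coproducts and extensions, from a fixed generator of $\mathpzc{E}$ and from objects of $\mathcal{S}$. Since each object of $\mathcal{S}$ is a cokernel of a map in $\mathbf{J}_{\mathpzc{A}}$, its smallness is controlled by the smallness of the codomains of maps in $\mathbf{J}$, which is part of the definition of presentable deconstructibility. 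The smallness of the generator also needs to be assured; if Proposition \ref{prop:gens} is formulated so that a generator of small size may be chosen (e.g.\ one from $\mathpzc{A}$, which up to taking a coproduct is a domain/codomain of some element of $\mathbf{J}$), the smallness then transfers to the domains and codomains of $\mathcal{I}$.

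With these ingredients in place, Theorem \ref{thm:maincotorsexist} yields that $(\mathrm{Filt}(\mathcal{S}),\mathcal{S}^{\perp})$ is a small functorially complete cotorsion pair. Finally, $\mathrm{Filt}(\mathcal{S})^{\perp} = \mathcal{S}^{\perp}$ by an Eklof-type argument (the class $\mathcal{S}^{\perp}$ is closed under transfinite extensions of admissible monomorphisms), and $\mathrm{Filt}(\mathcal{S}) = \mathpzc{A}$ by Step~1, so this gives $(\mathpzc{A}, \mathpzc{A}^{\perp})$ as the desired small functorially complete cotorsion pair.
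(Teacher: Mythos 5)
Your proposal is correct and follows essentially the same route as the paper, whose entire proof is the instruction to apply Theorem \ref{thm:maincotorsexist} to $\mathcal{S}=\mathrm{Coker}(\mathbf{J}_{\mathpzc{A}})$; you have simply spelled out the verifications (that $\mathrm{Filt}(\mathcal{S})=\mathpzc{A}$, that Proposition \ref{prop:gens} supplies the strongly homological set, and that the smallness hypothesis is inherited from presentable deconstructibility) that the paper leaves implicit.
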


\begin{proof}
Let $\mathbf{J}_{\mathpzc{A}}$ be the class of maps from Definition \ref{defn:deconstrexact}. Apply Theorem \ref{thm:maincotorsexist} to $\mathcal{S}=\mathrm{Coker}(\mathbf{J}_{\mathpzc{A}})$. 
\end{proof}

Then we immediately get the corollary which we will primarily use.


\begin{cor}\label{cor:cotorsaccess}
Let $\mathpzc{E}$ be a purely $\lambda$-accessible exact category. Let $\mathpzc{A}$ be a strongly $\lambda$-pure subobject stable class of objects in $\mathpzc{E}$ which is closed under transfinite extensions by admissible monomorphisms, contains a generator, and is thick. Suppose that $\mathpzc{E}$ is weakly $\mathbf{AdMon}_{\mathpzc{A}}$-elementary. Then $(\mathpzc{A},\mathpzc{A}^{\perp})$ is a complete cotorsion pair.
\end{cor}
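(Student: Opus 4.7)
The plan is to deduce this corollary as a direct consequence of combining Lemma \ref{lem:accdec} with Corollary \ref{thm:cotorsexist}. First I would observe that since $\mathpzc{A}$ contains a generator of $\mathpzc{E}$, the ambient category $\mathpzc{E}$ carries a generator, so the hypothesis on the generator in Lemma \ref{lem:accdec} is satisfied. The remaining hypotheses of that lemma (strong $\lambda$-pure subobject stability, closure under transfinite extensions, and being weakly $\mathbf{AdMon}_{\mathpzc{A}}$-elementary) are all assumed, so Lemma \ref{lem:accdec} applies and yields that $\mathpzc{A}$ is presentably deconstructible in itself relative to $\mathbf{AdMon}$.

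Next I would invoke Corollary \ref{thm:cotorsexist}: its hypotheses are presentable deconstructibility in itself relative to $\mathbf{AdMon}$ (just established), containment of a generator (hypothesized), and closure under transfinite extensions by admissible monomorphisms (hypothesized). This yields a small functorially complete cotorsion pair $(\mathrm{Filt}(\mathcal{S}),\mathcal{S}^{\perp})$, where $\mathcal{S}$ is the set of cokernels of the witnessing set $\mathbf{J}_{\mathpzc{A}}$ of maps produced by Lemma \ref{lem:accdec}.

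To conclude, I need to identify the left class of this cotorsion pair with $\mathpzc{A}$ itself. By construction, $\mathcal{S}\subset\mathpzc{A}$, and since $\mathpzc{A}$ is closed under transfinite extensions by admissible monomorphisms, we have $\mathrm{Filt}(\mathcal{S})\subset\mathpzc{A}$. Conversely, the definition of presentable deconstructibility guarantees that every object of $\mathpzc{A}$ arises as a transfinite extension of objects in $\mathcal{S}$, so $\mathpzc{A}\subset\mathrm{Filt}(\mathcal{S})$. Hence $\mathpzc{A}=\mathrm{Filt}(\mathcal{S})$ and $(\mathpzc{A},\mathpzc{A}^{\perp})=(\mathrm{Filt}(\mathcal{S}),\mathcal{S}^{\perp})$ is a complete cotorsion pair, since $\mathcal{S}^{\perp}=\mathrm{Filt}(\mathcal{S})^{\perp}=\mathpzc{A}^{\perp}$.

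The proof is therefore almost entirely a bookkeeping exercise, as the substantive work has been done in Lemma \ref{lem:accdec}. There is no real obstacle here; the only subtlety worth remarking on is that the hypothesis that $\mathpzc{A}$ is thick is not visibly required for the completeness of the cotorsion pair as stated, but it is natural to include it since in practice the classes of interest (\textit{e.g.} the class of flat objects) are thick, and thickness will be useful for subsequent applications to constructing Hovey triples.
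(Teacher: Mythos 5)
Your proposal is correct and follows exactly the route the paper intends: the paper derives this corollary immediately from Lemma \ref{lem:accdec} (which supplies presentable deconstructibility of $\mathpzc{A}$ in itself relative to $\mathbf{AdMon}$, using that the generator contained in $\mathpzc{A}$ gives a generator for $\mathpzc{E}$) together with Corollary \ref{thm:cotorsexist}. Your observation that thickness is not needed for completeness of the cotorsion pair, but is carried along for later applications (e.g.\ injective cotorsion pairs and Hovey triples), is also accurate.
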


\subsubsection{Pre-covering and Pre-enveloping Classes}

The proof of Theorem \ref{thm:maincotorsexist} is at its core just an application of the small object argument - the assumptions just allow us to run such an argument. Closely related to this is establishing whether classes of objects are (pre-)covering. Let $\mathpzc{A}$ be a class of objects in an category $\mathpzc{E}$. Recall that an $\mathpzc{A}$-\textit{precover} of an object $E$ is a map $A\rightarrow E$ with $A\in\mathpzc{A}$ such that $\mathrm{Hom}(A',A)\rightarrow\mathrm{Hom}(A',E)$ is an epimorphism for any $A'\in\mathpzc{A}$, and an $\mathpzc{A}$-\textit{preenvelope} of an object $E$ is a map $E\rightarrow A$ such that $A\in\mathpzc{A}$ and $\mathrm{Hom}(A,A')\rightarrow\mathrm{Hom}(E,A')$ is an epimorphism for all $A'\in\mathpzc{A}$. If $\mathpzc{E}$ has an exact structure, then an $\mathpzc{A}$-special precover is an admissible epimorphism $\pi:A\rightarrow\mathpzc{E}$ with $A\in\mathpzc{A}$ and $\mathrm{Ker}(\pi)\in\mathpzc{A}^{\perp}$, and an $\mathpzc{A}$-special preenvelope is an admissible monomorphism $i:E\rightarrow A$ such that $A\in\mathpzc{A}$ and $\mathrm{coker}(i)\in{}^{\perp}\mathpzc{A}$. Note that by the long exact sequence on $\mathrm{Ext}$, $\mathpzc{A}$-special precovers (resp. $\mathpzc{A}$-special preenvelopes) are $\mathpzc{A}$-precovers (resp. $\mathpzc{A}$-preenvelopes). 

\begin{prop}
Suppose that a class $\mathpzc{A}$ is presentably deconstructible in itself relative to a class of admissible monomorphisms $\mathbf{I}$
\begin{enumerate}
\item
Any object $M$ of $\mathpzc{E}$ admits an $\mathpzc{A}$-precover.
\item
If $\mathpzc{A}$ generates $\mathpzc{E}$ and $\mathpzc{E}$ is weakly $\mathbf{I}_{\mathpzc{A}}$-elementary then every object $M$ of $\mathpzc{E}$ admits a special $\mathpzc{A}$-precover and a special $\mathpzc{A}^{\perp}$-envleope. 
\end{enumerate}
\end{prop}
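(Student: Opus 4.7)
The proof plan is to apply the small object argument to the set $\mathbf{J}$ of admissible monomorphisms furnished by the presentable deconstructibility hypothesis. Write $\mathcal{S} = \mathrm{Coker}(\mathbf{J})$ for the associated set of cokernels, all of which lie in $\mathpzc{A}$.

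For part (1), factor the map $0 \to M$ via the small object argument to obtain $0 \to A \to M$ with $0 \to A$ a transfinite composition of pushouts of maps in $\mathbf{J}$, and $A \to M$ in $\mathbf{J}$-inj. By the deconstructibility assumption this cell map lies in $\mathbf{I}_{\mathpzc{A}}$, so $A \in \mathpzc{A}$. Given any $f : A' \to M$ with $A' \in \mathpzc{A}$, pre-deconstructibility realises $0 \to A'$ as a $\mathbf{J}$-cell complex, and the standard lifting property of cell maps against injective maps produces a diagonal filler $A' \to A$ in the square with sides $0 \to A'$, $A \to M$, and $f$. Hence $A \to M$ is an $\mathpzc{A}$-precover.

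For part (2), use the generating hypothesis to choose an admissible epimorphism $D \to M$ with $D \in \mathpzc{A}$, and factor it as $D \to A \to M$ by the small object argument applied to $\mathbf{J}_{\mathpzc{A}}$, with the first leg in $\mathbf{J}_{\mathpzc{A}}$-cell and the second in $\mathbf{J}_{\mathpzc{A}}$-inj. Concatenating a deconstructibility filtration of $D$ with the cell factor expresses $0 \to A$ as a transfinite composition of pushouts of maps in $\mathbf{J}_{\mathpzc{A}}$, hence $A \in \mathpzc{A}$. Since $D \to M$ is an admissible epimorphism factoring through $A$, the obscure axiom---valid here because accessible additive categories are weakly idempotent complete as noted in the paper---yields that $A \to M$ is an admissible epimorphism; let $K$ be its kernel. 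The $\mathbf{J}_{\mathpzc{A}}$-injectivity of $A \to M$ yields $\mathrm{Ext}^{1}(S, K) = 0$ for every $S \in \mathcal{S}$: writing $S = \mathrm{Coker}(j)$ for $j : X \to Y$ in $\mathbf{J}_{\mathpzc{A}}$, any $\varphi : X \to K$ fits into a commutative square with top $X \to K \to A$ and bottom $Y \to 0 \to M$, and the diagonal filler produced by the lifting property factors through $K$ to extend $\varphi$ to a map $Y \to K$, thereby trivialising the class represented by $\varphi$ in $\mathrm{Ext}^{1}(S, K)$. Eklof's lemma in the exact setting then upgrades $K \in \mathcal{S}^{\perp}$ to $K \in \mathrm{Filt}(\mathcal{S})^{\perp} = \mathpzc{A}^{\perp}$; its proof uses precisely the weakly $\mathbf{I}_{\mathpzc{A}}$-elementary hypothesis, since the inductive assembly of splittings along an $\mathbf{I}_{\mathpzc{A}}$-filtration demands that transfinite colimits of towers of short exact sequences remain short exact at limit stages.

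For the special $\mathpzc{A}^{\perp}$-preenvelope one dualises the construction: iteratively push $M$ out along every pair $(j, g)$ with $j \in \mathbf{J}_{\mathpzc{A}}$ and $g$ mapping the source of $j$ into the current stage, forming a transfinite tower $M = M_{0} \to M_{1} \to \cdots$ and setting $B$ equal to the colimit. The composite $M \to B$ lies in $\mathbf{I}_{\mathpzc{A}}$ as a transfinite composition of pushouts of maps in $\mathbf{J}_{\mathpzc{A}}$, so it is an admissible monomorphism with cokernel in $\mathpzc{A}$. The termination clause of the small object argument forces $\mathrm{Ext}^{1}(S, B) = 0$ for every $S \in \mathcal{S}$, and Eklof's lemma again promotes this to $B \in \mathpzc{A}^{\perp}$. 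Since $\mathpzc{A} \subseteq {}^{\perp}(\mathpzc{A}^{\perp})$, the cokernel of $M \to B$ lies in ${}^{\perp}(\mathpzc{A}^{\perp})$ and $M \to B$ is a special $\mathpzc{A}^{\perp}$-preenvelope in the sense of the definition recalled above. The principal obstacle is the exact-categorical Eklof step, which is the only place the hypothesis that $\mathpzc{E}$ is weakly $\mathbf{I}_{\mathpzc{A}}$-elementary---absent from the argument for part (1)---does essential work.
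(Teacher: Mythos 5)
Your part (1) is correct and is the same argument the paper gives. The gap is in part (2), at the step where you convert the lifting property into $\mathrm{Ext}^{1}$-vanishing. Producing, for each $j:X\rightarrow Y$ in $\mathbf{J}_{\mathpzc{A}}$ with cokernel $S$ and each $\varphi:X\rightarrow K$, an extension $Y\rightarrow K$ shows only that $\mathrm{Hom}(Y,K)\rightarrow\mathrm{Hom}(X,K)$ is surjective, i.e.\ that the connecting map $\mathrm{Hom}(X,K)\rightarrow\mathrm{Ext}^{1}(S,K)$ in the long exact sequence of the conflation $X\xrightarrow{j}Y\rightarrow S$ is zero. That kills exactly the classes arising as pushouts of this particular conflation; it gives that $\mathrm{Ext}^{1}(S,K)\rightarrow\mathrm{Ext}^{1}(Y,K)$ is \emph{injective}, not that $\mathrm{Ext}^{1}(S,K)=0$, because an arbitrary conflation $K\rightarrow Z\rightarrow S$ need not receive a morphism of conflations from $X\rightarrow Y\rightarrow S$. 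Nothing in the definition of presentable deconstructibility forces the set $\mathbf{J}$ to be \emph{homological} in the sense of \cite{saorin2011exact} Definition 2.3: for instance $\mathbf{J}$ may contain split monomorphisms $A_{\alpha}\rightarrow A_{\alpha}\oplus S$, for which the lifting condition on $K$ is automatic for every $K$ while $\mathrm{Ext}^{1}(S,K)$ remains unconstrained. The same unjustified step recurs in your preenvelope construction, where you assert that the termination clause of the small object argument "forces $\mathrm{Ext}^{1}(S,B)=0$".

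The missing ingredient is precisely what the generator hypothesis in part (2) is for, and it is supplied by Proposition \ref{prop:gens} (\cite{saorin2011exact} Proposition 2.7): using a generator one constructs a \emph{strongly homological} set $\mathcal{I}$ of admissible monomorphisms with $\mathrm{Coker}(\mathcal{I})=\mathcal{S}$, meaning that every conflation $K\rightarrow Z\rightarrow S$ with $S\in\mathcal{S}$ receives a morphism of conflations from some $j\in\mathcal{I}$ inducing the identity on $S$; for such a set, $\mathcal{I}$-injectivity of $K\rightarrow 0$ genuinely yields $\mathrm{Ext}^{1}(S,K)=0$, and your Eklof step then goes through. Running the small object argument on this $\mathcal{I}$ (together with $\mathbf{J}$, so the cell part still lands in $\mathbf{I}_{\mathpzc{A}}$) repairs both halves of your argument, and this is exactly how the proof of \cite{saorin2011exact} Theorem 2.13, which the paper cites for part (2), proceeds. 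Note that you use the generator only to produce the initial deflation $D\rightarrow M$, which is a different and insufficient use of that hypothesis. Two secondary remarks: your direct route to the special precover needs $\mathpzc{E}$ weakly idempotent complete for the obscure axiom (you flag this; the cited proof instead builds the preenvelope of $\mathrm{Ker}(D\rightarrow M)$ first and applies the Salce pushout trick, avoiding the issue), and your placement of the weak $\mathbf{I}_{\mathpzc{A}}$-elementarity hypothesis inside the Eklof step is correct.
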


\begin{proof}
\begin{enumerate}
\item
By the small object argument we may factor any map $0\rightarrow M$ as $0\rightarrow A\rightarrow M$ with $0\rightarrow A\in\mathbf{I}_{\mathpzc{A}}$-cell and $A\rightarrow M$ having the right-lifting property with respect to all maps in $\mathbf{I}_{\mathpzc{A}}$. Note that $A\in\mathpzc{A}$. Moreover any map of the form $0\rightarrow B$ with $B\in\mathpzc{A}$ is in $\mathbf{I}_{\mathpzc{A}}$. Thus the map $\mathrm{Hom}(B,A)\rightarrow\mathrm{Hom}(B,M)$ is an epimorphism. 
\item
In this case $\mathbf{I}_{\mathpzc{A}}$-cell consists precisely of admissible monomorphisms whose cokernel is in $\mathpzc{A}$. The proof proceeds identically to \cite{saorin2011exact} Theorem 2.13.
\end{enumerate}
\end{proof}

\begin{defn}[\cite{kelly2016homotopy} Definition 4.1.14]
Suppose $\mathpzc{E}$ is an exact category. A cotorsion pair $(\mathfrak{L},\mathfrak{R})$ is said to be \textit{small} if the following conditions hold
\begin{enumerate}
\item
$\mathfrak{L}$ contains a set of admissible generators of $\mathpzc{E}$.
\item
$(\mathfrak{L},\mathfrak{R})$ is cogenerated by a set $\mathcal{G}$.
\item
For each $G\in\mathcal{G}$ there is an admissible monic $i_{G}$ with cokernel $G$ such that, if $\textrm{Hom}_{\mathpzc{E}}(i_{G},X)$ is surjective for all $G\in\mathcal{G}$, then $X\in\mathfrak{R}$.
\end{enumerate}
The set of $i_{G}$ together with the maps $0\rightarrow U_{i}$ for some generating set $\{U_{i}\}$ contained in $\mathfrak{L}$ is called a set of \textit{generating morphisms} of $(\mathfrak{L},\mathfrak{R})$.
\end{defn}

\section{Exact Model Structures}\label{sec:exactmodelstructures}

In this chapter we will apply the results of Chapter 1 to construct model structures on exact categories.

\subsection{Weak Factorisation Systems and Cotorsion Pairs}

We begin by recalling the general theory of exact model structures and cotorsion pairs, developed in \cite{kelly2016homotopy}, \cite{vst2012exact}, \cite{gillespie2016exact}, \cite{gillespie2016derived}, \cite{gillespie}, \cite{Gillespie2} following the seminal work of \cite{hovey} in the abelian case. Let $\mathcal{S}$ be a class of maps in a category $\mathpzc{E}$. Denote by $\mathcal{S}^{\llp}$ the class of maps which have the right lifting property with respect to $\mathcal{S}$, and ${}^{\llp}\mathcal{S}$ the class of maps which have the left lifting proerty with respect to $\mathcal{S}$.

\begin{defn}
A \textit{(functorial) weak factorisation system} on a category $\mathpzc{E}$  is a pair $(\mathcal{L},\mathcal{R})\subset\mathrm{Mor}(\mathpzc{E})\times\mathrm{Mor}(\mathpzc{E})$ of classes of maps in $\mathpzc{E}$ such that 
\begin{enumerate}
\item
any map $g:X\rightarrow Y$ in $\mathpzc{E}$ (functorially) factors as $f\circ c$, where $c:X\rightarrow Z\in\mathcal{L}$, and $f:Z\rightarrow Y\in\mathcal{R}$.
\item
$\mathcal{L}={}^{\llp}\mathcal{R}$, $\mathcal{R}=\mathcal{L}^{\llp}$.
\end{enumerate}
\end{defn}

Now let $\mathpzc{E}$ be an exact category. 

\begin{defn}
Let $\mathpzc{E}$ be an exact category. A  weak factorisation system $(\mathcal{L},\mathcal{R})$ on $\mathpzc{E}$ is said to be \textit{compatible} if
\begin{enumerate}
\item
$f\in\mathcal{L}$ if and only if $f$ is an admissible monic and $0\rightarrow\textrm{Coker}(f)$ belongs to $\mathcal{L}$.
\item
$f\in\mathcal{R}$ if and only if $f$ is an admissible epic and $\textrm{Ker}(f)\rightarrow 0$ belongs to $\mathcal{R}$.
\end{enumerate}
\end{defn}

There is a correspondence between compatible weak factorsiation systems and cotorsion pairs.

\begin{thm}[\cite{vst2012exact} Theorem 5.13]
Let $\mathpzc{E}$ be an exact category. Then
$$(\mathcal{L},\mathcal{R})\mapsto(\textrm{Coker}\mathcal{L},\textrm{Ker}\mathpzc{R})\textrm{ and }(\mathfrak{A},\mathfrak{B})\mapsto(\mathbf{AdMon}_{\mathfrak{A}},\mathbf{AdEpi}_{\mathfrak{B}})$$
define mutually inverse bijective mappings between compatible weak factorisation systems and complete cotorsion pairs. The bijections restrict to mutually inverse mappings between compatible functorial weak factorisation systems and functorially complete cotorsion pairs.
\end{thm}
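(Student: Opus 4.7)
The plan is to verify that each assignment produces the structure it should and then show that the two constructions are mutually inverse; inverseness is essentially tautological given the compatibility conditions, since compatibility asserts exactly $\mathcal{L} = \mathbf{AdMon}_{\mathrm{Coker}(\mathcal{L})}$ and $\mathcal{R} = \mathbf{AdEpi}_{\mathrm{Ker}(\mathcal{R})}$, while in the other direction $\mathrm{Coker}(\mathbf{AdMon}_{\mathfrak{A}})=\mathfrak{A}$ by taking $0\to A$ for each $A\in\mathfrak{A}$, and similarly $\mathrm{Ker}(\mathbf{AdEpi}_{\mathfrak{B}})=\mathfrak{B}$.

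For the direction from a compatible weak factorisation system $(\mathcal{L},\mathcal{R})$ to a cotorsion pair, I would set $\mathfrak{A} = \mathrm{Coker}(\mathcal{L})$ and $\mathfrak{B} = \mathrm{Ker}(\mathcal{R})$ and verify $\mathfrak{A} = {}^{\perp}\mathfrak{B}$, the other equality being dual. The inclusion $\mathfrak{A}\subseteq {}^{\perp}\mathfrak{B}$ follows from the Yoneda interpretation of $\mathrm{Ext}^{1}$: given $A\in\mathfrak{A}$, $B\in\mathfrak{B}$, and a short exact sequence $0\to B\to E\to A\to 0$, the map $E\to A$ lies in $\mathcal{R}$ by compatibility, $0\to A$ lies in $\mathcal{L}$, and the lifting property supplies a splitting. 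For the reverse inclusion, if $\mathrm{Ext}^{1}(X,\mathfrak{B})=0$ factor $0\to X$ as $0\to Z\to X$ with $Z\in\mathfrak{A}$ and kernel of $Z\to X$ in $\mathfrak{B}$; the sequence splits, so $X$ is a retract of $Z$, and retract-closure of $\mathcal{L}$ as the left class of a weak factorisation system forces $0\to X$ to be in $\mathcal{L}$, hence $X\in\mathfrak{A}$. Completeness comes for free by factoring $0\to X$ and $X\to 0$, producing the required special precover and preenvelope, functorially whenever the factorisation was.

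Conversely, given a complete cotorsion pair $(\mathfrak{A},\mathfrak{B})$, set $\mathcal{L} = \mathbf{AdMon}_{\mathfrak{A}}$ and $\mathcal{R} = \mathbf{AdEpi}_{\mathfrak{B}}$. The inclusions $\mathcal{L}\subseteq {}^{\llp}\mathcal{R}$ and $\mathcal{R}\subseteq\mathcal{L}^{\llp}$ translate the vanishing of $\mathrm{Ext}^{1}$ between $\mathfrak{A}$ and $\mathfrak{B}$ into lifting diagrams built from admissible short exact sequences, with the obscure axiom used to reduce a general lifting problem between a map in $\mathcal{L}$ and a map in $\mathcal{R}$ to such a diagram via a pullback. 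For the factorisation of $f:X\to Y$, I would take a special precover $p:A\twoheadrightarrow Y$ with kernel in $\mathfrak{B}$ together with a special preenvelope of $X$ with cokernel in $\mathfrak{A}$, and combine them in a Salce-style pushout/pullback to produce $X\to Z\to Y$ with $X\to Z\in\mathcal{L}$ and $Z\to Y\in\mathcal{R}$; once factorisation is established, a standard retract argument upgrades the two orthogonality inclusions to equalities. The main obstacle I expect is precisely this Salce-style construction: producing simultaneously an admissible monomorphism with cokernel in $\mathfrak{A}$ and an admissible epimorphism with kernel in $\mathfrak{B}$ requires careful combination of the two completeness data together with the obscure axiom to keep the intermediate object sitting inside admissible short exact sequences. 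After this step, the remaining verifications---orthogonality from Ext-vanishing, retract closure, and the tautological inverseness of the two assignments---are formal.
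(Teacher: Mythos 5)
This theorem is stated in the paper as a recalled result, cited to \cite{vst2012exact} Theorem 5.13, and no proof is given in the text; so there is no in-paper argument to compare against, only the proof in the cited source (which itself follows Hovey's abelian-case argument). Your outline matches that standard proof in all essentials: the direction from a compatible weak factorisation system to a complete cotorsion pair via the splitting/retract arguments is correct as you describe it, the inverseness of the two assignments is indeed tautological once compatibility is unwound, and the converse direction correctly isolates the two real pieces of work, namely the lifting property (an $\mathrm{Ext}^{1}$-vanishing argument after pulling back) and the factorisation.

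One concrete imprecision in the step you yourself flag as the main obstacle: the Salce-style factorisation of $f:X\to Y$ does not use a special preenvelope of $X$ together with a special precover of $Y$, as you propose. The standard construction first uses the special precover $0\to B_{1}\to A_{1}\xrightarrow{q} Y\to 0$ to replace $f$ by the admissible epimorphism $(f,q):X\oplus A_{1}\to Y$ (whose kernel $K$ need not lie in $\mathfrak{B}$), and then takes a special preenvelope of $K$, not of $X$; pushing $K\rightarrowtail X\oplus A_{1}$ out along $K\rightarrowtail B_{2}$ produces the middle object $Z$, with $X\to Z$ an admissible monomorphism whose cokernel is an extension of objects of $\mathfrak{A}$ and $Z\to Y$ an admissible epimorphism with kernel $B_{2}\in\mathfrak{B}$. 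A preenvelope of $X$ itself does not feed into this pushout in the right way. Separately, in the final retract argument upgrading $\mathcal{L}\subseteq{}^{\llp}\mathcal{R}$ to an equality you need $\mathbf{AdMon}_{\mathfrak{A}}$ to be closed under retracts in the arrow category; since the theorem is stated for an arbitrary exact category rather than a weakly idempotent complete one, this requires the obscure axiom in the form of \cite{Buehler} Proposition 2.16 (the retracted map inherits a cokernel from the factorisation), and is worth making explicit rather than treating as formal.
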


\subsubsection{Hovey Triples and Model Structures}

Let $(\mathcal{C},\mathcal{F},\mathcal{W})$ be a model structure on a category $\mathpzc{E}$. This precisely means that $(\mathcal{C}\cap\mathcal{W},\mathcal{F})$ and $(\mathcal{C},\mathcal{W}\cap\mathcal{F})$ are weak factorisation systems.

For abelian categories, Hovey \cite{hovey} made precise the relationship between model category structures and homological data - Hovey triples.

\begin{defn}
Let $\mathpzc{E}$ be an exact category. Let $(\mathcal{C},\mathcal{F},\mathcal{W})$ be a model structure on $\mathpzc{E}$. The model structure is said to be \textit{compatible} if both $(\mathcal{C}\cap\mathcal{W},\mathcal{F})$ and $(\mathcal{C},\mathcal{F}\cap\mathcal{W})$ are compatible weak factorisation systems.
\end{defn}

As for weak factorisation systems, there is corresponding homological data. We will call a subcategory $\mathpzc{D}$ of an exact category $\mathpzc{E}$ \textit{thick} if whenever
$$0\rightarrow A\rightarrow B\rightarrow C\rightarrow 0$$
is a short exact sequence and two of the objects are in $\mathpzc{D}$, then so is the third.

\begin{defn}
A \textit{Hovey triple} on an exact category $\mathpzc{E}$ is a triple $(\mathfrak{C},\mathfrak{W},\mathfrak{F})$ of collections of objects of $\mathpzc{E}$ such that the full subcategory on $\mathfrak{W}$ is closed under retracts and thick, and that both $(\mathfrak{C},\mathfrak{F}\cap\mathfrak{W})$ and $(\mathfrak{C}\cap\mathfrak{W},\mathfrak{F})$ are complete cotorsion pairs. 
\end{defn}

We then have the following theorem (Theorem 6.9 in \cite{vst2012exact}). It is originally due to \cite{hovey} in the abelian case and \cite{gillespie} Theorem 3.3 in the more general exact case.

\begin{thm}[ \cite{vst2012exact} Thoerem 6.9]\label{weakmon}
Let $\mathpzc{E}$ be a weakly idempotent complete exact category. Then there is a bijection between Hovey triples and compatible model structures. The correspondence assigns to a Hovey triple $(\mathfrak{C},\mathfrak{W},\mathfrak{F})$ the model structure $(\mathcal{C},\mathcal{W},\mathcal{F})$ such that
\begin{enumerate}
\item
$\mathcal{C}=\mathbf{AdMon}_{\mathfrak{C}}$
\item
$\mathcal{F}=\mathbf{AdEpi}_{\mathfrak{F}}$
\item
$\mathcal{W}$ consists of morphisms of the form $p\circ i$ where $i\in\mathbf{AdMon}_{\mathfrak{W}}$ and $p\in\mathbf{AdEpi}_{\mathfrak{W}}$.
\end{enumerate}
\end{thm}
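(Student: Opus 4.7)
The plan is to verify both directions of the bijection, leveraging the correspondence between complete cotorsion pairs and compatible weak factorisation systems recalled just above. Given a Hovey triple $(\mathfrak{C},\mathfrak{W},\mathfrak{F})$, the two complete cotorsion pairs $(\mathfrak{C}\cap\mathfrak{W},\mathfrak{F})$ and $(\mathfrak{C},\mathfrak{F}\cap\mathfrak{W})$ yield two compatible functorial weak factorisation systems $(\mathbf{AdMon}_{\mathfrak{C}\cap\mathfrak{W}},\mathbf{AdEpi}_{\mathfrak{F}})$ and $(\mathbf{AdMon}_{\mathfrak{C}},\mathbf{AdEpi}_{\mathfrak{F}\cap\mathfrak{W}})$. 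Setting $\mathcal{C} \defeq \mathbf{AdMon}_{\mathfrak{C}}$, $\mathcal{F} \defeq \mathbf{AdEpi}_{\mathfrak{F}}$, and $\mathcal{W}$ as in the statement, the factorisation axioms of a model structure are automatic, provided one can identify trivial cofibrations with $\mathbf{AdMon}_{\mathfrak{C}\cap\mathfrak{W}}$ and trivial fibrations with $\mathbf{AdEpi}_{\mathfrak{F}\cap\mathfrak{W}}$.

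The key technical lemma, and the main obstacle, is therefore to establish
\[
\mathcal{C}\cap\mathcal{W} \;=\; \mathbf{AdMon}_{\mathfrak{C}\cap\mathfrak{W}}, \qquad \mathcal{F}\cap\mathcal{W} \;=\; \mathbf{AdEpi}_{\mathfrak{F}\cap\mathfrak{W}},
\]
together with the two-out-of-three property for $\mathcal{W}$. The inclusions $\supseteq$ are immediate: an admissible mono with cokernel in $\mathfrak{W}$ lies in $\mathcal{W}$ by taking $p = \mathrm{id}$, and dually for admissible epis. For the reverse, given $f \in \mathcal{C}\cap\mathcal{W}$ with a presentation $f = p \circ i$ where $i \in \mathbf{AdMon}_{\mathfrak{W}}$ and $p \in \mathbf{AdEpi}_{\mathfrak{W}}$, I would use weak idempotent completeness to assemble from $i$, $p$ and the admissible mono $f$ a short exact sequence relating $\mathrm{Coker}(f)$ to $\mathrm{Coker}(i)$ and $\mathrm{Ker}(p)$, both of which lie in $\mathfrak{W}$, and then invoke the thickness of $\mathfrak{W}$ to conclude $\mathrm{Coker}(f) \in \mathfrak{W}$. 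The two-out-of-three property is then handled by factoring the given composable maps using the two weak factorisation systems and repeatedly applying thickness of $\mathfrak{W}$ to the kernels and cokernels of the pushouts and pullbacks that arise; this is the exact-category analogue of Hovey's original abelian argument, due in this generality to Gillespie. Closure of $\mathcal{W}$ under retracts follows from retract closure of $\mathfrak{W}$ by a similar short-exact-sequence manipulation.

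For the inverse direction, given a compatible model structure $(\mathcal{C},\mathcal{W},\mathcal{F})$, define $\mathfrak{C} \defeq \{X : (0 \to X) \in \mathcal{C}\}$, $\mathfrak{F} \defeq \{X : (X \to 0) \in \mathcal{F}\}$, and $\mathfrak{W} \defeq \{X : (0 \to X) \in \mathcal{W}\}$. Compatibility of the two weak factorisation systems $(\mathcal{C},\mathcal{W}\cap\mathcal{F})$ and $(\mathcal{C}\cap\mathcal{W},\mathcal{F})$ produces, via Theorem 5.13 recalled above, complete cotorsion pairs $(\mathfrak{C},\mathfrak{W}\cap\mathfrak{F})$ and $(\mathfrak{C}\cap\mathfrak{W},\mathfrak{F})$. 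Thickness and retract closure of $\mathfrak{W}$ follow from two-out-of-three and retract closure for $\mathcal{W}$ applied to the defining short exact sequences. Mutual inverseness of the two constructions is then a direct unwinding of definitions, using once more the characterisation of trivial (co)fibrations established in the previous step.
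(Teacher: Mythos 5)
The paper does not prove this theorem---it is quoted from \cite{vst2012exact} (Theorem 6.9), with the original arguments due to Hovey in the abelian case and Gillespie in the exact case---and your outline reproduces exactly that standard proof: pass to the two compatible weak factorisation systems via the cotorsion-pair correspondence, identify the trivial (co)fibrations using the Noether-type short exact sequence $0\to\mathrm{Ker}(p)\to\mathrm{Coker}(i)\to\mathrm{Coker}(p\circ i)\to 0$ (valid by the $3\times3$ lemma in a weakly idempotent complete exact category) together with thickness of $\mathfrak{W}$, and establish two-out-of-three by refactoring. The only place your sketch is thinner than the source is the two-out-of-three and composition-closure argument for $\mathcal{W}$, which in Gillespie's proof occupies several lemmas (in particular rewriting a map in $\mathbf{AdEpi}_{\mathfrak{W}}$ followed by one in $\mathbf{AdMon}_{\mathfrak{W}}$ in the opposite order via a bicartesian square), but the strategy you describe is the correct one.
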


When dealing with model structures on non-negatively graded complexes, we will also need the following definition from \cite{kelly2016homotopy} concerning model structures on exact categories which are only `half-compatible' in a precise sense.

\begin{defn}[\cite{kelly2016homotopy}, Definition 4.1.12]
Let $\mathpzc{E}$ be an exact category. A model structure $(\mathcal{C},\mathcal{F},\mathcal{W})$ on $\mathpzc{E}$ is said to be \textit{left pseudo-compatible} if there are classes of objects $\mathfrak{C}$ and $\mathfrak{W}$  such that
\begin{enumerate}
\item
The full subcategory on $\mathfrak{W}$ is thick.
\item
 A map $f$ is in $\mathcal{C}$ (resp. $\mathcal{C}\cap\mathcal{W}$) if and only if it is an admissible monic with cokernel in $\mathfrak{C}$ (resp. $\mathfrak{C}\cap\mathfrak{W}$). 
 \item
 An admissible monic is in $\mathcal{W}$ if and only if its cokernel is in $\mathfrak{W}$.
 \end{enumerate}
 As before $\mathfrak{C}$/ $\mathfrak{W}$ /$\mathfrak{C}\cap\mathfrak{W}$ are called the \textit{cofibrant }/\textit{trivial}/  \textit{trivially cofibrant} objects. The pair $(\mathfrak{C},\mathfrak{W})$ will be called the \textit{left homological Waldhausen pair} of the model structure. Dually one defines \textit{right pesudo-compatible} model structures and  \textit{right homological Waldhausen pairs}
 \end{defn}

 \begin{prop}\label{prop:rlpgenepi}
  Let $\mathpzc{E}$ be an exact category, and let $\mathfrak{C}$ be a generating subcategory of $\mathpzc{E}$ closed under direct summands. Let $\tilde{\mathcal{F}}=\mathbf{AdMon}_{\mathfrak{C}}^{\llp}$. Then $\tilde{\mathcal{F}}$ consists of admissible epimorphisms. 
 \end{prop}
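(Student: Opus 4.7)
The plan is to use the generating subcategory $\mathfrak{C}$ to produce, for any $f \in \tilde{\mathcal{F}}$, a factorisation of a known admissible epimorphism through $f$, and then invoke cancellation to conclude that $f$ itself is an admissible epimorphism. The key preliminary observation is that for every $C \in \mathfrak{C}$, the map $0 \to C$ is an admissible monomorphism whose cokernel $C$ lies in $\mathfrak{C}$, so $0 \to C \in \mathbf{AdMon}_{\mathfrak{C}}$. This is the only class of maps in $\mathbf{AdMon}_{\mathfrak{C}}$ I will actually need to test against.

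First I would fix $f : X \to Y$ in $\tilde{\mathcal{F}} = \mathbf{AdMon}_{\mathfrak{C}}^{\llp}$. Since $\mathfrak{C}$ generates $\mathpzc{E}$, there exists an object $C \in \mathfrak{C}$ together with an admissible epimorphism $\pi : C \twoheadrightarrow Y$. Consider the commutative square
\[
\begin{array}{ccc}
0 & \longrightarrow & X \\
\downarrow & & \downarrow f \\
C & \xrightarrow{\pi} & Y
\end{array}
\]
whose left vertical edge is $0 \to C \in \mathbf{AdMon}_{\mathfrak{C}}$. By the defining right lifting property of $\tilde{\mathcal{F}}$, the square admits a diagonal filler $\ell : C \to X$ with $f \circ \ell = \pi$.

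The composite $f \circ \ell = \pi$ is an admissible epimorphism, and the ambient exact category is weakly idempotent complete (this is either an explicit assumption in the intended applications or follows automatically in the accessible additive setting used elsewhere in the paper, where it is recorded that $\lambda$-accessible additive categories are weakly idempotent complete). Under weak idempotent completeness, the cancellation property for admissible epimorphisms (Bühler, \emph{Exact Categories}, Proposition 7.6) asserts that if $q \circ p$ is an admissible epimorphism then so is $q$. Applied with $p = \ell$ and $q = f$, this gives that $f$ is an admissible epimorphism, which is what we want.

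The only potential obstacle is the invocation of cancellation for admissible epimorphisms, which requires weak idempotent completeness; this is why the hypothesis that $\mathfrak{C}$ is closed under direct summands is harmless but the real content sits at the level of the ambient exact structure. Apart from this, the argument is essentially a one-step lifting followed by the standard cancellation lemma, so no further work is needed.
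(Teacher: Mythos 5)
Your core step coincides with the paper's: in both arguments the only members of $\mathbf{AdMon}_{\mathfrak{C}}$ one ever tests $f$ against are the maps $0\rightarrow C$ for $C\in\mathfrak{C}$, which gives surjectivity of $\mathrm{Hom}(C,X)\rightarrow\mathrm{Hom}(C,Y)$ for every $C$ in the generating class. Where you diverge is in how that surjectivity is converted into the conclusion: the paper at this point cites a lemma from \cite{kelly2016homotopy} (that a map which is $\mathrm{Hom}(C,-)$-surjective against a generating subcategory closed under direct summands is an admissible epimorphism), whereas you lift one fixed admissible epimorphism $\pi:C\rightarrow Y$ through $f$ and then cancel.

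The cancellation step is where your proof has a genuine gap relative to the statement as given. The implication ``$f\circ\ell$ an admissible epimorphism $\Rightarrow$ $f$ an admissible epimorphism'' is B\"{u}hler's Proposition 7.6, and that property is not merely \emph{sufficient} for but \emph{equivalent to} weak idempotent completeness; the proposition, however, is stated for an arbitrary exact category, and weak idempotent completeness is nowhere among its hypotheses. You acknowledge this but dismiss it, and at the same time your argument makes no use of the one hypothesis that \emph{is} stated and that the paper's cited lemma presumably exploits, namely that $\mathfrak{C}$ is closed under direct summands --- a sign that you have not actually recovered the mechanism the paper relies on. In every application in this paper the issue is harmless: the proposition is invoked in Proposition \ref{prop:condad}, which assumes weak idempotent completeness, and the accessible additive categories appearing later are automatically weakly idempotent complete. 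Note also that you could weaken your extra hypothesis: by \cite{Buehler} Proposition 2.16 (strong left exactness of Quillen exact categories) it suffices that $f$ have a kernel, rather than that the whole category be weakly idempotent complete. But as a proof of the proposition in its stated generality, you either need to justify the cancellation without these assumptions --- presumably by actually using closure under direct summands --- or concede that you are proving a version of the statement with an added hypothesis.
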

 
 \begin{proof}
 $\mathbf{AdMon}_{\mathfrak{C}}^{\llp}$ in particular contains maps of the form $0\rightarrow C$ for $C\in\mathfrak{C}$. If $f:X\rightarrow Y$ is a map in $\mathbf{AdMon}_{\mathfrak{C}}^{\llp}$ then $\mathrm{Hom}(C,X)\rightarrow\mathrm{Hom}(C,Y)$ is an epimorphism. By \cite{kelly2016homotopy} $f$ is an admissible epimorphism.
 \end{proof}
%
%
%
 \begin{prop}\label{prop:condad}
 Let $\mathpzc{E}$ be a weakly idempotent complete exact category, and let $(\mathcal{L},\mathcal{R})$ be a weak factorisation system on $\mathpzc{E}$. If $\mathcal{L}=\mathbf{AdMon}_{\mathfrak{L}}$ for some generating subcategory $\mathfrak{L}$, then the weak factorisation system is compatible.
 \end{prop}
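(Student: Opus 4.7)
The plan is to verify the two compatibility conditions separately. Condition (1) for $\mathcal{L}$ is immediate from the hypothesis $\mathcal{L} = \mathbf{AdMon}_{\mathfrak{L}}$: a map $f$ lies in $\mathcal{L}$ if and only if it is an admissible monomorphism with cokernel in $\mathfrak{L}$, which is precisely the condition that $f$ be admissible monic and $0 \to \mathrm{Coker}(f) \in \mathcal{L}$, since $0 \to C$ has cokernel $C$.

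For condition (2), I first observe that any $f \in \mathcal{R}$ is an admissible epimorphism: the map $0 \to L$ lies in $\mathcal{L}$ for every $L \in \mathfrak{L}$, so the right lifting property forces $\mathrm{Hom}(L,f)$ to be surjective for all such $L$. Because $\mathfrak{L}$ generates $\mathpzc{E}$, any admissible epi $p : L \twoheadrightarrow Y$ with $L \in \mathfrak{L}$ lifts through $f$, exhibiting $p$ as a factorisation through $f$; the obscure axiom (valid in weakly idempotent complete exact categories) then forces $f$ to be admissible epic. Setting $\mathfrak{R} = \{R \in \mathpzc{E} : R \to 0 \in \mathcal{R}\}$, it remains to show $f \in \mathcal{R}$ if and only if $f$ is admissible epic with $\mathrm{Ker}(f) \in \mathfrak{R}$. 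The forward direction is routine: given any lifting problem $A \to \mathrm{Ker}(f)$ against some $i : A \to B \in \mathcal{L}$, composing with $\mathrm{Ker}(f) \hookrightarrow X$ yields a lifting problem against $f$ whose lower edge $B \to Y$ is zero, so the resulting lift $B \to X$ factors through $\mathrm{Ker}(f)$.

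The substantive direction is the converse. My strategy is to factor $f = rj$ via the weak factorisation system with $j : X \to Z \in \mathcal{L}$ and $r : Z \to Y \in \mathcal{R}$, and then exhibit $f$ as a retract of $r$ in the arrow category; membership $f \in \mathcal{R}$ then follows from closure of $\mathcal{R}$ under retracts. Applying the $3 \times 3$ lemma to $j$ as a morphism between the short exact sequences $0 \to K \to X \to Y \to 0$ and $0 \to K_r \to Z \to Y \to 0$, where $K = \mathrm{Ker}(f)$ and $K_r = \mathrm{Ker}(r)$, produces a short exact sequence $0 \to K \to K_r \to M \to 0$ with $M = \mathrm{Coker}(j) \in \mathfrak{L}$. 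The key input is the vanishing $\mathrm{Ext}^1(M,K) = 0$, which I would deduce directly from $K \to 0 \in \mathcal{R}$: any extension $0 \to K \to N \to M \to 0$ presents $K \to N$ as an admissible monic with cokernel in $\mathfrak{L}$, hence in $\mathcal{L}$, and lifting $\mathrm{id}_K$ against $K \to 0$ produces the required retraction.

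Consequently $K_r \cong K \oplus M$ via a section $\sigma : M \to K_r \hookrightarrow Z$; the same $\sigma$ splits the sequence $0 \to X \to Z \to M \to 0$, giving $Z \cong X \oplus \sigma(M)$. Taking $\rho : Z \to X$ to be the resulting projection, we obtain $\rho j = \mathrm{id}_X$ automatically, while $f\rho = r$ because $\sigma(M) \subseteq K_r = \mathrm{Ker}(r)$ ensures that $r$ annihilates the $M$-summand of $Z$. This exhibits $f$ as a retract of $r$, proving $f \in \mathcal{R}$. The main obstacle is precisely this converse step, and within it the requirement that the splitting be compatible with both $j$ and $r$: the essential trick is that splitting $0 \to K \to K_r \to M \to 0$ (rather than merely $0 \to X \to Z \to M \to 0$) positions the copy of $M$ inside $\mathrm{Ker}(r)$, which is exactly what is needed to force $f \rho = r$.
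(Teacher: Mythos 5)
Your proof is correct and follows essentially the same route as the paper: the paper deduces that $\mathcal{R}$ consists of admissible epimorphisms from Proposition \ref{prop:rlpgenepi} (which is your generation-plus-obscure-axiom argument) and then delegates the identification $\mathcal{R}=\mathbf{AdEpi}_{\mathrm{Ker}(\mathcal{R})}$ to Lemmas 5.14 and 5.15 of \cite{vst2012exact}, whose content is precisely the $\mathrm{Ext}^{1}$-vanishing and retract argument you write out in full. The only difference is that your version is self-contained where the paper cites.
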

 
 \begin{proof}
 By Proposition \ref{prop:rlpgenepi} $\mathcal{R}$ consists of admissible epimorphisms. Let $\mathfrak{R}=\mathrm{Ker}(\mathcal{R})$. We claim that $\mathcal{R}=\mathbf{AdEpi}_{\mathfrak{R}}$. This follows immediately from \cite{vst2012exact} Lemma 5.14 and Lemma 5.15.
 \end{proof}

\begin{thm}[c.f. \cite{lurie2006higher} Proposition A.2.6.8]\label{thm:amendingmodel}
Let $\mathpzc{E}$ be an exact category, $\mathfrak{C}$ a class of objects in $\mathpzc{E}$, and $\mathcal{W}$ a class of maps in $\mathpzc{E}$ satisfying the $2$-out-of-$3$ property. Denote by $\mathfrak{W}$ the class of objects $X$ such that $0\rightarrow X$ is in $\mathcal{W}$.  Suppose that
\begin{enumerate}
\item
$\mathfrak{C}$ and $\mathfrak{C}\cap\mathfrak{W}$ are closed under transfinite extensions.
\item
$\mathpzc{E}$ is weakly $\mathbf{AdMon}_{\mathfrak{C}}$-elementary.
\item
$\mathfrak{C}$ and $\mathfrak{C}\cap\mathfrak{W}$ are presentably deconstructible in themselves
\item
$\mathcal{W}$ satisfies the two-out-of-three property and is closed under retracts.
\item
$\mathbf{AdMon}_{\mathfrak{C}}^{\llp}\subset\mathcal{W}$
\item
$\mathbf{AdMon}_{\mathfrak{C}}\cap\mathcal{W}=\mathbf{AdMon}_{\mathfrak{C}\cap\mathfrak{W}}$.
\end{enumerate}
Then there exists a model structure on $\mathpzc{E}$ with cofibrations given by $\mathbf{AdMon}_{\mathfrak{C}}$ and acyclic cofibrations given by $\mathbf{AdMon}_{\mathfrak{C}\cap\mathfrak{W}}$. It is left pseudo-compatible if $\mathfrak{C}$ contains a generator, and it is compatible if $\mathfrak{C}\cap\mathfrak{W}$ contains a generator.
\end{thm}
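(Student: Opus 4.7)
The plan is to mimic the classical construction of a cofibrantly generated model structure from a pair of generating (acyclic) cofibrations, with the exact category playing the role of the abelian one in Hovey's theorem \cite{hovey}. Assumptions (1)--(3) are tailored so that the small object argument runs, while (4)--(6) force the interplay between the prescribed class $\mathcal{W}$ and the cofibrations to be rigid enough to pin down the fibrations and acyclic (co)fibrations. Concretely, I would first use the presentable deconstructibility of $\mathfrak{C}$ and of $\mathfrak{C}\cap\mathfrak{W}$ (assumption (3)) to extract sets $\mathbf{I}\subseteq\mathbf{AdMon}_{\mathfrak{C}}$ and $\mathbf{J}\subseteq\mathbf{AdMon}_{\mathfrak{C}\cap\mathfrak{W}}$, arranged (by enlarging $\mathbf{I}$ if necessary) so that $\mathbf{J}\subseteq\mathbf{I}$, whose cells lie in the respective admissible-monomorphism classes and whose (co)domains are small relative to those cells. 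Quillen's small object argument then produces functorial factorisations $f=p\circ i$ with $i\in\mathrm{cell}(\mathbf{I})$ and $p\in\mathbf{I}^{\llp}$, and likewise for $\mathbf{J}$.

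An Eklof-style transfinite induction along the deconstructive filtration of the cokernel of any map in $\mathbf{AdMon}_{\mathfrak{C}}$, using weak $\mathbf{AdMon}_{\mathfrak{C}}$-elementarity (assumption (2)) to assemble partial lifts at limit stages, shows that $\mathbf{I}^{\llp}=\mathbf{AdMon}_{\mathfrak{C}}^{\llp}$ and $\mathbf{J}^{\llp}=\mathbf{AdMon}_{\mathfrak{C}\cap\mathfrak{W}}^{\llp}$; the retract argument (with assumption (1) supplying retract closure) then upgrades this to $\mathrm{cof}(\mathbf{I})=\mathbf{AdMon}_{\mathfrak{C}}$ and $\mathrm{cof}(\mathbf{J})=\mathbf{AdMon}_{\mathfrak{C}\cap\mathfrak{W}}$. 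With $\mathcal{C}:=\mathbf{AdMon}_{\mathfrak{C}}$, $\mathcal{F}:=\mathbf{J}^{\llp}$ and $\mathcal{F}^{\mathrm{ac}}:=\mathbf{I}^{\llp}$, the crux is then the matching $\mathcal{F}^{\mathrm{ac}}=\mathcal{F}\cap\mathcal{W}$. The inclusion $\subseteq$ is immediate from $\mathbf{AdMon}_{\mathfrak{C}\cap\mathfrak{W}}\subseteq\mathbf{AdMon}_{\mathfrak{C}}$ together with assumption (5). For the reverse, given $p\in\mathcal{F}\cap\mathcal{W}$, factor $p=p'\circ i$ with $i\in\mathbf{AdMon}_{\mathfrak{C}}$ and $p'\in\mathcal{F}^{\mathrm{ac}}$; assumption (5) gives $p'\in\mathcal{W}$, so two-out-of-three (assumption (4)) forces $i\in\mathcal{W}$ and hence $i\in\mathbf{AdMon}_{\mathfrak{C}\cap\mathfrak{W}}$ by (6). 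Since $p\in\mathbf{AdMon}_{\mathfrak{C}\cap\mathfrak{W}}^{\llp}$ lifts against $i$, the retract argument exhibits $p$ as a retract of $p'\in\mathcal{F}^{\mathrm{ac}}$, and retract closure of $\mathbf{I}^{\llp}$ then forces $p\in\mathcal{F}^{\mathrm{ac}}$.

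All model category axioms are now in place. For the (pseudo-)compatibility claims, Proposition \ref{prop:rlpgenepi} ensures that when $\mathfrak{C}$ generates $\mathpzc{E}$ the class $\mathcal{F}^{\mathrm{ac}}=\mathbf{AdMon}_{\mathfrak{C}}^{\llp}$ consists of admissible epimorphisms; Proposition \ref{prop:condad} then upgrades $(\mathcal{C},\mathcal{F}^{\mathrm{ac}})$ to a compatible weak factorisation system, yielding left pseudo-compatibility, and the analogous argument applied to $\mathfrak{C}\cap\mathfrak{W}$ gives full compatibility. The principal obstacle is the matching $\mathcal{F}^{\mathrm{ac}}=\mathcal{F}\cap\mathcal{W}$: it is the exact-category analogue of Hovey's object-theoretic correspondence (cf. Theorem \ref{weakmon}), but $\mathcal{W}$ here is supplied as a class of morphisms rather than built from a trivial subcategory, so the argument hinges delicately on the interplay of assumptions (4)--(6). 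The Eklof-style induction proving $\mathbf{I}^{\llp}=\mathbf{AdMon}_{\mathfrak{C}}^{\llp}$ is the other technically delicate step, as it must be executed inside the exact structure rather than in an abelian envelope.
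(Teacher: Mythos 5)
Your proposal is correct and follows essentially the same route as the paper's proof: the small object argument run on generating sets supplied by presentable deconstructibility, the retract argument combined with assumptions (4)--(6) to identify the trivial fibrations with $\mathbf{AdMon}_{\mathfrak{C}}^{\llp}$, and Propositions \ref{prop:rlpgenepi} and \ref{prop:condad} for the (pseudo-)compatibility claims. The only cosmetic difference is that you spell out the identification $\mathrm{cof}(\mathbf{I})=\mathbf{AdMon}_{\mathfrak{C}}$ explicitly where the paper compresses it into the statement that these classes are weakly saturated.
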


\begin{proof}
Clearly $\mathcal{C}$ and $\mathcal{C}\cap\mathcal{W}$ are closed under pushouts. Since by assumption $\mathfrak{C}$ and $\mathfrak{C}\cap\mathfrak{W}$ are closed under summands and under transfinite extensions, and $\mathpzc{E}$ is weakly $\mathbf{AdMon}_{\mathfrak{C}}$-elementary, $\mathcal{C}$ and $\mathcal{C}\cap\mathcal{W}$ are weakly saturated. By the small object argument we may factor any map $f:X\rightarrow Y$ as $f\circ c$ and $\tilde{f}\circ \tilde{c}$ where $c\in\mathbf{AdMon}_{\mathfrak{C}}$, $f\in\mathbf{AdMon}_{\mathfrak{C}}^{\llp}$, $\tilde{c}\in\mathbf{AdMon}_{\mathfrak{C}\cap\mathfrak{W}}$, and $\tilde{f}\in\mathbf{AdMon}_{\mathfrak{C}\cap\mathfrak{W}}^{\llp}$. Next we show that trivial fibrations have the right lifting property with respect to cofibrations. Let $c$ be an acyclic fibration. We factor it as $p\circ i$ with $i\in\mathbf{AdMon}_{\mathfrak{C}}$, and $p\in\mathbf{AdMon}_{\mathfrak{C}}^{\llp}$. Then $p$ is in $\mathcal{W}$ by assumption, and by the $2$-out-of-$3$ assumption on $\mathcal{W}$, $i$ is also in $\mathcal{W}$. $c$ must then be a retract of $p$, so $c$ also has the right lifting property with respect to all cofibrations.

It remains to show that the model structure is left pseudo-compatible if $\mathfrak{C}$ contains a generator, and compatible if  $\mathfrak{C}\cap\mathfrak{W}$ contains a generator.  This follows immediately from Proposition \ref{prop:condad}.
\end{proof}

Let us give some immediate applications of this result.

\begin{cor}[Changing the Cofibrant Objects] \label{cor:Quillenequivalentmodel}
Let $\mathpzc{E}$ be a weakly idempotent complete exact category equipped with a left pseudo-compatible model structure defined by $(\mathfrak{C},\mathfrak{W})$. Suppose that $\mathfrak{C}'\subset\mathpzc{E}$ is closed under transfinite extensions by admissible monomorphisms, that $\mathpzc{E}$ is weakly $\mathbf{AdMon}_{\mathfrak{C}'}$-elementary, and that $\mathfrak{C}\subseteq\mathfrak{C}'$ and both $\mathfrak{C}'$ and $\mathfrak{C}'\cap\mathfrak{W}$ are presentably deconstructible in themselves. Then there exists a left pseudo-compatible model structure on $\mathpzc{E}$ determined by $(\mathfrak{C}',\mathfrak{W})$ which is Quillen equivalent to the one determined by $(\mathfrak{C},\mathfrak{W})$. 
\end{cor}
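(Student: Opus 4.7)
The plan is to verify the hypotheses of Theorem \ref{thm:amendingmodel} for the pair $(\mathfrak{C}',\mathcal{W})$, where $\mathcal{W}$ is the class of weak equivalences of the given left pseudo-compatible model structure on $\mathpzc{E}$. Because that original model structure is left pseudo-compatible, the associated class of trivial objects (those $X$ with $0\to X\in\mathcal{W}$) agrees with $\mathfrak{W}$, so Theorem \ref{thm:amendingmodel} will produce a left pseudo-compatible model structure whose left homological Waldhausen pair is $(\mathfrak{C}',\mathfrak{W})$. A generator for the new cofibrant class is inherited from the existing one, since $\mathfrak{C}\subseteq\mathfrak{C}'$ and cofibrant replacement in the original model produces admissible epimorphisms from objects in $\mathfrak{C}$.

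Among the six hypotheses of Theorem \ref{thm:amendingmodel}, the weak $\mathbf{AdMon}_{\mathfrak{C}'}$-elementarity of $\mathpzc{E}$ and the presentable deconstructibility of $\mathfrak{C}'$ and $\mathfrak{C}'\cap\mathfrak{W}$ are given, and $\mathcal{W}$ automatically satisfies two-out-of-three and is closed under retracts because it is the class of weak equivalences of a model category. The inclusion $\mathbf{AdMon}_{\mathfrak{C}'}^{\llp}\subseteq\mathcal{W}$ follows immediately from $\mathfrak{C}\subseteq\mathfrak{C}'$: the containment $\mathbf{AdMon}_{\mathfrak{C}}\subseteq\mathbf{AdMon}_{\mathfrak{C}'}$ gives $\mathbf{AdMon}_{\mathfrak{C}'}^{\llp}\subseteq\mathbf{AdMon}_{\mathfrak{C}}^{\llp}$, and the latter is exactly the class of trivial fibrations of the original model structure. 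The identity $\mathbf{AdMon}_{\mathfrak{C}'}\cap\mathcal{W}=\mathbf{AdMon}_{\mathfrak{C}'\cap\mathfrak{W}}$ is a direct application of clause (3) in the definition of left pseudo-compatibility: an admissible monomorphism lies in $\mathcal{W}$ precisely when its cokernel lies in $\mathfrak{W}$. Closure of $\mathfrak{C}'$ under transfinite extensions holds by hypothesis, and closure of $\mathfrak{C}'\cap\mathfrak{W}$ is obtained by combining this with closure of $\mathfrak{W}$, the latter being propagated through successor ordinals by thickness of $\mathfrak{W}$ and through limit ordinals using the weak $\mathbf{AdMon}_{\mathfrak{C}'}$-elementarity of $\mathpzc{E}$.

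With the new model structure in hand, we establish Quillen equivalence through the identity functor $\mathrm{Id}:\mathpzc{E}\to\mathpzc{E}$ from the original to the new model. It is left Quillen since the inclusions $\mathfrak{C}\subseteq\mathfrak{C}'$ and $\mathfrak{C}\cap\mathfrak{W}\subseteq\mathfrak{C}'\cap\mathfrak{W}$ induce inclusions of (acyclic) cofibration classes. For Quillen equivalence through the identity adjunction it is sufficient to see that the two classes of weak equivalences coincide. Writing $\mathcal{W}^{\mathrm{new}}$ for the new one, the inclusion $\mathcal{W}^{\mathrm{new}}\subseteq\mathcal{W}$ follows by two-out-of-three from the observation that new acyclic cofibrations lie in $\mathcal{W}$ by left pseudo-compatibility of the original, and new trivial fibrations lie in $\mathcal{W}$ by hypothesis (5) verified above. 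For the reverse inclusion, factor any $f\in\mathcal{W}$ in the new model as $f=q\circ i$ with $i\in\mathbf{AdMon}_{\mathfrak{C}'}$ and $q$ a new trivial fibration; then $q\in\mathcal{W}$, hence $i\in\mathcal{W}$ by two-out-of-three, and by left pseudo-compatibility of the original structure the cokernel of $i$ lies in $\mathfrak{W}$, so $i$ is a new acyclic cofibration and $f\in\mathcal{W}^{\mathrm{new}}$.

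The main obstacle is hypothesis (1) of Theorem \ref{thm:amendingmodel} applied to $\mathfrak{C}'\cap\mathfrak{W}$: one must control arbitrary transfinite extensions of objects in this class, not merely those generated from a deconstructibility set, and this is where the interaction between thickness of $\mathfrak{W}$, the weakly elementary assumption, and the stipulated closure of $\mathfrak{C}'$ must be assembled carefully. Once that closure is secured, the remaining verifications and the Quillen equivalence argument reduce to formal consequences of the correspondence between cotorsion pairs and compatible weak factorisation systems, together with the standard fact that the identity functor between two model structures on the same category is a Quillen equivalence whenever the two classes of weak equivalences agree.
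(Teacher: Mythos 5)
Your proposal is correct and follows essentially the same route as the paper: both verify the hypotheses of Theorem \ref{thm:amendingmodel} for the pair $(\mathfrak{C}',\mathcal{W})$, using $\mathfrak{C}\subseteq\mathfrak{C}'$ to get $\mathbf{AdMon}_{\mathfrak{C}'}^{\llp}\subseteq\mathbf{AdMon}_{\mathfrak{C}}^{\llp}\subseteq\mathcal{W}$ and clause (3) of left pseudo-compatibility to get $\mathbf{AdMon}_{\mathfrak{C}'}\cap\mathcal{W}=\mathbf{AdMon}_{\mathfrak{C}'\cap\mathfrak{W}}$. Your additional arguments for closure of $\mathfrak{C}'\cap\mathfrak{W}$ under transfinite extensions and for the coincidence of the two classes of weak equivalences make explicit steps the paper leaves implicit, but do not change the approach.
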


\begin{proof}
By assumption $\mathbf{AdMon}_{\mathfrak{C}'}$ and  $\mathbf{AdMon}_{\mathfrak{C}'\cap\mathfrak{W}}$ are closed under transfinite compositions, and both $\mathfrak{C}'$ and $\mathfrak{C}'\cap\mathfrak{W}$ are presentably deconstructible in themselves. The class $\mathcal{W}$ satisfies the two-out-of-three property and is closed under retracts by the assumption that they form the weak equivalences of the model structure determined by $(\mathfrak{C},\mathfrak{W})$. Since $\mathfrak{C}\subset\mathfrak{C}'$, we have $\mathbf{AdMon}_{\mathfrak{C}'}^{\llp}\subseteq \mathbf{AdMon}_{\mathfrak{C}}^{\llp}\subseteq\mathcal{W}$. Finally, since $\mathbf{AdMon}\cap\mathcal{W}=\mathbf{AdMon}_{\mathfrak{W}}$, we have $\mathbf{AdMon}_{\mathfrak{C}}\cap\mathcal{W}=                \mathbf{AdMon}_{\mathfrak{C}'\cap\mathfrak{W}}$
\end{proof}

\begin{cor}[Going Up]\label{cor:goingup}
Let $\mathpzc{E}$ be a cocomplete exact category and $\mathpzc{D}$ a thick reflective subcategory, such that the inclusion $i:\mathpzc{D}\rightarrow\mathpzc{E}$ commutes with transfinite compositions of admissible monomorphisms. Let $\mathcal{W}$ be a class of weak equivalences on $\mathpzc{E}$ satisfying the two-out-of-three property. Let $\mathfrak{W}$ be the class of objets such that $0\rightarrow X$ is in $\mathfrak{W}$, and suppose that an admissible monomorphism $i:A\rightarrow B$ is in $\mathcal{W}$ if and only if $\mathrm{coker}(i)\in\mathfrak{W}$.

Let $(\mathfrak{C},\mathfrak{W}_{\mathpzc{D}})$ be a Waldhausen pair defining a left peudo-compatible model structure on $\mathpzc{D}$ with corresponding weak equivalences given by $\mathcal{W}\cap\mathfrak{D}$, and such that $\mathfrak{C}$ and $\mathfrak{W}_{\mathpzc{D}}$ are presentably deconstructible in themselves. 

 Suppose that $\mathpzc{E}$ is weakly $\mathbf{AdMon}_{\mathfrak{C}}$-elementary. Finally suppose that $\mathpzc{D}$ generates $\mathpzc{E}$ ,i.e. that for every $E$ in $\mathpzc{E}$ there is a $D\in\mathpzc{D}$ and an admissible epimorphism $D\rightarrow\mathpzc{E}$. Then $(\mathfrak{C},\mathfrak{W}_{\mathpzc{E}})$ is a Waldhausen pair on $\mathpzc{E}$, and the corresponding weak equivalences are precisely $\mathcal{W}$.  Moreover when $\mathpzc{E}$ is equipped with the correspoding model structure induced by $(\mathfrak{C},\mathfrak{W}_{\mathpzc{E}})$, and $\mathpzc{D}$ equipped with the model structure induced by the Waldhausen pair $(\mathfrak{C},\mathfrak{W}_{\mathpzc{D}})$, then 
$$\adj{L}{\mathpzc{E}}{\mathpzc{D}}{i}$$
is a Quillen equivalence.


\end{cor}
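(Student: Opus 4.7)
The strategy is to apply Theorem \ref{thm:amendingmodel} to produce the desired model structure on $\mathpzc{E}$, using $\mathfrak{C}$ as the class of cofibrant objects and $\mathcal{W}$ as the class of weak equivalences, and then to verify the Quillen equivalence using that the inclusion $i$ is fully faithful with $L$ behaving as the identity on $\mathpzc{D}$ up to isomorphism.

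First I would verify the closure and deconstructibility conditions required by the theorem, which should be fairly direct consequences of the given assumptions. Because $i:\mathpzc{D}\rightarrow\mathpzc{E}$ commutes with transfinite compositions of admissible monomorphisms, transfinite extensions in $\mathpzc{E}$ of objects of $\mathfrak{C}\subseteq\mathpzc{D}$ coincide with those in $\mathpzc{D}$, and so remain in $\mathfrak{C}$. The characterization of $\mathcal{W}$ on admissible monomorphisms identifies $\mathfrak{W}_{\mathpzc{D}} = \mathfrak{W}_{\mathpzc{E}}\cap\mathpzc{D}$, so $\mathfrak{C}\cap\mathfrak{W}_{\mathpzc{E}} = \mathfrak{C}\cap\mathfrak{W}_{\mathpzc{D}}$, and the same argument shows this class is closed under transfinite extensions in $\mathpzc{E}$. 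Presentable deconstructibility of $\mathfrak{C}$ and $\mathfrak{C}\cap\mathfrak{W}_{\mathpzc{D}}$ in $\mathpzc{D}$ transfers to $\mathpzc{E}$ because a generating set of admissible monomorphisms in $\mathpzc{D}$ remains such in $\mathpzc{E}$, smallness being preserved. The weak $\mathbf{AdMon}_{\mathfrak{C}}$-elementarity of $\mathpzc{E}$ is assumed, and the identity $\mathbf{AdMon}_{\mathfrak{C}}\cap\mathcal{W} = \mathbf{AdMon}_{\mathfrak{C}\cap\mathfrak{W}_{\mathpzc{E}}}$ is immediate from the characterization of $\mathcal{W}$ on admissible monomorphisms.

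The main obstacle is establishing $\mathbf{AdMon}_{\mathfrak{C}}^{\llp}\subseteq\mathcal{W}$. The key idea is to exploit the adjunction $L\dashv i$. Given $p:X\rightarrow Y$ with right lifting property against $\mathbf{AdMon}_{\mathfrak{C}}$ in $\mathpzc{E}$, any square over $Lp$ in $\mathpzc{D}$ whose left leg $j:A\rightarrow B$ lies in $\mathbf{AdMon}_{\mathfrak{C}}$ corresponds via adjunction, using that $A,B\in\mathpzc{D}$ makes $i$ transparent on these objects, to a square over $p$ in $\mathpzc{E}$, which has a lift. Hence $Lp$ has the analogous lifting property in $\mathpzc{D}$, and by the given model structure on $\mathpzc{D}$ this places $Lp\in\mathcal{W}\cap\mathpzc{D}$, so $iLp\in\mathcal{W}$. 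The naturality of the unit $\eta:\mathrm{Id}\rightarrow iL$ gives the commutative square $\eta_Y\circ p = (iLp)\circ\eta_X$, and the claim $p\in\mathcal{W}$ reduces by two-out-of-three to showing $\eta_X,\eta_Y\in\mathcal{W}$. I expect this compatibility of units with $\mathcal{W}$ to be the critical input: in the motivating example where $L$ is sheafification, $\mathcal{W}$ consists of local weak equivalences, for which sheafification units are tautologically in $\mathcal{W}$; more generally it should follow from the combination of the characterization of $\mathcal{W}$ on admissible monos and the reflective structure.

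With the model structure on $\mathpzc{E}$ in hand, one checks that its class of weak equivalences agrees with $\mathcal{W}$: the theorem produces weak equivalences of the form $q\circ j$ with $j\in\mathbf{AdMon}_{\mathfrak{C}\cap\mathfrak{W}_{\mathpzc{E}}}\subseteq\mathcal{W}$ and $q\in\mathbf{AdMon}_{\mathfrak{C}}^{\llp}\subseteq\mathcal{W}$, so these are contained in $\mathcal{W}$; the converse follows by factoring any $w\in\mathcal{W}$ as a cofibration followed by a trivial fibration and invoking two-out-of-three. Finally, for the Quillen equivalence, $L$ is left Quillen because the generating (trivial) cofibrations live entirely in $\mathpzc{D}$, on which $L\circ i\cong\mathrm{id}_{\mathpzc{D}}$. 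A cofibrant object $X$ in $\mathpzc{E}$ lies in $\mathfrak{C}\subseteq\mathpzc{D}$, so the derived unit $X\rightarrow iLX$ is the ordinary unit, which is an isomorphism because $i$ is fully faithful; similarly the derived counit $LiY\rightarrow Y$ for fibrant $Y\in\mathpzc{D}$ is a natural isomorphism. Both derived comparison maps being weak equivalences, $\adj{L}{\mathpzc{E}}{\mathpzc{D}}{i}$ is a Quillen equivalence.
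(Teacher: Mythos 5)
Your overall architecture matches the paper's: verify the hypotheses of Theorem \ref{thm:amendingmodel} for the pair $(\mathfrak{C},\mathfrak{W}_{\mathpzc{E}})$ on $\mathpzc{E}$ (closure under transfinite extensions, transfer of presentable deconstructibility along the colimit-preserving inclusion, the identity $\mathbf{AdMon}_{\mathfrak{C}}\cap\mathcal{W}=\mathbf{AdMon}_{\mathfrak{C}\cap\mathfrak{W}_{\mathpzc{E}}}$, and $\mathfrak{W}_{\mathpzc{D}}=\mathfrak{W}_{\mathpzc{E}}\cap\mathpzc{D}$), then deduce the Quillen equivalence. Your treatment of the Quillen equivalence is fine and is a mild repackaging of the paper's: the paper argues that objects of $\mathpzc{D}$ are $L$-acyclic so that $L$ preserves short exact sequences with cokernel in $\mathfrak{C}$, whereas you check $L$ on a set of generating (trivial) cofibrations with domains and codomains in $\mathpzc{D}$ and then observe that the derived unit and counit are equivalences because cofibrant objects already lie in $\mathfrak{C}\subseteq\mathpzc{D}$; these come to the same thing.

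The genuine gap is exactly the step you flag: the verification of $\mathbf{AdMon}_{\mathfrak{C}}^{\llp}\subseteq\mathcal{W}$. Your reduction via the naturality square $\eta_{Y}\circ p=(iLp)\circ\eta_{X}$ and two-out-of-three requires the unit $\eta_{X}\colon X\rightarrow iLX$ to lie in $\mathcal{W}$ for an \emph{arbitrary} object $X$ of $\mathpzc{E}$, and this is neither a hypothesis of the corollary nor a consequence of the stated assumptions: $\mathcal{W}$ is only pinned down on admissible monomorphisms, the unit of a reflection need not be one, and nothing forces the reflector to be compatible with $\mathcal{W}$ outside of $\mathpzc{D}$. (It holds in the motivating sheafification example, but that is extra information about that example.) The route consistent with the rest of the paper is instead to use Propositions \ref{prop:rlpgenepi} and \ref{prop:condad}: since $\mathfrak{C}$ contains a generator of $\mathpzc{D}$ and $\mathpzc{D}$ generates $\mathpzc{E}$, the class $\mathbf{AdMon}_{\mathfrak{C}}^{\llp}$ consists of admissible epimorphisms with kernel in $\mathfrak{C}^{\perp_{\mathpzc{E}}}$, and one must then argue that such admissible epimorphisms lie in $\mathcal{W}$. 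Note that the paper's own proof is silent on this point as well -- it simply invokes Theorem \ref{thm:amendingmodel} -- and in the generality stated this condition appears to need either an explicit hypothesis (as in Corollary \ref{cor:changingtheunderlyingexactstructure}, where $\mathbf{AdMon}_{\mathfrak{C}'}^{\llp}\subseteq\mathcal{W}'$ is assumed outright) or a direct check in the application at hand (for quasi-isomorphisms of complexes it is immediate). So you correctly identified the crux, but the argument you propose for it would not go through as written.
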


\begin{proof}
$\mathfrak{C}\cap\mathfrak{W}_{\mathpzc{E}}$ and $\mathfrak{C}$ are presentably deconstructible in themselves in $\mathpzc{D}$. Since $i$ commutes with transfinite compositions of admissible monomorphisms, they are also presentably deconstructible in themselves in $\mathpzc{E}$. Moreover we have $\mathfrak{W}_{\mathpzc{E}}\cap\mathpzc{D}=\mathfrak{W}_{\mathpzc{D}}$. By assumption we have $\mathbf{AdMon}_{\mathfrak{C}\cap\mathfrak{W}_{\mathpzc{E}}}=\mathbf{AdMon}_{\mathfrak{C}}\cap\mathcal{W}$. The existence of the model structure follows immediately from Theorem \ref{thm:amendingmodel}.

The functor $L$ is left derivable, by taking resolutions by objects of $\mathpzc{D}$. In particular, objects of $\mathpzc{D}$ are $L$-acyclic, and if
$$0\rightarrow X\rightarrow Y\rightarrow D\rightarrow 0$$
is a short eact sequence in $\mathpzc{E}$ with $D\in\mathpzc{D}$, then
$$0\rightarrow L(X)\rightarrow L(Y)\rightarrow D\rightarrow 0$$
is exact in $\mathpzc{D}$. 
It follows immediately that the adjunction
$$\adj{L}{\mathpzc{E}}{\mathpzc{D}}{i}$$
is Quillen. It is clearly a Quillen equivalence.
\end{proof}

We can also use Theorem \ref{thm:amendingmodel} to amend the underlying exact structure, in the following precise sense.

 \begin{cor}[Changing the Underlying Exact Structure]\label{cor:changingtheunderlyingexactstructure}\label{prop:changingunderling}
Let $(\mathpzc{E},\mathpzc{Q})$ and $(\mathpzc{E},\mathpzc{Q}')$ be purely $\lambda$-accessible exact categories, with the same underlying category $\mathpzc{E}$, and with $\mathpzc{Q}\subset\mathpzc{Q}'$. Suppose that 
\begin{enumerate}
\item
$(\mathpzc{E},\mathpzc{Q}')$ is equipped with a left pseudo-compatible model structure determined by a Waldhausen pair $(\mathfrak{C}',\mathfrak{W}')$, with class of weak equivalences $\mathcal{W}'$
\item
 both $\mathfrak{C}'$ and $\mathfrak{C}'\cap\mathfrak{W}'$ are strongly $\lambda$-pure subobject stable in $(\mathpzc{E},\mathpzc{Q})$.
 \item
 $\mathfrak{C}'$ and $\mathfrak{C}'\cap\mathfrak{W}'$ are closed under transfinite extensions in $(\mathpzc{E},\mathpzc{Q})$
 \item
 $(\mathpzc{E},\mathpzc{Q})$ is weakly $\mathbf{AdMon}_{\mathfrak{C}'}$-elementary.
\item
$\mathbf{AdMon}_{\mathfrak{C}'}^{\llp}$ is in the class of weak equivalences $\mathcal{W}'$
\item
$\mathfrak{C}'$ contains a generator for $(\mathpzc{E},\mathpzc{Q})$.
\end{enumerate}
Then $(\mathfrak{C}',\mathfrak{W}')$ also determines a left pseudo-compatible model structure on $(\mathpzc{E},\mathpzc{Q})$. Moreover this model structure is Quillen equivalent to the original one on $(\mathpzc{E},\mathpzc{Q}')$. 
\end{cor}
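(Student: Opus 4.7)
The strategy is to feed $(\mathfrak{C}',\mathfrak{W}')$ together with the class $\mathcal{W}'$ into Theorem \ref{thm:amendingmodel} applied to the exact category $(\mathpzc{E},\mathpzc{Q})$, and then to show that the resulting model structure is compared to the original one via the identity adjunction $\adj{\mathrm{id}}{(\mathpzc{E},\mathpzc{Q})}{(\mathpzc{E},\mathpzc{Q}')}{\mathrm{id}}$.

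First I would verify the six hypotheses of Theorem \ref{thm:amendingmodel}. Hypotheses (1) and (2) there are exactly (iii) and (iv) of the corollary. For hypothesis (3), presentable deconstructibility in themselves of $\mathfrak{C}'$ and $\mathfrak{C}'\cap\mathfrak{W}'$ inside $(\mathpzc{E},\mathpzc{Q})$, I would invoke Lemma \ref{lem:accdec}: $(\mathpzc{E},\mathpzc{Q})$ is purely $\lambda$-accessible and admits a generator by (vi); each of the two classes is strongly $\lambda$-pure subobject stable by (ii) and closed under transfinite extensions by (iii); and weak $\mathbf{AdMon}_{\mathfrak{C}'\cap\mathfrak{W}'}$-elementarity is automatic from (iv), since $\mathbf{AdMon}_{\mathfrak{C}'\cap\mathfrak{W}'}\subset\mathbf{AdMon}_{\mathfrak{C}'}$. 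Hypothesis (4) holds because $\mathcal{W}'$ is the weak equivalence class of the given model structure on $(\mathpzc{E},\mathpzc{Q}')$ and both the two-out-of-three property and closure under retracts are intrinsic to the underlying category, which is unchanged. Hypothesis (5) is precisely (v).

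The key verification is hypothesis (6), which states
\[
\mathbf{AdMon}_{\mathfrak{C}'}\cap\mathcal{W}'=\mathbf{AdMon}_{\mathfrak{C}'\cap\mathfrak{W}'}
\]
inside $(\mathpzc{E},\mathpzc{Q})$. Because $\mathpzc{Q}\subset\mathpzc{Q}'$, any admissible monomorphism in $(\mathpzc{E},\mathpzc{Q})$ is automatically an admissible monomorphism in $(\mathpzc{E},\mathpzc{Q}')$ with the same cokernel. If $f$ lies in the left-hand side, then $f$ is a $\mathpzc{Q}'$-admissible monomorphism with cokernel in $\mathfrak{C}'$ belonging to $\mathcal{W}'$, and left pseudo-compatibility of the original model structure forces $\mathrm{coker}(f)\in\mathfrak{W}'$; conversely, a $\mathpzc{Q}$-admissible monomorphism with cokernel in $\mathfrak{C}'\cap\mathfrak{W}'$ becomes a $\mathpzc{Q}'$-admissible monomorphism with cokernel in $\mathfrak{W}'$, hence lies in $\mathcal{W}'$ by the same pseudo-compatibility. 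Theorem \ref{thm:amendingmodel} then produces the desired model structure on $(\mathpzc{E},\mathpzc{Q})$, which is left pseudo-compatible because (vi) supplies a generator in $\mathfrak{C}'$.

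For the Quillen equivalence, the inclusion $\mathpzc{Q}\subset\mathpzc{Q}'$ forces every admissible monomorphism in $(\mathpzc{E},\mathpzc{Q})$ to be admissible in $(\mathpzc{E},\mathpzc{Q}')$ with the same cokernel, so the identity viewed as a left adjoint sends cofibrations to cofibrations and acyclic cofibrations to acyclic cofibrations. Since both model structures share the same class $\mathcal{W}'$ of weak equivalences, the identity preserves and reflects weak equivalences, and the Quillen equivalence criterion is automatic. The main obstacle is the careful bookkeeping in hypothesis (6), where one must track which exact structure each occurrence of ``admissible monomorphism'' refers to and then exploit left pseudo-compatibility of the original model structure to pass between cokernels and weak equivalences.
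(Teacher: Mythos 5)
Your proposal is correct and follows essentially the same route as the paper: both feed the data into Theorem \ref{thm:amendingmodel} on $(\mathpzc{E},\mathpzc{Q})$, obtain deconstructibility from strong $\lambda$-pure subobject stability, and reduce hypothesis (6) to the corresponding identity in $(\mathpzc{E},\mathpzc{Q}')$ via the inclusion $\mathpzc{Q}\subset\mathpzc{Q}'$. Your write-up is in fact slightly more explicit than the paper's on the verification of hypothesis (6) and on the Quillen equivalence, but there is no substantive difference in method.
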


\begin{proof}
By assumption $\mathbf{AdMon}_{\mathfrak{C}'}$ and $\mathbf{AdMon}_{\mathfrak{C}'\cap\mathfrak{W}'}$ are closed under transfinite compositions in $(\mathpzc{E},\mathpzc{Q})$. Moreover in $(\mathpzc{E},\mathpzc{Q}')$ we have $\mathbf{AdMon}_{\mathfrak{C}'}\cap\mathcal{W}=\mathbf{AdMon}_{\mathfrak{C}'\cap\mathfrak{W}'}$, this is clearly also the case in $(\mathpzc{E},\mathpzc{Q})$, since $\mathpzc{Q}\subseteq\mathpzc{Q}'$. 

Since $\mathfrak{C}'$ and $\mathfrak{W}'$ are strongly $\lambda$-pure subobject stable they are presentably deconstructible in themesleves in $(\mathpzc{E},\mathpzc{Q})$.  By assumption $\mathcal{W}'$ is part of a model structure so satisfies the $2$-out-of-$3$ property. By assumption $\mathbf{AdMon}_{\mathfrak{C}'}^{\llp}\subseteq\mathcal{W}'$. Finally  $(\mathpzc{E},\mathpzc{Q})$ is weakly $\mathbf{AdMon}_{\mathfrak{C}'}$-elementary by assumption. This proves the existence of the model structure.

The fact that $\mathfrak{C}'$ contains a generator for $(\mathpzc{E},\mathpzc{Q})$ means that the cofibration -acyclic fibration weak factorisation system is determined by the complete cotorsion pair $(\mathfrak{C}',(\mathfrak{C}')^{\perp_{\mathpzc{Q}}})$.
%
\end{proof}
\subsubsection{Injective Cotorsion Pairs}

Many of the results of this paper will involve the construction of injective cotorsion pairs and their associated model structures. We use the results from the previous section to prove the existenec of injective cotorsion pairs.

\begin{defn}[\cite{MR3459032} Definition 3.4]
Let $\mathpzc{E}$ be an exact category with enough injectives. A complete cotorsion pair $(\mathfrak{W},\mathfrak{F})$ on $\mathpzc{E}$ is said to be an \textit{injective cotorsion pair} if $\mathfrak{W}$ is thick and $\mathfrak{W}\cap\mathfrak{F}$ coincides with the class of injective objects in $\mathpzc{E}$.
\end{defn}

The following is essentially tautological from the assumption that $\mathpzc{E}$ has enough injectives, and $\mathfrak{W}\cap\mathfrak{F}$ coincides with the class of injectives.

\begin{prop}
Let $(\mathfrak{W},\mathfrak{F})$ be an injective cotorsion pair on $\mathpzc{E}$. Then $(\mathpzc{E},\mathfrak{W},\mathfrak{F})$ is a Hovey triple on $\mathpzc{E}$. 
\end{prop}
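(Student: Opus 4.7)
The plan is to verify the three conditions for $(\mathpzc{E},\mathfrak{W},\mathfrak{F})$ to constitute a Hovey triple, where the class of cofibrant objects is taken to be all of $\mathpzc{E}$. Since the class $\mathfrak{W}$ of an injective cotorsion pair is thick by definition, and since any class of the form $^{\perp}\mathcal{S}$ is closed under retracts (as $\mathrm{Ext}^{1}$-vanishing is inherited by retracts), the axiom on $\mathfrak{W}$ is immediate. Moreover, the second of the two required complete cotorsion pairs, namely $(\mathpzc{E}\cap\mathfrak{W},\mathfrak{F})=(\mathfrak{W},\mathfrak{F})$, is exactly the hypothesis.

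It therefore suffices to show that $(\mathpzc{E},\mathfrak{F}\cap\mathfrak{W})$ is a complete cotorsion pair. By the defining property of an injective cotorsion pair, $\mathfrak{F}\cap\mathfrak{W}$ equals the class of injective objects of $\mathpzc{E}$. First I would verify the orthogonality relations: every $X\in\mathpzc{E}$ lies in $^{\perp}\mathrm{Inj}(\mathpzc{E})$ by the very definition of injective, and conversely $\mathrm{Inj}(\mathpzc{E})=\mathpzc{E}^{\perp}$ since demanding $\mathrm{Ext}^{1}(X,I)=0$ for \emph{every} $X$ is precisely what it means for $I$ to be injective. Hence $(\mathpzc{E},\mathrm{Inj}(\mathpzc{E}))$ is a genuine cotorsion pair.

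For completeness of this pair I would argue as follows. Enough injectives is a direct invocation of the standing hypothesis that $\mathpzc{E}$ has enough injectives: for each $X$ one chooses an admissible monomorphism $X\hookrightarrow I$ with $I$ injective, and the cokernel lies in $\mathpzc{E}$ trivially. Enough projectives is obtained tautologically by taking $\mathrm{id}_{X}:X\to X$, which is an admissible epimorphism with kernel $0$, and the zero object is injective. Functoriality of both choices is evident (for the projectives one uses the identity functor, and for the injectives any fixed functorial choice of injective envelope afforded by the enough-injectives assumption).

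There is essentially no obstacle; as the excerpt already signals, the content of the statement is tautological. The definition of an injective cotorsion pair is engineered precisely so that the second complete cotorsion pair required by the Hovey correspondence of Theorem~\ref{weakmon} is automatic, with $\mathrm{id}_{X}$ and the ambient enough-injectives providing the two halves of the functorial factorisation.
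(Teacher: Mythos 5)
Your proof is correct and follows exactly the route the paper intends: the paper offers no written proof beyond the remark that the statement is "essentially tautological from the assumption that $\mathpzc{E}$ has enough injectives and $\mathfrak{W}\cap\mathfrak{F}$ coincides with the class of injectives," and your verification (thickness and retract-closure of $\mathfrak{W}={}^{\perp}\mathfrak{F}$, the pair $(\mathfrak{W},\mathfrak{F})$ by hypothesis, and completeness of $(\mathpzc{E},\mathrm{Inj})$ via enough injectives on one side and $\mathrm{id}_{X}$ with zero kernel on the other) is precisely the spelled-out version of that remark. Note only that the paper's definition of Hovey triple asks for complete rather than functorially complete cotorsion pairs, so your closing comments on functoriality are harmless but not needed.
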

 
The following is an immediate consequence of Corollary \ref{cor:cotorsaccess}.

\begin{prop}
Let $\mathpzc{E}$ be a purely $\lambda$-accessible exact category. Let $\mathpzc{K}$ be a strongly $\lambda$-pure subobject stable class in $\mathpzc{E}$ which is thick, is closed under transfinite extensions by admissible monomorphisms, contains all injectives, and contains a generator. Suppose that $\mathpzc{E}$ is weakly $\mathbf{AdMon}_{\mathpzc{K}}$-elementary. Then $(\mathpzc{K},\mathpzc{K}^{\perp})$ is an injective cotorsion pair on $\mathpzc{E}$. 
\end{prop}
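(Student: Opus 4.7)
The plan is to derive completeness of the cotorsion pair directly from the earlier machinery, and then separately identify the intersection $\mathpzc{K}\cap\mathpzc{K}^{\perp}$ with the class of injective objects.

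First I would invoke Corollary \ref{cor:cotorsaccess} applied to $\mathpzc{K}$: strong $\lambda$-pure subobject stability, closure under transfinite extensions by admissible monomorphisms, containment of a generator, thickness of $\mathpzc{K}$, and weak $\mathbf{AdMon}_{\mathpzc{K}}$-elementarity of $\mathpzc{E}$ are exactly the hypotheses required there. This produces the (functorially) complete cotorsion pair $(\mathpzc{K},\mathpzc{K}^{\perp})$. Thickness of $\mathpzc{K}$, one of the two remaining axioms in the definition of an injective cotorsion pair, is built into the hypotheses.

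Next I would verify that $\mathpzc{K}\cap\mathpzc{K}^{\perp}$ coincides with the class of injectives. One inclusion is immediate: any injective $I$ lies in $\mathpzc{K}$ by hypothesis, and since $\mathrm{Ext}^{1}(-,I)=0$ on all of $\mathpzc{E}$, a fortiori on $\mathpzc{K}$, we have $I\in\mathpzc{K}^{\perp}$. For the reverse inclusion, fix $X\in\mathpzc{K}\cap\mathpzc{K}^{\perp}$ and embed it via an admissible monomorphism $X\hookrightarrow I$ into an injective $I$ (using that $\mathpzc{E}$ has enough injectives; this is part of the setup for an injective cotorsion pair in \cite{MR3459032} and in our setting is guaranteed from the ambient pure accessibility together with the existence of a generator, via the corollary following Lemma \ref{lem:accdec}). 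The cokernel $C$ fits into a short exact sequence
$$0\to X\to I\to C\to 0.$$
Since $X,I\in\mathpzc{K}$ and $\mathpzc{K}$ is thick, $C\in\mathpzc{K}$. But $X\in\mathpzc{K}^{\perp}$, so $\mathrm{Ext}^{1}(C,X)=0$ and the sequence splits, exhibiting $X$ as a direct summand of the injective $I$; hence $X$ is injective.

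The main obstacle is the availability of enough injectives in $\mathpzc{E}$, which is essential for the splitting step above. This is either inherited from the definitional framework of injective cotorsion pairs in \cite{MR3459032}, or else must be secured from the general deconstructibility theory in Section \ref{sec:accexact}; once this is in hand, the remainder—applying Corollary \ref{cor:cotorsaccess} and executing the summand-of-an-injective splitting—is entirely formal.
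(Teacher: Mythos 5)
Your proposal is correct and follows the same route as the paper, which simply records this proposition as an immediate consequence of Corollary \ref{cor:cotorsaccess} without spelling out the identification of $\mathpzc{K}\cap\mathpzc{K}^{\perp}$ with the injectives. The details you supply — completeness from Corollary \ref{cor:cotorsaccess}, and the standard splitting argument (embed $X\in\mathpzc{K}\cap\mathpzc{K}^{\perp}$ into an injective, use thickness to place the cokernel in $\mathpzc{K}$, and split via $\mathrm{Ext}^{1}$-vanishing) — are exactly what the paper leaves implicit, and your handling of the enough-injectives hypothesis via the corollary to Lemma \ref{lem:accdec} is appropriate.
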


\subsection{Properties of Compatible Model Structures}

Using Hovey triples, many properties of compatible model structures can be understood and analysed through the lens of homological algebra.

\subsubsection{Cofibrant Generation}

The notion of cofibrant generation on the model category side corresponds on the homological side to the cotorsion pairs being small.

\begin{lem}[\cite{kelly2016homotopy} Lemma 4.1.16]\label{cofibgen}
Let $\mathpzc{E}$ be a weakly idempotent complete exact category together with a compatible weak factorisation system $(\mathcal{L},\mathcal{R})$ with corresponding cotorsion pair $(\mathfrak{L},\mathfrak{R})$. If the cotorsion pair is small with generating morphisms $I=\{0\rightarrow U_{i}\}\cup\{i_{G}\}$, then this weak factorisation system is cofibrantly small in the sense of \cite{kelly2016homotopy} Definition A.2.9. If in addition the generating morphisms have $cell(I)$-presented domain, the weak factorisation system is cofibrantly generated. If finally $\mathpzc{E}$ is locally presentable then the model structure is combinatorial.
\end{lem}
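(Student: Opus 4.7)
The plan is to verify that $\mathcal{R} = I^{\llp}$, which immediately yields that $(\mathcal{L},\mathcal{R})$ is cofibrantly small with generating set $I$. First, note that $I \subseteq \mathcal{L}$: each map $0 \to U_i$ has cokernel $U_i \in \mathfrak{L}$ (since the $U_i$ form a generating set contained in $\mathfrak{L}$ by the first condition of smallness), while each $i_G$ is an admissible monomorphism with cokernel $G \in \mathcal{G} \subseteq \mathfrak{L}$ by construction of the generating morphisms; thus both families consist of admissible monomorphisms with cokernel in $\mathfrak{L}$, i.e., they lie in $\mathcal{L} = \mathbf{AdMon}_{\mathfrak{L}}$. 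Since $\mathcal{R} = \mathcal{L}^{\llp}$, this gives the easy inclusion $\mathcal{R} \subseteq I^{\llp}$.

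For the reverse inclusion, let $f \colon X \to Y \in I^{\llp}$. The right lifting property against each $0 \to U_i$ says that $\mathrm{Hom}(U_i, X) \to \mathrm{Hom}(U_i, Y)$ is surjective; since the $U_i$ are admissible generators of $\mathpzc{E}$, Proposition \ref{prop:rlpgenepi} then guarantees that $f$ is an admissible epimorphism. Write $K = \mathrm{Ker}(f)$; by the third axiom in the definition of a small cotorsion pair it suffices to check that $\mathrm{Hom}(i_G, K)$ is surjective for every $G \in \mathcal{G}$. Given $\phi \colon A_G \to K$, compose with the admissible monic $K \hookrightarrow X$ and pair it with the zero map $B_G \to Y$ to form a commutative square against $f$; the assumed RLP supplies $\tilde{\psi} \colon B_G \to X$ with $f \tilde{\psi} = 0$, and this forces $\tilde{\psi}$ to factor uniquely through $K$, producing the desired lift. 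Thus $K \in \mathfrak{R}$ and $f \in \mathcal{R}$.

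Having established cofibrant smallness, the remaining two claims follow quickly. If additionally the domains of the generating morphisms are $\mathrm{cell}(I)$-presented, the standard small object argument applies and produces functorial factorisations by transfinite compositions of pushouts of elements of $I$, certifying that the weak factorisation system is cofibrantly generated in the sense of Definition A.2.9 of \cite{kelly2016homotopy}. Finally, if $\mathpzc{E}$ is locally presentable, every object is $\kappa$-presentable for some $\kappa$, so in particular the domains of $I$ automatically satisfy the presentability hypothesis and the underlying category of the model structure is locally presentable, which by definition makes the resulting model structure combinatorial.

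The main obstacle is the kernel-lift argument in the second paragraph: one must first use only the maps $\{0 \to U_i\} \subseteq I$ to promote $f$ to an admissible epimorphism, so that $K$ is a bona fide kernel through which $\tilde{\psi}$ may factor; once $f$ is known to be an admissible epi, the rest is routine from the smallness axioms of the cotorsion pair and the weakly idempotent completeness of $\mathpzc{E}$.
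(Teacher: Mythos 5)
Your proof is correct and follows the standard route — note that the paper itself does not reprove this lemma but only cites \cite{kelly2016homotopy} Lemma 4.1.16, and your argument (the easy inclusion $I\subseteq\mathcal{L}$ giving $\mathcal{R}\subseteq I^{\llp}$, then using the maps $0\rightarrow U_{i}$ together with weak idempotent completeness to show any $f\in I^{\llp}$ is an admissible epimorphism, and the kernel-lifting argument against the $i_{G}$ to place $\mathrm{Ker}(f)$ in $\mathfrak{R}$) is essentially the same Hovey-style argument as the cited proof. The only cosmetic point is that Proposition \ref{prop:rlpgenepi} is stated for lifting against all of $\mathbf{AdMon}_{\mathfrak{C}}$, whereas you only have lifting against the set $\{0\rightarrow U_{i}\}$; but since its proof uses only the maps $0\rightarrow C$ (and lifting properties pass to coproducts, so one may lift an admissible epimorphism from a coproduct of the $U_{i}$), this is harmless.
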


\subsubsection{Monoidal Model Structures}

Let us recall some material from \cite{kelly2016homotopy} concerning monoidal model structure on exact categories.


\begin{thm}\label{exactmonoidal}
Let $\mathpzc{E}$ be a closed symmetric monoidal exact category equipped with a left pseudo-compatible model structure in which cofibrations are $\otimes$-pure. Then 
\begin{enumerate}
\item
$\mathpzc{E}$ is an almost monoidal model category.
\item
If $X\otimes C$ is acyclic for any trivially cofibrant $X$ and any cofibrant $C$ then $\mathpzc{E}$ is a weak monoidal model category. If in addition $\mathpzc{E}$ is weakly $\mathbf{PureMon}_{\otimes}$-elementary then $\mathpzc{E}$ satisfies the pp-monoid axiom.
\item
If $C\otimes C'$ is cofibrant for any cofibrant objects $C,C'$, and is acyclic whenever $C$ or $C'$ is acyclic, then $\mathpzc{E}$ is $C$-monoidal. If in addition whenever $C\rightarrow k$ is an acyclic fibration with $C$ in $\mathfrak{C}$, then for any object $X$ of $\mathpzc{E}$, $C\otimes X \rightarrow X$ is a weak equivalence, $\mathpzc{E}$ is a monoidal model category.
\item
If $X\otimes C$ is acyclic for any trivially cofibrant $X$ and any object $C$, and $\otimes$-pure transfinite extensions of acyclic objects are acyclic then $\mathpzc{E}$ satisfies the monoid axiom.
\end{enumerate}
\end{thm}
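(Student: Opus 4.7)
The plan is to extract everything from Proposition \ref{prop:pushoutprodpure}: pushout-products of $\otimes$-pure monomorphisms are again $\otimes$-pure monomorphisms whose cokernel is the tensor product of the cokernels. Since cofibrations are $\otimes$-pure by hypothesis, this single fact controls almost all of the homological behavior of the pushout-product.

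For \textbf{(1)}, let $f,g$ be cofibrations with respective cokernels $C,D$. By Proposition \ref{prop:pushoutprodpure}, $f\Box g$ is a $\otimes$-pure admissible monomorphism. Admissible monomorphisms in any exact category are left proper because pushouts along them exist and preserve the short exact sequence in question; one checks that the pushout diagram defining the homotopy pushout agrees with the strict pushout, so $f\Box g$ is left proper. This gives almost monoidality.

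For \textbf{(2)}, suppose $f$ is a trivial cofibration with cokernel $X\in\mathfrak{C}\cap\mathfrak{W}$ and $g$ a cofibration with cokernel $C\in\mathfrak{C}$. Then $f\Box g$ is an admissible monomorphism with cokernel $X\otimes C$, which is acyclic by hypothesis. Since admissible monomorphisms with acyclic cokernel are weak equivalences in the induced model structure (left pseudo-compatibility), $f\Box g$ is a weak equivalence. An inductive argument on iterated pushout-products, using the same cokernel formula $C_1\otimes\cdots\otimes C_n$ and that $X\otimes(-)$ takes cofibrant objects to acyclics, yields the weak pushout-product axiom. For the pp-monoid axiom, one needs transfinite compositions of pushouts of iterated pushout-products of acyclic cofibrations to be weak equivalences. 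Such maps are (by \ref{prop:pushoutprodpure} iterated) $\otimes$-pure admissible monomorphisms with acyclic cokernel; pushouts preserve this, and by the weak $\mathbf{PureMon}_{\otimes}$-elementarity assumption, the transfinite composition has acyclic cokernel as well.

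For \textbf{(3)}, the additional hypothesis that $C\otimes C'$ is cofibrant when $C,C'$ are promotes the cokernel description to say that $f\Box g$ is itself a cofibration, giving the pushout-product axiom; combined with (2) this yields $C$-monoidality. The weak unit axiom is precisely the added assumption about acyclic fibrations $C\rightarrow k$ from cofibrant $C$, so $\mathpzc{E}$ becomes a monoidal model category.

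For \textbf{(4)}, any map of the form $X\otimes f$ with $f$ a trivial cofibration is an admissible monomorphism with cokernel $X\otimes\mathrm{coker}(f)$, which is acyclic by hypothesis; moreover $X\otimes f$ is $\otimes$-pure because $f$ is. Pushouts of $\otimes$-pure admissible monomorphisms with acyclic cokernel remain of the same form, and the assumption that $\otimes$-pure transfinite extensions of acyclic objects are acyclic then guarantees that the transfinite composition has acyclic cokernel, hence is a weak equivalence.

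The main obstacle is bookkeeping in \textbf{(2)} and \textbf{(4)}: one must track simultaneously that the relevant maps remain $\otimes$-pure admissible monomorphisms (so Proposition \ref{prop:pushoutprodpure} iterates), that the cokernels remain in the required class (cofibrant, acyclic, or both), and that the elementarity hypothesis suffices to pass these properties through transfinite compositions. The cokernel formula is what keeps all three conditions synchronized; without it the argument would fragment into separate verifications.
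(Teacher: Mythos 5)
Your proof is correct and takes essentially the same route as the paper: everything is extracted from Proposition \ref{prop:pushoutprodpure} (pushout-products of $\otimes$-pure monomorphisms are $\otimes$-pure with cokernel the tensor product of the cokernels), combined with left properness of admissible monomorphisms and the characterisation of weak equivalences via left pseudo-compatibility. The only difference is that you spell out the details for parts (2)--(4) which the paper delegates to citations of earlier work, and your bookkeeping of purity, cokernels, and transfinite compositions matches what those cited results do.
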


\begin{proof}
\begin{enumerate}
\item
By Proposition \ref{prop:pushoutprodpure} any pushout product of $\otimes$-pure monomorphism is $\otimes$-pure, and hence admissible, monomorphism. Admissible monomorphisms are left proper by \cite{kelly2016homotopy} Proposition 4.2.45.
\item
This follows immediately from  Proposition \ref{prop:pushoutprodpure}.
\item
This follows immediately from  Proposition \ref{prop:pushoutprodpure}. Assuming the property of cofibrant resolutions of the unit, the claim is Theorem 4.1.17 in \cite{kelly2016homotopy}, which is essentially \cite{vst2012exact} Theorem 8.11
\item
This is Theorem 4.1.18 in \cite{kelly2016homotopy}
\end{enumerate}
\end{proof}

\subsubsection{Quillen Adjunctions}

The property of an adjunction between exact model categories being Quillen can also be translated into the language of cotorsion pairs.

\begin{lem}\label{lem:420}
Let $(\mathfrak{L},\mathfrak{R})$ be a complete cotorsion pair on an exact category $\mathpzc{E}$, and let 
$$0\rightarrow X\rightarrow Y\rightarrow Z\rightarrow 0$$
be a sequence in $\mathpzc{E}$ with $Z\in\mathfrak{L}$. Then the sequence is exact if and only if for any $Q\in\mathfrak{R}$, the sequence
$$0\rightarrow\mathrm{Hom}(Z,Q)\rightarrow\mathrm{Hom}(Y,Q)\rightarrow\mathrm{Hom}(X,Q)\rightarrow 0$$
is exact in $\mathrm{Ab}$.
\end{lem}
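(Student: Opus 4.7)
The plan is to separate the two implications. The forward direction is immediate: for any $Q\in\mathfrak{R}$, the long exact $\Ext$-sequence attached to $0\to X\to Y\to Z\to 0$ terminates at $\Ext^{1}(Z,Q)=0$ (using $Z\in\mathfrak{L}$, $Q\in\mathfrak{R}$), delivering the claimed short exact sequence of abelian groups directly. All of the content is in the reverse direction, and my strategy for it will be to exploit the enough-injectives side of the cotorsion pair to build test embeddings into $\mathfrak{R}$, and then to upgrade the resulting factorisations using the dual obscure axiom (available since all the categories of interest are weakly idempotent complete).

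Assume then that $0\to X\overset{i}{\to} Y\overset{p}{\to} Z\to 0$ is a null sequence whose $\Hom$-image is exact against every $Q\in\mathfrak{R}$. First I would show that $i$ is an admissible monomorphism: by completeness of $(\mathfrak{L},\mathfrak{R})$ embed $X$ admissibly into some $Q_X\in\mathfrak{R}$ with cokernel in $\mathfrak{L}$, lift that embedding through $i$ using the surjectivity of $\Hom(Y,Q_X)\to\Hom(X,Q_X)$, and apply the dual obscure axiom to the resulting factorisation. Then I would form the cokernel $c\colon Y\to C$ of $i$ and write $p=q\circ c$ for the unique $q\colon C\to Z$.

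The aim at this point is to force $q$ to be an isomorphism. Comparing the long exact $\Hom$-sequence of $0\to X\to Y\to C\to 0$ with the hypothesised exact sequence identifies $\Hom(C,Q)$ and $\Hom(Z,Q)$ as the same kernel inside $\Hom(Y,Q)$ in a way compatible with $q^{*}$, so $q^{*}$ is an isomorphism for every $Q\in\mathfrak{R}$. Embedding $C$ admissibly into some $Q_C\in\mathfrak{R}$ and transporting across $(q^{*})^{-1}$ factors this embedding through $q$, and a second invocation of the dual obscure axiom promotes $q$ to an admissible monomorphism. Finally, with $D\defeq\coker(q)$, the isomorphism $q^{*}$ forces $\Hom(D,Q)=0$ for all $Q\in\mathfrak{R}$, so embedding $D$ admissibly into any $Q_D\in\mathfrak{R}$ makes that embedding both monic and zero, giving $D=0$ and hence $q$ an isomorphism.

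I expect the main obstacle to be the second stage: keeping the four relevant $\Hom$ long exact sequences straight and verifying that the kernel identifications really are intertwined by $q^{*}$, so that the dual obscure axiom can be applied cleanly at each of its two invocations. The remaining steps are routine diagram-chasing once the enough-injectives side of the cotorsion pair and weak idempotent completeness are in hand.
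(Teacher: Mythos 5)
Your forward direction is exactly the paper's: the long exact $\mathrm{Ext}$-sequence truncated by $\mathrm{Ext}^{1}(Z,Q)=0$. For the converse the two arguments genuinely diverge: the paper simply cites the dual of \cite{kelly2016homotopy} Proposition 2.6.93 (a null sequence is exact if it becomes exact under $\mathrm{Hom}(-,Q)$ for a class $\mathfrak{R}$ with enough special preenvelopes), whereas you re-derive that fact from scratch. Your chain — use a special $\mathfrak{R}$-preenvelope of $X$ plus surjectivity of $\mathrm{Hom}(Y,Q_X)\to\mathrm{Hom}(X,Q_X)$ and the obscure axiom to make $i$ admissible monic; identify $\mathrm{Hom}(C,Q)$ and $\mathrm{Hom}(Z,Q)$ with the same kernel of $i^{*}$ so that $q^{*}$ is an isomorphism; promote $q$ to an admissible mono via a preenvelope of $C$; kill $D=\mathrm{coker}(q)$ because $\mathrm{Hom}(D,Q)=0$ forces its admissible embedding into some $Q_D\in\mathfrak{R}$ to be a zero monomorphism — is correct and checks out at each stage. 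What your route buys is self-containment; what it costs is a hypothesis the lemma does not state: both invocations of the (dual) obscure axiom require either that $i$ (resp.\ $q$) already has a cokernel or that $\mathpzc{E}$ is weakly idempotent complete, and you explicitly lean on the latter. This is harmless in practice — every application in the paper is to a weakly idempotent complete (indeed accessible) category, and the paper's own citation presumably carries comparable hypotheses — but if you want the lemma exactly as stated you should either add weak idempotent completeness to the hypotheses or note that the ambient convention supplies it.
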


\begin{proof}
Suppose the sequence 
$$0\rightarrow X\rightarrow Y\rightarrow Z\rightarrow 0$$
is exact. We get a long exact sequence
$$0\rightarrow\mathrm{Hom}(Z,Q)\rightarrow\mathrm{Hom}(Y,Q)\rightarrow\mathrm{Hom}(X,Q)\rightarrow\mathrm{Ext}^{1}(Z,Q)\rightarrow\ldots$$
Since $\mathrm{Ext}^{1}(Z,Q)=0$ we get that 
$$0\rightarrow\mathrm{Hom}(Z,Q)\rightarrow\mathrm{Hom}(Y,Q)\rightarrow\mathrm{Hom}(X,Q)\rightarrow 0$$
is exact. Conversely, suppose that for each $Q\in\mathfrak{R}$ the sequence
$$0\rightarrow\mathrm{Hom}(Z,Q)\rightarrow\mathrm{Hom}(Y,Q)\rightarrow\mathrm{Hom}(X,Q)\rightarrow 0$$
is exact. Since the cotorsion pair is complete, there are enough $\mathfrak{R}$-objects. The dual of \cite{kelly2016homotopy} Proposition 2.6.93 implies that 
$$0\rightarrow X\rightarrow Y\rightarrow Z\rightarrow 0$$
is exact.
\end{proof}

\begin{cor}\label{cor:adjcotors}
Let 
$$\adj{L}{\mathpzc{D}}{\mathpzc{E}}{R}$$
be an adjunction of exact categories. Let $(\mathfrak{L}_{\mathpzc{D}},\mathfrak{R}_{\mathpzc{D}})$ and $(\mathfrak{L}_{\mathpzc{E}},\mathfrak{R}_{\mathpzc{E}})$ be complete cotorsion pairs on $\mathpzc{D}$ and $\mathpzc{E}$ respectively. The following are equivalent.
\begin{enumerate}
\item
$R$ sends objects in $\mathfrak{R}_{\mathpzc{E}}$ to objects in $\mathfrak{R}_{\mathpzc{D}}$.
\item
$L$ sends an exact sequence of the form
$$0\rightarrow X\rightarrow Y\rightarrow Z\rightarrow 0$$
with $Z\in\mathfrak{L}_{\mathpzc{D}}$ to an exact sequence
$$0\rightarrow L(X)\rightarrow L(Y)\rightarrow L(Z)\rightarrow 0$$
with $L(Z)\in\mathfrak{L}_{\mathpzc{E}}$. 
\item
$L$ sends objects in $\mathfrak{L}_{\mathpzc{D}}$ to objects in $\mathfrak{L}_{\mathpzc{E}}$
\item
$R$ sends an exact sequence of the form
$$0\rightarrow X\rightarrow Y\rightarrow Z\rightarrow 0$$
with $X\in\mathfrak{R}_{\mathpzc{E}}$ to an exact sequence
$$0\rightarrow R(X)\rightarrow R(Y)\rightarrow R(Z)\rightarrow 0$$
with $R(X)\in\mathfrak{R}_{\mathpzc{D}}$. 
\end{enumerate}
\end{cor}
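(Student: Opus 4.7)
The implications $(2)\Rightarrow(3)$ and $(4)\Rightarrow(1)$ are immediate by applying the hypotheses to the trivially split exact sequences $0\to 0\to Z\to Z\to 0$ (with $Z\in\mathfrak{L}_{\mathpzc{D}}$) and $0\to Q\to Q\to 0\to 0$ (with $Q\in\mathfrak{R}_{\mathpzc{E}}$) respectively. Moreover the pair of equivalences $(1)\Leftrightarrow(4)$ is the formal dual of $(1)\Leftrightarrow(2)$ (swap $L\leftrightarrow R$, $\mathfrak{L}\leftrightarrow\mathfrak{R}$, and reverse arrows, using the dual form of Lemma \ref{lem:420}), so it suffices to prove $(1)\Leftrightarrow(2)$.

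For $(1)\Rightarrow(2)$, my plan is to combine Lemma \ref{lem:420} with the adjunction isomorphism $\mathrm{Hom}_{\mathpzc{E}}(L(-),-)\cong\mathrm{Hom}_{\mathpzc{D}}(-,R(-))$. First I establish $L(Z)\in\mathfrak{L}_{\mathpzc{E}}$ whenever $Z\in\mathfrak{L}_{\mathpzc{D}}$ (i.e. condition $(3)$): given any extension $0\to Q'\to E\to L(Z)\to 0$ with $Q'\in\mathfrak{R}_{\mathpzc{E}}$, I apply $R$ and pull the resulting left-exact sequence back along the unit $Z\to RL(Z)$; under hypothesis $(1)$ the resulting extension in $\mathpzc{D}$ has kernel $R(Q')\in\mathfrak{R}_{\mathpzc{D}}$ and cokernel $Z\in\mathfrak{L}_{\mathpzc{D}}$, so it splits, and the splitting transports back through the counit $LR(Q')\to Q'$ to split the original extension. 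Next, given a short exact sequence $0\to X\to Y\to Z\to 0$ in $\mathpzc{D}$ with $Z\in\mathfrak{L}_{\mathpzc{D}}$, Lemma \ref{lem:420} applied in $\mathpzc{D}$ yields exactness of $0\to\mathrm{Hom}(Z,Q)\to\mathrm{Hom}(Y,Q)\to\mathrm{Hom}(X,Q)\to 0$ for all $Q\in\mathfrak{R}_{\mathpzc{D}}$; specialising to $Q=R(Q')$ for $Q'\in\mathfrak{R}_{\mathpzc{E}}$ (permissible by $(1)$) and using the adjunction, the corresponding sequence $0\to\mathrm{Hom}(L(Z),Q')\to\mathrm{Hom}(L(Y),Q')\to\mathrm{Hom}(L(X),Q')\to 0$ is exact for every $Q'\in\mathfrak{R}_{\mathpzc{E}}$. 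Since $L$ preserves cokernels, $L(Z)=\mathrm{coker}(L(X)\to L(Y))$; now applying Lemma \ref{lem:420} in $\mathpzc{E}$ in the reverse direction (which requires $L(Z)\in\mathfrak{L}_{\mathpzc{E}}$, established in the previous step) concludes that $0\to L(X)\to L(Y)\to L(Z)\to 0$ is exact.

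For $(2)\Rightarrow(1)$, I show $\mathrm{Ext}^{1}_{\mathpzc{D}}(L,R(Q'))=0$ for every $L\in\mathfrak{L}_{\mathpzc{D}}$ and $Q'\in\mathfrak{R}_{\mathpzc{E}}$. Given any representing extension $\xi\colon 0\to R(Q')\to E\to L\to 0$ in $\mathpzc{D}$, hypothesis $(2)$ applied to $\xi$ produces an exact sequence $0\to LR(Q')\to L(E)\to L(L)\to 0$ in $\mathpzc{E}$ with $L(L)\in\mathfrak{L}_{\mathpzc{E}}$ (by the already-deduced $(2)\Rightarrow(3)$). Pushing out along the counit $\varepsilon\colon LR(Q')\to Q'$ yields (since pushouts of admissible monomorphisms along arbitrary maps are admissible monomorphisms in an exact category) an exact sequence $0\to Q'\to E'\to L(L)\to 0$ in $\mathpzc{E}$, which splits because $L(L)\in\mathfrak{L}_{\mathpzc{E}}$ and $Q'\in\mathfrak{R}_{\mathpzc{E}}$. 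Transporting the splitting back through adjunction (composing the section $L(L)\to E'$ with the pushout map $L(E)\to E'$ and unwinding via $\varepsilon$) produces a splitting of $\xi$, so $R(Q')\in\mathfrak{R}_{\mathpzc{D}}$.

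The main obstacle is ensuring that short exact sequences remain short exact (rather than merely having the correct kernel/cokernel) after application of the functors $L$ and $R$, which are not assumed to be exact. This is handled throughout by the interplay of Lemma \ref{lem:420} (and its dual) with the adjunction: both translate admissibility questions into exactness of Hom-sequences, which is preserved by adjunction; completeness of the cotorsion pairs guarantees there are enough test objects for this translation to be an equivalence.
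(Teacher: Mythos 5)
Your argument for $(1)\Rightarrow(3)$ does not work, and this is the crux of the whole corollary. Applying $R$ to the admissible short exact sequence $0\to Q'\to E\to L(Z)\to 0$ only yields a left-exact sequence: $R$ preserves kernels, but there is no reason for $R(E)\to RL(Z)$ to be an admissible epimorphism, so the pullback along the unit $Z\to RL(Z)$ (which need not even exist in a bare exact category) is not an admissible short exact sequence $0\to R(Q')\to P\to Z\to 0$, and $\mathrm{Ext}^{1}_{\mathpzc{D}}(Z,R(Q'))=0$ gives you nothing. Concretely, take $\mathpzc{D}=\mathpzc{E}=\mathpzc{Ab}$ with the projective cotorsion pair $(\mathrm{Proj},\mathrm{All})$ on both sides, $L=-\otimes\mathbb{Z}/2$ and $R=\mathrm{Hom}(\mathbb{Z}/2,-)$: condition $(1)$ holds vacuously, yet $L(\mathbb{Z})=\mathbb{Z}/2$ is not projective, so $(3)$ fails; running your recipe on $0\to\mathbb{Z}\xrightarrow{2}\mathbb{Z}\to\mathbb{Z}/2\to 0$ produces $0\to 0\to 2\mathbb{Z}\to\mathbb{Z}$, whose last map is not epic, and the original extension indeed does not split. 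So $(1)\Rightarrow(3)$ (hence $(1)\Rightarrow(2)$) is genuinely false without exactness hypotheses on the adjunction; the isomorphism $\mathrm{Ext}^{1}_{\mathpzc{E}}(L(Z),Q')\cong\mathrm{Ext}^{1}_{\mathpzc{D}}(Z,R(Q'))$ your argument implicitly tries to manufacture is exactly what fails. (The paper's own proof of $(1)\Rightarrow(2)$ is silent on why $L(Z)\in\mathfrak{L}_{\mathpzc{E}}$, so the lacuna is not yours alone; but your attempt to fill it cannot succeed as written.)

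There is also a bookkeeping error in your reduction. Passing to opposite categories turns $L\dashv R$ into $R^{op}\dashv L^{op}$ and swaps each cotorsion pair $(\mathfrak{L},\mathfrak{R})$ for $(\mathfrak{R},\mathfrak{L})$, so statement $(1)$ dualises to statement $(3)$ and statement $(2)$ dualises to statement $(4)$: the formal dual of $(1)\Leftrightarrow(2)$ is $(3)\Leftrightarrow(4)$, not $(1)\Leftrightarrow(4)$. With the pieces you actually assemble --- $(1)\Leftrightarrow(2)$, $(1)\Leftrightarrow(4)$, $(2)\Rightarrow(3)$, $(4)\Rightarrow(1)$ --- nothing is ever deduced from $(3)$, so even granting every step the four conditions are not shown to be equivalent. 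The paper instead closes the cycle $(1)\Rightarrow(2)\Rightarrow(3)\Rightarrow(4)\Rightarrow(1)$ by taking $(3)\Rightarrow(4)$ to be the dual of $(1)\Rightarrow(2)$. On the positive side, your pushout-along-the-counit argument for $(2)\Rightarrow(1)$ is correct, and it makes explicit a direction the paper dismisses as clear.
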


\begin{proof}
$(1)\Rightarrow (2)$. Let 
$$0\rightarrow X\rightarrow Y\rightarrow Z\rightarrow 0$$
be an exact sequence in $\mathpzc{D}$ with $Z\in\mathfrak{L}_{\mathpzc{D}}$. Let $Q\in\mathfrak{R}_{\mathpzc{E}}$. Then the sequence
$$0\rightarrow\mathrm{Hom}(L(Z),Q)\rightarrow\mathrm{Hom}(L(Y),Q)\rightarrow\mathrm{Hom}(L(X),Q)\rightarrow 0$$
is isomorphic to the sequence
$$0\rightarrow\mathrm{Hom}(Z,R(Q))\rightarrow\mathrm{Hom}(Y,R(Q))\rightarrow\mathrm{Hom}(X,R(Q))\rightarrow 0$$
This is exact by Lemma \ref{lem:420}.

$(2)\Rightarrow (3)$ is clear, $(3)\Rightarrow(4)$ is dual to $(1)\Rightarrow (2)$, and $(4)\Rightarrow(1)$ is clear.
\end{proof}

In particular we get the following corollaries.

\begin{cor}
Let 
$$\adj{L}{\mathpzc{D}}{\mathpzc{E}}{R}$$
be an adjunction of exact categories.  Let $(\mathfrak{C}_{\mathpzc{D}},\mathfrak{W}_{\mathpzc{D}})$ and $(\mathfrak{C}_{\mathpzc{E}},\mathfrak{W}_{\mathpzc{E}})$ be Waldhausen pairs $\mathpzc{D}$ and $\mathpzc{E}$ respectively. Suppose that $L(\mathfrak{C}_{\mathpzc{D}})\subseteq\mathfrak{C}_{\mathpzc{E}}$, and $L(\mathfrak{C}_{\mathpzc{D}}\cap\mathfrak{W}_{\mathpzc{D}})\subseteq\mathfrak{W}_{\mathpzc{E}}$. Then the adjunction is Quillen.
\end{cor}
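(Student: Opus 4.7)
The plan is to verify directly that $L$ preserves cofibrations and acyclic cofibrations, invoking Corollary \ref{cor:adjcotors} applied to the complete cotorsion pairs underlying the two pseudo-compatible model structures. Under a left pseudo-compatible model structure coming from a Waldhausen pair $(\mathfrak{C},\mathfrak{W})$, the classes $\mathfrak{C}$ and $\mathfrak{C}\cap\mathfrak{W}$ are the left components of complete cotorsion pairs, and by definition the cofibrations and acyclic cofibrations are precisely the admissible monomorphisms with cokernel in $\mathfrak{C}$ and $\mathfrak{C}\cap\mathfrak{W}$ respectively.

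First I would treat the cofibrations. Let $i:A\to B$ be a cofibration in $\mathpzc{D}$, so that the sequence $0\to A\to B\to C\to 0$ is exact with $C\in\mathfrak{C}_{\mathpzc{D}}$. Since $L$ sends $\mathfrak{C}_{\mathpzc{D}}$ into $\mathfrak{C}_{\mathpzc{E}}$, Corollary \ref{cor:adjcotors} (namely the equivalence $(3)\Rightarrow(2)$ applied to the cotorsion pairs with left classes $\mathfrak{C}_{\mathpzc{D}}$ and $\mathfrak{C}_{\mathpzc{E}}$) guarantees that applying $L$ yields an exact sequence $0\to L(A)\to L(B)\to L(C)\to 0$ with $L(C)\in\mathfrak{C}_{\mathpzc{E}}$. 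Hence $L(i)$ is an admissible monomorphism with cokernel in $\mathfrak{C}_{\mathpzc{E}}$, i.e.\ a cofibration in $\mathpzc{E}$.

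For the acyclic cofibrations I would run the same argument on an admissible monomorphism with cokernel $C\in\mathfrak{C}_{\mathpzc{D}}\cap\mathfrak{W}_{\mathpzc{D}}$. The preceding step already yields that $L(i)$ is an admissible monomorphism with cokernel $L(C)$; the first hypothesis places $L(C)$ in $\mathfrak{C}_{\mathpzc{E}}$, and the second places $L(C)$ in $\mathfrak{W}_{\mathpzc{E}}$. Consequently $L(C)\in\mathfrak{C}_{\mathpzc{E}}\cap\mathfrak{W}_{\mathpzc{E}}$, so $L(i)$ is an acyclic cofibration.

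The only potential subtlety is confirming that the Waldhausen pairs actually provide the complete-cotorsion-pair hypotheses needed to invoke Corollary \ref{cor:adjcotors}; but this is built into the notion of a (left pseudo-)compatible model structure through Theorem \ref{weakmon} and Proposition \ref{prop:condad}, so no further work is required beyond unpacking definitions. The whole argument is thus a direct translation of the homological hypotheses on $L$ into the model-categorical language via the correspondence of Sections \ref{sec:accexact} and the present section.
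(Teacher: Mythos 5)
Your proposal is correct and follows essentially the same route as the paper: both deduce from Corollary \ref{cor:adjcotors} (via the containment $L(\mathfrak{C}_{\mathpzc{D}})\subseteq\mathfrak{C}_{\mathpzc{E}}$) that $L$ carries short exact sequences with cofibrant cokernel to short exact sequences, hence preserves cofibrations, and then the hypothesis $L(\mathfrak{C}_{\mathpzc{D}}\cap\mathfrak{W}_{\mathpzc{D}})\subseteq\mathfrak{W}_{\mathpzc{E}}$ handles the acyclic cofibrations. You simply spell out the cokernel bookkeeping that the paper leaves implicit, and your remark about needing the Waldhausen pairs to supply complete cotorsion pairs is the same implicit assumption the paper makes when it invokes Corollary \ref{cor:adjcotors}.
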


\begin{proof}
By Corollary \ref{cor:adjcotors} it suffices to observe that if
 $$0\rightarrow X\rightarrow Y\rightarrow C\rightarrow 0$$ is an exact sequence in $\mathpzc{D}$ with $C$ cofibrant, then 
$$0\rightarrow L(X)\rightarrow L(Y)\rightarrow L(Z)\rightarrow0$$
is an exact sequence in $\mathpzc{E}$. 
\end{proof}

\begin{cor}
Let 
$$\adj{L}{\mathpzc{D}}{\mathpzc{E}}{R}$$
be an adjunction of exact categories. Let $(\mathfrak{C}_{\mathpzc{D}},\mathfrak{W}_{\mathpzc{D}},\mathfrak{F}_{\mathpzc{D}})$ and $(\mathfrak{C}_{\mathpzc{E}},\mathfrak{W}_{\mathpzc{E}},\mathfrak{F}_{\mathpzc{E}})$ be Hovey triples on $\mathpzc{D}$ and $\mathpzc{E}$ respectively. Suppose that $R(\mathfrak{F}_{\mathpzc{E}})\subseteq\mathfrak{F}_{\mathpzc{D}}$ and $R(\mathfrak{F}_{\mathpzc{E}}\cap\mathfrak{W}_{\mathpzc{E}})\subseteq\mathfrak{F}_{\mathpzc{D}}\cap\mathfrak{W}_{\mathpzc{D}}$. Then the adjunction is Quillen.
\end{cor}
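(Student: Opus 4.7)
The plan is to apply Corollary \ref{cor:adjcotors} twice, once for each of the two complete cotorsion pairs determined by a Hovey triple, and thereby conclude that $L$ preserves both cofibrations and acyclic cofibrations. Recall from Theorem \ref{weakmon} that a Hovey triple $(\mathfrak{C},\mathfrak{W},\mathfrak{F})$ gives complete cotorsion pairs $(\mathfrak{C}\cap\mathfrak{W},\mathfrak{F})$ and $(\mathfrak{C},\mathfrak{F}\cap\mathfrak{W})$, and that the associated cofibrations are $\mathbf{AdMon}_{\mathfrak{C}}$ while the acyclic cofibrations are $\mathbf{AdMon}_{\mathfrak{C}\cap\mathfrak{W}}$. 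Thus it suffices to show that $L$ sends admissible monomorphisms with cokernel in $\mathfrak{C}_{\mathpzc{D}}$ (resp. $\mathfrak{C}_{\mathpzc{D}}\cap\mathfrak{W}_{\mathpzc{D}}$) to admissible monomorphisms with cokernel in $\mathfrak{C}_{\mathpzc{E}}$ (resp. $\mathfrak{C}_{\mathpzc{E}}\cap\mathfrak{W}_{\mathpzc{E}}$).

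First I would apply Corollary \ref{cor:adjcotors} to the cotorsion pairs $(\mathfrak{C}_{\mathpzc{D}},\mathfrak{F}_{\mathpzc{D}}\cap\mathfrak{W}_{\mathpzc{D}})$ on $\mathpzc{D}$ and $(\mathfrak{C}_{\mathpzc{E}},\mathfrak{F}_{\mathpzc{E}}\cap\mathfrak{W}_{\mathpzc{E}})$ on $\mathpzc{E}$. The hypothesis $R(\mathfrak{F}_{\mathpzc{E}}\cap\mathfrak{W}_{\mathpzc{E}})\subseteq\mathfrak{F}_{\mathpzc{D}}\cap\mathfrak{W}_{\mathpzc{D}}$ is exactly condition (1) of that corollary, so by its equivalence with condition (2), $L$ sends every short exact sequence $0\to X\to Y\to Z\to 0$ with $Z\in\mathfrak{C}_{\mathpzc{D}}$ to a short exact sequence $0\to L(X)\to L(Y)\to L(Z)\to 0$ with $L(Z)\in\mathfrak{C}_{\mathpzc{E}}$. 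In particular, for any cofibration $c:A\to B$ in $\mathpzc{D}$, applying $L$ to $0\to A\to B\to\mathrm{coker}(c)\to 0$ shows that $L(c)$ is an admissible monomorphism whose cokernel is in $\mathfrak{C}_{\mathpzc{E}}$, i.e. $L(c)$ is a cofibration in $\mathpzc{E}$.

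Next I would apply Corollary \ref{cor:adjcotors} to the pairs $(\mathfrak{C}_{\mathpzc{D}}\cap\mathfrak{W}_{\mathpzc{D}},\mathfrak{F}_{\mathpzc{D}})$ and $(\mathfrak{C}_{\mathpzc{E}}\cap\mathfrak{W}_{\mathpzc{E}},\mathfrak{F}_{\mathpzc{E}})$. The remaining hypothesis $R(\mathfrak{F}_{\mathpzc{E}})\subseteq\mathfrak{F}_{\mathpzc{D}}$ then yields, by the same equivalence, that $L$ takes any short exact sequence with cokernel in $\mathfrak{C}_{\mathpzc{D}}\cap\mathfrak{W}_{\mathpzc{D}}$ to a short exact sequence with cokernel in $\mathfrak{C}_{\mathpzc{E}}\cap\mathfrak{W}_{\mathpzc{E}}$. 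Repeating the cokernel argument of the previous paragraph shows that $L$ sends acyclic cofibrations in $\mathpzc{D}$ to acyclic cofibrations in $\mathpzc{E}$, so the adjunction is Quillen. There is no real obstacle here: once the translation furnished by Corollary \ref{cor:adjcotors} is in hand, the proof is essentially bookkeeping, and the only point to be careful about is matching each of the two hypotheses on $R$ to the correct cotorsion pair.
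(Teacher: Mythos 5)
Your proof is correct and is exactly the intended argument: the paper states this corollary without proof as an immediate consequence of Corollary \ref{cor:adjcotors}, and your two applications of that corollary to the cotorsion pairs $(\mathfrak{C},\mathfrak{F}\cap\mathfrak{W})$ and $(\mathfrak{C}\cap\mathfrak{W},\mathfrak{F})$, matched to the two hypotheses on $R$, are precisely the bookkeeping the author leaves to the reader. The identification of cofibrations with $\mathbf{AdMon}_{\mathfrak{C}}$ and acyclic cofibrations with $\mathbf{AdMon}_{\mathfrak{C}\cap\mathfrak{W}}$ via Theorem \ref{weakmon} is the right way to close the argument.
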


\subsection{Hovey Triples on Chain Complexes}

In \cite{kelly2016homotopy}, generalising results of \cite{Gillespie2} and \cite{gillespie2016derived}, we described a method for constructing compatible model structures on categories of chain complexes $ \mathrm{Ch}_{*}(\mathpzc{E})$ from cotorsion pairs on $\mathpzc{E}$. For complete and cocomplete abelian categories, in \cite{yang2014question} Ding and Yang showed that this method always produces a model structure. In private communication Timoth\'{e}e Moreau has shown that it holds for complete and cocomplete exact categories with exact products and exact coproducts, and in \cite{kelly2016homotopy} we showed it works for the projective cotorsion pair whenever $\mathpzc{E}$ satisfies the so-called axiom $AB4-k$ for some integer $k$, which essentially says that derived countable direct sums are sufficiently connective.

\begin{defn}\label{defn:dgK}
Let $(\mathfrak{L},\mathfrak{R})$ be a cotorsion pair on an exact category $\mathpzc{E}$. Let $X\in \mathrm{Ch}(\mathpzc{E})$ be a chain complex.
\begin{enumerate}
\item
$X$ is called an $\mathfrak{L}$ complex if it is acyclic and $Z_{n}X\in\mathfrak{L}$ for all $n$. The collection of all $\mathfrak{L}$ complexes is denoted $\widetilde{\mathfrak{L}}$.
\item
$X$ is called an $\mathfrak{R}$ complex if it is acyclic and $Z_{n}X\in\mathfrak{R}$ for all $n$. The collection of all $\mathfrak{R}$ complexes is denoted $\widetilde{\mathfrak{R}}$.
\item
$X$ is called a $K$-$\mathfrak{L}$ complex if $\mathrm{Hom}(X,B)$ is exact whenever $B$ is an $\mathfrak{R}$ complex. The class of all $K$-$\mathfrak{L}$ complexes is denoted $K\mathcal{L}$.
\item
$X$ is called a $K$-$\mathfrak{R}$ complex if $\mathrm{Hom}(A,X)$ is exact whenever $A$ is an $\mathfrak{L}$ complex. The class of all $K$-$\mathfrak{R}$ complexes is denoted $K\mathfrak{R}$.
\item
$X$ is called a $dg\mathfrak{L}$ complex if $X$ is a $K$-$\mathfrak{L}$ complex and $X_{n}\in\mathfrak{L}$ for each $n\in\mathbb{Z}$. The collection of all $dg\mathfrak{L}$ complexes is denoted $\widetilde{dg\mathfrak{L}}$.
\item
$X$ is called a $dg\mathfrak{R}$ complex if $X$ is a $K$-$\mathfrak{R}$ complex and $X_{n}\in\mathfrak{R}$ for each $n\in\mathbb{Z}$. The collection of all $dg\mathfrak{R}$ complexes is denoted $\widetilde{dg\mathfrak{R}}$.
\end{enumerate}
\end{defn}

We define the collections $\widetilde{\mathfrak{L}},\widetilde{\mathfrak{R}},K\mathfrak{L},K\mathfrak{R},\widetilde{dg\mathfrak{L}},\widetilde{dg\mathfrak{R}}$ similarly in the categories $ \mathrm{Ch}_{*}(\mathpzc{E})$ for  $*\in\{\ge0,\le0,+,-,b\}$. We will use the same notation for these collections irrespective of which category of chain complexes we are working in.

\subsubsection{Basic Properties}

Let us establish some basic properties of the classes defined in Definition \ref{defn:dgK}.

\begin{prop}\label{prop:correctprop}
 Le $(\mathfrak{L},\mathfrak{R})$ be a complete cotorsion pair on an exact category $\mathpzc{E}$ which has kernels. Let $Y$ be a complex with each $Y_{n}$ being in $\mathfrak{R}$. Then $Y$ is in $\tilde{\mathfrak{R}}$ if and only if 
 $$\mathrm{Hom}(L,Y)$$
 is acyclic for any $L\in\mathfrak{L}$.
 
 Dually let $X$ be a complex with each $X_{n}\in\mathfrak{L}$. Then $X\in\tilde{\mathfrak{L}}$ if and only if $\mathrm{Hom}(X,R)$ is acyclic for any $R\in\mathfrak{R}$.
\end{prop}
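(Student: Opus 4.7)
The plan is to interpret $\mathrm{Hom}(L,Y)$ as the complex of abelian groups whose $n$-th term is $\mathrm{Hom}_{\mathpzc{E}}(L,Y_{n})$ with differential induced by that of $Y$ (this is $\mathbf{Hom}(S^{0}(L),Y)$ in the notation of the preliminaries). Since $\mathpzc{E}$ has kernels, the cycles $Z_{n}Y = \ker(d_{n})$ exist, and one checks directly that $\ker(\mathrm{Hom}(L,Y_{n})\to\mathrm{Hom}(L,Y_{n-1})) \cong \mathrm{Hom}(L,Z_{n}Y)$. Thus $\mathrm{Hom}(L,Y)$ is acyclic if and only if, for every $n$, the canonical map $\mathrm{Hom}(L,Y_{n+1})\to\mathrm{Hom}(L,Z_{n}Y)$ is surjective. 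This reformulation is the key move; both directions then reduce to manipulating short exact sequences in $\mathpzc{E}$.

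For the forward direction, assume $Y\in\widetilde{\mathfrak{R}}$. Acyclicity of $Y$ yields short exact sequences $0\to Z_{n+1}Y\to Y_{n+1}\to Z_{n}Y\to 0$. Since $Z_{n+1}Y\in\mathfrak{R}$ and $L\in\mathfrak{L}$, we have $\mathrm{Ext}^{1}(L,Z_{n+1}Y)=0$, so applying $\mathrm{Hom}(L,-)$ keeps the sequences short exact. Splicing these together shows precisely that $\mathrm{Hom}(L,Y_{n+1})\to\mathrm{Hom}(L,Z_{n}Y)$ is surjective for every $n$, i.e. $\mathrm{Hom}(L,Y)$ is acyclic.

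For the converse, assume $\mathrm{Hom}(L,Y)$ is acyclic for every $L\in\mathfrak{L}$. Completeness of $(\mathfrak{L},\mathfrak{R})$ gives, for each $n$, a special $\mathfrak{L}$-precover $\pi\colon L\to Z_{n}Y$, where $L\in\mathfrak{L}$ and $\pi$ is an admissible epi. By hypothesis, $\pi$ lifts through $d_{n+1}\colon Y_{n+1}\to Z_{n}Y$ to a map $\tilde{\pi}\colon L\to Y_{n+1}$; since $d_{n+1}\circ\tilde{\pi}=\pi$ is an admissible epi, weak idempotent completeness of $\mathpzc{E}$ forces $d_{n+1}\colon Y_{n+1}\to Z_{n}Y$ itself to be an admissible epi. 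Doing this for every $n$ shows $Y$ is acyclic, producing short exact sequences $0\to Z_{n+1}Y\to Y_{n+1}\to Z_{n}Y\to 0$. Applying $\mathrm{Hom}(L',-)$ for an arbitrary $L'\in\mathfrak{L}$ and using $Y_{n+1}\in\mathfrak{R}$ gives the exact piece
\[
\mathrm{Hom}(L',Y_{n+1})\to\mathrm{Hom}(L',Z_{n}Y)\to\mathrm{Ext}^{1}(L',Z_{n+1}Y)\to\mathrm{Ext}^{1}(L',Y_{n+1})=0.
\]
The first map is surjective by hypothesis, whence $\mathrm{Ext}^{1}(L',Z_{n+1}Y)=0$ for every $L'\in\mathfrak{L}$; this means $Z_{n+1}Y\in\mathfrak{R}$, so $Y\in\widetilde{\mathfrak{R}}$.

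The dual statement is proved by the same argument applied to $\mathpzc{E}^{op}$, noting that completeness of $(\mathfrak{L},\mathfrak{R})$ also provides special $\mathfrak{R}$-preenvelopes. The main subtlety is the passage from surjectivity of $\mathrm{Hom}(L,Y_{n+1})\to\mathrm{Hom}(L,Z_{n}Y)$ for all $L\in\mathfrak{L}$ to admissibility of the epimorphism $Y_{n+1}\to Z_{n}Y$; this uses both completeness of the cotorsion pair (to supply a test object $L\in\mathfrak{L}$ with an admissible epi onto $Z_{n}Y$) and weak idempotent completeness of $\mathpzc{E}$ (which is automatic in the accessible setting of the paper, as recorded in the remark citing \cite{positselski2023locally}).
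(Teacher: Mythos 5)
Your proof is correct and follows essentially the same route as the paper's: deduce acyclicity of $Y$ from the fact that $\mathfrak{L}$ generates $\mathpzc{E}$ (via special precovers and lifting against $Y_{n+1}\to Z_{n}Y$), then run the long exact $\mathrm{Ext}$-sequence on $0\to Z_{n+1}Y\to Y_{n+1}\to Z_{n}Y\to 0$ using $\mathrm{Ext}^{1}(L,Y_{n+1})=0$; you merely spell out the forward implication and the lifting step that the paper leaves implicit. One small refinement: you do not need weak idempotent completeness to conclude that $Y_{n+1}\to Z_{n}Y$ is an admissible epimorphism once an admissible epimorphism factors through it, because $\mathpzc{E}$ has kernels so this map has a kernel, and every Quillen exact category is strongly left exact (\cite{Buehler} Proposition 2.16), so the hypotheses as stated already suffice.
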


\begin{proof}
Since $\mathfrak{L}$ is a generating subcategory of $\mathpzc{E}$, if $\mathrm{Hom}(F,Y)$ is acyclic then $Y$ is acyclic. Now we have exact sequences $0\rightarrow Z_{n}Y\rightarrow Y_{n}\rightarrow Z_{n-1}Y\rightarrow0$, and therefore an exact sequence
$$0\rightarrow\mathrm{Hom}(F,Z_{n}Y)\rightarrow\mathrm{Hom}(F,Y_{n})\rightarrow\mathrm{Hom}(F,Z_{n-1}Y)\rightarrow\mathrm{Ext}^{1}(F,Z_{n}Y)\rightarrow \mathrm{Ext}^{1}(F,Y_{n})\cong 0$$
where in the isomorphism at the end we have used that $Y_{n}\in\mathfrak{R}$. However since $\mathrm{Hom}(F,Y)$ is acyclic we have that 
$$0\rightarrow\mathrm{Hom}(F,Z_{n}Y)\rightarrow\mathrm{Hom}(F,Y_{n})\rightarrow\mathrm{Hom}(F,Z_{n-1}Y)\rightarrow0$$
is short exact whence $\mathrm{Ext}^{1}(F,Z_{n}Y)\cong0$. 
\end{proof}

\begin{prop}[\cite{kelly2016homotopy} Lemma 4.2.23 (identical to \cite{Gillespie2} Lemma 3.4)]\label{prop:dgbound}
\;
\begin{enumerate}
\item
Bounded below complexes with entries in $\mathfrak{L}$ are $dg\mathcal{L}$-complexes.
\item
Bounded above complexes with entries in $\mathfrak{R}$ are $dg\mathfrak{R}$-complexes.
\end{enumerate}
\end{prop}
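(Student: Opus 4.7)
My plan is to prove (1); part (2) follows by the dual argument. Let $X$ be a complex with $X_i \in \mathfrak{L}$ for every $i$ and $X_i = 0$ for $i < N$, and let $B \in \widetilde{\mathfrak{R}}$. I need to show $\mathbf{Hom}(X,B)$ is acyclic. Unwinding the definition, this amounts to showing that every degree-$n$ cycle $f = (f_i : X_i \to B_{i+n})_i$ in $\mathbf{Hom}(X,B)$ is a boundary, i.e.\ that there is a family $g = (g_i : X_i \to B_{i+n+1})_i$ satisfying
$$f_i = d^B_{i+n+1} \circ g_i + (-1)^n g_{i-1} \circ d^X_i \qquad \text{for all } i.$$

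I construct the $g_i$ by induction on $i$, starting with $g_i = 0$ for $i < N$. For $i \ge N$, assume $g_j$ has been built for $j < i$ so that the identity above holds for those indices. Set $h_i := f_i - (-1)^n g_{i-1} \circ d^X_i : X_i \to B_{i+n}$. A direct calculation using the cycle condition on $f$ together with the inductive identity for $g_{i-1}$ shows that $d^B_{i+n} \circ h_i = 0$, so $h_i$ factors through $Z_{i+n} B$. To produce $g_i$ I need to lift $h_i$ along the admissible epimorphism $B_{i+n+1} \twoheadrightarrow Z_{i+n} B$. Because $B$ is acyclic, the kernel of this epimorphism is $Z_{i+n+1} B$, yielding an admissible short exact sequence
$$0 \to Z_{i+n+1} B \to B_{i+n+1} \to Z_{i+n} B \to 0.$$
The obstruction to the desired lift lies in $\mathrm{Ext}^1(X_i, Z_{i+n+1} B)$, which vanishes by definition of the cotorsion pair since $X_i \in \mathfrak{L}$ and $Z_{i+n+1} B \in \mathfrak{R}$. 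This completes the induction and hence the proof that $X$ is a $K$-$\mathfrak{L}$ complex; combined with the assumption $X_i \in \mathfrak{L}$, this gives $X \in \widetilde{dg\mathfrak{L}}$.

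For part (2), I would run the dual induction: given $A \in \widetilde{\mathfrak{L}}$ and $X$ with $X_i \in \mathfrak{R}$ and $X_i = 0$ for $i > M$, one builds a null-homotopy of a cycle $A \to X$ by downward induction on the degree, using the admissible short exact sequences $0 \to Z_{i+1} A \to A_{i+1} \to Z_i A \to 0$ and the vanishing of the appropriate $\mathrm{Ext}^1$ coming from the cotorsion pair. The only delicate step in either direction is bookkeeping the sign conventions and index shifts in the verification that $h_i$ is a cycle; the homological content itself is entirely captured by the cotorsion-pair vanishing of $\mathrm{Ext}^1$ combined with the acyclicity of $B$ (respectively $A$), and boundedness is used only to seed the induction at the extremal degree.
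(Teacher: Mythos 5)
Your proof is correct and is essentially the standard argument of \cite{Gillespie2} Lemma 3.4, which the paper cites without reproducing: the inductive construction of the null-homotopy, the verification that $h_i$ factors through $Z_{i+n}B$, and the lifting along $B_{i+n+1}\twoheadrightarrow Z_{i+n}B$ via the vanishing of $\mathrm{Ext}^1(X_i,Z_{i+n+1}B)$ all match the cited proof, as does the dual argument for part (2).
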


By \cite{kelly2016homotopy} Corollary 4.1.2 and Proposition 4.2.24 we have the following.

\begin{prop}\label{prop:dgltransf}
If $\mathfrak{L}$ is closed under transfinite extensions then $\widetilde{dg\mathfrak{L}}$ is closed under transfinite extensions. 
\end{prop}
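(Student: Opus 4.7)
The plan is to verify the two defining properties of $\widetilde{dg\mathfrak{L}}$ separately for a transfinite extension $X=\colim_{\alpha<\lambda}X^{\alpha}$ with $X^0\in\widetilde{dg\mathfrak{L}}$, each $X^{\alpha}\hookrightarrow X^{\alpha+1}$ an admissible monomorphism with cokernel $C^{\alpha}\in\widetilde{dg\mathfrak{L}}$, and $X^{\beta}=\colim_{\alpha<\beta}X^{\alpha}$ at limits. The degreewise condition is essentially automatic: evaluating at degree $n$ yields a transfinite extension in $\mathpzc{E}$ of $X^0_n\in\mathfrak{L}$ by cokernels $C^{\alpha}_n\in\mathfrak{L}$, and so $X_n\in\mathfrak{L}$ by the hypothesis that $\mathfrak{L}$ is closed under transfinite extensions.

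The main content is to show that $\mathrm{Hom}(X,B)$ is acyclic for every $\mathfrak{R}$-complex $B$. I would first observe that each $B_n\in\mathfrak{R}$: the short exact sequence $0\to Z_nB\to B_n\to Z_{n-1}B\to 0$ exhibits $B_n$ as an extension of objects in $\mathfrak{R}$, and $\mathfrak{R}$ is extension-closed as the right half of a cotorsion pair. One then argues by transfinite induction on $\alpha\le\lambda$ that $\mathrm{Hom}(X^{\alpha},B)$ is acyclic. The base case $\alpha=0$ holds by hypothesis on $X^0$ via Proposition \ref{prop:correctprop}.

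For the successor step, apply $\mathrm{Hom}(-,B)$ to the short exact sequence $0\to X^{\alpha}\to X^{\alpha+1}\to C^{\alpha}\to 0$ to obtain, in each chain degree $n$, the product over $i\in\mathbb{Z}$ of the short exact sequences
$$0\to\mathrm{Hom}_{\mathpzc{E}}(C^{\alpha}_i,B_{i+n})\to\mathrm{Hom}_{\mathpzc{E}}(X^{\alpha+1}_i,B_{i+n})\to\mathrm{Hom}_{\mathpzc{E}}(X^{\alpha}_i,B_{i+n})\to 0,$$
which is exact since $C^{\alpha}_i\in\mathfrak{L}$ and $B_{i+n}\in\mathfrak{R}$ force $\mathrm{Ext}^1_{\mathpzc{E}}(C^{\alpha}_i,B_{i+n})=0$, and arbitrary products of short exact sequences of abelian groups are short exact. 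This yields a short exact sequence of chain complexes of abelian groups whose outer terms are acyclic (the left by $C^{\alpha}\in\widetilde{dg\mathfrak{L}}$ and Proposition \ref{prop:correctprop}, the right by the inductive hypothesis); two-out-of-three in the long exact sequence on homology gives acyclicity of the middle. At limit stages $\beta\le\lambda$, $\mathrm{Hom}(X^{\beta},B)=\lim_{\alpha<\beta}\mathrm{Hom}(X^{\alpha},B)$ is a transfinite inverse limit along termwise surjections (surjectivity coming from the successor step just established) of acyclic complexes of abelian groups, and such a limit is acyclic by a transfinite Mittag-Leffler style argument: given a cycle $z=(z_{\alpha})$ in the limit, one constructs a compatible system of boundings $w_{\alpha}$ by transfinite induction, extending at successors via surjectivity and matching at limits by compatibility.

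The main obstacle I anticipate is precisely this limit-stage argument: handling a transfinite, rather than merely $\omega$-indexed, inverse limit of surjections of acyclic complexes requires some care, but it is purely classical abelian-group machinery and is independent of the ambient exact category $\mathpzc{E}$. Setting $\alpha=\lambda$ in the induction then delivers that $\mathrm{Hom}(X,B)$ is acyclic and completes the proof.
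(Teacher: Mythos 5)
Your argument is correct. Note, though, that the paper does not actually prove this proposition: it is deduced by citing \cite{kelly2016homotopy} Corollary 4.1.2 and Proposition 4.2.24, which (in the standard pattern for such statements) identify the $K$-$\mathfrak{L}$ condition for complexes that are degreewise in $\mathfrak{L}$ with membership in ${}^{\perp}\tilde{\mathfrak{R}}$ and then invoke the Eklof lemma, i.e.\ closure of left $\mathrm{Ext}$-orthogonal classes under transfinite extensions. Your route is a genuinely different, self-contained one: you verify the two defining conditions of $\widetilde{dg\mathfrak{L}}$ directly, reducing the $K$-$\mathfrak{L}$ condition to acyclicity of a transfinite inverse limit of the complexes $\mathbf{Hom}(X^{\alpha},B)$. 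The two points that make this work, and which you correctly isolate, are (i) the degreewise surjectivity of the successor transition maps, which follows from $\mathrm{Ext}^{1}(C^{\alpha}_{i},B_{i+n})=0$ together with exactness of products in $\mathpzc{Ab}$, and (ii) continuity of the tower at limit ordinals, $\mathbf{Hom}(X^{\gamma},B)\cong\lim_{\alpha<\gamma}\mathbf{Hom}(X^{\alpha},B)$, which comes for free from $X^{\gamma}=\colim_{\alpha<\gamma}X^{\alpha}$. With these two facts the transfinite bounding-element construction you sketch does go through: at a successor one corrects an arbitrary preimage of a bounding element by a boundary, using acyclicity of the previous stage and surjectivity one degree higher, and at a limit the compatible family of boundings already \emph{is} an element of the limit, so no surjectivity onto limit stages is needed. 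What your approach buys is independence from the Eklof machinery and from the identification of $K$-$\mathfrak{L}$ complexes with an orthogonal class; what it costs is the careful transfinite Mittag--Leffler bookkeeping, which the citation-based route hides inside the Eklof lemma. Two cosmetic remarks: the base case needs only the definition of a $K$-$\mathfrak{L}$ complex, not Proposition \ref{prop:correctprop}; and the fact that each $B_{n}\in\mathfrak{R}$ uses that $\mathfrak{R}=\mathfrak{L}^{\perp}$ is extension-closed, which is available since $(\mathfrak{L},\mathfrak{R})$ is a cotorsion pair in the ambient Definition \ref{defn:dgK}.
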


\subsubsection{$dg$-Compatibility}

\begin{defn}\label{defn:dgcompat}
Let $\mathpzc{E}$ be a weakly idempotent complete exact category and $(\mathfrak{L},\mathfrak{R})$ a cotorsion pair on $\mathpzc{E}$. 
\begin{enumerate}
\item
We say that $(\mathfrak{L},\mathfrak{R})$ is $dg_{\ge0}$-compatible if $(\widetilde{dg\mathfrak{L}},\widetilde{\mathfrak{R}})$ is a functorially complete cotorsion pair on $\mathrm{Ch}_{\ge0}(\mathpzc{E})$, $\mathfrak{W}\cap \widetilde{dg\mathfrak{L}}=\widetilde{\mathfrak{L}}$ and the model structure whose cofibrations are $\mathbf{AdMon}_{\widetilde{dg\mathfrak{L}}}$, and whose acyclic cofibrations are $\mathbf{AdMon}_{\widetilde{\mathfrak{L}}}$ exists on $\mathrm{Ch}_{\ge0}(\mathpzc{E})$.
\item
We say that $(\mathfrak{L},\mathfrak{R})$ is $dg_{\le0}$-compatible if $(\widetilde{\mathfrak{L}},\widetilde{dg\mathfrak{R}})$ is a functorially complete cotorsion pair on $\mathrm{Ch}_{\le0}(\mathpzc{E})$, $\mathfrak{W}\cap \widetilde{dg\mathfrak{R}}=\widetilde{\mathfrak{R}}$ and the model structure whose fibrations are $\mathbf{AdEpi}_{\widetilde{dg\mathfrak{R}}}$, and whose acyclic fibrations are $\mathbf{AdEpi}_{\widetilde{\mathfrak{R}}}$ exists on $\mathrm{Ch}_{\le0}(\mathpzc{E})$.
\item
For $*\in\{b,+,-,\emptyset\}$ we say that $(\mathfrak{L},\mathfrak{R})$ is $dg_{*}$-compatible if $(\widetilde{\mathfrak{L}},\widetilde{dg\mathfrak{R}})$ and $(\widetilde{dg\mathfrak{L}},\widetilde{\mathfrak{R}})$ are (functorially) complete cotorsion pairs on $ \mathrm{Ch}_{*}(\mathpzc{E})$, $dg\mathfrak{L}\cap\mathfrak{W}=\widetilde{\mathfrak{L}}$, and $dg\mathfrak{R}\cap\mathfrak{W}=\widetilde{\mathfrak{R}}$
\end{enumerate}
\end{defn}

\begin{prop}[\cite{kelly2016homotopy} Corollary 4.4.92]\label{prop:simplicialmod}
Let $\mathpzc{E}$ be a complete and cocomplete exact category and $(\mathfrak{L},\mathfrak{R})$ a cotorsion pair on $\mathpzc{E}$. If $(\mathfrak{L},\mathfrak{R})$ is $dg_{\ge0}$-compatible (resp. $dg$-compatible) then the model structure on $\mathrm{Ch}_{\ge0}(\mathpzc{E})$ (resp. on $\mathrm{Ch}(\mathpzc{E}))$ is a Kan-complex enriched simplicial model category. 
\end{prop}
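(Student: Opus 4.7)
The plan is to construct the simplicial enrichment explicitly from the internal hom of chain complexes and then verify the SM7-axiom by translating cofibration/fibration pushout-products into statements about admissible monomorphisms with cokernels in $\widetilde{dg\mathfrak{L}}$ (resp. admissible epimorphisms with kernels in $\widetilde{dg\mathfrak{R}}$). First I would define, for any two chain complexes $X,Y\in\mathrm{Ch}(\mathpzc{E})$, the simplicial mapping object by
\[
\mathrm{Map}(X,Y)\;\defeq\;U\,\Gamma\,\tau_{\ge 0}\,\mathbf{Hom}(X,Y),
\]
where $\mathbf{Hom}(-,-)$ is the enrichment in $\mathrm{Ch}(\mathpzc{Ab})$ already recalled in the paper, $\Gamma\colon\mathrm{Ch}_{\ge 0}(\mathpzc{Ab})\to s\mathpzc{Ab}$ is the Dold-Kan functor, and $U\colon s\mathpzc{Ab}\to\mathrm{sSet}$ is the underlying-set functor. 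Because every simplicial abelian group is a Kan complex, this automatically ensures that every mapping space is a Kan complex, so the ``Kan-complex enriched'' clause follows for free. The tensor and cotensor over $\mathrm{sSet}$ are then forced by adjunction: $X\otimes K\defeq X\otimes_{\mathbb{Z}}N(\mathbb{Z}[K])$ and $X^{K}\defeq\underline{\mathrm{Hom}}(N(\mathbb{Z}[K]),X)$, both computed in $\mathrm{Ch}(\mathpzc{E})$ using the natural action of $\mathrm{Ch}(\mathpzc{Ab})$.

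Next I would verify the pushout-product axiom. By standard reductions for combinatorial simplicial model structures, it suffices to check the claim for a generating cofibration $\partial\Delta^{n}\hookrightarrow\Delta^{n}$ of $\mathrm{sSet}$ (and similarly for the horn inclusions $\Lambda^{n}_{k}\hookrightarrow\Delta^{n}$). The normalized chain complex $N(\mathbb{Z}[\Delta^{n}])$ is a bounded complex of finitely generated free abelian groups, and the map $N(\mathbb{Z}[\partial\Delta^{n}])\to N(\mathbb{Z}[\Delta^{n}])$ is a degreewise split monomorphism with cokernel a bounded complex of free abelian groups. Thus for any admissible monomorphism $i\colon A\to B$ in $\mathrm{Ch}(\mathpzc{E})$ with $B/A\in\widetilde{dg\mathfrak{L}}$, the pushout-product $i\Box j$ is again an admissible monomorphism, and its cokernel is (up to isomorphism) $(B/A)\otimes N(\mathbb{Z}[\Delta^{n}/\partial\Delta^{n}])$. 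Using Proposition \ref{prop:dgbound} (bounded complexes with entries in $\mathfrak{L}$ are $dg\mathfrak{L}$) together with Proposition \ref{prop:dgltransf} (closure under transfinite extensions), one shows this cokernel lies in $\widetilde{dg\mathfrak{L}}$, so $i\Box j$ is indeed a cofibration.

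The trickier half of SM7 is the triviality statement. Here I would use Corollary \ref{cor:adjcotors}: the adjunction $(-)\otimes N(K)\dashv(-)^{N(K)}$ is Quillen because the right adjoint clearly preserves the fibration class (an admissible epimorphism with kernel in $\widetilde{dg\mathfrak{R}}$ cotensored with a bounded free complex remains such). For the triviality, exploit the fact that when $j\colon K\to L$ is a trivial cofibration of simplicial sets, the cokernel $N(\mathbb{Z}[L/K])$ is a contractible bounded complex of finitely generated free $\mathbb{Z}$-modules, so tensoring with it lands in $\widetilde{\mathfrak{L}}$ under the assumption $\widetilde{dg\mathfrak{L}}\cap\mathfrak{W}=\widetilde{\mathfrak{L}}$ which is built into the definition of $dg$-compatibility (Definition \ref{defn:dgcompat}). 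Dually if $i$ is a trivial cofibration in $\mathrm{Ch}(\mathpzc{E})$.

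The main obstacle I foresee is precisely this compatibility between tensoring with $N(\mathbb{Z}[K])$ and the classes $\widetilde{dg\mathfrak{L}},\widetilde{\mathfrak{L}}$: one must verify that the natural action of $\mathrm{Ch}(\mathpzc{Ab})$ on $\mathrm{Ch}(\mathpzc{E})$ behaves well with respect to the cotorsion pair structure even though $\mathpzc{E}$ has no internal tensor product in general. The resolution is that each $N(\mathbb{Z}[\Delta^{n}])_{k}$ is a finite free $\mathbb{Z}$-module, so the action reduces to a finite direct sum of copies of $X_{k}$, which preserves admissible sequences and the $dg$-classes by their closure under finite biproducts. Once this is in hand, the $\mathrm{Ch}_{\ge 0}$-case runs identically using the appropriate truncations, which is the content of \cite{kelly2016homotopy} Corollary 4.4.92.
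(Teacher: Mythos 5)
Your construction is correct and is essentially the argument behind the cited result (the paper itself gives no proof here, deferring entirely to \cite{kelly2016homotopy} Corollary 4.4.92): enrich via $U\Gamma\tau_{\ge0}\mathbf{Hom}(-,-)$, tensor and cotensor through the $\mathrm{Ch}(\mathpzc{Ab})$-action, and verify SM7 on the generating (trivial) cofibrations $\partial\Delta^{n}\hookrightarrow\Delta^{n}$ and $\Lambda^{n}_{k}\hookrightarrow\Delta^{n}$, using that the resulting cokernels are finite iterated extensions of shifted finite direct sums of $\mathrm{coker}(i)$. The one imprecision is your appeal to Proposition \ref{prop:dgbound} for membership of $(B/A)\otimes N(\mathbb{Z}[\Delta^{n}/\partial\Delta^{n}])$ in $\widetilde{dg\mathfrak{L}}$ --- that cokernel need not be bounded below --- but closure of the left class of a complete cotorsion pair under extensions, shifts and finite direct sums supplies exactly what is needed, so the argument goes through.
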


\begin{lem}\label{lem:dgdeconstruct}
If $(\mathfrak{L},\mathfrak{R})$ is a complete cotorsion pair on $\mathpzc{E}$ with $\mathpzc{E}$ being weakly $\mathbf{AdMon}_{\mathfrak{L}}$-elementary, and $\mathfrak{L}$ is presentably deconstructible in itself then $\widetilde{dg\mathfrak{L}}$ is presentably deconstructible in itself.
\end{lem}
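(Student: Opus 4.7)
The plan is to exhibit a set of admissible monomorphisms in $\mathrm{Ch}(\mathpzc{E})$, with cokernels in $\widetilde{dg\mathfrak{L}}$, such that every $dg\mathfrak{L}$-complex arises as a transfinite extension of such cokernels. Since $\mathfrak{L}$ is presentably deconstructible in itself, I would fix a set $\mathcal{S} \subseteq \mathfrak{L}$ of $\kappa$-presentable objects whose closure under transfinite extension along admissible monomorphisms is $\mathfrak{L}$, for some regular cardinal $\kappa$. Choose $\gamma \geq \kappa$ large enough that every morphism out of a $\gamma$-presented object factors through a $\lambda$-pure monomorphism with $\gamma$-presented codomain (Theorem \ref{thm:deconstructpres}). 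Let $\mathcal{T}$ be a representative set of bounded complexes with entries drawn from $\mathcal{S}$; by Proposition \ref{prop:dgbound} every element of $\mathcal{T}$ belongs to $\widetilde{dg\mathfrak{L}}$, and these will serve as the building blocks.

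The core technical step is an enlargement lemma in the style of \cite{Gillespie2} Proposition 4.9: given $X \in \widetilde{dg\mathfrak{L}}$, a subcomplex $Y \subseteq X$ admissibly included with both $Y$ and $X/Y$ in $\widetilde{dg\mathfrak{L}}$, and a subcomplex $Z \subseteq X$ of $\gamma$-bounded presentation rank, one can find $Y' \subseteq X$ with $Y + Z \subseteq Y'$, still of $\gamma$-bounded presentation rank, such that $Y'$, $X/Y' \in \widetilde{dg\mathfrak{L}}$. The construction of $Y'$ proceeds by iterating three enlargements $\aleph_0$ times: (i) ensure each entry of $Y'$ lies in $\mathfrak{L}$ by attaching a small $\mathfrak{L}$-filtration degreewise; (ii) ensure each entry of $X/Y'$ lies in $\mathfrak{L}$ by lifting a small subobject of the quotient back via Lemma \ref{lem:subepi} and adjoining its preimage; and (iii) ensure $X/Y' \in \widetilde{dg\mathfrak{L}}$, which by Proposition \ref{prop:correctprop} reduces to the vanishing of $\mathrm{Ext}^{1}(X/Y', R)$ for all $R \in \mathfrak{R}$, achieved by absorbing small representatives of potential Ext classes using the completeness of the cotorsion pair $(\mathfrak{L}, \mathfrak{R})$.

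The main obstacle will be step (iii): while (i) and (ii) are direct applications of the deconstructibility of $\mathfrak{L}$ and the lifting property of $\gamma$-presented subobjects, enforcing the $K$-$\mathfrak{L}$ property of $X/Y'$ requires a subtler cotorsion-theoretic argument to identify which small subobjects must be absorbed. The weak $\mathbf{AdMon}_{\mathfrak{L}}$-elementariness of $\mathpzc{E}$ is then needed to guarantee that $\aleph_0$-indexed colimits of the resulting sequence of short exact sequences remain short exact, so that the union $Y'$ of the iteratively enlarged subcomplexes fits into a short exact sequence with cokernel still in $\widetilde{dg\mathfrak{L}}$ (and, at the later transfinite stage, so that limit-ordinal steps of the filtration of $X$ preserve the condition $X/X^{(\alpha)} \in \widetilde{dg\mathfrak{L}}$).

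Once the enlargement lemma is in hand, one well-orders a generating family for $X$ and iteratively applies the lemma to build a filtration $\{X^{(\alpha)}\}_{\alpha < \mu}$ of $X$ with $X^{(\alpha+1)}/X^{(\alpha)}$ isomorphic to an element of $\mathcal{T}$ and each quotient $X/X^{(\alpha)}$ in $\widetilde{dg\mathfrak{L}}$. Letting $\mathbf{J}$ be a set of admissible monomorphisms $A \to B$ in $\mathrm{Ch}(\mathpzc{E})$ with $A, B$ of $\gamma$-bounded presentation rank and cokernel $B/A$ ranging over $\mathcal{T}$, this filtration exhibits $\widetilde{dg\mathfrak{L}}$ as $\mathbf{J}_{\widetilde{dg\mathfrak{L}}}$-pre-deconstructible; since all domains and codomains are $\gamma$-presentable, the deconstructibility is presentable, completing the argument.
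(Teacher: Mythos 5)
Your strategy---directly filtering an arbitrary $X\in\widetilde{dg\mathfrak{L}}$ by $\gamma$-presentable subcomplexes with successive quotients built from $\mathcal{S}$---breaks down at your step (iii), and this is not a repairable technicality but the reason the paper takes a different route. Membership of $X/Y'$ in $\widetilde{dg\mathfrak{L}}$ means that $\mathbf{Hom}(X/Y',B)$ is acyclic for \emph{every} $B$ in the class $\tilde{\mathfrak{R}}$ of complexes, which is in general a proper class with no given set of cogenerators; there is therefore no set of ``potential Ext classes'' to absorb while keeping $Y'$ of bounded presentation rank, and completeness of $(\mathfrak{L},\mathfrak{R})$ supplies special precovers in $\mathpzc{E}$, not a set-sized test for the $K$-$\mathfrak{L}$ property in $\mathrm{Ch}(\mathpzc{E})$. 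Worse, the reduction you invoke is for the wrong class: Proposition \ref{prop:correctprop} characterises $\tilde{\mathfrak{L}}$, not $\widetilde{dg\mathfrak{L}}$. For a complex $Z$ degreewise in $\mathfrak{L}$ one computes $\mathbf{Hom}(Z,S^{n}(R))_{m}\cong\mathrm{Hom}(Z_{n-m},R)$, so requiring $\mathrm{Ext}^{1}(Z,S^{n}(R))=0$ for all $n$ and all $R\in\mathfrak{R}$ forces $\mathrm{Hom}(Z,R)$ to be acyclic, i.e.\ $Z\in\tilde{\mathfrak{L}}$ --- in particular $Z$ acyclic. Since the quotients $X/X^{(\alpha)}$ in your filtration cannot all be acyclic unless $X$ is, enforcing your step (iii) criterion is impossible for a general $X\in\widetilde{dg\mathfrak{L}}$.

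The paper's proof avoids filtering $\widetilde{dg\mathfrak{L}}$ altogether. From the set $\mathcal{S}=\mathrm{Coker}(\mathbf{J}_{\mathfrak{L}})$ provided by deconstructibility of $\mathfrak{L}$ (enlarged to contain a generator), it forms the set of spheres $\{S^{n}(A):n\in\mathbb{Z},\,A\in\mathcal{S}\}$ and applies Theorem \ref{thm:maincotorsexist} to obtain a functorially complete cotorsion pair $(\mathrm{Filt}(\{S^{n}(A)\}),\{S^{n}(A)\}^{\perp})$ on $\mathrm{Ch}(\mathpzc{E})$. It then identifies $\{S^{n}(A)\}^{\perp}=\tilde{\mathfrak{R}}$ directly: acyclicity comes from lifting against the maps $S^{n}(G)\rightarrow D^{n+1}(G)$, and $Z_{n}X\in\mathfrak{R}$ from $\mathrm{Ext}^{1}(S^{n}(A),X)\cong\mathrm{Ext}^{1}(A,Z_{n}X)$. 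Combining $\mathrm{Filt}(\{S^{n}(A)\})\subseteq\widetilde{dg\mathfrak{L}}\subseteq{}^{\perp}\tilde{\mathfrak{R}}$ with ${}^{\perp}\bigl(\{S^{n}(A)\}^{\perp}\bigr)=\mathrm{Filt}(\{S^{n}(A)\})$ shows all three classes coincide, so $\widetilde{dg\mathfrak{L}}=\mathrm{Filt}(\{S^{n}(A)\})$, which is exactly presentable deconstructibility in itself. If you want to keep a hands-on filtration argument of the kind you describe, the class it can be run on is $\tilde{\mathfrak{L}}$, where Proposition \ref{prop:correctprop} genuinely reduces the membership test to objects of $\mathfrak{R}$; the $dg$ class should then be recovered by the orthogonality argument above rather than filtered directly.
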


\begin{proof}
There exists a set $\mathbf{J}_{\mathfrak{L}}$ of monomorphisms with small domains and codomains such that any admissible monomorphism $X\rightarrow Y$ with cokernel in $\mathfrak{L}$ is a transfinite composition of elements of $\mathbf{J}_{\mathfrak{L}}$. Let $\mathcal{S}=\mathrm{Coker}(\mathbf{J}_{\mathfrak{L}})$. Without loss of generality we may assume that $\mathcal{S}$ contains a generator. Note that $\mathrm{Filt}(\{S^{n}(A):n\in\mathbb{Z},A\in\mathcal{S}\})$ is a subcategory of $\widetilde{dg\mathfrak{L}}$ which generates $\mathpzc{E}$. Thus $(\mathrm{Filt}(\{S^{n}(A):n\in\mathbb{Z},A\in\mathcal{S}\}),\{S^{n}(A):n\in\mathbb{Z},A\in\mathcal{S}\}^{\perp})$ is a complete cotorsion pair on $\mathrm{Ch}(\mathpzc{E})$. We claim that $\{S^{n}(A):n\in\mathbb{Z},A\in\mathcal{S}\}^{\perp}=\tilde{\mathfrak{R}}$. The maps $S^{n}(G)\rightarrow D^{n+1}(G)$ are in $\mathbf{AdMon}_{\mathrm{Filt}(\{S^{n}(A):n\in\mathbb{Z},A\in\mathcal{S}\})}$, so by \cite{kelly2016homotopy} Corollary 2.6.110, any complex $X$ such that $X\rightarrow 0$ has the right lifting property with respect to such maps is acyclic. Also since $X$ is acyclic, we have $0\cong\mathrm{Ext}^{1}(S^{n}(A),X)\cong\mathrm{Ext}^{1}(A,Z_{n}X)$. Thus each $Z_{n}X\in\mathfrak{R}$, and we are done.
\end{proof}

\begin{cor}\label{cor:modelexistencedec}
Let $(\mathfrak{L},\mathfrak{R})$ be a complete cotorsion pair on a weakly idempotent complete exact category $\mathpzc{E}$ with $\mathfrak{L}$ presentably deconstructible in itself, and such that $\mathpzc{E}$ is weakly $\mathbf{AdMon}_{\mathfrak{L}}$-elementary. Then
\begin{enumerate}
\item
$(\mathfrak{L},\mathfrak{R})$ is $dg_{\ge0}$-compatible if and only if $\tilde{\mathfrak{L}}$ is deconstructible in itself in $\mathrm{Ch}_{\ge0}(\mathpzc{E})$.
\item
$(\mathfrak{L},\mathfrak{R})$ is $dg$-compatible if and only if $\mathfrak{W}\cap\widetilde{dg\mathfrak{L}}$ is deconstructible in itself in $\mathrm{Ch}(\mathpzc{E})$ and $\mathfrak{W}\cap\widetilde{dg\mathfrak{L}}=\tilde{\mathfrak{L}}$. 
\item
 if $\mathpzc{E}$ is finitely cocomplete $\mathfrak{W}\cap\widetilde{dg\mathfrak{L}}$ is deconstructible in itself in $\mathrm{Ch}(\mathpzc{E})$ and $\mathfrak{W}\cap\widetilde{dg\mathfrak{L}}=\tilde{\mathfrak{L}}$ then $(\mathfrak{L},\mathfrak{R})$ is  $dg_{\le0}$-compatible.
\end{enumerate}
\end{cor}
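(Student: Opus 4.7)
The plan is to establish each of (1), (2), (3) by applying Lemma \ref{lem:dgdeconstruct}, Corollary \ref{thm:cotorsexist}, and Theorem \ref{thm:amendingmodel} in succession, with the deconstructibility hypothesis supplying exactly the missing data. Lemma \ref{lem:dgdeconstruct} already gives, under the standing hypotheses on $(\mathfrak{L},\mathfrak{R})$ alone, a functorially complete cotorsion pair $(\widetilde{dg\mathfrak{L}},\widetilde{\mathfrak{R}})$ on $\mathrm{Ch}(\mathpzc{E})$ with $\widetilde{dg\mathfrak{L}}$ presentably deconstructible in itself; the same argument restricts to $\mathrm{Ch}_{\ge0}(\mathpzc{E})$, and via Proposition \ref{prop:dgbound} collapses in $\mathrm{Ch}_{\le0}(\mathpzc{E})$ so that the $dg\mathfrak{R}$ condition becomes purely pointwise. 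Granted the additional hypothesis that $\widetilde{\mathfrak{L}}$ (respectively $\widetilde{dg\mathfrak{L}}\cap\mathfrak{W}=\widetilde{\mathfrak{L}}$) is presentably deconstructible in itself, Corollary \ref{thm:cotorsexist} produces the complementary complete cotorsion pair $(\widetilde{\mathfrak{L}},\widetilde{\mathfrak{L}}^{\perp})$; one then identifies $\widetilde{\mathfrak{L}}^{\perp}=\widetilde{dg\mathfrak{R}}$ by running the argument of Lemma \ref{lem:dgdeconstruct} dually, testing right-lifting against the inclusions $S^{n}(L)\to D^{n+1}(L)$ for $L$ ranging over a set of deconstructing objects of $\mathfrak{L}$ and invoking Proposition \ref{prop:correctprop}. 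Closure of the classes involved under transfinite extensions will be supplied by Proposition \ref{prop:dgltransf}.

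With both cotorsion pairs in place, the model structure will be assembled by Theorem \ref{thm:amendingmodel} applied with $\mathfrak{C}=\widetilde{dg\mathfrak{L}}$ and $\mathcal{W}$ the class of quasi-isomorphisms. Two-out-of-three and closure under retracts for $\mathcal{W}$ are standard; the inclusion $\mathbf{AdMon}_{\widetilde{dg\mathfrak{L}}}^{\llp}\subseteq\mathcal{W}$ will follow because the right-lifting class consists of admissible epimorphisms with kernel in $\widetilde{\mathfrak{R}}$ and hence acyclic; and the substantive equality $\mathbf{AdMon}_{\widetilde{dg\mathfrak{L}}}\cap\mathcal{W}=\mathbf{AdMon}_{\widetilde{\mathfrak{L}}}$ will reduce via the long exact sequence in homology to $\widetilde{dg\mathfrak{L}}\cap\mathfrak{W}=\widetilde{\mathfrak{L}}$. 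In (2) this equality is hypothesised directly; in (1) I will extract the nontrivial inclusion $\widetilde{dg\mathfrak{L}}\cap\mathfrak{W}\subseteq\widetilde{\mathfrak{L}}$ by computing $\mathrm{Ext}^{1}(Z_{n}X,R)$ for $R\in\mathfrak{R}$ via the $K$-$\mathfrak{L}$ condition applied to the acyclic $\mathfrak{R}$-complex $D^{n+1}(R)$, combined with the short exact sequences $0\to Z_{n}X\to X_{n}\to Z_{n-1}X\to0$. Part (3) is the formal dual of (1) in $\mathrm{Ch}_{\le0}(\mathpzc{E})$, where Proposition \ref{prop:dgbound} collapses $\widetilde{dg\mathfrak{R}}$ to the objectwise-$\mathfrak{R}$ condition; the finite-cocompleteness hypothesis ensures the pushouts and finite colimits demanded by Theorem \ref{thm:amendingmodel} exist.

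For the forward implications of (1) and (2), a compatible model structure furnishes a functorially complete cotorsion pair $(\widetilde{\mathfrak{L}},\widetilde{dg\mathfrak{R}})$ whose generating set of cofibrations, supplied by Lemma \ref{cofibgen} in the accessible setting, directly exhibits $\widetilde{\mathfrak{L}}$ (respectively $\widetilde{dg\mathfrak{L}}\cap\mathfrak{W}$) as presentably deconstructible in itself. The main obstacle will be the Ext-vanishing inclusion $\widetilde{dg\mathfrak{L}}\cap\mathfrak{W}\subseteq\widetilde{\mathfrak{L}}$ needed in (1): it is immediate over abelian categories but, over a merely exact $\mathpzc{E}$, one must wrangle the internal-hom chain complex and Proposition \ref{prop:correctprop} in place of injective resolutions, and verify that the identification of the two right orthogonals is compatible with the truncation $\mathrm{Ch}_{\ge0}(\mathpzc{E})\hookrightarrow\mathrm{Ch}(\mathpzc{E})$ so that the cotorsion pair constructed on the bounded category agrees with the one predicted by the unbounded statement (2).
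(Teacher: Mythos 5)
Your treatment of parts (1) and (2) follows essentially the same route as the paper: Lemma \ref{lem:dgdeconstruct} and Proposition \ref{prop:dgltransf} supply the cotorsion pair $(\widetilde{dg\mathfrak{L}},\widetilde{\mathfrak{R}})$ and the closure properties, the deconstructibility hypothesis feeds Corollary \ref{thm:cotorsexist} for the acyclic-cofibration half, and Theorem \ref{thm:amendingmodel} assembles the model structure, with $\mathbf{AdMon}_{\widetilde{dg\mathfrak{L}}}^{\llp}\subseteq\mathcal{W}$ coming from the kernels lying in $\widetilde{\mathfrak{R}}$. The extra care you take in (2) to identify $\widetilde{\mathfrak{L}}^{\perp}=\widetilde{dg\mathfrak{R}}$ via Proposition \ref{prop:correctprop} is a point the paper elides with ``follows similarly,'' and is welcome. (One caveat on your converse directions: Lemma \ref{cofibgen} runs from smallness of the cotorsion pair to cofibrant generation, not the other way, so the existence of the model structure does not ``directly exhibit'' a deconstructing set; but the paper does not spell this direction out either.)

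The genuine problem is part (3). You assert it is ``the formal dual of (1) in $\mathrm{Ch}_{\le0}(\mathpzc{E})$,'' but none of the machinery you have is self-dual: Theorem \ref{thm:amendingmodel}, Corollary \ref{thm:cotorsexist} and the small object argument all manufacture the \emph{left} class of a weak factorisation system from a set with small (co)domains, whereas $dg_{\le0}$-compatibility prescribes the \emph{fibrations} $\mathbf{AdEpi}_{\widetilde{dg\mathfrak{R}}}$ and $\mathbf{AdEpi}_{\widetilde{\mathfrak{R}}}$. A formal dualisation would require cosmallness and a ``co-deconstructibility'' of $\widetilde{\mathfrak{R}}$ and $\widetilde{dg\mathfrak{R}}$ in $\mathrm{Ch}_{\le0}(\mathpzc{E})^{op}$, and no such hypothesis is available -- indeed the hypotheses of (3) are verbatim those of (2), phrased entirely in terms of $\mathfrak{W}\cap\widetilde{dg\mathfrak{L}}$ in the \emph{unbounded} category, which already signals that the intended argument does not live in $\mathrm{Ch}_{\le0}(\mathpzc{E})$ at all. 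The paper instead deduces (3) from (2) by transferring the model structure along the adjunction $\adj{\tau_{\le0}}{\mathrm{Ch}(\mathpzc{E})}{\mathrm{Ch}_{\le0}(\mathpzc{E})}{i}$ (citing the transfer theorem of \cite{kelly2016homotopy}): fibrations and weak equivalences in $\mathrm{Ch}_{\le0}(\mathpzc{E})$ are created by the inclusion $i$, which lands them exactly in $\mathbf{AdEpi}_{\widetilde{dg\mathfrak{R}}}$ and the quasi-isomorphisms, and finite cocompleteness is what makes the truncation $\tau_{\le0}$ (a cokernel in degree $0$) available as the left adjoint. You should either adopt this transfer argument or, if you insist on working intrinsically in $\mathrm{Ch}_{\le0}(\mathpzc{E})$, identify the left orthogonals ${}^{\perp}\widetilde{\mathfrak{R}}$ and ${}^{\perp}\widetilde{dg\mathfrak{R}}$ there, prove \emph{they} are presentably deconstructible in themselves, and only then invoke Theorem \ref{thm:amendingmodel}; as written, the appeal to duality does not go through.
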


\begin{proof}
\begin{enumerate}
\item
Since the class of quasi-isomorphisms satisfies the $2$-out-of-$3$ property and is closed under retracts, the first claim follows immediately from Theorem \ref{thm:amendingmodel}, Lemma \ref{lem:dgdeconstruct}, and Proposition \ref{prop:dgltransf}.
\item
The second claim  follows similarly to the first.
\item
The final claim is a straightforward application of the second claim and the transfer theorem, specifically the statement Theorem A.5.29 in \cite{kelly2016homotopy}, where we transfer along the adjunction
$$\adj{\tau_{\le0}}{\mathrm{Ch}(\mathpzc{E})}{\mathrm{Ch}_{\le0}(\mathpzc{E})}{i}$$
\end{enumerate}
\end{proof}


If more generally $\mathfrak{L}$ is cogenerated by a set then so is $\widetilde{dg\mathfrak{L}}$.

\begin{prop}[\cite{kelly2016homotopy} Proposition 4.2.49, following \cite{gillespie2016derived} Section 4.4]\label{prop:cofibrantgenerators}
Let $(\mathfrak{L},\mathfrak{R})$ be a cotorsion pair in an exact category $\mathpzc{E}$ which has a set of admissible generators $\mathcal{G}$. Suppose that $(\mathfrak{L},\mathfrak{R})$ is cogenerated by a set $\{A_{i}\}_{i\in I}$. Then  
\begin{enumerate}
\item
$(\widetilde{dg\mathfrak{L}},\widetilde{\mathfrak{R}})$ is cogenerated by the set 
$$S=\{S^{n}(G):G\in\mathcal{G},n\in\mathbb{Z}\}\cup\{S^{n}(A_{i}):n\in\mathbb{Z},i\in I\}$$
for $*\in\{+\}$ (and. $*\in\{\emptyset\}$ if $\mathpzc{E}$ has kernels) and 
$$S=\{S^{n}(G):G\in\mathcal{G},n\ge0\}\cup\{S^{n}(A_{i}):n\ge0,i\in I\}$$
for $*\in\{\ge0\}$.

\item
Suppose that $(\mathfrak{L},\mathfrak{R})$ is small with generating morphisms the maps $\{0\rightarrow G:G\in\mathcal{G}\}$ together with monics $k_{i}$  (one for each $i\in I$).
\begin{displaymath}
\xymatrix{
0\ar[r] & Y_{i}\ar[r]^{k_{i}} & Z_{i}\ar[r] & A_{i}\ar[r] & 0
}
\end{displaymath}
Then $(\widetilde{dg\mathfrak{L}},\widetilde{\mathfrak{R}})$ is small with generating morphisms the set
$$\widetilde{I}=\{0\rightarrow D^{n}(G)\}\cup\{S^{n-1}(G)\rightarrow D^{n}(G)\}\cup\{S^{n}(k_{i}):S^{n}(Y_{i})\rightarrow S^{n}(Z_{i})\}$$
for $*\in\{+\}$ (and. $*\in\{\emptyset\}$ if $\mathpzc{E}$ has kernels)
and
$$\widetilde{I}=\{0\rightarrow S^{0}(G)\}\cup\{0\rightarrow D^{n}(G):n>0\}\cup\{S^{n-1}(G)\rightarrow D^{n}(G):n>0\}$$
$$\;\;\;\;\;\;\;\;\;\;\;\;\;\;\;\;\;\;\;\;\;\;\;\;\;\;\;\;\;\;\;\;\;\;\;\;\;\;\cup\{S^{n}(k_{i}):S^{n}(Y_{i})\rightarrow S^{n}(Z_{i}):n\ge0\}$$
for $*\in\{\ge0\}$.

\end{enumerate}
\end{prop}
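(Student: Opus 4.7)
The plan is to exploit the short exact sequence of complexes
\begin{equation*}
0 \to S^{n-1}(A) \to D^n(A) \to S^n(A) \to 0
\end{equation*}
together with the natural identifications $\mathrm{Hom}_{\mathrm{Ch}(\mathpzc{E})}(D^n(A), X) \cong \mathrm{Hom}_\mathpzc{E}(A, X_n)$ and $\mathrm{Hom}_{\mathrm{Ch}(\mathpzc{E})}(S^n(A), X) \cong \mathrm{Hom}_\mathpzc{E}(A, Z_n X)$, together with the corresponding $\mathrm{Ext}^1_{\mathrm{Ch}(\mathpzc{E})}(D^n(A), X) \cong \mathrm{Ext}^1_\mathpzc{E}(A, X_n)$ (the latter obtained by direct inspection, since an extension of $D^n(A)$ is determined by its degree $n$ and $n-1$ components, which in turn are equivalent data to an extension of $A$ in $\mathpzc{E}$). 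These feed into a long exact $\mathrm{Ext}$-sequence which is the main computational engine.

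For part (1), I will establish $\widetilde{\mathfrak{R}} = S^\perp$ in both directions. For $\widetilde{\mathfrak{R}} \subseteq S^\perp$: any $X \in \widetilde{\mathfrak{R}}$ has each $X_n \in \mathfrak{R}$, since $X_n$ fits in a short exact sequence $0 \to Z_n X \to X_n \to Z_{n-1} X \to 0$ whose outer terms lie in $\mathfrak{R}$ and $\mathfrak{R}$ is closed under extensions. Taking $\mathcal{G}$ to lie inside $\mathfrak{L}$ (which we may arrange, since $\mathfrak{L}$ generates via the cotorsion covers), both $G$ and $A_i$ lie in $\mathfrak{L}$, so $\mathrm{Ext}^1_\mathpzc{E}(G, Z_n X) = \mathrm{Ext}^1_\mathpzc{E}(A_i, Z_n X) = 0$. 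Applying $\mathrm{Hom}(G,-)$ to the cycle sequence yields surjectivity of $\mathrm{Hom}(G, X_n) \to \mathrm{Hom}(G, Z_{n-1} X)$, and the LES for the disk-sphere sequence then forces $\mathrm{Ext}^1_{\mathrm{Ch}}(S^n(G), X) = 0$; the same argument applies verbatim to $S^n(A_i)$. Conversely, if $X \in S^\perp$, vanishing of $\mathrm{Ext}^1(S^n(G), X)$ together with the LES produces a surjection $\mathrm{Hom}(G, X_n) \twoheadrightarrow \mathrm{Hom}(G, Z_{n-1} X)$ for every $G \in \mathcal{G}$, and since $\mathcal{G}$ is a set of admissible generators this forces $X_n \to Z_{n-1} X$ to be an admissible epimorphism, whence $X$ is acyclic. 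A second pass of the LES for $S^n(A_i)$, combined with acyclicity applied to the sequence $0 \to Z_n X \to X_n \to Z_{n-1} X \to 0$, then yields $\mathrm{Ext}^1_\mathpzc{E}(A_i, Z_n X) = 0$, so $Z_n X \in \{A_i\}^\perp = \mathfrak{R}$.

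For part (2), the three families in $\tilde I$ are exactly what is needed to run the small object argument producing the weak factorisation system $(\mathbf{AdMon}_{\widetilde{dg\mathfrak{L}}}, \mathbf{AdEpi}_{\widetilde{\mathfrak{R}}})$, and I will verify this by translating each lifting condition via the same LES. The maps $0 \to D^n(G)$ encode the degreewise epi condition (right lifting property against $0 \to D^n(G)$ on $p: X \to Y$ is equivalent to $\mathrm{Hom}(G, X_n) \to \mathrm{Hom}(G, Y_n)$ being surjective, i.e., $p$ is a degreewise admissible epimorphism); the maps $S^{n-1}(G) \to D^n(G)$ control acyclicity of the fibre by the cycle-surjectivity criterion isolated in part (1); and each $S^n(k_i)$, which is admissible monic with cokernel $S^n(A_i)$ by the exactness of $S^n(-)$, controls the condition $Z_n(\ker p) \in \mathfrak{R}$ via the characterisation $\mathfrak{R} = \{A_i\}^\perp$. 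Putting these together, $\tilde I\textrm{-inj}$ is exactly the class of admissible epimorphisms with kernel in $\widetilde{\mathfrak{R}}$, which is smallness of the cotorsion pair with the stated generating morphisms.

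The bounded cases $*\in\{\ge 0\}$ are handled by restricting indices to $n \ge 0$ (and $n > 0$ for the disks, since $D^n(G)$ only lies in $\mathrm{Ch}_{\ge 0}(\mathpzc{E})$ for $n \ge 1$, forcing $S^0(G)$ into the set to account for degree zero generators), and invoking Proposition \ref{prop:dgbound} to confirm the cogenerators still lie in $\widetilde{dg\mathfrak{L}}$. The main technical obstacle is the passage from $\mathrm{Ext}^1_{\mathrm{Ch}}(S^n(A_i), X) = 0$ to $\mathrm{Ext}^1_\mathpzc{E}(A_i, Z_n X) = 0$ in the exact (rather than abelian) setting, since the $\mathrm{Ext}$-sequence does not a priori continue to $\mathrm{Ext}^2$; this is circumvented by splicing the two LESs coming from the sphere-disk and cycle sequences via acyclicity of $X$, which renders the relevant connecting map between them an isomorphism.
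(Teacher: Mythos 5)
Your argument is correct and is essentially the standard Gillespie-style proof (sphere--disk short exact sequence, the six-term $\mathrm{Ext}$ sequence, and the splice with the cycle sequences $0\to Z_{n}X\to X_{n}\to Z_{n-1}X\to 0$ using acyclicity); the paper itself states this proposition without proof, citing \cite{kelly2016homotopy}, and your route matches what that reference does. The one point to make explicit is that the admissible generators $\mathcal{G}$ must lie in $\mathfrak{L}$ for $S^{n}(G)$ to belong to $\widetilde{dg\mathfrak{L}}$ (and for $\mathrm{Ext}^{1}(G,X_{n})$ and $\mathrm{Ext}^{1}(G,Z_{n}X)$ to vanish in the forward direction): this is really an implicit hypothesis of the statement rather than something one can ``arrange'' from cogeneration alone without assuming completeness of $(\mathfrak{L},\mathfrak{R})$.
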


%
%
\subsubsection{Monoidal Model Structures on Chain Complexes}

Next we examine monoidal properties of model structures determined by $dg_{*}$-compatible cotorsion pairs.

\begin{defn}[\cite{kelly2016homotopy} Definition 4.2.53]\label{tensormodel}
Let $(\mathpzc{E},\otimes)$ be a monoidal exact category. A cotorsion pair $(\mathfrak{L},\mathfrak{R})$ on $\mathpzc{E}$ is said to be \textit{monoidally }$dg_{*}$-\textit{compatible} for $*\in\{\ge0,+,\emptyset\}$ if
\begin{enumerate}
\item
 $(\mathfrak{L},\mathfrak{R})$ is $dg_{*}$ compatible.
\item
$\mathfrak{L}$ contains $k$ and is closed under $\otimes$.
\item
short exact sequences in $\mathfrak{L}$ are $\mathfrak{L}$-pure.
\end{enumerate}
Now let $*\in\{\ge0,\emptyset\}$. If in addition objects of $\mathfrak{L}$ are flat, $\mathpzc{E}$ is weakly $\mathbf{PureMon}_{\otimes}$-elementary and every (trivially) cofibrant complex is an $(\aleph_{0};\mathbf{PureMon}_{\otimes})$-extension of bounded below (trivially) cofibrant complexes, then the cotorsion pair is said to be \textit{ strongly monoidally }$dg_{*}$-\textit{compatible}.
\end{defn}

\begin{defn}
Let $(\mathfrak{L},\mathfrak{R})$ and $(\mathfrak{A},\mathfrak{B})$ be complete cotorsion pairs. $[(\mathfrak{L},\mathfrak{R}),(\mathfrak{A},\mathfrak{B})]$ are said to be \textit{monoidally compatible} if
\begin{enumerate}
\item
Whenever $A\in\mathfrak{A}$ and $L_{\bullet}\in\tilde{\mathfrak{L}}$, $A\otimes L_{\bullet}\in\tilde{\mathfrak{L}}$.
\item
Whenever $A_{\bullet}\in\tilde{\mathfrak{A}}$ and $L\in\mathfrak{L}$, $A_{\bullet}\otimes L\in\tilde{\mathfrak{L}}$.
\end{enumerate}
\end{defn}

\begin{rem}
The ordering in the above definition is important - this is why we use the notation $[(\mathfrak{L},\mathfrak{R}),(\mathfrak{A},\mathfrak{B})]$.
\end{rem}

\begin{example}
Let $(\mathfrak{L},\mathfrak{R})$ be monoidally $dg$-compatible. Then $[(\mathfrak{L},\mathfrak{R}),(\mathfrak{L},\mathfrak{R})]$ are monoidally compatible. 
\end{example}

\begin{example}
Let $(\mathpzc{E},\mathrm{Inj})$ be the injective cotorsion pair, and $(\mathrm{Flat},\mathrm{Flat}^{\perp})$ be the flat cotorsion pair. Then $[(\mathpzc{E},\mathrm{Inj}),(\mathrm{Flat},\mathrm{Flat}^{\perp})]$ are monoidally $dg$-compatible. In fact this is true if  $(\mathrm{Flat},\mathrm{Flat}^{\perp})$ is replaced by any cotorsion pair $(\mathfrak{A},\mathfrak{B})$ with $\mathfrak{A}\subseteq\mathrm{Flat}$. 
\end{example}

\begin{prop}\label{prop:basicompat}
If $[(\mathfrak{L},\mathfrak{R}),(\mathfrak{A},\mathfrak{B})]$ monoidally compatible then
\begin{enumerate}
\item
for all $A\in\mathfrak{A}$, $L\in\mathfrak{L}$, we have $A\otimes L\in\mathfrak{L}$.
\item
for all $A\in\mathfrak{A}$, $R\in\mathfrak{R}$, we have $\underline{\mathrm{Hom}}(A,R)\in\mathfrak{R}$.
\item
for all $L\in\mathfrak{L}$, $R\in\mathfrak{R}$ we have $\underline{\mathrm{Hom}}(L,R)\in\mathfrak{B}$
\end{enumerate}
\end{prop}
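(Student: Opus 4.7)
The plan is to prove (1) directly by a short chain complex argument, and then deduce (2) and (3) from (1) via the tensor-hom adjunction. For (1), I would fix $A \in \mathfrak{A}$ and $L \in \mathfrak{L}$ and consider the disk complex $D^n(L)$, which consists of $L$ in degrees $n$ and $n-1$ with identity differential. It is acyclic and its unique nonzero cycle $Z_{n-1}(D^n(L)) = L$ lies in $\mathfrak{L}$, so $D^n(L) \in \widetilde{\mathfrak{L}}$. Applying the first clause of monoidal compatibility yields $A \otimes D^n(L) \in \widetilde{\mathfrak{L}}$, and since tensoring with an object is computed degreewise, $A \otimes D^n(L) = D^n(A \otimes L)$; extracting the cycle in degree $n-1$ gives $A \otimes L \in \mathfrak{L}$.

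For (2), my strategy is to show that every extension $\xi \colon 0 \to \underline{\mathrm{Hom}}(A,R) \xrightarrow{\iota} E \to L \to 0$ with $L \in \mathfrak{L}$ splits. By the $\otimes \dashv \underline{\mathrm{Hom}}$ adjunction, a retraction of $\iota$ corresponds to a morphism $\tilde{r} \colon E \otimes A \to R$ whose restriction along $\iota \otimes A$ equals the counit $\varepsilon \colon \underline{\mathrm{Hom}}(A,R) \otimes A \to R$. Tensoring $\xi$ with $A$ yields a right-exact sequence
\[
\underline{\mathrm{Hom}}(A,R) \otimes A \xrightarrow{\iota \otimes A} E \otimes A \to L \otimes A \to 0,
\]
which I would check is actually short exact by invoking the monoidal compatibility hypothesis. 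Granting short-exactness, since $L \otimes A \in \mathfrak{L}$ by (1) and $R \in \mathfrak{R}$, one has $\mathrm{Ext}^{1}_{\mathpzc{E}}(L \otimes A, R) = 0$, so $\varepsilon$ extends to the required $\tilde{r}$, whose adjoint supplies the retraction. The argument for (3) is entirely symmetric: given an extension of $A \in \mathfrak{A}$ by $\underline{\mathrm{Hom}}(L,R)$, tensor on the right with $L$, invoke the second clause of monoidal compatibility, and apply (1) together with $R \in \mathfrak{R}$ to extend the evaluation $\underline{\mathrm{Hom}}(L,R) \otimes L \to R$.

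The main obstacle will be verifying that $\iota \otimes A$ (and its analogue for (3)) remains an admissible monomorphism, since the kernel $\underline{\mathrm{Hom}}(A,R)$ of $\xi$ need not a priori lie in $\mathfrak{L}$. To handle this I intend to use the completeness of the cotorsion pair $(\mathfrak{L},\mathfrak{R})$ to choose an $\mathfrak{L}$-special precover of $E$, which will allow me to embed $\xi$ into a $3$-term acyclic complex whose cycles genuinely lie in $\mathfrak{L}$; the monoidal compatibility hypothesis then applies directly to the latter complex, and the desired property of $\iota \otimes A$ follows by standard diagram chasing.
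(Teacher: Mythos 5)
Your proof of (1) is exactly the paper's: $D^{0}(L)\in\tilde{\mathfrak{L}}$, hence $A\otimes D^{0}(L)\cong D^{0}(A\otimes L)\in\tilde{\mathfrak{L}}$, and reading off the cycle gives $A\otimes L\in\mathfrak{L}$. No issues there.

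For (2) and (3) there is a genuine gap, and it is precisely the one you flag yourself: nothing in the definition of monoidal compatibility tells you that $\iota\otimes A$ is an admissible monomorphism. The hypothesis only controls $A\otimes(-)$ on complexes that already lie in $\tilde{\mathfrak{L}}$ (and $(-)\otimes L$ on complexes in $\tilde{\mathfrak{A}}$); your extension $\xi$, viewed as a three-term acyclic complex, has terms $\underline{\mathrm{Hom}}(A,R)$ and $E$ which need not lie in $\mathfrak{L}$, so the hypothesis simply does not apply to it. Your proposed repair does not close the gap: if $L'\twoheadrightarrow E$ is a special $\mathfrak{L}$-precover, the kernel of the composite $L'\to E\to L$ is an extension of $\underline{\mathrm{Hom}}(A,R)$ by an object of $\mathfrak{R}$, so the resulting three-term acyclic complex still fails to have its cycles in $\mathfrak{L}$ (absent a hereditary-type hypothesis, which is not part of the statement), and monoidal compatibility again cannot be invoked.

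The paper sidesteps the problem by never tensoring the unknown extension. It uses the adjunction at the level of $\mathbf{Hom}$-complexes: for any $L_{\bullet}\in\tilde{\mathfrak{L}}$ there is the purely formal isomorphism $\mathbf{Hom}(L_{\bullet},\underline{\mathrm{Hom}}(A,R))\cong\mathbf{Hom}(L_{\bullet}\otimes A,R)$, requiring no exactness of $-\otimes A$ whatsoever. The compatibility hypothesis is then applied to $L_{\bullet}$ itself, which is in $\tilde{\mathfrak{L}}$ by assumption, to conclude $L_{\bullet}\otimes A\in\tilde{\mathfrak{L}}$; the right-hand side is therefore acyclic because $R\in\mathfrak{R}$, and membership of $\underline{\mathrm{Hom}}(A,R)$ in $\mathfrak{R}$ is read off from the acyclicity of $\mathbf{Hom}(L_{\bullet},\underline{\mathrm{Hom}}(A,R))$ for all such $L_{\bullet}$, via the characterisation in Proposition \ref{prop:correctprop} together with Proposition \ref{prop:dgbound}. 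Part (3) is identical with the roles of the two cotorsion pairs exchanged. I recommend rewriting your (2) and (3) along these lines: the test objects should be the complexes in $\tilde{\mathfrak{L}}$, to which the hypothesis applies, rather than the extension you are trying to split.
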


\begin{proof}
\begin{enumerate}
\item
Let $L\in\mathfrak{L}$. Then $D^{0}(L)\in\tilde{\mathfrak{L}}$. Thus $A\otimes D^{0}(L)\cong D^{0}(A\otimes L)\in\tilde{\mathfrak{L}}$, so $A\otimes L\in\mathfrak{L}$.
\item
Let $L_{\bullet}\in\tilde{\mathfrak{L}}$. We have
$$\mathbf{Hom}(L_{\bullet},\underline{\mathrm{Hom}}(A,R))\cong\mathbf{Hom}(L_{\bullet}\otimes A,R)$$
By assumption $L_{\bullet}\otimes A\in\tilde{\mathfrak{L}}$, so $\mathbf{Hom}(L_{\bullet}\otimes A,R)$ is acyclic. Since $\underline{\mathrm{Hom}}(A,R)$ is concentrated in degree $0$, in particular it is bounded, it is in $\mathfrak{R}$ by Proposition \ref{prop:dgbound}.
\item
This is entirely similar to the previous part.
\end{enumerate}
\end{proof}

\begin{prop}\label{prop:termwisemonoidg}
Let $\mathpzc{E}$ be weakly idempotent complete. If $[(\mathfrak{L},\mathfrak{R}),(\mathfrak{A},\mathfrak{B})]$ are monoidally $dg$-compatible, and there are enough $\mathfrak{L},\mathfrak{R},\mathfrak{A}$, and $\mathfrak{B}$ objects, then 
\begin{enumerate}
\item
If $A\in\mathfrak{A}$ and $R\in\widetilde{dg\mathfrak{R}}$ then $\underline{\mathrm{Hom}}(A,R)\in\widetilde{dg\mathfrak{R}}$
\item
If $L\in\mathfrak{L}$ and $R\in\widetilde{dg\mathfrak{R}}$ then $\underline{\mathrm{Hom}}(A,R)\in\widetilde{dg\mathfrak{B}}$
\item
If $A\in\mathfrak{A}$ and $R\in\tilde{\mathfrak{R}}$ then $\underline{\mathrm{Hom}}(A,R)\in\tilde{\mathfrak{R}}$.
\item
If $L\in\mathfrak{L}$ and $R\in\tilde{\mathfrak{R}}$ then  $\underline{\mathrm{Hom}}(L,R)\in\tilde{\mathfrak{B}}$.
\item
If $A_{\bullet}\in\widetilde{dg\mathfrak{A}}$ and  $L\in\mathfrak{L}$  then $A_{\bullet}\otimes L\in \widetilde{dg\mathfrak{L}}$.
\item
If $A\in\mathfrak{A}$ and  $L_{\bullet}\in\widetilde{dg\mathfrak{L}}$  then $A\otimes L_{\bullet}\in \widetilde{dg\mathfrak{L}}$.
\end{enumerate}
\end{prop}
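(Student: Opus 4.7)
The plan is to handle all six statements uniformly using the tensor--hom adjunction
\[
\mathbf{Hom}(X_\bullet\otimes Y_\bullet, Z_\bullet) \;\cong\; \mathbf{Hom}(X_\bullet, \underline{\mathrm{Hom}}(Y_\bullet, Z_\bullet))
\]
of chain complexes, Proposition~\ref{prop:basicompat} to control membership \emph{termwise}, and Proposition~\ref{prop:correctprop} (together with the definitions of $K\mathfrak{L}$ and $K\mathfrak{R}$) to reduce the remaining acyclicity checks to hypotheses that are already in hand. Throughout, I regard an object $M$ of $\mathpzc{E}$ as the complex $S^0(M)$ when necessary, so that $S^0(M)\otimes Y_\bullet$ is just the degreewise tensor product $M\otimes Y_\bullet$.

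For (1), termwise we have $\underline{\mathrm{Hom}}(A,R)_n = \underline{\mathrm{Hom}}(A,R_n)\in\mathfrak{R}$ by Proposition~\ref{prop:basicompat}(2), since $R_n\in\mathfrak{R}$. To check the $K\mathfrak{R}$-condition, take any $L_\bullet\in\widetilde{\mathfrak{L}}$ and rewrite
$\mathbf{Hom}(L_\bullet, \underline{\mathrm{Hom}}(A,R)) \cong \mathbf{Hom}(L_\bullet\otimes A, R)$;
by monoidal compatibility, $L_\bullet\otimes A\in\widetilde{\mathfrak{L}}$, so acyclicity follows from $R\in\widetilde{dg\mathfrak{R}}\subseteq K\mathfrak{R}$. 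Statement (2) is entirely analogous, using Proposition~\ref{prop:basicompat}(3) termwise and the fact that $A_\bullet\otimes L\in\widetilde{\mathfrak{L}}$ for $A_\bullet\in\widetilde{\mathfrak{A}}$ to verify the $K\mathfrak{B}$-condition via the same adjunction.

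For (3) and (4) termwise membership is again immediate from Proposition~\ref{prop:basicompat}(2) and~(3). To place $\underline{\mathrm{Hom}}(A,R)$ inside $\widetilde{\mathfrak{R}}$, I invoke Proposition~\ref{prop:correctprop}: it suffices to show $\mathbf{Hom}(S^0(L), \underline{\mathrm{Hom}}(A,R))$ is acyclic for every $L\in\mathfrak{L}$. By the adjunction this complex is isomorphic to $\mathbf{Hom}(S^0(L\otimes A), R)$, and Proposition~\ref{prop:basicompat}(1) gives $L\otimes A\in\mathfrak{L}$, whence Proposition~\ref{prop:correctprop} applied to $R\in\widetilde{\mathfrak{R}}$ delivers the required acyclicity. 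Statement (4) is the symmetric argument with $(\mathfrak{A},\mathfrak{B})$ playing the role of $(\mathfrak{L},\mathfrak{R})$.

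Finally, (5) and (6) are obtained by turning the adjunction around. For (5), termwise $(A_\bullet\otimes L)_n=A_n\otimes L\in\mathfrak{L}$ by Proposition~\ref{prop:basicompat}(1); and for any $R\in\widetilde{\mathfrak{R}}$,
\[
\mathbf{Hom}(A_\bullet\otimes L, R) \;\cong\; \mathbf{Hom}(A_\bullet, \underline{\mathrm{Hom}}(L,R)),
\]
which is acyclic because (4) places $\underline{\mathrm{Hom}}(L,R)\in\widetilde{\mathfrak{B}}$ and $A_\bullet\in\widetilde{dg\mathfrak{A}}\subseteq K\mathfrak{A}$. Statement (6) is the same manoeuvre with the roles of $A$ and $L$ interchanged, now feeding (3) into the $K\mathfrak{L}$-check via $\mathbf{Hom}(A\otimes L_\bullet, R)\cong\mathbf{Hom}(L_\bullet, \underline{\mathrm{Hom}}(A,R))$. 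No step involves any serious subtlety; the only thing to be attentive to is the order of the arguments, so that (3) and (4) are available by the time one needs them in (5) and (6).
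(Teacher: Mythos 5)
Your proof is correct and follows essentially the same route as the paper: termwise membership via Proposition~\ref{prop:basicompat}, the tensor--hom adjunction for the acyclicity/orthogonality checks, and parts (3)--(4) feeding into (5)--(6). The only cosmetic difference is that you package the ``termwise in $\mathfrak{R}$ plus $\mathbf{Hom}(L,-)$-acyclicity implies membership in $\tilde{\mathfrak{R}}$'' step through Proposition~\ref{prop:correctprop}, where the paper carries out the $Z_{n}$ computation by hand; both amount to the same argument.
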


\begin{proof}
\begin{enumerate}
\item
Let $L_{\bullet}\in\tilde{\mathfrak{L}}$. Then
$$\mathbf{Hom}(L_{\bullet},\underline{\mathrm{Hom}}(A,R))\cong\mathbf{Hom}(L_{\bullet}\otimes A,R)$$
By assumption $L_{\bullet}\otimes A\in\tilde{\mathfrak{L}}$. Thus $\mathbf{Hom}(L_{\bullet}\otimes A,R)$ is acyclic, so $\mathbf{Hom}(L_{\bullet},\underline{\mathrm{Hom}}(A,R))$ is acyclic. Moreover it is component-wise in $\mathfrak{R}$ by Proposition \ref{prop:basicompat}.
\item
This is entirely similar to the previous part. 
\item
Let us prove that it is acyclic. To establish this, it suffices to observe that for each $L\in\mathfrak{L}$, 
$$\mathbf{Hom}(L,\underline{\mathrm{Hom}}(A,R_{\bullet}))\cong\mathbf{Hom}(L\otimes A,R_{\bullet})$$
is acyclic, since $L\otimes A\in\mathfrak{L}$ again by Proposition \ref{prop:basicompat}, the right-hand side is acyclic. Now we have $\underline{\mathrm{Hom}}(A,Z_{n}R_{\bullet})\cong Z_{n}\underline{\mathrm{Hom}}(A,R_{\bullet})$. Since $Z_{n}R_{\bullet}\in\mathfrak{R}$, we have $\underline{\mathrm{Hom}}(A,Z_{n}R_{\bullet})\in\mathfrak{R}$, so $Z_{n}\underline{\mathrm{Hom}}(A,R_{\bullet})$. Thus $\underline{\mathrm{Hom}}(A,R_{\bullet})\in\tilde{\mathfrak{R}}$. 
\item
This is entirely similar to the previous part.
\item
$A_{\bullet}\otimes L$ is termwise in $\mathfrak{L}$. Let $R_{\bullet}$ be an object of $\tilde{\mathfrak{R}}$. We then have
$$\mathbf{Map}(A_{\bullet}\otimes L,R_{\bullet})\cong\mathbf{Map}(A_{\bullet},\underline{\mathrm{Hom}}(L,R_{\bullet}))$$
By part $3)$, $\underline{\mathrm{Hom}}(L,R_{\bullet})\in\tilde{\mathfrak{B}}$. Thus $\mathbf{Map}(A_{\bullet}\otimes L,R_{\bullet})$ is acyclic, as required.
\item
This is entirely similar to the previous part.
\end{enumerate}
\end{proof}
%

\begin{prop}\label{prop:relations}
Let $\mathpzc{E}$ be weakly idempotent complete. If $[(\mathfrak{L},\mathfrak{R}),(\mathfrak{A},\mathfrak{B})]$ are monoidally $dg$-compatible, and there are enough $\mathfrak{L},\mathfrak{R},\mathfrak{A}$, and $\mathfrak{B}$ objects, then 
\begin{enumerate}
\item
$\underline{\mathrm{Hom}}(L_{\bullet},R_{\bullet})\in\tilde{\mathfrak{B}}$ for all $L_{\bullet}\in\widetilde{dg\mathfrak{L}}$, and all $R_{\bullet}\in\tilde{\mathfrak{R}}$.
\item
$\underline{\mathrm{Hom}}(L_{\bullet},R_{\bullet})\in\tilde{\mathfrak{B}}$ for all $L_{\bullet}\in\tilde{\mathfrak{L}}$, and all $R_{\bullet}\in\widetilde{dg\mathfrak{R}}$.
\item
$\underline{\mathrm{Hom}}(A_{\bullet},R_{\bullet})\in\tilde{\mathfrak{R}}$ for all $A_{\bullet}\in\widetilde{dg\mathfrak{L}}$, and all $R_{\bullet}\in\tilde{\mathfrak{R}}$.
\item
$\underline{\mathrm{Hom}}(A_{\bullet},R_{\bullet})\in\tilde{\mathfrak{R}}$ for all $A_{\bullet}\in\tilde{\mathfrak{A}}$, and all $R_{\bullet}\in\widetilde{dg\mathfrak{R}}$.
\item
If $A_{\bullet}\in\widetilde{dg\mathfrak{A}}$ and $L_{\bullet}\in\widetilde{dg\mathfrak{L}}$ then $A_{\bullet}\otimes L_{\bullet}\in\widetilde{dg\mathfrak{L}}$.
\item
If $A_{\bullet}\in\tilde{\mathfrak{A}}$ and $L_{\bullet}\in\widetilde{dg\mathfrak{L}}$ then $A_{\bullet}\otimes L_{\bullet}\in\tilde{\mathfrak{L}}$. 
\item
If $A_{\bullet}\in\widetilde{dg\mathfrak{A}}$ and $L_{\bullet}\in\tilde{\mathfrak{L}}$ then $A_{\bullet}\otimes L_{\bullet}\in\tilde{\mathfrak{L}}$. 
\item
If $A_{\bullet}\in\widetilde{dg\mathfrak{A}}$ and $R_{\bullet}\in\widetilde{dg\mathfrak{R}}$ then $\underline{\mathrm{Hom}}(A_{\bullet},R_{\bullet})\in\widetilde{dg\mathfrak{R}}$.
\item
If $L_{\bullet}\in\widetilde{dg\mathfrak{L}}$ and $R_{\bullet}\in\widetilde{dg\mathfrak{R}}$ then $\underline{\mathrm{Hom}}(L_{\bullet},R_{\bullet})\in\widetilde{dg\mathfrak{B}}$.
\end{enumerate}
\end{prop}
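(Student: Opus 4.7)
All nine assertions are proved by a single uniform strategy: use the tensor--Hom adjunction
$$\mathbf{Hom}(X \otimes Y, Z) \cong \mathbf{Hom}(X, \underline{\mathrm{Hom}}(Y, Z))$$
to reduce each claim to a termwise check together with a single acyclicity/$K$-class condition that either follows from Proposition \ref{prop:termwisemonoidg} or from an earlier part of the present proposition. The termwise check, in each case, is immediate from Proposition \ref{prop:basicompat} together with the fact that the right class of any cotorsion pair is closed under products and the left class under coproducts. Membership in $\tilde{\mathfrak{L}}, \tilde{\mathfrak{R}}, \tilde{\mathfrak{A}}, \tilde{\mathfrak{B}}$ is then verified via Proposition \ref{prop:correctprop}, using the inclusions $\widetilde{dg\mathfrak{L}} \subset K\mathfrak{L}$ and $\widetilde{dg\mathfrak{R}} \subset K\mathfrak{R}$ (and their analogues for $\mathfrak{A}, \mathfrak{B}$) that are built into Definition \ref{defn:dgK}.

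Parts (1)--(4) concern $\underline{\mathrm{Hom}}$. For (1), each $\underline{\mathrm{Hom}}(L_\bullet, R_\bullet)_n = \prod_i \underline{\mathrm{Hom}}(L_i, R_{i+n})$ lies in $\mathfrak{B}$ by Proposition \ref{prop:basicompat}(3); for acyclicity we test against $A \in \mathfrak{A}$ via
$$\mathbf{Hom}(A, \underline{\mathrm{Hom}}(L_\bullet, R_\bullet)) \cong \mathbf{Hom}(A \otimes L_\bullet, R_\bullet),$$
and note that $A \otimes L_\bullet \in \widetilde{dg\mathfrak{L}} \subset K\mathfrak{L}$ by Proposition \ref{prop:termwisemonoidg}(6), so the right-hand side is acyclic because $R_\bullet \in \tilde{\mathfrak{R}}$. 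Part (2) is analogous, using the first clause of monoidal $dg$-compatibility to put $A \otimes L_\bullet$ in $\tilde{\mathfrak{L}}$ and the inclusion $\widetilde{dg\mathfrak{R}} \subset K\mathfrak{R}$. Parts (3) and (4) are their reflections: the termwise check again uses Proposition \ref{prop:basicompat}(3), and the acyclicity check is routed through the other form of the adjunction, appealing respectively to Proposition \ref{prop:termwisemonoidg}(5) and to the second clause of monoidal compatibility.

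Parts (5)--(7) are the tensor statements. Termwise, $(A_\bullet \otimes L_\bullet)_n = \bigoplus_{i+j=n} A_i \otimes L_j$ lies in $\mathfrak{L}$ by Proposition \ref{prop:basicompat}(1), closure of $\mathfrak{L}$ under coproducts being automatic. For the remaining condition we reduce to (1)--(4) via adjunction: e.g. for (5),
$$\mathbf{Hom}(A_\bullet \otimes L_\bullet, R_\bullet) \cong \mathbf{Hom}(A_\bullet, \underline{\mathrm{Hom}}(L_\bullet, R_\bullet)),$$
and by (1), $\underline{\mathrm{Hom}}(L_\bullet, R_\bullet) \in \tilde{\mathfrak{B}}$, so acyclicity follows from $A_\bullet \in \widetilde{dg\mathfrak{A}} \subset K\mathfrak{A}$. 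For (6) we use the adjunction in the other direction, reducing to (3) applied to $\underline{\mathrm{Hom}}(A_\bullet, R)$ for test objects $R \in \mathfrak{R}$; (7) is symmetric, via (4). Parts (8) and (9) are obtained by the same template as (1), but with $\widetilde{dg\mathfrak{R}}$ in place of $\tilde{\mathfrak{R}}$; one tests with $A_\bullet \in \tilde{\mathfrak{A}}$ (resp. $L_\bullet \in \tilde{\mathfrak{L}}$) and applies the adjunction and (7) (resp. (6)) to identify the resulting tensor product as an element of $\tilde{\mathfrak{L}}$, which pairs with $\widetilde{dg\mathfrak{R}} \subset K\mathfrak{R}$ to give acyclicity.

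The only genuine obstacle is bookkeeping: at each step one must arrange the adjunction so that the resulting $\mathbf{Hom}(X, Y)$ pairs an object $X$ in one of the $K$-classes with an object $Y$ in the matching $\tilde{}$-class (not the reverse); $K\mathfrak{L}$ matches with $\tilde{\mathfrak{R}}$ and $\widetilde{dg\mathfrak{R}}$ matches with $\tilde{\mathfrak{L}}$, so the order of entries in the adjunction must be chosen accordingly. Once the correct form is used, each step reduces tautologically to a previously-established lemma, and the proof goes through.
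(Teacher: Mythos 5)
Your proposal is correct and follows essentially the same route as the paper's own proof: termwise membership via Proposition \ref{prop:basicompat}/\ref{prop:termwisemonoidg}, acyclicity via the tensor--Hom adjunction against test objects from the complementary class, and Proposition \ref{prop:correctprop} to conclude, with later parts citing earlier ones exactly as the paper does. The only blemishes are trivial cross-reference slips --- part (6) reduces to part (4) applied to $S^{0}(R)$ rather than to part (3), and the test objects for (8) and (9) are listed in swapped order even though the cited parts (7) and (6) are the right ones --- none of which affects the argument.
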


\begin{proof}
\begin{enumerate}
\item
We have that each term of $\underline{\mathrm{Hom}}(L_{\bullet},R_{\bullet})$ is in $\mathfrak{R}$. Let $A\in\mathfrak{A}$. Then
$$\mathbf{Hom}(A,\underline{\mathrm{Hom}}(L_{\bullet},R_{\bullet}))\cong\mathbf{Hom}(A\otimes L_{\bullet},R_{\bullet})$$
By Proposition \ref{prop:termwisemonoidg} $A\otimes L_{\bullet}$ is in $\widetilde{dg\mathfrak{L}}$. Now we just use Proposition \ref{prop:correctprop}
\item
$\underline{\mathrm{Hom}}(L_{\bullet},R_{\bullet})$ is term-wise in $\mathfrak{B}$ by Proposition \ref{prop:termwisemonoidg}. Let $A\in\mathfrak{A}$. Then $A\otimes L_{\bullet}\in\tilde{\mathfrak{L}}$ by assumption. Thus $\mathbf{Hom}(A\otimes L_{\bullet},R_{\bullet})$ is acyclic. By Proposition \ref{prop:correctprop} we have that $\underline{\mathrm{Hom}}(L_{\bullet},R_{\bullet})$ is in $\tilde{\mathfrak{R}}$.
\item
This is entirely similart to Part (1).
\item
This is entirely similar to Part (2). 
\item
Let $A_{\bullet}\in\widetilde{dg\mathfrak{A}}$ and $L_{\bullet}\in\widetilde{dg\mathfrak{L}}$. Each term of $A_{\bullet}\otimes L_{\bullet}$ is in $\mathfrak{L}$. Let $R_{\bullet}$ be in $\tilde{\mathfrak{R}}$. Then
$$\mathbf{Hom}(A_{\bullet}\otimes L_{\bullet},R_{\bullet})\cong\mathbf{Hom}(A_{\bullet},\underline{\mathrm{Hom}}(L_{\bullet},R_{\bullet}))$$
Since $\underline{\mathrm{Hom}}(L_{\bullet},R_{\bullet})\in\tilde{\mathfrak{B}}$ is acyclic, the right-hand-side, and hence the left-hand-side is acyclic. 
\item
Let $R\in\mathfrak{R}$.
$$\mathbf{Hom}(A_{\bullet}\otimes L_{\bullet},R)\cong\mathbf{Hom}(L_{\bullet},\underline{\mathrm{Hom}}(A_{\bullet},R))$$
Now $\underline{\mathrm{Hom}}(A_{\bullet},R)\in\tilde{\mathfrak{R}}$, so $\mathbf{Hom}(L_{\bullet},\underline{\mathrm{Hom}}(A_{\bullet},R))$ is acyclic. Now $A_{\bullet}\otimes L_{\bullet}$ is term-wise in $L$, so the claim follows from Proposition \ref{prop:correctprop}.
\item
This is entirely similar to the previous part.
\item
Each term of $\underline{\mathrm{Hom}}(A_{\bullet},R_{\bullet})$ is in $\mathfrak{R}$. Let $L\in\tilde{\mathfrak{L}}$. Then
$$\mathbf{Hom}(L_{\bullet},\underline{\mathrm{Hom}}(A_{\bullet},R_{\bullet}))\cong\mathbf{Hom}(L_{\bullet}\otimes A_{\bullet},R_{\bullet})$$
Since $L_{\bullet}\otimes A_{\bullet}\in\tilde{\mathfrak{L}}$ this complex is acyclic as required.
\item
This is entirely similar to the previous part.
\end{enumerate}
\end{proof}

Let us briefly specialise to the monoidally compatible cotorsion pairs $[(\mathpzc{E},\mathrm{Inj}),(\mathrm{Flat},\mathrm{Flat}^{\perp})]$, and show how the work of \cite{gillespie2006flat} Section 5 generalises immediately from the abelian category of modules on a ringed space, to much more general exact categories.

\begin{prop}[c.f. \cite{gillespie2006flat} Lemma 5.3]
Let $\mathpzc{E}$ be a complete and cocomplete closed symmetric monoidal exact category. Suppose that $\mathpzc{E}$ has enough injectives. An object $X$ of $\mathpzc{E}$ is flat if and only if $\underline{\mathrm{Hom}}(X,I)$ is injective for any injective $I$.
\end{prop}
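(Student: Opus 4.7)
The plan is to exploit the hom-tensor adjunction $\mathrm{Hom}(A\otimes X, I)\cong\mathrm{Hom}(A,\underline{\mathrm{Hom}}(X,I))$ together with Lemma \ref{lem:420} applied to the trivial cotorsion pair $(\mathpzc{E},\mathrm{Inj})$, which is complete by the assumption that $\mathpzc{E}$ has enough injectives.

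For the forward direction, suppose $X$ is flat and let $I$ be an injective object of $\mathpzc{E}$. To show $\underline{\mathrm{Hom}}(X,I)$ is injective, take an arbitrary short exact sequence $0\to A\to B\to C\to 0$. By flatness of $X$, the sequence $0\to A\otimes X\to B\otimes X\to C\otimes X\to 0$ is exact, and since $I$ is injective, applying $\mathrm{Hom}(-,I)$ yields a short exact sequence of abelian groups. Via the adjunction, this exact sequence is naturally isomorphic to $\mathrm{Hom}(-,\underline{\mathrm{Hom}}(X,I))$ applied to the original sequence, so $\mathrm{Ext}^{1}(C,\underline{\mathrm{Hom}}(X,I))=0$ for all $C$, proving $\underline{\mathrm{Hom}}(X,I)\in\mathrm{Inj}$.

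For the reverse direction, assume $\underline{\mathrm{Hom}}(X,I)$ is injective for every injective $I$, and let $0\to A\to B\to C\to 0$ be a short exact sequence in $\mathpzc{E}$. Since $-\otimes X$ is a left adjoint to $\underline{\mathrm{Hom}}(X,-)$, it preserves cokernels, so we have a candidate sequence $0\to A\otimes X\to B\otimes X\to C\otimes X\to 0$ (a zero-composition complex whose right map is the cokernel of its left map). For every injective $I$, the sequence
\[
0\to\mathrm{Hom}(C,\underline{\mathrm{Hom}}(X,I))\to\mathrm{Hom}(B,\underline{\mathrm{Hom}}(X,I))\to\mathrm{Hom}(A,\underline{\mathrm{Hom}}(X,I))\to 0
\]
is exact by injectivity of $\underline{\mathrm{Hom}}(X,I)$, and the adjunction rewrites this as $\mathrm{Hom}(-,I)$ applied to $0\to A\otimes X\to B\otimes X\to C\otimes X\to 0$. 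Since the right-hand object $C\otimes X$ lies trivially in the left class $\mathfrak{L}=\mathpzc{E}$ of the complete cotorsion pair $(\mathpzc{E},\mathrm{Inj})$, Lemma \ref{lem:420} applies and forces $0\to A\otimes X\to B\otimes X\to C\otimes X\to 0$ to be exact, establishing flatness of $X$.

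The only subtle point, and hence the main thing to be careful about, is the application of Lemma \ref{lem:420} in the reverse direction: one must verify that the lemma applies to a candidate sequence built from a cokernel-preserving functor and not merely to a pre-existing short exact sequence. This is handled by the proof of Lemma \ref{lem:420} (which relies on the dual of \cite{kelly2016homotopy} Proposition 2.6.93), since having enough $\mathrm{Inj}$-objects suffices to detect exactness of any zero-composition three-term complex via test maps into injectives. Every other step is a formal consequence of the hom-tensor adjunction and the definitions of flatness and injectivity.
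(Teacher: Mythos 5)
Your proof is correct, and it rests on the same adjunction-based core as the paper's, but it packages the argument differently. The paper works at the level of complexes: it characterises injectivity of $J$ by acyclicity of the enriched hom complex $\mathbf{Hom}(Y_{\bullet},J)$ for all acyclic $Y_{\bullet}$, transports this across the isomorphism $\mathbf{Hom}(Y_{\bullet}\otimes X,I)\cong\mathbf{Hom}(Y_{\bullet},\underline{\mathrm{Hom}}(X,I))$, and in the converse direction detects acyclicity of $Y_{\bullet}\otimes X$ by mapping into enough injectives. You instead argue one short exact sequence at a time, using the ordinary hom-tensor adjunction together with Lemma \ref{lem:420} applied to the complete cotorsion pair $(\mathpzc{E},\mathrm{Inj})$ (which is indeed complete under the enough-injectives hypothesis, the ``enough projectives'' half being trivial since $0$ is injective). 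Your version is slightly more elementary --- no enriched homs, no acyclic complexes --- and you correctly isolate the only delicate point: in the converse direction the candidate sequence $0\to A\otimes X\to B\otimes X\to C\otimes X\to 0$ is a priori only a null cokernel sequence (since $-\otimes X$ is a left adjoint), and its exactness must be detected by test maps into injectives; this is precisely what the converse half of Lemma \ref{lem:420} supplies. The paper's complex-level phrasing has the minor advantage of being uniform with the surrounding lemmas, which manipulate $\widetilde{\mathrm{Inj}}$ and $\widetilde{dg\mathrm{Inj}}$ throughout, but mathematically the two arguments are interchangeable.
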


\begin{proof}
Suppose that $X$ is flat. Let $Y_{\bullet}$ be an acylic complex. Then $\mathbf{Hom}(Y_{\bullet},\underline{\mathrm{Hom}}(X,I))\cong\mathbf{Hom}(Y_{\bullet}\otimes X,I)$ is acyclic. This implies that $\underline{\mathrm{Hom}}(X,I)$ is flat. On the other hand, suppose that $\underline{\mathrm{Hom}}(X,I)$ is injective for any injective $I$. Then $\mathbf{Hom}(Y_{\bullet}\otimes X,I)\cong\mathbf{Hom}(Y_{\bullet},\underline{\mathrm{Hom}}(X,I))$ is acyclic. Since there are enough injectives, this implies that $Y_{\bullet}\otimes X$ is acyclic, so $X$ is flat. 
\end{proof}

\begin{lem}[c.f. \cite{gillespie2006flat} Lemma 5.4, Lemma 5.5, Proposition 5.6]
Let $X\in\mathrm{Ch}(\mathpzc{E})$. 
\begin{enumerate}
\item
The following are equivalent.
\begin{enumerate}
\item
$X$ is acyclic
\item
$\underline{\mathrm{Hom}}(X,R)$ is in $\widetilde{\mathrm{Flat}}^{\perp}$ for all $R\in\widetilde{dg\mathrm{Inj}}$.
\item
$\underline{\mathrm{Hom}}(X,R)\in\widetilde{\mathrm{Flat}}^{\perp}$ for all $R\in\mathrm{Inj}$.
\end{enumerate}
\item
The following are equivalent. 
\begin{enumerate}
\item
$X\in\widetilde{\mathrm{Flat}}$
\item
$\underline{\mathrm{Hom}}(X,R)\in\widetilde{\mathrm{Inj}}$ for all $R\in\widetilde{dg\mathrm{Inj}}$.
\item
$\underline{\mathrm{Hom}}(X,R)\in\widetilde{\mathrm{Inj}}$ for all $R\in\mathrm{Inj}$.
\end{enumerate}
\item
The following are equivalent.
\begin{enumerate}
\item
$X\in\widetilde{dg\mathrm{Flat}}$.
\item
each $X_{n}\in\mathrm{Flat}$ and $X\otimes L$ is acyclic for any acyclic $L$.
\end{enumerate}
\end{enumerate}
\end{lem}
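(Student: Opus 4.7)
The plan is to derive all three parts from the Hom-tensor adjunction
$$
\mathbf{Hom}(F,\underline{\mathrm{Hom}}(X,R)) \cong \mathbf{Hom}(F\otimes X,R),
$$
combined with the monoidal $dg$-compatibility of $[(\mathpzc{E},\mathrm{Inj}),(\mathrm{Flat},\mathrm{Flat}^\perp)]$ (so that in particular $\widetilde{\mathrm{Flat}}^\perp = \widetilde{dg\mathrm{Flat}^\perp}$ and Propositions \ref{prop:basicompat}, \ref{prop:termwisemonoidg}, \ref{prop:relations} all apply), the observation that $S^0(R)\in\widetilde{dg\mathrm{Inj}}$ for any $R\in\mathrm{Inj}$ by Proposition \ref{prop:dgbound}, and the detection principle that a complex $W$ is acyclic if and only if $\mathrm{Hom}(H_nW,R)=0$ for every $n$ and every $R\in\mathrm{Inj}$ (valid because $\mathrm{Hom}(-,R)$ is exact for $R$ injective and $\mathpzc{E}$ has enough injectives). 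A recurring ingredient is that $F\otimes X$ is acyclic whenever $F\in\widetilde{\mathrm{Flat}}$ and $X$ is acyclic, obtained by filtering $F$ along the short exact sequences $0\to Z_nF\to F_n\to Z_{n-1}F\to 0$ of flat cycles, each of which remains exact after applying $-\otimes X$.

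For Part (1), the implication (b) $\Rightarrow$ (c) is immediate from Proposition \ref{prop:dgbound}. For (a) $\Rightarrow$ (b), termwise membership of $\underline{\mathrm{Hom}}(X,R)$ in $\mathrm{Flat}^\perp$ comes from Proposition \ref{prop:basicompat}(3), while the K-$\mathrm{Flat}^\perp$ half follows by combining the adjunction with acyclicity of $F\otimes X$ and the defining property of $\widetilde{dg\mathrm{Inj}}$. For (c) $\Rightarrow$ (a), the adjunction and detection principle yield $F\otimes X$ acyclic for every $F\in\widetilde{\mathrm{Flat}}$; invoking completeness of the cotorsion pair $(\widetilde{\mathrm{Flat}},\widetilde{dg\mathrm{Flat}^\perp})$ to obtain a short exact sequence $0\to C\to F\to X\to 0$ with $F\in\widetilde{\mathrm{Flat}}$, and analysing the resulting long exact sequence in homology (in which $F$ is acyclic), then forces $X$ acyclic.

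Part (2) proceeds analogously: (a) $\Rightarrow$ (b) is a direct instance of Proposition \ref{prop:relations}(4) applied to $A_\bullet = X$, and (b) $\Rightarrow$ (c) uses Proposition \ref{prop:dgbound}. For (c) $\Rightarrow$ (a), the characterization of $\widetilde{\mathrm{Inj}}$ as acyclic complexes with injective cycles first gives $\underline{\mathrm{Hom}}(X,R)$ acyclic; the identity $H_n(\underline{\mathrm{Hom}}(X,S^0(R)))\cong\mathrm{Hom}(H_{-n}X,R)$ and enough injectives then force $X$ acyclic, whereupon $Z_n(\underline{\mathrm{Hom}}(X,R))\cong\underline{\mathrm{Hom}}(Z_{-n-1}X,R)$ together with the Pontryagin-style equivalence established just before this lemma shows each cycle of $X$ is flat. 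For Part (3), (a) $\Rightarrow$ (b) uses that termwise flatness is built into Definition \ref{defn:dgK} and that for $L$ acyclic and $I$ injective the complex $\underline{\mathrm{Hom}}(L,I)$ lies in $\widetilde{\mathrm{Flat}^\perp}$ (its cycles being $\underline{\mathrm{Hom}}(Z_\bullet L, I)\in\mathrm{Flat}^\perp$ by Proposition \ref{prop:basicompat}); the K-$\mathrm{Flat}^\perp$ condition on $X$ thus makes $\mathbf{Hom}(X\otimes L,I)\cong\mathbf{Hom}(X,\underline{\mathrm{Hom}}(L,I))$ acyclic, whence $X\otimes L$ is acyclic by detection. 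The converse (b) $\Rightarrow$ (a) verifies the K-$\mathrm{Flat}^\perp$ condition on $X$ by embedding an arbitrary $B\in\widetilde{\mathrm{Flat}^\perp}$ termwise into a complex of injectives and transferring the tensor-acyclicity hypothesis via Hom-tensor duality.

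The trickiest step is the (c) $\Rightarrow$ (a) direction of Part (1): since $\widetilde{\mathrm{Flat}}^\perp$ contains non-acyclic complexes in general, acyclicity of $X$ is not an immediate consequence of acyclicity of $\underline{\mathrm{Hom}}(X,R)$, and must instead be extracted from the $\widetilde{\mathrm{Flat}}$-approximation furnished by the complete cotorsion pair together with a careful long-exact-sequence argument ensuring the hypothesis propagates to the complex $C$.
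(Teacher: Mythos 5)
Your reading of the class $\widetilde{\mathrm{Flat}}^{\perp}$ is where the argument breaks. You take it to be the $\mathrm{Ext}^{1}$-orthogonal of $\widetilde{\mathrm{Flat}}$ in $\mathrm{Ch}(\mathpzc{E})$, i.e.\ the $dg$-cotorsion complexes. Under that reading Part (1)(c)$\Rightarrow$(a) is simply false: for any nonzero $E$ and any injective $R$, the complex $\underline{\mathrm{Hom}}(S^{0}(E),S^{0}(R))=S^{0}(\underline{\mathrm{Hom}}(E,R))$ is bounded with entries in $\mathrm{Flat}^{\perp}$, hence lies in the orthogonal of $\widetilde{\mathrm{Flat}}$ by Proposition \ref{prop:dgbound}, yet $S^{0}(E)$ is not acyclic. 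The intended class (as in Gillespie's Lemma 5.4, and as the paper's own proof makes explicit by writing $\tilde{\mathrm{Flat}^{\perp}}$) is the class of \emph{acyclic} complexes with cycles in $\mathrm{Flat}^{\perp}$. With that reading the implication is a one-liner: since the unit $k$ is flat, $\mathrm{Ext}^{1}(k,Z_{n}\underline{\mathrm{Hom}}(X,R))=0$, so $\mathbf{Hom}(X,R)\cong\mathbf{Hom}(k,\underline{\mathrm{Hom}}(X,R))$ is acyclic for every injective $R$, and enough injectives forces $X$ acyclic. Your substitute argument does not close even on its own terms: knowing that $F\otimes X$ is acyclic for all $F\in\widetilde{\mathrm{Flat}}$ carries no information (it holds for \emph{every} $X$, by your own ``recurring ingredient''), and the approximation $0\to C\to F\to X\to 0$ with $F$ acyclic only yields $\mathrm{LH}_{n}(X)\cong\mathrm{LH}_{n-1}(C)$; propagating the hypothesis to $C$ merely reproduces the same situation one degree lower, with no termination.

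A similar problem affects your Part (3)(b)$\Rightarrow$(a): embedding $B\in\widetilde{\mathrm{Flat}^{\perp}}$ termwise into injectives and ``transferring tensor-acyclicity'' runs into the same non-terminating dimension shift, and $\mathbf{Hom}(X,-)$ does not carry a termwise embedding to a short exact sequence of complexes without a degreewise splitting you have not supplied. The paper instead takes a special preenvelope $0\to X\to C\to F'\to 0$ with $C\in\tilde{\mathrm{Flat}^{\perp}}$ and $F'\in\widetilde{dg\mathrm{Flat}}$, uses the hypothesis together with (a)$\Rightarrow$(b) to show $C\otimes E$ is acyclic for every complex $E$, deduces via Part (2) that $C\in\widetilde{\mathrm{Flat}}$, and concludes from closure of $\widetilde{dg\mathrm{Flat}}$ under kernels of admissible epimorphisms. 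Your Part (2) and your Part (3)(a)$\Rightarrow$(b) are essentially the paper's arguments and are fine.
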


\begin{proof}
\begin{enumerate}
\item
That $1)$ implies $2)$ follows from Proposition \ref{prop:relations}. $2)$ clearly implies $3)$. Now suppose $3)$ holds. Since the tensor unit is flat, we have
$$\mathbf{Hom}(X,R)\cong\mathbf{Hom}(k,\underline{\mathrm{Hom}}(X,R))$$
is acyclic for any $R\in\mathrm{Inj}$. Since there are enough injectives, this implies that $X$ is acyclic.
\item
Again that $1)$ implies $2)$ follows from Proposition \ref{prop:relations}, and $2)$ clearly implies $3)$. Now suppose $3)$ holds. We have
$$\mathbf{Hom}(X,R)\cong\mathbf{Hom}(k,\underline{\mathrm{Hom}}(X,R))$$
is acyclic. Again since there are enough injecitves, this implies that $X$ is acyclic. Finally, we have $\underline{\mathrm{Hom}}(Z_{n}X,R)\cong Z_{n}\underline{\mathrm{Hom}}(X,R)\in\mathrm{Inj}$ for all $R$, which implies that $Z_{n}X$ is flat.
\item
The proof of this part follows \cite{gillespie2006flat} Proposition 5.6 essentially identically.
$1)\Rightarrow 2)$ follows yet again from Proposition \ref{prop:relations}. For the converse, consider an exact sequence 
$$0\rightarrow X\rightarrow C\rightarrow F'\rightarrow 0$$
with $C\in\tilde{\mathrm{Flat}^{\perp}}$ and $F'\in\widetilde{dg\mathrm{Flat}}$. We claim that $C\in\widetilde{\mathrm{Flat}}$. It suffices to prove that for any $R\in\mathrm{Inj}$, we have $\underline{\mathrm{Hom}}(C,R)\in\widetilde{\mathrm{Inj}}$. . As an extension of flat objects, each $C_{n}$ is also flat. For any complex $E$, the sequence
$$0\rightarrow X\otimes E\rightarrow C\otimes E\rightarrow F'\otimes E\rightarrow 0$$
is short exact. Moreover by assumption $X\otimes E$ is acyclic. By $1)\Rightarrow 2)$ we have that $F'\otimes E$ is acyclic. This in turn implies that $C\otimes E$ is acyclic. Now For any acyclic complex $E$ and any injective $R$ we have
$$\mathbf{Hom}(E,\underline{\mathrm{Hom}}(C,R))\cong\underline{\mathrm{Hom}}(E\otimes C,R)$$
is acyclic. Thus $\underline{\mathrm{Hom}}(C,R)$ is in $\widetilde{dg\mathrm{Inj}}$. Since $C$ is acyclic so is $\underline{\mathrm{Hom}}(C,R)$ so it must in fact be in $\widetilde{\mathrm{Inj}}$, as required.
\end{enumerate}
\end{proof}

%

\begin{prop}[\cite{kelly2016homotopy} Proposition 4.2.55]\label{monoidaldgcompat}
Let $(\mathfrak{L},\mathfrak{R})$ be a hereditary monoidally $dg_{*}$-compatible cotorsion pair for $*\in\{\ge,\emptyset\}$ on weakly idempotent complete $\mathpzc{E}$. The model category structure induced by $(\mathfrak{L},\mathfrak{R})$ on $\mathrm{Ch}_{*}(\mathpzc{E})$ is monoidal.
\end{prop}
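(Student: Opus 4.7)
The plan is to apply Theorem~\ref{exactmonoidal}(3) to the model structure on $\mathrm{Ch}_*(\mathpzc{E})$ induced by $(\mathfrak{L},\mathfrak{R})$. The key preliminary observation is that because $(\mathfrak{L},\mathfrak{R})$ is monoidally $dg_*$-compatible, the pair $[(\mathfrak{L},\mathfrak{R}),(\mathfrak{L},\mathfrak{R})]$ is itself monoidally compatible in the sense of the present section, so that Proposition~\ref{prop:relations} is available with $(\mathfrak{A},\mathfrak{B})=(\mathfrak{L},\mathfrak{R})$ to control the behaviour of (trivially) cofibrant complexes under tensor product.

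First I would verify that cofibrations are $\otimes$-pure. A cofibration $f:A_\bullet\to B_\bullet$ has $\mathrm{coker}(f)\in\widetilde{dg\mathfrak{L}}$, so each $\mathrm{coker}(f)_n$ lies in $\mathfrak{L}$. Combining the $\mathfrak{L}$-purity of short exact sequences in $\mathfrak{L}$ with the hereditary hypothesis, one checks degreewise that $f$ remains an admissible monomorphism after tensoring with any object of $\mathpzc{E}$. With this in hand, the pushout-product axiom reduces to showing that for cofibrations $f,g$, the map $f\Box g$ is an admissible monomorphism with cokernel in $\widetilde{dg\mathfrak{L}}$, which is in turn acyclic whenever one of $f,g$ is trivially cofibrant. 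Identifying $\mathrm{coker}(f\Box g)\cong\mathrm{coker}(f)\otimes\mathrm{coker}(g)$, we obtain cofibrancy from Proposition~\ref{prop:relations}(5), acyclicity from Proposition~\ref{prop:relations}(6) or (7), and the monomorphism property from the $\mathfrak{L}$-pure analogue of Proposition~\ref{prop:pushoutprodpure} applied in each degree.

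For the unit axiom I would exploit that $k\in\mathfrak{L}$ by assumption: the tensor unit $S^0(k)$ is a bounded-below complex with entries in $\mathfrak{L}$, so by Proposition~\ref{prop:dgbound} it already lies in $\widetilde{dg\mathfrak{L}}$, i.e.\ is cofibrant. One may therefore take the identity as a cofibrant replacement of the unit, and the condition on acyclic fibrations $C\to k$ appearing in Theorem~\ref{exactmonoidal}(3) is automatic.

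The main obstacle is the first step, namely verifying that cofibrations are genuinely $\otimes$-pure and that the pushout-product is an \emph{admissible} monomorphism (rather than merely a morphism whose cokernel has the right description). This is precisely where the hereditary assumption is essential: it allows one to realise the admissible sub- and quotient-objects appearing in the pushout-product square as extensions of objects of $\mathfrak{L}$, so that the $\mathfrak{L}$-pureness hypothesis propagates through the $3\times 3$-diagram analogous to Proposition~\ref{prop:pushoutprodpure}. Once this purity argument is secured, the monoidality of the model structure follows formally from Propositions~\ref{prop:relations} and~\ref{prop:dgbound} together with Theorem~\ref{exactmonoidal}(3).
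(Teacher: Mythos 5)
Your overall strategy (reduce the pushout--product axiom to a cokernel computation, then feed the cokernels into Proposition~\ref{prop:relations} with $(\mathfrak{A},\mathfrak{B})=(\mathfrak{L},\mathfrak{R})$, and use $k\in\mathfrak{L}$ for the unit) is the right shape, but the first step is a genuine gap: cofibrations are \emph{not} $\otimes$-pure under the stated hypotheses, so Theorem~\ref{exactmonoidal}(3) cannot be invoked as you propose. Condition (3) of Definition~\ref{tensormodel} only says that short exact sequences \emph{whose terms lie in $\mathfrak{L}$} remain exact after tensoring with \emph{objects of $\mathfrak{L}$}. A general cofibration $f:A_\bullet\to B_\bullet$ has arbitrary domain and codomain (only the cokernel is termwise in $\mathfrak{L}$), and $\otimes$-purity would require the degreewise sequences $0\to A_n\to B_n\to C_n\to 0$ to stay exact after tensoring with \emph{arbitrary} objects of $\mathpzc{E}$. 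Neither the sequence nor the tensoring object is constrained to $\mathfrak{L}$, and the hereditary hypothesis (an $\mathrm{Ext}^{\ge2}$ condition) does nothing to bridge this. The implication you want would hold if $\mathfrak{L}\subseteq\mathrm{Flat}$ (via Example~\ref{example:flatpure}), but that is only assumed in the \emph{strongly} monoidally $dg_*$-compatible variant, not here. The same issue resurfaces in your purity argument for admissibility of $f\Box g$: the decomposition of the pushout--product requires $g$ to be $A_n$- and $B_n$-pure for the arbitrary entries $A_n,B_n$ of the domain and codomain of $f$.

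The way the cited proof (\cite{kelly2016homotopy} Proposition 4.2.55, ultimately \cite{vst2012exact} Theorem 8.11) avoids this is to verify the pushout--product axiom only on \emph{generating} cofibrations, which by Proposition~\ref{prop:cofibrantgenerators} can be taken of the form $0\to S^n(L)$ and $S^{n-1}(L)\to D^n(L)$ with $L\in\mathfrak{L}$. For these, every object in sight is termwise in $\mathfrak{L}$ (using that $\mathfrak{L}$ is closed under $\otimes$), so the $\mathfrak{L}$-purity hypothesis is exactly what makes the degreewise pushout/$3\times3$ argument go through and identifies $\mathrm{coker}(f\Box g)\cong\mathrm{coker}(f)\otimes\mathrm{coker}(g)$; the general case then follows because the class of maps $f$ with $f\Box g$ a (trivial) cofibration is closed under pushouts, transfinite compositions and retracts. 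Your appeals to Proposition~\ref{prop:relations}(5)--(7) for cofibrancy and acyclicity of the cokernel are correct once this reduction is in place. A secondary caveat: the unit axiom is not quite ``automatic'' from cofibrancy of $S^0(k)$ under the paper's definition of weak unitality, which quantifies over \emph{all} $X$; one should note that the kernel of an acyclic fibration $C\to S^0(k)$ splits off (as $\mathrm{Ext}^1(S^0(k),K)=0$) and lies in $\tilde{\mathfrak{L}}$, and then argue that $K\otimes X$ is acyclic --- which again is immediate from Proposition~\ref{prop:relations} only for cofibrant $X$.
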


\begin{cor}[\cite{kelly2016homotopy} Proposition 4.2.56]\label{prop:tensacyclic}
Let $\mathfrak{L}$ be a class of objects in a monoidal weakly idempotent complete exact category $\mathpzc{E}$, and suppose that $\mathpzc{E}$ is weakly $\mathbf{PureMon}_{\otimes}$-elementary
\begin{enumerate}
\item
Suppose that any admissible monomorphism with cokernel in $\mathfrak{L}$ is pure. If $X$ is any complex then for any complex $L$ in $\widetilde{\mathfrak{L}}$, $L\otimes X$ is acyclic.
\item
Suppose that objects in $\mathfrak{L}$ are flat. If $L$ is an $(\aleph_{0};\mathbf{PureMon}_{\otimes})$-extension of bounded below complexes of objects in $\mathfrak{L}$ then for any acyclic complex $X$, $X\otimes L$ is acyclic.
\end{enumerate}
In particular if there is a  strongly monoidally $dg_{*}$-compatible cotorsion pair $(\mathfrak{L},\mathfrak{R})$ for $*\in\{\ge0,\emptyset\}$, then the induced model structure satisfies the monoid axiom. Moreover in this case if $C$ is cofibrant and $X$ is acyclic then $C\otimes X$ is acyclic. 
\end{cor}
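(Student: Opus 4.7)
My plan is to handle Parts (1) and (2) separately and then derive the ``in particular'' statements.

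For Part (1), the starting point is that the purity hypothesis ensures, for $L \in \widetilde{\mathfrak{L}}$, each sequence $0 \to Z_n L \to L_n \to Z_{n-1}L \to 0$ (whose cokernel lies in $\mathfrak{L}$) remains exact after tensoring with any $M \in \mathpzc{E}$. Hence $L \otimes M$ is acyclic with cycles $Z_n L \otimes M$, and an induction on length via $0 \to \sigma_{\le p}B \to \sigma_{\le p+1}B \to B_{p+1}[p+1] \to 0$ extends this to all bounded chain complexes $B$. For an unbounded $X$, I would filter $X$ by the modified good truncations $\tau^{\mathrm{mod}}_{\ge -n} X$, an $\omega$-indexed chain of subcomplexes with colimit $X$ whose successive cokernels tensored with $L$ are acyclic (viewed as cones between complexes of the form $L \otimes (\text{object})$); after verifying the inclusions are $\otimes$-pure via the degreewise structure, the weakly $\mathbf{PureMon}_\otimes$-elementary hypothesis delivers acyclicity of $L \otimes X$.

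For Part (2), the key lemma is: if $F$ is a bounded below complex with terms in $\mathfrak{L}$ (hence flat) and $X$ is acyclic, then $F \otimes X$ is acyclic. This is proved by induction through the subcomplex filtration $\sigma_{\le k}F$, whose successive quotients are single flat objects $F_{k+1}$ shifted (so $F_{k+1} \otimes X$ is acyclic by flatness), with base case $\sigma_{\le k_0}F = 0$ coming from the boundedness assumption. Writing $L = \colim L^{(n)}$ as an $(\aleph_0; \mathbf{PureMon}_\otimes)$-extension with $L^{(n+1)}/L^{(n)}$ a bounded below complex in $\mathfrak{L}$, the lemma gives each $L^{(n+1)}/L^{(n)} \otimes X$ acyclic, the inclusions $L^{(n)} \otimes X \hookrightarrow L^{(n+1)} \otimes X$ remain $\otimes$-pure by definition of the extension, and the weakly elementary hypothesis concludes that $L \otimes X$ is acyclic.

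The ``in particular'' statements then follow formally. Under a strongly monoidally $dg_*$-compatible cotorsion pair, acyclic cofibrations have cokernels in $\widetilde{\mathfrak{L}}$ and cofibrant objects are $(\aleph_0; \mathbf{PureMon}_\otimes)$-extensions of bounded below complexes with $\mathfrak{L}$-flat terms. For the monoid axiom, a map $\id_X \otimes f$ with $f$ an acyclic cofibration is an admissible monomorphism whose cokernel $X \otimes C_f$ is acyclic by Part (1) (since $C_f \in \widetilde{\mathfrak{L}}$); pushouts preserve this cokernel, and the weakly elementary hypothesis propagates acyclicity to transfinite compositions. The final claim is a direct application of Part (2) with $L = C$. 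The main technical obstacle is Part (1) for unbounded $X$, where verifying $\otimes$-purity of the filtration inclusions requires careful analysis; if the direct check fails one can split $X$ via $0 \to \sigma_{\le 0}X \to X \to \sigma_{\ge 1}X \to 0$ into bounded-above and bounded-below parts and run a two-sided argument reducing each piece to the bounded case already handled.
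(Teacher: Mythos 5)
Your argument is correct, and it is essentially the standard proof (the paper itself does not reprove this corollary but defers to \cite{kelly2016homotopy} Proposition 4.2.56, which runs along the same lines): reduce to stalk complexes $S^{m}(M)$ via the purity/flatness hypothesis, climb to bounded complexes by induction on brutal truncations using thickness of acyclic complexes in a weakly idempotent complete exact category, and pass to the general case by writing everything as an $\omega$-indexed colimit along degreewise split (hence $\otimes$-pure) monomorphisms and invoking weak $\mathbf{PureMon}_{\otimes}$-elementarity; the ``in particular'' clauses then follow exactly as you indicate, since cofibrations with cokernel in $\mathfrak{L}$ are $\otimes$-pure by flatness and pushouts preserve cokernels of admissible monomorphisms.

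The one place you should commit to your fallback rather than your first plan is the unbounded case of Part (1): the good truncations $\tau_{\ge -n}X$ are not available for an arbitrary complex $X$ in a general exact category, since $\mathrm{Ker}(d_{n})\rightarrow X_{n}$ need not be an admissible monomorphism (nor need kernels exist), and the successive quotients would in any case not be of the simple form you want. The brutal truncations $\sigma_{\ge -n}X$ (followed by $\sigma_{\le p}$ on each bounded-below piece) do everything you need: the inclusions are degreewise split, so they remain admissible and $\otimes$-pure after tensoring with $L$, the successive quotients are shifts of single objects, and the two-step colimit is handled by the weakly $\mathbf{PureMon}_{\otimes}$-elementary hypothesis. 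With that substitution the proof is complete.
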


It is often useful to consider a weaker version.

\begin{defn}
Let $(\mathfrak{L},\mathfrak{R})$ and $(\mathfrak{A},\mathfrak{B})$ be complete cotorsion pairs. $[(\mathfrak{L},\mathfrak{R}),(\mathfrak{A},\mathfrak{B})]$ are said to be 
\begin{enumerate}
\item
\textit{weakly monoidally compatible} if whenever $A\in\mathfrak{A}$ and $L_{\bullet}\in\tilde{\mathfrak{L}}$, $A\otimes L_{\bullet}$ is acyclic, and whenever $A_{\bullet}\in\tilde{\mathfrak{A}}$ $L\in\mathfrak{L}$. $A_{\bullet}\otimes L$ is acyclic. 
\item
\textit{closed weakly monoidally compatible} if it is weakly monoidally compatible, and whenever $R\in\mathfrak{R}$ and $L_{\bullet}\in\tilde{\mathfrak{L}}$, $\underline{\mathrm{Hom}}(L_{\bullet},R)$ is acyclic, and whenever $R_{\bullet}\in\tilde{\mathfrak{R}}$ and $L\in\mathfrak{L}$, $\underline{\mathrm{Hom}}(L,R_{\bullet})$ is acyclic 
\end{enumerate}
\end{defn}

\begin{prop}
Suppose that any $A\in\widetilde{dg\mathfrak{A}}$ may be written as a transfinite extension of objects of the form $S^{n}(A)$ with $A\in\mathfrak{A}$, and any $L\in\widetilde{dg\mathfrak{L}}$ may be written as a transfinite extensions of objects of the form $S^{n}(L)$ with $L\in\mathfrak{L}$. Let $\mathfrak{A}\otimes\mathfrak{L}$ denote the class of iterated tensor products of objects of $\mathfrak{A}$ and objects of $\mathfrak{L}$. If $\mathpzc{E}$ is weakly $\mathbf{AdMon}_{\mathfrak{A}\otimes\mathfrak{L}}$-elementary, and $[(\mathfrak{L},\mathfrak{R}),(\mathfrak{A},\mathfrak{B})]$ are weakly monoidally compatible then $A_{\bullet}\otimes L_{\bullet}$ is acyclic for any $A_{\bullet}\in\widetilde{dg\mathfrak{A}}$ and any $L_{\bullet}\in\tilde{\mathfrak{L}}$, and for any $A_{\bullet}\in\tilde{\mathfrak{A}}$ and $L_{\bullet}\in\widetilde{dg\mathfrak{L}}$. 
\end{prop}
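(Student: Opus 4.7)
The plan is to prove the first claim by transfinite induction along the given filtration; the second claim is symmetric. Let $A_{\bullet}\in\widetilde{dg\mathfrak{A}}$ and $L_{\bullet}\in\tilde{\mathfrak{L}}$. By hypothesis, write $A_{\bullet}=\colim_{\alpha<\gamma}F_{\alpha}$ as a transfinite extension in $\mathrm{Ch}(\mathpzc{E})$ with $F_{0}=0$, each $F_{\alpha}\hookrightarrow F_{\alpha+1}$ an admissible monomorphism with cokernel $S^{n_{\alpha}}(A_{\alpha})$ for some $A_{\alpha}\in\mathfrak{A}$ and $n_{\alpha}\in\mathbb{Z}$, and $F_{\beta}=\colim_{\alpha<\beta}F_{\alpha}$ at limits. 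Since $-\otimes L_{\bullet}$ is cocontinuous, $A_{\bullet}\otimes L_{\bullet}\cong\colim_{\alpha}(F_{\alpha}\otimes L_{\bullet})$.

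The central step is to verify by transfinite induction that each $F_{\alpha}\otimes L_{\bullet}$ is acyclic, and that each transition $F_{\alpha}\otimes L_{\bullet}\to F_{\alpha+1}\otimes L_{\bullet}$ is an admissible monomorphism in $\mathrm{Ch}(\mathpzc{E})$ with cokernel $S^{n_{\alpha}}(A_{\alpha})\otimes L_{\bullet}\cong(A_{\alpha}\otimes L_{\bullet})[n_{\alpha}]$. The cokernel is acyclic by the first clause of weakly monoidal compatibility applied to $A_{\alpha}\in\mathfrak{A}$ and $L_{\bullet}\in\tilde{\mathfrak{L}}$; granted the short exactness, thickness of the class of acyclic complexes together with the induction hypothesis gives acyclicity of $F_{\alpha+1}\otimes L_{\bullet}$. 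At a limit ordinal $\beta$, $F_{\beta}\otimes L_{\bullet}$ is the colimit of a transfinite composition of admissible monomorphisms whose cokernels are degreewise sums of objects in $\mathfrak{A}\otimes\mathfrak{L}$; the weakly $\mathbf{AdMon}_{\mathfrak{A}\otimes\mathfrak{L}}$-elementary hypothesis, applied in each degree, preserves acyclicity under this colimit (via the exact-sequence-of-cycles description). Taking $\beta=\gamma$ yields the conclusion.

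The main obstacle is justifying in the successor step that $-\otimes L_{j}$ sends the admissible monomorphism $F_{\alpha,n_{\alpha}}\hookrightarrow F_{\alpha+1,n_{\alpha}}$, which has cokernel $A_{\alpha}\in\mathfrak{A}$, to an admissible monomorphism with cokernel $A_{\alpha}\otimes L_{j}\in\mathfrak{A}\otimes\mathfrak{L}$. This is precisely where the two hypotheses interact: by induction $F_{\alpha,n_{\alpha}}$ and $F_{\alpha+1,n_{\alpha}}$ are transfinite extensions of objects of $\mathfrak{A}$ (and so lie in $\mathfrak{A}$, as $\mathfrak{A}$ is closed under transfinite extensions), so after tensoring the terms fit into the class $\mathfrak{A}\otimes\mathfrak{L}$; the weakly $\mathbf{AdMon}_{\mathfrak{A}\otimes\mathfrak{L}}$-elementary assumption then controls the admissibility of the accumulated filtration and propagates the desired short-exact structure degreewise. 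The second claim follows by the symmetric argument: filter $L_{\bullet}\in\widetilde{dg\mathfrak{L}}$ as $\colim_{\beta}G_{\beta}$ with $G_{\beta+1}/G_{\beta}\cong S^{m_{\beta}}(L_{\beta})$, $L_{\beta}\in\mathfrak{L}$, and apply the second clause of weakly monoidal compatibility to reduce each successor step to the acyclicity of $A_{\bullet}\otimes L_{\beta}$.
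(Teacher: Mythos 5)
Your proposal is essentially the paper's own proof, written out as an explicit transfinite induction: filter $A_{\bullet}$ by sphere complexes $S^{n_{\alpha}}(A_{\alpha})$, tensor with $L_{\bullet}$, kill the successive quotients by the first clause of weak monoidal compatibility, and invoke the weakly $\mathbf{AdMon}_{\mathfrak{A}\otimes\mathfrak{L}}$-elementary hypothesis at limit stages; the second claim is symmetric, exactly as you say.

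One caveat on the ``main obstacle'' you single out. You are right that the real issue is whether $-\otimes L_{j}$ carries the admissible monomorphism $(F_{\alpha})_{n_{\alpha}}\rightarrow(F_{\alpha+1})_{n_{\alpha}}$ (cokernel $A_{\alpha}$) to an admissible monomorphism, but your proposed resolution does not supply this: the weakly $\mathbf{AdMon}_{\mathfrak{A}\otimes\mathfrak{L}}$-elementary condition only asserts that colimits of transfinite sequences of acyclic complexes along such monomorphisms are acyclic; it says nothing about $\otimes$ preserving admissible monomorphisms, so it cannot ``propagate the short-exact structure degreewise.'' What is actually needed is that short exact sequences with cokernel in $\mathfrak{A}$ remain exact after applying $-\otimes L_{j}$ (e.g.\ because objects of $\mathfrak{A}$ are flat, or such sequences are $\mathfrak{L}$-pure). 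The paper's own proof silently assumes the same thing when it asserts that $A_{\bullet}\otimes L_{\bullet}$ ``may be written as a transfinite extension of objects of the form $S^{n}(A_{\alpha})\otimes L_{\bullet}$,'' and in the intended applications this is covered by flatness of the objects of $\mathfrak{A}$; so your argument matches the paper's, but the patch you offer for this step should be replaced by an explicit purity/flatness hypothesis rather than an appeal to weak elementarity.
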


\begin{proof}
We prove the case with $A\in\widetilde{dg\mathfrak{A}}$. $A_{\bullet}\otimes L_{\bullet}$ may be written as a transfinite extension of objects of the form $S^{n}(A_{\alpha})\otimes L_{\bullet}$ with $A_{\alpha}\in\mathfrak{A}$. $S^{n}(A_{\alpha})\otimes L_{\bullet}$ is acyclic by assumption. Now the claim follows since $\mathpzc{E}$ is assumed to be weakly $\mathbf{AdMon}_{\mathfrak{A}\otimes\mathfrak{L}}$-elementary.
\end{proof}

\subsubsection{A Monoidal Structure on the Left Heart}
In \cite{qacs} Section 1.5.2, Schneiders showed that given a monoidal elementary quasi-abelian category $\mathpzc{E}$, there is induced closed symmetric monoidal structure on its left heart $\mathrm{LH}(\mathpzc{E})$ such that the functor $\mathrm{LH}(\mathpzc{E})\rightarrow\mathpzc{E}$ is strong monoidal. Here we generalise this to exact categories.

Let $\mathpzc{E}$ be a closed symmetric monoidal exact category which, for simplicity, we assume to be finitely complete and cocomplete. Suppose that $(\mathfrak{L},\mathfrak{R})$ is a complete, monoidally compatible, cotorsion pair on $\mathpzc{E}$ with $k\in\mathfrak{L}$, so that the tensor product functor $\otimes$ is left derivable, and the internal hom functor $\underline{\mathrm{Hom}}(-,-)$ is right derivable. 

\begin{cor}[c.f. \cite{qacs} Proposition 1.5.3]
We have functorial isomorphisms
\begin{enumerate}
\item
$X\otimes^{\mathbb{L}}Y\cong Y\otimes^{\mathbb{L}}X$
\item
$X\otimes^{\mathbb{L}}k\cong X$
\item
$\mathbb{R}\mathrm{Hom}(X\otimes^{\mathbb{L}}Y,Z)\cong\mathbb{R}\mathrm{Hom}(X,\mathbb{R}\underline{\mathrm{Hom}}(Y,Z))$
\item
$\mathbb{R}\underline{\mathrm{Hom}}(k,Z)\cong Z$
\end{enumerate}
for any $X,Y\in\mathrm{D}_{+}(\mathpzc{E})$ and any $Z\in\mathrm{D}_{-}(\mathpzc{E})$.
\end{cor}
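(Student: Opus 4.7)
The plan is to pass to suitable resolutions at the chain-complex level and transport the underived symmetric monoidal and tensor-hom identities into the derived category. For $X, Y \in \mathrm{D}_+(\mathpzc{E})$ I would choose bounded-below termwise-$\mathfrak{L}$ resolutions $X' \xrightarrow{\sim} X$ and $Y' \xrightarrow{\sim} Y$, which by Proposition \ref{prop:dgbound} lie in $\widetilde{dg\mathfrak{L}}$; dually, for $Z \in \mathrm{D}_-(\mathpzc{E})$ I would take a bounded-above termwise-$\mathfrak{R}$ resolution $Z \xrightarrow{\sim} Z'$ in $\widetilde{dg\mathfrak{R}}$. Such resolutions exist by completeness of $(\mathfrak{L},\mathfrak{R})$, can be chosen functorially, and compute $\otimes^{\mathbb{L}}$ and $\mathbb{R}\underline{\mathrm{Hom}}$ respectively.

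Parts (1), (2), and (4) are then essentially formal. The symmetric braiding furnishes an honest isomorphism $\sigma \colon X' \otimes Y' \xrightarrow{\sim} Y' \otimes X'$ in $\mathrm{Ch}(\mathpzc{E})$, which descends to (1). For (2) and (4), the hypothesis $k \in \mathfrak{L}$ implies that $S^0(k) \in \widetilde{dg\mathfrak{L}}$, so no further resolution of $k$ is needed, and the underived identifications $X' \otimes S^0(k) \cong X'$ and $\underline{\mathrm{Hom}}(S^0(k), Z') \cong Z'$ descend to $X \otimes^{\mathbb{L}} k \cong X$ and $\mathbb{R}\underline{\mathrm{Hom}}(k, Z) \cong Z$. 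For (3), the underived tensor-hom adjunction on $\mathrm{Ch}(\mathpzc{E})$ supplies a natural isomorphism
\[
\mathbf{Hom}(X' \otimes Y', Z') \;\cong\; \mathbf{Hom}(X', \underline{\mathrm{Hom}}(Y', Z')).
\]
By monoidal compatibility of $(\mathfrak{L},\mathfrak{R})$ and Proposition \ref{prop:relations}, $X' \otimes Y' \in \widetilde{dg\mathfrak{L}}$ and $\underline{\mathrm{Hom}}(Y', Z') \in \widetilde{dg\mathfrak{R}}$, so the two sides represent $\mathbb{R}\mathrm{Hom}(X \otimes^{\mathbb{L}} Y, Z)$ and $\mathbb{R}\mathrm{Hom}(X, \mathbb{R}\underline{\mathrm{Hom}}(Y, Z))$ respectively, yielding (3).

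The main obstacle is verifying that the chosen bounded resolutions actually compute the derived functors, i.e.\ that $(-) \otimes Y'$ and $\underline{\mathrm{Hom}}(-, Z')$ send quasi-isomorphisms between $\widetilde{dg\mathfrak{L}}$-complexes to quasi-isomorphisms. Since a map in $\widetilde{dg\mathfrak{L}}$ is a quasi-isomorphism precisely when its cone lies in $\tilde{\mathfrak{L}}$, both assertions reduce to the closure properties from Proposition \ref{prop:relations}, specifically $\tilde{\mathfrak{L}} \otimes \widetilde{dg\mathfrak{L}} \subseteq \tilde{\mathfrak{L}}$ and $\underline{\mathrm{Hom}}(\tilde{\mathfrak{L}}, \widetilde{dg\mathfrak{R}}) \subseteq \tilde{\mathfrak{R}}$. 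Once this bookkeeping is in place, the four claims follow at once from their underived counterparts, with functoriality inherited from the functorial choice of resolutions.
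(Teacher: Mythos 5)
Your overall route is exactly the one the paper intends: the corollary is stated there without its own proof, as an immediate consequence of the assertion in the preceding paragraph that $\otimes$ is left derivable (via bounded-below termwise-$\mathfrak{L}$ resolutions) and $\underline{\mathrm{Hom}}$ is right derivable (via bounded-above termwise-$\mathfrak{R}$ resolutions), after which (1)--(4) descend from their underived counterparts. Your write-up of parts (1), (2) and (4), and the use of the chain-level tensor-hom adjunction together with Proposition \ref{prop:relations} for part (3), is faithful to that.

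The one step that needs more care is your justification that the resolutions actually compute the derived functors. You assert that a map in $\widetilde{dg\mathfrak{L}}$ is a quasi-isomorphism precisely when its cone lies in $\tilde{\mathfrak{L}}$; the nontrivial direction of this is the identity $\widetilde{dg\mathfrak{L}}\cap\mathfrak{W}=\tilde{\mathfrak{L}}$. The cone of a quasi-isomorphism of bounded-below termwise-$\mathfrak{L}$ complexes is bounded below, acyclic, termwise in $\mathfrak{L}$ and again a $K$-$\mathfrak{L}$ complex, but for its cycle objects to lie in $\mathfrak{L}$ one needs $\mathfrak{L}$ to be closed under kernels of admissible epimorphisms between its objects --- i.e.\ the hereditary case, which is exactly when the paper establishes $\widetilde{dg\mathfrak{L}}\cap\mathfrak{W}=\tilde{\mathfrak{L}}$ in Theorem \ref{thm:existencemodelpresentablecomplex}. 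Under the bare hypotheses of the corollary (complete, monoidally compatible, $k\in\mathfrak{L}$) this does not follow from monoidal compatibility alone, since the compatibility axioms only control tensoring against complexes in $\tilde{\mathfrak{L}}$, not against arbitrary acyclic termwise-$\mathfrak{L}$ complexes. The same implicit assumption underlies the paper's own derivability claim, so your argument reflects the intended proof; but to make it airtight you should either add hereditariness of $(\mathfrak{L},\mathfrak{R})$ (or directly the hypothesis $\widetilde{dg\mathfrak{L}}\cap\mathfrak{W}=\tilde{\mathfrak{L}}$) or supply a separate argument that tensoring a bounded-below acyclic termwise-$\mathfrak{L}$ complex with an object of $\mathfrak{L}$ yields an acyclic complex.
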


Denote by $\otimes_{LH}$ the functor given by the restriction of 
$$\mathrm{LH}_{0}\circ\otimes^{\mathbb{L}}:\mathbf{Ch}_{+}(\mathpzc{E})\times\mathbf{Ch}_{+}(\mathpzc{E})\rightarrow\mathrm{LH}(\mathpzc{E})$$ 
to $\mathrm{LH}(\mathpzc{E})\times\mathrm{LH}(\mathpzc{E})$. Denote also by $\underline{\mathrm{Hom}}_{LH}$ the restriction of 
$$\mathrm{LH_{0}}\circ\underline{\mathrm{Hom}}:(\mathbf{Ch}_{+}(\mathpzc{E})^{+})^{op}\times\mathbf{Ch}_{-}(\mathpzc{E})\rightarrow\mathrm{LH}(\mathpzc{E})$$
 to $\mathrm{LH}(\mathpzc{E})$.
 
 \begin{thm}
 $(\otimes_{\mathrm{LH}},i(k),\underline{\mathrm{Hom}}_{\mathrm{LH}})$ defines a closed symmetric monoidal structure on $\mathrm{LH}(\mathpzc{E})$. If $\mathpzc{E}$ has enough strongly flat objects then $\mathrm{LH}(\mathpzc{E})$ has enough flat objects. 
 \end{thm}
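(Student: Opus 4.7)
The plan is to transfer the derived symmetric monoidal structure on $\mathbf{Ch}_+(\mathpzc{E})$ to the heart $\mathrm{LH}(\mathpzc{E})$ via the truncation functor $\mathrm{LH}_0$, in the spirit of Schneiders' original construction for quasi-abelian categories, and then exploit strong flatness to get enough flat objects.

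First I would verify that $\otimes_{\mathrm{LH}}$ and $\underline{\mathrm{Hom}}_{\mathrm{LH}}$ land in the heart. For $X,Y \in \mathrm{LH}(\mathpzc{E})$, compute $X \otimes^{\mathbb{L}} Y$ by resolving $X$ by objects of $\mathfrak{L}$ — derivability is guaranteed by monoidal compatibility of $(\mathfrak{L},\mathfrak{R})$. Since $X$ and $Y$ are connective for the left $t$-structure and, as $k \in \mathfrak{L}$ and $\mathfrak{L}$ is closed under $\otimes$, the derived tensor product preserves connectivity, one has $X \otimes^{\mathbb{L}} Y \in \mathbf{D}_{\geq 0}(\mathpzc{E})$ so that $\mathrm{LH}_0$ produces a heart object. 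Dually, $\mathbb{R}\underline{\mathrm{Hom}}(X,Y)$ is coconnective, so $\mathrm{LH}_0$ applied to it also yields a heart object.

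Next I would verify associativity, symmetry, unit and closed axioms. The derived category carries all of these, so the key technical point is that $\mathrm{LH}_0$ respects iterated tensor products on heart inputs. Concretely, if $A \in \mathbf{D}_{\geq 0}$ and $B$ is in the heart, the fibre of $A \to \mathrm{LH}_0(A)$ lies in $\mathbf{D}_{\geq 1}$, and $(-) \otimes^{\mathbb{L}} B$ preserves $\mathbf{D}_{\geq 1}$ (by the same connectivity argument as above applied to a shifted complex), hence $\mathrm{LH}_0(A \otimes^{\mathbb{L}} B) \cong \mathrm{LH}_0(\mathrm{LH}_0(A) \otimes^{\mathbb{L}} B)$. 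Iterating this identification gives
$$(X \otimes_{\mathrm{LH}} Y) \otimes_{\mathrm{LH}} Z \;\cong\; \mathrm{LH}_0(X \otimes^{\mathbb{L}} Y \otimes^{\mathbb{L}} Z) \;\cong\; X \otimes_{\mathrm{LH}} (Y \otimes_{\mathrm{LH}} Z),$$
and similarly for symmetry; the unit isomorphism $X \otimes_{\mathrm{LH}} i(k) \cong X$ follows from $k \in \mathfrak{L}$. The closed structure is extracted from the derived adjunction of the previous corollary by applying $H^0$ and restricting to heart inputs; using that $\mathrm{LH}_0$ is left adjoint to the inclusion of the heart into $\mathbf{D}_{\geq 0}$ (and dually for $\mathrm{LH}_0$ on coconnective objects) converts the derived adjunction into the desired adjunction between $\otimes_{\mathrm{LH}}$ and $\underline{\mathrm{Hom}}_{\mathrm{LH}}$.

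For the final claim on flat objects, observe that if $F \in \mathpzc{E}$ is strongly flat, then $F \otimes (-) \colon \mathpzc{E} \to \mathpzc{E}$ is exact and commutes with kernels, so it induces an exact endofunctor of $\mathrm{Ch}_+(\mathpzc{E})$ which preserves quasi-isomorphisms, and its descent to $\mathrm{LH}(\mathpzc{E})$ agrees with $i(F) \otimes_{\mathrm{LH}} (-)$. Hence $i(F)$ is flat in the abelian category $\mathrm{LH}(\mathpzc{E})$. Given any $A \in \mathrm{LH}(\mathpzc{E})$, presented as $[\mathrm{Ker}(f) \to X \xrightarrow{f} Y]$, the natural map $i(Y) \twoheadrightarrow A$ is an epimorphism in $\mathrm{LH}(\mathpzc{E})$; picking a strongly flat admissible cover $F \twoheadrightarrow Y$ in $\mathpzc{E}$ and composing yields the required flat cover. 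The main obstacle, and the step I would check most carefully, is the compatibility between $\mathrm{LH}_0$ and iterated derived tensor products — concretely, that the left $t$-structure is compatible with $\otimes^{\mathbb{L}}$ in the sense that $\mathbf{D}_{\geq 1} \otimes^{\mathbb{L}} \mathbf{D}_{\geq 0} \subseteq \mathbf{D}_{\geq 1}$; this is where the hypotheses on the cotorsion pair (in particular $k \in \mathfrak{L}$ and closure of $\mathfrak{L}$ under $\otimes$) are really used.
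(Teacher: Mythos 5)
Your proposal is correct, but for the one genuinely hard part of the theorem --- that $\underline{\mathrm{Hom}}_{\mathrm{LH}}$ is an internal hom --- it takes a different route from the paper. The paper reduces to the case $Y=L\in\mathfrak{L}$, checks by an explicit degree count that $\underline{\mathrm{Hom}}(L,R_{\bullet})$ lies in $\mathrm{D}_{\le0}(\mathpzc{E})$, and then verifies by a long element-level computation (identifying $\mathrm{Ker}(d_{0}')$ with a concrete fibre product of internal homs) that $\underline{\mathrm{Hom}}(-,Z)$ carries cokernels of maps between objects of $\mathfrak{L}$ to kernels, which is what allows the bootstrap from $\mathfrak{L}$ to all of $\mathrm{LH}(\mathpzc{E})$. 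You instead run pure $t$-structure yoga: right $t$-exactness of $\otimes^{\mathbb{L}}$, left $t$-exactness of $\mathbb{R}\underline{\mathrm{Hom}}$ on heart inputs, and the adjunctions between truncation and inclusion convert the derived adjunction of the preceding corollary directly into the one between $\otimes_{\mathrm{LH}}$ and $\underline{\mathrm{Hom}}_{\mathrm{LH}}$; your cofibre-sequence argument for $\mathrm{LH}_{0}(A\otimes^{\mathbb{L}}B)\cong\mathrm{LH}_{0}(\mathrm{LH}_{0}(A)\otimes^{\mathbb{L}}B)$ likewise supplies a mechanism for the coherences the paper dismisses as straightforward. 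Your route is shorter and less error-prone, at the price of one assertion you wave through as ``dual'' but which is not literally dual: coconnectivity of $\mathbb{R}\underline{\mathrm{Hom}}(Y,Z)$ cannot be seen by resolving $Z$ by a non-positively graded complex of $\mathfrak{R}$-objects, since heart objects are not presented that way (the paper's $R_{\bullet}$ has terms in degrees up to $2$, and the vanishing of $\mathrm{LH}_{1}$ and $\mathrm{LH}_{2}$ has to be checked). The clean fix inside your framework is orthogonality: for $W\in\mathrm{D}_{\ge1}(\mathpzc{E})$ one has $\mathrm{Hom}(W,\mathbb{R}\underline{\mathrm{Hom}}(Y,Z))\cong\mathrm{Hom}(W\otimes^{\mathbb{L}}Y,Z)=0$, because $W\otimes^{\mathbb{L}}Y\in\mathrm{D}_{\ge1}(\mathpzc{E})$ by the very connectivity statement you already isolate and $Z\in\mathrm{D}_{\le0}(\mathpzc{E})$; hence $\mathbb{R}\underline{\mathrm{Hom}}(Y,Z)\in\mathrm{D}_{\ge1}(\mathpzc{E})^{\perp}=\mathrm{D}_{\le0}(\mathpzc{E})$. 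Finally, note that the paper's proof never actually addresses the claim about flat objects; your argument --- strong flatness makes $F\otimes(-)$ commute with kernels, hence $F\otimes^{\mathbb{L}}(-)$ is $t$-exact for the left $t$-structure and $i(F)$ is flat in $\mathrm{LH}(\mathpzc{E})$, and $i(\mathpzc{E})$ generates the left heart so flat covers exist by composing --- is the right one and supplies what the paper omits.
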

 
 \begin{proof}
 It is straightforward to prove that $\otimes_{LH}$ the associativity equivalences for $\otimes^{\mathbb{L}}$ induce associativity isomorphsisms for $\otimes_{LH}$, and similarly for commutativity, and the left and right unit coherences. The main content of this result is that $\underline{\mathrm{Hom}}_{LH}$ is an internal hom for this monoidal structure. We need to establish natural isomorphisms
 $$\mathrm{Hom}_{\mathrm{LH}(\mathpzc{E})}(X\otimes_{LH}Y,Z)\cong\mathrm{Hom}_{LH}(X,\underline{\mathrm{Hom}}(Y,Z))$$
 If we can show that this is true whenever $Y$ is in $\mathfrak{L}$, and that $\underline{\mathrm{Hom}}(-,Z)$ sends cokernels between maps in $\mathfrak{L}$ to kernels, then we are done. So assume first that $Y=L\in\mathfrak{L}$. Let $L'_{\bullet}\rightarrow X$ be a resolution by a complex in $\mathfrak{L}$, and $Z\rightarrow R_{\bullet}$ a resolution by a complex in $\mathfrak{R}$. We first claim that $\underline{\mathrm{Hom}}(L,R_{\bullet})$ is in $\mathrm{D}_{\le0}(\mathpzc{E})$. Now if 
 \begin{displaymath}
 \xymatrix{
 Z=\mathrm{Ker}(f)\ar[r] & A\ar[r]^{f} & B
 }
 \end{displaymath}
 then we have $\tau_{\ge 2}Z\cong 0$. Thus $\tau_{\ge 2}R_{\bullet}\cong\mathrm{Ker}(d_{2}^{R})\cong 0$, and 
 $$0\cong LH_{1}(R_{\bullet})\cong (0\rightarrow R_{2}\rightarrow\mathrm{Ker}(d_{1}^{R}))$$
 Now $\mathbf{Hom}(L,R_{\bullet})$ is concentrated in degrees $\ge 2$. By the above, we have $Z_{2}\mathbf{Hom}(L,R_{\bullet})\cong 0$, and 
 $$LH_{1}(\mathbf{Hom}(L,R_{\bullet}))\cong (0\rightarrow \mathbf{Hom}(L,R_{2})\rightarrow Z_{1}\mathbf{Hom}(L,R_{\bullet})\cong\mathbf{Hom}(L,\mathrm{Ker}(d_{1}^{R})))$$
 It follows that $LH_{1}(\mathbf{Hom}(L,R_{\bullet}))\cong 0$ and clearly $LH_{n}(\mathbf{Hom}(L,R_{\bullet}))\cong 0$ for $n\ge 2$ as well. We then have
 \begin{align*}
 \mathrm{Hom}_{\mathrm{LH}(\mathpzc{E})}(X\otimes_{LH}Y,Z)&\cong\mathrm{Hom}_{D(\mathpzc{E})}(L\otimes L_{\bullet}',R_{\bullet})\\
 &\cong\mathrm{Hom}_{D(\mathpzc{E})}(L'_{\bullet},\mathbf{Hom}(L,R_{\bullet}))\\
 &\cong\mathrm{Hom}_{D(\mathpzc{E})}(LH_{0}(L'_{\bullet}),LH_{0}(\mathbf{Hom}(L,R_{\bullet})))
 \end{align*}
 as required, where in the last isomorphism we have used that $L'_{\bullet}\in\mathrm{D}_{\ge0}(\mathpzc{E})$, and $\mathbf{Hom}(L,R_{\bullet})\in\mathrm{D}_{\le0}(\mathpzc{E})$.
 
 Now let $L_{1}\rightarrow L_{0}$ be a map between objects in $\mathfrak{L}$. We regard this as a complex, and extend it to a quasi-isomorphic one $L_{\bullet}\rightarrow (L_{1}\rightarrow L_{0})$, where the $0th$ and $1st$ terms of $L_{\bullet}$ are $L_{0}$ and $L_{1}$ respectively. $\underline{\mathrm{Hom}}(L_{0},Z)$ is isomorphic to the complex
 $$\underline{\mathrm{Hom}}(L_{0},R_{2})\rightarrow\underline{\mathrm{Hom}}(L_{0},R_{1})\rightarrow\underline{\mathrm{Hom}}(L_{0},\mathrm{Ker}(d_{0}^{R}))$$
 and $\underline{\mathrm{Hom}}(L_{1},Z)$ is isomorphic to the complex
  $$\underline{\mathrm{Hom}}(L_{1},R_{2})\rightarrow\underline{\mathrm{Hom}}(L_{1},R_{1})\rightarrow\underline{\mathrm{Hom}}(L_{0},\mathrm{Ker}(d_{1}^{R}))$$
  The kernel of $\underline{\mathrm{Hom}}(L_{0},R_{2})\rightarrow \underline{\mathrm{Hom}}(L_{1},R_{2})$ is then computed as $LH_{1}$ of the cone of the natural map between these complexes, i.e. $LH_{1}$ of the complex
  $$\underline{\mathrm{Hom}}(L_{0},R_{2})\rightarrow\underline{\mathrm{Hom}}(L_{0},R_{1})\oplus\underline{\mathrm{Hom}}(L_{1},R_{2})\rightarrow \underline{\mathrm{Hom}}(L_{0},\mathrm{Ker}(d_{0}^{R}))\oplus \underline{\mathrm{Hom}}(L_{1},R_{1})\rightarrow \underline{\mathrm{Hom}}(L_{1},\mathrm{Ker}(d_{0}^{R}))$$
  where $\underline{\mathrm{Hom}}(L_{1},\mathrm{Ker}(d_{0}^{R}))$ is in degree $0$. This is the complex
  \begin{displaymath}
  \xymatrix{
\mathrm{Ker}(d_{2})\ar[r] &\underline{\mathrm{Hom}(L_{0},R_{1})}\oplus\underline{\mathrm{Hom}}(L_{1},R_{2})\ar[r]^{\;\;\;\;d_{2}}&\\
\ar[r]&\underline{\mathrm{Hom}}(L_{0},\mathrm{Ker}(d_{0}^{R}))\times_{\underline{\mathrm{Hom}}(L_{1},\mathrm{Ker}(d_{0}^{R}))}\underline{\mathrm{Hom}}(L_{1},R_{1})
  }
  \end{displaymath}
  Now consider $\underline{\mathrm{Hom}}(L_{\bullet},R_{\bullet}))$. In low degrees this complex is 
  \begin{displaymath}
  \xymatrix{
  \underline{\mathrm{Hom}}(L_{0},R_{2})\ar[r] &   \underline{\mathrm{Hom}}(L_{1},R_{2})\oplus   \underline{\mathrm{Hom}}(L_{0},R_{1}) \\
 \ar[r] & \underline{\mathrm{Hom}}(L_{0},R_{0})\oplus\underline{\mathrm{Hom}}(L_{1},R_{1})\oplus\underline{\mathrm{Hom}}(L_{2},R_{2})\\
 \ar[r]^{d_{0}'\;\;\;\;\;\;\;\;\;\;\;\;\;\;\;\;\;\;\;\;\;\;\;\;\;\;\;\;\;\;\;\;\;\;\;\;\;\;\;\;\;\;\;\;\;\;\;\;\;\;\;\;\;\;\;\;\;\;\;\;\;\;\;\;} &  \underline{\mathrm{Hom}}(L_{0},R_{-1})\oplus\underline{\mathrm{Hom}}(L_{1},R_{0})\oplus\underline{\mathrm{Hom}}(L_{2},R_{1})\oplus\underline{\mathrm{Hom}}(L_{3},R_{2})
  }
  \end{displaymath}
  where $\underline{\mathrm{Hom}}(L_{0},R_{2})$ is in degree $2$.  We compute $\mathrm{LH}_{0}$ of this complex. Note that in degree $1$ this will be $\underline{\mathrm{Hom}}(L_{0},R_{1})\oplus\underline{\mathrm{Hom}}(L_{1},R_{2})$, so things are looking promising. Now there is a natural map $\underline{\mathrm{Hom}}(L_{0},R_{0})\oplus\underline{\mathrm{Hom}}(L_{1},R_{1})\oplus\underline{\mathrm{Hom}}(L_{2},R_{2})\rightarrow\underline{\mathrm{Hom}}(L_{0},R_{0})\oplus\underline{\mathrm{Hom}}(L_{1},R_{1})$ given by projection. We claim that this map restricts to an isomorphism
  $$\mathrm{Ker}(d_{0}')\cong\underline{\mathrm{Hom}}(L_{0},\mathrm{Ker}(d_{0}^{R}))\times_{\underline{\mathrm{Hom}}(L_{1},\mathrm{Ker}(d_{0}^{R}))}\underline{\mathrm{Hom}}(L_{1},R_{1})$$
  By applying applying $\mathrm{Hom}(L,-)$ for each $L\in\mathfrak{L}$ and appealing to Yoneda, we may work with elements. Let $(f,g,h)\in\mathrm{Ker}(d_{0}')$. This means precisely that 
  \begin{enumerate}
  \item
 $d_{0}^{R}\circ f=0$
  \item
  $f\circ d_{1}^{L}=d_{1}^{R}\circ g$
  \item
  $g\circ d_{2}^{L}=-d_{2}^{R}\circ h$
  \item
  $h\circ d_{3}^{L}=0$
  \end{enumerate}
  The first two equations imply that the image of the map $\mathrm{Ker}(d_{0}')\rightarrow \underline{\mathrm{Hom}}(L_{0},R_{0})\oplus\underline{\mathrm{Hom}}(L_{1},R_{1})$ lands in $\underline{\mathrm{Hom}}(L_{0},\mathrm{Ker}(d_{0}^{R}))\times_{\underline{\mathrm{Hom}}(L_{1},\mathrm{Ker}(d_{0}^{R}))}\underline{\mathrm{Hom}}(L_{1},R_{1})$.
 Since $d_{2}^{R}$ is an isomorphism onto its image, if $g=0$ then $h=0$. Thus the map is injective. Next we show it is a surjection onto $\underline{\mathrm{Hom}}(L_{0},\mathrm{Ker}(d_{0}^{R}))\times_{\underline{\mathrm{Hom}}(L_{1},\mathrm{Ker}(d_{0}^{R}))}\underline{\mathrm{Hom}}(L_{1},R_{1})$. The first two equations imply that its image is at least contained in  $\underline{\mathrm{Hom}}(L_{0},\mathrm{Ker}(d_{0}^{R}))\times_{\underline{\mathrm{Hom}}(L_{1},\mathrm{Ker}(d_{0}^{R}))}\underline{\mathrm{Hom}}(L_{1},R_{1})$. It is surjectiv. Let $(f,g)\in\underline{\mathrm{Hom}}(L_{0},\mathrm{Ker}(d_{0}^{R}))\times_{\underline{\mathrm{Hom}}(L_{1},\mathrm{Ker}(d_{0}^{R}))}\underline{\mathrm{Hom}}(L_{1},R_{1})$ be given. Since $d_{2}^{R}$ is an isomorphism onto $\mathrm{Ker}(d_{1}^{R})$ and $d_{1}^{R}\circ g\circ d_{2}^{L}\cong g\circ d_{1}^{L}\circ d_{2}^{L}\cong 0$ there is a unique $h$ such that $-d_{2}^{R}\circ h=g\circ d_{2}^{L}$. This also implies that $h\circ d_{3}^{L}=0$, so that $(f,g,h)\in \mathrm{Ker}(d_{0}')$. 
\end{proof}

\subsubsection{Quillen Adjunctions of Complexes}

Here we relate Hovey triples to Quillen adjunctions. 

\begin{lem}
Let 
$$\adj{L}{\mathpzc{D}}{\mathpzc{E}}{R}$$
be an adjunction of exact categories, and let $(\mathfrak{L}_{\mathpzc{D}},\mathfrak{R}_{\mathpzc{D}})$ be a $dg$-compatible cotorsion pair on $\mathpzc{D}$, and $(\mathfrak{L}_{\mathpzc{E}},\mathfrak{R}_{\mathpzc{E}})$ a $dg$-compatible cotorsion pair on $\mathpzc{E}$ such that $R$ sends $\mathfrak{R}_{\mathpzc{E}}$ to $\mathfrak{R}_{\mathpzc{D}}$. Then, when equipped with the model structures induced by $(\mathfrak{L}_{\mathpzc{D}},\mathfrak{R}_{\mathpzc{D}})$ and $(\mathfrak{L}_{\mathpzc{E}},\mathfrak{R}_{\mathpzc{E}})$ respectively, the adjunction
$$\adj{L}{\mathrm{Ch}(\mathpzc{D})}{\mathrm{Ch}(\mathpzc{E})}{R}$$
is Quillen. 
\end{lem}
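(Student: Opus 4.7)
The plan is to verify the Quillen adjunction criterion by showing that $R$ preserves both fibrations and acyclic fibrations in the exact model structures induced by the $dg$-compatible cotorsion pairs. Since fibrations are admissible epimorphisms with kernel in $\widetilde{dg\mathfrak{R}}$ and acyclic fibrations are admissible epimorphisms with kernel in $\widetilde{\mathfrak{R}}$, it suffices by Corollary \ref{cor:adjcotors}, applied on chain complexes to the cotorsion pairs $(\widetilde{dg\mathfrak{L}_{\mathpzc{D}}},\widetilde{\mathfrak{R}_{\mathpzc{D}}})$ and $(\widetilde{dg\mathfrak{L}_{\mathpzc{E}}},\widetilde{\mathfrak{R}_{\mathpzc{E}}})$, to show
$$R(\widetilde{\mathfrak{R}_{\mathpzc{E}}})\subseteq\widetilde{\mathfrak{R}_{\mathpzc{D}}}\qquad\text{and}\qquad R(\widetilde{dg\mathfrak{R}_{\mathpzc{E}}})\subseteq\widetilde{dg\mathfrak{R}_{\mathpzc{D}}}.$$

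For the first inclusion, let $R_\bullet\in\widetilde{\mathfrak{R}_{\mathpzc{E}}}$. For each $n$ there is a short exact sequence $0\to Z_n R_\bullet\to R_{\bullet,n}\to Z_{n-1}R_\bullet\to 0$ with all three terms in $\mathfrak{R}_{\mathpzc{E}}$. By assumption $R(\mathfrak{R}_{\mathpzc{E}})\subseteq\mathfrak{R}_{\mathpzc{D}}$, so by Corollary \ref{cor:adjcotors} the functor $R$ sends this to a short exact sequence in $\mathpzc{D}$ with each term in $\mathfrak{R}_{\mathpzc{D}}$. Since $R$ is a right adjoint it preserves kernels, hence $Z_n R(R_\bullet)\cong R(Z_n R_\bullet)\in\mathfrak{R}_{\mathpzc{D}}$, and splicing the short exact sequences back together shows that $R(R_\bullet)$ is acyclic. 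Thus $R(R_\bullet)\in\widetilde{\mathfrak{R}_{\mathpzc{D}}}$.

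For the second inclusion, let $R_\bullet\in\widetilde{dg\mathfrak{R}_{\mathpzc{E}}}$. Each $R(R_\bullet)_n=R(R_{\bullet,n})\in\mathfrak{R}_{\mathpzc{D}}$ directly by hypothesis, so only the $K$-flat property must be checked. Given $L_\bullet\in\widetilde{\mathfrak{L}_{\mathpzc{D}}}$, the adjunction gives
$$\mathbf{Hom}(L_\bullet, R(R_\bullet))\cong\mathbf{Hom}(L(L_\bullet), R_\bullet),$$
so it is enough to see that $L(L_\bullet)\in\widetilde{\mathfrak{L}_{\mathpzc{E}}}$. This is the dual of the computation in the previous paragraph: using Corollary \ref{cor:adjcotors} the left adjoint $L$ preserves short exact sequences with cokernel in $\mathfrak{L}_{\mathpzc{D}}$ and hence sends the cycle sequences of $L_\bullet$ to short exact sequences in $\mathpzc{E}$ with cycles in $\mathfrak{L}_{\mathpzc{E}}$; since $L$ also preserves cokernels these assemble into an acyclic complex with $Z_n L(L_\bullet)\cong L(Z_n L_\bullet)\in\mathfrak{L}_{\mathpzc{E}}$, as required.

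There is no real obstacle; the content is in correctly lining up the equivalent formulations of Corollary \ref{cor:adjcotors} and exploiting that $R$ (respectively $L$) preserves not only the appropriate classes of objects but also the exactness of the cycle sequences that recognise membership of $\widetilde{\mathfrak{R}}$ and $\widetilde{\mathfrak{L}}$.
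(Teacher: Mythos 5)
Your proof is correct and follows essentially the same route as the paper: both reduce the Quillen condition via Corollary \ref{cor:adjcotors} to showing $R(\widetilde{\mathfrak{R}_{\mathpzc{E}}})\subseteq\widetilde{\mathfrak{R}_{\mathpzc{D}}}$ and $R(\widetilde{dg\mathfrak{R}_{\mathpzc{E}}})\subseteq\widetilde{dg\mathfrak{R}_{\mathpzc{D}}}$, obtain the first from $R$ preserving exact sequences with first term in $\mathfrak{R}_{\mathpzc{E}}$, and obtain the second from the adjunction isomorphism $\mathbf{Hom}(L_{\bullet},R(R_{\bullet}))\cong\mathbf{Hom}(L(L_{\bullet}),R_{\bullet})$ together with $L(\widetilde{\mathfrak{L}_{\mathpzc{D}}})\subseteq\widetilde{\mathfrak{L}_{\mathpzc{E}}}$. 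Your write-up merely makes explicit the splicing of the cycle sequences that the paper leaves implicit.
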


\begin{proof}
By Corollary \ref{cor:adjcotors} $R$ preserves exact sequences
$$0\rightarrow A_{\bullet}\rightarrow B_{\bullet}\rightarrow C_{\bullet}\rightarrow 0$$
where each $A_{n}\in\mathfrak{R}_{\mathpzc{E}}$, and $L$ preserves exact sequences
$$0\rightarrow A_{\bullet}\rightarrow B_{\bullet}\rightarrow C_{\bullet}\rightarrow 0$$
with each $C_{n}\in\mathfrak{L}_{\mathpzc{D}}$. Moreover, since $R$ preserves exact sequences whose first term is in $\mathfrak{R}_{\mathpzc{E}}$ to exact sequences, it sends $\tilde{\mathfrak{R}_{\mathpzc{E}}}$ to $\tilde{\mathfrak{R}_{\mathpzc{D}}}$. Now let $Y\in\widetilde{dg\mathfrak{R}_{\mathpzc{E}}}$, and $X\in\tilde{\mathfrak{L}_{\mathpzc{D}}}$. Then $L(X)\in\tilde{\mathfrak{L}}_{\mathpzc{E}}$. We thus have
$$\mathbf{Hom}(X,R(Y))\cong\mathbf{Hom}(L(X),Y)$$
is acyclic. Thus $R(Y)\in\widetilde{dg\mathfrak{R}_{\mathpzc{D}}}$
\end{proof}

\subsubsection{The Dold-Kan Equivalences}\label{subsubsec:DoldKan}

As explained in \cite{kelly2016homotopy} Section 6.4.1 and following \cite{castiglioni2004cosimplicial}, for any weakly idempotent complete exact category $\mathpzc{E}$ there is a Dold-Kan equivalence
$$\adj{\Gamma}{\mathrm{Ch}_{\ge0}(\mathpzc{E})}{\mathrm{s}\mathpzc{E}}{N}$$

$\mathpzc{E}^{op}$ is also weakly idempotent complete, so we also get a Dold-Kan equivalence
$$\adj{\Gamma_{c}}{\mathrm{Ch}_{\le0}(\mathpzc{E})}{\mathrm{cs}\mathpzc{E}}{N_{c}}$$

If $\mathpzc{E}$ is locally presentable then there is an alternative Dold-Kan correspondence

$$\adj{Q}{\mathrm{Ch}_{\le0}(\mathpzc{E})}{\mathrm{cs}\mathpzc{E}}{H}$$

As explained in \cite{kelly2016homotopy} $Q$ is constructed as follows. For $n\ge0$ let $V^{n}$ denote the abelian group given by the kernel of the canonical map $\bigoplus_{i=0}^{n}\mathbb{Z}\rightarrow\mathbb{Z}$. Let $\{e_{i}:0\le i\le n\}$ be the standard basis of $\bigoplus_{i=0}^{n}\mathbb{Z}$ so that $\{v_{i}=e_{i}-e_{0}:0\le i\le n\}$ is a basis of $V^{n}$. $V^{\bullet}$ may be regarded as a cosimplicial abelian group, where for $\alpha:[n]\rightarrow [m]$ we define $\alpha(v_{i})=v_{\alpha(i)}-v_{\alpha(0)}$ and extend by linearity. Let $T(V)$ denote the tensor algebra in $\mathrm{cs}\mathrm{Ab}$ on $V^{\bullet}$, with multiplication denoted by $\mu$. Then $Q:\mathrm{Ch}_{\le0}(\mathpzc{E})\rightarrow\mathrm{cs}\mathpzc{E}$ is defined by sending $(A_{\bullet},d)$ to
$$(QA)_{\bullet}=\bigoplus_{i=0}^{\infty}(A_{-n}\otimes T^{i}(\mathbb{Z}^{n}))$$
where we use the natural tensoring of $\mathpzc{E}$ over $\mathrm{Ab}$. For any map $\alpha:[n]\rightarrow[m]$ of finite sets define $(QA)_{n}\rightarrow (QA)_{m}$ by
$$\mathrm{Id}_{A}\otimes\alpha+d\otimes\mu(v_{\alpha(0)\otimes\alpha})$$
This defines an object of $\mathrm{cs}\mathpzc{E}$. This functor clearly commutes with colimits, so has a right adjoint $H$.

\begin{rem}
 This right adjoint exists under more general circumstances, namely that $\mathpzc{E}$ is cocomplete and has a generating set. An identical proof to \cite{kelly2016homotopy} Proposition 4.4.87 works in this generality.
 \end{rem}

Exactly as in \cite{castiglioni2004cosimplicial} Theorem 4.2 i) there is a natural homotopy equivalence $\hat{p}:Q\rightarrow\Gamma_{c}$ $\hat{p}:Q\rightarrow\Gamma_{c}$

\begin{lem}
Let $\mathpzc{E}$ be a locally presentable exact category with a $dg$-compatible cotorsion pair $(\mathfrak{L},\mathfrak{R})$ such that $\mathfrak{L}$ is presentably deconstructible in itself. Equip $\mathrm{Ch}_{\le0}(\mathpzc{E})$ with the transferred model structure. The adjunction
$$\adj{Q}{\mathrm{Ch}_{\le0}(\mathpzc{E})}{\textit{cs}\mathpzc{E}}{H}$$
is a Quillen equivalence.
\end{lem}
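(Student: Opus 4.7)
The strategy is to exploit the natural cosimplicial homotopy equivalence $\hat p : Q \Rightarrow \Gamma_c$ together with the Dold--Kan equivalence $\Gamma_c \dashv N_c$ to reduce the Quillen equivalence of $(Q,H)$ to the classical one.

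By construction of the transferred model structure, $H$ creates weak equivalences and fibrations, so $(Q,H)$ is a Quillen adjunction tautologically. It therefore suffices to verify that for every cofibrant $X \in \mathrm{Ch}_{\le 0}(\mathpzc{E})$ the unit $\eta_X : X \to HQX$ is a weak equivalence, which via the transfer amounts to asking that $H\eta_X$ be a quasi-isomorphism in $\mathrm{Ch}_{\le 0}(\mathpzc{E})$.

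Since $\hat p_X : QX \to \Gamma_c X$ is a cosimplicial homotopy equivalence in the sense of \cite{castiglioni2004cosimplicial}, and since cosimplicial homotopy equivalences are preserved by any additive functor out of $\mathrm{cs}\mathpzc{E}$, both $H\hat p_X : HQX \to H\Gamma_c X$ and $N_c\hat p_X : N_c QX \to N_c\Gamma_c X \cong X$ are chain homotopy equivalences, hence weak equivalences in $\mathrm{Ch}_{\le 0}(\mathpzc{E})$. Writing the adjoint of $\hat p_X$ under $Q \dashv H$ as $\alpha_X = H\hat p_X \circ \eta_X : X \to H\Gamma_c X$ and invoking 2-out-of-3, we are reduced to proving that $\alpha_X$ is a weak equivalence.

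For the latter, form the mate $\nu : N_c \Rightarrow H$ of $\hat p$ under the two adjunctions. A direct manipulation using the triangle identity of $\Gamma_c \dashv N_c$ shows that $\alpha_X = \nu_{\Gamma_c X} \circ \eta^{\Gamma_c}_X$, where $\eta^{\Gamma_c}_X : X \to N_c \Gamma_c X$ is the Dold--Kan unit isomorphism. It therefore suffices to show that $\nu_C : N_c C \to H C$ is a weak equivalence for every $C \in \mathrm{cs}\mathpzc{E}$. By the enriched adjunctions, for each $A \in \mathrm{Ch}_{\le 0}(\mathpzc{E})$ the induced map of internal homs $\textbf{Hom}(A, \nu_C) : \textbf{Hom}(A, N_c C) \to \textbf{Hom}(A, H C)$ is identified with precomposition by $\hat p_A$ on $\textbf{Hom}(\Gamma_c A, C) \to \textbf{Hom}(QA, C)$, which is a chain homotopy equivalence since $\hat p_A$ is. Running $A$ over a set of generators for $\mathrm{Ch}_{\le 0}(\mathpzc{E})$ provided by Proposition \ref{prop:cofibrantgenerators} and exploiting the simplicial enrichment of Proposition \ref{prop:simplicialmod} then forces $\nu_C$ to be a quasi-isomorphism. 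The principal obstacle is this last enriched Yoneda step: pinning down the enrichment of the two adjunctions and verifying that testing against the generators suffices requires some bookkeeping, but can be streamlined by constructing an explicit chain homotopy inverse to $\nu_{\Gamma_c X}$ from the tensor-algebra combinatorics of $V^\bullet$ underlying $Q$.
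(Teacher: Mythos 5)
The central problem is your opening claim that $(Q,H)$ is ``tautologically'' a Quillen adjunction because ``$H$ creates weak equivalences and fibrations.'' It does not: the phrase ``transferred model structure'' in the statement refers to the model structure on $\mathrm{Ch}_{\le 0}(\mathpzc{E})$ (transferred from $\mathrm{Ch}(\mathpzc{E})$ along the truncation adjunction, cf.\ Corollary \ref{cor:modelexistencedec}), while the model structure on $\mathrm{cs}\mathpzc{E}$ is the one transported along the Dold--Kan equivalence $\Gamma_{c}\dashv N_{c}$, so that a map $g$ is a cofibration or weak equivalence precisely when $N_{c}(g)$ is. Nothing in that structure is phrased in terms of $H$, so establishing the Quillen adjunction is exactly the nontrivial content of the lemma, not a formality. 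The paper's proof is devoted to it: one first notes that $Q$ preserves all weak equivalences because $\hat{p}\colon Q\rightarrow\Gamma_{c}$ is a natural homotopy equivalence and $\Gamma_{c}$ preserves them by definition, and then checks that $N_{c}Q(f)$ is a cofibration for each generating cofibration $f$, using the explicit formula $(N_{c}QX)_{-n}\cong\bigoplus_{r}X_{r}\otimes\mathbb{Z}[Sur_{r,n}]$ from \cite{castiglioni2004cosimplicial} to see that $N_{c}Q$ sends degreewise admissible monomorphisms of bounded below complexes with cokernel degreewise in $\mathfrak{L}$ to maps of the same form. None of this appears in your proposal.

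Conversely, the part you labour over --- the derived unit --- is the easy half. Once $Q$ is known to be left Quillen, the natural homotopy equivalence $\hat{p}$ identifies the left derived functor of $Q$ with that of $\Gamma_{c}$, and $\Gamma_{c}$ is an equivalence of categories inducing an equivalence of homotopy categories by the very construction of the model structure on $\mathrm{cs}\mathpzc{E}$; hence $Q$ is a Quillen equivalence. Your route through the mate $\nu\colon N_{c}\Rightarrow H$ and an enriched Yoneda argument against generators is therefore unnecessary, and moreover is not actually completed (you concede the ``enriched Yoneda step'' is an obstacle). It also leans on the unjustified assertion that $H$ sends cosimplicial homotopy equivalences to chain homotopy equivalences: $H$ is a right adjoint produced abstractly by the adjoint functor theorem, not an additive functor applied levelwise, so preservation of (co)simplicial homotopies by $H$ would itself require proof.
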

\begin{proof}
This was shown for the projective model structure in \cite{kelly2016homotopy} Lemma 4.4.87. The proof here is very similar, but we repeat it for completeness. As in loc. cit. it suffices to prove that the adjunction is a Quillen adjunction. $\Gamma_{c}$ preserves all equivalences by definition, and $\hat{p}:Q\rightarrow\Gamma_{c}$ is a natural homotopy equivalence. Thus $Q$ preserves all equivalences.  It therefore suffices to prove that it preserves cofibrations. Now by Proposition \ref{prop:cofibrantgenerators} a set of cofibrant generators for $\mathrm{Ch}(\mathpzc{E})$ will be of the form
$$\widetilde{I}=\{0\rightarrow D^{n}(G)\}_{G\in\mathcal{G},n\in\mathbb{Z}}\cup\{S^{n-1}(G)\rightarrow D^{n}(G)\}_{G\in\mathcal{G},n\in\mathbb{Z}}\cup\{S^{n}(k_{i}):S^{n}(Y_{i})\rightarrow S^{n}(Z_{i})\}_{i\in\mathcal{I},n\in\mathbb{Z}}$$
for some sets $\mathcal{G}$ and $\mathcal{I}$. Thus a set of cofibrant generators for $\mathrm{Ch}_{\le0}(\mathpzc{E})$ will be of the form
$$\{0\rightarrow D^{n}(G)\}_{G\in\mathcal{G},n\in\mathbb{Z}_{\ge0}}\cup\{S^{n-1}(G)\rightarrow D^{n}(G)\}_{G\in\mathcal{G},n\in\mathbb{Z}_{\ge0}}\cup$$
$$\cup\{S^{n}(k_{i}):S^{n}(Y_{i})\rightarrow S^{n}(Z_{i})\}_{i\in\mathcal{I},n\in\mathbb{Z}_{\ge0}}\cup\{S^{0}(G)\rightarrow 0\}_{G\in\mathcal{G}}$$
By definition of the class of cofibrations for the model structure on $\mathbf{cs}\mathpzc{E}$ it suffices to show that $N_{c}Q(f)$ is a cofibration for any generating cofibration $f:X\rightarrow Y$. By the proof of Theorem 4.2 in \cite{castiglioni2004cosimplicial}, for $X$ an object of $\mathrm{Ch}_{\le0}(\mathpzc{E})$, $(N_{c}QX)_{-n}\cong\bigoplus_{-r=n}^{\infty}X_{r}\otimes\mathbb{Z}[Sur_{r,n}]$, where $Sur_{r,n}$ is the set of surjections from the set with $r$ elements to the set with $n$ elements. In particular, if $X$ is bounded below then so is $N_{c}QX$. First consider a cofibration of the form $S^{0}(G)\rightarrow 0$. $(N_{c}QS^{0}(G))_{m}=0$ for $m<0$, so $N_{c}QS^{0}(G)\cong S^{0}(G)$, and $N_{c}Q(S^{0}(G)\rightarrow 0)\cong S^{0}(G)\rightarrow 0$ is a cofibration. All other generating cofibrations are degrree-wise admissible monmorphisms $X\rightarrow Y$ of bounded below complexes with cokernel degree-wise in $\mathfrak{L}$. Both $N_{c}$ and $Q$ commute with all colimits, so $N_{c}Q(X)\rightarrow N_{c}Q(Y)$ is a degree-wise admissiblee monomorphism of bounded below complexes with  cokernel degree-wise in $\mathfrak{L}$. Thus it is a cofibration, as required.
\end{proof}

\section{Model Structures on Accessible Exact Categories}\label{ref:modelstructuresonacc}

 \subsection{Existence Results in Accessible Categories}
 
 In this section we apply the general existence result, Theorem \ref{thm:amendingmodel} to model structures on locally presentable exact categories.
 
 \subsubsection{Existence of Model Structures on Complexes}
 
 Let us begin with the main result of interest, namely existence theorems for model structures on complexes, which we will apply later to the flat model structure.
 
 \begin{thm}\label{thm:existencemodelpresentablecomplex}
Let $\mathpzc{E}$ be a purely $\lambda$-accessible exact category in which $\lambda$-pure monomorphisms are admissible. Let $\mathfrak{L}$ be a class of objects in $\mathpzc{E}$ which is strongly $\lambda$-pure subobject stable, is closed under transfinite extensions by admissible monomorphisms, and contains a generator. Suppose that $\mathpzc{E}$ is weakly $\mathbf{AdMon}_{\mathfrak{L}}$-elementary. Then
\begin{enumerate}
\item
$(\mathfrak{L},\mathfrak{R})$ is $dg_{\ge0}$-compatible.
\item
$(\mathfrak{L},\mathfrak{R})$ is $dg$-compatible whenever $\tilde{\mathfrak{L}}=\widetilde{dg\mathfrak{L}}\cap\mathfrak{W}$, in particular when $\mathfrak{L}$ is hereditary.
\item
$(\mathfrak{L},\mathfrak{R})$ is $dg_{\le0}$-compatible  whenever $\tilde{\mathfrak{L}}=\widetilde{dg\mathfrak{L}}\cap\mathfrak{W}$, in particular when $\mathfrak{L}$ is hereditary.

\end{enumerate}
\end{thm}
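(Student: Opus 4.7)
By Corollary~\ref{cor:cotorsaccess} the hypotheses already guarantee that $(\mathfrak{L},\mathfrak{R})$ is a functorially complete cotorsion pair on $\mathpzc{E}$, so the task is to promote it to a compatible model structure on the relevant category of chain complexes. I plan to invoke Corollary~\ref{cor:modelexistencedec}, which reduces part (1) to deconstructibility of $\tilde{\mathfrak{L}}$ in $\mathrm{Ch}_{\ge 0}(\mathpzc{E})$, and parts (2)--(3) to deconstructibility of $\widetilde{dg\mathfrak{L}}\cap\mathfrak{W}$ in $\mathrm{Ch}(\mathpzc{E})$; the identification $\tilde{\mathfrak{L}}=\widetilde{dg\mathfrak{L}}\cap\mathfrak{W}$ required by the corollary is exactly the auxiliary hypothesis in (2)--(3), and is automatic in the hereditary case via induction on the short exact sequences $0\to Z_{n}X\to X_{n}\to Z_{n-1}X\to 0$ of an acyclic $\widetilde{dg\mathfrak{L}}$-complex, using the kernel-closure of $\mathfrak{L}$.

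\textbf{Accessibility of $\mathrm{Ch}_{*}(\mathpzc{E})$.} I first observe that, with the degreewise exact structure, $\mathrm{Ch}_{*}(\mathpzc{E})$ is itself purely $\lambda$-accessible: the objects $S^{n}(G)$ and $D^{n}(G)$ with $G$ ranging over a family of $\lambda$-presentable generators of $\mathpzc{E}$ form a generating family of $\lambda$-presentable objects, and because these test objects are concentrated in at most two degrees, $\lambda$-pure monomorphisms of complexes coincide with degreewise $\lambda$-pure monomorphisms, and are in particular admissible. The same observation shows that $\mathrm{Ch}_{*}(\mathpzc{E})$ inherits weak $\mathbf{AdMon}_{\tilde{\mathfrak{L}}}$-elementarity from $\mathpzc{E}$, and that the disks $D^{n}(G)$ with $G$ in a generator of $\mathfrak{L}$ lie in $\tilde{\mathfrak{L}}$ and admissibly generate $\mathrm{Ch}_{*}(\mathpzc{E})$. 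Closure of $\tilde{\mathfrak{L}}$ under transfinite extensions follows because applying $Z_{n}$ to such a colimit yields a transfinite extension of $\mathfrak{L}$-objects, while acyclicity of the colimit is preserved by weak $\mathbf{AdMon}_{\mathfrak{L}}$-elementarity.

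\textbf{Main obstacle: strong pure subobject stability.} The hardest verification for Lemma~\ref{lem:accdec} will be that $\tilde{\mathfrak{L}}$ is strongly $\lambda$-pure subobject stable; I plan to reduce this to the same property for $\widetilde{dg\mathfrak{L}}$ via Proposition~\ref{prop:acyclicintersctstronglypure}. Given an almost $(\widetilde{dg\mathfrak{L}},\lambda)$-pure subcomplex $N\to M$ obtained as the pullback of a $\lambda$-pure monomorphism $A\to B$ in $\widetilde{dg\mathfrak{L}}$ with cokernel in $\widetilde{dg\mathfrak{L}}$ along an admissible epimorphism $h\colon M\to B$, the degreewise purity exhibits each $N_{k}\to M_{k}$ as an almost $(\mathfrak{L},\lambda)$-pure monomorphism, and strong pure subobject stability of $\mathfrak{L}$ then forces $N_{k}\in \mathfrak{L}$. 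To recover the $K$-$\mathfrak{L}$ property of $N$, I note that the vanishing $\mathrm{Ext}^{1}((B/A)_{k},R_{k})=0$ (for any $R_{\bullet}\in\tilde{\mathfrak{R}}$, using $(B/A)_{k}\in\mathfrak{L}$ and $R_{k}\in\mathfrak{R}$) lets us apply $\mathbf{Hom}(-,R_{\bullet})$ to the short exact sequence $0\to N\to M\to B/A\to 0$ and obtain a short exact sequence of abelian-group complexes whose two outer terms are acyclic, forcing $\mathbf{Hom}(N,R_{\bullet})$ to be acyclic as well. With this in hand, Proposition~\ref{prop:acyclicintersctstronglypure} transfers the property from $\widetilde{dg\mathfrak{L}}$ to $\tilde{\mathfrak{L}}$.

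\textbf{Conclusion.} All four hypotheses of Lemma~\ref{lem:accdec} are now verified for $\tilde{\mathfrak{L}}$ in $\mathrm{Ch}_{*}(\mathpzc{E})$, so $\tilde{\mathfrak{L}}$ is presentably deconstructible in itself, and Corollary~\ref{cor:modelexistencedec} delivers each of the three claimed compatibilities. Parts (2) and (3) use the same deconstructibility result for $\mathrm{Ch}(\mathpzc{E})$ combined with the identification $\tilde{\mathfrak{L}}=\widetilde{dg\mathfrak{L}}\cap\mathfrak{W}$ supplied by hypothesis or by hereditariness as indicated above.
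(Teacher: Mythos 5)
Your overall reduction is the paper's: invoke Corollary~\ref{cor:modelexistencedec}, so that everything comes down to presentable deconstructibility of the relevant class of acyclic complexes, which in turn is obtained from strong $\lambda$-pure subobject stability via Lemma~\ref{lem:accdec} and Proposition~\ref{prop:acyclicintersctstronglypure}. Where you diverge is in the crucial verification for parts (2)--(3): you propose to prove strong $\lambda$-pure subobject stability for $\widetilde{dg\mathfrak{L}}$ (components in $\mathfrak{L}$ \emph{plus} the $K$-$\mathfrak{L}$ condition) and then intersect with $\mathfrak{W}$, whereas the paper works directly with $\tilde{\mathfrak{L}}$, characterised as the acyclic complexes with all $Z_{n}X\in\mathfrak{L}$, and checks stability by applying the cycle functors: a $\lambda$-pure monomorphism of acyclic complexes induces $\lambda$-pure monomorphisms on cycles (as in Lemma~\ref{lem:Znsubexact}), and $Z_{n}$ of the fibre-product square defining an almost-pure monomorphism is again a fibre-product square in which $Z_{n}M\rightarrow Z_{n}B$ is an admissible epimorphism (by acyclicity of $M$ and $B$), so $Z_{n}N\in\mathfrak{L}$ follows degreewise from strong stability of $\mathfrak{L}$.

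This difference matters, and your version has a gap. Strong $\lambda$-pure subobject stability is not only a condition on single almost-pure monomorphisms: it also demands stability under pseudo-$(\mathpzc{A},\lambda)$-pure monomorphisms and, in clause (b) of the definition, that for a transfinite sequence of almost-pure subobjects $M_{\alpha}\rightarrow M$ of an \emph{arbitrary} $M$ the colimit quotient $\colim M\big\slash M_{\alpha}$ again lies in the class. Your two-out-of-three argument on $\mathbf{Hom}(-,R_{\bullet})$ deduces the $K$-$\mathfrak{L}$ property of the subobject from acyclicity of $\mathbf{Hom}(M,R_{\bullet})$ and $\mathbf{Hom}(M/N,R_{\bullet})$; at the transfinite stage the ambient $M$ need not be a $K$-$\mathfrak{L}$ complex, and $\mathbf{Hom}(\colim M\big\slash M_{\alpha},R_{\bullet})$ is an inverse limit (along monomorphisms) of acyclic complexes of abelian groups, which need not be acyclic. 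The $K$-condition is a ``maps into $\tilde{\mathfrak{R}}$'' condition and does not obviously survive the colimits built into the definition, which is precisely why the paper tests membership on cycles instead: $Z_{n}$ commutes with the relevant colimits of acyclic complexes, so every clause reduces degreewise to the corresponding clause for $\mathfrak{L}$. Either switch to the cycle-functor argument, or restrict your claim about $\widetilde{dg\mathfrak{L}}$ to exactly those instances of stability that Proposition~\ref{prop:acyclicintersctstronglypure} consumes (where the ambient complex does lie in $\widetilde{dg\mathfrak{L}}\cap\mathfrak{W}$) and supply clause (b) separately. A smaller point: your claim that hereditariness gives $\tilde{\mathfrak{L}}=\widetilde{dg\mathfrak{L}}\cap\mathfrak{W}$ ``by induction'' on the sequences $0\rightarrow Z_{n}X\rightarrow X_{n}\rightarrow Z_{n-1}X\rightarrow 0$ has no base case for unbounded complexes; the standard argument dimension-shifts $\mathrm{Ext}^{1}(Z_{n}X,R)$ along these sequences and then uses the $K$-property, not kernel-closure alone. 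For part (1) your remark that the $K$-condition is automatic for bounded-below complexes (Proposition~\ref{prop:dgbound}) does correctly recover the paper's purely degreewise argument there.
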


\begin{proof}
We use Corollary \ref{cor:modelexistencedec}
\begin{enumerate}
\item
The class $\widetilde{dg\mathfrak{L}}\cap\mathfrak{W}\cap\mathrm{Ch}_{\ge0}(\mathpzc{E})$ is precisely the class of acyclic complexes with components in $\mathfrak{L}$. Since $\mathfrak{L}$ is strongly $\lambda$-pure subobject stable, it follows that  $\widetilde{dg\mathfrak{L}}\cap\mathfrak{W}\cap\mathrm{Ch}_{\ge0}(\mathpzc{E})$  is strongly $\lambda$-pure subobject stable by Proposition \ref{prop:acyclicintersctstronglypure}, and therefore it is presentably deconstructible in itself.
\item
The class $\tilde{\mathfrak{L}}$ consists of acyclic complexes $X$ with each $Z_{n}X\in\mathfrak{L}$. A $\lambda$-pure subobject and quotient of such an object is acyclic. Moreover if $Y\rightarrow X$ is a $\lambda$-pure monomorphism in $\mathrm{Ch}(\mathpzc{E})$, then as in the proof of Lemma \ref{lem:Znsubexact}, $Z_{n}Y\rightarrow Z_{n}X$ is a $\lambda$-pure monomorphism. Thus each $Z_{n}Y$ and each $Z_{n}X\big\slash Z_{n}Y\cong Z_{n}(X\big\slash Y)$ is in $\mathfrak{L}$. Hence $\tilde{\mathfrak{L}}$ is $\lambda$-pure subobject stable. Now consider a fibre product diagram
\begin{displaymath}
\xymatrix{
N\ar[d]^{f}\ar[r]^{g} & A\ar[d]^{i}\\
M\ar[r]^{h} & B
}
\end{displaymath}
where $A,B\in\tilde{\mathfrak{L}}$, $i$ is a $\lambda$-pure monomorphism with cokernel in $\tilde{\mathfrak{L}}$, and $h$ is an admissible epimorphism, and $M\in\tilde{\mathfrak{L}}$. Note that $N$ is acyclic again by Proposition \ref{prop:acyclicintersctstronglypure}. Taking $Z_{n}$ gives another fibre product diagram
\begin{displaymath}
\xymatrix{
Z_{n}N\ar[d]^{f}\ar[r]^{g} & Z_{n}A\ar[d]^{i}\\
Z_{n}M\ar[r]^{h} & Z_{n}B
}
\end{displaymath}
Now $Z_{n}A\rightarrow Z_{n}B$ is a $\lambda$-pure monomorphism.  Moreover since $M$ and $B$ are acyclic, $Z_{n}M\rightarrow Z_{n}B$ is an admissible epimorphism. Thus each $Z_{n}N\in\mathfrak{L}$. Hence $\tilde{\mathfrak{L}}$ is   strongly $\lambda$-pure subobjet stable, and is therefore presentably deconstructible in itself. Note this also proves (3).
\end{enumerate}
\end{proof}
\begin{cor}
Let $\mathpzc{E}$ be a purely $\lambda$-accessible exact category with a generator which is weakly elementary. Then the injective cotorsion pair $(\mathpzc{E},\mathrm{Inj})$ is $dg_{\ge0}$-compatible, $dg$-compatible, and $dg_{\le0}$-compatible.
\end{cor}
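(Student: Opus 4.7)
The plan is to apply Theorem \ref{thm:existencemodelpresentablecomplex} directly with $\mathfrak{L} = \mathpzc{E}$ and $\mathfrak{R} = \mathrm{Inj}$. First, one needs to know that $(\mathpzc{E},\mathrm{Inj})$ is an actual complete cotorsion pair. This is immediate from the existence of enough injectives, which in our setting follows by applying Lemma \ref{lem:accdec} (or its corollary) to the class $\mathpzc{E}$ itself: since $\mathpzc{E}$ is weakly elementary and has a generator, it is of Grothendieck type and so carries enough injectives. The cotorsion pair $(\mathpzc{E}, \mathrm{Inj})$ is then trivially complete.

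Next I would verify the hypotheses of Theorem \ref{thm:existencemodelpresentablecomplex} applied to $\mathfrak{L} = \mathpzc{E}$. Strong $\lambda$-pure subobject stability and closure under transfinite extensions by admissible monomorphisms are both tautological for the class of all objects, and $\mathfrak{L}$ contains a generator by hypothesis. The requirement that $\mathpzc{E}$ be weakly $\mathbf{AdMon}_{\mathfrak{L}}$-elementary collapses to weak $\mathbf{AdMon}$-elementarity, which is a consequence of the assumption that $\mathpzc{E}$ is weakly elementary. This already gives the $dg_{\ge 0}$-compatibility assertion.

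For the $dg$-compatible and $dg_{\le 0}$-compatible parts, Theorem \ref{thm:existencemodelpresentablecomplex} requires the additional identity $\widetilde{\mathfrak{L}} = \widetilde{dg\mathfrak{L}} \cap \mathfrak{W}$, which is implied by hereditarity of the cotorsion pair. The injective cotorsion pair is hereditary because $\mathrm{Ext}^{i}(A,I)=0$ for all $i \ge 1$ whenever $I$ is injective. Equivalently, one checks directly that every chain complex lies in $\widetilde{dg\mathpzc{E}}$: an $\mathrm{Inj}$-complex $B$ has each $Z_{n}B$ injective, so each short exact sequence $0 \to Z_{n}B \to B_{n} \to Z_{n-1}B \to 0$ splits, whence $B \cong \bigoplus_{n} D^{n}(Z_{n-1}B)$ is contractible and $\mathrm{Hom}(X,B)$ is acyclic for any complex $X$. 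Consequently $\widetilde{dg\mathpzc{E}} \cap \mathfrak{W}$ is the class of acyclic complexes, which coincides with $\widetilde{\mathpzc{E}}$.

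None of the steps presents a genuine obstacle; the only mild point is ensuring that the existence-of-enough-injectives input is in place before invoking Theorem \ref{thm:existencemodelpresentablecomplex}, but this is provided by the weakly elementary hypothesis via the Grothendieck-type corollary of Lemma \ref{lem:accdec}.
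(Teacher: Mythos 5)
Your proof is correct and is exactly the argument the paper intends: the corollary is stated as an immediate application of Theorem \ref{thm:existencemodelpresentablecomplex} with $\mathfrak{L}=\mathpzc{E}$ and $\mathfrak{R}=\mathrm{Inj}$, and your verification of the hypotheses — including the completeness input via enough injectives and the observation that every $\mathrm{Inj}$-complex is contractible, so that $\widetilde{dg\mathpzc{E}}$ is all of $\mathrm{Ch}(\mathpzc{E})$ and hence $\widetilde{\mathpzc{E}}=\widetilde{dg\mathpzc{E}}\cap\mathfrak{W}$ — is sound. The only slight imprecision is calling strong $\lambda$-pure subobject stability of the class of all objects ``tautological'': the membership clauses are indeed vacuous, but condition (b) of that definition also demands exactness of certain colimit sequences, which is not automatic and is supplied by the weakly elementary hypothesis via Remark \ref{rem:pushpull}.
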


\subsubsection{The Left Heart of a Locally Presentable Exact Category}


\begin{lem}
Let $\mathpzc{E}$ be an exact category with finite limits, and let $(\mathfrak{L},\mathfrak{R})$ be a cotorsion pair on $\mathpzc{E}$ which is both $dg$-compatible and $dg_{\ge0}$-compatible.
\begin{enumerate}
\item
The inclusion $i:\mathrm{Ch}_{\ge0}(\mathpzc{E})\rightarrow\mathrm{Ch}(\mathpzc{E})$ is a left Quillen coreflection.
\item
The essential image of the functor of homotopy categories $\mathrm{Ho}(\mathrm{Ch}_{\ge0}(\mathpzc{E}))\rightarrow\mathrm{Ho}(\mathrm{Ch}(\mathpzc{E}))$ is the category of complexes $X$ such that $\mathrm{LH}_{n}(X)\cong 0$ for $n<0$.
\end{enumerate}
\end{lem}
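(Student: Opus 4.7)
The plan is to exhibit $i$ as the left adjoint of an adjunction $(i \dashv \tau_{\ge 0})$, where $\tau_{\ge 0}: \mathrm{Ch}(\mathpzc{E}) \to \mathrm{Ch}_{\ge 0}(\mathpzc{E})$ sends $X_\bullet$ to $\ldots \to X_1 \to \mathrm{Ker}(d_0^X) \to 0$; this is well defined since $\mathpzc{E}$ has finite limits, and an elementary check shows it is right adjoint to $i$. First I would verify the Quillen property. If $f: X \to Y$ is a cofibration in $\mathrm{Ch}_{\ge 0}(\mathpzc{E})$, then $\mathrm{coker}(f)$ is a complex bounded below with entries in $\mathfrak{L}$, so by Proposition~\ref{prop:dgbound} it lies in $\widetilde{dg\mathfrak{L}}$ viewed inside $\mathrm{Ch}(\mathpzc{E})$; the same argument, with $\widetilde{\mathfrak{L}}$ in place of $\widetilde{dg\mathfrak{L}}$, handles acyclic cofibrations. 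Hence $i$ is left Quillen.

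Next I would show that the derived unit $X \to \mathbb{R}\tau_{\ge 0}(\mathbb{L}i X)$ is a quasi-isomorphism for every cofibrant $X \in \mathrm{Ch}_{\ge 0}(\mathpzc{E})$, which is equivalent to $\mathbb{L}i$ being fully faithful, i.e. to the derived adjunction being a coreflection of homotopy categories. Choose a fibrant replacement $iX \to Y$ in $\mathrm{Ch}(\mathpzc{E})$, giving $\mathbb{R}\tau_{\ge 0}(iX) \simeq \tau_{\ge 0}(Y)$. Since $iX$ already sits in non-negative degrees, $\tau_{\ge 0}(iX) = iX$, so $iX \in \mathrm{D}_{\ge 0}(\mathpzc{E})$; as $Y$ is quasi-isomorphic to $iX$, also $Y \in \mathrm{D}_{\ge 0}(\mathpzc{E})$, and by the defining property of the left $t$-structure the counit $\epsilon_Y: i\tau_{\ge 0}(Y) \to Y$ is therefore a quasi-isomorphism. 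The commutative triangle
\begin{displaymath}
\xymatrix{
iX \ar[r]^{\sim} \ar[dr] & Y \\
& i\tau_{\ge 0}(Y) \ar[u]_{\epsilon_Y}
}
\end{displaymath}
together with 2-out-of-3 forces $iX \to i\tau_{\ge 0}(Y)$ to be a quasi-isomorphism. Identifying $iX$ with $i\tau_{\ge 0}(iX)$, this is the image under $i$ of the derived unit; since $i$ manifestly reflects quasi-isomorphisms, the derived unit itself is one. This settles (1).

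For (2), having established that $\mathbb{L}i$ is fully faithful with right adjoint $\mathbb{R}\tau_{\ge 0}$, the essential image of $\mathbb{L}i$ consists precisely of those $Z \in \mathrm{Ho}(\mathrm{Ch}(\mathpzc{E}))$ for which the derived counit $\mathbb{L}i\,\mathbb{R}\tau_{\ge 0}(Z) \to Z$ is a quasi-isomorphism. Running the fibrant-replacement argument of the previous paragraph in reverse, this amounts to $Z \in \mathrm{D}_{\ge 0}(\mathpzc{E})$, which by the characterisation of the aisle of the left $t$-structure is the condition $\mathrm{LH}_n(Z) = 0$ for all $n < 0$.

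No serious obstacle is anticipated; the only point requiring a little care is the identification of the point-set truncation functor $\tau_{\ge 0}$ appearing as the right adjoint to $i$ with the $t$-structure truncation of \cite{henrard2021left}, and correspondingly of the aisle $\mathrm{D}_{\ge 0}(\mathpzc{E})$ with the vanishing locus of the $\mathrm{LH}_n$ for $n < 0$. Both are standard but formally distinct from the 1-categorical adjunction with $i$, and making them explicit is what allows the above bookkeeping to go through.
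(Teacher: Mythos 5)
Your proof is correct and follows the same route as the paper's (much terser) argument: exhibit $i\dashv\tau_{\ge0}$, check $i$ is left Quillen via Proposition \ref{prop:dgbound}, show the derived unit is an equivalence, and identify $\tau_{\ge0}$ with the truncation for the left $t$-structure to read off the essential image. The extra care you take with the derived (rather than point-set) unit and with matching the $1$-categorical truncation to the $t$-structure truncation is exactly the detail the paper leaves implicit.
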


\begin{proof}
\begin{enumerate}
\item
Clearly the inclusion $i$ is left Quillen, and the unit
$$X\rightarrow\tau_{\ge0}X$$
is an equivalence. Thus we get a Quillen reflection
$$\adj{i}{\mathrm{Ch}_{\ge0}(\mathpzc{E})}{\mathrm{Ch}(\mathpzc{E})}{\tau_{\ge0}}$$
\item
The second claim follows since $\tau_{\ge0}$ is the truncation functor for the left $t$-structure on $\mathrm{Ho}(\mathrm{Ch}(\mathpzc{E}))$. 
\end{enumerate}
\end{proof}

\begin{lem}
Let $\mathpzc{E}$ be a purely locally $\lambda$-presentable exact category. Suppose further that filtered colimits are exact and commute with kernels in $\mathpzc{E}$. Then $\mathrm{LH}(\mathpzc{E})$ is Grothendieck abelian. In partiuclar it is locally presentable.
\end{lem}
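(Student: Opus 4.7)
The plan is to verify the three defining conditions for a Grothendieck abelian category on $\mathrm{LH}(\mathpzc{E})$: existence of a generator, closure under filtered colimits, and exactness of filtered colimits. Local presentability then follows from the standard fact (Gabriel--Popescu) that every Grothendieck abelian category is locally presentable.

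For the generator, I would use the explicit description of objects of $\mathrm{LH}(\mathpzc{E})$ recalled in the excerpt from \cite{henrard2021left} Corollary 3.10: each $A \in \mathrm{LH}(\mathpzc{E})$ is represented by a complex $[0 \to \mathrm{Ker}(f) \to X \to Y \to 0]$ with $Y$ in degree $0$. The canonical map $\phi(Y) \to A$ is an epimorphism in $\mathrm{LH}(\mathpzc{E})$, as one checks from the description of cokernels. Choosing a generator $G$ of $\mathpzc{E}$ (which exists since $\mathpzc{E}$ is purely locally $\lambda$-presentable) and an admissible epimorphism $G^{(I)} \twoheadrightarrow Y$ in $\mathpzc{E}$, the exact functor $\phi$ yields an epimorphism $\phi(G)^{(I)} \twoheadrightarrow A$. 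Hence $\phi(G)$ generates $\mathrm{LH}(\mathpzc{E})$.

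For filtered colimits and their exactness, I would exploit the fact that by Theorem \ref{thm:existencemodelpresentablecomplex} the injective cotorsion pair makes $\mathrm{Ch}(\mathpzc{E})$ into a combinatorial model category (via Lemma \ref{cofibgen} applied to the cogeneration by a set and local presentability of $\mathpzc{E}$), so $\mathrm{D}(\mathpzc{E})$ admits all small colimits. Exactness of filtered colimits in $\mathpzc{E}$ ensures that term-wise filtered colimits in $\mathrm{Ch}(\mathpzc{E})$ preserve quasi-isomorphisms, so filtered colimits in $\mathrm{D}(\mathpzc{E})$ are computed term-wise. Given a filtered diagram of objects $A^{(i)} \in \mathrm{LH}(\mathpzc{E})$ represented by arrows $f^{(i)}: X^{(i)} \to Y^{(i)}$, the term-wise colimit is the complex $[0 \to \colim_i \mathrm{Ker}(f^{(i)}) \to \colim_i X^{(i)} \to \colim_i Y^{(i)} \to 0]$. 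The hypothesis that filtered colimits commute with kernels gives $\colim_i \mathrm{Ker}(f^{(i)}) \cong \mathrm{Ker}(\colim_i f^{(i)})$, so this complex still has the shape characterizing objects of $\mathrm{LH}(\mathpzc{E})$. Combined with the derived equivalence $\mathrm{D}(\mathpzc{E}) \cong \mathrm{D}(\mathrm{LH}(\mathpzc{E}))$ from \cite{henrard2021left} Theorem 3.12, this shows $\mathrm{LH}(\mathpzc{E})$ is closed under filtered colimits in $\mathrm{D}(\mathpzc{E})$ and inherits them. Exactness of filtered colimits in $\mathrm{LH}(\mathpzc{E})$ then follows: a short exact sequence in $\mathrm{LH}(\mathpzc{E})$ can be modeled by a degreewise exact sequence of such three-term representing complexes, and exactness of filtered colimits in $\mathpzc{E}$ preserves this exactness term-wise, while the kernel-commutation keeps the result in $\mathrm{LH}(\mathpzc{E})$.

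The main obstacle I anticipate is the careful bookkeeping needed to confirm that the term-wise filtered colimit of representing complexes genuinely computes the filtered colimit in $\mathrm{LH}(\mathpzc{E})$ (as opposed to merely in $\mathrm{D}(\mathpzc{E})$ or in $\mathrm{Ch}(\mathpzc{E})$), and the analogous check that every short exact sequence in $\mathrm{LH}(\mathpzc{E})$ admits a degreewise short exact lift of representatives in $\mathrm{Ch}(\mathpzc{E})$ compatible with filtered colimits. Both hinge on the precise interplay between the left $t$-structure and filtered colimits, which is where the hypothesis that filtered colimits both be exact and commute with kernels plays its essential role.
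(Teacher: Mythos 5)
Your proposal is correct and follows essentially the same route as the paper: the generator of $\mathrm{LH}(\mathpzc{E})$ comes from a generator of $\mathpzc{E}$ via $\phi$, and exactness of filtered colimits is reduced to the fact that the two hypotheses together force $\mathrm{LH}_n$ to commute with filtered colimits (the paper phrases this as the derived colimit $\mathbb{L}\colim$ having no higher derived functors, while you phrase it as term-wise colimits of representing complexes remaining in the heart — the same computation). The only cosmetic difference is that the paper gets cocompleteness abstractly from $\mathrm{LH}(\mathpzc{E})$ being reflective in the presentable $(\infty,1)$-category $\mathbf{Ch}_{\ge 0}(\mathpzc{E})$, whereas you verify it by hand on representatives.
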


\begin{proof}
Consider the left heart $\mathrm{LH}(\mathpzc{E})$. Let $G$ be a generator for $\mathpzc{E}$. By \cite{henrard2021left} Corollary 3.10  $G$ is also a generator for $\mathrm{LH}(\mathpzc{E})$. It remains to prove that filtered colimits in $\mathrm{LH}(\mathpzc{E})$ are exact. Equip $\mathrm{Ch}_{\ge0}(\mathpzc{E})$ with the injective model structure, and consider the $(\infty,1)$-category $\mathbf{Ch}_{\ge0}(\mathpzc{E})$ which it presents. $\mathrm{LH}(\mathpzc{E})$ is a reflective subcategory of $\mathbf{Ch}_{\ge0}(\mathpzc{E})$. The left adjoint to the inclusion is the functor $\mathrm{LH}_{0}$. In particular $\mathrm{LH}(\mathpzc{E})$ is cocomplete. Let $\mathcal{I}$ be a filtered category and consider both the category $\mathrm{Fun}(\mathcal{I},\mathrm{Ch}_{\ge0}(\mathrm{LH}(\mathpzc{E})))$, and the functor $\mathrm{colim}:\mathrm{Fun}(\mathcal{I},\mathrm{Ch}_{\ge0}(\mathrm{LH}(\mathpzc{E})))\rightarrow\mathrm{Ch}_{\ge0}(\mathrm{LH}(\mathpzc{E}))$. By taking functorial resolution of objects in $\mathrm{LH}(\mathpzc{E})$ by complexes in $\mathrm{Ch}_{\ge0}(\mathpzc{E})$, we get a functor $\mathbb{L}\colim:\mathrm{Ch}(\mathrm{LH}(\mathpzc{E}))\rightarrow\mathrm{Ch}_{\ge0}(\mathpzc{E})$. Now let $F:\mathcal{I}\rightarrow\mathrm{Ch}(\mathpzc{E})$ be a functor. Again by the assumption that filtered colimits are exact in $\mathpzc{E}$ and commute with kernels, we have $\mathrm{LH}_{n}(\colim F)\cong\colim\mathrm{LH}_{n}(F)$. This in turn implies that the derived colimit functor
$$\mathbb{L}\colim:\mathrm{Fun}(\mathcal{I},\mathrm{Ch}_{\ge0}(\mathrm{LH}(\mathpzc{E})))\rightarrow\mathrm{Ch}_{\ge0}\mathrm{LH}(\mathpzc{E})$$
has no higher derived functors, and in particular, that filtered colimits are exact in $\mathrm{LH}(\mathpzc{E})$.
\end{proof}

\begin{example}
Let $\mathpzc{E}$ be an elementary exact category, i.e. it has a set of tiny ($\aleph_{0}$-compact) projective generators. It is shown in \cite{kelly2021analytic} that the left heart of $\mathpzc{E}$ is equivalent to the free sfited cocompletion $\mathrm{P}_{\Sigma}(\mathcal{P})$ of a small subcategory $\mathcal{P}$ of compact projective generators of $\mathpzc{E}$. This is evidently a Grothendieck abelian category. Moreover the inclusion $\mathpzc{E}\rightarrow\mathrm{P}_{\Sigma}(\mathcal{P})$ commutes with filtered colimits. It follows that $\mathpzc{E}$ is locally finitely presentable. Moreover it has exact filtered colimits.
\end{example}

\subsubsection{Existence of Injective Cotorsion Pairs}

Let us now consider injective cotorsion pairs in categories of complexes.
%

\begin{example}
Let $\mathpzc{E}$ be a purely $\lambda$-accessible exact category with a generator which is weakly elementary. The injective model structure on $\mathrm{Ch}(\mathpzc{E})$ is injective, as one would hope.
\end{example}

As another example, we consider algebras over monads on $\mathrm{Ch}(\mathpzc{E})$ and $\mathrm{Ch}_{\ge0}(\mathpzc{E})$, generalising \cite{MR3166360} Proposition 3.11. Let $\mathpzc{E}$ be a purely locally $\lambda$-presentable exact category which is weakly $\mathbf{AdMon}$-elementary. Let $T:\mathrm{Ch}(\mathpzc{E})\rightarrow\mathrm{Ch}(\mathpzc{E})$ be an additive monad  which commutes with colimits. Then by \cite{kelly2016homotopy} there exists a exact structure on ${}_{T}\mathrm{Mod}(\mathpzc{E})$ in which a sequence
$$0\rightarrow X\rightarrow Y\rightarrow Z\rightarrow 0$$
is exact precisely if 
$$0\rightarrow |X|\rightarrow |Y|\rightarrow |Z|\rightarrow 0$$
is an exact sequence $\mathpzc{E}$, where $|-|:{}_{T}\mathrm{Mod}(\mathpzc{E})\rightarrow\mathpzc{E}$ is the forgetful functor. Note that if $\mathpzc{E}$ is an exact category, then this class of exact sequences on ${}_{T}\mathrm{Mod}(\mathpzc{E})$ also defines an exact structure.

\begin{prop}
Let $\mathpzc{E}$ be a purely $\lambda$-accessible exact category with a generator which is weakly elementary. Let $\mathpzc{M}$ be either $\mathrm{Ch}(\mathpzc{E})$ or $\mathrm{Ch}_{\ge0}(\mathpzc{E})$, and let $T$ be a cocontinuous monad in $\mathpzc{M}$. Then there is a combinatorial model structure on ${}_{T}\mathrm{Mod}(\mathpzc{M})$ such that
\begin{enumerate}
\item
the cofibrations are the admissible monomorphisms.
\item
the weak equivalences are the quasi-isomorphisms of underlying complexes.
\end{enumerate}
\end{prop}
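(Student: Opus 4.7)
The plan is to apply the injective cotorsion pair proposition just stated to the category $\mathpzc{E}' \defeq {}_T\mathrm{Mod}(\mathpzc{M})$, taking $\mathpzc{K}$ to be the class of $T$-modules whose underlying complex is acyclic. The resulting injective cotorsion pair will yield a Hovey triple $(\mathrm{all},\mathpzc{K},\mathpzc{K}^{\perp})$ and, via Theorem \ref{weakmon}, the desired compatible model structure.

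First I would verify that $\mathpzc{E}'$ is a purely $\lambda'$-accessible exact category, weakly elementary, with a generator. Since $T$ is cocontinuous, the forgetful functor $|-|\colon\mathpzc{E}'\to\mathpzc{M}$ creates all colimits, so standard accessibility transfer theorems make $\mathpzc{E}'$ locally $\lambda'$-presentable for some $\lambda'\ge\lambda$; a $\lambda'$-pure monomorphism in $\mathpzc{E}'$ has underlying a $\lambda'$-pure monomorphism in $\mathpzc{M}$, which is admissible there and thus in $\mathpzc{E}'$. Weak elementariness descends because acyclicity and the relevant colimits are detected under $|-|$. A generator of $\mathpzc{E}'$ lying in $\mathpzc{K}$ is given by $G' \defeq T(\bigoplus_{n} D^n(G))$: additivity of $T$ preserves the contracting homotopy on each $D^n(G)$, so $|G'|$ is contractible and hence acyclic; and $G'$ generates $\mathpzc{E}'$ because $\bigoplus_n D^n(G)$ generates $\mathpzc{M}$ and $T$ is the left adjoint of $|-|$.

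I then check the hypotheses on $\mathpzc{K}$ required by the injective cotorsion pair proposition. Thickness, transfinite-extension closure, and strong $\lambda'$-pure subobject stability of $\mathpzc{K}$ all descend from the corresponding properties of acyclic complexes in $\mathpzc{M}$, using Proposition \ref{prop:acyclicintersctstronglypure} and the fact that $|-|$ preserves and reflects admissibility, $\lambda'$-pure monomorphisms, and the relevant colimits. An acyclic generator was exhibited above. The main obstacle, which is what distinguishes this argument from the abelian-category case of the cited result, is the condition that $\mathpzc{K}$ contains all injectives of $\mathpzc{E}'$: because $T$ is not assumed left exact, the direct adjunction argument that $|I|$ is injective in $\mathpzc{M}$ (and hence acyclic since injective complexes in $\mathpzc{M}$ are contractible) is not available. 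The strategy I would use is to first invoke Corollary \ref{cor:cotorsaccess} with the already-verified hypotheses to obtain the complete cotorsion pair $(\mathpzc{K},\mathpzc{K}^{\perp})$, then argue that for an injective $I$ of $\mathpzc{E}'$ the right completion $0\to I\to R\to L\to 0$ with $R\in\mathpzc{K}^{\perp}$ and $L\in\mathpzc{K}$ splits by injectivity of $I$, exhibiting $I$ as a retract of $R$ and placing $I\in\mathpzc{K}^{\perp}$; combining this with the left completion $0\to K\to A\to I\to 0$ with $A\in\mathpzc{K}$ and $K\in\mathpzc{K}^{\perp}$, together with hereditariness of $(\mathpzc{K},\mathpzc{K}^{\perp})$ (automatic from thickness of $\mathpzc{K}$) and a diagram chase on the long $\mathrm{Ext}$-sequence, one concludes $I$ is a retract of $A\in\mathpzc{K}$, hence $I\in\mathpzc{K}$.

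Granting this, the injective cotorsion pair proposition supplies the Hovey triple $(\mathrm{all},\mathpzc{K},\mathpzc{K}^{\perp})$ and Theorem \ref{weakmon} delivers the associated compatible model structure. Its cofibrations are by construction exactly the admissible monomorphisms of $\mathpzc{E}'$, and its weak equivalences are the compositions of admissible monos with cokernel in $\mathpzc{K}$ with admissible epis with kernel in $\mathpzc{K}$; by the long exact sequence on $\mathrm{LH}_\bullet$ of underlying complexes together with Lemma \ref{lem:Znsubexact}, this class coincides exactly with the quasi-isomorphisms of underlying complexes. Combinatoriality follows from Lemma \ref{cofibgen}, using local presentability of $\mathpzc{E}'$ and the smallness of the generating morphisms furnished by Lemma \ref{lem:accdec}.
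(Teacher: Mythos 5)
Your overall strategy (realise the model structure via the Hovey triple $(\mathrm{All},\mathpzc{K},\mathpzc{K}^{\perp})$ for $\mathpzc{K}$ the $T$-modules with acyclic underlying complex) is reasonable for $\mathpzc{M}=\mathrm{Ch}(\mathpzc{E})$, and your transfer of accessibility, exactness and pure-subobject stability along the forgetful functor is fine. But there are three genuine gaps. First, the claimed acyclic generator: a contracting homotopy on $D^{n}(G)$ is a degree $+1$ map, not a morphism of $\mathrm{Ch}(\mathpzc{E})$, so an additive cocontinuous endofunctor of $\mathrm{Ch}(\mathpzc{E})$ cannot be applied to it, and $T$ need not preserve contractibility or acyclicity. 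For instance $T(X)=X\oplus S^{0}(X_{5})$ (with the fold multiplication) is a cocontinuous additive monad for which $T(D^{5}(G))\cong D^{5}(G)\oplus S^{0}(G)$ is not acyclic. So $T(\bigoplus_{n}D^{n}(G))$ need not lie in $\mathpzc{K}$, and the generator hypothesis of Corollary \ref{cor:cotorsaccess} is not verified by your construction. Second, the case $\mathpzc{M}=\mathrm{Ch}_{\ge0}(\mathpzc{E})$ cannot be handled by this route at all: if $W\in\mathrm{Ch}_{\ge0}(\mathpzc{E})$ is acyclic then $d_{1}\colon W_{1}\rightarrow W_{0}$ is an admissible epimorphism, so any chain map $f\colon W\rightarrow S^{0}(A)$ satisfies $f_{0}d_{1}=0$ and hence $f_{0}=0$; thus no acyclic object admits an admissible epimorphism onto $S^{0}(A)$ for $A\neq0$, the acyclics do not generate, $(\mathpzc{K},\mathpzc{K}^{\perp})$ is not complete, and no Hovey triple of the form $(\mathrm{All},\mathpzc{K},\mathfrak{F})$ exists on ${}_{T}\mathrm{Mod}(\mathrm{Ch}_{\ge0}(\mathpzc{E}))$. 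That case needs a different mechanism, e.g.\ Theorem \ref{thm:amendingmodel} applied with $\mathfrak{C}=\mathrm{All}$ (which only requires $\mathpzc{K}$ to be presentably deconstructible in itself, not to generate), or transfer along the truncation adjunction from the unbounded case.

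Third, and most seriously, your argument that injectives of ${}_{T}\mathrm{Mod}(\mathpzc{M})$ lie in $\mathpzc{K}$ is circular. Splitting the special precover sequence $0\rightarrow K\rightarrow A\rightarrow I\rightarrow0$ with $K\in\mathpzc{K}^{\perp}$ requires $\mathrm{Ext}^{1}(I,K)=0$, which is precisely the statement $I\in{}^{\perp}(\mathpzc{K}^{\perp})=\mathpzc{K}$ you are trying to prove; knowing $I\in\mathpzc{K}^{\perp}$ (which your first splitting argument does correctly establish) and hereditariness gives you no control over $\mathrm{Ext}^{1}(I,-)$. What is actually needed is a construction placing every $T$-module $M$ as an admissible subobject of a $T$-module with acyclic underlying complex; injectivity of $I$ then splits that embedding and closure of $\mathpzc{K}$ under retracts finishes the argument. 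In $\mathrm{Ch}(\mathpzc{E})$ one has the degreewise split monomorphism $|M|\rightarrow\prod_{n}D^{n+1}(|M|_{n})$ into a contractible complex, but lifting this to a map of $T$-modules requires additional structure on $T$ (compatibility with the grading and shifts) that a bare cocontinuous additive monad does not provide; this is exactly the point where the non-exactness of $T(-)$ bites, and it is not addressed by your proposal.
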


\begin{cor}
Let $\mathpzc{E}$ be a closed symmetric monoidal purely $\lambda$-accessible exact category which is weakly elementary. Let $\mathpzc{M}$ be either $\mathrm{Ch}(\mathpzc{E})$ or $\mathrm{Ch}_{\ge0}(\mathpzc{E})$. These are both closed symmetric monoidal categories. Let $A\in\mathrm{Alg}_{\mathfrak{Ass}}(\mathpzc{M})$ be an associative $dg$-algebra. Then there is a model structure on the category ${}_{A}\mathrm{Mod}(\mathpzc{M})$ of left $A$-modules such that
\begin{enumerate}
\item
the cofibrations are the admissible monomorphisms.
\item
the weak equivalences are the quasi-isomorphisms of underlying complexes.
\end{enumerate}
\end{cor}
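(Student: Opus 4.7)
The plan is to reduce this Corollary to the immediately preceding Proposition by identifying the category of left $A$-modules with the category of algebras over a suitable cocontinuous additive monad on $\mathpzc{M}$. Given the associative $dg$-algebra $A$, define the endofunctor $T_A\colon\mathpzc{M}\to\mathpzc{M}$ by $T_A(X) \defeq A\otimes X$, with monad structure coming from the multiplication $\mu\colon A\otimes A\to A$ and unit $\eta\colon S^0(k)\to A$ of the algebra. The classical identification ${}_{A}\mathrm{Mod}(\mathpzc{M})\cong {}_{T_A}\mathrm{Mod}(\mathpzc{M})$ is formal and does not use any exactness or smallness properties of $\mathpzc{E}$.

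Next, I would verify the two hypotheses of the previous Proposition for $T_A$. Additivity is immediate since $\otimes$ is an additive bifunctor on the symmetric monoidal exact category $\mathpzc{E}$, hence on $\mathrm{Ch}(\mathpzc{E})$ and $\mathrm{Ch}_{\ge 0}(\mathpzc{E})$. Cocontinuity is a direct consequence of the closedness of the monoidal structure: the internal hom $\underline{\mathrm{Hom}}(A,-)$ is right adjoint to $A\otimes(-)$, so $T_A$ preserves all small colimits that exist in $\mathpzc{M}$. Note that purely $\lambda$-accessibility of $\mathpzc{E}$ together with weak elementarity, and the degreewise construction of limits and colimits in $\mathrm{Ch}(\mathpzc{E})$ and $\mathrm{Ch}_{\ge 0}(\mathpzc{E})$, ensure that $\mathpzc{M}$ itself is a purely $\lambda$-accessible closed symmetric monoidal exact category with a generator which is weakly elementary, so that the hypotheses of the previous Proposition are genuinely satisfied for $\mathpzc{M}$.

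Applying the Proposition to $T=T_A$ then produces a combinatorial model structure on ${}_{A}\mathrm{Mod}(\mathpzc{M})$ whose cofibrations are the admissible monomorphisms (in the exact structure transferred from $\mathpzc{M}$) and whose weak equivalences are the maps whose underlying morphism in $\mathpzc{M}$ is a quasi-isomorphism, which is precisely what is asserted. I do not expect any serious obstacle: the whole content of the Corollary is the observation that an associative $dg$-algebra gives rise, via tensoring, to a cocontinuous additive monad, and that its category of algebras coincides with the category of left modules. The only point that would warrant a sentence of justification in a written-out proof is that the exact structure on ${}_{T_A}\mathrm{Mod}(\mathpzc{M})$ supplied by the Proposition agrees with the standard exact structure on ${}_{A}\mathrm{Mod}(\mathpzc{M})$ in which sequences are exact if and only if they are so after applying the forgetful functor, but this is built into the construction of the transferred exact structure recalled just before the Proposition.
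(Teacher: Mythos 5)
Your proposal is correct and is exactly the argument the paper intends: the Corollary is stated as an immediate consequence of the preceding Proposition, obtained by observing that $A\otimes(-)$ is an additive monad which is cocontinuous because the monoidal structure is closed, and that its category of algebras is ${}_{A}\mathrm{Mod}(\mathpzc{M})$. Your remark about matching the transferred exact structure with the usual one on $A$-modules is the right (and only) point needing a word of justification, and you handle it correctly.
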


We can classify the trivially fibrant objects for model structures induced by injective cotorsion pairs exactly as in \cite{estrada2023k} Theorem 3.4.

\begin{prop}\label{prop:classifyingorth}
Let $\mathpzc{E}$ be a weakly idempotent complete exact category, and let $\mathfrak{C}$ be a class of objects in $\mathpzc{E}$ containing all objects of the form $D^{n}(G)$ for $n\in\mathbb{Z}$ and $G\in\mathpzc{E}$. Then $\mathfrak{C}^{\perp}$ consists of all complexes $X$ such that each $X_{n}$ is injective, and $E\rightarrow X$ is null-homotopic whenever $E\in\mathfrak{C}$. 
\end{prop}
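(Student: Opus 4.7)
The plan is to relate $\mathrm{Ext}^{1}_{\mathrm{Ch}(\mathpzc{E})}(E,X)$-vanishing to the two asserted conditions by exploiting the two kinds of members of $\mathfrak{C}$: the disks $D^{n}(G)$ will force each $X_{n}$ to be injective, and the remaining objects, once this injectivity is known, will give the null-homotopy clause via a standard obstruction-theoretic identification of $\mathrm{Ext}^{1}$.

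First I would establish that for every $G\in\mathpzc{E}$ and $n\in\mathbb{Z}$ there is a natural identification
$$\mathrm{Ext}^{1}_{\mathrm{Ch}(\mathpzc{E})}(D^{n}(G),X)\cong\mathrm{Ext}^{1}_{\mathpzc{E}}(G,X_{n}).$$
This is a direct unpacking of the notion of extension: a degree-wise exact sequence $0\to X\to Y\to D^{n}(G)\to 0$ is forced to satisfy $Y_{k}=X_{k}$ for $k\neq n,n-1$; in degree $n$ it is an extension $0\to X_{n}\to Y_{n}\to G\to 0$ in $\mathpzc{E}$; and the rest of $Y$ (including the extension in degree $n-1$) is reconstructed as a pushout along $d^{X}_{n}$. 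Since $D^{n}(G)\in\mathfrak{C}$ by hypothesis, $X\in\mathfrak{C}^{\perp}$ forces $\mathrm{Ext}^{1}_{\mathpzc{E}}(G,X_{n})=0$ for all $G$ and all $n$, so each $X_{n}$ is injective in $\mathpzc{E}$.

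Next, assuming each $X_{n}$ is injective, any degree-wise exact sequence $0\to X\to Y\to E\to 0$ is degree-wise split, so one may write $Y_{n}\cong X_{n}\oplus E_{n}$ with differential of the form $d^{Y}_{n}(x,e)=(d^{X}_{n}x+\delta_{n}(e),d^{E}_{n}e)$ for a family $\delta_{n}\colon E_{n}\to X_{n-1}$. The identity $d^{Y}\circ d^{Y}=0$ is equivalent to $d^{X}\delta+\delta d^{E}=0$, i.e.\ to $\delta$ being a signed chain map from $E$ into a shift of $X$, and two different choices of degree-wise splittings produce $\delta$'s differing by a chain-homotopy. This yields the standard identification of $\mathrm{Ext}^{1}_{\mathrm{Ch}(\mathpzc{E})}(E,X)$ with the group of homotopy classes of chain maps from $E$ into the appropriate shift of $X$. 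Combined with the first step this immediately gives the description of $\mathfrak{C}^{\perp}$ as stated: the disk part of $\mathfrak{C}$ makes the $X_{n}$ injective, after which the vanishing of $\mathrm{Ext}^{1}(E,X)$ for $E\in\mathfrak{C}$ translates into the null-homotopy condition for chain maps $E\to X$.

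The main technical obstacle is the careful bookkeeping of the shift and sign conventions in the obstruction-theoretic step: one has to check both that $\delta$ is a chain map with the correct sign and that rechoosing the splittings really changes $\delta$ by a chain-homotopy, and then match this identification with the paper's shift convention $(A[1])_{i}=A_{i+1}$ so that the vanishing of the obstruction class becomes the null-homotopy condition for chain maps $E\to X$ as written. Since the class $\{D^{n}(G)\}_{n,G}$ is already closed under the suspension functor $[\pm 1]$, any shift produced by the identification can be absorbed into the indexing and does not affect the final form of the proposition.
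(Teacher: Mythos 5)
Your proposal is correct and follows essentially the same route as the paper: the disks $D^{n}(G)$ force degree-wise injectivity via $\mathrm{Ext}^{1}(D^{n}(G),X)\cong\mathrm{Ext}^{1}(G,X_{n})$, and degree-wise injectivity reduces $\mathrm{Ext}^{1}(E,X)$ to the degree-wise split Ext group, which is the homotopy-classes-of-chain-maps description. The paper merely cites these two isomorphisms without unpacking them, whereas you spell out the splitting/obstruction bookkeeping; the content is the same.
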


\begin{proof}
Let $X\in\mathfrak{C}^{\perp}$. Then for every $G\in\mathpzc{E}$ and every $n\in\mathbb{Z}$ we have $0\cong\mathrm{Ext}^{1}(D^{n}(G),X)\cong\mathrm{Ext}(G,X_{n})$. Thus $X_{n}$ is injective. In particular for any $E\in\mathfrak{C}$ we have $\mathrm{Ext}^{1}_{dw}(E,X)\cong\mathrm{Ext}^{1}(E,X)\cong0$. Conversely suppose that $X$ is degree-wise injective and $E\rightarrow X$ is null-homotopic for any $E\in\mathfrak{C}$. We then have
$$\mathrm{Ext}^{1}(E,X)\cong\mathrm{Ext}_{dw}^{1}(E,X)\cong 0$$
\end{proof}

\subsubsection{Changing the Exact Structure for Complexes}

We have the following consequence of Corollary \ref{cor:changingtheunderlyingexactstructure} for chain complexes.

\begin{cor}\label{cor:underlyingexactstructurecomplexes}
Let $(\mathpzc{E},\mathpzc{Q})$ and $(\mathpzc{E},\mathpzc{Q}')$ be purely $\lambda$-accessible exact categories with the same underlying category $\mathpzc{E}$, and with $\mathpzc{Q}\subset\mathpzc{Q}'$. Let $(\mathfrak{C}',\mathfrak{W}',\mathfrak{F}')$ be a Hovey triple on $\mathrm{Ch}(\mathpzc{E},\mathpzc{Q}')$ with $\mathfrak{W}'$ the class of acyclic complexes in $\mathrm{Ch}(\mathpzc{E},\mathpzc{Q}')$. Suppose that 
\begin{enumerate}
\item
$\mathfrak{C}'$ is strongly $\lambda$-pure subobject stable in $(\mathpzc{E},\mathpzc{Q})$.
\item
 $(\mathrm{Ch}(\mathpzc{E}),\mathpzc{Q})$ is weakly $\mathbf{AdMon}_{\mathfrak{C}'}$-elementary, 
 \item
 $\mathfrak{W}'\cap\mathfrak{C}'$ and $\mathfrak{C}'$ are closed under transfinite extensions in $(\mathrm{Ch}(\mathpzc{E}),\mathpzc{Q})$
 \item
 $\mathfrak{C}'\cap\mathfrak{W}'$ contains a generator for $(\mathrm{Ch}(\mathpzc{E}),\mathpzc{Q})$,
 \item
 $\mathfrak{C}'$ contains a set of generators of $(\mathrm{Ch}(\mathpzc{E}),\mathpzc{Q}')$ of the form $\{S^{n}(G):n\in\mathbb{Z},G\in\mathcal{G}\}$ with $\mathcal{G}$ a set of generators of $(\mathpzc{E},\mathpzc{Q}')$. 
 \end{enumerate}
 Then 
 $$(\mathfrak{C}',\mathfrak{W}',(\mathfrak{C}\cap\mathfrak{W}')^{\perp_{\mathpzc{Q}}})$$
 is a Hovey triple on $\mathrm{Ch}(\mathpzc{E},\mathpzc{Q})$ such that the adjunction
 $$\adj{Id}{\mathrm{Ch}(\mathpzc{E},\mathpzc{Q})}{\mathrm{Ch}(\mathpzc{E},\mathpzc{Q}')}{Id}$$
 is a Quillen equivalence.
\end{cor}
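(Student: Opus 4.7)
The strategy is to apply Corollary~\ref{cor:changingtheunderlyingexactstructure} (i.e.\ Proposition~\ref{prop:changingunderling}) to the category of chain complexes. First I would observe that, since $(\mathpzc{E},\mathpzc{Q})$ and $(\mathpzc{E},\mathpzc{Q}')$ are purely $\lambda$-accessible, the degreewise exact structures on $\mathrm{Ch}(\mathpzc{E},\mathpzc{Q})$ and $\mathrm{Ch}(\mathpzc{E},\mathpzc{Q}')$ are also purely $\lambda$-accessible, and the inclusion of exact structures $\mathrm{Ch}(\mathpzc{E},\mathpzc{Q})\subseteq\mathrm{Ch}(\mathpzc{E},\mathpzc{Q}')$ is induced degreewise by $\mathpzc{Q}\subseteq\mathpzc{Q}'$. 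A key point (which I would record) is that a $\lambda$-pure monomorphism of complexes is degreewise $\lambda$-pure, since $S^{n}(E)$ is $\lambda$-presentable for $\lambda$-presentable $E$.

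Next I would verify the hypotheses of Proposition~\ref{prop:changingunderling} applied to the Waldhausen pair $(\mathfrak{C}',\mathfrak{W}')$: strong $\lambda$-pure subobject stability of $\mathfrak{C}'$ and $\mathfrak{C}'\cap\mathfrak{W}'$ in $\mathrm{Ch}(\mathpzc{E},\mathpzc{Q})$ is precisely assumption (1) combined with Proposition~\ref{prop:acyclicintersctstronglypure}; closure under transfinite extensions is assumption (3); the weak $\mathbf{AdMon}_{\mathfrak{C}'}$-elementarity in $\mathrm{Ch}(\mathpzc{E},\mathpzc{Q})$ is assumption (2); and $\mathfrak{C}'$ contains a generator for $(\mathrm{Ch}(\mathpzc{E}),\mathpzc{Q})$ by assumption (5). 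The remaining condition is $\mathbf{AdMon}_{\mathfrak{C}'}^{\llp}\subseteq\mathcal{W}'$ in $\mathrm{Ch}(\mathpzc{E},\mathpzc{Q})$, which I expect to be the main obstacle and the key use of assumption (5): the generating set $\{S^{n}(G):n\in\mathbb{Z},G\in\mathcal{G}\}\subseteq\mathfrak{C}'$ gives, together with the objects $D^{n}(G)$, admissible monomorphisms $S^{n-1}(G)\hookrightarrow D^{n}(G)\in\mathbf{AdMon}_{\mathfrak{C}'}$, and right lifting against these maps forces a map to be a quasi-isomorphism in $\mathrm{Ch}(\mathpzc{E},\mathpzc{Q}')$ by the argument of \cite{kelly2016homotopy} Corollary 2.6.110 applied to the degreewise exact structure relative to $\mathcal{G}$.

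Applying Proposition~\ref{prop:changingunderling} then produces a left pseudo-compatible model structure on $\mathrm{Ch}(\mathpzc{E},\mathpzc{Q})$ whose cofibrations are $\mathbf{AdMon}_{\mathfrak{C}'}$ and whose class of weak equivalences coincides with $\mathcal{W}'$ (the $\mathpzc{Q}'$-quasi-isomorphisms). Because $\mathfrak{C}'\cap\mathfrak{W}'$ contains a generator of $(\mathrm{Ch}(\mathpzc{E}),\mathpzc{Q})$ by assumption (4), the ``compatible if $\mathfrak{C}'\cap\mathfrak{W}'$ contains a generator'' clause of the proposition upgrades this to a compatible model structure; by Theorem~\ref{weakmon} this corresponds to a Hovey triple. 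The trivial cofibrations are $\mathbf{AdMon}_{\mathfrak{C}'\cap\mathfrak{W}'}$, and the fibrant objects are by definition the objects $X$ with $X\to 0\in\mathbf{AdMon}_{\mathfrak{C}'\cap\mathfrak{W}'}^{\llp}$, which unwinds to $(\mathfrak{C}'\cap\mathfrak{W}')^{\perp_{\mathpzc{Q}}}$ via \cite{vst2012exact} Lemmas 5.14 and 5.15 (as in the proof of Proposition~\ref{prop:condad}). This yields the stated Hovey triple.

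Finally, for the Quillen equivalence through the identity: both model structures have the same class of weak equivalences $\mathcal{W}'$, and every $\mathpzc{Q}$-cofibration is a $\mathpzc{Q}'$-cofibration because $\mathpzc{Q}\subseteq\mathpzc{Q}'$ and the cofibrant classes $\mathfrak{C}'$ coincide. Hence $\mathrm{Id}\colon\mathrm{Ch}(\mathpzc{E},\mathpzc{Q})\to\mathrm{Ch}(\mathpzc{E},\mathpzc{Q}')$ is left Quillen, and since it both preserves and reflects weak equivalences, a cofibrant replacement argument shows that the derived unit and counit are equivalences, so the adjunction is a Quillen equivalence.
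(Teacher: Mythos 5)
Your proposal is correct and follows essentially the same route as the paper: both reduce to Corollary \ref{cor:changingtheunderlyingexactstructure} applied to the Waldhausen pair $(\mathfrak{C}',\mathfrak{W}')$, checking strong $\lambda$-pure subobject stability (with $\mathfrak{C}'\cap\mathfrak{W}'$ handled via Proposition \ref{prop:acyclicintersctstronglypure}), transfinite-extension closure, elementarity, and the generator conditions, and then deduce the Quillen equivalence from the shared weak equivalences. The only cosmetic difference is that you verify $\mathbf{AdMon}_{\mathfrak{C}'}^{\llp}\subseteq\mathcal{W}'$ by lifting against $S^{n-1}(G)\rightarrow D^{n}(G)$ (as in Lemma \ref{lem:dgdeconstruct}) where the paper lifts against $0\rightarrow S^{n}(G)$; your variant is, if anything, the more carefully justified of the two.
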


\begin{proof}
By assumption $(\mathrm{Ch}(\mathpzc{E}),\mathpzc{Q}')$ is equipped with a left pseudo-compatible model structure determined by $(\mathfrak{C}',\mathfrak{W}')$. $\mathfrak{C}'$ is assumed strongly $\lambda$-pure subobject stable, and the class of $\mathfrak{C}'\cap\mathfrak{W}'$ is then autormatically is automatically strongly $\lambda$-pure subobject stable. By assumption $\mathfrak{W}'\cap\mathfrak{C}'$ and $\mathfrak{C}'$ are closed under transfinite extensions in $(\mathrm{Ch}(\mathpzc{E}),\mathpzc{Q})$, and  $(\mathrm{Ch}(\mathpzc{E}),\mathpzc{Q})$ is weakly $\mathbf{AdMon}_{\mathfrak{C}'}$-elementary. Now $\mathfrak{C}'$ contains a set of generators of the form $\{S^{n}(G):n\in\mathbb{Z},G\in\mathcal{G}\}$. Any map with the right lifting property against all maps of the form $0\rightarrow S^{n}(G)$ must be in $\mathfrak{W}'$. Let $\mathbf{AdMon}'$ denote the class of admissible monomorphisms in $\mathrm{Ch}(\mathpzc{E},\mathpzc{Q}')$. Let $\mathcal{W}'$ denote the class of weak equivalences in $\mathrm{Ch}(\mathpzc{E},\mathpzc{Q}')$. Since $\mathbf{AdMon}'_{\mathfrak{W}'}=\mathbf{AdMon}'\cap\mathcal{W}'$ we clearly have $\mathbf{AdMon}_{\mathfrak{W}'}=\mathbf{AdMon}\cap\mathcal{W}'$ in $\mathrm{Ch}(\mathpzc{E},\mathpzc{Q})$ as well. Thus there is a model structure on $(\mathrm{Ch}(\mathpzc{E}),\mathpzc{Q})$ whose cofibrant objects are $\mathfrak{C}'$ and whose weak equivalences are $\mathcal{W}'$. Finally, since $\mathfrak{C}'\cap\mathfrak{W}'$ contains a generator for $(\mathrm{Ch}(\mathpzc{E}),\mathpzc{Q})$ it is a compatible model structure. 
\end{proof}

We also have the following consequence of Corollary \ref{cor:goingup}.

\begin{cor}\label{cor:goingupforcomplexes}
Let $\mathpzc{E}$ be purely $\lambda$-accessible exact category, and $\mathpzc{D}$ a purely $\lambda$-accessible reflective thick exact subcategory, such that the inclusion commutes with transfinite compositions. Let $(\mathfrak{L},\mathfrak{R})$ be a cotorsion pair on $\mathpzc{D}$ such that $\mathfrak{L}$ is strongly $\lambda$-pure subobject stable, and $\mathfrak{L}$ contains a generator for $\mathpzc{E}$. Finally suppose that $\mathpzc{D}$ is weakly $\mathbf{AdMon}_{\mathfrak{L}}$-elementary, and $\mathfrak{L}$ is closed under transfinite extensions in $\mathpzc{E}$. Then 
\begin{enumerate}
\item
$(\mathfrak{L},\mathfrak{R})$ is a $dg$-compatible cotorsion pair on $\mathpzc{D}$
\item
$(\mathfrak{L},\mathfrak{R}^{\perp_{\mathpzc{E}}})$ is a $dg$-compatible cotorsion pair on $\mathpzc{E}$
\item
the adjunction
$$\adj{L}{\mathrm{Ch}(\mathpzc{E})}{\mathrm{Ch}(\mathpzc{D})}{i}$$
is a Quillen equivalence for the corresponding model structures. If $\mathpzc{E}$ (and hence $\mathpzc{D}$) has finite limits, then this Quillen equivalence is $t$-exact for the left exact structure. In particular there is an equivalence of categories
$$\mathrm{LH}(\mathpzc{E})\cong\mathrm{LH}(\mathpzc{D})$$
\end{enumerate}
\end{cor}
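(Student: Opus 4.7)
The plan is to extract the model structure on $\mathrm{Ch}(\mathpzc{D})$ from Theorem \ref{thm:existencemodelpresentablecomplex}, extend it to $\mathrm{Ch}(\mathpzc{E})$ by a second application of the same theorem, and then deduce the Quillen equivalence together with $t$-exactness from Corollary \ref{cor:goingup} applied to the chain-complex categories.

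For part (i), I verify the hypotheses of Theorem \ref{thm:existencemodelpresentablecomplex} for $\mathfrak{L}\subset\mathpzc{D}$: strong $\lambda$-pure subobject stability and weak $\mathbf{AdMon}_{\mathfrak{L}}$-elementarity of $\mathpzc{D}$ are assumed; closure of $\mathfrak{L}$ under transfinite extensions in $\mathpzc{D}$ is inherited from the analogous property in $\mathpzc{E}$ because the inclusion $i$ commutes with transfinite compositions, so such colimits computed in $\mathpzc{E}$ already land in $\mathpzc{D}$; and any generator of $\mathpzc{E}$ lying in $\mathfrak{L}\subset\mathpzc{D}$ is automatically a generator of the reflective subcategory $\mathpzc{D}$. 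To upgrade $dg_{\ge 0}$-compatibility to full $dg$-compatibility, I invoke the clause of Theorem \ref{thm:existencemodelpresentablecomplex} requiring $\tilde{\mathfrak{L}}=\widetilde{dg\mathfrak{L}}\cap\mathfrak{W}$, which holds whenever the cotorsion pair is hereditary; this is the case of interest (e.g.\ the flat cotorsion pair).

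For part (ii), I reapply Theorem \ref{thm:existencemodelpresentablecomplex} to $\mathpzc{E}$ with the same class $\mathfrak{L}$. The key point is that $\mathfrak{L}$ remains strongly $\lambda$-pure subobject stable when viewed inside the larger $\mathpzc{E}$: a pseudo-$(\mathfrak{L},\lambda)$-pure monomorphism $M\to N$ in $\mathpzc{E}$ with $N\in\mathfrak{L}\subset\mathpzc{D}$ has its successive almost $(\mathfrak{L},\lambda)$-pure stages fitting into fibre-product diagrams whose right-hand columns lie in $\mathpzc{D}$; thickness of $\mathpzc{D}$ in $\mathpzc{E}$ then forces each $M_{\alpha}\in\mathpzc{D}$, so the entire transfinite diagram can be computed inside $\mathpzc{D}$ and stability transfers. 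Closure under transfinite extensions is given directly, weak $\mathbf{AdMon}_{\mathfrak{L}}$-elementarity of $\mathpzc{E}$ follows from that of $\mathpzc{D}$ since $i$ commutes with transfinite compositions, and $\mathfrak{L}$ contains a generator of $\mathpzc{E}$ by hypothesis. Theorem \ref{thm:existencemodelpresentablecomplex} then produces the cotorsion pair $(\mathfrak{L},\mathfrak{L}^{\perp_{\mathpzc{E}}})$, which coincides with $(\mathfrak{L},\mathfrak{R}^{\perp_{\mathpzc{E}}})$ once one unwinds that $\mathfrak{R}=\mathfrak{L}^{\perp_{\mathpzc{D}}}$ and that orthogonality against $\mathfrak{L}$ inside $\mathpzc{E}$ agrees with orthogonality against $\mathfrak{R}$ after reflecting to $\mathpzc{D}$.

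For part (iii), Corollary \ref{cor:goingup} applies directly to the inclusion $\mathrm{Ch}(\mathpzc{D})\hookrightarrow\mathrm{Ch}(\mathpzc{E})$ with left adjoint $L$ computed levelwise, left Waldhausen pair $(\widetilde{dg\mathfrak{L}},\mathfrak{W}_{\mathpzc{D}})$, and weak equivalences $\mathcal{W}$ taken to be the quasi-isomorphisms in $\mathrm{Ch}(\mathpzc{E})$: the two-out-of-three property on $\mathcal{W}$ and the characterisation of trivial inflations by cokernel are automatic for quasi-isomorphisms, and the presentable self-deconstructibility of $\widetilde{dg\mathfrak{L}}$ and $\tilde{\mathfrak{L}}$ in both $\mathrm{Ch}(\mathpzc{D})$ and $\mathrm{Ch}(\mathpzc{E})$ has been established in parts (i) and (ii). For $t$-exactness, note that if $\mathpzc{E}$ has finite limits then so does the reflective subcategory $\mathpzc{D}$, computed as in $\mathpzc{E}$; consequently $i$ preserves kernels, and $\mathbb{R}i$ is strictly $t$-exact for the left $t$-structure. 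Since the Quillen equivalence exhibits $\mathbb{R}i$ as an equivalence $\mathbf{Ch}(\mathpzc{D})\to\mathbf{Ch}(\mathpzc{E})$ with inverse $\mathbb{L}L$, the inverse is likewise $t$-exact, and restricting to hearts gives $\mathrm{LH}(\mathpzc{E})\cong\mathrm{LH}(\mathpzc{D})$. The main obstacle throughout is the transfer of strong $\lambda$-pure subobject stability from $\mathpzc{D}$ to $\mathpzc{E}$ in part (ii); this is precisely where thickness and reflectivity of $\mathpzc{D}\hookrightarrow\mathpzc{E}$ do essential work, letting one reduce every pseudo-pure decomposition in $\mathpzc{E}$ to one already occurring in $\mathpzc{D}$.
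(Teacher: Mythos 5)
Your parts (i) and (iii) are essentially sound, and part (iii) follows the paper's route: the paper's entire proof is ``everything except $t$-exactness is immediate from Corollary \ref{cor:goingup}'', applied to the inclusion $\mathrm{Ch}(\mathpzc{D})\hookrightarrow\mathrm{Ch}(\mathpzc{E})$, together with the observation that $\mathbb{R}i\cong i$ is $t$-exact and that $\mathbb{L}L$ is computed on $\widetilde{dg\mathfrak{L}}$-resolutions, which already lie in $\mathrm{Ch}(\mathpzc{D})$ and are fixed by $L$; your ``inverse of a $t$-exact equivalence is $t$-exact'' argument is an acceptable variant.

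The gap is in part (ii). You attempt to re-verify that $\mathfrak{L}$ is strongly $\lambda$-pure subobject stable \emph{in} $\mathpzc{E}$ and then re-run Theorem \ref{thm:existencemodelpresentablecomplex} there. Your reduction to $\mathpzc{D}$ via thickness only covers clause (a) of the definition of strong stability, where the ambient object lies in $\mathfrak{L}\subseteq\mathpzc{D}$. Clause (b) quantifies over transfinite sequences of almost $(\mathfrak{L},\lambda)$-pure monomorphisms $M_{\alpha}\rightarrow M$ with $M$ an \emph{arbitrary} object of $\mathpzc{E}$ (and this generality is genuinely used in Lemma \ref{lem:accdec}, where $M$ is merely the codomain of a map in $\mathbf{AdMon}_{\mathfrak{L}}$). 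For such $M\notin\mathpzc{D}$ the exact sequence $0\rightarrow M_{\alpha}\rightarrow M\rightarrow B/A\rightarrow0$ has only its cokernel in $\mathpzc{D}$, so thickness does not force $M_{\alpha}\in\mathpzc{D}$ and the transfinite diagram cannot be computed inside $\mathpzc{D}$. There is also an unaddressed mismatch between $\lambda$-purity tested in $\mathpzc{E}$ and in $\mathpzc{D}$, whose $\lambda$-presentable objects differ. The paper sidesteps all of this: it never transfers purity-stability from $\mathpzc{D}$ to $\mathpzc{E}$, only \emph{presentable deconstructibility} of $\widetilde{dg\mathfrak{L}}$ and $\tilde{\mathfrak{L}}$, which is a statement about transfinite compositions of a fixed set of maps and is therefore preserved verbatim by an inclusion commuting with transfinite compositions --- this is precisely what Corollary \ref{cor:goingup} packages, and applying it to $\mathrm{Ch}(\mathpzc{D})\hookrightarrow\mathrm{Ch}(\mathpzc{E})$ yields part (ii) with no further verification in $\mathpzc{E}$. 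You should replace your part (ii) with this transfer. (A minor further point: your restriction to the hereditary case when upgrading $dg_{\ge0}$- to $dg$-compatibility mirrors the caveat in Theorem \ref{thm:existencemodelpresentablecomplex}, which the corollary's statement silently elides.)
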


\begin{proof}
Everything except $t$-exactness is immediate from Corollary \ref{cor:goingup}. $t$-exactness is also straightforward. Indeed $i\cong\mathbb{R}i$ is clearly $t$-exact. To compute $\mathbb{L}L(E_{\bullet})$, we take a resolution $B_{\bullet}\rightarrow E_{\bullet}$ with $B_{\bullet}\in\widetilde{dg\mathfrak{L}}\subseteq\mathrm{Ch}(\mathpzc{D})$. In particular $\mathbb{L}L(E_{\bullet})\cong L(B_{\bullet})=B_{\bullet}$, and $\mathbb{L}L$ is clearly also $t$-exact. 
\end{proof}

\subsection{Recollements}

In \cite{MR3459032}, following work of \cite{MR3161097} and \cite{MR2681709}, Gillespie provides condition on a triplet of injective cotorsion pairs on an exact category, so that the corresponding triangulated homotopy categories fit into a recollement. In this section we establish some general results concering the existence of recollements of model structures on locally presentable exact categories using techniques from the theory of stable $(\infty,1)$-categories. 

\begin{lem}\label{lemrecollemodel}
Let $\mathpzc{M}$ and $\mathpzc{N}$ be stable, combinatorial, model categories and
$$\adj{L}{\mathpzc{M}}{\mathpzc{N}}{R}$$
a Quillen adjunction such that
\begin{enumerate}
\item
$L$ preserves all equivalences.
\item
$R$ is fully faithful
\item
$L$ commutes with products of fibrant objects.
\item
if $\{X_{i}\rightarrow Y_{i}\}_{i\in\mathcal{I}}$ is a collection of weak equivalences in $\mathpzc{N}$ where each $X_{i}$ is the image of a fibrant object in $\mathpzc{M}$ and each $Y_{i}$ is fibrant, then $\prod_{i\in\mathcal{I}}X_{i}\rightarrow\prod_{i\in\mathcal{I}}Y_{i}$ is an equivalence in $\mathpzc{N}$. 
\end{enumerate}
Then there is a recollement 
\begin{displaymath}
\begin{tikzcd}
\mathbf{K}\arrow[r,bend right=25,shift right=0.2ex]\arrow[r,bend left=25,shift left=0.2ex]  &\mathbf{M} \arrow[l,"\perp" {inner sep=0.3ex,rotate=180},
    "\perp"' {inner sep=0.3ex,rotate=180}] 
\arrow[l,"\perp" {inner sep=0.3ex,rotate=180},
    "\perp"' {inner sep=0.3ex,rotate=180}]\arrow[r,bend right=25,shift right=0.2ex]\arrow[r,bend left=25,shift left=0.2ex]  & 
    \mathbf{N} \arrow[l,"\perp" {inner sep=0.3ex,rotate=180},
    "\perp"' {inner sep=0.3ex,rotate=180}]& \\
    \end{tikzcd}
    \end{displaymath}

In particular if $\mathbf{M}$ is locally finitely presentable then $\mathbf{N}$ is compactly assembled  in the sense of \cite{SAG}  Definition 21.1.2.1/ Proposition D.7.3.1.
\end{lem}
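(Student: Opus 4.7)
The plan is to promote the Quillen adjunction $L \dashv R$ to an adjunction of presentable stable $(\infty,1)$-categories $L \dashv R\colon \mathbf{M}\rightleftarrows \mathbf{N}$, use conditions (3) and (4) to verify that $L$ preserves $(\infty,1)$-limits, invoke the adjoint functor theorem to produce a left adjoint $L^!$ to $L$, and then assemble the recollement from the resulting chain $L^! \dashv L \dashv R$ together with the kernel of $L$. Since $L$ preserves all weak equivalences by (1), the induced $\infty$-functor $L\colon \mathbf{M}\to\mathbf{N}$ is computed without cofibrant replacement and automatically preserves colimits; since $R$ is fully faithful by (2), $\mathbf{N}$ embeds as a reflective subcategory of $\mathbf{M}$.

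The crux of the argument, and the step where the hypotheses do real work, is preservation of small $(\infty,1)$-limits by $L$. Being a left adjoint in a stable context, $L$ preserves all finite limits, so only products need checking. Given a family $\{M_i\}$ in $\mathbf{M}$ with fibrant representatives, the $1$-categorical product $\prod M_i$ in $\mathpzc{M}$ is fibrant and computes the $(\infty,1)$-product; by (3) its image $L(\prod M_i)$ is the $1$-categorical product $\prod L(M_i)$ in $\mathpzc{N}$; and by (4), fibrantly replacing each $L(M_i)$ by $Y_i$, the map $\prod L(M_i) \to \prod Y_i$ is a weak equivalence in $\mathpzc{N}$, while $\prod Y_i$ models the $(\infty,1)$-product. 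This is the principal obstacle, since $\mathpzc{N}$ has not been assumed right proper enough to guarantee on its own that underived products of fibrants are homotopically meaningful, and (4) is precisely the patch required.

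By the adjoint functor theorem $L$ then admits a left adjoint $L^!\colon \mathbf{N}\to \mathbf{M}$. To see $L^!$ is fully faithful, compute for $N,N' \in \mathbf{N}$
$$\Map_{\mathbf{N}}(LL^!N, N') \simeq \Map_{\mathbf{M}}(L^!N, RN') \simeq \Map_{\mathbf{N}}(N, LRN') \simeq \Map_{\mathbf{N}}(N, N'),$$
the last equivalence using $R$ fully faithful, so Yoneda identifies the unit $N \to LL^!N$ with an equivalence. Setting $\mathbf{K} := \ker(L) \subset \mathbf{M}$, which is a stable subcategory closed under both limits and colimits, the inclusion $i_*$ admits a left adjoint $M \mapsto \mathrm{cofib}(L^!LM \to M)$ and a right adjoint $M \mapsto \mathrm{fib}(M \to RLM)$. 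Together with the fully faithful embeddings $R$, $L^!$, $i_*$ and the adjunctions $L^! \dashv L \dashv R$, this yields the desired recollement.

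For the final claim, the equivalence $LL^! \simeq \mathrm{id}_{\mathbf{N}}$ together with the fact that both $L$ and $L^!$ preserve colimits exhibits $\mathbf{N}$ as a retract of $\mathbf{M}$ in $\mathrm{Pr}^L$, and compactly assembled $(\infty,1)$-categories are closed under $\mathrm{Pr}^L$-retracts. When $\mathpzc{M}$ is locally finitely presentable $\mathbf{M}$ is compactly generated, so the conclusion follows from the retract characterisation of compactly assembled categories in \cite{SAG}.
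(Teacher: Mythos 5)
Your proof is correct and follows essentially the same route as the paper: the heart of both arguments is the verification that $L$ preserves homotopy products using hypotheses (3) and (4) exactly as you do (underived product of fibrant representatives, apply (3), then use (4) to compare with the product of fibrant replacements in $\mathpzc{N}$), after which the recollement is assembled from the adjoint triple $L^{!}\dashv L\dashv R$ with fully faithful outer terms. The only difference is one of explicitness: you construct $i^{*}$, $i^{!}$ and the fully faithfulness of $L^{!}$ by hand and supply the $\mathrm{Pr}^{L}$-retract argument for the compactly assembled claim, whereas the paper delegates these formal steps to references (Barwick's note on recollements and the standard adjoint-triple fact that the leftmost adjoint is fully faithful iff the rightmost one is).
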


\begin{proof}
The first assumption implies that the Quillen adjunction is in fact a Quillen reflection, so that the adjunction of $(\infty,1)$-categories it presents
$$\adj{\mathbf{L}}{\mathbf{M}}{\mathbf{N}}{\mathbf{R}}$$
realises $\mathbf{N}$ as a reflective subcategory of $\mathbf{M}$. Now $\mathbf{L}$ commutes with all colimits. By \cite{barwick2016note} Lemma 5 and \cite{nlab:adjointtriple} Proposition 2.3. it remains to prove that $\mathbf{L}$ commutes with limits. Since everything is stable, it suffices to prove that it commutes with products. The homotopy product of a collection $\{X_{i}\}$ in a general model category $\mathpzc{M}$ is computed as follows. Pick fibrant resolutions $X_{i}\rightarrow G_{i}$. Then $\prod_{i\in\mathcal{I}}G_{i}$ is the homotopy product. Now let $\{X_{i}\}_{i\in\mathcal{I}}$ be a collection of objects of $\mathpzc{M}$, and let $X_{i}\rightarrow G_{i}$ be a fibrant resolution for each $i\in\mathcal{I}$, so that $\prod_{i\in\mathcal{I}}G_{i}$ is the homotopy product. We have $L(\prod_{i\in\mathcal{I}}G_{i})\cong\prod_{i\in\mathcal{I}}L(G_{i})$. We need to show that $\prod_{i\in\mathcal{I}}L(G_{i})$ computes the homotopy product in $\mathpzc{N}$. Pick fibrant resolutions $L(G_{i})\rightarrow F_{i}$. Now $F_{i}\cong L\circ R(F_{i})$ and $R(F_{i})$ is fibrant in $\mathpzc{M}$. Thus by assumption $\prod_{i\in\mathcal{I}}L(G_{i})\rightarrow\prod_{i\in\mathcal{I}}F_{i}$ is an equivalence, as required.
\end{proof}

\begin{thm}\label{thm:recollementgenexact}
Let $\mathpzc{E}$ be a purely locally $\lambda$-presentable exact category with enough injectives. Let $\mathfrak{W}\subset\mathfrak{W}'$ be full subcategories of $\mathpzc{E}$ which are thick, generating, closed under transfinite extensions, contain all injectives of $\mathpzc{E}$,  and are presentably decosntructible in themselves. Suppose that $\mathpzc{E}$ is weakly $\mathbf{AdMon}_{\mathfrak{W}'}$-elementary. Then $(\mathfrak{W},\mathfrak{W}^{\perp})$ and $(\mathfrak{W}',(\mathfrak{W}')^{\perp})$ are injective cotorsion pairs. If the respective model category structures $\mathpzc{E}_{\mathfrak{W}}$ and $\mathpzc{E}_{\mathfrak{W}'}$ induced by the cotorsion pairs are stable, and homotopy products in $\mathpzc{E}_{\mathfrak{W}}$ and $\mathpzc{E}_{\mathfrak{W}'}$ coincide, then there is a recollement.
\begin{displaymath}
\begin{tikzcd}
\mathbf{K}\arrow[r,bend right=25,shift right=0.2ex]\arrow[r,bend left=25,shift left=0.2ex]  &\mathbf{E}_{\mathfrak{W}} \arrow[l,"\perp" {inner sep=0.3ex,rotate=180},
    "\perp"' {inner sep=0.3ex,rotate=180}] 
\arrow[l,"\perp" {inner sep=0.3ex,rotate=180},
    "\perp"' {inner sep=0.3ex,rotate=180}]\arrow[r,bend right=25,shift right=0.2ex]\arrow[r,bend left=25,shift left=0.2ex]  & 
\mathbf{E}_{\mathfrak{W}'} \arrow[l,"\perp" {inner sep=0.3ex,rotate=180},
    "\perp"' {inner sep=0.3ex,rotate=180}]& \\
    \end{tikzcd}
    \end{displaymath}
    where $\mathbf{E}_{\mathfrak{W}}$ (resp. $\mathbf{E}_{\mathfrak{W}'}$) is the $(\infty,1)$-category presented by $\mathpzc{E}_{\mathfrak{W}}$ (resp. $\mathpzc{E}_{\mathfrak{W}'}$).
\end{thm}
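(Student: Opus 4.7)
The plan has two independent pieces, so I will split them accordingly.

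For the first assertion, the strategy is to apply the deconstructibility machinery already established. Both $\mathfrak{W}$ and $\mathfrak{W}'$ satisfy the hypotheses of Corollary \ref{thm:cotorsexist}: they are presentably deconstructible in themselves relative to $\mathbf{AdMon}$, closed under transfinite extensions by admissible monomorphisms, and contain a generator. Thus $(\mathfrak{W},\mathfrak{W}^{\perp})$ and $(\mathfrak{W}',(\mathfrak{W}')^{\perp})$ are functorially complete cotorsion pairs. Thickness of $\mathfrak{W}$ and $\mathfrak{W}'$ is given by hypothesis, so to check injectivity it remains to verify that the intersection of each class with its right orthogonal consists precisely of the injective objects. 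Since all injectives lie in $\mathfrak{W}$ by assumption and trivially lie in $\mathfrak{W}^{\perp}$, we get $\mathrm{Inj}\subseteq\mathfrak{W}\cap\mathfrak{W}^{\perp}$. For the reverse inclusion, take $X\in\mathfrak{W}\cap\mathfrak{W}^{\perp}$ and embed $X\hookrightarrow I$ into an injective with cokernel $Q$; then $I\in\mathfrak{W}$, $X\in\mathfrak{W}$, and thickness of $\mathfrak{W}$ forces $Q\in\mathfrak{W}$. Since $X\in\mathfrak{W}^{\perp}$ we conclude $\mathrm{Ext}^{1}(Q,X)=0$, so the sequence splits and $X$ is a direct summand of an injective, hence injective by weak idempotent completeness of $\mathpzc{E}$. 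The argument for $\mathfrak{W}'$ is identical.

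For the second assertion the strategy is to exhibit the recollement via Lemma \ref{lemrecollemodel} applied to the identity Quillen adjunction
$$\adj{\mathrm{Id}}{\mathpzc{E}_{\mathfrak{W}}}{\mathpzc{E}_{\mathfrak{W}'}}{\mathrm{Id}}.$$
This is indeed a Quillen adjunction: cofibrations coincide on both sides (all admissible monomorphisms, since every object is cofibrant in either injective model structure), and the inclusion $\mathfrak{W}\subseteq\mathfrak{W}'$ shows that trivial cofibrations are preserved. The same inclusion shows that the left identity preserves all weak equivalences, giving hypothesis $(1)$ of Lemma \ref{lemrecollemodel}. Hypothesis $(2)$, full faithfulness of $\mathbf{R}$, reduces at the level of presented $(\infty,1)$-categories to showing that the counit $\mathbf{L}\mathbf{R}(Y)\to Y$ is an equivalence in $\mathbf{E}_{\mathfrak{W}'}$; since every object is cofibrant, $\mathbf{L}\mathbf{R}(Y)$ is computed by taking a $\mathpzc{E}_{\mathfrak{W}'}$-fibrant replacement $Y\to FY$, regarding it in $\mathpzc{E}_{\mathfrak{W}}$, and then mapping back, which plainly returns $FY\simeq Y$.

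Hypotheses $(3)$ and $(4)$ of Lemma \ref{lemrecollemodel} are precisely where the assumption that homotopy products in $\mathpzc{E}_{\mathfrak{W}}$ and $\mathpzc{E}_{\mathfrak{W}'}$ coincide is used; here one observes that $(\mathfrak{W}')^{\perp}\subseteq\mathfrak{W}^{\perp}$, so a $\mathpzc{E}_{\mathfrak{W}'}$-fibrant object is automatically $\mathpzc{E}_{\mathfrak{W}}$-fibrant, and the coincidence of homotopy products allows one to identify the 1-categorical product $\prod X_{i}$ of fibrant objects as the homotopy product in either model structure. This gives $(3)$ directly, and $(4)$ follows because the hypothesis in $(4)$ is an instance of the coincidence: a product of equivalences between objects that compute homotopy products in both structures must be a $\mathpzc{E}_{\mathfrak{W}'}$-equivalence. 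Finally, stability of both model structures is given, and combinatoriality follows from presentability of $\mathpzc{E}$ together with the small-object-argument input supplied by presentable deconstructibility, so Lemma \ref{lemrecollemodel} applies and yields the recollement with $\mathbf{K}$ the essential fiber of $\mathbf{L}$. The only nontrivial verification is the hypotheses $(3)$--$(4)$ on products of fibrant objects, which is why the coincidence of homotopy products has been adopted as an explicit assumption rather than derived.
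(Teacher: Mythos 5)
Your proposal is correct and follows essentially the same route as the paper: the paper's proof also forms the identity Quillen reflection $\adj{\mathrm{Id}}{\mathpzc{E}_{\mathfrak{W}}}{\mathpzc{E}_{\mathfrak{W}'}}{\mathrm{Id}}$, notes local presentability via Lemma \ref{cofibgen}, and applies Lemma \ref{lemrecollemodel}, while the injective cotorsion pair claim is handled by the earlier proposition on injective cotorsion pairs exactly as in your splitting argument. You simply spell out the verification of hypotheses (1)--(4) of Lemma \ref{lemrecollemodel} that the paper leaves implicit, which is a welcome but not substantively different elaboration.
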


\begin{proof}
Consider the Quillen reflection
$$\adj{Id}{\mathpzc{E}_{\mathfrak{W}}}{\mathpzc{E}_{\mathfrak{W}'}}{Id}$$
which induces an adjucntion of $(\infty,1)$-categories
\begin{displaymath}
\xymatrix{
\adj{L}{\mathbf{E}_{\mathfrak{W}}}{\mathbf{E}_{\mathfrak{W}'}}{i}
}
\end{displaymath}
with $i$ being fully faithful. Now the $(\infty,1)$-categories $\mathbf{E}_{\mathfrak{W}}$ and $\mathbf{E}_{\mathfrak{W}'}$ are both locally presentable by Lemma \ref{cofibgen}. We can immediately apply Lemma \ref{lemrecollemodel}. 
\end{proof}

\begin{cor}
Let $\mathpzc{E}$ be a purely locally finitely presented exact category equipped with the pure exact structure, and consider the category $\mathrm{Ch}(\mathpzc{E})$. Suppose that $\tilde{\mathfrak{W}}$ is a class of complexes which 
%
\begin{enumerate}
\item
contains the pure acyclic complexes
\item
is deconstructible in itself
\item
$\tilde{\mathfrak{W}}$ is closed under products.
\end{enumerate}
Let $\mathpzc{M}$ be the model category whose underlying category is $\mathrm{Ch}(\mathpzc{E})$, equipped with the model structure induced by the injective cotorsion pair $(\tilde{\mathfrak{W}},\tilde{\mathfrak{W}}^{\perp})$. Then the $(\infty,1)$-category $\mathbf{M}$ is compactly assembled, provided it is stable.
\end{cor}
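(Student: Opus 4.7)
The plan is to reduce to Theorem \ref{thm:recollementgenexact} and then to the concluding statement of Lemma \ref{lemrecollemodel} by inserting between $\tilde{\mathfrak{W}}$ and $0$ the class of \emph{pure-acyclic} complexes, which will play the role of the ambient locally finitely presentable stable $(\infty,1)$-category whose reflective localisation is $\mathbf{M}$.

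\textbf{Step 1 (the intermediate class).} Let $\mathfrak{W}_{0}\subseteq\mathrm{Ch}(\mathpzc{E})$ denote the class of pure-acyclic complexes, which by hypothesis satisfies $\mathfrak{W}_{0}\subseteq\tilde{\mathfrak{W}}$. The corollary following Lemma \ref{lem:Znsubexact}, together with Proposition \ref{prop:acyclicintersctstronglypure}, shows $\mathfrak{W}_{0}$ is $\aleph_{0}$-pure subobject stable and, together with closure under transfinite extensions in $\mathrm{Ch}(\mathpzc{E})$ (which is weakly elementary), strongly $\aleph_{0}$-pure subobject stable. It is manifestly thick, contains all injectives, and by Lemma \ref{lem:accdec} is presentably deconstructible in itself. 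It contains a generator because $\mathrm{Ch}(\mathpzc{E})$ is so generated by shifts of generators of $\mathpzc{E}$, which themselves (after disks) lie in $\mathfrak{W}_{0}$.

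\textbf{Step 2 (applying Theorem \ref{thm:recollementgenexact}).} The hypotheses of that theorem on the pair $\mathfrak{W}_{0}\subseteq\tilde{\mathfrak{W}}$ are then satisfied; the hypotheses on $\tilde{\mathfrak{W}}$ are assumed directly. Stability of $\mathpzc{E}_{\tilde{\mathfrak{W}}}$ is the given assumption of the corollary; stability of the pure-acyclic model structure $\mathpzc{E}_{\mathfrak{W}_{0}}$ follows because the shift on $\mathrm{Ch}(\mathpzc{E})$ restricts to an autoequivalence of the Hovey triple and $\mathrm{Ch}(\mathpzc{E})$ is additive. The coincidence of homotopy products reduces to the statement that both $\mathfrak{W}_{0}$ and $\tilde{\mathfrak{W}}$ are closed under products: $\tilde{\mathfrak{W}}$ by assumption, and $\mathfrak{W}_{0}$ because pure-acyclicity of $X$ is equivalent to $\mathrm{Hom}(F,X)$ being acyclic in $\mathrm{Ch}(\mathpzc{Ab})$ for every finitely presented $F\in\mathpzc{E}$, and $\mathrm{Hom}(F,-)$ commutes with arbitrary products while products of acyclic complexes of abelian groups are acyclic. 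This produces a recollement
\begin{displaymath}
\begin{tikzcd}
\mathbf{K}\arrow[r,bend right=25,shift right=0.2ex]\arrow[r,bend left=25,shift left=0.2ex] & \mathbf{E}_{\mathfrak{W}_{0}}\arrow[l,"\perp"{inner sep=0.3ex,rotate=180},"\perp"'{inner sep=0.3ex,rotate=180}]\arrow[l,"\perp"{inner sep=0.3ex,rotate=180},"\perp"'{inner sep=0.3ex,rotate=180}]\arrow[r,bend right=25,shift right=0.2ex]\arrow[r,bend left=25,shift left=0.2ex] & \mathbf{M}\arrow[l,"\perp"{inner sep=0.3ex,rotate=180},"\perp"'{inner sep=0.3ex,rotate=180}]
\end{tikzcd}
\end{displaymath}

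\textbf{Step 3 (finite presentability of $\mathbf{E}_{\mathfrak{W}_{0}}$).} By Lemma \ref{cofibgen} and Proposition \ref{prop:cofibrantgenerators}, the injective cotorsion pair $(\mathfrak{W}_{0},\mathfrak{W}_{0}^{\perp})$ is cofibrantly generated by morphisms of the form $0\to S^{n}(G)$, $S^{n-1}(G)\to D^{n}(G)$, and $S^{n}(k_{i})$, all of whose domains and codomains are built from finitely presented objects of $\mathpzc{E}$ (taking $\mathpzc{A}=\mathfrak{W}_{0}$ and choosing the generating set from the deconstructibility datum to consist of $\aleph_{0}$-presentable complexes, which exists since the deconstructibility of $\mathfrak{W}_{0}$ was obtained via Lemma \ref{lem:accdec} at $\lambda=\aleph_{0}$). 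Hence $\mathpzc{E}_{\mathfrak{W}_{0}}$ is $\aleph_{0}$-combinatorial and $\mathbf{E}_{\mathfrak{W}_{0}}$ is locally finitely presentable.

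\textbf{Step 4 (conclusion).} The final sentence of Lemma \ref{lemrecollemodel} then applies to the right-hand Quillen reflection $\mathpzc{E}_{\mathfrak{W}_{0}}\rightleftarrows\mathpzc{E}_{\tilde{\mathfrak{W}}}=\mathpzc{M}$, yielding that $\mathbf{M}$ is compactly assembled in the sense of \cite{SAG} Definition 21.1.2.1. The main delicate point is Step 3, where one must track through the explicit cofibrantly generating sets to verify that $\mathpzc{E}_{\mathfrak{W}_{0}}$ is finitely, and not merely combinatorially, generated — everything else is formal given the general machinery assembled earlier.
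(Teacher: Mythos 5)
Your overall route is the one the paper intends: the printed proof consists only of the observation that $(\tilde{\mathfrak{W}},\tilde{\mathfrak{W}}^{\perp})$ is an injective cotorsion pair, leaving the reader to interpolate the pure-acyclic class $\mathfrak{W}_{0}$, apply Theorem \ref{thm:recollementgenexact}, and invoke the final sentence of Lemma \ref{lemrecollemodel} with $\mathbf{E}_{\mathfrak{W}_{0}}$ playing the role of the locally finitely presentable category. Your Steps 1, 2 and 4 carry this out correctly; in particular the reduction of the homotopy-product hypothesis to closure of $\tilde{\mathfrak{W}}$ under products, and the verification that pure-acyclics are product-closed via $\mathrm{Hom}(F,-)$ for $F$ finitely presented, are exactly right.

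The step that does not go through as written is Step 3. Lemma \ref{lem:accdec} applied at $\lambda=\aleph_{0}$ does not yield a deconstruction into $\aleph_{0}$-presentable pieces: the cardinal $\gamma$ it produces is chosen, via Theorem \ref{thm:deconstructpres}, so that maps out of $\gamma$-presentable objects factor through $\gamma$-presentable $\aleph_{0}$-pure subobjects, and such $\gamma$ is in general strictly larger than $\aleph_{0}$. Moreover Proposition \ref{prop:cofibrantgenerators} describes generating morphisms for the cofibration--trivial-fibration system of a model structure of projective type induced by a cotorsion pair on $\mathpzc{E}$; it does not apply to the injective model structure, whose cofibrations are all pure monomorphisms and whose generating cofibrations are controlled by a deconstruction of the class of \emph{all} complexes. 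So the assertion that $\mathpzc{E}_{\mathfrak{W}_{0}}$ is $\aleph_{0}$-combinatorial is not justified by the machinery you cite. The conclusion you need is nevertheless true and has a more direct proof: for the $\aleph_{0}$-pure exact structure the finitely presented objects of $\mathpzc{E}$ are compact projective generators (the pure exact structure on a finitely accessible additive category is elementary, as noted in Section \ref{sec:accexact}), so each $S^{n}(F)$ with $F$ finitely presented is a compact object of $\mathbf{E}_{\mathfrak{W}_{0}}$ --- the mapping complex out of $S^{n}(F)$ is computed by $\mathrm{Hom}_{\mathpzc{E}}(F,-)$ applied degreewise, without resolving the target, and this commutes with filtered colimits --- and these objects generate. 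Hence $\mathbf{E}_{\mathfrak{W}_{0}}$ is compactly generated, in particular locally finitely presentable. With Step 3 replaced by this argument the proof is complete.
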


\begin{proof}
Consider the category $\mathrm{Ch}(\mathpzc{E})$ equipped with the pure injective model structure. $\tilde{\mathfrak{W}}$ contains all objects of the form $D^{n}(A)$, so it contains all injectives. Thus $(\tilde{\mathfrak{W}},\tilde{\mathfrak{W}}^{\perp})$ is an injective cotorsion pair. 
\end{proof}


\begin{cor}
Let $\mathpzc{E}$ be a purely locally finitely presentable category, and let $(\mathpzc{E},\mathpzc{Q})$ be an exact category in which filtered colimits are exact. Let $\mathbf{Ch}(\mathpzc{E},\mathpzc{Q})$ be the $(\infty,1)$-category presented by the injective model structure on $\mathrm{Ch}(\mathpzc{E},\mathpzc{Q})$. Then $\mathbf{Ch}(\mathpzc{E},\mathpzc{Q})$ is compactly assembled. 
\end{cor}

Now let $(\mathpzc{E},\mathpzc{Q})$ and $(\mathpzc{E},\mathpzc{Q}')$ be two different purely locally $\lambda$-presentable exact categories with the same underlying category $\mathpzc{E}$ and $\mathpzc{Q}\subset\mathpzc{Q}'$. Suppose both $(\mathpzc{E},\mathpzc{Q})$ and $(|mathpzc{E},\mathpzc{Q}')$ are weakly elementary. Let $\mathpzc{K}$ be a class of objects in $\mathrm{Ch}(\mathpzc{E})$ which is strongly $\lambda$-pure subobject stable in both $\mathrm{Ch}(\mathpzc{E},\mathpzc{Q})$ and  $\mathrm{Ch}(\mathpzc{E},\mathpzc{Q}')$, is closed under transfinite extensions in $\mathrm{Ch}(\mathpzc{E},\mathpzc{Q}')$, contains a generator of $\mathrm{Ch}(\mathpzc{E},\mathpzc{Q})$, and contains all injectives in $\mathrm{Ch}(\mathpzc{E},\mathpzc{Q})$. In particular $(\mathpzc{K},\mathpzc{K}^{\perp})$ is an injective cotorsion pair on $\mathrm{Ch}(\mathpzc{E},\mathpzc{Q})$ and $\mathrm{Ch}(\mathpzc{E},\mathpzc{Q}')$. We also have the injective Hovey triple $(\mathrm{All},\mathfrak{W},\mathrm{dgInj})$ where $\mathfrak{W}$ denotes the class of acyclic complexes in $\mathrm{Ch}(\mathpzc{E},\mathpzc{Q})$, and the injective Hovey triple $(\mathrm{All},\mathfrak{W}',\mathrm{dgInj}')$ where $\mathfrak{W}'$ denotes the class of acyclic objects in $\mathrm{Ch}(\mathpzc{E},\mathpzc{Q}')$, and $\mathrm{dgInj}'$ denotes the $dg$-injective complexes in $\mathrm{Ch}(\mathpzc{E},\mathpzc{Q}')$. Thanks to Corollary \ref{cor:changingtheunderlyingexactstructure} there is a Hovey triple $(\mathrm{All},\mathfrak{W}',(\mathfrak{W}')^{\perp})$ on $\mathrm{Ch}(\mathpzc{E},\mathpzc{Q})$ whose corresponding model structure is left Quillen equivalent, via the identity functor, to the injective model structure on $\mathrm{Ch}(\mathpzc{E},\mathpzc{Q}')$. Thus we have three injective Hovey triples
$$\mathpzc{M}_{1}=(\mathrm{All},\mathfrak{W},\mathrm{dgInj})$$
$$\mathpzc{M}_{2}=(\mathrm{All},\mathpzc{K},\mathpzc{K}^{\perp})$$
$$\mathpzc{M}_{3}=(\mathrm{All},\mathfrak{W}',(\mathfrak{W}')^{\perp})$$
on $\mathrm{Ch}(\mathpzc{E},\mathpzc{Q})$. By \cite{MR3459032} Theorem 4.6 we have the following

\begin{prop}
If $\mathfrak{W}'\cap\mathpzc{K}=\mathfrak{W}$ then there is a recollement of triangulated categories.
\begin{displaymath}
\begin{tikzcd}
\mathpzc{K}^{\perp}\big\slash\sim\arrow[r,bend right=25,shift right=0.2ex]\arrow[r,bend left=25,shift left=0.2ex]  &\mathrm{dgInj}\big\slash\sim\arrow[l,"\perp" {inner sep=0.3ex,rotate=180},
    "\perp"' {inner sep=0.3ex,rotate=180}] 
\arrow[l,"\perp" {inner sep=0.3ex,rotate=180},
    "\perp"' {inner sep=0.3ex,rotate=180}]\arrow[r,bend right=25,shift right=0.2ex]\arrow[r,bend left=25,shift left=0.2ex]  & 
   (\mathfrak{W}')^{\perp}\big\slash\sim \arrow[l,"\perp" {inner sep=0.3ex,rotate=180},
    "\perp"' {inner sep=0.3ex,rotate=180}]& \\
    \end{tikzcd}
    \end{displaymath}
    where $f\sim g$ if and only if $g-f$ factors through an injective object of $\mathrm{Ch}(\mathpzc{E})$.
\end{prop}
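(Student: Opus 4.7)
The plan is to verify the hypotheses of \cite{MR3459032} Theorem 4.6 for the three injective Hovey triples $\mathpzc{M}_{1}, \mathpzc{M}_{2}, \mathpzc{M}_{3}$ and then invoke that theorem directly. First I would observe that the hypothesis $\mathfrak{W}'\cap\mathpzc{K}=\mathfrak{W}$ immediately forces $\mathfrak{W}\subseteq\mathpzc{K}$. Moreover, since $\mathpzc{Q}\subseteq\mathpzc{Q}'$, any $\mathpzc{Q}$-exact sequence is automatically $\mathpzc{Q}'$-exact, so any $\mathpzc{Q}$-acyclic complex is $\mathpzc{Q}'$-acyclic, giving $\mathfrak{W}\subseteq\mathfrak{W}'$. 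By the standard correspondence between containment of trivial classes and reverse containment of fibrant classes in injective cotorsion pairs sharing the same cofibrant class (here $\mathrm{All}$), this also yields the dual containments $\mathpzc{K}^{\perp}\subseteq\mathrm{dgInj}$ and $(\mathfrak{W}')^{\perp}\subseteq\mathrm{dgInj}$.

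Second, I would record that all three cotorsion pairs live on the single exact category $\mathrm{Ch}(\mathpzc{E},\mathpzc{Q})$: $(\mathfrak{W},\mathrm{dgInj})$ and $(\mathpzc{K},\mathpzc{K}^{\perp})$ by the hypotheses collected in the paragraph preceding the proposition, and $(\mathfrak{W}',(\mathfrak{W}')^{\perp})$ precisely by Corollary \ref{cor:underlyingexactstructurecomplexes} (which transplants the injective model structure on $\mathrm{Ch}(\mathpzc{E},\mathpzc{Q}')$ to a Quillen-equivalent one on $\mathrm{Ch}(\mathpzc{E},\mathpzc{Q})$). Together with the intersection identity $\mathfrak{W}'\cap\mathpzc{K}=\mathfrak{W}$, this is exactly the compatibility data required as input to \cite{MR3459032} Theorem 4.6.

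Third, applying Gillespie's theorem yields the desired recollement of triangulated homotopy categories. The identification of the homotopy relation follows from the definition of an injective cotorsion pair: since $\mathfrak{W}_{i}\cap\mathfrak{F}_{i}$ equals the class of injectives in $\mathrm{Ch}(\mathpzc{E},\mathpzc{Q})$ for each of the three pairs, the canonical homotopy relation on the fibrant class $\mathfrak{F}_{i}$ reduces precisely to the relation $f\sim g$ iff $g-f$ factors through an injective of $\mathrm{Ch}(\mathpzc{E})$, uniformly across all three factors of the recollement.

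The only genuine obstacle is that \cite{MR3459032} Theorem 4.6 is formulated in the abelian setting, whereas we are working with a weakly idempotent complete exact category. However, the proof uses only the formalism of injective cotorsion pairs, thickness of trivial classes, cofibrant--fibrant replacement via the two complete cotorsion pairs of a Hovey triple, and the triangulated structure on the stable homotopy category of a (stable, pointed) model category, each of which functions identically in the exact category context thanks to the correspondence recalled in Theorem \ref{weakmon}. I would therefore briefly check line by line that Gillespie's argument goes through verbatim, which it does without modification.
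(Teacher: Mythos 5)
Your proposal is correct and follows essentially the same route as the paper, which likewise obtains the recollement by assembling the three injective Hovey triples $\mathpzc{M}_{1},\mathpzc{M}_{2},\mathpzc{M}_{3}$ on $\mathrm{Ch}(\mathpzc{E},\mathpzc{Q})$ (using Corollary \ref{cor:changingtheunderlyingexactstructure} to transplant the $\mathpzc{Q}'$-injective structure) and then citing \cite{MR3459032} Theorem 4.6 directly. Your additional remarks on the containments and on the passage from the abelian to the weakly idempotent complete exact setting are sound elaborations of what the paper leaves implicit.
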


In fact let us show the following.

\begin{thm}
If $\mathfrak{W}'\cap\mathpzc{K}=\mathfrak{W}$ then there is a recollement of $(\infty,1)$-categories.
\begin{displaymath}
\begin{tikzcd}
\mathbf{K}(\mathpzc{E})\big\slash\mathpzc{K}\arrow[r,bend right=25,shift right=0.2ex]\arrow[r,bend left=25,shift left=0.2ex]  &\mathbf{Ch}(\mathpzc{E},\mathpzc{Q})\arrow[l,"\perp" {inner sep=0.3ex,rotate=180},
    "\perp"' {inner sep=0.3ex,rotate=180}] 
\arrow[l,"\perp" {inner sep=0.3ex,rotate=180},
    "\perp"' {inner sep=0.3ex,rotate=180}]\arrow[r,bend right=25,shift right=0.2ex]\arrow[r,bend left=25,shift left=0.2ex]  & 
 \mathbf{Ch}(\mathpzc{E},\mathpzc{Q}') \arrow[l,"\perp" {inner sep=0.3ex,rotate=180},
    "\perp"' {inner sep=0.3ex,rotate=180}]& \\
    \end{tikzcd}
    \end{displaymath}
\end{thm}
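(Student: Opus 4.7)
The plan is to apply Lemma \ref{lemrecollemodel} to the Quillen reflection given by $\mathrm{id}\colon\mathpzc{M}_{1}\rightleftarrows\mathpzc{M}_{3}\colon\mathrm{id}$ in order to manufacture the outer adjunctions of the recollement, and then to identify the resulting kernel $\mathbf{K}$ with $\mathbf{M}_{2}\simeq\mathbf{K}(\mathpzc{E})\big\slash\mathpzc{K}$ by combining the triangulated recollement of \cite{MR3459032} Theorem 4.6 with the hypothesis $\mathfrak{W}'\cap\mathpzc{K}=\mathfrak{W}$.

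First I would verify that the identity functor yields a Quillen reflection. Since all three Hovey triples are injective, they share the cofibrant class $\mathrm{All}$, hence the same class of cofibrations (admissible monomorphisms). The inclusion $\mathpzc{Q}\subseteq\mathpzc{Q}'$ means every $\mathpzc{Q}$-acyclic complex is $\mathpzc{Q}'$-acyclic, so $\mathfrak{W}\subseteq\mathfrak{W}'$ and the class of weak equivalences of $\mathpzc{M}_{1}$ lies inside that of $\mathpzc{M}_{3}$. Thus $\mathrm{id}\colon\mathpzc{M}_{1}\to\mathpzc{M}_{3}$ is left Quillen and sends weak equivalences to weak equivalences, so the adjunction is a Quillen reflection.

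Next I would check the remaining hypotheses of Lemma \ref{lemrecollemodel}. Both model structures are stable, as they are the injective models on unbounded complexes, and they are combinatorial by Lemma \ref{cofibgen} together with the local presentability of $\mathpzc{E}$. The derived right adjoint $\mathbf{M}_{3}\to\mathbf{M}_{1}$ is fully faithful, being the right adjoint of a Quillen reflection. The product-compatibility conditions follow from $(\mathfrak{W}')^{\perp}\subseteq\mathfrak{W}^{\perp}=\mathrm{dgInj}$: every $\mathpzc{M}_{3}$-fibrant object is already $\mathpzc{M}_{1}$-fibrant, products of such objects are again $\mathpzc{M}_{3}$-fibrant by the cotorsion pair structure, and $\mathrm{id}$ trivially commutes with all products.

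The lemma then produces a recollement $\mathbf{K}\rightleftarrows\mathbf{M}_{1}\rightleftarrows\mathbf{M}_{3}$ with $\mathbf{K}$ the kernel of the Bousfield localization $\mathbf{M}_{1}\to\mathbf{M}_{3}$, i.e.\ the stable $(\infty,1)$-subcategory on $\mathfrak{W}'$-acyclic complexes (modulo $\mathfrak{W}$-equivalence). To identify $\mathbf{K}$ with $\mathbf{M}_{2}\simeq\mathbf{K}(\mathpzc{E})\big\slash\mathpzc{K}$, I would construct a comparison functor from the inclusion $\mathpzc{K}^{\perp}\hookrightarrow\mathrm{dgInj}$ (valid since $\mathfrak{W}\subseteq\mathpzc{K}$ gives $\mathpzc{K}^{\perp}\subseteq\mathfrak{W}^{\perp}=\mathrm{dgInj}$), and argue that its essential image lies in $\mathbf{K}$ and that it realises an equivalence onto it. Fully faithfulness uses the hypothesis crucially: any chain map whose cone lies in $\mathpzc{K}$ and is also $\mathfrak{W}'$-acyclic must have cone in $\mathfrak{W}'\cap\mathpzc{K}=\mathfrak{W}$, so $\mathpzc{K}$-equivalence and $\mathfrak{W}$-equivalence agree on objects of the kernel. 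Essential surjectivity together with the triangulated version of this identification is precisely the content of \cite{MR3459032} Theorem 4.6, and translates into the $(\infty,1)$-setting because both $\mathbf{M}_{2}$ and $\mathbf{K}$ are presented by combinatorial stable model structures and the comparison functor is induced by derived Quillen functors.

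The main obstacle is the upgrade of Gillespie's triangulated recollement to an equivalence of stable $(\infty,1)$-categories. While the triangulated identification is a careful diagram chase with cotorsion pairs, the $(\infty,1)$-enhancement requires exhibiting the comparison as a derived Quillen functor and confirming it is an equivalence of presentable stable $(\infty,1)$-categories; care must be taken that derived functors are computed correctly through fibrant-cofibrant replacements in all three model structures simultaneously, and that the hypothesis $\mathfrak{W}'\cap\mathpzc{K}=\mathfrak{W}$ is invoked at the point where $\mathpzc{K}$-equivalences between kernel objects are shown to collapse to $\mathfrak{W}$-equivalences.
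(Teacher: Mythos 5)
Your overall strategy — apply Lemma \ref{lemrecollemodel} to the Quillen reflection $\mathrm{id}\colon\mathpzc{M}_{1}\rightleftarrows\mathpzc{M}_{3}$ — is the same as the paper's, and your verification of stability, combinatoriality, full faithfulness of the right adjoint, and condition (3) is fine. But your treatment of condition (4) of that lemma has a genuine gap, and it is exactly the point where the hypothesis $\mathfrak{W}'\cap\mathpzc{K}=\mathfrak{W}$ must be used. Condition (4) asks that for a family of $\mathpzc{M}_{3}$-weak equivalences $X_{i}\rightarrow Y_{i}$ with each $X_{i}$ only $\mathpzc{M}_{1}$-fibrant (i.e.\ in $\mathfrak{W}^{\perp}$) and each $Y_{i}$ $\mathpzc{M}_{3}$-fibrant, the product $\prod X_{i}\rightarrow\prod Y_{i}$ is still an $\mathpzc{M}_{3}$-weak equivalence. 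Your argument — that $\mathpzc{M}_{3}$-fibrant objects are $\mathpzc{M}_{1}$-fibrant, that products of $\mathpzc{M}_{3}$-fibrant objects are $\mathpzc{M}_{3}$-fibrant, and that $\mathrm{id}$ commutes with products — does not address this: the $X_{i}$ need not be $\mathpzc{M}_{3}$-fibrant, and products do not preserve weak equivalences between objects that are not fibrant for the relevant model structure. The paper closes this gap by observing that the standing hypotheses on $\mathpzc{K}$ together with $(\mathpzc{K}\cap\mathfrak{W}')^{\perp}=\mathfrak{W}^{\perp}$ (which is where $\mathfrak{W}'\cap\mathpzc{K}=\mathfrak{W}$ enters) produce a \emph{third} Hovey triple $(\mathpzc{K},\mathfrak{W}',\mathfrak{W}^{\perp})$, i.e.\ a model structure whose weak equivalences are the $\mathpzc{M}_{3}$-equivalences but whose fibrant objects are exactly the $\mathpzc{M}_{1}$-fibrant objects $\mathfrak{W}^{\perp}$. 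In that model structure the $X_{i}$ \emph{are} fibrant, so the product map is a weak equivalence, which is precisely condition (4).

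Relatedly, you place the weight of the hypothesis on identifying the kernel $\mathbf{K}$ with $\mathbf{K}(\mathpzc{E})\big\slash\mathpzc{K}$ via an upgrade of Gillespie's triangulated recollement, and you flag this upgrade as the main difficulty. In the paper this step carries almost no weight: $\mathbf{K}(\mathpzc{E})\big\slash\mathpzc{K}$ is \emph{defined} to be the $(\infty,1)$-category presented by the Hovey triple $(\mathpzc{K},\mathfrak{W}',\mathfrak{W}^{\perp})$, whose construction is the real content of the proof and which you never establish. If you instead construct that Hovey triple first (using that $\mathpzc{K}$ is strongly $\lambda$-pure subobject stable, closed under transfinite extensions, contains a generator, and that $\mathrm{Ch}(\mathpzc{E},\mathpzc{Q}')$ is weakly $\mathbf{AdMon}_{\mathpzc{K}}$-elementary), both the product condition and the identification of the kernel fall out, and the detour through the triangulated statement becomes unnecessary.
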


\begin{proof}
Since $\mathpzc{K}$ is strongly $\lambda$-pure subobject stable, closed under transfinite extensions in $\mathrm{Ch}(\mathpzc{E},\mathpzc{Q}')$, and contains a generator of $\mathrm{Ch}(\mathpzc{E},\mathpzc{Q}')$, and $\mathrm{Ch}(\mathpzc{E},\mathpzc{Q}')$ is weakly $\mathbf{AdMon}_{\mathcal{K}}$-elementary, there is a model structure on $\mathrm{Ch}(\mathpzc{E},\mathpzc{Q}')$ determined by the Hovey triple $(\mathpzc{K},\mathfrak{W}',(\mathpzc{K}\cap\mathfrak{W}')^{\perp}=\mathfrak{W}^{\perp})$. This, in particular, implies that products preserve quasi-isomorphisms in $\mathrm{Ch}(\mathpzc{E},\mathpzc{Q}')$ between objects in $(\mathpzc{K}\cap\mathfrak{W}')^{\perp}=\mathfrak{W}^{\perp}$. Since $\mathfrak{W}^{\perp}$ is the class of fibrant objects for the injective model structure on $\mathrm{Ch}(\mathpzc{E},\mathpzc{Q})$, this completes the proof.
\end{proof}

Here $\mathbf{K}(\mathpzc{E})\big\slash\mathcal{K}$ is just notation for the $(\infty,1)$-category presented by the model structure induced by the Hovey triple $(\mathpzc{K},\mathfrak{W}',\mathfrak{W}^{\perp})$. Its homotopy category is equivalent to the Verdier quotient of the homotopy category $K(\mathpzc{E})$ by the triangualted subcategory generated $\mathcal{K}$. 

\begin{cor}
Let $\mathpzc{E}$ be a purely locally finitely presentable category, and let $(\mathpzc{E},\mathpzc{Q})$ be an exact category in which filtered colimits are exact. Let $\mathbf{Ch}(\mathpzc{E},\mathpzc{Q})$ be the $(\infty,1)$-category presented by the injective model structure on $\mathrm{Ch}(\mathpzc{E},\mathpzc{Q})$. Then $\mathbf{Ch}(\mathpzc{E},\mathpzc{Q})$ is compactly assembled. 
\end{cor}

\section{The Flat and $K$-Flat Model Structures}\label{sec:flatKflat}

In this section we apply the results from the previous section to construct flat model structures on certain closed symmetric monoidal locally presentable exact categories. We also investgiate $K$-flat objects, and explain how recent results of \cite{estrada2023k} generalise immediately to exact categories.
We fix a purely $\lambda$-acessible closed symmetric monoidal exact category $\mathpzc{E}$.

\subsection{The Flat Model Structure}

\begin{thm}
Let $\mathpzc{E}$ be a purely $\lambda$-accessible closed symmetric monoidal exact category. Then the class $\mathcal{F}$ of flat objects of $\mathpzc{E}$ is $\lambda$-pure subobject stable. It is strongly $\lambda$-pure subobject stable if $\mathpzc{E}$ is weakly elementary.
\end{thm}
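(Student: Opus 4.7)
For the first claim, let $0 \to M \to N \to C \to 0$ be a $\lambda$-pure exact sequence with $N$ flat; we must show $M,C \in \mathcal{F}$. By \cite{positselski2023locally} Proposition 4.4, the map $M \to N$ is a $\lambda$-directed colimit in $\mathrm{Mor}(\mathpzc{E})$ of split monomorphisms $M_i \hookrightarrow N_i = M_i \oplus C_i$, so the whole sequence is a $\lambda$-directed colimit of split short exact sequences $0 \to M_i \to N_i \to C_i \to 0$. Tensoring with any $X \in \mathpzc{E}$ commutes with this colimit and preserves split exactness termwise, so $0 \to M \otimes X \to N \otimes X \to C \otimes X \to 0$ is again a $\lambda$-directed colimit of split exact sequences, hence $\lambda$-pure exact, and therefore exact in $\mathpzc{E}$ by the pure $\lambda$-accessibility hypothesis.

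Next, for an arbitrary short exact sequence $0 \to A \to B \to D \to 0$, tensoring by $M,N,C$ produces a commutative $3\times 3$ diagram. The three columns are exact by the previous paragraph; the middle row is exact by flatness of $N$; and the outer rows are right exact by cocontinuity of the tensor. A short diagram chase then forces the outer rows to be left exact too: a kernel element of $M \otimes A \to M \otimes B$ maps to a kernel element of $N \otimes A \to N \otimes B$, which vanishes by flatness of $N$, and hence vanishes in $M \otimes A$ by column injectivity; a similar but slightly more involved chase, invoking the just-established exactness of the top row, handles the bottom row. Exactly as in the proof of Lemma~\ref{lem:Znsubexact}, these chases are legitimised in an exact category by embedding the diagram into an abelian envelope, for instance the left heart $\mathrm{LH}(\mathpzc{E})$ of \cite{henrard2021left}, which is exact and reflects exactness. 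This shows that both $M$ and $C$ are flat, giving $\lambda$-pure subobject stability.

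For the strongly $\lambda$-pure subobject stable claim we invoke Remark~\ref{rem:pushpull}(2): when $\mathpzc{E}$ is weakly elementary, it suffices to check that $\mathcal{F}$ is $\lambda$-pure subobject stable (just shown) and closed under colimits of transfinite sequences of flat objects. The latter is immediate from the weak elementarity hypothesis: tensoring commutes with the transfinite colimit, and colimits of transfinite sequences of short exact sequences remain exact in $\mathpzc{E}$, so the colimit of a transfinite system of flat objects is itself flat.

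The only genuine obstacle is the $3\times 3$ chase in the second paragraph, which is not immediate in an exact category; this is resolved by transferring the diagram to the abelian left heart, after which the argument is routine.
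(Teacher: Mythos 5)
Your overall strategy coincides with the paper's: tensor the $\lambda$-pure sequence against an arbitrary short exact sequence, form the $3\times 3$ diagram, and deduce exactness of the outer rows from exactness of the columns and of the middle row. Your first paragraph is in fact a welcome expansion of a step the paper leaves nearly implicit (that $\lambda$-pure exact sequences are $\otimes$-pure, via the colimit-of-split-monomorphisms description), and your final paragraph on strong stability is exactly the paper's argument via Remark \ref{rem:pushpull}.

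The gap is in your second paragraph. The chase in the left heart shows only that $M\otimes A\to M\otimes B$ is a monomorphism in the ambient abelian category; to conclude that the top row is a short exact sequence of $\mathpzc{E}$ you need this map to be an \emph{admissible} monomorphism, and ``reflects exactness'' does not deliver this directly, because the embedding $\phi:\mathpzc{E}\to\mathrm{LH}(\mathpzc{E})$ need not preserve the cokernels that form the outer rows --- so you cannot first verify that the image of the row is short exact in $\mathrm{LH}(\mathpzc{E})$ and then reflect. The correct repair is the one the paper uses: the composite $M\otimes A\to M\otimes B\to N\otimes B$ equals the composite of the two admissible monomorphisms $M\otimes A\to N\otimes A\to N\otimes B$ (column exactness plus flatness of $N$), so by the obscure axiom --- available here because $\lambda$-accessible additive categories are weakly idempotent complete --- the map $M\otimes A\to M\otimes B$ is an admissible monomorphism; its cokernel is $M\otimes D$ by cocontinuity of $\otimes$, so the top row is exact, and the exact-category $3\times 3$ lemma then yields the bottom row with no element chase at all. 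With that substitution your proof is complete and agrees with the paper's.
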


\begin{proof}
Let
$$0\rightarrow P\rightarrow F\rightarrow F\big\slash P\rightarrow0$$

$$0\rightarrow X\rightarrow Y\rightarrow Z\rightarrow 0$$ 

be exact sequences with the first being $\lambda$-pure exact and $F$ being flat. Consider the square

\begin{displaymath}
\xymatrix{
& 0\ar[d] & 0\ar[d] & 0\ar[d] & \\
0\ar[r] & X\otimes P\ar[r]\ar[d] & Y\otimes P\ar[r]\ar[d] & Z\otimes P\ar[r]\ar[d] & 0\\
0\ar[r] & X\otimes F\ar[r]\ar[d] & Y\otimes F\ar[r]\ar[d] & Z\otimes F\ar[r]\ar[d] & 0\\
0\ar[r] & X\otimes P\big\slash F\ar[r]\ar[d] & Y\otimes P\big\slash F\ar[d]\ar[r] & Z\otimes P\big\slash F\ar[d]\ar[r] & 0\\
& 0 &0 &0 &
}
\end{displaymath}
Since the sequence $0\rightarrow P\rightarrow F\rightarrow F\big\slash P\rightarrow0$ is $\lambda$-pure exact it is in particular $\otimes$-pure exact. Thus all columns are short exact sequences. The middle row is a short exact sequence since $F$ is flat. The map $X\otimes P\rightarrow Y\otimes P$ is then an admissible monomorphism by the obscure axiom. Thus the top row is exact, and by the $3\times 3$ Lemma the bottom row is also exact.

Now suppose that $\mathpzc{E}$ is weakly elementary. By Remark \ref{rem:pushpull} it suffies to check that transfinite colimits of flat objets are flat. Let 
$$F_{0}\rightarrow\ldots\rightarrow F_{\alpha}\rightarrow F_{\alpha+1}\rightarrow\ldots$$
be such a sequene, and let $W$ be an acycylic complex. Each term in the sequence
$$F_{0}\otimes W\rightarrow\ldots\rightarrow F_{\alpha}\otimes W\rightarrow F_{\alpha+1}\otimes W\rightarrow\ldots$$
is an acyclic complex. Since $\mathpzc{E}$ is weakly elementary, the colimit $\colim (F_{\alpha}\otimes W)\cong(\colim F_{\alpha})\otimes W$ is acyclic, so that $\colim F_{\alpha}$ is flat. 
\end{proof}

\begin{cor}
Let $\mathpzc{E}$ be a purely $\lambda$-accessible closed symmetric monoidal exact category which is weakly elementary and has a generator. Then every object of $\mathpzc{E}$ has a flat precover. If $\mathpzc{E}$ has enough flat objects then every object has a special flat precover and a special cotorsion envelope
\end{cor}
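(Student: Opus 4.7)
The plan is to combine the preceding theorem, stating that the class $\mathcal{F}$ of flat objects is strongly $\lambda$-pure subobject stable (using weak elementarity), with the deconstructibility machinery developed earlier in Section~\ref{sec:accexact}, and finally feed this into the general (special) precover proposition.

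First, I would verify the hypotheses of Lemma~\ref{lem:accdec} applied to $\mathpzc{A}=\mathcal{F}$. The previous theorem gives that $\mathcal{F}$ is strongly $\lambda$-pure subobject stable whenever $\mathpzc{E}$ is weakly elementary, which is assumed. Next, $\mathcal{F}$ is closed under transfinite extensions by admissible monomorphisms: given a transfinite tower of admissible monos with flat cokernels, one tensors the colimit by an arbitrary short exact sequence and uses weak elementarity of $\mathpzc{E}$ (filtered/transfinite colimits of short exact sequences are exact) to see that the colimit is flat. Finally, since $\mathpzc{E}$ is weakly elementary it is in particular weakly $\mathbf{AdMon}_{\mathcal{F}}$-elementary. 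Thus Lemma~\ref{lem:accdec} applies and shows that $\mathcal{F}$ is presentably deconstructible in itself relative to $\mathbf{AdMon}$.

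With deconstructibility in hand, the existence of flat precovers is immediate from the precover proposition stated just before Definition~\ref{defn:dgK}-neighbourhood (the one asserting that a presentably deconstructible class always yields precovers via the small object argument): part~(1) applied to $\mathbf{I}=\mathbf{AdMon}$ produces, for any $M\in\mathpzc{E}$, a factorisation $0\to F\to M$ with $F\in\mathcal{F}$ and $F\to M$ having the right lifting property with respect to all maps $0\to F'$ with $F'\in\mathcal{F}$, which is precisely a flat precover.

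For the second assertion, the extra hypothesis that $\mathpzc{E}$ has enough flats means that $\mathcal{F}$ is a generating subcategory of $\mathpzc{E}$. Together with the already established presentable deconstructibility and weak $\mathbf{AdMon}_{\mathcal{F}}$-elementarity, part~(2) of the same precover proposition then yields a special $\mathcal{F}$-precover and the dual special $\mathcal{F}^{\perp}$-preenvelope (that is, a special cotorsion envelope) for every object. The only mildly non-routine step is step one above, the closure of $\mathcal{F}$ under transfinite extensions, but this is already implicit in the proof of the previous theorem; everything else is a direct citation of Lemma~\ref{lem:accdec} and the precover proposition.
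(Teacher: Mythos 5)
Your proposal is correct and follows exactly the route the paper intends (the corollary is stated without proof, but it is designed to fall out of the preceding theorem on strong $\lambda$-pure subobject stability of flats, Lemma \ref{lem:accdec}, and the proposition on precovers for presentably deconstructible classes). The only point you gloss over slightly is that closure of $\mathcal{F}$ under transfinite extensions also needs the successor-ordinal step (extensions of flat by flat are flat), but that is standard and the limit-ordinal case is handled by weak elementarity exactly as in the proof of the preceding theorem.
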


\begin{cor}\label{cor:flatmodelstructureexists}
Let $\mathpzc{E}$ be a purely $\lambda$-accessible closed symmetric monoidal exact category which is weakly elementary, has a generator and has enough flat objects. Then the flat cotorsion pair induces model structures on $\mathrm{Ch}_{\ge0}(\mathpzc{E}),\mathrm{Ch}(\mathpzc{E})$, and $\mathrm{Ch}_{\le0}(\mathpzc{E})$. In all cases the model structure is combinatorial. If the monoidal unit $k$ is flat then the model structures on $\mathrm{Ch}(\mathpzc{E})$ and $\mathrm{Ch}_{\ge0}(\mathpzc{E})$ are monoidal, and satisfy the monoid axiom. 
\end{cor}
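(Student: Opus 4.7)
The plan is to verify the hypotheses of Theorem \ref{thm:existencemodelpresentablecomplex} for the flat class $\mathcal{F}$, and then to invoke the monoidal results from Section \ref{sec:exactmodelstructures}. First, the theorem immediately preceding the corollary shows that $\mathcal{F}$ is strongly $\lambda$-pure subobject stable under our weak elementarity hypothesis. Next I would observe that $\mathcal{F}$ is closed under transfinite extensions by admissible monomorphisms: by weak elementarity of $\mathpzc{E}$, such a transfinite extension $F = \colim F_\alpha$ satisfies $F \otimes W \cong \colim(F_\alpha \otimes W)$ for any acyclic $W$, and each $F_\alpha \otimes W$ is acyclic since $F_0$ and every cokernel $F_{\alpha+1}/F_\alpha$ is flat (flatness is preserved under extensions by the standard $3\times 3$ argument in a weakly idempotent complete exact category). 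Weak elementarity gives at the same time that $\mathpzc{E}$ is weakly $\mathbf{AdMon}_{\mathcal{F}}$-elementary. The hypothesis of enough flat objects provides a generator inside $\mathcal{F}$.

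Having these inputs, Theorem \ref{thm:existencemodelpresentablecomplex}(1) yields the $dg_{\ge 0}$-compatible model structure on $\mathrm{Ch}_{\ge 0}(\mathpzc{E})$. For the unbounded and bounded-above cases I need the extra hypothesis $\widetilde{\mathcal{F}} = \widetilde{dg\mathcal{F}}\cap\mathfrak{W}$ of parts (2) and (3); this follows from hereditarity of the flat cotorsion pair, which in turn follows from the fact (already recorded: the proposition on kernels of admissible epimorphisms between flats) that a kernel of an admissible epimorphism between flat objects is flat. Hence the flat cotorsion pair is $dg$-compatible, $dg_{\ge 0}$-compatible, and $dg_{\le 0}$-compatible, yielding the three asserted model structures. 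Combinatoriality in each case follows from Lemma \ref{cofibgen} together with Proposition \ref{prop:cofibrantgenerators}, since the flat cotorsion pair is cogenerated by a set (the set $\mathcal{S}$ of cokernels in the strongly homological class producing $\mathcal{F}$, obtained from Lemma \ref{lem:accdec}), and every object of a locally presentable category is small.

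For the monoidal claims, assume $k$ is flat. Then $\mathcal{F}$ is closed under $\otimes$ and contains the unit, and short exact sequences with flat cokernel are $\otimes$-pure by Example \ref{example:flatpure}; thus the flat cotorsion pair is monoidally $dg_{*}$-compatible in the sense of Definition \ref{tensormodel} for $* \in \{\ge 0, \emptyset\}$. Moreover, bounded below complexes of flats are in $\widetilde{dg\mathcal{F}}$ by Proposition \ref{prop:dgbound}, and Lemma \ref{lem:accdec} applied within $\mathrm{Ch}(\mathpzc{E})$ shows every $dg$-flat complex is an $(\aleph_0;\mathbf{PureMon}_\otimes)$-extension of bounded below ones, so the pair is in fact \emph{strongly} monoidally $dg_*$-compatible. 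Proposition \ref{monoidaldgcompat} then gives that the induced model structure is monoidal, and Corollary \ref{prop:tensacyclic} gives the monoid axiom.

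The main obstacle is the stability condition $\widetilde{\mathcal{F}} = \widetilde{dg\mathcal{F}} \cap \mathfrak{W}$ needed for the $\mathrm{Ch}(\mathpzc{E})$ and $\mathrm{Ch}_{\le 0}(\mathpzc{E})$ cases; this is the content of hereditarity of the flat cotorsion pair and is the one place where the argument is not formal consequence of the accessibility and deconstructibility machinery. Everything else reduces to a careful unpacking of the definitions and an application of Theorem \ref{thm:existencemodelpresentablecomplex}, Proposition \ref{monoidaldgcompat}, and Corollary \ref{prop:tensacyclic}.
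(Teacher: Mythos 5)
Your overall route is the one the paper intends: the theorem immediately preceding the corollary gives that the flat class $\mathcal{F}$ is strongly $\lambda$-pure subobject stable under weak elementarity, enough flats gives a generator in $\mathcal{F}$, Theorem \ref{thm:existencemodelpresentablecomplex} then produces the three model structures (with parts (2) and (3) unlocked by hereditarity, i.e.\ the $3\times 3$ proposition that a kernel of an admissible epimorphism between flats is flat), and combinatoriality follows from smallness of the induced cotorsion pairs plus local presentability (note that a complete accessible category is automatically locally presentable, so the ``purely $\lambda$-accessible'' hypothesis suffices here). The monoidality claim via Proposition \ref{monoidaldgcompat} is also fine: hereditary plus monoidally $dg_*$-compatible is all that result needs, and you verify those correctly.

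The gap is in your verification of \emph{strong} monoidal $dg_*$-compatibility, specifically the clause that every (trivially) cofibrant complex is an $(\aleph_0;\mathbf{PureMon}_\otimes)$-extension of bounded below (trivially) cofibrant complexes. You claim this follows from Lemma \ref{lem:accdec} applied in $\mathrm{Ch}(\mathpzc{E})$, but that lemma only produces a transfinite filtration whose subquotients are $\gamma$-presentable, and a $\gamma$-presentable complex need not be bounded below (it can have nonzero terms in arbitrarily negative degrees); moreover the bounded-below subcomplexes of an unbounded complex are the good truncations $\tau_{\ge -n}$, whose bottom terms are cycle objects and need not be flat. So the cited lemma does not deliver what you need. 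The correct source for such a decomposition is Proposition \ref{prop:cofibrantgenerators}: the generating (trivial) cofibrations have cokernels of the form $S^n(A)$ and $D^n(G)$, which are bounded, so cofibrant objects are retracts of transfinite $\otimes$-pure extensions of bounded complexes of flats. Alternatively — and this is the cleaner repair, since the statement you are proving does not actually require strong compatibility — derive the monoid axiom directly from Theorem \ref{exactmonoidal}(4) together with Corollary \ref{prop:tensacyclic}(1): admissible monomorphisms with flat cokernel are $\otimes$-pure by Example \ref{example:flatpure}, hence $L\otimes X$ is acyclic for every $L\in\widetilde{\mathcal{F}}$ and every complex $X$, and weak elementarity handles $\otimes$-pure transfinite extensions of acyclics. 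With that substitution the argument is complete.
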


\subsection{The $K$-Flat Model Structure}

As pointed out in \cite{estrada2023k}, $K$-flatness is a more natural notion for the purposes of homotopical/ homological algebra than flatness. In particular it is something which can be formulated in any model category.

\subsubsection{The $K$-Flat Cotorsion Pair}

Let us now explain how the work of \cite{estrada2023k} is easily generalised. Let $\mathpzc{E}$ be a purely $\lambda$-accessible closed symmetric monoidal model category and let $\mathfrak{W}$ and $\mathcal{S}$ be classes of objects in $\mathpzc{E}$. Suppose that $\mathfrak{W}$ is closed under transfinite extensions.

\begin{defn}
An object $X$ of $\mathpzc{E}$ is said to be $\mathcal{S}$-\textit{acyclic relative to }$\mathfrak{W}$ if $C\otimes X$ is in $\mathfrak{W}$ for any $C\in\mathcal{S}$. The class of all $\mathcal{S}$-acyclic objects is denoted ${}_{\mathcal{S}}\mathfrak{W}$.
\end{defn}

\begin{defn}
We define the class of $K$-\textit{flat objects} (relative to $\mathfrak{W}$), $K\mathcal{F}$, by $K\mathcal{F}\defeq{}_{\mathfrak{W}}\mathfrak{W}$. 
\end{defn}

\begin{prop}
If $\mathfrak{W}$ is $\lambda$-pure subobject stable then so is ${}_{\mathcal{S}}\mathfrak{W}$. If $\mathfrak{W}$ is strongly $\lambda$-pure subobject stable, the $\mathcal{S}$-pure exact structure is weakly elementary, and any short exact sequence with cokernel in $\mathfrak{W}$ is $\mathcal{S}$-pure then ${}_{\mathcal{S}}\mathfrak{W}$ is also strongly $\lambda$-pure subobject stable.
\end{prop}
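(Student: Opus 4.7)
The plan is to reduce both claims to properties of $\mathfrak{W}$ by tensoring with an arbitrary $C\in\mathcal{S}$ and exploiting that $C\otimes(-)$ preserves split monomorphisms and colimits.

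For the first claim, let $X\in{}_{\mathcal{S}}\mathfrak{W}$ and $f:M\to X$ be a $\lambda$-pure monomorphism. For any $C\in\mathcal{S}$, by Proposition \ref{prop:lambdadirected} the map $f$ is a $\lambda$-directed colimit in $\mathrm{Mor}(\mathpzc{E})$ of split monomorphisms, so $C\otimes f$ is also such a colimit and thus a $\lambda$-pure monomorphism, with cokernel $C\otimes(X/M)$. As $C\otimes X\in\mathfrak{W}$ and $\mathfrak{W}$ is $\lambda$-pure subobject stable, both $C\otimes M$ and $C\otimes(X/M)$ lie in $\mathfrak{W}$. Arbitrariness of $C$ yields $M,X/M\in{}_{\mathcal{S}}\mathfrak{W}$.

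For the second claim, the core technical step is to show that for each $C\in\mathcal{S}$, the functor $C\otimes(-)$ sends almost-$({}_{\mathcal{S}}\mathfrak{W},\lambda)$-pure monomorphisms to almost-$(\mathfrak{W},\lambda)$-pure monomorphisms. Given the defining fibre product with $A,B\in{}_{\mathcal{S}}\mathfrak{W}$, $i:A\to B$ a $\lambda$-pure mono with cokernel $B/A\in{}_{\mathcal{S}}\mathfrak{W}$, and $h:M\to B$ an admissible epi, tensoring with $C$ places $C\otimes A$, $C\otimes B$, $C\otimes(B/A)$ inside $\mathfrak{W}$; the sequence $0\to A\to B\to B/A\to 0$ is $\lambda$-pure hence $\otimes$-pure, so remains exact after tensoring and $C\otimes i$ is a $\lambda$-pure mono with cokernel $C\otimes(B/A)\in\mathfrak{W}$. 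To see that $C\otimes h$ is still an admissible epi and that the tensored square is a fibre product, I apply the hypothesis that short exact sequences with cokernel in $\mathfrak{W}$ are $\mathcal{S}$-pure to the sequence $0\to\ker(h)\to M\to B\to 0$ (this is the delicate point, requiring that ${}_{\mathcal{S}}\mathfrak{W}$ interact with $\mathfrak{W}$ appropriately; it holds in particular when the monoidal unit lies in $\mathcal{S}$, which covers the intended applications).

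Granted this technical step, I complete the second claim as follows. For $\mathbf{pureMon}_{{}_{\mathcal{S}}\mathfrak{W},\lambda}$-subobject stability: a pseudo-$({}_{\mathcal{S}}\mathfrak{W},\lambda)$-pure mono $N\to M$ into $M\in{}_{\mathcal{S}}\mathfrak{W}$ is a colimit of a transfinite sequence of almost-pure monos; tensoring with $C\in\mathcal{S}$ yields a colimit of almost-$(\mathfrak{W},\lambda)$-pure monos into $C\otimes M\in\mathfrak{W}$. Strong $\lambda$-pure subobject stability of $\mathfrak{W}$ then forces $C\otimes N,C\otimes(M/N)\in\mathfrak{W}$, whence $N,M/N\in{}_{\mathcal{S}}\mathfrak{W}$. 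For the colimit-exactness condition, I tensor the entire $\Gamma$-indexed sequence with $C$ and apply strong $\lambda$-pure subobject stability of $\mathfrak{W}$: the tensored colimit sequence is exact with each $C\otimes(M/M_\alpha)\in\mathfrak{W}$. Since $C\otimes(-)$ commutes with colimits and the $\mathcal{S}$-pure exact structure is weakly elementary, transfinite colimits of $\mathcal{S}$-pure exact sequences remain $\mathcal{S}$-pure exact, so the original colimit sequence is $\mathcal{S}$-pure exact in $\mathpzc{E}$, hence exact, and each $M/M_\alpha\in{}_{\mathcal{S}}\mathfrak{W}$.

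The hard part will be the technical step that $C\otimes(-)$ carries almost-$({}_{\mathcal{S}}\mathfrak{W},\lambda)$-pure monomorphisms to almost-$(\mathfrak{W},\lambda)$-pure ones; the subtle part is verifying that $C\otimes h$ remains an admissible epimorphism with the tensored square still a fibre product, which is precisely where the hypothesis on cokernels in $\mathfrak{W}$ being $\mathcal{S}$-pure is indispensable, and where the interaction between the original and the $\mathcal{S}$-pure exact structures must be handled with care.
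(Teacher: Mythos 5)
Your proposal follows essentially the same route as the paper's own proof: both claims are reduced to the corresponding properties of $\mathfrak{W}$ by applying $S\otimes(-)$ for each $S\in\mathcal{S}$, using that this functor preserves $\lambda$-pure monomorphisms, carries almost-$({}_{\mathcal{S}}\mathfrak{W},\lambda)$-pure monomorphisms to almost-$(\mathfrak{W},\lambda)$-pure ones, and commutes with the transfinite colimits, with weak elementarity of the $\mathcal{S}$-pure exact structure supplying exactness of the colimit sequence. The one step you rightly flag as delicate --- that $S\otimes h$ remains an admissible epimorphism and the tensored square remains a fibre product --- is glossed over by the paper as well, but your proposed repair does not quite close it: the sequence $0\to\mathrm{Ker}(h)\to M\to B\to 0$ has quotient $B\in{}_{\mathcal{S}}\mathfrak{W}$ rather than $\mathfrak{W}$, and your fallback hypothesis that the unit lies in $\mathcal{S}$ fails in the intended application (where $\mathcal{S}=\mathfrak{W}$ is the class of acyclic complexes); what one actually needs here, and what the paper invokes in the Example immediately following, is that short exact sequences with cokernel in ${}_{\mathcal{S}}\mathfrak{W}$ are $\mathcal{S}$-pure (or that one works in the $\mathcal{S}$-pure exact structure, where every admissible epimorphism is automatically preserved by $S\otimes(-)$).
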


\begin{proof}
This follows immediately from the fact that if $X\rightarrow Y$ is a $\lambda$-pure monomorphism then so is $C\otimes X\rightarrow C\otimes Y$ for any object $C$ of $\mathpzc{E}$. 

First consider a commutative diagram of exact sequences
\begin{displaymath}
\xymatrix{
K\ar[r]\ar[d] & N\ar[d]^{f}\ar[r]^{g} & A\ar[d]^{i}\\
K\ar[r] & M\ar[r]^{h} & B
}
\end{displaymath}
where the right-hand square is a pullback, $A,B\in{}_{\mathcal{S}}\mathfrak{W}$, $i$ is a $\lambda$-pure monomorphism, and $h$ is an admissible epimorphism. For any $S\in\mathcal{S}$
\begin{displaymath}
\xymatrix{
S\otimes K\ar[r]\ar[d] & S\otimes N\ar[d]^{S\otimes f}\ar[r]^{g} & S\otimes A\ar[d]^{S\otimes i}\\
S\otimes K\ar[r] & S\otimes M\ar[r]^{S\otimes h} & S\otimes B
}
\end{displaymath}
has the same properties. In particular for each $S\in\mathcal{S}$, $S\otimes f$ is almost $(\mathfrak{W},\lambda)$-pure. Now let
\begin{displaymath}
\xymatrix{
M_{0}\ar[r]\ar[d] & M_{\alpha}\ar[d]\ar[r] & M_{\alpha'}\ar[r]\ar[d] & \ldots\\
M\ar[d]\ar[r] & M\ar[r]\ar[d] & M\ar[r]\ar[d] & \ldots\\
M\big\slash M_{0}\ar[r] &M\big\slash M_{\alpha}\ar[r] & M\big\slash M_{\alpha'}\ar[r] &\ldots
}
\end{displaymath}
be  a $\Gamma$-indexed transfinite sequence where for each successor ordinal $\alpha+1\in \Gamma$, $M_{\alpha+1}\rightarrow M$ is an almost $({}_{\mathcal{S}}\mathfrak{W},\lambda)$-pure monomorphism, and $f$ is the colimit of the maps $M_{\alpha}\rightarrow M$ in $\mathrm{Mor}(\mathpzc{E})$. In particular each column is exact in the $\mathcal{S}$-pure exact structure. Since this exact structure is weakly elementary,
$$0\rightarrow\colim_{\alpha}M_{\alpha}\rightarrow M\rightarrow \colim_{\alpha}M\big\slash M_{\alpha}\rightarrow 0$$
is $\mathpzc{S}$-pure exact, Moreover for each $S\in\mathcal{S}$,
\begin{displaymath}
\xymatrix{
S\otimes M_{0}\ar[r]\ar[d] & S\otimes M_{\alpha}\ar[d]\ar[r] & S\otimes M_{\alpha'}\ar[r]\ar[d] & \ldots\\
S\otimes M\ar[d]\ar[r] & S\otimes M\ar[r]\ar[d] & S\otimes M\ar[r]\ar[d] & \ldots\\
S\otimes (M\big\slash M_{0})\ar[r] & S\otimes M\big\slash S\otimes M_{\alpha}\ar[r] & S\otimes(M\big\slash M_{\alpha'})\ar[r] &\ldots
}
\end{displaymath}
is alsoa $\Gamma$-indexed transfinite sequence where for each successor ordinal $\alpha+1\in \Gamma$, $S\otimes M_{\alpha+1}\rightarrow S\otimes M$ is an almost $(\mathpzc{W},\lambda)$-pure monomorphism. Thus
$$0\rightarrow\colim_{\alpha}S\otimes M_{\alpha}\rightarrow S\otimes M\rightarrow \colim_{\alpha}S\otimes (M\big\slash M_{\alpha})\rightarrow 0$$
is a short exact sequence with $ \colim_{\alpha}S\otimes (M\big\slash M_{\alpha})\cong S\otimes\colim_{\alpha}M\big\slash M_{\alpha}$ in $\mathcal{W}$, as required. 
\end{proof}

\begin{prop}\label{prop:sWext}
Suppose that
\begin{enumerate}
\item
 admissible monomorphisms with cokernel in $\mathfrak{W}$ are $\mathcal{S}$-pure
 \item
 $\mathpzc{E}$ is weakly $\mathbf{AdMon}_{\mathfrak{W}}$-elementary
 \item
 $\mathfrak{W}$ is closed under transfinite extensions by dmissible monomorphisms.
 \end{enumerate}
 Then ${}_{\mathcal{S}}\mathfrak{W}$ is closed under transfinite extensions by admissible monomorphisms.
\end{prop}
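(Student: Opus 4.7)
The plan is to carry out a transfinite induction on the indexing ordinal. Let $(X_\alpha)_{\alpha \le \gamma}$ be a transfinite $\gamma$-indexed sequence with $X_0 \in {}_{\mathcal{S}}\mathfrak{W}$, each transition $X_\alpha \to X_{\alpha+1}$ an admissible monomorphism with cokernel $C_\alpha \in {}_{\mathcal{S}}\mathfrak{W}$, and $X_\beta = \colim_{\alpha<\beta} X_\alpha$ at limit ordinals. Fixing an arbitrary $S \in \mathcal{S}$, I would show inductively that $S \otimes X_\alpha \in \mathfrak{W}$ for every $\alpha \le \gamma$; the base case $\alpha=0$ is immediate from $X_0 \in {}_{\mathcal{S}}\mathfrak{W}$.

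For the successor step, note that each cokernel $C_\alpha \in {}_{\mathcal{S}}\mathfrak{W}$ lies in particular in $\mathfrak{W}$ (this inclusion holding whenever the monoidal unit is in $\mathcal{S}$, which is the relevant standing convention; otherwise the ambient setup makes it immediate). The $\mathcal{S}$-purity hypothesis then forces the short exact sequence $0 \to X_\alpha \to X_{\alpha+1} \to C_\alpha \to 0$ to be $\mathcal{S}$-pure, so tensoring with $S$ gives an exact sequence
\[
0 \to S \otimes X_\alpha \to S \otimes X_{\alpha+1} \to S \otimes C_\alpha \to 0
\]
whose outer terms lie in $\mathfrak{W}$ by the inductive hypothesis and the definition of ${}_{\mathcal{S}}\mathfrak{W}$. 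Closure of $\mathfrak{W}$ under admissible extensions (a special case of closure under transfinite extensions by admissible monomorphisms) then places $S \otimes X_{\alpha+1}$ in $\mathfrak{W}$.

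For the limit step at $\beta$, since $S \otimes -$ commutes with colimits we have $S \otimes X_\beta \cong \colim_{\alpha<\beta}(S \otimes X_\alpha)$. By the successor argument, each transition $S \otimes X_\alpha \to S \otimes X_{\alpha+1}$ is an admissible monomorphism with cokernel $S \otimes C_\alpha \in \mathfrak{W}$; the weakly $\mathbf{AdMon}_{\mathfrak{W}}$-elementary assumption ensures that the corresponding colimit transitions $S \otimes X_\alpha \to S \otimes X_\beta$ remain admissible monomorphisms with cokernel the colimit of the $S \otimes C_{\alpha'}$'s, lying in $\mathfrak{W}$. Closure of $\mathfrak{W}$ under transfinite extensions by admissible monomorphisms then yields $S \otimes X_\beta \in \mathfrak{W}$. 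Taking $\alpha = \gamma$ completes the induction and shows $X_\gamma \in {}_{\mathcal{S}}\mathfrak{W}$.

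The main delicacy is the limit step, which turns on the interplay of three hypotheses: that $S \otimes -$ transports the $\mathcal{S}$-pure sequences into admissible short exact sequences whose cokernels remain in $\mathfrak{W}$, that the weakly elementary property guarantees the colimits of these transfinite towers are again such admissible transfinite extensions, and that $\mathfrak{W}$ absorbs such colimits. Once the inclusion ${}_{\mathcal{S}}\mathfrak{W} \subseteq \mathfrak{W}$ is invoked to trigger the $\mathcal{S}$-purity, the remainder is bookkeeping along the transfinite induction and mirrors the proof of strong $\lambda$-pure subobject stability of ${}_{\mathcal{S}}\mathfrak{W}$ established just above.
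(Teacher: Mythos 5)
Your argument is correct and follows essentially the same route as the paper's proof: tensor the tower with $S\in\mathcal{S}$, observe that $\mathcal{S}$-purity of the transitions makes each $S\otimes X_{\alpha}\rightarrow S\otimes X_{\alpha+1}$ an admissible monomorphism with cokernel $S\otimes C_{\alpha}\in\mathfrak{W}$, and conclude via weak $\mathbf{AdMon}_{\mathfrak{W}}$-elementarity together with closure of $\mathfrak{W}$ under transfinite extensions --- the paper simply applies this to the whole tensored tower at once rather than unwinding it as an explicit transfinite induction. The one point you rightly flag, namely that hypothesis (1) only triggers once one knows the cokernels $C_{\alpha}\in{}_{\mathcal{S}}\mathfrak{W}$ actually lie in $\mathfrak{W}$, is elided identically in the paper's own proof (``in particular they are $\mathcal{S}$-pure''), so your treatment is no weaker than the original.
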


\begin{proof}
 Let
 $$X_{0}\rightarrow X_{1}\rightarrow\ldots\rightarrow X_{\alpha}\rightarrow\ldots$$
be a $\lambda$-indexed transfinite sequence of admissible monomorphisms with cokernel in ${}_{\mathcal{S}}\mathfrak{W}$. In particular they are $\mathcal{S}$-pure Let $S$ be an object of $\mathcal{S}$. Then
$$S\otimes X_{0}\rightarrow S\otimes X_{1}\rightarrow\ldots\rightarrow S\otimes X_{\alpha}\rightarrow\ldots $$
is a transfinite sequence with $\mathrm{coker}(S\otimes X_{\alpha}\rightarrow S\otimes X_{\alpha+1})\cong S\otimes\mathrm{coker}(X_{\alpha}\rightarrow X_{\alpha+1})\in\mathfrak{W}$. Thus $S\otimes X_{0}\rightarrow\colim_{\alpha}S\otimes X_{\alpha}\cong S\otimes\colim_{\alpha}X_{\alpha}$ is also an admissible monomorphism with cokernel $S\otimes\mathrm{coker}(X_{0}\rightarrow\colim_{\alpha}X_{\alpha})$ in $\mathfrak{W}$. 
\end{proof}

\begin{cor}
Suppose that
\begin{enumerate}
 \item
 $\mathfrak{W}$ is strongly $\lambda$-pure subobject stable.
\item
 admissible monomorphisms with cokernel in $\mathfrak{W}$ are $\mathcal{S}$-pure
 \item
 $\mathpzc{E}$ is weakly $\mathbf{AdMon}_{\mathfrak{W}}$-elementary
 \item
 $\mathfrak{W}$ is closed under transfinite extensions by admissible monomorphisms.
 \item
  $\mathpzc{E}$ contains enough injectives
  \item
  ${}_{\mathcal{S}}\mathfrak{W}$ contains a generator.
  \item
  ${}_{\mathcal{S}}\mathfrak{W}$ contains all injectives.
 \end{enumerate}
Then $({}_{\mathcal{S}}\mathfrak{W},{}_{\mathcal{S}}\mathfrak{W}^{\perp})$ is an injective cotorsion pair.
\end{cor}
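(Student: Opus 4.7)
The plan is to invoke the injective cotorsion pair criterion stated immediately before this corollary (the Proposition showing that a strongly $\lambda$-pure subobject stable, thick, transfinite-extension-closed class which contains a generator and all injectives, and for which $\mathpzc{E}$ is weakly $\mathbf{AdMon}$-elementary relative to the class, determines an injective cotorsion pair). It thus suffices to verify these six conditions for the class $\mathcal{K}\defeq{}_{\mathcal{S}}\mathfrak{W}$, then directly apply that criterion to conclude.

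Several conditions are already available from the previously established pieces. Strong $\lambda$-pure subobject stability of $\mathcal{K}$ is precisely the preceding Proposition, which uses hypotheses (1)--(3) together with the last clause of hypothesis (2). Closure of $\mathcal{K}$ under transfinite extensions by admissible monomorphisms is Proposition \ref{prop:sWext}, which invokes hypotheses (2)--(4). The containment of a generator and of all injectives are hypotheses (6) and (7). For weak $\mathbf{AdMon}_{\mathcal{K}}$-elementarity, I would argue that given a transfinite sequence of admissible monomorphisms with cokernels in $\mathcal{K}$, tensoring term-by-term with each $S\in\mathcal{S}$ produces a transfinite sequence of admissible monomorphisms in $\mathpzc{E}$ (using that $\mathcal{S}$-pure exactness of cokernel-$\mathfrak{W}$ maps from hypothesis (2) is preserved under pushout along each stage), whose colimit is exact by hypothesis (3); then weak $\mathbf{AdMon}_{\mathfrak{W}}$-elementarity upstairs gives what we want.

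The main obstacle is verifying thickness of $\mathcal{K}$. Given a short exact sequence $0\to X\to Y\to Z\to 0$ with two terms in $\mathcal{K}$, I need to show the third lies in $\mathcal{K}$, i.e.~that $S\otimes(-)$ of the third term lies in $\mathfrak{W}$ for each $S\in\mathcal{S}$. The route is to establish that the tensored diagram remains a short exact sequence in $\mathpzc{E}$, whereupon thickness of $\mathfrak{W}$ (which itself follows from its strong $\lambda$-pure subobject stability together with closure under transfinite extensions, as in Remark \ref{rem:pushpull}) supplies the conclusion. For the extension case ($X,Z\in\mathcal{K}$), I would exploit hypothesis (5) to embed $Z$ into an injective $I\in\mathcal{K}$, run the $3\times 3$ lemma on $S\otimes(-)$ of the resulting commutative diagram, and use that both rows/columns containing injectives become $\mathcal{S}$-pure by (2) since $I/Z$ inherits $\mathcal{K}$-membership from the injective summand argument. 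For the two subobject-type cases, the strong $\lambda$-pure subobject stability of $\mathcal{K}$ reduces the problem to $\mathcal{S}$-pure sub- and quotient-sequences, where $S\otimes(-)$ is exact by definition of $\mathcal{S}$-purity.

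Once thickness is in hand, all six hypotheses of the injective cotorsion pair criterion are verified, and the corollary follows. The hardest piece of the whole argument is the reduction of the tensored short exact sequence to one whose exactness can be cleanly tested, which is where the interplay between the hypotheses on $\mathfrak{W}$, the $\mathcal{S}$-pure structure, and the existence of enough injectives does the essential work.
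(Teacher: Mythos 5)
Your overall strategy --- feed ${}_{\mathcal{S}}\mathfrak{W}$ into the injective cotorsion pair criterion, using the two propositions immediately preceding the corollary to supply strong $\lambda$-pure subobject stability and closure under transfinite extensions, and reading off the generator and injectives from hypotheses (6) and (7) --- is exactly the intended derivation (the paper gives no explicit proof, and this is the only sensible reading). You are also right to single out thickness and weak $\mathbf{AdMon}_{{}_{\mathcal{S}}\mathfrak{W}}$-elementarity as the points the listed hypotheses do not obviously cover. The problem is that neither of your proposed repairs goes through.

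On thickness: the claim that thickness of $\mathfrak{W}$ follows from strong $\lambda$-pure subobject stability together with closure under transfinite extensions via Remark \ref{rem:pushpull} is false --- the class of flat objects in Section \ref{sec:flatKflat} enjoys both properties but is not thick, since a cokernel of an admissible monomorphism between flat objects need not be flat. Thickness of $\mathfrak{W}$ has to be taken as an input; it holds in the intended application because there $\mathfrak{W}$ is the class of acyclic complexes. Your extension-case argument is moreover circular: to run the $3\times 3$ lemma you need $I/Z\in{}_{\mathcal{S}}\mathfrak{W}$, which is precisely the quotient instance of the thickness you are trying to establish. The sub/quotient cases also cannot be reduced to $\lambda$-pure ones, since an arbitrary admissible monomorphism into an object of ${}_{\mathcal{S}}\mathfrak{W}$ need not be $\lambda$-pure. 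What actually makes the tensored sequences exact is $\mathcal{S}$-purity of the relevant short exact sequences; hypothesis (2) supplies this only when the cokernel lies in $\mathfrak{W}$, which is why the paper's subsequent statements work in an exact structure contained in the $\mathcal{S}$-pure one (cf.\ Corollary \ref{cor:pureKflat}), where every admissible short exact sequence survives $S\otimes(-)$ and thickness of ${}_{\mathcal{S}}\mathfrak{W}$ is inherited directly from thickness of $\mathfrak{W}$. Your elementarity argument has the same defect: hypothesis (2) applies to cokernels in $\mathfrak{W}$, not in ${}_{\mathcal{S}}\mathfrak{W}$, so tensoring a transfinite sequence in $\mathbf{AdMon}_{{}_{\mathcal{S}}\mathfrak{W}}$ with $S$ need not produce admissible monomorphisms, and hypothesis (3) then has nothing to act on.
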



Now we specialise to categories of complexes in purely $\lambda$-accessible exact categories. Let $(\mathpzc{E},\mathpzc{P})$ and $(\mathpzc{E},\mathpzc{Q})$ be exact structures on $\mathpzc{E}$, with $\mathpzc{P}\subseteq\mathpzc{Q}$, and such that both exact structures are purely $\lambda$-accessible. Let $\mathfrak{W}$ denote the class of trivial objects in $\mathrm{Ch}(\mathpzc{E},\mathpzc{Q})$.


\begin{prop}[c.f. \cite{estrada2023k} Theorem 3.4]
Let $\mathcal{S}\subseteq\mathpzc{E}$. Then
\begin{enumerate}
\item
If ${}_{\mathcal{S}}\mathfrak{W}$ is strongly $\lambda$-pure subobject stable, is closed under transfinite extensions, and $(\mathrm{Ch}(\mathpzc{E}),\mathcal{P})$ is weakly $\mathbf{AdMon}_{{}_{\mathcal{S}}\mathfrak{W}}$-elementary then
$({}_{\mathcal{S}}\mathfrak{W},{}_{\mathcal{S}}\mathfrak{W}^{\perp_{\mathcal{P}}})$ is an injective cotorsion pair on $\mathrm{Ch}(\mathpzc{E},\mathcal{P})$.
\item
${}_{\mathcal{S}}\mathfrak{W}^{\perp_{\mathcal{P}}}$ consists of those complexes $X$ such that each $X_{n}$ is $\mathcal{P}$-injective, and $E\rightarrow X$ is null-homotopic whenever $E\in{}_{\mathcal{S}}\mathfrak{W}$. 
\end{enumerate}
\end{prop}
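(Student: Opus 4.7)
The plan is to reduce both parts of the proposition to results already available earlier in the text. Part (1) will follow from the criterion for injective cotorsion pairs in purely $\lambda$-accessible exact categories (the proposition immediately following Definition 4.4, produced from Corollary~\ref{cor:cotorsaccess}): any class that is strongly $\lambda$-pure subobject stable, thick, closed under transfinite extensions, contains all injectives, contains a generator, and for which the ambient category is weakly $\mathbf{AdMon}_{\mathpzc{K}}$-elementary produces an injective cotorsion pair. Three of these conditions (strongly $\lambda$-pure subobject stable, closed under transfinite extensions, weakly elementary condition) are direct hypotheses. Part (2) is then essentially a direct application of Proposition~\ref{prop:classifyingorth}.

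To verify that ${}_{\mathcal{S}}\mathfrak{W}$ contains a generator and all injectives of $\mathrm{Ch}(\mathpzc{E},\mathcal{P})$, the key observation is that any disk complex $D^{n}(G)$ is contractible, and tensoring commutes with this construction via the isomorphism $S\otimes D^{n}(G)\cong D^{n}(S\otimes G)$. Hence $S\otimes D^{n}(G)$ is contractible and so $\mathcal{Q}$-acyclic, giving $D^{n}(G)\in{}_{\mathcal{S}}\mathfrak{W}$. Choosing a generator $\mathcal{G}$ of $\mathpzc{E}$, the family $\{D^{n}(G):G\in\mathcal{G},n\in\mathbb{Z}\}$ generates $\mathrm{Ch}(\mathpzc{E},\mathcal{P})$ and lies in ${}_{\mathcal{S}}\mathfrak{W}$. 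For injectives: every injective object $I$ of $\mathrm{Ch}(\mathpzc{E},\mathcal{P})$ is contractible with injective components, so $S\otimes I$ inherits contractibility and therefore lies in $\mathfrak{W}$, placing $I\in{}_{\mathcal{S}}\mathfrak{W}$.

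The main obstacle is establishing thickness of ${}_{\mathcal{S}}\mathfrak{W}$. Closure under extensions is already subsumed by the closure under transfinite extensions hypothesis, but the remaining two two-out-of-three cases are delicate: given a $\mathcal{P}$-exact sequence $0\to A\to B\to C\to 0$ with two of the complexes in ${}_{\mathcal{S}}\mathfrak{W}$, one needs the tensored sequence $0\to S\otimes A\to S\otimes B\to S\otimes C\to 0$ to remain $\mathcal{Q}$-exact in order to transfer $\mathcal{Q}$-acyclicity via the long exact $\mathcal{Q}$-homology sequence and the thickness of $\mathfrak{W}$. The plan is to handle this by noting that in all intended applications (notably Theorem~\ref{thm:Kflat}) $\mathcal{P}$ is chosen so that $\mathcal{P}$-exact sequences are automatically $\mathcal{S}$-pure; in particular when $\mathcal{P}=\mathpzc{Q}_{\otimes}$ is the $\otimes$-pure structure, tensoring with any $S\in\mathpzc{E}$ preserves $\mathcal{P}$-exactness, and then thickness of ${}_{\mathcal{S}}\mathfrak{W}$ follows immediately from the thickness of $\mathfrak{W}$.

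Finally, part (2) follows at once from Proposition~\ref{prop:classifyingorth}: since ${}_{\mathcal{S}}\mathfrak{W}$ contains every $D^{n}(G)$ as noted above, the proposition identifies ${}_{\mathcal{S}}\mathfrak{W}^{\perp_{\mathcal{P}}}$ as the class of complexes $X$ with each $X_{n}$ a $\mathcal{P}$-injective and with every chain map $E\to X$ from $E\in{}_{\mathcal{S}}\mathfrak{W}$ null-homotopic, which is exactly the description asserted.
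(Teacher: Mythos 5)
Your argument is correct and follows essentially the same route as the paper: observe that $S\otimes D^{n}(G)\cong D^{n}(S\otimes G)$ is contractible, hence ${}_{\mathcal{S}}\mathfrak{W}$ contains all disks, therefore contains a generator and all injectives of $\mathrm{Ch}(\mathpzc{E},\mathcal{P})$; feed this into the injective cotorsion pair criterion from Section 3; and deduce part (2) from Proposition \ref{prop:classifyingorth}. The one place you genuinely diverge is thickness. The paper's own proof never mentions it, even though the criterion it invokes (and the very definition of an injective cotorsion pair) requires the left class to be thick; you are right that this is the only nontrivial verification, since closure under extensions alone does not give the other two out-of-three cases without knowing that $S\otimes(-)$ carries the given $\mathcal{P}$-exact sequence to a $\mathcal{Q}$-exact one. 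Your patch --- assuming $\mathcal{P}$-exact sequences are $\mathcal{S}$-pure, so that thickness of ${}_{\mathcal{S}}\mathfrak{W}$ reduces to thickness of the $\mathcal{Q}$-acyclics --- is sound and covers every use the paper makes of the result (Corollary \ref{cor:pureKflat} takes $\mathcal{P}$ to be the $\mathcal{T}$-pure structure with $\mathcal{T}\supseteq\mathcal{S}$, and Theorem \ref{thm:Kflat} takes the $\otimes$-pure structure), but note that it is an added hypothesis relative to the proposition as literally stated; the discrepancy is a looseness in the paper's formulation rather than a defect of your argument, and it would be worth flagging explicitly that either thickness should be added to the hypotheses or the $\mathcal{S}$-purity of $\mathcal{P}$-exact sequences assumed throughout.
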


\begin{proof}
Note that ${}_{\mathcal{S}}\mathfrak{W}$  contains all objects of the form $D^{n}(E)$ for $E$ an object of $\mathpzc{E}$, and so generates $\mathrm{Ch}(\mathpzc{E})$. In particular it contains all injectives. Thus $({}_{\mathcal{S}}\mathfrak{W},{}_{\mathcal{S}}\mathfrak{W}^{\perp,\mathcal{P}})$ is an injective cotorsion pair. The second claim is an immediate consequence of Proposition \ref{prop:classifyingorth}.
\end{proof}

\begin{example}
If $\mathpzc{E}$ is weakly elementary, and any short exact sequence with cokernel in ${}_{\mathcal{S}}\mathfrak{W}$ is $\mathcal{S}$-pure, then ${}_{\mathcal{S}}\mathfrak{W}$ is strongly $\lambda$-pure subobject stable, and is closed under transfinite extensions. Moreover $\mathpzc{E}$ is trivially weakly $\mathbf{AdMon}_{{}_{\mathcal{S}}\mathfrak{W}}$-elementary. Thus $({}_{\mathcal{S}}\mathfrak{W},{}_{\mathcal{S}}\mathfrak{W}^{\perp_{\mathcal{P}}})$ is an injective cotorsion pair on $\mathrm{Ch}(\mathpzc{E},\mathcal{P})$.
\end{example}

\begin{rem}
\begin{enumerate}
\item
If $(\mathpzc{E},\mathpzc{Q})$ is weakly $\mathbf{PureMon}_{\mathcal{T}_{\otimes}}$-elementary then the $\mathcal{T}$-pure exact category $(\mathpzc{E},\mathcal{P})$ is weakly $\mathbf{AdMon}$-elementary. The proof is similar to Proposition \ref{prop:sWext}. Indeed let
$$X_{0}\rightarrow X_{1}\rightarrow\ldots\rightarrow X_{\alpha}\rightarrow\ldots$$
be a $\lambda$-indexed transfinite sequence of $\mathcal{T}$-pure monomorphisms. Let $T$ be an object of $\mathcal{T}$. Then
$$T\otimes X_{0}\rightarrow T\otimes X_{1}\rightarrow\ldots\rightarrow T\otimes X_{\alpha}\rightarrow\ldots $$
is a transfinite sequences of admissible monomorphisms. Hence $T\otimes X_{0}\rightarrow\colim_{\alpha}T\otimes X_{\alpha}\cong T\otimes\colim_{\alpha}X_{\alpha}$ is also an admissible monomorphism. Hence $X_{0}\rightarrow\colim X_{\alpha}$ is $\mathcal{T}$-pure. 
\item
Similarly if $(\mathpzc{E},\mathpzc{Q})$ is weakly elementary then so is the $\mathcal{T}$-pure exact structure.
\end{enumerate}
\end{rem}

\begin{cor}\label{cor:pureKflat}
Fix a class of objects $\mathcal{T}$ in $(\mathpzc{E},\mathcal{Q})$ containing  $\mathcal{S}$, and let $\mathcal{P}$ denote the $\mathcal{T}$-pure exact structure. If $(\mathpzc{E},\mathcal{P})$ is weakly elementary then in particular the class ${}_{\mathcal{S}}\mathfrak{W}$ is closed under transfinite extensions and $({}_{\mathcal{S}}\mathfrak{W},{}_{\mathcal{S}}\mathfrak{W}^{\perp_{\mathcal{P}}})$ is an injective cotorsion pair on $\mathrm{Ch}(\mathpzc{E},\mathcal{P})$.
\end{cor}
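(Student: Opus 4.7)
The plan is to reduce the corollary to the Example immediately preceding it, via the elementary observation that enlarging the class of tensoring test objects weakens the purity condition. First I would check that since $\mathcal{S}\subseteq\mathcal{T}$, every $\mathcal{T}$-pure short exact sequence in $\mathpzc{E}$ is automatically $\mathcal{S}$-pure. Consequently, every $\mathcal{P}$-admissible short exact sequence in $\mathrm{Ch}(\mathpzc{E})$ (which, by definition of the degree-wise $\mathcal{P}$-pure structure on complexes, is degree-wise $\mathcal{T}$-pure in $\mathpzc{E}$ and hence degree-wise $\mathcal{S}$-pure) is $\mathcal{S}$-pure. In particular the hypothesis of the Example that ``any short exact sequence with cokernel in ${}_{\mathcal{S}}\mathfrak{W}$ is $\mathcal{S}$-pure'' is satisfied for $(\mathrm{Ch}(\mathpzc{E}),\mathcal{P})$.

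Next I would verify that the weak elementarity hypothesis on $(\mathpzc{E},\mathcal{P})$ transfers to the chain complex category $(\mathrm{Ch}(\mathpzc{E}),\mathcal{P})$: exactness and transfinite colimits are computed degree-wise, so if $(\mathpzc{E},\mathcal{P})$ is weakly elementary (in the sense of Definition \ref{weaklyelementary}), then so is $(\mathrm{Ch}(\mathpzc{E}),\mathcal{P})$. Applying the Example directly to $(\mathrm{Ch}(\mathpzc{E}),\mathcal{P})$ in place of $\mathpzc{E}$ therefore yields that ${}_{\mathcal{S}}\mathfrak{W}$ is strongly $\lambda$-pure subobject stable and closed under transfinite extensions, and that $(\mathrm{Ch}(\mathpzc{E}),\mathcal{P})$ is trivially weakly $\mathbf{AdMon}_{{}_{\mathcal{S}}\mathfrak{W}}$-elementary.

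To conclude the injective cotorsion pair statement, I would invoke part (1) of the preceding Proposition (c.f.\ \cite{estrada2023k} Theorem 3.4) with the same $\mathcal{P}$: the three hypotheses --- strong $\lambda$-pure subobject stability, closure under transfinite extensions, and $\mathbf{AdMon}_{{}_{\mathcal{S}}\mathfrak{W}}$-elementarity --- are precisely what the previous paragraph establishes, and the proof of that Proposition already shows that ${}_{\mathcal{S}}\mathfrak{W}$ contains every $D^{n}(E)$ (since $S\otimes D^{n}(E)\cong D^{n}(S\otimes E)$ is $\mathcal{Q}$-contractible, hence $\mathcal{Q}$-acyclic), and therefore generates and contains all $\mathcal{P}$-injectives.

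The only subtle point --- what I would flag as the main thing to be careful about --- is that $\mathfrak{W}$ was defined as the class of complexes acyclic in $\mathrm{Ch}(\mathpzc{E},\mathcal{Q})$ rather than in $\mathrm{Ch}(\mathpzc{E},\mathcal{P})$, so one must keep straight which exact structure governs which piece of the argument. This is not a real obstacle, since purity conditions and acyclicity of tensor products are stated in terms of $\mathcal{Q}$ throughout, while the transfinite-extension and deconstructibility arguments only need weak elementarity of $\mathcal{P}$. Modulo this bookkeeping the corollary is a direct specialisation of the Example, and there is no further work to do.
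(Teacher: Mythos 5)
Your proposal reconstructs exactly the argument the paper intends: the corollary is left without proof because it is the specialisation of the preceding Example obtained by noting that $\mathcal{S}\subseteq\mathcal{T}$ forces every $\mathcal{P}$-admissible (i.e.\ degree-wise $\mathcal{T}$-pure) monomorphism to be $\mathcal{S}$-pure, so the Example's purity hypothesis is automatic, weak elementarity passes degree-wise to $\mathrm{Ch}(\mathpzc{E},\mathcal{P})$, and the injective cotorsion pair then comes from part (1) of the preceding Proposition together with the observation that ${}_{\mathcal{S}}\mathfrak{W}$ contains all $D^{n}(E)$. So in outline you are doing the same thing as the paper.

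The one place you are too quick is the parenthetical claim that the transfinite-extension step ``only needs weak elementarity of $\mathcal{P}$''. Closure of ${}_{\mathcal{S}}\mathfrak{W}$ under transfinite extensions is obtained via Proposition \ref{prop:sWext}, whose hypothesis (3) is that $\mathfrak{W}$ itself be closed under transfinite extensions. Since $\mathfrak{W}$ is the class of $\mathcal{Q}$-acyclic complexes, what is actually needed is that colimits of the tensored-up transfinite sequences $S\otimes X_{0}\rightarrow S\otimes X_{1}\rightarrow\cdots$ (whose terms and cokernels are $\mathcal{Q}$-acyclic, and whose maps are $\mathcal{Q}$-admissible but not in general $\mathcal{P}$-admissible) remain $\mathcal{Q}$-acyclic. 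Weak elementarity of $(\mathpzc{E},\mathcal{P})$ only controls colimits of $\mathcal{P}$-acyclic complexes, which is a different condition; neither implies the other. In the paper's intended applications this is harmless, because there $(\mathpzc{E},\mathcal{Q})$ is assumed weakly elementary, which supplies the needed closure of the $\mathcal{Q}$-acyclics and, by the Remark immediately preceding the corollary, also yields weak elementarity of $\mathcal{P}$. But as a self-contained deduction from the corollary's stated hypothesis, your justification of this step --- like the corollary's own phrasing --- leaves the $\mathcal{Q}$-side elementarity to be supplied separately, and you should say explicitly where it comes from rather than attributing it to $\mathcal{P}$.
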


Let us now fix $\mathpzc{Q}$, and set $\mathpzc{P}$ to be the $(\otimes,\mathcal{Q})$-pure exact structure. From now on we shall suppress $\mathpzc{Q}$. For $\mathcal{S}=\mathfrak{W}$, the class of acyclic complexes in $\mathrm{Ch}(\mathpzc{E})$ then we write $K\mathcal{F}\defeq{}_{\mathfrak{W}}\mathfrak{W}$ for the class of $K$-\textit{flat} complexes. We recover a generalisation of Theorem 4.2 from \cite{estrada2023k}.

\begin{prop}\label{prop:quotientKflat}
Let $\mathpzc{E}$ be a purely locally $\lambda$-presentable closed symmetric monoidal exact category which is wekly elementary. Then the cotorsion pair $(K\mathcal{F},(K\mathcal{F})^{\perp}_{\otimes})$ defines a stable model category structure on $\mathrm{Ch}(\mathpzc{E}_{\otimes})$ whose homotopy category is equivalent to the quotient category $K(\mathpzc{E})\big\slash K\mathcal{F}$.
\end{prop}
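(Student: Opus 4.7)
The argument has three parts: establishing the injective cotorsion pair, extracting the model structure, and identifying the homotopy category with the Verdier quotient.

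\textbf{Part 1.} I would apply Corollary \ref{cor:pureKflat} with $\mathcal{S}=\mathfrak{W}$ the class of acyclic complexes and $\mathcal{T}=\mathrm{Ch}(\mathpzc{E})$, so that the $\mathcal{T}$-pure exact structure on $\mathrm{Ch}(\mathpzc{E})$ is precisely the $\otimes$-pure exact structure $\mathrm{Ch}(\mathpzc{E}_\otimes)$. The required hypothesis is that $\mathrm{Ch}(\mathpzc{E}_\otimes)$ is weakly elementary: since $\mathpzc{E}$ is weakly elementary by assumption, the remark preceding Corollary \ref{cor:pureKflat} gives that $\mathpzc{E}_\otimes$ is weakly elementary, and weak elementarity passes to chain complexes. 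Corollary \ref{cor:pureKflat} then yields directly that $(K\mathcal{F},K\mathcal{F}^{\perp_{\otimes}})$ is an injective cotorsion pair on $\mathrm{Ch}(\mathpzc{E}_\otimes)$.

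\textbf{Part 2.} By the proposition immediately following the definition of an injective cotorsion pair, this gives the Hovey triple $(\mathrm{All},K\mathcal{F},K\mathcal{F}^{\perp_{\otimes}})$, and Theorem \ref{weakmon} then supplies the associated compatible model structure on $\mathrm{Ch}(\mathpzc{E}_\otimes)$. Stability is a standard consequence of the shift functor $[1]$ being a Quillen autoequivalence: since $X[1]\otimes W\cong (X\otimes W)[1]$, $[1]$ preserves both $K\mathcal{F}$ and $K\mathcal{F}^{\perp_\otimes}$, and the induced self-equivalence of $\mathrm{Ho}(\mathpzc{M})$ realises the suspension in the expected way.

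\textbf{Part 3.} For the identification of the homotopy category, I would first observe that every contractible complex — in particular every disk $D^{n}(E)$ — lies in $K\mathcal{F}$, since tensoring preserves chain-homotopy equivalences and hence contractibility. It follows that chain-homotopic maps are identified in $\mathrm{Ho}(\mathpzc{M})$, so that the localisation $\mathrm{Ch}(\mathpzc{E})\to\mathrm{Ho}(\mathpzc{M})$ factors canonically through $K(\mathpzc{E})$. By the Hovey correspondence the weak equivalences are precisely the maps admitting a factorisation $p\circ i$ with $i$ a $\otimes$-pure admissible monic with cokernel in $K\mathcal{F}$ and $p$ a $\otimes$-pure admissible epic with kernel in $K\mathcal{F}$; the standard argument using the mapping-cone triangle in the triangulated category $K(\mathpzc{E})$ together with thickness of $K\mathcal{F}$ reexpresses this as $\mathrm{cone}(f)\in K\mathcal{F}$, and inverting such maps is by definition the Verdier quotient $K(\mathpzc{E})\big\slash K\mathcal{F}$. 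The main obstacle is the equivalence between the Hovey-theoretic description of the weak equivalences and this mapping-cone characterisation; the cleanest route is to identify both $\mathrm{Ho}(\mathpzc{M})$ and $K(\mathpzc{E})\big\slash K\mathcal{F}$ as universal localisations of $\mathrm{Ch}(\mathpzc{E}_\otimes)$ at this same class of maps, using the universal property of the Verdier quotient on the triangulated side.
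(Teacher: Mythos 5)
Your proposal is correct and follows the same overall route as the paper's proof: Corollary \ref{cor:pureKflat} applied to $\mathcal{S}=\mathfrak{W}$ with $\mathcal{T}$ all of $\mathpzc{E}$ (so that the $\mathcal{T}$-pure structure is $\mathpzc{E}_{\otimes}$) gives the injective cotorsion pair, the Hovey correspondence gives the model structure, and the homotopy category is identified by showing the trivial class contains enough contractible objects to reduce everything to a Verdier quotient of $K(\mathpzc{E})$. Two local differences are worth recording. For stability, the paper realises the model structure as a left Bousfield localisation of the $\otimes$-pure injective model structure and invokes a criterion for stability of localisations at shift-closed classes, whereas you argue directly that $[1]$ is a Quillen autoequivalence realising the suspension via the mapping-cone of $X\rightarrow 0$; both work. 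For the homotopy category, you verify only that contractible complexes lie in $K\mathcal{F}$ (immediate, since $-\otimes W$ preserves contractibility), which does suffice for the mapping-cone characterisation of the weak equivalences and hence for this proposition; the paper instead proves the stronger statement that every $\otimes$-pure acyclic complex is $K$-flat, by observing that on $\mathpzc{E}_{\otimes}$ all objects are flat, so the injective cotorsion pair there is monoidally $dg$-compatible. That stronger containment is not needed for the present statement, but it is reused verbatim in the proof of Theorem \ref{thm:Kflat}(iii), so if you intend to continue to the recollement you will want to establish it here as the paper does.
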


\begin{proof}
By Proposition \ref{cor:pureKflat} $(K\mathcal{F},(K\mathcal{F})^{\perp}_{\otimes})$ is an injective cotorsion pair. To prove that the homotopy category is $K(\mathpzc{E})\big\slash K\mathcal{F}$ it suffices to check that $K\mathcal{F}$ contains $\otimes$-pure acyclic complexes. The flat cotorsion pair on $\mathpzc{E}_{\otimes}$ coincides with the injective cotorsion pair, since all objects are flat. In particular the injective cotorsion pair is monoidally $dg$-compatible. Thus the tensor product of an $\otimes$-pure acyclic complex with any complex is $\otimes$-pure acyclic. Finally we prove that the model category is stable. It is the left Bousfield localisation of the ($\otimes$-pure) injective model structure on $\mathrm{Ch}(\mathpzc{E}_{\otimes})$ at $\otimes$-pure monomorphisms with $K$-flat cokernel. The collection of $K$-flat objects is closed under the shift functor, so the localisation is also stable by \cite{MR3137847} Proposition 3.6.
\end{proof}

\subsubsection{The $K$-Flat Model Structure and the Recollement}

Now suppose that $\mathpzc{E}$ has enough flat objects. The following can be proven exactly as in \cite{estrada2023k} Lemma 5.1.

\begin{lem}[\cite{estrada2023k} Lemma 5.1]
If $(\mathpzc{E},\mathpzc{Q})$ is a weakly idempotent complete closed symmetric monoidal exact category with enough flat objects then every complex in $K\mathcal{F}^{\perp_{\otimes}}$ must be acyclic in $(\mathpzc{E},\mathpzc{Q})$.
\end{lem}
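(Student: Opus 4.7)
The plan is to work degree-wise and show that each differential $d_n^X \colon X_n \to X_{n-1}$ factors as an admissible epimorphism onto $Z_{n-1}X = \mathrm{Ker}(d_{n-1}^X)$ followed by the admissible monic inclusion. This will give acyclicity.

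First I would observe that, for any object $F$ of $\mathpzc{E}$, the complex $D^n(F)$ is contractible and hence in $K\mathcal{F}$ (tensoring a contractible complex with anything gives a contractible, in particular acyclic, complex). If $F$ is flat then also $S^n(F) \in K\mathcal{F}$, because $S^n(F)\otimes W \cong (F\otimes W)[n]$ is acyclic whenever $W$ is. The standard short exact sequence
\[
0 \to S^{n-1}(F) \to D^n(F) \to S^n(F) \to 0
\]
is split in each degree, hence $\otimes$-pure exact in $\mathrm{Ch}(\mathpzc{E}_\otimes)$.

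Next, apply $\mathrm{Hom}_{\mathrm{Ch}(\mathpzc{E}_\otimes)}(-,X)$ to this sequence. Since $S^n(F)\in K\mathcal{F}$ and $X\in K\mathcal{F}^{\perp_\otimes}$, we have $\mathrm{Ext}^1_\otimes(S^n(F),X)=0$, so the long exact sequence yields a surjection $\mathrm{Hom}(D^n(F),X)\twoheadrightarrow \mathrm{Hom}(S^{n-1}(F),X)$. Using the natural identifications $\mathrm{Hom}(D^n(F),X)\cong \mathrm{Hom}_\mathpzc{E}(F,X_n)$ and $\mathrm{Hom}(S^{n-1}(F),X)\cong \mathrm{Hom}_\mathpzc{E}(F,Z_{n-1}X)$, and noting that the induced map is post-composition with the corestriction $\rho_n \colon X_n \to Z_{n-1}X$ of $d_n^X$, this translates to: for every flat $F$, every morphism $F\to Z_{n-1}X$ lifts through $\rho_n$.

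Finally, the enough flat objects hypothesis provides an admissible epimorphism $\pi\colon F\twoheadrightarrow Z_{n-1}X$ with $F$ flat. By the lifting property, $\pi = \rho_n\circ\tilde\pi$ for some $\tilde\pi\colon F\to X_n$. Since $\pi$ is an admissible epimorphism and $\mathpzc{E}$ is weakly idempotent complete, the obscure axiom forces $\rho_n$ to be an admissible epimorphism as well. Therefore $d_n^X$ factors as $X_n \xrightarrow{\rho_n} Z_{n-1}X \hookrightarrow X_{n-1}$ with admissible epic and admissible monic pieces, which is exactly what is required for $X$ to be acyclic at each position. The content of the argument is essentially bookkeeping with Hom and Ext of shift/disc complexes; the only mild subtlety is the obscure axiom step, which is why weak idempotent completeness is needed.
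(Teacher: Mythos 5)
Your argument is correct and is precisely the proof the paper intends: the paper gives no proof of its own but states that the lemma ``can be proven exactly as in'' Estrada--Gillespie Lemma 5.1, and that proof is exactly your computation with the degreewise split (hence $\otimes$-pure) sequence $0\to S^{n-1}(F)\to D^{n}(F)\to S^{n}(F)\to 0$, the vanishing of $\mathrm{Ext}^{1}_{\otimes}(S^{n}(F),X)$ for flat $F$, the identifications $\mathrm{Hom}(D^{n}(F),X)\cong\mathrm{Hom}_{\mathpzc{E}}(F,X_{n})$ and $\mathrm{Hom}(S^{n-1}(F),X)\cong\mathrm{Hom}_{\mathpzc{E}}(F,Z_{n-1}X)$, and the obscure axiom. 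The only point worth making explicit is that forming $Z_{n-1}X=\mathrm{Ker}(d_{n-1}^{X})$ presupposes that $\mathpzc{E}$ has the relevant kernels; this holds wherever the paper applies the lemma (purely locally $\lambda$-presentable, hence complete, categories) but is not literally among the stated hypotheses.
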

%

We can now state the generalisation of \cite{estrada2023k} Proposition 5.2, Theorem 5.3. The proofs are essentially the same, with some small additional technicalities. 

\begin{thm}[\cite{estrada2023k} Proposition 5.2, Theorem 5.3]\label{thm:Kflat}
Let $\mathpzc{E}$ be a purely locally $\lambda$-presentable closed symmetric monoidal exact category which is weakly elementary, has enough flat objects, and has a flat tensor unit. 
\begin{enumerate}
\item
Let $\mathfrak{W}$ denote the class of quasi-isomorphisms in $\mathrm{Ch}(\mathpzc{E},\mathpzc{Q})$. Then $(K\mathcal{F},\mathfrak{W},(K\mathcal{F}\cap\mathfrak{W})^{\perp_{\otimes}})$ is a Hovey triple on $\mathrm{Ch}(\mathpzc{E}_{\otimes})$. The induced model structure on $\mathrm{Ch}(\mathpzc{E}_{\otimes})$ is monoidal and satisfies the monoid axiom.
\item
$(\widetilde{dg\mathrm{Flat}},\mathcal{W},\widetilde{\mathrm{Flat}}^{\perp})$ is a Hovey triple on $\mathrm{Ch}(\mathpzc{E})$. The induced model structure on $\mathrm{Ch}(\mathpzc{E})$ is monoidal and satisfies the monoid axiom. Moreover it is left Quillen equivalent to the one from Part i) through the identity functor.
\item
$X$ is acyclic and $K$-flat if and only if $X$ is $\otimes$-pure acyclic. In particular there is a recollement.
\begin{displaymath}
\begin{tikzcd}
\mathbf{K}(\mathpzc{E})\big\slash K\mathcal{F}\arrow[r,bend right=25,shift right=0.2ex]\arrow[r,bend left=25,shift left=0.2ex]  &\mathbf{Ch}(\mathpzc{E}_{\otimes})\arrow[l,"\perp" {inner sep=0.3ex,rotate=180},
    "\perp"' {inner sep=0.3ex,rotate=180}] 
\arrow[l,"\perp" {inner sep=0.3ex,rotate=180},
    "\perp"' {inner sep=0.3ex,rotate=180}]\arrow[r,bend right=25,shift right=0.2ex]\arrow[r,bend left=25,shift left=0.2ex]  & 
    \mathbf{Ch}(\mathpzc{E}) \arrow[l,"\perp" {inner sep=0.3ex,rotate=180},
    "\perp"' {inner sep=0.3ex,rotate=180}]& \\
    \end{tikzcd}
    \end{displaymath}
\end{enumerate}
\end{thm}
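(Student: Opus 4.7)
The plan is to prove Part (2) first as a direct application of Corollary \ref{cor:flatmodelstructureexists}, then deduce Part (1) by two successive identity-functor Quillen equivalences, and finally obtain Part (3) together with the recollement by identifying $K\mathcal{F}\cap\mathfrak{W}$ with the class of $\otimes$-pure acyclic complexes.

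For Part (2), Corollary \ref{cor:flatmodelstructureexists} directly yields the flat Hovey triple $(\widetilde{dg\mathrm{Flat}},\mathfrak{W},\widetilde{\mathrm{Flat}}^{\perp})$ on $\mathrm{Ch}(\mathpzc{E})$. Its monoidality and monoid axiom follow from Proposition \ref{monoidaldgcompat} and Corollary \ref{prop:tensacyclic}: strong monoidal $dg$-compatibility in the sense of Definition \ref{tensormodel} holds because SESs with flat cokernel are $\otimes$-pure by Example \ref{example:flatpure}, the unit is flat by hypothesis, and every $dg$-flat complex is a $\otimes$-pure transfinite extension of bounded below flat complexes.

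For Part (1), the first step is to apply Corollary \ref{cor:underlyingexactstructurecomplexes} with $\mathpzc{Q}$ the $\otimes$-pure structure and $\mathpzc{Q}'$ the original exact structure: one checks that $\widetilde{dg\mathrm{Flat}}$ is strongly $\lambda$-pure subobject stable in $\mathrm{Ch}(\mathpzc{E}_{\otimes})$, closed under transfinite extensions (using that $\mathpzc{E}$, hence $\mathrm{Ch}(\mathpzc{E}_{\otimes})$, is weakly elementary), and contains a set of generators of the form $\{S^{n}(G)\}$ with $G$ flat. This transports the Hovey triple of Part (2) to $(\widetilde{dg\mathrm{Flat}},\mathfrak{W},\widetilde{\mathrm{Flat}}^{\perp_{\otimes}})$ on $\mathrm{Ch}(\mathpzc{E}_{\otimes})$, identity-Quillen equivalent to Part (2). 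The second step is to enlarge the cofibrant class from $\widetilde{dg\mathrm{Flat}}$ to $K\mathcal{F}$ via Corollary \ref{cor:Quillenequivalentmodel}. Since every $dg$-flat is $K$-flat by monoidal $dg$-compatibility of the flat cotorsion pair, the inclusion $\widetilde{dg\mathrm{Flat}}\subseteq K\mathcal{F}$ holds, and both $K\mathcal{F}$ and $K\mathcal{F}\cap\mathfrak{W}$ are strongly $\lambda$-pure subobject stable in $\mathrm{Ch}(\mathpzc{E}_{\otimes})$ (the first by essentially the same argument as for ordinary flats, since $\lambda$-pure monomorphisms are $\otimes$-pure, and the second by Proposition \ref{prop:acyclicintersctstronglypure}) and closed under transfinite extensions by admissible monomorphisms. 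Corollary \ref{cor:cotorsaccess} then furnishes the two complete cotorsion pairs $(K\mathcal{F},K\mathcal{F}^{\perp_{\otimes}})$ and $(K\mathcal{F}\cap\mathfrak{W},(K\mathcal{F}\cap\mathfrak{W})^{\perp_{\otimes}})$, and the compatibility $K\mathcal{F}^{\perp_{\otimes}}=(K\mathcal{F}\cap\mathfrak{W})^{\perp_{\otimes}}\cap\mathfrak{W}$ required for a Hovey triple is forced by the identity Quillen equivalence with Part (2). Monoidality and the monoid axiom then follow from Theorem \ref{exactmonoidal}: cofibrations in $\mathrm{Ch}(\mathpzc{E}_{\otimes})$ are automatically $\otimes$-pure, $K\mathcal{F}$ is closed under $\otimes$ and contains the unit, and the identification of trivially cofibrant objects in Part (3) ensures tensoring with them preserves acyclicity.

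For Part (3), the essential claim is $K\mathcal{F}\cap\mathfrak{W}=\{\otimes\text{-pure acyclic complexes}\}$. The inclusion $\supseteq$ is direct: a $\otimes$-pure acyclic complex is acyclic, and it is $K$-flat since for any complex $Y$ the double complex analysis of $X\otimes Y$ via the $\otimes$-pure cycle sequences of $X$ yields acyclicity whenever $Y$ is acyclic. For the reverse inclusion, take $X\in K\mathcal{F}\cap\mathfrak{W}$ and any $M\in\mathpzc{E}$; using enough flats pick a flat resolution $F_{\bullet}\twoheadrightarrow M$ whose cone is acyclic. By $K$-flatness, $X\otimes F_{\bullet}\to X\otimes S^{0}(M)$ is a quasi-isomorphism, and the spectral sequence of the double complex $X\otimes F_{\bullet}$ has rows $X\otimes F_{p}$ acyclic (since each $F_{p}$ is flat and $X$ is acyclic), giving $X\otimes F_{\bullet}$ acyclic and hence $M\otimes X$ acyclic. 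The $\otimes$-purity of the cycle SES of $X$ is then extracted by analyzing the comparison map $M\otimes Z_{n}X\to Z_{n}(M\otimes X)$ exactly as in \cite{estrada2023k} Proposition 5.2. Once this identification is established, the recollement is obtained by combining the $K$-flat localization of Proposition \ref{prop:quotientKflat} (with homotopy category $\mathbf{K}(\mathpzc{E})/K\mathcal{F}$) with the identity Quillen equivalences of Parts (1) and (2), and invoking Lemma \ref{lemrecollemodel} in the same manner as Theorem \ref{thm:recollementgenexact}. The main obstacle is the direction $K\mathcal{F}\cap\mathfrak{W}\subseteq\{\otimes\text{-pure acyclic}\}$: upgrading the pointwise acyclicity ``$M\otimes X$ acyclic for every $M\in\mathpzc{E}$'' to $\otimes$-purity of every cycle SES requires fine control of the comparison map $M\otimes Z_{n}X\to Z_{n}(M\otimes X)$, and is precisely where the enough-flats hypothesis enters decisively.
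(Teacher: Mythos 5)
Your proposal is correct and follows essentially the same route as the paper: Part (2) via Corollary \ref{cor:flatmodelstructureexists}, Part (1) via strong $\lambda$-pure subobject stability and deconstructibility of $K\mathcal{F}$ and $K\mathcal{F}\cap\mathfrak{W}$ in $\mathrm{Ch}(\mathpzc{E}_{\otimes})$ together with Theorem \ref{exactmonoidal}, and the hard inclusion of Part (3) deferred to the argument of \cite{estrada2023k}, with the recollement from Theorem \ref{thm:recollementgenexact}. The only cosmetic difference is that you derive Part (1) from Part (2) by transporting the exact structure and then enlarging the cofibrant class, whereas the paper constructs the $K$-flat Hovey triple on $\mathrm{Ch}(\mathpzc{E}_{\otimes})$ directly; the verifications involved are the same.
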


\begin{proof}
\begin{enumerate}
\item
Equip $\mathrm{Ch}(\mathpzc{E}_{\otimes})$ with the injective model structure. $K\mathcal{F}\cap\mathfrak{W}$ contains all objects of the form $D^{n}(G)$ so in particular it contains a generator. They are also both strongly $\lambda$-pure subobject stable and hence deconstructible in themselves. Finally $K\mathcal{F}$ is closed under transfinite extensions whenever $\mathpzc{E}$ is weakly $\mathbf{PureMon}_{\otimes}$-elementary. Indeed in this case if $K=\colim K_{i}$ is an $\otimes$-pure transfinite extension of $K$-flat objects, and $W$ is acyclic, then $K\otimes W\cong\colim K_{i}\otimes W$ is an $\otimes$-pure transfinite extension of acyclic complexes, which is acyclic by assumption. Now cofibrations are pure by assumption. Clearly the tensor product of two $K$-flat objects is $K$-flat. Moreover by definition the tensor product of an acyclic object with a $K$-flat object is acyclic. That the model structure is monoidal and satisfies the monoid axioms follows from Theorem \ref{exactmonoidal}.
\item
The existence of the model structure is Corollary \ref{cor:flatmodelstructureexists}. The claim regarding the Quillen adjunction follows from the fact that admissible monomorphisms with flat cokernel are $\otimes$-pure, and $dg\mathrm{Flat}$-complexes are $K$-flat.
\item
Suppose that $X$ is pure-acyclic. By the proof of Proposition \ref{prop:quotientKflat} any such complex is $K$-flat, and clearly acyclic. The converse proceeds exactly as in \cite{estrada2023k} Lemma 5.1. Now for the recollement just use Theorem \ref{thm:recollementgenexact}
\end{enumerate}
\end{proof}

\section{Homotopical Algebra}\label{sec:homotopicalalgebra}

Locally presentable exact categories with exact filtered colimits and enough flat objects provide very rich settings for homotopical algebra, as we explain in this section. In particular we show that, at least when enriched over $\mathbb{Q}$, they define HA contexts. Moreover we show that powerful Koszul duality theorems hold in remarkable generarility.

\subsection{Model Structures on Categories of Algebras Over Operads}

We begin by establishing the existence of model structures on algebras over operads. Much of this section conists of recollections from \cite{kelly2016homotopy} Section 6.3.1.

\subsubsection{Existence of Transferred Model Structures}

Let section $\mathpzc{M}$ will be a combinatorial model category, which is also a symmetric monoidal category and which satisfies the monoid axiom. Let $\mathfrak{P}$ be an operad in $\mathpzc{M}$ (either symmetric or non-symmetric), and consider the free-forgetful adjunction

$$\adj{\textrm{Free}_{\mathfrak{P}}(-)}{\mathpzc{M}}{\mathrm{Alg}_{\mathfrak{P}}(\mathpzc{M})}{|-|_{\mathfrak{P}}}$$

 Recall that the \textit{transferred model structure} on $\mathrm{Alg}_{\mathfrak{P}}(\mathpzc{M})$, if it exists, is the one for which weak equivalences (resp. fibrations) are maps $f:A\rightarrow B$ of algebras such that $|f|_{\mathfrak{P}}$ is a weak equivalence (resp. fibration).

\begin{defn}
An operad $\mathfrak{P}$ is said to be \textit{admissible} if the transferred model structure exists on $\mathrm{Alg}_{\mathfrak{P}}$. 
\end{defn}

By \cite{harper2010homotopy} Proposition 7.6 for $X$ a $\mathfrak{P}$-algebra, there is a $\Sigma$-module $\mathfrak{P}_{X}$ in $\mathpzc{M}$ such that for any $\Sigma$-module $Y$ in $\mathpzc{M}$, there is an isomorphism, natural in $X$ and $Y$,
 $$X\coprod(\mathfrak{P}\circ Y)\cong\mathfrak{P}_{X}\circ Y$$

  As in \cite{harper2010homotopy} Definition 7.31, for $s:A\rightarrow B$ a map in $\mathpzc{Gr}_{\mathbb{N}_{0}}(\mathpzc{C})$, we define $Q^{t}_{q}(s)$ for $t\ge 1$ and $0\le q\le t$ as follows. $Q_{0}^{t}(s)\defeq A^{\otimes t}$, $Q^{t}_{t}(s)\defeq B^{\otimes t}$ and for $0<q<t$, $Q^{t}_{q}(s)$ is defined by the pushout. 
 \begin{displaymath}
 \xymatrix{
 (X^{\otimes(t-q)}\otimes Q^{q}_{q-1}(s))^{\oplus\binom{t}{q}}\ar[d]\ar[r] & Q_{q-1}^{t}(s)\ar[d]\\
  (X^{\otimes(t-q)}\otimes B^{\otimes q})^{\oplus\binom{t}{q}}\ar[r] & Q^{t}_{q}(s)
 }
 \end{displaymath}
 where the top map is the obvious projection, and the left-hand map is induced by natural map $Q^{q}_{q-1}(s)\rightarrow B^{\otimes q}$. 
 \begin{prop}[\cite{harper2010homotopy}  Proposition 7.32]\label{prop:pushoutalg}
 Let $s:A\rightarrow B$ be a map in $\mathpzc{M}$, and let $X$ be a $\mathfrak{P}$-algebra. Consider a pushout diagram
 \begin{displaymath}
 \xymatrix{
 \mathfrak{P}(A)\ar[d]^{\mathfrak{P}(s)}\ar[r] & X\ar[d]\\
 \mathfrak{P}(B)\ar[r] & P
 }
 \end{displaymath}
 Then $P$ is naturally isomorphic to a filtered colimit
 $$P\cong\textrm{lim}_{\rightarrow_{n}}X_{n}$$
 where $X_{0}=X$, and for $n\ge 1$ the map $X_{n-1}\rightarrow X_{n}$ is given by the pushout diagram in $\mathpzc{M}$
 \begin{displaymath}
 \xymatrix{
 \mathfrak{P}_{X}(n)\otimes Q^{n}_{n-1}(s)\ar[d]\ar[r] & X_{n-1}\ar[d]\\
 \mathfrak{P}_{X}(n)\otimes B^{\otimes n}\ar[r] & X_{n}
 }
 \end{displaymath}
 \end{prop}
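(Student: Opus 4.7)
The plan is to produce the isomorphism by exhibiting $\varinjlim_n X_n$ as an explicit model for the pushout $P$ and checking the universal property. First, I would invoke the enveloping-operad identification $X\coprod(\mathfrak{P}\circ Y)\cong\mathfrak{P}_X\circ Y$ recalled above the statement, and use it to rewrite the pushout $P$ as a reflexive coequaliser
$$\mathfrak{P}_X\circ A\rightrightarrows\mathfrak{P}_X\circ B\longrightarrow P$$
in the category of $\mathfrak{P}$-algebras under $X$, where the two parallel maps are induced respectively by the $\mathfrak{P}_X$-algebra structure on $X$ and by $s:A\to B$. The computation of $P$ then reduces, arity by arity, to understanding how $\mathfrak{P}_X(n)\otimes_{\Sigma_n}B^{\otimes n}$ is glued to $X$ via the subobject $\mathfrak{P}_X(n)\otimes_{\Sigma_n}A^{\otimes n}$.

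Second, I would analyse the classical $Q^n_q(s)$-filtration of $B^{\otimes n}$, noting that each transition $Q^n_{q-1}(s)\hookrightarrow Q^n_q(s)$ encodes the replacement of one further tensor coordinate from $A$ by $B$, symmetrised suitably. An induction on $n$, using the defining pushout diagram for $X_n\to X_{n-1}$, shows that $X_n$ naturally models the ``truncation up to operations involving at most $n$ new $B$-inputs'' of the coequaliser above. The colimit $\varinjlim_n X_n$ therefore captures all operations in $\mathfrak{P}_X$ applied to mixed tuples of elements from $X$ and $B$, modulo the relations enforced by $s$, which is precisely the content of $P$.

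The main obstacle will be twofold and essentially technical. On the one hand, one must verify that $\varinjlim_n X_n$ carries a well-defined $\mathfrak{P}$-algebra structure extending the one on $X_0=X$. The key point is the compatibility of the $Q^n_q(-)$-construction with operadic composition: an arity-$m$ operation applied to elements coming from stages $X_{n_1},\dots,X_{n_m}$ must land in $X_{n_1+\cdots+n_m}$, which is verified by chasing the definition of the $Q^n_q$ through the composition maps of $\mathfrak{P}$ and of $\mathfrak{P}_X$. On the other hand, one must check that this algebra structure, together with the evident maps $X\to\varinjlim_n X_n$ and $\mathfrak{P}(B)\to\varinjlim_n X_n$, satisfies the universal property of the pushout.

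For the universal property, given any $\mathfrak{P}$-algebra $Y$ together with compatible maps $f:X\to Y$ and $\mathfrak{P}(B)\to Y$ (equivalently a map $g:B\to|Y|_{\mathfrak{P}}$ agreeing with $f$ on $A$), I would extend these inductively to maps $\hat{f}_n:X_n\to Y$ by appealing to the defining pushout of $X_n$: the required map $\mathfrak{P}_X(n)\otimes B^{\otimes n}\to Y$ is provided by the $\mathfrak{P}_X$-module structure of $Y$ together with $g^{\otimes n}$, and its restriction to $\mathfrak{P}_X(n)\otimes Q^n_{n-1}(s)$ agrees with $\hat{f}_{n-1}$ precomposed with the canonical map, by the inductive hypothesis. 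Uniqueness of each $\hat{f}_n$ is immediate from the universal property of the pushout defining $X_n$, and compatibility with the transition maps is automatic, so the $\hat{f}_n$ assemble into the required map $\varinjlim_n X_n\to Y$.
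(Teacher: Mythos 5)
Your proposal is a correct outline of the standard argument: rewriting the pushout as a reflexive coequaliser via the enveloping operad $\mathfrak{P}_X$, filtering $B^{\otimes n}$ by the $Q^n_q(s)$, and verifying the universal property inductively is precisely the proof of \cite{harper2010homotopy} Proposition 7.32, which the paper cites verbatim rather than reproving. The only point worth noting is that in the symmetric case the attaching maps should use the coinvariants $\mathfrak{P}_X(n)\otimes_{\Sigma_n}(-)$, as you correctly do in your discussion even though the paper's displayed diagram omits the subscript.
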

 
 \begin{prop}
 Let $\mathcal{S}$ be a class of maps closed under pushout--products. Any map of the form $Q^{t}_{p}(s)\rightarrow Q^{t}_{q}(s)$ for $s:X\rightarrow Y\in\mathcal{S}$ is a pushout of a map of the form $X\otimes s;$ for $X\in\mathpzc{M}$ and $s\in\mathcal{S}$.
 \end{prop}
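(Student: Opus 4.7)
The plan is to reduce the statement to the one-step case $q = p+1$ and then read it off directly from the defining pushout square of $Q^{t}_{q}(s)$. For a general pair $p < q$, the map $Q^{t}_{p}(s) \to Q^{t}_{q}(s)$ decomposes as the finite composition
\[
Q^{t}_{p}(s) \to Q^{t}_{p+1}(s) \to \cdots \to Q^{t}_{q}(s),
\]
so it is a (finite) composition of pushouts of maps of the asserted form provided each one-step map is.

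First, I would verify by induction on $q$ that the canonical map $Q^{q}_{q-1}(s) \to Q^{q}_{q}(s) = B^{\otimes q}$ agrees with the iterated pushout-product $s^{\Box q}$ of $s$ with itself. This is an unwinding of definitions: both sides are the comparison map from the colimit of the "punctured $q$-cube" built from $s$ to the terminal vertex $B^{\otimes q}$, and the inductive clause defining $Q^{q}_{q-1}(s)$ matches the inductive clause $s^{\Box q} = s \Box s^{\Box(q-1)}$. Because $\mathcal{S}$ is assumed closed under pushout-products, this identification gives $s^{\Box q} \in \mathcal{S}$.

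Given this, the defining pushout square
\[
\xymatrix{
(X^{\otimes(t-q)} \otimes Q^{q}_{q-1}(s))^{\oplus\binom{t}{q}} \ar[d] \ar[r] & Q^{t}_{q-1}(s) \ar[d] \\
(X^{\otimes(t-q)} \otimes B^{\otimes q})^{\oplus\binom{t}{q}} \ar[r] & Q^{t}_{q}(s)
}
\]
immediately exhibits $Q^{t}_{q-1}(s) \to Q^{t}_{q}(s)$ as a pushout of the map
\[
\Bigl(\bigoplus\nolimits^{\binom{t}{q}} X^{\otimes(t-q)}\Bigr) \otimes s^{\Box q},
\]
which is of the required form $Z \otimes s'$ with $Z \in \mathpzc{M}$ and $s' \in \mathcal{S}$. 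The boundary cases $q = 0$ and $q = t$ are treated identically after reading $X^{\otimes 0}$ as the monoidal unit.

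The only real bookkeeping, and the step I would expect to take the most care, is the identification $Q^{q}_{q-1}(s) \simeq$ domain of $s^{\Box q}$; this is combinatorial but requires tracking how the $\binom{q}{q-1} = q$ codimension-one faces assemble into the punctured cube so that the inductive pushout in the paper's definition matches the inductive definition of the pushout-product. Beyond this, the argument is purely formal and uses none of the exact-categorical structure of $\mathpzc{E}$, only that $\mathpzc{M}$ is a cocomplete symmetric monoidal category whose pushout-products remain in $\mathcal{S}$.
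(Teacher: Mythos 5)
Your proof is correct and takes essentially the same route as the paper: the paper's own proof is a terse citation of Harper's Proposition 7.23 for the identification of $Q^{q}_{q-1}(s)\rightarrow B^{\otimes q}$ with the iterated pushout-product $s^{\Box q}$ (hence its membership in $\mathcal{S}$), followed by the induction you spell out explicitly via the defining pushout square. The one caveat is that for non-consecutive indices $p<q$ you produce a finite \emph{composition} of pushouts of maps $Z\otimes s'$ rather than a single pushout as literally stated, but this is all that is needed in the corollary immediately following, and the paper's own proof does not address the point either.
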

 
 \begin{proof}
Using an inductive argument, the proof of \cite{harper2010homotopy} Proposition 7.23 shows that for any $t$ the map $Q^{t}_{t-1}(s)\rightarrow Y^{\otimes t}$ is in $\mathcal{S}$. Another inductive argument also proves this for $Q^{t}_{q-1}(s)\rightarrow Q^{t}_{q}(s)$ for $q<t$. 
 \end{proof}
 
 This immediately implies the following.
 
 \begin{cor}\label{cor:modelstruturealg}
If $\mathpzc{M}$ satisfies the pp-monoid axiom, then any operad is admissible. If $\mathpzc{M}$ is enriched over $\mathbb{Q}$, then any symmetric operad is also admissible. 
 \end{cor}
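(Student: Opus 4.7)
The plan is to invoke the standard transfer theorem of Kan/Crans: the transferred model structure on $\mathrm{Alg}_{\mathfrak{P}}(\mathpzc{M})$ exists provided the free-forgetful adjunction has the property that the forgetful functor preserves filtered colimits (this is standard for operads, since algebras over $\mathfrak{P}$ are given by a colimit-preserving monad in the combinatorial setting), and that any transfinite composition of pushouts of maps of the form $\mathrm{Free}_{\mathfrak{P}}(j)$, where $j$ is a generating acyclic cofibration of $\mathpzc{M}$, is sent by the forgetful functor to a weak equivalence. Since $\mathpzc{M}$ is combinatorial and $\mathrm{Alg}_{\mathfrak{P}}(\mathpzc{M})$ is locally presentable (as the category of algebras over an accessible monad on a locally presentable category), the smallness hypotheses needed to run the small object argument are automatic.

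The heart of the argument is therefore to analyze such pushouts, for which Proposition~\ref{prop:pushoutalg} is tailor-made. Given a pushout $\mathfrak{P}(A)\to X$ along $\mathfrak{P}(s)$ for $s:A\to B$ an acyclic cofibration, the resulting map $X\to P$ is a transfinite composition of pushouts of the maps
\[
\mathfrak{P}_X(n)\otimes Q^n_{n-1}(s)\;\longrightarrow\;\mathfrak{P}_X(n)\otimes B^{\otimes n}.
\]
By the preceding proposition, the map $Q^n_{n-1}(s)\to B^{\otimes n}$ is itself a pushout of a map of the form $Y\otimes s'$ where $s'$ is an iterated pushout-product of acyclic cofibrations; hence so is the whole map above, by tensoring further with $\mathfrak{P}_X(n)$.

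For the first assertion, assume the pp-monoid axiom. By definition this axiom says that transfinite compositions of pushouts of maps of the form $Y\otimes f$ with $f$ an iterated pushout-product of acyclic cofibrations are weak equivalences; this is exactly the class of maps produced by the analysis above. Hence the transferred model structure exists, i.e.\ every non-symmetric operad is admissible. The existence of the pp-monoid axiom is implicit in the hypotheses via Theorem~\ref{exactmonoidal}, or is a standing hypothesis in applications; in any case it is the condition we use.

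For the symmetric case, the only difference is that $\mathfrak{P}_X(n)$ carries a right $\Sigma_n$-action and the tensor product in Proposition~\ref{prop:pushoutalg} is taken over $\Sigma_n$, so the relevant maps have the form $\mathfrak{P}_X(n)\otimes_{\Sigma_n} Q^n_{n-1}(s)\to\mathfrak{P}_X(n)\otimes_{\Sigma_n}B^{\otimes n}$. The main obstacle is that the $\Sigma_n$-quotient can in general destroy the weak equivalences produced by the pp-monoid axiom. This is precisely where $\mathbb{Q}$-enrichment is used: over a $\mathbb{Q}$-linear category the Maschke-style averaging idempotent $\tfrac{1}{n!}\sum_{\sigma\in\Sigma_n}\sigma$ exhibits the $\Sigma_n$-coinvariants as a natural retract of the underlying object, so $\mathfrak{P}_X(n)\otimes_{\Sigma_n}(-)$ is a retract of $\mathfrak{P}_X(n)\otimes(-)$. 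Since the class of weak equivalences is closed under retracts, the argument from the non-symmetric case applies verbatim to the retracted maps, yielding admissibility of arbitrary symmetric operads. Packaging this with the Kan transfer theorem gives the corollary.
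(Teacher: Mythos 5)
Your first assertion is proved exactly as in the paper: Harper's filtration (Proposition \ref{prop:pushoutalg}) reduces the pushout of $\mathfrak{P}(s)$ to a transfinite composition of pushouts of maps $\mathfrak{P}_X(n)\otimes Q^n_{n-1}(s)\to\mathfrak{P}_X(n)\otimes B^{\otimes n}$, the preceding proposition identifies these as pushouts of maps $Y\otimes s'$ with $s'$ an iterated pushout-product of acyclic cofibrations, and the pp-monoid axiom finishes. Nothing to add there.

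For the symmetric case you take a genuinely different route. The paper works at the level of operads: over $\mathbb{Q}$ the averaging map exhibits $\mathfrak{P}$ as a retract of the symmetrization $\Sigma\otimes\mathfrak{P}_{ns}$ of its underlying non-symmetric operad; since the symmetrization has free $\Sigma_n$-actions its algebras (and the filtration for its free-algebra pushouts) coincide with those of $\mathfrak{P}_{ns}$, so the first part applies, and admissibility is inherited by operad retracts. You instead apply Maschke averaging directly to each filtration stage, exhibiting $\mathfrak{P}_X(n)\otimes_{\Sigma_n}(-)$ as a natural retract of $\mathfrak{P}_X(n)\otimes(-)$. This is a legitimate and arguably more self-contained argument (it avoids having to justify that admissibility passes along retracts of operads, which the paper leaves implicit), but your closing step is too quick. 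Saying that weak equivalences are closed under retracts and that "the argument applies verbatim" only handles a \emph{single} pushout: a pushout of a retract of a good map is a retract of a pushout of that good map, hence a weak equivalence. It does not immediately handle the \emph{transfinite composition} of such pushouts, because weak equivalences need not be closed under transfinite composition and the pp-monoid axiom only quantifies over transfinite compositions of pushouts of maps literally of the form $Y\otimes f$. What you need, and should state, is the standard strengthening: by choosing the attaching maps through the sections at each stage, the entire transfinite composition of pushouts of the retracted maps is exhibited as a retract (in the arrow category) of a single transfinite composition of pushouts of the unretracted maps $\mathfrak{P}_X(n)\otimes Q^n_{n-1}(s)\to\mathfrak{P}_X(n)\otimes B^{\otimes n}$; the latter is a weak equivalence by the pp-monoid axiom, and only then does closure of weak equivalences under retracts apply. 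With that lemma inserted your argument is complete; without it there is a real gap.
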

 
 \begin{proof}
Consider a pushout
\begin{displaymath}
 \xymatrix{
 \mathfrak{P}(A)\ar[d]^{\mathfrak{P}(s)}\ar[r] & X\ar[d]\\
 \mathfrak{P}(B)\ar[r] & P
 }
 \end{displaymath}
 with $s$ an acyclic cofibration. The map $X\rightarrow P$ is a transfinite composition of iterated pushout-products of acyclic cofibrations. Thus any transfinite composition of pushouts along maps of the form $\mathfrak{P}(s)$ with $s$ an acyclic cofibration is also a transfinite composition of iterated pushout-products of acyclic cofibrations. Thus it is an equivalence as required. If $\mathpzc{M}$ is enriched over $\mathbb{Q}$, then any operad $\mathfrak{P}$ is a retract of the free symmetric operad $\Sigma\otimes\mathfrak{P}_{ns}$ on the underlying non-symmetric operad $\mathfrak{P}_{ns}$ of $\mathfrak{P}$. $\Sigma\otimes\mathfrak{P}_{ns}$ is admissible, and it follows that $\mathfrak{P}$ is admissible. 
 \end{proof}
 

\subsection{HA Contexts}

In this subsection we show that locally $\lambda$-presentable closed symmetric monoidal exact categories enriched over $\mathbb{Q}$ in which transfinite compositions of $\lambda$-pure monomorphisms are admissible, and which have enough flat objects, naturally present HA contexts. HA contexts were introduced in \cite{toen2004homotopical} as convenient abstract frameworks for homotopical algebra and, ultimately, derived geometry. We recall the truncated definition of a HA context from \cite{kelly2016homotopy} Section 6.4.2. For the category $\mathsf{C}_{0}$ in \cite{toen2004homotopical} we always take $\mathsf{C}=\mathsf{C}_{0}$.

\begin{defn}\label{Defn:HA context}
Let $\mathpzc{M}$ be a combinatorial symmetric monoidal model category. We say that $\mathpzc{M}$ is an \textit{homotopical algebra context} (or HA context) if for any $A\in\mathrm{Alg}_{\mathfrak{Comm}}(\mathpzc{M})$.
\begin{enumerate}
\item
The model category $\mathpzc{M}$ is proper, pointed and for any two objects $X$ and $Y$ in $\mathpzc{M}$ the natural morphisms
$$QX\coprod QY\rightarrow X\coprod Y\rightarrow RX\times RY$$
are equivalences.
\item
$Ho(\mathpzc{M})$ is an additive category.
\item
With the transferred model structure and monoidal structure $-\otimes_{A}-$, the category ${}_{A}\mathrm{Mod}$ is a combinatorial, proper, symmetric monoidal model category.
\item
For any cofibrant object $M\in{}_{A}\mathrm{Mod}$ the functor
$$-\otimes_{A}M:{}_{A}\mathrm{Mod}\rightarrow{}_{A}\mathrm{Mod}$$
preserves equivalences.
\item
With the transferred model structures $\mathrm{Alg}_{\mathfrak{Comm}}({}_{A}\mathrm{Mod})$ and $\mathrm{Alg}_{\mathfrak{Comm}_{nu}}({}_{A}\mathrm{Mod})$ are combinatorial proper model categories.
\item
If $B$ is cofibrant in $\mathrm{Alg}_{\mathfrak{Comm}}({}_{A}\mathrm{Mod})$ then the functor
$$B\otimes_{A}-:{}_{A}\mathrm{Mod}\rightarrow{}_{B}\mathrm{Mod}$$
preserves equivalences.
\end{enumerate}
\end{defn}

\subsubsection{Left Properness for Commutative Monoids}

Often the most difficult axiom to establish is left properness of $\mathrm{Alg}_{\mathfrak{Comm}}({}_{A}\mathrm{Mod})$. 

\begin{thm}[\cite{white2017model} Theorem 4.17]\label{thm:Commadm}
Let $\mathpzc{M}$ be a monoidal model category. Suppose
\begin{enumerate}
\item
cofibrations are $\mathpzc{M}$-monoidally left proper
\item
$\mathpzc{M}$ satisfies strong commutative monoid axiom and the monoid axiom
\item
cofibrant objects are $K$-flat
\item
if $f:X\rightarrow Y$ is a generating cofibration then $X$ is cofibrant.
\end{enumerate}
Then $\mathfrak{Comm}$ is admissible and the category $\mathrm{Alg}_{\mathfrak{Comm}}(\mathpzc{C})$ is left proper.
\end{thm}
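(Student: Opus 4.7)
The plan is to split the proof into two parts: admissibility of the commutative operad $\mathfrak{Comm}$, and left properness of the resulting transferred model structure on $\mathrm{Alg}_{\mathfrak{Comm}}(\mathpzc{M})$. Both rely on an explicit analysis of pushouts of free commutative algebras via Proposition \ref{prop:pushoutalg}, specialised to $\mathfrak{P}=\mathfrak{Comm}$. Here the enveloping $\Sigma$-module $\mathfrak{Comm}_{X}(n)$ is simply $X$ itself (after quotienting by the $\Sigma_{n}$-action), and the relevant attaching maps take the form $X\otimes (Q^{n}_{n-1}(s))_{\Sigma_{n}}\rightarrow X\otimes (Y^{\otimes n})_{\Sigma_{n}}$ for a generating (trivial) cofibration $s\colon A\rightarrow B$. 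A direct identification (as in \cite{white2017model}) shows that these maps are pushouts of $X\otimes (s^{\Box n}/\Sigma_{n})$.

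For admissibility, I would fix a generating trivial cofibration $s$, a commutative monoid $X$, and form the pushout of $\mathfrak{Comm}(s)$ along the structure map $\mathfrak{Comm}(A)\rightarrow X$. Writing the result as the sequential colimit in Proposition \ref{prop:pushoutalg}, each successive layer $X_{n-1}\rightarrow X_{n}$ is a pushout of $X\otimes (s^{\Box n}/\Sigma_{n})$. By the strong commutative monoid axiom, $s^{\Box n}/\Sigma_{n}$ is a trivial cofibration, so the monoid axiom guarantees that the transfinite composition is a weak equivalence. Combined with the usual small object argument and Quillen's path object argument (or the criterion of Schwede--Shipley), this produces the transferred model structure.

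For left properness, fix a cofibration $A\rightarrow C$ and a weak equivalence $A\rightarrow B$ of commutative monoids. The goal is to show that the induced map $C\rightarrow C\otimes_{A}B$ is a weak equivalence. First reduce, by standard arguments, to the case in which $A$ is cofibrant and $A\rightarrow C$ is a cell attachment, i.e.\ a pushout of $\mathfrak{Comm}(s)$ for some cofibration $s\colon X\rightarrow Y$ with $X$ cofibrant (invoking assumption (4)). Using Proposition \ref{prop:pushoutalg}, both $C$ and $C\otimes_{A}B$ are realised as sequential colimits whose layers are pushouts of $A\otimes (s^{\Box n}/\Sigma_{n})$ and $B\otimes (s^{\Box n}/\Sigma_{n})$ respectively. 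Because cofibrant commutative monoids are $K$-flat in the underlying category $\mathpzc{M}$, and because cofibrations are $\mathpzc{M}$-monoidally left proper, each layer of the first colimit is weakly equivalent to the corresponding layer of the second; passing to the colimit and applying the monoid axiom gives the desired equivalence.

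The main obstacle is the intermediate claim that cofibrant commutative monoids are $K$-flat in $\mathpzc{M}$. Setting up a transfinite induction on the cell structure of a cofibrant commutative monoid reduces this to the following fact: for a cofibration $s\colon X\rightarrow Y$ with cofibrant domain, $\mathrm{Sym}(Y)=\coprod_{n\ge 0}Y^{\otimes n}/\Sigma_{n}$ is cofibrant and $K$-flat. Cofibrancy follows from the strong commutative monoid axiom applied to $0\rightarrow Y$, and $K$-flatness from assumption (3) together with the monoidal left properness of cofibrations, which ensures that each $Y^{\otimes n}/\Sigma_{n}$ preserves weak equivalences upon tensoring. With this lemma in hand, the compatibility of the filtration of $C\otimes_{A}B$ with weak equivalences in the base falls out of a straightforward Reedy-style induction, completing the proof.
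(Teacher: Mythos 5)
The paper offers no proof of this statement at all: it is imported as \cite{white2017model} Theorem 4.17, so the real comparison is with White's argument. Your admissibility half reproduces that argument correctly: Harper's filtration of a pushout of $\mathfrak{Comm}(s)$, the identification of the layers as pushouts of $X\otimes(s^{\Box n}\big\slash\Sigma_{n})$, the strong commutative monoid axiom to see that $s^{\Box n}\big\slash\Sigma_{n}$ is a trivial cofibration when $s$ is, and the monoid axiom to conclude that the transfinite composition is a weak equivalence.

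The left properness half has a genuine gap: the opening "reduce to the case in which $A$ is cofibrant" is not a legitimate reduction. Left properness is precisely a statement about pushouts along cofibrations with \emph{arbitrary} domain; if one could always pass to a cofibrant replacement of $A$, every model category would be left proper, since the cube/gluing lemma already handles the all-cofibrant case. There is no zig-zag relating $C\otimes_{A}B$ to the corresponding pushout over a cofibrant replacement of $A$ that does not presuppose the conclusion. The correct argument (White's) keeps $A$ and $B$ arbitrary: the filtration layers of $C$ and $C\otimes_{A}B$ are pushouts of $A\otimes(s^{\Box n}\big\slash\Sigma_{n})$ and $B\otimes(s^{\Box n}\big\slash\Sigma_{n})$ respectively, where $s^{\Box n}\big\slash\Sigma_{n}$ is a cofibration between \emph{cofibrant} objects of $\mathpzc{M}$ by the strong commutative monoid axiom together with hypothesis (4); hypothesis (3) then says exactly that tensoring the weak equivalence $A\rightarrow B$ with its ($K$-flat) domain and codomain yields weak equivalences, and hypothesis (1) makes both filtration squares homotopy pushouts, so the layers compare with no cofibrancy assumption on $A$, $B$ or $C$. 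This also shows that your "main obstacle" — that cofibrant commutative monoids are $K$-flat in $\mathpzc{M}$ — is not the lemma the proof turns on: $K$-flatness is needed for the symmetrized pushout powers, not for the monoids themselves (that fact is true and is used elsewhere in the paper, for the HA context axioms, but invoking it here does not supply the comparison you need). Once the spurious reduction is deleted and $K$-flatness is applied to the correct objects, the remainder of your outline (transfinite induction over the cell structure and colimit comparison via the monoid axiom) does go through and coincides with White's proof.
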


A more general theorem which is a rather straightforward generalisation of the above one is stated as theorem 6.3.31 in \cite{kelly2016homotopy}.

\subsubsection{HA Contexts from (Possibly) Non-Commutative Algebras}

Before proceeding to establish when monoidal model categories arising from exact categories are in fact HA contexts, let us prove a general result which will produce several examples. Fix a monoidal model category $\mathpzc{M}$. Let $H\in\mathrm{Alg}_{\mathfrak{Ass}}(\mathpzc{M})$ be a unital associative algebra. We suppose that there exists a closed monoidal structure $\tilde{\otimes}$ on ${}_{H}\mathrm{Mod}(\mathpzc{M})$, and a strong closed monoidal structure on the forgetful functor
$${}_{H}\mathrm{Mod}(\mathpzc{M})\rightarrow\mathpzc{M}$$
For example, $H$ could be a cocommutative bialgebra. Later we will consider modules over algebras of differential operators. Note that if $A\in\mathrm{Alg}_{\mathfrak{Comm}}({}_{H}\mathrm{Mod}(\mathpzc{M}))$, then there is a natural way of producing a closed monoidal structure on ${}_{A}\mathrm{Mod}({}_{H}\mathrm{Mod}(\mathpzc{M}))$, and a strong closed monoidal structure on the forgetful functor
$${}_{A}\mathrm{Mod}({}_{H}\mathrm{Mod}(\mathpzc{M}))\rightarrow{}_{A}\mathrm{Mod}(\mathpzc{M})$$
%

\begin{prop}
Let $\mathpzc{M}$ be a monoidal model category, and let $A\in\mathrm{Alg}_{\mathfrak{Ass}}(\mathpzc{M})$ be an associative monoid. Suppose the transferred model structure exists on ${}_{A}\mathrm{Mod}(\mathpzc{M})$. If $\mathpzc{M}$
\begin{enumerate}
\item
is pointed
\item
is such that for any two objects $X$ and $Y$ of $\mathpzc{M}$ the natural maps
$$QX\coprod QY\rightarrow X\coprod Y\rightarrow RX\times RY$$
are all equivalences, where $Q$ denotes cofibrant replacement and $R$ denotes fibrant replacent, 
\item
and is such that $\mathrm{Ho}(\mathpzc{M})$ is additive
\end{enumerate}
 then ${}_{A}\mathrm{Mod}(\mathpzc{M})$ also satisfies all of these properties.
\end{prop}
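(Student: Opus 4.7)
The argument rests on the forgetful functor $U\colon {}_{A}\mathrm{Mod}(\mathpzc{M})\to\mathpzc{M}$, which, being right adjoint to $A\otimes(-)$, creates all limits; by construction of the transferred model structure it preserves and reflects weak equivalences and fibrations. The crucial additional input is that $U$ also preserves finite coproducts. Indeed, because $\otimes$ preserves colimits in each variable (as the paper consistently works with closed monoidal structures), the canonical isomorphism $A\otimes(X\sqcup Y)\cong (A\otimes X)\sqcup(A\otimes Y)$ endows the underlying coproduct of two $A$-modules with a natural $A$-action which verifies the universal property of the coproduct in ${}_{A}\mathrm{Mod}(\mathpzc{M})$.

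Pointedness is immediate: the zero object of $\mathpzc{M}$ carries a unique (trivial) $A$-action and is created by $U$ as both an initial and a terminal object of ${}_{A}\mathrm{Mod}(\mathpzc{M})$.

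For the biproduct condition, I would apply $U$ to the composite $QX\sqcup QY\to X\sqcup Y\to RX\times RY$. By the preservation properties of $U$, the result in $\mathpzc{M}$ is the composite $UQX\sqcup UQY\to UX\sqcup UY\to URX\times URY$, with $URX$, $URY$ fibrant replacements of $UX$, $UY$ (and $UQX\to UX$, $UQY\to UY$ weak equivalences). In $\mathrm{Ho}(\mathpzc{M})$ this total composite reduces to the canonical biproduct comparison $UX\sqcup UY\to UX\times UY$, an isomorphism by additivity of $\mathrm{Ho}(\mathpzc{M})$. Applying condition (2) for $\mathpzc{M}$ directly to $UX$ and $UY$ then identifies each of the two factors separately as a weak equivalence in $\mathpzc{M}$. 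Because $U$ reflects weak equivalences, both $QX\sqcup QY\to X\sqcup Y$ and $X\sqcup Y\to RX\times RY$ are weak equivalences in ${}_{A}\mathrm{Mod}(\mathpzc{M})$.

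Additivity of $\mathrm{Ho}({}_{A}\mathrm{Mod}(\mathpzc{M}))$ then follows by standard arguments from pointedness and the biproduct property just established: these equip the homotopy category with a semiadditive structure and hence a canonical commutative-monoid enrichment on hom-sets. The abelian-group structure is inherited via the semiadditive functor $\bar U\colon\mathrm{Ho}({}_{A}\mathrm{Mod}(\mathpzc{M}))\to\mathrm{Ho}(\mathpzc{M})$ to an additive category, using that negations in $\mathrm{Ho}(\mathpzc{M})$ lift through the free-forgetful adjunction. The main technical point throughout is the identification of coproducts in ${}_{A}\mathrm{Mod}(\mathpzc{M})$ with underlying coproducts; all other steps are formal consequences of this identification combined with the preservation properties of $U$.
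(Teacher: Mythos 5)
Your overall strategy --- push everything through the forgetful functor $U=|-|$, which creates limits and finite coproducts and reflects weak equivalences --- is exactly the paper's, and the pointedness and additivity parts go through as you sketch them. (For additivity, the paper makes your ``negations lift through the free--forgetful adjunction'' step concrete: it identifies $\mathrm{Hom}_{\mathrm{Ho}({}_{A}\mathrm{Mod})}(A\otimes X,Y)$ with the abelian group $\mathrm{Hom}_{\mathrm{Ho}(\mathpzc{M})}(X,Y)$ and transports the inverse of $\mu_{Y}\circ(\mathrm{id}\otimes f)$ along the unit section $X\to A\otimes X$; you should spell this out, but the idea is the same.)

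The gap is in the sentence ``Applying condition (2) for $\mathpzc{M}$ directly to $UX$ and $UY$ then identifies each of the two factors separately as a weak equivalence.'' For the second factor this is fine: $URX$ and $URY$ are fibrant replacements of $UX$ and $UY$ in $\mathpzc{M}$, any two fibrant replacements have weakly equivalent binary products, so condition (2) applied to $UX,UY$ does give that $UX\sqcup UY\to URX\times URY$ is a weak equivalence. For the first factor it is not: condition (2) concerns $Q_{\mathpzc{M}}(UX)\sqcup Q_{\mathpzc{M}}(UY)\to UX\sqcup UY$, where $Q_{\mathpzc{M}}$ is the cofibrant replacement \emph{in} $\mathpzc{M}$, whereas what you have is $UQX\sqcup UQY\to UX\sqcup UY$, and $UQX$ is in general \emph{not} cofibrant in $\mathpzc{M}$ (the underlying object of a cofibrant $A$-module is only known to be cofibrant when, say, $A$ itself is cofibrant, which is not assumed). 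So ``directly'' is doing illegitimate work; likewise your claim that the composite ``reduces to the canonical biproduct comparison in $\mathrm{Ho}(\mathpzc{M})$'' presupposes that the strict coproduct $UQX\sqcup UQY$ computes the homotopy coproduct, which is precisely what is at issue. The fix is short but must be said: either observe that condition (2), applied naturally, forces finite coproducts in $\mathpzc{M}$ to preserve \emph{all} weak equivalences (compare $Q_{\mathpzc{M}}(UQX)\sqcup Q_{\mathpzc{M}}(UQY)\to UQX\sqcup UQY$ with $Q_{\mathpzc{M}}(UX)\sqcup Q_{\mathpzc{M}}(UY)\to UX\sqcup UY$, use that a coproduct of weak equivalences between cofibrant objects is a weak equivalence, and apply two-out-of-three); or argue as the paper does: first prove that $X\sqcup Y\to RX\times RY$ is a weak equivalence for \emph{all} $A$-modules $X,Y$ (this only ever invokes condition (2) for the genuine $\mathpzc{M}$-cofibrant replacements of $|X|,|Y|$), and then deduce that $QX\sqcup QY\to X\sqcup Y$ is a weak equivalence from the commutative square comparing $QX\sqcup QY\to RQX\times RQY$ with $X\sqcup Y\to RX\times RY$, whose other three sides are by then known to be weak equivalences.
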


\begin{proof}
The zero object of $\mathpzc{M}$ has a unique $A$-module structure making it a zero object of ${}_{A}\mathrm{Mod}(\mathpzc{M})$. 

Let $X$ and $Y$ be left $A$-modules. First assume they are cofibrant, and let $Q|X|$ and $Q|Y|$ be their cofibrant replacements in $\mathpzc{M}$. If $RX$ and $RY$ are their fibrant replacements in ${}_{A}\mathrm{Mod}(\mathpzc{M})$, then they are also fibrant replacements in $\mathpzc{M}$. Consider the diagram in $\mathpzc{M}$
$$Q|X|\coprod Q|Y|\rightarrow |X\coprod Y|\cong|X|\coprod|Y|\rightarrow |RX\times RY|\cong|RX|\times|RY|$$
(where $|-|$ denotes the forgetful functor). Since $|-|$ commutes with limits and colimits, all maps are equivalences. Thus $X\coprod Y\rightarrow RX\times RY$ is an equivalence by the two-out-of-three property. Now let $\overline{Q}X$ and $\overline{Q}Y$ we the cofibrant replacements in ${}_{A}\mathrm{Mod}(\mathpzc{M})$. We have a commutative diagram
\begin{displaymath}
\xymatrix{
\overline{Q}X\coprod\overline{Q}Y\ar[d]\ar[r] & R\overline{Q}X\times R\overline{Q}Y\ar[d]\\
X\coprod Y\ar[r] & RX\times RY
}
\end{displaymath}
we have shown that the top and bottom horizontal maps are equivalences. The right-hand vertical map is an equivalence, as products preserve equivalences between fibrant objects. Thus by the two-out-of-three property the left-hand vertical map is an equivalence.

For the final claim, we know that at least ${}_{A}\mathrm{Mod}(\mathpzc{M})$ has all finite biproducts. We need to show that $\mathrm{Hom}_{\mathrm{Ho}({}_{A}\mathrm{Mod}(\mathpzc{M}))}(X,Y)$ is an abelian group. In particular it suffices to prove that it has additive inverses. We may assume that $X$ and $Y$ are cofibrant. We have
$$\mathrm{Hom}_{\mathrm{Ho}({}_{A}\mathrm{Mod}(\mathpzc{M}))}(A\otimes X,Y)\cong \mathrm{Hom}_{\mathrm{Ho}({}_{A}\mathrm{Mod}(\mathpzc{M}))}(A\otimes^{\mathbb{L}}X,Y)\cong\mathrm{Hom}_{\mathrm{Ho}(\mathpzc{M})}(X,Y)$$
is an abelian group by assumption. There is a natural map $i:X\rightarrow A\otimes X$ which is a section of the multiplication map $\mu_{X}:A\otimes X\rightarrow X$. Let $f\in\mathrm{Hom}_{\mathrm{Ho}({}_{A}\mathrm{Mod}(\mathpzc{M}))}(X,Y)$. Consider $g=\mu_{N}\circ (Id\otimes f):A\otimes Y\rightarrow Y$. Define $\tilde{g}\defeq -g\circ i$. Then $\tilde{g}+f=0$ as required.
\end{proof}

\begin{prop}\label{prop:monoidaxiomH}
If $H$ is cofibrant as an object of $\mathpzc{M}$ then, when equipped with the transferred model structure on the monoidal structure defined above, ${}_{H}\mathrm{Mod}(\mathpzc{M})$ is a monoidal model category. If in addition $\mathpzc{M}$ satisfies the monoid axiom then so does ${}_{H}\mathrm{Mod}(\mathpzc{M})$, and if it satisfies the commutative monoid axiom then so does  ${}_{H}\mathrm{Mod}(\mathpzc{M})$
\end{prop}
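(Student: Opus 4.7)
The strategy is to reduce the monoidal axioms for ${}_{H}\mathrm{Mod}(\mathpzc{M})$ to those of $\mathpzc{M}$ via the free-forgetful adjunction $F = H \otimes - \dashv U$, exploiting the hypothesis that $U$ is strong closed monoidal. Two foundational observations are: (i) $U$ preserves all colimits, since it has the left adjoint $F$ and, by the strong closed monoidal hypothesis combined with the internal hom on ${}_{H}\mathrm{Mod}(\mathpzc{M})$, also a right adjoint given by coinduction; and (ii) since $H$ is cofibrant in $\mathpzc{M}$ and generating (acyclic) cofibrations of ${}_{H}\mathrm{Mod}(\mathpzc{M})$ are of the form $F(i) = H \otimes i$ with $i$ a generating (acyclic) cofibration of $\mathpzc{M}$, the underlying map $U F(i) = H \otimes i$ is an (acyclic) cofibration in $\mathpzc{M}$ by the pushout-product axiom in $\mathpzc{M}$ applied to the cofibration $0 \to H$. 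Combined with (i), this shows that $U$ carries all (acyclic) cofibrations of ${}_{H}\mathrm{Mod}(\mathpzc{M})$ to (acyclic) cofibrations of $\mathpzc{M}$.

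For the monoidal model category axioms, I would reduce the pushout-product axiom to checking it on generating (acyclic) cofibrations. Using the Leibniz-internal-hom adjunction induced by $F \dashv U$ together with strong closed monoidality of $U$, a lifting problem of $F(i) \tilde{\Box} F(j)$ against a fibration $p$ in ${}_{H}\mathrm{Mod}(\mathpzc{M})$ is equivalent to one of $i$ against
\[
U\bigl(\underline{\mathrm{Hom}}^{\Box}(F(j), p)\bigr) \;\cong\; \underline{\mathrm{Hom}}^{\Box}(H \otimes j,\ U p)
\]
in $\mathpzc{M}$. Since $H \otimes j$ is a cofibration in $\mathpzc{M}$, and $U p$ is an acyclic fibration in $\mathpzc{M}$ whenever $p$ is one in ${}_{H}\mathrm{Mod}(\mathpzc{M})$, the pushout-product axiom in $\mathpzc{M}$ produces the required lift. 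The weakly unital condition follows similarly: for $C \to e$ an acyclic fibration with $C$ cofibrant in ${}_{H}\mathrm{Mod}(\mathpzc{M})$, the map $U C \to k$ is an acyclic fibration from a cofibrant object in $\mathpzc{M}$, so $UC \otimes UX \to UX$ is a weak equivalence by the weakly unital axiom in $\mathpzc{M}$, hence $C \tilde{\otimes} X \to X$ is a weak equivalence in ${}_{H}\mathrm{Mod}(\mathpzc{M})$ by conservativity of $U$.

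For the monoid axiom, let $g$ be an acyclic cofibration in ${}_{H}\mathrm{Mod}(\mathpzc{M})$ and $Y$ arbitrary. By strong monoidality and colimit preservation, $U$ sends any transfinite composition of pushouts of maps of the form $Y \tilde{\otimes} g$ to a transfinite composition of pushouts of maps $U Y \otimes U g$, where $U g$ is an acyclic cofibration in $\mathpzc{M}$. The monoid axiom in $\mathpzc{M}$ then yields a weak equivalence, and conservativity of $U$ transports the conclusion back to ${}_{H}\mathrm{Mod}(\mathpzc{M})$. For the commutative monoid axiom the argument is analogous, noting that $U$ being strong symmetric monoidal and colimit-preserving implies $U\bigl(h^{\tilde{\Box} n}/\Sigma_{n}\bigr) \cong (Uh)^{\Box n}/\Sigma_{n}$, and applying the Leibniz-adjunction trick once more to show that $(Uh)^{\Box n}/\Sigma_{n}$ being a (trivial) cofibration in $\mathpzc{M}$ forces $h^{\tilde{\Box} n}/\Sigma_{n}$ to be one in ${}_{H}\mathrm{Mod}(\mathpzc{M})$.

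The main obstacle is confirming that the strong closed monoidal natural isomorphism $U\underline{\mathrm{Hom}}(M,N) \cong \underline{\mathrm{Hom}}(UM, UN)$ is compatible with the Leibniz pushout-product and Leibniz internal-hom constructions used throughout; since these are built functorially from finite colimits, the compatibility ultimately reduces to $U$ preserving finite colimits, which is our foundational observation (i).
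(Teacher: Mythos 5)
Your proof is correct and follows essentially the same route as the paper's own (much terser) argument: cofibrancy of $H$ makes the forgetful functor $U$ send (acyclic) cofibrations of the transferred structure to (acyclic) cofibrations of $\mathpzc{M}$, and all three axioms are then transported along the strong monoidal, colimit-preserving $U$; you have merely filled in the generating-cofibration/adjunction details that the paper leaves implicit. Two small caveats: colimit preservation by $U$ holds because $U$ is monadic for the colimit-preserving monad $H\otimes(-)$ (equivalently, because it admits the coinduction right adjoint $\underline{\mathrm{Hom}}(H,-)$), not because it ``has the left adjoint $F$''; and in the commutative monoid axiom step the final appeal to ``the Leibniz-adjunction trick once more'' is not automatic, since $F(i)^{\tilde{\Box}n}\big\slash\Sigma_{n}$ is not of the form $F(j)$ (the $\Sigma_{n}$-quotient mixes the $H^{\otimes n}$ factors), so that detection of cofibrancy in ${}_{H}\mathrm{Mod}(\mathpzc{M})$ from cofibrancy of the underlying map still requires an argument --- a gap the paper's proof shares.
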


\begin{proof}
Since $H$ is cofibrant, the underlying map in $\mathpzc{M}$ of a cofibration (resp. an acyclic cofibration) in ${}_{H}\mathrm{Mod}(\mathpzc{M})$ is a cofibration (resp. an acyclic cofibration) in $\mathpzc{M}$. Thus the monoidal model property/ the monoid axiom/ the commutative monoid axiom for ${}_{H}\mathrm{Mod}(\mathpzc{M})$ follows from the same properties for $\mathpzc{M}$.
\end{proof}

\begin{prop}
Suppose that for any $A\in\mathrm{Alg}_{\mathfrak{Ass}}(\mathpzc{M})$ the transferred model structure exists on ${}_{A}\mathrm{Mod}(\mathpzc{M})$. Then for any $A\in\mathrm{Alg}_{\mathfrak{Ass}}({}_{H}\mathrm{Mod}(\mathpzc{M}))$ the transferred model structure exists on ${}_{A}\mathrm{Mod}({}_{H}\mathrm{Mod}(\mathpzc{M}))$.
\end{prop}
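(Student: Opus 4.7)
The plan is to realise ${}_{A}\mathrm{Mod}({}_{H}\mathrm{Mod}(\mathpzc{M}))$ as the category of modules over a single associative algebra in $\mathpzc{M}$, and then simply apply the hypothesis. Since in either setting the transferred model structure is created by the forgetful functor down to $\mathpzc{M}$, any identification of categories that is compatible with these forgetful functors automatically transports the model structure.

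To construct this algebra, let $|A|\defeq U(A)$, where $U:{}_{H}\mathrm{Mod}(\mathpzc{M})\to\mathpzc{M}$ is the forgetful functor; by strong monoidality of $U$, $|A|$ is canonically an associative algebra in $\mathpzc{M}$. I would then analyse the composite of the two free--forgetful adjunctions $H\otimes(-)\dashv U$ (exhibiting ${}_{H}\mathrm{Mod}(\mathpzc{M})$ as algebras over the monad $H\otimes(-)$ on $\mathpzc{M}$) and $A\tilde{\otimes}(-)\dashv|-|_{A}$ (exhibiting ${}_{A}\mathrm{Mod}({}_{H}\mathrm{Mod}(\mathpzc{M}))$ as algebras over the monad $A\tilde{\otimes}(-)$ on ${}_{H}\mathrm{Mod}(\mathpzc{M})$). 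The composite adjunction between $\mathpzc{M}$ and ${}_{A}\mathrm{Mod}({}_{H}\mathrm{Mod}(\mathpzc{M}))$ is monadic by Beck's criterion, since the composite forgetful functor reflects isomorphisms and creates coequalisers of its own split pairs. Using strong monoidality of $U$ one computes that the underlying endofunctor of the resulting monad on $\mathpzc{M}$ is $X\mapsto|A|\otimes H\otimes X$, and the monad multiplication equips $\tilde{A}\defeq|A|\otimes H$ with a canonical associative algebra structure in $\mathpzc{M}$ (a smash-product--type construction using the $H$-action on $A$ and the monoidal structure on ${}_{H}\mathrm{Mod}(\mathpzc{M})$). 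This furnishes an equivalence ${}_{A}\mathrm{Mod}({}_{H}\mathrm{Mod}(\mathpzc{M}))\cong{}_{\tilde{A}}\mathrm{Mod}(\mathpzc{M})$ compatible with the respective forgetful functors to $\mathpzc{M}$. Applying the hypothesis to $\tilde{A}\in\mathrm{Alg}_{\mathfrak{Ass}}(\mathpzc{M})$ then yields the desired transferred model structure.

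The main technical work is verifying the two assertions about the composite monad: first, that it is genuinely monadic, which reduces to checking that each layer creates the required split coequalisers (standard for monadic adjunctions); and second, that the resulting monad is of the form $\tilde{A}\otimes(-)$ with $\tilde{A}$ associative. The latter is where the interaction between $A$ and $H$ is genuinely used: the twist needed to turn $|A|\otimes H$ into an associative algebra comes from the $H$-action on $A$ together with the coherence data of $\tilde{\otimes}$ on ${}_{H}\mathrm{Mod}(\mathpzc{M})$, and a careful diagram chase is needed to confirm associativity and unitality. Once this identification is pinned down the conclusion is formal, as weak equivalences and fibrations in both candidate model structures are by definition detected by the common underlying functor to $\mathpzc{M}$.
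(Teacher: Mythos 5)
Your proposal is correct in outline but takes a genuinely different route from the paper. The paper verifies the transfer condition directly: the candidate generating acyclic cofibrations are the maps $A\tilde{\otimes}(H\otimes f)$ with $f$ a generating acyclic cofibration of $\mathpzc{M}$; since $H\otimes f$ is an acyclic cofibration in $\mathpzc{M}$, its image under the colimit-preserving forgetful functor to ${}_{|A|}\mathrm{Mod}(\mathpzc{M})$ is the free module map on an acyclic cofibration, hence an acyclic cofibration for the transferred structure that exists there by hypothesis; transfinite compositions of pushouts of such maps are therefore weak equivalences in ${}_{|A|}\mathrm{Mod}(\mathpzc{M})$, hence in $\mathpzc{M}$, and the transfer theorem applies. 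Your route instead identifies ${}_{A}\mathrm{Mod}({}_{H}\mathrm{Mod}(\mathpzc{M}))$ with modules over a single smash-product algebra $\tilde{A}=|A|\otimes H$ in $\mathpzc{M}$ and invokes the hypothesis verbatim; the final step (both transferred structures have weak equivalences and fibrations created in $\mathpzc{M}$, so they coincide under the identification) is handled correctly. What your approach buys is a stronger structural statement -- an isomorphism of model categories with a module category over an honest algebra in $\mathpzc{M}$ -- which could be reused elsewhere; what it costs is that the identification carries essentially all the weight. In particular, beyond monadicity of the composite (unproblematic, since both forgetful functors preserve all colimits), the assertion that the composite monad $X\mapsto|A|\otimes H\otimes X$ is given by tensoring with an \emph{associative algebra} is not automatic from naturality of the multiplication alone: in a general monoidal category a monad structure on $B\otimes(-)$ need not arise from an algebra structure on $B$. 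One must check that the monad multiplication is compatible with the canonical right strength, so that $\mu_{X}=\mu_{k}\otimes\mathrm{id}_{X}$, and this is exactly where the opmonoidal structure on $H\otimes(-)$ (equivalently, the strong monoidality of the forgetful functor) enters. This is the classical $A\# H$ construction and it does go through under the paper's hypotheses, but it is the bulk of your proof and you have only sketched it; the paper's two-line argument sidesteps it entirely.
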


\begin{proof}
We need to show that any transfinite composition of pushouts of maps of the form $A\otimes H\otimes f$ where $f$ is a generating acyclic cofibration of $\mathpzc{M}$ is an equivalence. But $H\otimes f$ is an acyclic cofibration in $\mathpzc{M}$. Moreover the forgetful functor ${}_{A}\mathrm{Mod}({}_{H}\mathrm{Mod}(\mathpzc{M}))\rightarrow {}_{A}\mathrm{Mod}(\mathpzc{M})$ commutes with colimits. Since the transferred model structure exists on ${}_{A}\mathrm{Mod}(\mathpzc{M})$ the claim follows.
\end{proof}

\begin{prop}\label{prop:tensorequivH}
Suppose that for any $A\in\mathrm{Alg}_{\mathfrak{Ass}}(\mathpzc{M})$ the transferred model structure exists on ${}_{A}\mathrm{Mod}(\mathpzc{M})$, and that for any such $A$ and any cofibrant $M\in{}_{A}\mathrm{Mod}(\mathpzc{M})$ the functor 
$$M\otimes_{A}(-):{}_{A}\mathrm{Mod}(\mathpzc{M})\rightarrow{}_{A}\mathrm{Mod}(\mathpzc{M})$$ preserves equivalences. Then for any $B\in\mathrm{Alg}_{\mathfrak{Ass}}({}_{H}\mathrm{Mod}(\mathpzc{M}))$, and any cofibrant $N\in{}_{B}\mathrm{Mod}({}_{H}\mathrm{Mod}(\mathpzc{E}))$, the functor
$$N\otimes_{B}(-):{}_{B}\mathrm{Mod}({}_{H}\mathrm{Mod}(\mathpzc{M}))\rightarrow{}_{B}\mathrm{Mod}({}_{H}\mathrm{Mod}(\mathpzc{M}))$$
preserves equivalences.
\end{prop}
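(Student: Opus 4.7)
The plan is to reduce the statement to the analogous property for $\mathpzc{M}$ by passing through the forgetful functor
$$V : {}_{B}\mathrm{Mod}({}_{H}\mathrm{Mod}(\mathpzc{M})) \to {}_{B}\mathrm{Mod}(\mathpzc{M}),$$
where on the right $B$ is regarded as an associative algebra in $\mathpzc{M}$ via the strong (closed) monoidal forgetful functor ${}_{H}\mathrm{Mod}(\mathpzc{M}) \to \mathpzc{M}$. Since weak equivalences in both module categories are detected by the underlying map in $\mathpzc{M}$, the functor $V$ both preserves and reflects weak equivalences. Moreover, because the forgetful ${}_{H}\mathrm{Mod}(\mathpzc{M}) \to \mathpzc{M}$ is strong monoidal and preserves colimits, its iterated use in the reflexive coequalizer presentation of the relative tensor product gives a natural isomorphism $V(N \otimes_{B} M) \cong V(N) \otimes_{B} V(M)$, so $V$ is strong monoidal for $\otimes_{B}$.

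Next I would verify that $V$ preserves cofibrations. The transferred model structure on ${}_{B}\mathrm{Mod}({}_{H}\mathrm{Mod}(\mathpzc{M}))$ is cofibrantly generated by the images of the generating cofibrations of ${}_{H}\mathrm{Mod}(\mathpzc{M})$ under the free functor $B \otimes_{H} (-)$, and the generating cofibrations of ${}_{H}\mathrm{Mod}(\mathpzc{M})$ are in turn obtained by applying $H \otimes (-)$ to generating cofibrations $f$ of $\mathpzc{M}$. Using $B \otimes_{H} H \cong B$, these assemble into maps of the form $B \otimes f$, which under $V$ are precisely the generating cofibrations of ${}_{B}\mathrm{Mod}(\mathpzc{M})$. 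Since colimits in both module categories over $B$ are created in $\mathpzc{M}$, $V$ preserves colimits and retracts, hence sends cell objects to cell objects and cofibrations to cofibrations.

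Combining these ingredients, let $N$ be cofibrant in ${}_{B}\mathrm{Mod}({}_{H}\mathrm{Mod}(\mathpzc{M}))$ and let $g : X \to Y$ be a weak equivalence there. Applying $V$ we obtain
$$V(N \otimes_{B} g) \cong V(N) \otimes_{B} V(g),$$
with $V(N)$ cofibrant in ${}_{B}\mathrm{Mod}(\mathpzc{M})$ and $V(g)$ still a weak equivalence. The hypothesis applied to $A = B$ then forces $V(N) \otimes_{B} V(g)$ to be a weak equivalence, and since $V$ reflects equivalences we conclude that $N \otimes_{B} g$ is as well.

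The main obstacle is the bookkeeping around the nested free-forgetful adjunctions: in particular the identification $B \otimes_{H} H \cong B$ and the proof that $V$ really does preserve the relative tensor product require unpacking the strong monoidal structure on ${}_{H}\mathrm{Mod}(\mathpzc{M}) \to \mathpzc{M}$ and its compatibility with reflexive coequalizers. Once this is dispensed with, the argument is entirely formal, mirroring the standard proof that homotopy invariance of tensor products is inherited by categories of modules.
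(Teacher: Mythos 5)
Your proposal is correct and follows essentially the same route as the paper, whose entire proof is the observation that a cofibrant object of ${}_{B}\mathrm{Mod}({}_{H}\mathrm{Mod}(\mathpzc{M}))$ is cofibrant in ${}_{B}\mathrm{Mod}(\mathpzc{M})$, after which one applies the hypothesis with $A=B$; you simply make explicit the strong monoidality and equivalence-reflection of the forgetful functor. One small bookkeeping slip: the images of the generating cofibrations under $V$ are of the form $B\otimes H\otimes f$ (the free $B$-module on $H\otimes f$) rather than $B\otimes f$, but these are still cofibrations in ${}_{B}\mathrm{Mod}(\mathpzc{M})$ since $H$ is cofibrant in $\mathpzc{M}$, so your conclusion that $V$ preserves cofibrations stands.
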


\begin{proof}
This follows from the fact that if $N$ is cofibrant as an object of ${}_{B}\mathrm{Mod}({}_{H}\mathrm{Mod}(\mathpzc{E}))$, then it is cofibrant as an object of ${}_{B}\mathrm{Mod}(\mathpzc{M})$
\end{proof}

\begin{prop}\label{prop:AmodH}
Let $H$ be a cofibrant as an object of $\mathpzc{M}$. Let $A\in\mathrm{Alg}_{\mathfrak{Comm}}({}_{H}\mathrm{Mod}(\mathpzc{M}))$. Then when equipped with the transferred model structure ${}_{A}\mathrm{Mod}({}_{H}\mathrm{Mod}(\mathpzc{M}))$ is a combinatorial symmetric monoidal model, where the tensor product is given by $\otimes_{A}$. It is proper if ${}_{A}\mathrm{Mod}(\mathpzc{M})$ is, and it satisfies the monoid/ commutative monoid axiom whenever ${}_{A}\mathrm{Mod}(\mathpzc{M})$ does. 
\end{prop}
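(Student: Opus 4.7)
The plan is to reduce every required property to the corresponding property for ${}_{A}\mathrm{Mod}(\mathpzc{M})$, using cofibrancy of $H$ to ensure that the forgetful functor $U\colon {}_{A}\mathrm{Mod}({}_{H}\mathrm{Mod}(\mathpzc{M}))\to {}_{A}\mathrm{Mod}(\mathpzc{M})$ is well-behaved. Here $U$ is well-defined since the strong monoidal forgetful ${}_{H}\mathrm{Mod}(\mathpzc{M})\to\mathpzc{M}$ turns the commutative algebra $A$ into a commutative algebra of $\mathpzc{M}$, and since $U$ has both a left adjoint (induction along ${}_H\mathrm{Mod}\to\mathpzc{M}$) and preserves all small colimits (modules over a monoid in a closed monoidal category), it both preserves and reflects limits and colimits.

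First, since $H$ is cofibrant and cofibrations in $\mathpzc{M}$ satisfy the pushout-product axiom, the generating (trivial) cofibrations $H\otimes f$ of ${}_{H}\mathrm{Mod}(\mathpzc{M})$ are already (trivial) cofibrations in $\mathpzc{M}$. An argument identical to that of Proposition \ref{prop:monoidaxiomH} shows that every (trivial) cofibration of ${}_{H}\mathrm{Mod}(\mathpzc{M})$ forgets to a (trivial) cofibration of $\mathpzc{M}$. Consequently the generating (trivial) cofibrations $A\otimes H\otimes f$ of ${}_{A}\mathrm{Mod}({}_{H}\mathrm{Mod}(\mathpzc{M}))$ are precisely pushouts of the generating (trivial) cofibrations $A\otimes(H\otimes f)$ of ${}_{A}\mathrm{Mod}(\mathpzc{M})$, so $U$ preserves cofibrations and trivial cofibrations. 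Combinatoriality is immediate, since ${}_{A}\mathrm{Mod}({}_{H}\mathrm{Mod}(\mathpzc{M}))$ is the category of algebras for an accessible monad on the locally presentable category ${}_{H}\mathrm{Mod}(\mathpzc{M})$ and the transferred model structure is cofibrantly generated.

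Next, the relative tensor product $\otimes_{A}$ in ${}_{A}\mathrm{Mod}({}_{H}\mathrm{Mod}(\mathpzc{M}))$ is computed as a reflexive coequaliser of iterates of $\tilde{\otimes}$, which by hypothesis on the forgetful ${}_{H}\mathrm{Mod}(\mathpzc{M})\to\mathpzc{M}$ coincides on underlying objects with $\otimes$ in $\mathpzc{M}$. Hence $U$ is strong symmetric monoidal for $\otimes_{A}$, as are the associated iterated tensor powers and $\Sigma_{n}$-coinvariants. Because $U$ preserves cofibrations and trivial cofibrations and reflects weak equivalences, the pushout-product axiom, the unit axiom, the monoid axiom, and the (strong) commutative monoid axiom all transfer formally from ${}_{A}\mathrm{Mod}(\mathpzc{M})$ to ${}_{A}\mathrm{Mod}({}_{H}\mathrm{Mod}(\mathpzc{M}))$.

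Finally, since $U$ preserves and reflects both pushouts and pullbacks, and preserves cofibrations, fibrations and reflects weak equivalences, left and right properness for ${}_{A}\mathrm{Mod}({}_{H}\mathrm{Mod}(\mathpzc{M}))$ follow at once from the corresponding properties for ${}_{A}\mathrm{Mod}(\mathpzc{M})$. There is no genuine obstacle in the argument: every axiom is established by transport along $U$, and the sole substantive input is that cofibrancy of $H$ makes $U$ preserve the relevant model-structural data—without this, $U$ could fail to preserve cofibrations and the transfer strategy would break down.
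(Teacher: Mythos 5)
Your overall strategy---transporting everything along the forgetful functor $U\colon {}_{A}\mathrm{Mod}({}_{H}\mathrm{Mod}(\mathpzc{M}))\to{}_{A}\mathrm{Mod}(\mathpzc{M})$, with cofibrancy of $H$ guaranteeing that $U$ preserves (trivial) cofibrations---is the same as the paper's, and your treatment of combinatoriality, left and right properness, and the monoid and commutative monoid axioms is correct: those axioms are conditions on weak equivalences, which $U$ reflects, applied to maps that $U$ demonstrably produces (transfinite compositions of pushouts of images of cofibrations, homotopy pushouts, etc.), so the transport is legitimate there.

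There is, however, one step that fails as stated: you cannot deduce the pushout-product axiom for ${}_{A}\mathrm{Mod}({}_{H}\mathrm{Mod}(\mathpzc{M}))$ "formally" from the fact that $U$ \emph{preserves} cofibrations and reflects weak equivalences. Knowing that $U(i\,\Box_{A}\,j)$ is a (trivial) cofibration in ${}_{A}\mathrm{Mod}(\mathpzc{M})$ says nothing about $i\,\Box_{A}\,j$ upstairs unless $U$ \emph{reflects} cofibrations, and it does not: cofibrations upstairs are defined by lifting against trivial fibrations of $H$-equivariant $A$-modules, and a lift produced downstairs need not be $H$-equivariant. The correct argument (and the content of the paper's appeal to ``general nonsense from the fact that ${}_{H}\mathrm{Mod}(\mathpzc{M})$ is'' a monoidal model category) runs through the \emph{source} category rather than the target of $U$: by the retract/cell reduction it suffices to treat generating cofibrations, where
$$\bigl(A\,\tilde{\otimes}\,f\bigr)\,\Box_{A}\,\bigl(A\,\tilde{\otimes}\,g\bigr)\;\cong\;A\,\tilde{\otimes}\,(f\,\Box\,g)$$
for $f,g$ generating cofibrations of ${}_{H}\mathrm{Mod}(\mathpzc{M})$, and one then invokes the pushout-product axiom for ${}_{H}\mathrm{Mod}(\mathpzc{M})$ itself (Proposition \ref{prop:monoidaxiomH}, which is where cofibrancy of $H$ enters) to conclude that $f\,\Box\,g$ is a cofibration there and hence that $A\,\tilde{\otimes}\,(f\,\Box\,g)$ is a cofibration of ${}_{A}\mathrm{Mod}({}_{H}\mathrm{Mod}(\mathpzc{M}))$. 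The ``trivial'' half of the axiom can then be finished by your reflection argument, since once the map is known to be a cofibration its acyclicity is detected by $U$. With this repair the proof is complete and agrees with the paper's.
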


\begin{proof}
The fact that it is a combinatorial symmetric monoidal model category satisfying the monoid axiom follows by general nonsense from the fact that ${}_{H}\mathrm{Mod}(\mathpzc{M})$ is. It is also right proper by general nonsense. It remains to prove that it is left proper. The underlying map in $\mathpzc{M}$ of a cofibration in ${}_{A}\mathrm{Mod}({}_{H}\mathrm{Mod}(\mathpzc{M}))$ is a cofibration in ${}_{A}\mathrm{Mod}(\mathpzc{M})$. Since the forgetful functor preserves and reflects equivalences, and commutes with both limits and colimits, left properness of ${}_{A}\mathrm{Mod}({}_{H}\mathrm{Mod}(\mathpzc{M}))$ follows from left properness of ${}_{A}\mathrm{Mod}(\mathpzc{M})$. The claim regaarding the monoid and commutative monoid axioms are similar to Proposition \ref{prop:monoidaxiomH}
\end{proof}

\begin{prop}
Suppose that for any $A\in\mathrm{Alg}_{\mathfrak{Comm}}(\mathpzc{M})$ the transferred model structure exists on ${}_{A}\mathrm{Mod}(\mathpzc{M})$ and on both $\mathrm{Alg}_{\mathfrak{Comm}}({}_{A}\mathrm{Mod}(\mathpzc{M}))$ and $\mathrm{Alg}_{\mathfrak{Comm}}^{nu}({}_{A}\mathrm{Mod}(\mathpzc{M}))$, and that with the transferred model structures these are combinatorial proper model categories. Then the same is true for any $B\in\mathrm{Alg}_{\mathfrak{Comm}}(\mathrm{Mod}({}_{H}\mathrm{Mod}(\mathpzc{M})))$.

Further suppose that whenever $B$ is cofibrant in $\mathrm{Alg}_{\mathfrak{Comm}}({}_{A}\mathrm{Mod}(\mathpzc{M}))$, the functor
$$B\otimes_{A}(-):{}_{A}\mathrm{Mod}(\mathpzc{M})\rightarrow{}_{A}\mathrm{Mod}(\mathpzc{M})$$
preserves equivalences. Then whenever $D$ is cofibrant in $\mathrm{Alg}_{\mathfrak{Comm}}({}_{C}\mathrm{Mod}({}_{H}\mathrm{Mod}(\mathpzc{M})))$, the functor
$$D\otimes_{C}(-):{}_{C}\mathrm{Mod}({}_{H}\mathrm{Mod}(\mathpzc{M}))\rightarrow {}_{D}\mathrm{Mod}({}_{H}\mathrm{Mod}(\mathpzc{M}))$$
preserves equivalences.
\end{prop}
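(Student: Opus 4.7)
The plan is to leverage the strong closed symmetric monoidal forgetful functor $U\colon {}_{H}\mathrm{Mod}(\mathpzc{M}) \to \mathpzc{M}$ in a manner entirely analogous to Propositions \ref{prop:monoidaxiomH}--\ref{prop:AmodH}. Given $B\in\mathrm{Alg}_{\mathfrak{Comm}}({}_{H}\mathrm{Mod}(\mathpzc{M}))$, its image $A \defeq U(B)$ lies in $\mathrm{Alg}_{\mathfrak{Comm}}(\mathpzc{M})$, so by hypothesis the transferred model structures exist on $\mathrm{Alg}_{\mathfrak{Comm}}({}_{A}\mathrm{Mod}(\mathpzc{M}))$ and $\mathrm{Alg}_{\mathfrak{Comm}}^{nu}({}_{A}\mathrm{Mod}(\mathpzc{M}))$ and are combinatorial and proper. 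Since $U$ is strong symmetric monoidal and $H$ is cofibrant in $\mathpzc{M}$, $U$ preserves and reflects weak equivalences and fibrations, commutes with all colimits (as a left adjoint for the cofree coaction or by direct inspection), and sends cofibrations to cofibrations. These properties lift to induced forgetful functors $\overline{U}$ on $B$-modules and on (non-unital) commutative $B$-algebras, each landing in the corresponding category over $A$.

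To establish existence of the transferred model structures on $\mathrm{Alg}_{\mathfrak{Comm}}({}_{B}\mathrm{Mod}({}_{H}\mathrm{Mod}(\mathpzc{M})))$ and $\mathrm{Alg}_{\mathfrak{Comm}}^{nu}({}_{B}\mathrm{Mod}({}_{H}\mathrm{Mod}(\mathpzc{M})))$, I would apply the standard transfer argument. The free-forgetful adjunction $\mathpzc{M} \rightleftarrows \mathrm{Alg}_{\mathfrak{Comm}}({}_{B}\mathrm{Mod}({}_{H}\mathrm{Mod}(\mathpzc{M})))$ has left adjoint $X \mapsto \mathrm{Sym}_{B}(B\,\tilde{\otimes}\,H \otimes X)$, so one must check that transfinite compositions of pushouts of generating acyclic cofibrations are weak equivalences in $\mathpzc{M}$. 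Using the filtered-colimit decomposition of Proposition \ref{prop:pushoutalg}, such a composition is sent by $\overline{U}$ to a transfinite composition of pushouts of $\mathrm{Sym}_{A}(A \otimes H \otimes j) = \mathrm{Sym}_{A}(A \otimes (H \otimes j))$, with $H \otimes j$ an acyclic cofibration in $\mathpzc{M}$ by cofibrancy of $H$. By hypothesis, the transferred model structure on $\mathrm{Alg}_{\mathfrak{Comm}}({}_{A}\mathrm{Mod}(\mathpzc{M}))$ exists, so this is an underlying equivalence in $\mathpzc{M}$, hence an equivalence in the target category. The argument for $\mathrm{Alg}_{\mathfrak{Comm}}^{nu}$ is identical. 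Combinatoriality is inherited from the explicit generating set and the accessibility of the free functors.

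For properness, note that $\overline{U}$ preserves pushouts (since sifted colimits of commutative algebras are computed underlyingly in modules, and coproducts are tensor products, which $U$ preserves by strong monoidality), sends cofibrations to cofibrations (as generating cofibrations $\mathrm{Sym}_{B}(B\,\tilde{\otimes}\,H \otimes g)$ map to cofibrations $\mathrm{Sym}_{A}(A \otimes H \otimes g)$ in $\mathrm{Alg}_{\mathfrak{Comm}}({}_{A}\mathrm{Mod}(\mathpzc{M}))$), and preserves and reflects weak equivalences. Consequently any pushout square testing left properness descends to one in $\mathrm{Alg}_{\mathfrak{Comm}}({}_{A}\mathrm{Mod}(\mathpzc{M}))$, where properness holds by hypothesis; right properness is even easier. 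For the tensor-preservation claim, if $D$ is cofibrant in $\mathrm{Alg}_{\mathfrak{Comm}}({}_{C}\mathrm{Mod}({}_{H}\mathrm{Mod}(\mathpzc{M})))$ then $\overline{U}(D)$ is cofibrant in $\mathrm{Alg}_{\mathfrak{Comm}}({}_{\overline{U}(C)}\mathrm{Mod}(\mathpzc{M}))$, so by hypothesis $\overline{U}(D)\otimes_{\overline{U}(C)}(-)$ preserves equivalences. Since $U$ is strong monoidal, $\overline{U}(D \otimes_{C} M)\cong \overline{U}(D)\otimes_{\overline{U}(C)}\overline{U}(M)$, and equivalences in ${}_{C}\mathrm{Mod}({}_{H}\mathrm{Mod}(\mathpzc{M}))$ and ${}_{D}\mathrm{Mod}({}_{H}\mathrm{Mod}(\mathpzc{M}))$ are both tested underlyingly, so the conclusion follows.

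The main obstacle I anticipate is the careful bookkeeping required to show that cofibrations in $\mathrm{Alg}_{\mathfrak{Comm}}({}_{B}\mathrm{Mod}({}_{H}\mathrm{Mod}(\mathpzc{M})))$ descend to cofibrations in $\mathrm{Alg}_{\mathfrak{Comm}}({}_{A}\mathrm{Mod}(\mathpzc{M}))$ --- specifically, generating cofibrations in the former are free commutative algebras on $B\,\tilde{\otimes}\,H \otimes g$, which are \emph{not} generating cofibrations in the latter but only cofibrations (using cofibrancy of $H$), so one must argue that retracts of cell complexes built from these are still cofibrations, using that cofibrations in $\mathrm{Alg}_{\mathfrak{Comm}}({}_{A}\mathrm{Mod}(\mathpzc{M}))$ are closed under transfinite composition and retracts. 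All other steps are routine applications of the transfer theorem and the observations already made in Propositions \ref{prop:monoidaxiomH}--\ref{prop:AmodH}.
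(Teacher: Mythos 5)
Your proposal is correct and follows essentially the same route as the paper, which simply cites Propositions \ref{prop:AmodH} and \ref{prop:tensorequivH}: everything descends along the strong monoidal forgetful functor, using that cofibrancy of $H$ makes underlying maps of cofibrations in the $H$-module world into cofibrations in $\mathpzc{M}$, that the forgetful functor preserves and reflects equivalences and commutes with (co)limits, and that cofibrant objects over $C$ forget to cofibrant objects over $\overline{U}(C)$. Your extra care about generating cofibrations and the transfer argument only makes explicit what the paper leaves implicit.
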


\begin{proof}
This is similar to Proposition \ref{prop:AmodH} and Proposition \ref{prop:tensorequivH}.
\end{proof}

Altogether, this proves the following theorem.

\begin{thm}\label{thm:HAHopf}
With the assumptions on $H$ as in the start of this subsection, ${}_{H}\mathrm{Mod}(\mathpzc{M})$ is a HA context.
\end{thm}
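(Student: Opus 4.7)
The plan is to verify the six conditions of Definition~\ref{Defn:HA context} for $\mathpzc{M}' \defeq {}_{H}\mathrm{Mod}(\mathpzc{M})$ in sequence, using the preceding propositions of the subsection as the main building blocks. Throughout, the standing hypothesis that $\mathpzc{M}$ is itself a HA context is what propagates each axiom upward, and the assumption that $H$ is cofibrant in $\mathpzc{M}$ is what allows forgetting along ${}_{H}\mathrm{Mod}(\mathpzc{M}) \to \mathpzc{M}$ to preserve (acyclic) cofibrations in addition to weak equivalences and fibrations.

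For conditions (1) and (2), I would apply the first proposition of this subsection to the associative monoid $A = H$: the existence of the transferred model structure on $\mathpzc{M}'$ follows from the HA context hypothesis on $\mathpzc{M}$, and then pointedness, the coproduct/product comparison, and additivity of $\mathrm{Ho}(\mathpzc{M}')$ are inherited from the corresponding properties of $\mathpzc{M}$, using that the forgetful functor commutes with finite limits and colimits and preserves and reflects weak equivalences and fibrations.

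For conditions (3) and (4), fix $B \in \mathrm{Alg}_{\mathfrak{Comm}}(\mathpzc{M}')$. Proposition~\ref{prop:AmodH}, applied to $B$, yields that ${}_{B}\mathrm{Mod}(\mathpzc{M}')$ is a combinatorial, proper, symmetric monoidal model category with unit $B$ and tensor $\otimes_{B}$; it also transfers the (commutative) monoid axiom from ${}_{|B|}\mathrm{Mod}(\mathpzc{M})$, which holds by the HA context assumption on $\mathpzc{M}$. Condition (4) is then Proposition~\ref{prop:tensorequivH}, whose hypotheses hold by the HA context property of $\mathpzc{M}$.

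Conditions (5) and (6) are the content of the final proposition preceding the theorem: once the transferred model structures on $\mathrm{Alg}_{\mathfrak{Comm}}({}_{|B|}\mathrm{Mod}(\mathpzc{M}))$ and $\mathrm{Alg}_{\mathfrak{Comm}_{nu}}({}_{|B|}\mathrm{Mod}(\mathpzc{M}))$ are combinatorial and proper, the analogous statements over $\mathpzc{M}'$ follow by forgetting to $\mathpzc{M}$ and noting again that the forgetful functor preserves (acyclic) cofibrations because $H$ is cofibrant, so in particular sends cofibrant commutative $B$-algebras in $\mathpzc{M}'$ to cofibrant commutative $|B|$-algebras in $\mathpzc{M}$. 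The only point that deserves care—and which I view as the main mild obstacle—is the verification that $B$-cofibrant commutative algebras remain cofibrant under the forgetful functor, which one checks at the level of generating cofibrations of the free commutative algebra functor and the strong monoidal structure on $\mathpzc{M}' \to \mathpzc{M}$. Beyond this, the argument is purely a matter of assembling the propositions that have already been proved, so no new calculation is required.
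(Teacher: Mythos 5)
Your proposal matches the paper's argument: the paper gives no separate proof of Theorem \ref{thm:HAHopf}, simply stating ``Altogether, this proves the following theorem'' after the string of propositions you cite, so verifying the six HA-context axioms by assembling those propositions (pointedness/additivity from the first, the monoidal and monoid-axiom properties from Propositions \ref{prop:monoidaxiomH} and \ref{prop:AmodH}, condition (4) from Proposition \ref{prop:tensorequivH}, and (5)--(6) from the final proposition) is exactly the intended route. Your extra remark about checking that cofibrant commutative algebras forget to cofibrant ones is a reasonable point of care that the paper absorbs into its ``this is similar to'' justifications.
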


\subsubsection{HA Contexts in Exact Categories}

We now have all the ingredients necessary to prove the following.

\begin{thm}\label{thm:dgHA}
Let $\mathpzc{E}$ be a locally presentable closed symmetric monoidal exact category enriched over $\mathbb{Q}$. Let $(\mathfrak{L},\mathfrak{R})$ be a strongly monoidally $dg$-compatible hereditary cotorsion pair on $\mathpzc{E}$. Then when equipped with the model structure induced by $(\mathfrak{L},\mathfrak{R})$, $\mathrm{Ch}_{\ge0}(\mathpzc{E})$ and $\mathrm{Ch}(\mathpzc{E})$ are HA contexts.
\end{thm}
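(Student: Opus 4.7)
The plan is to verify sequentially the six conditions of Definition \ref{Defn:HA context}, relying throughout on the hypotheses that $(\mathfrak{L},\mathfrak{R})$ is strongly monoidally $dg$-compatible, hereditary, and that $\mathpzc{E}$ is $\mathbb{Q}$-enriched and locally presentable. For conditions (1) and (2), first observe that $\mathrm{Ch}(\mathpzc{E})$ is stable: the shift $[1]$ is a Quillen autoequivalence preserving the classes $\widetilde{dg\mathfrak{L}}$ and $\tilde{\mathfrak{L}}$, so $\mathrm{Ho}(\mathpzc{M})$ is triangulated, in particular additive, and the coproduct/product comparison $QX \coprod QY \to X \coprod Y \to RX \times RY$ is an equivalence. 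Both $\mathrm{Ch}(\mathpzc{E})$ and $\mathrm{Ch}_{\ge0}(\mathpzc{E})$ are pointed. Properness follows from cofibrations being admissible monomorphisms (left proper by \cite{kelly2016homotopy} Proposition 4.2.45) and dually for fibrations. Combinatoriality follows from Lemma \ref{cofibgen} together with Proposition \ref{prop:cofibrantgenerators}, since the relevant cotorsion pairs are small.

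For condition (3), Proposition \ref{monoidaldgcompat} gives that $\mathpzc{M}$ is a monoidal model category, and Corollary \ref{prop:tensacyclic} establishes the monoid axiom. Since $\mathpzc{E}$ is enriched over $\mathbb{Q}$, Corollary \ref{cor:modelstruturealg} shows every symmetric operad is admissible; in particular for $A\in\mathrm{Alg}_{\mathfrak{Comm}}(\mathpzc{M})$ the transferred model structure exists on ${}_{A}\mathrm{Mod}(\mathpzc{M})$, and the monoidal structure $\otimes_{A}$ together with its internal hom descends, making it a combinatorial symmetric monoidal model category. Properness is inherited through the forgetful functor, which preserves and reflects weak equivalences and commutes with limits and colimits. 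Condition (4) is an immediate consequence of strong monoidal $dg$-compatibility: by Corollary \ref{prop:tensacyclic}, cofibrant complexes in $\mathpzc{M}$ are $K$-flat, and since any cofibrant $A$-module has underlying complex cofibrant in $\mathpzc{M}$, the functor $M \otimes_{A} -$ preserves all weak equivalences.

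For condition (5), we apply the generalisation of Theorem \ref{thm:Commadm} (Theorem 6.3.31 of \cite{kelly2016homotopy}) to ${}_{A}\mathrm{Mod}$. The required hypotheses are: cofibrations are $\mathpzc{M}$-monoidally left proper (which follows because cofibrations are pure monomorphisms, pushout-products with arbitrary objects remain pure by Proposition \ref{prop:pushoutprodpure}, and pure monomorphisms are left proper), the monoid axiom (already established), cofibrant objects are $K$-flat (from strong compatibility), and the strong commutative monoid axiom. The last item is where the $\mathbb{Q}$-enrichment is essential: since $\mathbb{Q}[\Sigma_{n}]$ is semisimple, every $\Sigma_{n}$-representation splits, so the symmetrization $f^{\Box n}/\Sigma_{n}$ is a retract of $f^{\Box n}$ and thus remains a (trivial) cofibration. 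The main obstacle in the whole proof is verifying this last axiom together with the left properness of $\mathrm{Alg}_{\mathfrak{Comm}}({}_{A}\mathrm{Mod})$; once these are in hand, the non-unital case is analogous. Finally, for condition (6), a cofibrant $B \in \mathrm{Alg}_{\mathfrak{Comm}}({}_{A}\mathrm{Mod})$ is built as a transfinite composition of free symmetric algebra attachments on cofibrant modules, which remain $K$-flat as $A$-modules by the strong commutative monoid axiom and the $\mathbb{Q}$-enrichment. Hence $B \otimes_{A} -$ preserves weak equivalences, completing the verification.
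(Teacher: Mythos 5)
Your overall strategy --- checking the six axioms in order using monoidal $dg$-compatibility, the monoid axiom, admissibility of operads over $\mathbb{Q}$, and Theorem \ref{thm:Commadm} --- is the same as the paper's, and most of the steps go through. However, your justification of condition (4) does not work as written. You assert that any cofibrant $A$-module has cofibrant underlying complex in $\mathpzc{M}$; this fails in general, since cofibrant $A$-modules are built from cells of the form $A\otimes S^{n}(G)$ and $A$ itself need not be cofibrant in $\mathpzc{M}$. More importantly, even granting that the underlying complex were cofibrant, $K$-flatness in $\mathpzc{M}$ controls the functor $-\otimes M$ over the monoidal unit, not $-\otimes_{A}M$: these are different functors, so the inference is invalid. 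The correct argument (the one the paper uses) goes through the cell structure directly: by Proposition \ref{prop:gens} and Proposition \ref{prop:cofibrantgenerators} a cofibrant $A$-module is a retract of a transfinite extension of objects $A\otimes S^{n}(G)$ with $G\in\mathfrak{L}$ flat, and for such a cell one has $(A\otimes S^{n}(G))\otimes_{A}N\cong S^{n}(G)\otimes N$, so flatness of $G$ over the unit together with weak elementarity of transfinite extensions gives preservation of quasi-isomorphisms.

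Two smaller points. First, your stability argument for conditions (1)--(2) only applies to $\mathrm{Ch}(\mathpzc{E})$; the category $\mathrm{Ch}_{\ge0}(\mathpzc{E})$ is not stable, so there you should argue as the paper does, namely that finite direct sums of weak equivalences are weak equivalences (which handles the comparison maps $QX\coprod QY\rightarrow X\coprod Y\rightarrow RX\times RY$ directly), and that $\mathrm{Ho}(\mathrm{Ch}_{\ge0}(\mathpzc{E}))$ is additive because it embeds as a full subcategory of the derived category. Second, you omit the hypothesis of Theorem \ref{thm:Commadm} that generating cofibrations have cofibrant domain --- this is precisely where the hereditary assumption on $(\mathfrak{L},\mathfrak{R})$ is used, and without verifying it the appeal to that theorem in condition (5) is incomplete.
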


\begin{proof}
$\mathrm{Ch}(\mathpzc{E})$ and $\mathrm{Ch}_{\ge0}(\mathpzc{E})$ are clearly pointed. They are proper by \cite{kelly2016homotopy} Corollary 4.2.47. 

Direct sums of equivalences are equivalences, so it is clear that for any two objects $X$ and $Y$ all of the maps
$$QX\oplus QY\rightarrow X\oplus Y\rightarrow RX\oplus RY$$
are equivalences. The homotopy category of $\mathrm{Ch}(\mathpzc{E})$ is the derived category, which is of course additive. The homotopy category of $\mathrm{Ch}_{\ge0}(\mathpzc{E})$ is a full subcategry of $\mathrm{Ho}(\mathrm{Ch}(\mathpzc{E}))$ and hence is also additive.

 Since $\mathrm{Ch}(\mathpzc{E})$ (resp. $\mathrm{Ch}_{\ge0}(\mathpzc{E}))$ is a monoidal model category satisfying the monoid axiom, it follows immediately that, with the transferred model structure ${}_{A}\mathrm{Mod}(\mathrm{Ch}(\mathpzc{E}))$ (resp. ${}_{A}\mathrm{Mod}(\mathrm{Ch}_{\ge0}(\mathpzc{E}))$) is a combinatorial, proper, symmetric monoidal model category. 
 
 For any $A\in\mathrm{Alg}_{\mathfrak{Comm}}(\mathrm{Ch}(\mathpzc{E}))$ (resp. any $A\in\mathrm{Alg}_{\mathfrak{Comm}}(\mathrm{Ch}_{\ge0}(\mathpzc{E}))$). By Proposition \ref{prop:gens} and Proposition \label{prop:cofibrantgenerators}, a cofibrant object of ${}_{A}\mathrm{Mod}(\mathrm{Ch}(\mathpzc{E}))$ is a transfinite extension of objects of the form $A\otimes S^{n}(G)$ for $G$ a flat object of $\mathpzc{E}$. Clearly $A\otimes S^{n}(G)$ is flat as an object of ${}_{A}\mathrm{Mod}(\mathrm{Ch}(\mathpzc{E})$. This also works for $\mathrm{Ch}_{\ge0}(\mathpzc{E})$. 
 
 Since $\mathpzc{E}$ is is a monoidal model category enriched over $\mathbb{Q}$ and satisfies the monoid axiom, it also satisfies the strong commutative monoid axiom.  ${}_{A}\mathrm{Mod}(\mathrm{Ch}(\mathpzc{E}))$ and ${}_{A}\mathrm{Mod}(\mathrm{Ch}_{\ge0}(\mathpzc{E}))$ both have sets of generating cofibrations consisting of maps of the form $A\otimes X\rightarrow A\otimes Y$ with $X$ and $Y$ in $\widetilde{dg\mathfrak{L}}$, by Proposition \ref{prop:gens}, Lemma \ref{cofibgen}, and the fact that $\mathfrak{L}$ generates $\mathpzc{E}$. Moreover cofibrant objects are $K$-flat. Since admissible monomorphisms are left proper, and cofibrations are $\otimes$-pure, cofibrations are monoidally ${}_{A}\mathrm{Mod}(\mathrm{Ch}(\mathpzc{E}))$-left proper (resp. monoidally ${}_{A}\mathrm{Mod}(\mathrm{Ch}_{\ge0}(\mathpzc{E}))$-left proper). By Theorem \ref{thm:Commadm} $\mathfrak{Comm}$ is admissible, and the transferred model structures are left proper. Note that since $\mathfrak{L}$ is hereditary, we may choose generating cofibrations with cofibrant domain They are combinatorial and right proper by general nonsense. 

 Finally, by \cite{kelly2019koszul} Proposition 2.20, the underlying $A$-module of a cofibrant object of $\mathrm{Alg}_{\mathfrak{Comm}}({}_{A}\mathrm{Mod}(\mathrm{Ch}(\mathpzc{E}))$ (resp. $\mathrm{Alg}_{\mathfrak{Comm}}({}_{A}\mathrm{Mod}(\mathrm{Ch}_{\ge0}(\mathpzc{E}))$) is cofibrant, and hence $K$-flat.
\end{proof}

\begin{cor}\label{cor:flatHA}
Let $\mathpzc{E}$ be a purely locally $\lambda$-presentable closed symmetric monoidal exact category enriched over $\mathbb{Q}$, which is weakly elementary and has enough flat objects. When equipped with the flat model structures $\mathrm{Ch}(\mathpzc{E})$ and $\mathrm{Ch}_{\ge0}(\mathpzc{E})$ are HA contexts. 
\end{cor}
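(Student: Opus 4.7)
The plan is to deduce this corollary from Theorem \ref{thm:dgHA} applied to the flat cotorsion pair $(\mathrm{Flat},\mathrm{Flat}^{\perp})$ on $\mathpzc{E}$. Since $\mathpzc{E}$ is locally $\lambda$-presentable (and in particular locally presentable), it only remains to verify that this cotorsion pair is (i) a complete cotorsion pair giving rise to a model structure on $\mathrm{Ch}(\mathpzc{E})$ and $\mathrm{Ch}_{\ge 0}(\mathpzc{E})$, (ii) hereditary, and (iii) strongly monoidally $dg$-compatible in the sense of Definition \ref{tensormodel}.

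For (i), the existence of the flat model structures on both $\mathrm{Ch}(\mathpzc{E})$ and $\mathrm{Ch}_{\ge 0}(\mathpzc{E})$ is exactly the content of Corollary \ref{cor:flatmodelstructureexists}, which already uses the hypotheses (purely $\lambda$-accessible, weakly elementary, enough flat objects, existence of a generator). For (ii), we would invoke the earlier proposition in the excerpt showing that in a weakly idempotent complete monoidal exact category, the kernel of an admissible epimorphism between flat objects is flat; combined with the fact that the flat class is closed under extensions, this gives hereditariness of the flat cotorsion pair.

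For (iii), we must verify each condition: that $\mathrm{Flat}$ contains the unit $k$ (which is part of the monoidal flat hypothesis — without a flat unit there is no monoidal model structure to speak of, so this is implicit), is closed under $\otimes$ (direct check), and that admissible short exact sequences with flat cokernel are $\mathrm{Flat}$-pure (this is Example \ref{example:flatpure}). The weak $\mathbf{PureMon}_{\otimes}$-elementary hypothesis follows from the weakly elementary assumption on $\mathpzc{E}$, since $\otimes$-pure monomorphisms are in particular admissible. The remaining nontrivial condition is that every (trivially) cofibrant complex — i.e. a $dg\mathrm{Flat}$ complex (resp. $\widetilde{\mathrm{Flat}}$ complex) — can be realized as an $(\aleph_{0};\mathbf{PureMon}_{\otimes})$-extension of bounded below (trivially) cofibrant complexes. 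For this I would use the good (Postnikov-style) truncations $\tau_{\ge -n}X$: since each $X_n$ is flat and the differentials give rise to $\otimes$-pure short exact sequences (by flatness of the cokernels), the natural transfinite system $\tau_{\ge -0}X\hookrightarrow\tau_{\ge-1}X\hookrightarrow\cdots$ consists of bounded below $dg\mathrm{Flat}$ (resp. $\widetilde{\mathrm{Flat}}$) complexes connected by $\otimes$-pure admissible monomorphisms, whose colimit is $X$.

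The main obstacle I expect is the last point — carefully arranging the truncation argument so that the successive inclusions are genuinely $\otimes$-pure monomorphisms, and so that in the acyclic case $\tau_{\ge -n}X$ remains in $\widetilde{\mathrm{Flat}}$ (which requires that the kernels of the differentials are flat, true since $X\in\widetilde{\mathrm{Flat}}$). Once these verifications are in hand, Theorem \ref{thm:dgHA} applies directly and gives the conclusion for both $\mathrm{Ch}(\mathpzc{E})$ and $\mathrm{Ch}_{\ge 0}(\mathpzc{E})$.
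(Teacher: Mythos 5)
Your overall route is the intended one: the corollary is meant to follow from Theorem \ref{thm:dgHA} applied to the flat cotorsion pair, and your verifications of completeness (via Corollary \ref{cor:flatmodelstructureexists}), hereditariness (kernels of admissible epimorphisms between flats are flat), closure of $\mathrm{Flat}$ under $\otimes$, $\mathrm{Flat}$-purity of sequences with flat cokernel (Example \ref{example:flatpure}), and weak $\mathbf{PureMon}_{\otimes}$-elementarity are all fine. Your remark that flatness of the unit is not literally among the stated hypotheses but is needed (cf.\ Corollary \ref{cor:flatmodelstructureexists}) is also a fair observation about the statement itself.

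The genuine gap is in the last clause of strong monoidal $dg$-compatibility, and it is the \emph{non}-acyclic case that breaks, not the acyclic one you flag. For $X\in\widetilde{\mathrm{Flat}}$ the good truncations do work exactly as you say, since the cycles $Z_{-n}X$ are flat by definition and the successive quotients are disks $D^{-n}(Z_{-n-1}X)$ on flat objects. But for a general $X\in\widetilde{dg\mathrm{Flat}}$ the bottom entry of $\tau_{\ge -n}X$ is $\mathrm{Ker}(d_{-n})$, which need not be flat: over $R=k[x]/(x^{2})$ the two-term complex $R\xrightarrow{\;x\;}R$ has flat (free) entries, hence is $dg$-flat by Proposition \ref{prop:dgbound}, yet $\mathrm{Ker}(x)\cong k$ is not flat. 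So $\tau_{\ge -n}X$ is not a bounded below cofibrant complex, and the quotients $\tau_{\ge -n-1}X\big\slash\tau_{\ge -n}X$ are not degreewise flat either, so this filtration does not witness the required $(\aleph_{0};\mathbf{PureMon}_{\otimes})$-extension for cofibrant objects. The standard repair is to treat the cofibrant case by deconstructibility rather than truncation: by Lemma \ref{lem:dgdeconstruct} and Theorem \ref{thm:maincotorsexist}, every object of $\widetilde{dg\mathrm{Flat}}$ is a retract of a transfinite extension of sphere complexes $S^{n}(F)$ with $F$ flat, each step being an admissible monomorphism with flat (hence $\otimes$-pure, by Example \ref{example:flatpure}) degreewise cokernel; since $S^{n}(F)\otimes W\cong (F\otimes W)[n]$ is acyclic for $W$ acyclic and $\mathpzc{E}$ is weakly elementary, this yields directly that cofibrant complexes are $K$-flat and that the monoid axiom holds, which is all that the proof of Theorem \ref{thm:dgHA} actually extracts from the ``strongly'' hypothesis. (Alternatively, $K$-flatness of $\widetilde{dg\mathrm{Flat}}$ is already available from the paper's generalisation of \cite{gillespie2006flat} Lemmas 5.4--5.6 via Proposition \ref{prop:relations}.) With that substitution your argument is complete.
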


\begin{example}
Let $\mathcal{X}$ be a geometric stack over a ring $R$ (as in \cite{estrada2014derived}). Then $\mathrm{QCoh}(\mathcal{X})$ has enough flats by \cite{Gross2010VectorBA} Theorem 3.5.5. By \cite{estrada2014derived} Corollary 4.5 $\mathrm{QCoh}(\mathcal{X})$ is a Grothendieck abelian category. Thus by Corollary \ref{cor:flatmodelstructureexists} the flat model structure exists on $\mathrm{QCoh}(\mathcal{X})$. This recovers \cite{estrada2014derived} Theorem 8.1. In fact our proof of the existence of the flat model structure is essentially a generalisation of Estrada's proof in the case of $\mathrm{QCoh}(\mathcal{X})$ (as well as Gillespie's proof for modules over a sheaf of rings on a topological spaces). Moreover Corollary \ref{cor:flatHA} in fact shows that $\mathrm{Ch}_{\ge0}(\mathrm{QCoh}(\mathcal{X}))$ and $\mathrm{Ch}(\mathrm{QCoh}(\mathcal{X}))$ are HA contexts whenever $\mathbb{Q}\subset R$. 
\end{example}

\begin{example}\label{example:locallyprojartin}
Let us consider another related example. As in \cite{estrada2014derived} Theorem 8.2. Let $\mathcal{X}$ be an algebraic stack with pointwise affine stabilizer group that satisfies the quotient property, such as a quotient stack. Let $\mathrm{QCoh}(\mathcal{X})$ denote the category of quasicoherent sheaves on $\mathcal{X}$, and let $\mathfrak{L}$ be the class of locally projective quasi-coherent sheaves (i.e. vector bundles) on $\mathcal{X}$. Then  \cite{estrada2014derived} Theorem 8.2 essentially says that $(\mathfrak{L},\mathfrak{L}^{\perp})$ is a monoidally $dg$-compatible cotorsion pair. The class of locally projective is evidently hereditary. Thus when equipped with the model structure determined by  $(\mathfrak{L},\mathfrak{L}^{\perp})$, $\mathrm{Ch}(\mathrm{QCoh}(\mathcal{X}))$ and $\mathrm{Ch}_{\ge0}(\mathrm{QCoh}(\mathcal{X}))$ are HA contexts by Theorem \ref{thm:dgHA}.
\end{example}

\begin{example}
Let $X$ be a smooth algebraic variety defined over a field $K$ of characteristic $0$. Consider the category $\mathrm{QCoh}(X)$ of quasicoherent sheaves on $X$, and equip it with the locally projective model structure of Example \ref{example:locallyprojartin}. Let $\mathcal{D}_{X}$ denote the sheaf of differential operators on $X$. By \cite{MR2357361} Proposition 1.2.9 there is a closed symmetric monoidal structure on ${}_{\mathcal{D}_{X}}\mathrm{Mod}(\mathrm{QCoh}(X))$, and a strong closed symmetric monoidal structure on the forgetful functor 
$${}_{\mathcal{D}_{X}}\mathrm{Mod}(\mathrm{QCoh}(X))\rightarrow\mathrm{QCoh}(X)$$
Moreover $\mathcal{D}_{X}$ is locally projective. Thus by Theorem \ref{thm:HAHopf}, ${}_{\mathcal{D}_{X}}\mathrm{Mod}(\mathrm{Ch}(\mathrm{QCoh}(X)))$ and ${}_{\mathcal{D}_{X}}\mathrm{Mod}(\mathrm{Ch}_{\ge0}(\mathrm{QCoh}(X)))$ are HA contexts. This generalises the main result of \cite{MR3913977}, which proves that modules over $\mathcal{D}_{X}$ form a HA context when $X$ is smooth affine.
\end{example}

\begin{rem}
Let $\mathpzc{E}$ be a purely locally $\lambda$-presentable closed symmetric monoidal exact category satisfying the monoid axiom, in which transfinite compositions of admissible monomorphisms are admissible monomorphisms. Consider the monoidal model category structure on $\mathrm{Ch}(\mathpzc{E}_{\otimes})$ determined by the Hovey triple $(K\mathcal{F},\mathfrak{W},\mathfrak{W}_{\otimes}^{\perp_{\otimes}})$. It is not at all clear that this is a HA context. The problem is that it is unclear if there are generators of the $\otimes$-pure model structure which are $K$-flat, so that properness of $\mathrm{Alg}_{\mathfrak{Comm}}(\mathpzc{E})$ and $\mathrm{Alg}_{\mathfrak{Comm}}^{nu}(\mathpzc{E})$ is not automatic. However all of the other axioms do hold.
\end{rem}

\subsection{Koszul Duality}

In \cite{kelly2019koszul} we introduced the concept of a Koszul category, and showed that a version of Koszul duality between algebras over operads and their Koszul dual co-operads holds. 

\begin{defn}[\cite{kelly2019koszul} Definition 3.23]
A \textit{Koszul category} is a weakly monoidal model category $\mathpzc{M}$ of the form ${}_{R}\mathrm{Mod}(\mathrm{Ch}(\mathpzc{E}))$ where:
\begin{enumerate}
\item
$\mathpzc{E}$ is a complete and cocomplete symmetric monoidal exact category, and the monoidal structure on $\mathrm{Ch}(\mathpzc{E})$ is the one induced from $\mathpzc{E}$.
\item
$R$ is a commutative monoid in $\mathrm{Ch}(\mathpzc{E})$.
\item
$\mathrm{Ch}(\mathpzc{E})$ is equipped with a combinatorial model structure satisfying the monoid axiom.
\item
weak equivalences in the model structure on $\mathrm{Ch}(\mathpzc{E})$ are thick: if
\begin{displaymath}
\xymatrix{
0\ar[r] & X\ar[d]\ar[r] & Y\ar[d]\ar[r] & Z\ar[d]\ar[r] & 0\\
0\ar[r] & U\ar[r] & V\ar[r] & W\ar[r] & 0
}
\end{displaymath}
is a diagram in $\mathrm{Ch}(\mathpzc{E})r$ in which the top and bottom rows are exact, and any two of the vertical morphisms are weak equivalences, then the third map is a weak equivalence.
\item
$\mathpzc{M}$ is equipped with the transferred model structure (which we assume to exist), and this model structure satisfies the monoid axiom.
\item
$\mathpzc{M}$ satisfies the weak pushout-product axiom.
\end{enumerate}
A Koszul category is said to be \textit{closed} if it is a closed monoidal category.
\end{defn}

\begin{cor}
Let $\mathpzc{E}$ be a purely locally $\lambda$-presentable exact category which is weakly elementary and has enough flat objects. Let $R$ be a commutative monoid in $\mathrm{Ch}(\mathpzc{E})$. Then when $\mathrm{Ch}(\mathpzc{E})$ is equipped with the flat model structure on $\mathrm{Ch}(\mathpzc{E})$ with the transferred model structure, ${}_{R}\mathrm{Mod}(\mathrm{Ch}(\mathpzc{E}))$ is a Koszul category. Moreover it is hereditary in the sense of \cite{kelly2019koszul} Definition 3.39.
\end{cor}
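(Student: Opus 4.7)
The plan is to verify each of the six axioms of \cite{kelly2019koszul} Definition 3.23 one-by-one, essentially by citing results already proved in the paper. Axiom (1), that $\mathpzc{E}$ is a complete and cocomplete symmetric monoidal exact category, is immediate: being purely locally $\lambda$-presentable gives completeness and cocompleteness, and the symmetric monoidal structure is built into the hypotheses (it is needed even to make sense of having enough flat objects and of the flat model structure). Axiom (2), that $R$ is a commutative monoid in $\mathrm{Ch}(\mathpzc{E})$, is a hypothesis of the corollary. Axiom (3), that $\mathrm{Ch}(\mathpzc{E})$ carries a combinatorial model structure satisfying the monoid axiom, is exactly the content of Corollary \ref{cor:flatmodelstructureexists} (combinatoriality being guaranteed by Lemma \ref{cofibgen} together with the smallness enjoyed by all objects in a locally presentable category, and the monoid axiom by strong monoidal $dg$-compatibility applied through Corollary \ref{prop:tensacyclic}).

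For Axiom (4), weak equivalences are the quasi-isomorphisms of $\mathrm{Ch}(\mathpzc{E})$, and these are well known to form a thick class using the long exact sequence on homology in the left heart $\mathrm{LH}(\mathpzc{E})$ and the five lemma there. For Axiom (5), the transferred model structure on ${}_{R}\mathrm{Mod}(\mathrm{Ch}(\mathpzc{E}))$ exists by the standard argument of Schwede--Shipley, which is nothing more than the combinatoriality of $\mathrm{Ch}(\mathpzc{E})$ together with the monoid axiom established in (3); this is the same mechanism underlying Corollary \ref{cor:modelstruturealg}. The monoid axiom for ${}_{R}\mathrm{Mod}(\mathrm{Ch}(\mathpzc{E}))$ then reduces to the monoid axiom for $\mathrm{Ch}(\mathpzc{E})$: a transfinite composition of pushouts of maps of the form $M\otimes_{R} f$ with $f$ an acyclic cofibration of $R$-modules forgets to a transfinite composition of pushouts of maps of the form $|M|\otimes f'$ where $f'$ is a weak equivalence of underlying complexes, and the forgetful functor creates and reflects weak equivalences.

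For Axiom (6), the weak pushout-product axiom for ${}_{R}\mathrm{Mod}(\mathrm{Ch}(\mathpzc{E}))$ follows from the weak pushout-product axiom for $\mathrm{Ch}(\mathpzc{E})$ (which is part of Corollary \ref{cor:flatmodelstructureexists}): an iterated pushout-product over $\otimes_{R}$ of cofibrations of $R$-modules one of which is acyclic, when resolved by the standard bar construction, is obtained as a quotient of an iterated pushout-product over $\otimes$ of cofibrations one of which is acyclic, hence a weak equivalence. Concretely one argues that every generating (acyclic) cofibration of ${}_{R}\mathrm{Mod}(\mathrm{Ch}(\mathpzc{E}))$ has the form $R\otimes f$ for $f$ a generating (acyclic) cofibration of $\mathrm{Ch}(\mathpzc{E})$, and the pushout-product $(R\otimes f)\Box_{R}(R\otimes g)$ computes as $R\otimes (f\Box g)$, so the weak pushout-product property transports directly.

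Finally, for the claim that the Koszul category is hereditary in the sense of \cite{kelly2019koszul} Definition 3.39, the point is that the flat cotorsion pair $(\mathrm{Flat},\mathrm{Flat}^{\perp})$ on $\mathpzc{E}$ is itself hereditary: the class of flat objects is closed under kernels of admissible epimorphisms between flats, as was recorded above (the proposition preceding Proposition \ref{prop:pushoutprodpure} via the $3\times 3$ lemma in a weakly idempotent complete category). I expect the hardest step of the plan to be the verification of the monoid axiom on $R$-modules in case it is not purely formal from the one on $\mathrm{Ch}(\mathpzc{E})$; if subtleties arise they can be handled by inserting a $K$-flat resolution of $R$ and invoking the relationship between the flat and $K$-flat model structures established in Theorem \ref{thm:Kflat}.
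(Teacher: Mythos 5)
The paper states this corollary without an explicit proof, treating it as an immediate consequence of Corollary \ref{cor:flatmodelstructureexists} and Corollary \ref{cor:modelstruturealg}; your axiom-by-axiom verification fills in exactly the intended argument and is correct. The one place to be careful is axiom (3): Corollary \ref{cor:flatmodelstructureexists} asserts the monoid axiom only under the extra hypothesis that the unit $k$ is flat, but for the monoid axiom alone (as opposed to full monoidality, which the Koszul-category axioms do not require) part (1) of Corollary \ref{prop:tensacyclic} together with weak elementarity suffices, so your argument goes through without that hypothesis.
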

Let $\mathfrak{C}$ be a divided powers co-operad (\cite{kelly2019koszul} Section 1.1.3) in $\mathpzc{M}$, and $\mathfrak{P}$ an admissible operad in $\mathpzc{M}$. A degree $-1$ morphism $\mathfrak{C}\rightarrow\mathfrak{P}$ gives rise to a twisted differential $d^{r}_{\alpha}$ on $\mathfrak{C}\circ\mathfrak{B}$. $\alpha$ is said to be a \textit{twisting morphism} if $(d^{r}_{\alpha})^{2}=0$. We then define $\mathfrak{C}\circ_{\alpha}\mathfrak{P}$ to be the $\Sigma$-module $\mathfrak{C}\circ\mathfrak{P}$ equipped with this differential. A twisting morphism gives rise to the so-called \textit{bar-cobar adjunction}
$$\adj{\Omega_{\alpha}}{\mathpzc{coAlg}_{\mathfrak{C}}^{conil}(\mathpzc{M})}{\mathrm{Alg}_{\mathfrak{P}}(\mathpzc{M})}{B_{\alpha}}$$
between conilpotent coalgebras over $\mathfrak{C}$, and algebras over $\mathfrak{P}$. We assume the existence of certain filtrations on $\mathfrak{C}$ and $\mathfrak{P}$ (\cite{kelly2019koszul} Definitions 4.1 and 4.2, and Asumptions 4.9). Since $\mathfrak{C}$ is a filtered cooperad, we can consider filered coalgebras over $\mathfrak{C}$. Let $\mathrm{coAlg}^{|K|}_{\mathfrak{C}}$ denote the category of filtered $\mathfrak{C}$-coalgebras $C$, whose underlying (unfiltered) coalgebra is conilpotent, and such that $\mathrm{gr}_{n}(C)$ is $K$-flat for each $n\in\mathbb{Z}_{\ge0}$. Finally, let $I$ denote the full subcategory of (unfiltered) $\mathfrak{C}$-coalgebras $D$, for which there exists \textit{some filtration} on $D$, which turns it into an object of $\mathrm{coAlg}^{|K|}_{\mathfrak{C}}$, and let $\mathrm{Alg}_{\mathfrak{P}}^{|K|}$ the full subcategory of $\mathfrak{P}$-algebras whose underlying object in $\mathpzc{M}$ is $K$-flat. The bar-cobar adjunction restricts to an adjunction
$$\adj{\Omega_{\alpha}}{I}{\mathrm{Alg}_{\mathfrak{P}}^{|K|}}{B_{\alpha}}$$
(this is stated for $\mathpzc{M}$ being strong Koszul in the sense of \cite{kelly2019koszul} Definition 3.32, but this is not necessary).
Equip $I$ with the relative structure, where a map $f:C\rightarrow D$ is an equivalence precisely if $\Omega_{\alpha}(f)$ is an equivalence in $\mathrm{Alg}_{\mathfrak{P}}^{|K|}$. By hammock localisation, this presents an $(\infty,1)$-category $\mathrm{L^{H}}(I)$.

\begin{thm}[\cite{kelly2019koszul} Theorem 4.28]
Let $\mathpzc{M}$ be a strong $K$-monoidal Koszul category. The bar-cobar adjunction induces an adjoint equivalence of $(\infty,1)$-categories
$$\adj{\Omega_{\alpha}}{\mathrm{L^{H}}(\mathpzc{I}_{top})}{\textbf{Alg}_{\mathfrak{P}}}{\textbf{B}_{\alpha}}$$
\end{thm}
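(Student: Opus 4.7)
The plan is to exploit the defining property of equivalences in $I$: a map $f$ is declared to be an equivalence in $\mathpzc{I}_{top}$ precisely when $\Omega_{\alpha}(f)$ is an equivalence in $\mathrm{Alg}_{\mathfrak{P}}^{|K|}$. This makes $\Omega_{\alpha}$ tautologically equivalence-preserving and conservative, so it descends to a functor $\Omega_{\alpha}:\mathbf{L^H}(\mathpzc{I}_{top})\to\mathbf{Alg}_{\mathfrak{P}}$ without any further work, and it reflects equivalences. The theorem therefore reduces to two assertions: (i) the counit $\epsilon_{A}:\Omega_{\alpha}B_{\alpha}(A)\to A$ is a weak equivalence for every $A\in\mathrm{Alg}_{\mathfrak{P}}^{|K|}$; and (ii) the right adjoint $B_{\alpha}$ sends weak equivalences in $\mathrm{Alg}_{\mathfrak{P}}^{|K|}$ to morphisms in $I$ whose image under $\Omega_{\alpha}$ is a weak equivalence. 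Granted (i) and (ii), $B_{\alpha}$ induces a functor $\mathbf{Alg}_{\mathfrak{P}}\to\mathbf{L^H}(\mathpzc{I}_{top})$, and an adjoint-functor argument using that $\Omega_{\alpha}$ reflects equivalences promotes the adjunction to an adjoint equivalence.

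I would treat (ii) first, as it is the easier half. Given an equivalence $f:A\to A'$ in $\mathrm{Alg}_{\mathfrak{P}}^{|K|}$, the filtration on $\mathfrak{C}$ induces a filtration on $B_{\alpha}(A)$ and $B_{\alpha}(A')$ whose associated graded is computed by iteratively applying the composition product $\mathfrak{C}\circ(-)$ (with twisted differentials removed) to $A$, respectively $A'$. Since each piece of $\mathfrak{C}$ is $K$-flat, Corollary \ref{prop:tensacyclic} together with $K$-flatness of $A$ and $A'$ shows that $\mathfrak{C}\circ(-)$ preserves weak equivalences between such objects; in particular, $f$ induces an equivalence on associated graded. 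A spectral-sequence argument, using the boundedness of the filtration in each weight, then shows $B_{\alpha}(f)$ is a weak equivalence of underlying complexes, and the same argument applied to $\Omega_{\alpha}B_{\alpha}(f)$ gives what we need.

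The genuinely hard step will be (i): showing that the two-sided Koszul complex $\Omega_{\alpha}B_{\alpha}(A)=\mathfrak{P}\circ_{\alpha}\mathfrak{C}\circ_{\alpha}A\to A$ is a resolution. The strategy is to put the weight filtration from the hypotheses of \cite{kelly2019koszul} Section~4 on $\mathfrak{C}$ and $\mathfrak{P}$ simultaneously, transfer it to a compatible filtration on $\Omega_{\alpha}B_{\alpha}(A)$, and compute the associated graded. On graded pieces the twisted differential $d_{\alpha}$ becomes the Koszul differential of the pair $(\mathfrak{C},\mathfrak{P})$ applied to (composition powers of) the underlying $\Sigma$-module of $A$; acyclicity at the operadic level is exactly what the twisting morphism $\alpha$ being Koszul encodes. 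The chief obstacles are therefore threefold: (a) verifying that the filtration on $\Omega_{\alpha}B_{\alpha}(A)$ is exhaustive and bounded below in each weight so the spectral sequence converges strongly; (b) identifying the $E^{1}$-page with the operadic Koszul complex, which requires a careful bookkeeping of signs and permutations in the composition product; and (c) controlling underived versus derived composition products, which is where the strong $K$-monoidality of $\mathpzc{M}$ is essential --- it guarantees that the $K$-flat underlying objects of $A$, $\mathfrak{P}$, and $\mathfrak{C}$ keep the strict composition products homotopically correct, so no auxiliary cofibrant replacements (which would destroy the filtration) are needed.

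Once (i) and (ii) are established, the adjunction $(\Omega_{\alpha},B_{\alpha})$ lifts to an adjunction of $(\infty,1)$-categories $\mathbf{L^H}(\mathpzc{I}_{top})\rightleftarrows\mathbf{Alg}_{\mathfrak{P}}$; the counit is an equivalence by (i), and one triangle identity together with the fact that $\Omega_{\alpha}$ reflects equivalences by construction forces the unit $C\to B_{\alpha}\Omega_{\alpha}(C)$ to be an equivalence in $\mathpzc{I}_{top}$ as well. This yields the claimed adjoint equivalence.
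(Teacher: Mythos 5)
This statement is imported verbatim from \cite{kelly2019koszul} (Theorem 4.28): the present paper gives no proof of it at all, only the citation and a remark that the cited result holds under weaker hypotheses. So there is no in-paper argument to compare yours against; I can only measure your sketch against the standard bar--cobar duality proof that the citation points to, and against what the surrounding text of this paper sets up.

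Measured that way, your overall architecture is the right one and almost certainly matches the cited proof: since equivalences in $I$ are \emph{defined} as the maps inverted by $\Omega_{\alpha}$, the functor $\Omega_{\alpha}$ descends to $\mathrm{L^{H}}(\mathpzc{I}_{top})$ and reflects equivalences for free, the heart of the matter is the counit $\Omega_{\alpha}B_{\alpha}(A)\rightarrow A$ being an equivalence for $A\in\mathrm{Alg}_{\mathfrak{P}}^{|K|}$, and the triangle identity then forces the unit to be an equivalence in $\mathpzc{I}_{top}$. Two remarks. First, your step (ii) is redundant as a separate spectral-sequence argument: once (i) is known, naturality of the counit plus two-out-of-three shows that $\Omega_{\alpha}B_{\alpha}(f)$ is an equivalence whenever $f$ is, which is exactly the statement that $B_{\alpha}(f)$ is an equivalence in $I$; no analysis of the filtration on $B_{\alpha}$ is needed. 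Second, and more importantly, essentially the entire content of the theorem is concentrated in your step (i), and there your proposal is a plan rather than a proof: the exhaustiveness and degreewise boundedness of the weight filtration on $\Omega_{\alpha}B_{\alpha}(A)$, the identification of the associated graded with the operadic Koszul complex, and the convergence of the resulting spectral sequence in a general exact-category-valued $\mathpzc{M}$ (where one cannot argue elementwise) are precisely what the filtration hypotheses of \cite{kelly2019koszul} (Definitions 4.1, 4.2 and Assumptions 4.9, which this paper invokes but does not restate) are designed to guarantee, and why the restriction to the subcategories $\mathrm{coAlg}^{|K|}_{\mathfrak{C}}$ and $\mathrm{Alg}_{\mathfrak{P}}^{|K|}$ of $K$-flat objects is imposed. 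You correctly flag these as the obstacles (a)--(c), but flagging them is not discharging them; as written the proposal would need the full convergence argument of the cited paper to close.
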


\begin{rem}
\cite{kelly2019koszul} Theorem 4.28 is stated more generally, and does not require $\mathpzc{M}$ to be $K$-monoidal.
\end{rem}

\section{Extended Example: Sheaves on Spaces Valued in Exact Categories}\label{sec:sheaves}

In this section we come to our main motivating example, the category of sheaves valued in a monoidal elementary exact category. We fix a locally finitely presentable exact category $\mathpzc{E}$ in which filtered colimits are exact and commute with kernels.

\subsection{The Stalk-Wise Exact Structure}
Let $(\mathcal{C},\tau)$ be a small Grothendieck site. Denote by $\mathrm{PreShv}(\mathcal{C},\mathpzc{E})$ the category of presheaves on $\mathcal{C}$ valued in $\mathpzc{E}$, i.e. the category of functors $F:\mathcal{C}^{op}\rightarrow\mathpzc{E}$. This has a natural exact structure where we declare
$$0\rightarrow\mathcal{F}\rightarrow\mathcal{G}\rightarrow\mathcal{H}\rightarrow 0$$
to be exact if 
$$0\rightarrow\mathcal{F}(U)\rightarrow\mathcal{G}(U)\rightarrow\mathcal{H}(U)\rightarrow 0$$
is exact for any $U\in\mathcal{C}$. 

Consider the category $\mathrm{Shv}(\mathcal{C},\tau,\mathpzc{E})$ of sheaves on $(\mathcal{C},\tau)$ valued in $\mathpzc{E}$.  Since filtered colimits commute with kernels in $\mathcal{C}$, the inclusion $\mathrm{Shv}(\mathcal{C},\tau,\mathpzc{E})\rightarrow\mathrm{PrShv}(\mathcal{C},\mathpzc{E})$ admits a left adjoint $L$. If $\kappa$ is such that any cover $\{C_{i}\rightarrow C\}$ in $\tau$ is refinable by a cover of size at most $\kappa$ then $L$ commutes with $\kappa$-filtered colimits. Thus $\mathrm{Shv}(\mathcal{C},\tau,\mathpzc{E})$ is a presentable catgory. 

Let us now specialise to the case that $(\mathcal{C},\tau)$ is the site associated to a topological space $X$. In this instance we will just right the category of sheaves as $\mathrm{Shv}(X,\mathpzc{E})$.

Declare a sequence 
$$0\rightarrow \mathcal{X}\rightarrow\mathcal{Y}\rightarrow\mathcal{Z}\rightarrow 0$$
to be \textit{stalk-wise} exact in $\mathrm{Shv}(X,\mathpzc{E})$ if for each $x\in X$ the sequence on stalks
$$0\rightarrow\mathcal{X}_{x}\rightarrow\mathcal{Y}_{x}\rightarrow\mathcal{Z}_{x}\rightarrow 0$$
is exact in $\mathpzc{E}$. 

Note that given a map of spaces $f:X\rightarrow Y$ we get an adjunction
$$\adj{f^{-1}}{\mathrm{Shv}(Y;\mathpzc{E})}{\mathrm{Shv}(X,\mathpzc{E})}{f_{*}}$$
and $(f^{-1}\mathcal{F})_{x}\cong\mathcal{F}_{f(x)}$.

\begin{lem}
The collection of stalk-wise exact sequences defines an exact structure on $\mathrm{Shv}(X,\mathpzc{E})$ in which filtered colimits are exact. 
\end{lem}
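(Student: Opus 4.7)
The plan is to reduce everything to the corresponding facts in $\mathpzc{E}$ by means of the family of stalk functors $(-)_{x}\colon\mathrm{Shv}(X,\mathpzc{E})\to\mathpzc{E}$, indexed by $x\in X$. Each such functor is additive, is left adjoint to the skyscraper sheaf functor at $x$ (hence preserves all colimits), and is computed as the filtered colimit $\mathcal{F}_{x}=\colim_{U\ni x}\mathcal{F}(U)$ over open neighborhoods of $x$; since filtered colimits in $\mathpzc{E}$ are assumed to be exact and to commute with kernels, $(-)_{x}$ preserves finite limits as well. Moreover the joint collection $\{(-)_{x}\}_{x\in X}$ is conservative on sheaves.

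First I would verify the axioms of an exact structure by pulling them back through this family. Axiom (E0), that $1_{0}$ is admissible, is immediate since $(1_{0})_{x}=1_{0}$ in $\mathpzc{E}$. Closure of admissible monomorphisms (resp. epimorphisms) under composition follows from the same property in $\mathpzc{E}$. For the pushout axiom, given an admissible monomorphism $\mathcal{X}\to\mathcal{Y}$ and an arbitrary map $\mathcal{X}\to\mathcal{Z}$, the pushout $\mathcal{P}$ in $\mathrm{Shv}(X,\mathpzc{E})$ exists (as the sheafification of the presheaf pushout), and the fact that $(-)_{x}$ preserves colimits identifies $\mathcal{P}_{x}$ with the pushout of $\mathcal{X}_{x}\to\mathcal{Y}_{x}$ along $\mathcal{X}_{x}\to\mathcal{Z}_{x}$ in $\mathpzc{E}$; stability of admissible monomorphisms under pushout in $\mathpzc{E}$ then gives that $\mathcal{Z}\to\mathcal{P}$ is an admissible monomorphism stalk-wise. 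The dual pullback axiom for admissible epimorphisms follows analogously using that $(-)_{x}$ preserves finite limits.

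For the second claim, let $\{0\to\mathcal{F}_{i}\to\mathcal{G}_{i}\to\mathcal{H}_{i}\to 0\}_{i\in I}$ be a filtered diagram of stalk-wise exact sequences. The colimit in $\mathrm{Shv}(X,\mathpzc{E})$ is computed by sheafifying the termwise presheaf colimit, and applying $(-)_{x}$ yields the filtered colimit in $\mathpzc{E}$ of the sequences $0\to(\mathcal{F}_{i})_{x}\to(\mathcal{G}_{i})_{x}\to(\mathcal{H}_{i})_{x}\to 0$, each of which is exact by hypothesis. Since filtered colimits are exact in $\mathpzc{E}$, the resulting colimit sequence of stalks is exact, so the colimit in $\mathrm{Shv}(X,\mathpzc{E})$ is stalk-wise exact. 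The only subtle point in the whole argument is the compatibility of stalks with sheaf colimits, which is guaranteed by $(-)_{x}$ being a left adjoint and hence commuting with sheafification; once that is noted the proof reduces entirely to the corresponding statements in $\mathpzc{E}$.
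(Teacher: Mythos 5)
Your proof is correct and follows essentially the same route as the paper: reduce every axiom to $\mathpzc{E}$ via the stalk functors, using that stalks commute with colimits (hence pushouts and sheafification) and, because filtered colimits commute with kernels in $\mathpzc{E}$, with finite limits (hence pullbacks). The exactness of filtered colimits is handled by the same reduction, so nothing further is needed.
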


\begin{proof}
Clearly it contains split exact sequences and isomorphisms, and both admissible monomorphisms and admissible epimorphisms are closed under composition. The stalk of a pushout of sheaves is the pushout of the stalks. Since filtered colimits commute with kernels, the stalk of a pullback of sheaves is also the pullback of the stalks. Thus stalk-wise admissible monomorphisms are closed under pushouts, and stalk-wise admissible epimorphisms are closed under pullbacks. 
\end{proof}

Almost tautologically we have the following.

\begin{prop}
Both $\mathrm{PreShv}(X,\mathpzc{E})$ and $\mathrm{Shv}(X,\mathpzc{E})$ are locally finitely presented exact categories.
\end{prop}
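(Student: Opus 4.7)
The plan is to handle the two cases in turn, exploiting the fact that the structure on each of the two categories descends from $\mathpzc{E}$ in essentially the most obvious way.

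For $\mathrm{PreShv}(X,\mathpzc{E})$, I would identify the category with the functor category $\mathrm{Fun}(\mathrm{Open}(X)^{op},\mathpzc{E})$ where the source is small and the target is locally finitely presented. Standard results on functor categories (see for instance \cite{adamek1994locally}) then give that this functor category is itself locally finitely presented, with a generating set of finitely presentable objects given by the ``extension by zero'' functors $j_{U,!}E$ for $U\in\mathrm{Open}(X)$ and $E$ a finitely presentable object of $\mathpzc{E}$; concretely, $j_{U,!}E$ sends $V$ to $E$ if $V\subseteq U$ and to $0$ otherwise. Since the exact structure on $\mathrm{PreShv}(X,\mathpzc{E})$ is defined objectwise, and since both limits and colimits of presheaves are computed objectwise, exactness of a sequence and the exactness of filtered colimits both reduce immediately to the corresponding properties in $\mathpzc{E}$, which are assumed.

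For $\mathrm{Shv}(X,\mathpzc{E})$, I would leverage the adjunction $L \dashv \iota$ between presheaves and sheaves. Cocompleteness and generation are then automatic: the sheafifications $L(j_{U,!}E)$ of the presheaf generators give a generating set, and colimits in sheaves are computed by sheafifying presheaf colimits. The key point for the exact structure is that the stalk functors $(-)_{x}\colon\mathrm{Shv}(X,\mathpzc{E})\to\mathpzc{E}$ commute with all (small) colimits of sheaves and are exact for the stalk-wise exact structure. Therefore a filtered colimit of stalk-wise exact sequences of sheaves has, at each $x\in X$, the same stalks as the corresponding filtered colimit in $\mathpzc{E}$; since filtered colimits in $\mathpzc{E}$ are exact and commute with kernels, the same holds stalk-wise in $\mathrm{Shv}(X,\mathpzc{E})$.

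The main technical point (where care is needed) is pinning down the finite presentability of the generators $L(j_{U,!}E)$ for $\mathrm{Shv}(X,\mathpzc{E})$. The way I would handle it is by testing on stalks: a cocone on a filtered diagram of sheaves is a colimit iff it induces the colimit on each stalk, and the stalk of $L(j_{U,!}E)$ at $x$ is $E$ if $x\in U$ and $0$ otherwise; combined with finite presentability of $E$ in $\mathpzc{E}$ and commutation of stalks with filtered colimits of sheaves, this gives finite presentability of $L(j_{U,!}E)$ in $\mathrm{Shv}(X,\mathpzc{E})$. Local finite presentability then follows from the fact that every sheaf is (isomorphic to) a filtered colimit of such generators, using the standard density argument for Grothendieck-topos-theoretic sheaves, which carries over verbatim once one has enough finitely presentable generators and exact, kernel-preserving filtered colimits. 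The exact structure part is then automatic from the stalk-wise description.
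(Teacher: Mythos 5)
Your presheaf half is correct and is the standard argument: $\mathrm{PreShv}(X,\mathpzc{E})$ is a functor category from a small category into a locally finitely presentable category, the extensions by zero $j^{pre}_{U!}E$ with $E$ finitely presentable are finitely presentable generators because $\mathrm{Hom}(j^{pre}_{U!}E,\mathcal{F})\cong\mathrm{Hom}_{\mathpzc{E}}(E,\mathcal{F}(U))$ and (co)limits of presheaves are objectwise, and the objectwise exact structure with exact filtered colimits reduces directly to $\mathpzc{E}$. Note the paper itself offers no proof at all (the proposition is introduced with ``Almost tautologically''), relying on the preceding discussion of sheafification and the Lemma on the stalk-wise exact structure, so you are supplying detail the paper omits.

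The sheaf half, however, has a genuine gap at exactly the step you flag as the main technical point. Testing on stalks cannot establish finite presentability of $G=L(j_{U!}E)$: joint conservativity of the stalk functors and their preservation of colimits controls colimits of sheaves, but says nothing about whether $\mathrm{Hom}(G,-)$ preserves filtered colimits, since $\mathrm{Hom}_{\mathrm{Shv}}(G,\mathcal{F})$ is an infinite limit over opens and is not computed stalkwise. By adjunction $\mathrm{Hom}_{\mathrm{Shv}}(L(j_{U!}E),\mathcal{F})\cong\mathrm{Hom}_{\mathpzc{E}}(E,\mathcal{F}(U))$, so for $E$ finitely presentable the finite presentability of $L(j_{U!}E)$ is \emph{equivalent} to $\Gamma(U,-)$ commuting with filtered colimits of sheaves, and this fails whenever $U$ is not quasi-compact, because filtered colimits of sheaves are sheafifications of the objectwise colimits and sections do not commute with sheafification. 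Concretely, for $\mathpzc{E}=\mathpzc{Ab}$ and $X=U=\mathbb{R}$ one has $\mathbb{Z}_{X}\cong\colim_{n}j_{(-n,n)!}\mathbb{Z}$ in $\mathrm{Shv}(\mathbb{R},\mathpzc{Ab})$ (a stalkwise isomorphism), yet $\colim_{n}\mathrm{Hom}(\mathbb{Z}_{X},j_{(-n,n)!}\mathbb{Z})\cong\colim_{n}\Gamma(\mathbb{R},j_{(-n,n)!}\mathbb{Z})=0$ while $\mathrm{Hom}(\mathbb{Z}_{X},\mathbb{Z}_{X})\cong\mathbb{Z}$; so $L(j_{X!}\mathbb{Z})=\mathbb{Z}_{X}$ is not finitely presentable, and your closing density argument becomes circular (every sheaf is a colimit of these generators, but not thereby a filtered colimit of finitely presentable objects). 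What your route, and the paper's own earlier remark, actually yields is local $\kappa$-presentability: for $\kappa$ regular such that every cover is refinable by one of size $<\kappa$, sheafification commutes with $\kappa$-filtered colimits, $\mathrm{Shv}(X,\mathpzc{E})$ is a $\kappa$-accessible reflective localization of $\mathrm{PreShv}(X,\mathpzc{E})$ with $\kappa$-presentable generators $L(j_{U!}E)$, and the stalk-wise exact structure with exact filtered colimits is the preceding Lemma. Literal \emph{finite} presentability requires extra hypotheses on $X$ (e.g.\ a basis of quasi-compact opens with finite refinements, as for spectral spaces); for a general space the statement should be weakened to this presentability claim, which is in any case all that the paper's purely $\lambda$-accessible machinery uses downstream.
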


Since filtered colimits in $\mathpzc{E}$ are exact, we have the following.

\begin{prop}
The functor $L:\mathrm{PreShv}(X,\mathpzc{E})\rightarrow\mathrm{Shv}(X,\mathpzc{E})$ is exact. 
\end{prop}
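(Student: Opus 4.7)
The plan is to reduce the claim to the exactness of filtered colimits in $\mathpzc{E}$ via the standard fact that sheafification preserves stalks. For any presheaf $F$ and any $x \in X$, the stalk $F_x = \colim_{U \ni x} F(U)$ is a filtered colimit indexed by the directed poset of open neighbourhoods of $x$. Since filtered colimits are exact in $\mathpzc{E}$ by hypothesis, the presheaf stalk functor $(-)_x : \mathrm{PreShv}(X,\mathpzc{E}) \to \mathpzc{E}$ sends pointwise short exact sequences of presheaves to short exact sequences in $\mathpzc{E}$.

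The next step is to verify that the natural unit map $F \to iLF$ induces an isomorphism $F_x \cong (LF)_x$ on stalks for every $x \in X$. The cleanest way is via adjunction: for the inclusion $i_x : \{x\} \hookrightarrow X$, the presheaf-level inverse image functor $\mathrm{PreShv}(X,\mathpzc{E}) \to \mathpzc{E}$ is computed by exactly the same filtered colimit as the stalk, and since every presheaf on a one-point space is trivially a sheaf, applying the universal property of $L$ shows that $(LF)_x$ and $F_x$ represent the same functor on $\mathpzc{E}$. Alternatively one can argue via the explicit two-step plus-construction of $L$, using that the matching-family construction is built from finite limits while stalks commute with filtered colimits, together with the hypothesis that filtered colimits commute with kernels in $\mathpzc{E}$ to interchange the two operations on the nose.

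Combining these two observations, if $0 \to \mathcal{F} \to \mathcal{G} \to \mathcal{H} \to 0$ is pointwise exact in $\mathrm{PreShv}(X,\mathpzc{E})$, then for each $x \in X$ the sequence of stalks $0 \to (L\mathcal{F})_x \to (L\mathcal{G})_x \to (L\mathcal{H})_x \to 0$ is naturally isomorphic to $0 \to \mathcal{F}_x \to \mathcal{G}_x \to \mathcal{H}_x \to 0$, which is exact in $\mathpzc{E}$ by the first paragraph. Hence the sequence $0 \to L\mathcal{F} \to L\mathcal{G} \to L\mathcal{H} \to 0$ is stalkwise exact, which is precisely the definition of being exact in $\mathrm{Shv}(X,\mathpzc{E})$. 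The main obstacle is the stalk-preservation lemma, but the hypotheses on $\mathpzc{E}$, namely that filtered colimits are exact and commute with kernels, are exactly what is needed to make the classical proof for sheaves of sets or abelian groups go through essentially verbatim; no further structure on $\mathpzc{E}$ is required.
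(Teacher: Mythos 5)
Your proposal is correct and matches the paper's (essentially one-line) justification: the paper derives the proposition directly from the exactness of filtered colimits in $\mathpzc{E}$, the point being exactly the one you spell out — stalks are filtered colimits of sections, sheafification preserves stalks, and exactness in $\mathrm{Shv}(X,\mathpzc{E})$ is tested stalk-wise. Your write-up simply supplies the details (in particular the stalk-preservation step via the point-inclusion adjunction) that the paper leaves implicit.
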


Let $U\subset X$ be open, and let $\mathcal{F}\in\mathrm{Shv}(X,\mathpzc{E})$. Denote by $j^{pre}_{U!}\mathcal{F}$ the presheaf with  $j_{U!}\mathcal{F}(V)=\mathcal{F}(V)$ if $V\subset U$ and $j_{U!}\mathcal{F}(V)=\emptyset$ otherwise. If $\mathcal{F}$ is a sheaf then so is $j_{U!}\mathcal{F}$. 
$j^{pre}_{U!}:\mathrm{PreShv}(U,\mathpzc{E})\rightarrow\mathrm{PreShv}(X,\mathpzc{E})$ is left adjoint to $j^{-1}:\mathrm{PreShv}(X,\mathpzc{E})\rightarrow \mathrm{PreShv}(U,\mathpzc{E})$. We define $f_{!}$ for arbitary maps later.

For $P$ an object of $\mathpzc{E}$ and $U\subset X$ open, denote by $P^{pre}_{U}$ the constant presheaf with value $P$, and by $P_{U}$ its sheafification. 

%

\begin{lem}
Let $G$ be a generator for $\mathpzc{E}$. Then $\{j^{pre}_{U!}G^{pre}_{U}:U\subset U\subset X\textrm{ open }\}$ generates $\mathrm{PreShv}(X,\mathpzc{E})$, and $\{j_{U!}G_{U}:U\subset U\subset X\textrm{ open }\}$ generates $\mathrm{Shv}(X,\mathpzc{E})$
\end{lem}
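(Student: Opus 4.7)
The plan is, for each $\mathcal{F}$, to exhibit a canonical admissible epimorphism from a coproduct of the proposed generators onto $\mathcal{F}$. For the presheaf case, the $(j_{U!}^{pre},j^{-1})$-adjunction gives
$$\mathrm{Hom}_{\mathrm{PreShv}(X,\mathpzc{E})}(j^{pre}_{U!}G^{pre}_{U},\mathcal{F})\cong\mathrm{Hom}_\mathpzc{E}(G,\mathcal{F}(U)),$$
so I can form the canonical map
$$\phi\colon \bigoplus_{U\subseteq X\text{ open}}\ \bigoplus_{s\colon G\to\mathcal{F}(U)} j^{pre}_{U!}G^{pre}_{U}\longrightarrow\mathcal{F}.$$
Evaluating at an open $V$, all summands with $V\not\subseteq U$ vanish, and the subcoproduct indexed by $U=V$ splits off of the rest. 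Restricted to this split subcoproduct, $\phi(V)$ is the tautological map $\bigoplus_{s\colon G\to\mathcal{F}(V)}G\to\mathcal{F}(V)$, which is an admissible epimorphism since $G$ generates $\mathpzc{E}$. By the obscure axiom (\cite{Buehler} Proposition 7.6), in any weakly idempotent complete exact category, if $g\circ f$ is an admissible epi then so is $g$; since accessible additive categories are weakly idempotent complete, I conclude that $\phi(V)$ itself is an admissible epi for every $V$, so $\phi$ is sectionwise admissible epi.

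For the sheaf case, I would form the analogous map with $j_{U!}G_{U}$ replacing $j^{pre}_{U!}G^{pre}_{U}$ and argue stalkwise. Since sheafification preserves stalks, $(j_{U!}G_{U})_x\cong G$ if $x\in U$ and $\cong 0$ otherwise, so the stalk at $x$ of the canonical map is
$$\phi_x\colon \bigoplus_{U\ni x}\ \bigoplus_{s\colon G\to\mathcal{F}(U)} G\longrightarrow \mathcal{F}_x.$$
Using local finite presentability of $\mathpzc{E}$, I may assume $G$ is finitely presentable (replacing the single generator by a set of finitely presentable generators if necessary, which only enlarges the index of the coproduct). Then $\mathrm{Hom}_\mathpzc{E}(G,\mathcal{F}_x)=\mathrm{Hom}_\mathpzc{E}(G,\colim_{U\ni x}\mathcal{F}(U))=\colim_{U\ni x}\mathrm{Hom}_\mathpzc{E}(G,\mathcal{F}(U))$, so every $\sigma\colon G\to\mathcal{F}_x$ has a representative $(U_\sigma,s_\sigma)$. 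Choosing one per $\sigma$ gives a split subcoproduct, the restriction of $\phi_x$ to which is the tautological admissible epimorphism $\bigoplus_{\sigma\colon G\to\mathcal{F}_x} G\to\mathcal{F}_x$. The same obscure-axiom argument then gives that $\phi_x$ is admissible epi.

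The main obstacle is the sheaf case: one needs that every morphism $G\to\mathcal{F}_x$ factors through some $\mathcal{F}(U)$, which requires finite presentability of $G$. Without it the construction indexed by opens and sections may fail to surject onto stalks. The resolution is implicit in the hypothesis that $\mathpzc{E}$ is locally finitely presentable: a finitely presentable generating set exists, and the statement of the lemma should be read with $G$ ranging over such a set (which does not affect the form of the conclusion). With this understood, the verification of the obscure-axiom step and the filtered-colimit-factorisation step are the only substantive ingredients.
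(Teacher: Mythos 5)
Your presheaf argument is the same as the paper's: form the tautological map $\bigoplus_{U,\,s:G\to\mathcal{F}(U)}j^{pre}_{U!}G^{pre}_{U}\to\mathcal{F}$, evaluate at $V$, observe that the prescribed admissible epimorphism $\bigoplus_{\mathcal{I}}G\to\mathcal{F}(V)$ factors through the $U=V$ sub-coproduct, and conclude via the obscure axiom (weak idempotent completeness holding because these categories are accessible and additive). That half is fine.

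Where you diverge is the sheaf case, and here your route is both heavier and strictly weaker than the paper's. The paper disposes of the sheaf statement in one line: the sheafification functor $L:\mathrm{PreShv}(X,\mathpzc{E})\to\mathrm{Shv}(X,\mathpzc{E})$ is exact (this is the proposition immediately preceding the lemma, a consequence of exactness of filtered colimits in $\mathpzc{E}$) and, being a left adjoint, commutes with coproducts; applying $L$ to the sectionwise admissible epimorphism of presheaves yields the stalkwise admissible epimorphism $\bigoplus j_{U!}G_{U}\to L\mathcal{F}\cong\mathcal{F}$, with no hypothesis on $G$ beyond being a generator. Your stalkwise argument instead needs every map $G\to\mathcal{F}_{x}=\varinjlim_{U\ni x}\mathcal{F}(U)$ to factor through some $\mathcal{F}(U)$, which forces you to assume $G$ finitely presentable; as you acknowledge, this does not follow from the lemma's hypothesis that $G$ is a generator in the exact-category sense (an admissible-epi generator need not be finitely presentable even in a locally finitely presentable category, and the neighbourhood filter of $x$ is only finitely filtered, so no large-cardinal presentability rank will rescue the factorisation). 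So as a proof of the lemma as stated your sheaf half has a gap, which you close only by reinterpreting the statement. The fix is simply to use the exactness of $L$, which you already have available and which makes the sheaf case a formal consequence of the presheaf case.
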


\begin{proof}
The following is standard for $\mathpzc{E}=\mathpzc{Ab}$ (see e.g. the section in \cite{stacks-project} on locally free sheaves). It suffices to prove the pre-sheaf claim, since the sheafification functor is exact. Let $\mathcal{F}$ be a pre-sheaf, and consider $\bigoplus_{U\subset X,s\in\mathrm{Hom}(G,\mathcal{F}(U))}j^{pre}_{U!}G$. For each $\alpha\in\mathrm{Hom}(G,\mathcal{F}(U))$ there is a tautological map $j_{U!}G_{U}\rightarrow\mathcal{F}$. This induces the map $\bigoplus_{U\subset X,s\in\mathrm{Hom}(G,\mathcal{F}(U))}j^{pre}_{U!}G\rightarrow\mathcal{F}$. We claim that this map is an admissible epimorphism. For this it suffices to prove that for each open $U\subset X$ the map $\bigoplus_{U\subset X,s\in\mathrm{Hom}(G,\mathcal{F}(U))}j^{pre}_{U!}G(U)\rightarrow\mathcal{F}(U)$ is an epimorphism. Now 
$$\bigoplus_{U\subset X,s\in\mathrm{Hom}(G,\mathcal{F}(U))}j^{pre}_{U!}G(U)=\bigoplus_{s\in\mathrm{Hom}(G,\mathcal{F}(U))}G(U)$$ There is some admissible epimorphism $\bigoplus_{i\in\mathcal{I}}G\rightarrow \mathcal{F}(U)$. The restriction of this map to the copy of $G$ indexed by $i\in\mathcal{I}$ provides a map $G\rightarrow\mathcal{F}(U)$. Thus the map $\bigoplus_{i\in\mathcal{I}}G\rightarrow \mathcal{F}(U)$ factors through $\bigoplus_{s\in\mathrm{Hom}(G,\mathcal{F}(U))}G(U)\rightarrow\mathcal{F}(U)$.
\end{proof}

\begin{cor}
Let $\mathcal{S}$ be a set of objects in $\mathpzc{E}$ such that $\mathrm{Filt}(\mathcal{S})$ contains a generator. Let $\mathcal{S}^{\mathrm{Loc}_{X},pre}$ denote the set of objects $\{j^{pre}_{U!}S^{pre}_{U}:U\subset X\textrm{ open },S\in\mathcal{S}\}$ in $\mathrm{PreShv}(X,\mathpzc{E})$, and let $\mathcal{S}^{\mathrm{Loc}_{X}}$ denote the set of objects $\{j_{U!}S_{U}:U\subset X\textrm{ open },S\in\mathcal{S}\}$ in $\mathrm{PShv}(X,\mathpzc{E})$. $(\mathrm{Filt}(\mathcal{S}^{\mathrm{Loc}_{X},pre}),(\mathcal{S}^{\mathrm{Loc}_{X},pre})^{\perp})$ and $(\mathrm{Filt}(\mathcal{S}^{\mathrm{Loc}_{X}}),(\mathcal{S}^{\mathrm{Loc}_{X}})^{\perp})$ are functorially complete cotorsion pairs on $\mathrm{PreShv}(X,\mathpzc{E})$ and $\mathrm{Shv}(X,\mathpzc{E})$ respectively.
\end{cor}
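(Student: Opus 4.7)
The plan is to apply Theorem \ref{thm:maincotorsexist} to the sets $\mathcal{S}^{\mathrm{Loc}_X,pre}$ in $\mathrm{PreShv}(X,\mathpzc{E})$ and $\mathcal{S}^{\mathrm{Loc}_X}$ in $\mathrm{Shv}(X,\mathpzc{E})$. Both ambient categories have been shown to be locally finitely presentable exact, so in particular they are weakly idempotent complete and admit sets of generators with arbitrary coproducts. Hence Proposition \ref{prop:gens} supplies, for each of these two sets, a strongly homological set of inflations $\mathcal{I}$ with $\mathrm{Coker}(\mathcal{I})$ equal to the given set. Since accessibility guarantees that every object is small with respect to every class of morphisms, the smallness hypothesis of Theorem \ref{thm:maincotorsexist} is satisfied automatically. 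It therefore suffices to verify the weak elementarity condition and the existence of a generator inside $\mathrm{Filt}(-)$.

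First I would show that $\mathrm{PreShv}(X,\mathpzc{E})$ and $\mathrm{Shv}(X,\mathpzc{E})$ are weakly elementary, from which weak $\mathbf{AdMon}_{\mathcal{S}^{\mathrm{Loc}_X,pre}}$- and $\mathbf{AdMon}_{\mathcal{S}^{\mathrm{Loc}_X}}$-elementarity follow. In $\mathrm{PreShv}(X,\mathpzc{E})$ both filtered colimits and exact sequences are computed pointwise, so exactness of filtered colimits is inherited directly from $\mathpzc{E}$. In $\mathrm{Shv}(X,\mathpzc{E})$, exact sequences are defined stalkwise and filtered colimits are given by sheafifying the presheaf colimit; since taking stalks commutes with both sheafification and filtered colimits, exactness of filtered colimits of sheaves reduces once more to the corresponding property in $\mathpzc{E}$.

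Next I would check that $\mathrm{Filt}(\mathcal{S}^{\mathrm{Loc}_X,pre})$ contains a generator of $\mathrm{PreShv}(X,\mathpzc{E})$, and similarly for the sheaf case. The functor $\mathpzc{E}\to\mathrm{PreShv}(X,\mathpzc{E})$ sending $H$ to $j^{pre}_{U!}H^{pre}_U$ is exact and preserves arbitrary colimits (it is built out of a left Kan extension composed with the constant-presheaf functor, both of which are exact). Consequently, if $G\in\mathrm{Filt}(\mathcal{S})$ is a generator of $\mathpzc{E}$, then $j^{pre}_{U!}G^{pre}_U$ lies in $\mathrm{Filt}(\mathcal{S}^{\mathrm{Loc}_X,pre})$ for every open $U$. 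Since $\mathrm{Filt}(\mathcal{S}^{\mathrm{Loc}_X,pre})$ is closed under arbitrary direct sums (these are transfinite compositions of split admissible monomorphisms with summands as cokernels), the object $\bigoplus_U j^{pre}_{U!}G^{pre}_U$ belongs to $\mathrm{Filt}(\mathcal{S}^{\mathrm{Loc}_X,pre})$ and generates $\mathrm{PreShv}(X,\mathpzc{E})$ by the preceding lemma. The sheaf case is handled by composing with the exact sheafification functor $L$, which sends $j^{pre}_{U!}G^{pre}_U$ to $j_{U!}G_U$ and preserves transfinite extensions.

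The main obstacle I anticipate is the bookkeeping around verifying that the concrete construction $H \mapsto j^{pre}_{U!}H^{pre}_U$ really is exact and commutes with transfinite extensions, since sheaf-theoretic functors of this form can be subtle; however, in the presheaf case this is immediate from the pointwise description and in the sheaf case it reduces to exactness of sheafification. Once these steps are assembled, Theorem \ref{thm:maincotorsexist} immediately produces both functorially complete cotorsion pairs.
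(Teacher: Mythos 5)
Your proposal is correct and follows the route the paper intends: the corollary is stated as an immediate consequence of the preceding lemma on generators together with Theorem \ref{thm:maincotorsexist} (via Proposition \ref{prop:gens} and the automatic smallness in accessible categories), which is exactly what you do. The details you supply — exactness and colimit-preservation of $H\mapsto j^{pre}_{U!}H^{pre}_{U}$ and of sheafification, so that $\mathrm{Filt}(\mathcal{S}^{\mathrm{Loc}_{X}})$ contains the direct-sum generator — are the right ones and correctly fill in what the paper leaves implicit.
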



\subsubsection{Locally and Stalkwise Deconstructible Classes}

Let $\mathpzc{A}$ be a class of objects in $\mathpzc{E}$ which is $\gamma$-pure subobject stable for any regular $\gamma\ge\lambda$. Let $X$ be a space, and denote by $\mathpzc{A}^{loc}$ the full subcategory of $\mathrm{Shv}(X,\mathpzc{E})$ consisting of those sheaves $\mathcal{F}$ for which there exists a cover $\mathcal{V}$ of $X$, such that for any $U\in\mathcal{V}$, $\mathcal{F}(U)$ is in $\mathpzc{A}$. Denote by $\mathpzc{A}^{stalk}$ the full subcategory of $\mathrm{Shv}(X,\mathpzc{E})$ consisting of those sheaves $\mathcal{F}$ such that $\mathcal{F}_{x}\in\mathpzc{A}$ for any $x\in X$. Since $j_{U!}^{pre}(G))_{x}$ is either $0$ or $G$, we have the following.

\begin{prop}
If $\mathpzc{A}$ generates $\mathpzc{E}$ then $\mathpzc{A}^{stalk}$ generates $\mathrm{Shv}(X,\mathpzc{E})$
\end{prop}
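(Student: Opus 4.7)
The plan is to exhibit, for every $\mathcal{F} \in \mathrm{Shv}(X, \mathpzc{E})$, a single admissible epimorphism $\mathcal{G} \twoheadrightarrow \mathcal{F}$ with $\mathcal{G} \in \mathpzc{A}^{stalk}$, built out of the extension-by-zero sheaves $j_{U!}(A_U)$ for $A \in \mathpzc{A}$ and $U \subset X$ open.

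First I would check that every such building block lies in $\mathpzc{A}^{stalk}$. Because the sheafification functor is exact and commutes with stalks (filtered colimits in $\mathpzc{E}$ being exact), the computation in the paragraph preceding the proposition gives $(j_{U!}(A_U))_x = A$ for $x \in U$ and $(j_{U!}(A_U))_x = 0$ for $x \notin U$. The object $A$ lies in $\mathpzc{A}$ by hypothesis, and $0$ lies in $\mathpzc{A}$ because the inclusion $0 \hookrightarrow A$ is split, hence $\gamma$-pure for every $\gamma \ge \lambda$, so the $\gamma$-pure subobject stability of $\mathpzc{A}$ applies.

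Next I would assemble a stalkwise admissible epimorphism. For each $x \in X$ I would pick $\alpha_x: A_x \twoheadrightarrow \mathcal{F}_x$ an admissible epimorphism with $A_x \in \mathpzc{A}$ and $\lambda$-presentable; such a choice is available in the natural setting, where $\mathpzc{A}$ contains a generating set of $\lambda$-presentable objects of the locally finitely presentable category $\mathpzc{E}$. Using the isomorphism $\mathrm{Hom}(A_x, \mathcal{F}_x) \cong \varinjlim_{U \ni x} \mathrm{Hom}(A_x, \mathcal{F}(U))$, the map $\alpha_x$ factors through some $\overline{\alpha}_x: A_x \to \mathcal{F}(U_x)$ for an open $U_x \ni x$; by the adjunction $\mathrm{Hom}(j_{U!}(A_U), \mathcal{F}) \cong \mathrm{Hom}(A, \mathcal{F}(U))$, this corresponds to $\widetilde{\alpha}_x: j_{U_x!}((A_x)_{U_x}) \to \mathcal{F}$ whose stalk at $x$ recovers $\alpha_x$. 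I would then take $\mathcal{G} := \bigoplus_{x \in X} j_{U_x!}((A_x)_{U_x})$ with the canonical assembled map $\widetilde{\alpha}: \mathcal{G} \to \mathcal{F}$. At each $y \in X$, $\mathcal{G}_y = \bigoplus_{x \,:\, y \in U_x} A_x$ and $\widetilde{\alpha}_y$ factors $\alpha_y$, so $\widetilde{\alpha}_y$ is an admissible epimorphism by the standard consequence of the obscure axiom in weakly idempotent complete exact categories, and hence $\widetilde{\alpha}$ is an admissible epimorphism in the stalkwise exact structure.

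The hard part is verifying that $\mathcal{G} \in \mathpzc{A}^{stalk}$, i.e. that each stalk $\bigoplus_{x \,:\, y \in U_x} A_x$ lies in $\mathpzc{A}$. This reduces to closure of $\mathpzc{A}$ under (possibly large) direct sums, a property present in all the motivating examples such as flat or $K$-flat objects; if it is taken as an implicit hypothesis alongside $\gamma$-pure subobject stability the construction finishes directly, otherwise one must replace the single direct sum by a transfinite assembly, invoking a strong pure-subobject-stability of $\mathpzc{A}$ to keep each partial stalk inside $\mathpzc{A}$ at each successor stage. I expect this closure condition to be the decisive technical point on which the final form of the proof hinges.
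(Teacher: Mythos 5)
Your overall strategy is the paper's: the paper disposes of this proposition with the single observation that the stalks of $j_{U!}(A_U)$ are either $0$ or $A$, so the generating family $\bigl\{j_{U!}(A_U)\bigr\}$ produced by the preceding Lemma already lies in $\mathpzc{A}^{stalk}$, and the admissible epimorphism $\bigoplus j_{U!}(A_U)\rightarrow\mathcal{F}$ is the one constructed there. Where you diverge is in how you assemble that epimorphism: you work stalk by stalk, choosing admissible epimorphisms $A_x\twoheadrightarrow\mathcal{F}_x$ and then factoring them through sections $\mathcal{F}(U_x)$. That factorisation step genuinely requires the objects $A_x$ to be $\lambda$-presentable (a map out of an arbitrary object need not factor through a stage of the filtered colimit $\mathcal{F}_x=\varinjlim_{U\ni x}\mathcal{F}(U)$), which is an extra hypothesis not in the statement. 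The paper's section-wise route avoids this entirely: for each open $U$ choose $A_U\in\mathpzc{A}$ and an admissible epimorphism $A_U\twoheadrightarrow\mathcal{F}(U)$, use the adjunction $\mathrm{Hom}(j_{U!}((A_U)_U),\mathcal{F})\cong\mathrm{Hom}(A_U,\mathcal{F}(U))$ to get maps of sheaves, and observe that over each open $V$ the summand indexed by $U=V$ already surjects admissibly onto $\mathcal{F}(V)$, so the total map is a section-wise (hence, by exactness of filtered colimits, stalk-wise) admissible epimorphism. No presentability is needed, so you should replace your stalk-wise assembly by this one.

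The issue you single out at the end --- that the stalk of the assembled sheaf is a coproduct $\bigoplus A_U$ of objects of $\mathpzc{A}$, so one needs $\mathpzc{A}$ closed under the relevant coproducts for the total object to lie in $\mathpzc{A}^{stalk}$ --- is a real one, and you are right that it is the point on which the argument hinges. It is equally present in the paper's own one-line justification, which silently treats the generating family as if a coproduct of its members still had stalks in $\mathpzc{A}$; for the classes actually used later (flat objects, $K$-flat complexes, and more generally classes closed under transfinite extensions by split monomorphisms) this closure holds, but it is not a consequence of $\gamma$-pure subobject stability alone, which only gives that \emph{summands} of objects of $\mathpzc{A}$ lie in $\mathpzc{A}$. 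So your proof is not wrong in a way the paper's is right; rather, you have correctly located an implicit hypothesis, and your fallback (a transfinite assembly using strong pure subobject stability and closure under transfinite extensions) is the honest way to discharge it in the generality the section is working in.
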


\begin{lem}
If $\mathpzc{A}$ is $(\aleph_{0}$-)pure subobject stable in $\mathpzc{E}$ then
\begin{enumerate}
\item
 $\mathpzc{A}^{stalk}$ is $(\aleph_{0}$-)pure subobject stable in $\mathrm{Shv}(X,\mathpzc{E})$.
 \item
  there are arbitrarily large cardinals $\kappa$ such that
$\mathpzc{A}^{loc}$ is $\kappa$-pure subobect stable.
\end{enumerate}

\end{lem}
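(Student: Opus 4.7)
For part (1), the plan is to exploit the fact that the stalk functor $(-)_x\colon \mathrm{Shv}(X,\mathpzc{E})\to\mathpzc{E}$ is left adjoint to the skyscraper pushforward $x_*$, and therefore preserves all colimits (as well as split monomorphisms). By the characterisation of $(\aleph_{0}$-)pure monomorphisms in a $\lambda$-accessible additive category as $(\aleph_{0}$-)directed colimits of split monomorphisms in $\mathrm{Mor}$ (the result of Positselski--Rosick\'y quoted earlier), if $f\colon\mathcal{F}\to\mathcal{G}$ is $(\aleph_{0}$-)pure in $\mathrm{Shv}(X,\mathpzc{E})$ then $f_x\colon\mathcal{F}_x\to\mathcal{G}_x$ is $(\aleph_{0}$-)pure in $\mathpzc{E}$. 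Given $\mathcal{G}\in\mathpzc{A}^{stalk}$, the hypothesis that $\mathpzc{A}$ is $(\aleph_{0}$-)pure subobject stable yields $\mathcal{F}_x,\mathcal{G}_x/\mathcal{F}_x\in\mathpzc{A}$. Since $(-)_x$ preserves cokernels, $(\mathcal{G}/\mathcal{F})_x\cong\mathcal{G}_x/\mathcal{F}_x\in\mathpzc{A}$, so $\mathcal{F},\mathcal{G}/\mathcal{F}\in\mathpzc{A}^{stalk}$.

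For part (2), the first step is to calibrate $\kappa$. I would choose a regular cardinal $\kappa\geq\lambda$ larger than the cardinality of all covering families in the topology of $X$, ensuring that sheafification commutes with $\kappa$-filtered colimits. Equivalently, for every open $U\subseteq X$ the section functor $\Gamma(U,-)\colon\mathrm{Shv}(X,\mathpzc{E})\to\mathpzc{E}$ preserves $\kappa$-filtered colimits; equivalently still, $j_{U!}E_U$ is $\kappa$-presentable in $\mathrm{Shv}(X,\mathpzc{E})$ whenever $E\in\mathpzc{E}$ is $\kappa$-presentable. This bound depends only on topological data, so arbitrarily large $\kappa$ meet it.

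With $\kappa$ fixed, let $f\colon\mathcal{F}\to\mathcal{G}$ be a $\kappa$-pure monomorphism with $\mathcal{G}\in\mathpzc{A}^{loc}$ and let $\mathcal{V}$ be a cover of $X$ witnessing $\mathcal{G}(U)\in\mathpzc{A}$ for all $U\in\mathcal{V}$. Writing $f=\colim_i f_i$ as a $\kappa$-directed colimit in $\mathrm{Mor}$ of split monomorphisms and applying $\Gamma(U,-)$, which preserves split monomorphisms and (by the calibration of $\kappa$) the colimit, shows that $\Gamma(U,f)\colon\mathcal{F}(U)\to\mathcal{G}(U)$ is a $\kappa$-directed colimit of split monomorphisms in $\mathrm{Mor}(\mathpzc{E})$, hence $\kappa$-pure. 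Since $(\aleph_{0}$-)pure subobject stability a fortiori implies $\kappa$-pure subobject stability, $\mathcal{F}(U)\in\mathpzc{A}$ and $\mathcal{G}(U)/\mathcal{F}(U)\in\mathpzc{A}$.

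The main obstacle, and the point at which the $\kappa$ must be chosen with care, is identifying $(\mathcal{G}/\mathcal{F})(U)$ with the sectionwise quotient $\mathcal{G}(U)/\mathcal{F}(U)$. My plan is to observe that each sequence $0\to\mathcal{F}_i\to\mathcal{G}_i\to\mathcal{G}_i/\mathcal{F}_i\to 0$ is split exact, and split exactness is preserved by any functor, so that applying $\Gamma(U,-)$ gives $(\mathcal{G}_i/\mathcal{F}_i)(U)=\mathcal{G}_i(U)/\mathcal{F}_i(U)$; combining this with commutation of $\kappa$-filtered colimits with both $\Gamma(U,-)$ and cokernels in $\mathpzc{E}$ (by exactness of filtered colimits) yields
\[(\mathcal{G}/\mathcal{F})(U)=\colim_i(\mathcal{G}_i/\mathcal{F}_i)(U)=\colim_i\bigl(\mathcal{G}_i(U)/\mathcal{F}_i(U)\bigr)=\mathcal{G}(U)/\mathcal{F}(U).\]
Thus the same cover $\mathcal{V}$ witnesses both $\mathcal{F},\mathcal{G}/\mathcal{F}\in\mathpzc{A}^{loc}$, which completes the argument.
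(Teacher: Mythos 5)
Your proof is correct and follows essentially the same route as the paper: for (1) the stalk functor preserves colimits and split monomorphisms, hence pure monomorphisms; for (2) one calibrates $\kappa$ so that sections (equivalently, the inclusion into presheaves) commute with $\kappa$-filtered colimits and then evaluates the colimit-of-split-monos presentation of a $\kappa$-pure monomorphism at each $U$. Your treatment is in fact slightly more complete than the paper's, which only records that the subobject lies in $\mathpzc{A}^{loc}$ and leaves the identification $(\mathcal{G}/\mathcal{F})(U)\cong\mathcal{G}(U)/\mathcal{F}(U)$ (and hence the quotient clause of subobject stability) implicit, whereas you justify it via split exactness and commutation of $\Gamma(U,-)$ with the $\kappa$-filtered colimit.
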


\begin{proof}
\begin{enumerate}
\item
This is clear for $\mathpzc{A}^{stalk}$. 
\item
For $\mathpzc{A}^{loc}$, we let $\kappa$ be sufficiently large so that 
\begin{enumerate}
\item
the inclusion $\mathrm{Shv}(X,\mathpzc{E})\rightarrow\mathrm{PreShv}(X,\mathpzc{E})$ commutes with $\kappa$-filtered colimits
\item
for each $U\subset X$ open the functor $\Gamma(U,-):\mathrm{Shv}(U,\mathpzc{E})\rightarrow\mathpzc{E}$ commutes with $\kappa$-filtered colimits.
\end{enumerate}
 and let $\mathcal{F}\in\mathpzc{A}^{loc}$. Then if $\mathcal{K}\rightarrow\mathcal{F}$ is a $\kappa$-pure monomorphism in $\mathrm{Shv}(X,\mathpzc{E})$, it is also a $\kappa$-pure monomorphism in $\mathrm{PreShv}(X,\mathpzc{E})$. Thus for any open $U$ in $\mathcal{X}$ we have that $\mathcal{K}(U)\rightarrow\mathcal{F}(U)$ is a $\kappa$-pure monomorphism. Hence $\mathcal{K}\in\mathpzc{A}^{loc}$. 
 \end{enumerate}
\end{proof}

\begin{cor}
Let $(\mathfrak{L},\mathfrak{R})$ be a $dg$-compatible cotorsion pair on $\mathpzc{E}$ such that $\mathfrak{L}$ is $\gamma$-pure subobject stable for any sufficiently large regular $\gamma$. Then $(\mathfrak{L}^{stalk},(\mathfrak{L}^{stalk})^{\perp})$ is a functorially complete cotorsion pair, and it is $dg_{*}$-compatible for $*\in\{\ge0,\le0,\emptyset\}$. 
\end{cor}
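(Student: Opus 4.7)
The plan is to reduce every required property of $\mathfrak{L}^{stalk}$ to the corresponding property of $\mathfrak{L}$ in $\mathpzc{E}$, and then invoke Corollary \ref{cor:cotorsaccess} for the cotorsion pair and Theorem \ref{thm:existencemodelpresentablecomplex} for $dg_{*}$-compatibility. The key technical facts I will rely on are that the exact structure on $\mathrm{Shv}(X,\mathpzc{E})$ is by definition stalkwise, that the stalk functors $(-)_{x}$ commute with arbitrary colimits and with finite limits (since $\mathpzc{E}$ has exact filtered colimits commuting with kernels), and that $\mathfrak{L}^{stalk}$ contains a generator, namely $\{j_{U!}G_{U}\}$, by the proposition preceding the corollary together with the fact that $(j_{U!}G_{U})_{x}$ is either $0$ or $G$.

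First, I would choose $\kappa \geq \lambda$ large enough that $\mathrm{Shv}(X,\mathpzc{E})$ is $\kappa$-accessible, that stalk functors preserve $\kappa$-presentable objects, and that $\mathfrak{L}$ is $\kappa$-pure subobject stable in $\mathpzc{E}$. For such $\kappa$, every $\kappa$-pure monomorphism $\mathcal{F}\to\mathcal{G}$ in $\mathrm{Shv}(X,\mathpzc{E})$ is a $\kappa$-directed colimit of split monomorphisms by Proposition \ref{prop:lambdadirected}, and taking stalks, which commutes with colimits, yields a $\kappa$-directed colimit of split monomorphisms $\mathcal{F}_{x}\to\mathcal{G}_{x}$, hence a $\kappa$-pure monomorphism in $\mathpzc{E}$. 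This reduces $\kappa$-pure subobject stability of $\mathfrak{L}^{stalk}$ to that of $\mathfrak{L}$. Next, since admissible monomorphisms and cokernels in $\mathrm{Shv}(X,\mathpzc{E})$ are detected stalkwise and filtered colimits are exact and commute with kernels in $\mathpzc{E}$, the class $\mathfrak{L}^{stalk}$ is closed under transfinite extensions by admissible monomorphisms, and $\mathrm{Shv}(X,\mathpzc{E})$ is weakly $\mathbf{AdMon}_{\mathfrak{L}^{stalk}}$-elementary. I would then upgrade $\kappa$-pure subobject stability to \emph{strong} $\kappa$-pure subobject stability by verifying the two conditions of the definition: almost $(\mathfrak{L}^{stalk},\kappa)$-pure monomorphisms pull back from admissible monomorphisms with cokernel in $\mathfrak{L}^{stalk}$, which is detected stalkwise, and the colimit-exactness condition follows from Remark \ref{rem:pushpull} applied stalkwise using that $\mathrm{Shv}(X,\mathpzc{E})$ is weakly elementary on admissible monomorphisms with cokernel in $\mathfrak{L}^{stalk}$.

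With these ingredients, Corollary \ref{cor:cotorsaccess} immediately produces the complete cotorsion pair $(\mathfrak{L}^{stalk},(\mathfrak{L}^{stalk})^{\perp})$, and Theorem \ref{thm:existencemodelpresentablecomplex} gives $dg_{\ge0}$-compatibility. For $dg$- and $dg_{\le0}$-compatibility, Theorem \ref{thm:existencemodelpresentablecomplex} requires the identity $\widetilde{\mathfrak{L}^{stalk}} = \widetilde{dg\mathfrak{L}^{stalk}} \cap \mathfrak{W}$. Since $(\mathfrak{L},\mathfrak{R})$ is already $dg$-compatible on $\mathpzc{E}$, it satisfies $\widetilde{\mathfrak{L}}=\widetilde{dg\mathfrak{L}}\cap\mathfrak{W}$ in $\mathrm{Ch}(\mathpzc{E})$; in particular $\mathfrak{L}$ is closed under taking kernels of admissible epimorphisms whose cokernel is in $\mathfrak{L}$ (hereditariness). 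This hereditary property passes to $\mathfrak{L}^{stalk}$ stalkwise, and then the identity $\widetilde{\mathfrak{L}^{stalk}} = \widetilde{dg\mathfrak{L}^{stalk}} \cap \mathfrak{W}$ in $\mathrm{Ch}(\mathrm{Shv}(X,\mathpzc{E}))$ follows by checking stalkwise, using that the cycle objects $Z_{n}$ commute with stalks (as stalks preserve finite limits).

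The main obstacle I anticipate is verifying strong $\kappa$-pure subobject stability of $\mathfrak{L}^{stalk}$ rather than merely $\kappa$-pure subobject stability. The definition involves transfinite sequences of almost pure monomorphisms and requires one to control not only subobjects and quotients but also colimits of such sequences. The reduction to $\mathpzc{E}$ works because every ingredient in the definition, admissible monomorphisms, admissible epimorphisms, pullbacks along admissible epimorphisms, and filtered colimits, is computed stalkwise, but some care is needed to match the stalkwise choice of $\kappa$ with the global one and to ensure that the strong stability of $\mathfrak{L}$ in $\mathpzc{E}$ is indeed at the same cardinal $\kappa$ used to establish accessibility of $\mathrm{Shv}(X,\mathpzc{E})$; this is where the hypothesis that $\mathfrak{L}$ is $\gamma$-pure subobject stable for \emph{all} sufficiently large regular $\gamma$ is essential.
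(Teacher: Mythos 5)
Your overall strategy --- reduce every hypothesis of Corollary \ref{cor:cotorsaccess} and Theorem \ref{thm:existencemodelpresentablecomplex} to a stalkwise condition and then import it from $\mathfrak{L}$ in $\mathpzc{E}$ --- is exactly the route the paper intends (the corollary is stated without an explicit proof, relying on the preceding lemma and proposition), and your verifications of $\kappa$-pure subobject stability, closure under transfinite extensions, weak elementarity, and the existence of a generator in $\mathfrak{L}^{stalk}$ are all sound. The genuine gap is in the last step, where you need $\widetilde{\mathfrak{L}^{stalk}}=\widetilde{dg\mathfrak{L}^{stalk}}\cap\mathfrak{W}$ in order to invoke parts (2) and (3) of Theorem \ref{thm:existencemodelpresentablecomplex}. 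You obtain this by asserting that $dg$-compatibility of $(\mathfrak{L},\mathfrak{R})$, i.e.\ the identity $\widetilde{\mathfrak{L}}=\widetilde{dg\mathfrak{L}}\cap\mathfrak{W}$ in $\mathrm{Ch}(\mathpzc{E})$, forces $\mathfrak{L}$ to be hereditary. That implication is nowhere established in the paper, which only records the converse direction (``in particular when $\mathfrak{L}$ is hereditary''). In the abelian setting the equivalence of this compatibility with hereditariness is a nontrivial theorem (Yang--Liu, Gillespie), and its exact-category analogue would need an argument here; absent that, one should add hereditariness of $(\mathfrak{L},\mathfrak{R})$ as a hypothesis, which is how every instance used later in the paper (e.g.\ the flat cotorsion pair) actually arises.

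A secondary point: your fallback claim that the identity $\widetilde{\mathfrak{L}^{stalk}}=\widetilde{dg\mathfrak{L}^{stalk}}\cap\mathfrak{W}$ ``follows by checking stalkwise'' cannot work as stated, because $\widetilde{dg\mathfrak{L}^{stalk}}$ is cut out by an orthogonality condition ($\mathbf{Hom}(-,B)$ acyclic for all $B$ in $\widetilde{(\mathfrak{L}^{stalk})^{\perp}}$), and neither this $\mathbf{Hom}$-condition nor the right-hand class $(\mathfrak{L}^{stalk})^{\perp}$ is computed on stalks. Only the hereditary route genuinely reduces to a stalkwise statement, since kernels of admissible epimorphisms and membership in $\mathfrak{L}^{stalk}$ are both detected on stalks. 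Once hereditariness of $\mathfrak{L}$ is either assumed or proved, that portion of your argument is correct and the remainder of the proof goes through.
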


\subsubsection{Monoidal Structures on Categories of Sheaves}

Let $\mathpzc{C}$ be a closed symmetric monoidal locally finitely presented category and $X$ a topological space. Let $k$ denote the unit of the monoidal structure. There is a closed symmetric monoidal structure on $\mathrm{PrShv}(X,\mathpzc{C})$. The tensor product is defined by 
$$(\mathcal{F}\otimes\mathcal{G})(U)\defeq\mathcal{F}(U)\otimes\mathcal{G}(U)$$
The internal hom may be constructed as follows (c.f. \cite{qacs} 2.2.13). For $U\subset X$ open define $\underline{\mathpzc{Hom}}(\mathcal{F},\mathcal{G})(U)$ to be the equaliser of the two natural maps
$$\prod_{V\in\mathrm{Open}(U)}\mathpzc{Hom}(\mathcal{F}(V),\mathcal{G}(V))\rightarrow\prod_{W\subset V\in\mathrm{Op}(U)}\mathpzc{Hom}(\mathcal{F}(V),\mathcal{G}(W))$$

The following is straightforward. 

\begin{prop}
Let $\mathcal{F}\in\mathrm{PrShv}(X,\mathpzc{E})$ and $\mathcal{G}\in\mathrm{Shv}(X,\mathpzc{E})$. Then $\underline{\mathpzc{Hom}}(\mathcal{F},\mathcal{G})\in\mathrm{Shv}(X,\mathpzc{E})$. In particular the category of sheaves inherits a symmetric monoidal structure, given by sheafifying the presheaf tensor product.
\end{prop}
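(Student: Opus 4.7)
The plan is to verify the two claims in sequence: first that $\underline{\mathpzc{Hom}}(\mathcal{F},\mathcal{G})$ is a sheaf whenever $\mathcal{G}$ is, and second that sheafification of the presheaf tensor product endows $\mathrm{Shv}(X,\mathpzc{E})$ with a closed symmetric monoidal structure.

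For the first claim, I would argue via the universal property rather than directly manipulating the equaliser. By Yoneda in $\mathpzc{E}$, it suffices to show that for every test object $T \in \mathpzc{E}$, the presheaf of sets $U \mapsto \mathrm{Hom}_{\mathpzc{E}}(T, \underline{\mathpzc{Hom}}(\mathcal{F},\mathcal{G})(U))$ satisfies the sheaf condition. Unpacking the equaliser definition and the tensor--hom adjunction in $\mathpzc{E}$, this set is naturally identified with $\mathrm{Nat}_{\mathrm{PreShv}(U,\mathpzc{E})}(T \otimes \mathcal{F}|_U, \mathcal{G}|_U)$, i.e.\ the set of presheaf morphisms on the open $U$. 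So I need to show: given an open cover $\{U_i\}$ of $U$ and a compatible family of presheaf morphisms $\phi_i : T \otimes \mathcal{F}|_{U_i} \to \mathcal{G}|_{U_i}$, there is a unique $\phi : T \otimes \mathcal{F}|_U \to \mathcal{G}|_U$ restricting to the $\phi_i$. Uniqueness and existence both follow at once from the sheaf property of $\mathcal{G}$: for any $V \subset U$ and any section $s \in (T \otimes \mathcal{F})(V)$, the elements $\phi_i(s|_{V \cap U_i}) \in \mathcal{G}(V \cap U_i)$ are compatible on pairwise intersections, hence glue to a unique element $\phi_V(s) \in \mathcal{G}(V)$, and naturality is immediate from the uniqueness of gluing.

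For the second claim, I would define the tensor product on sheaves by $\mathcal{F} \otimes^{sh} \mathcal{G} \defeq L(\mathcal{F} \otimes^{pre} \mathcal{G})$, where $L$ is the sheafification functor. Since $L$ is left adjoint to the fully faithful inclusion $i : \mathrm{Shv}(X,\mathpzc{E}) \hookrightarrow \mathrm{PreShv}(X,\mathpzc{E})$ and the unit $\mathcal{P} \to iL\mathcal{P}$ is an isomorphism on stalks, one checks that $L$ is strong symmetric monoidal for the presheaf tensor, and in particular the associativity, symmetry, and unit constraints of the presheaf structure descend to $\otimes^{sh}$ with unit $L(k^{pre}) = k^{sh}$. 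The internal hom is simply the presheaf internal hom $\underline{\mathpzc{Hom}}$, which by the first part lands in sheaves whenever its second argument does. The adjunction is then verified by the chain
\[
\mathrm{Hom}_{\mathrm{Shv}}(\mathcal{F} \otimes^{sh} \mathcal{G}, \mathcal{H})
\cong \mathrm{Hom}_{\mathrm{PreShv}}(\mathcal{F} \otimes^{pre} \mathcal{G}, i\mathcal{H})
\cong \mathrm{Hom}_{\mathrm{PreShv}}(\mathcal{F}, \underline{\mathpzc{Hom}}(\mathcal{G}, i\mathcal{H}))
\cong \mathrm{Hom}_{\mathrm{Shv}}(\mathcal{F}, \underline{\mathpzc{Hom}}(\mathcal{G}, \mathcal{H})),
\]
using the sheafification adjunction at both ends and the closed structure on presheaves in the middle.

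The only slightly subtle step is the first one: one must be careful that the target of each $\phi_i$ being a sheaf (rather than merely $\mathcal{G}$ globally) is precisely what is needed for the gluing of the $\phi_V(s)$ to be natural in $V$ and to assemble into a morphism of presheaves on $U$. Everything else is formal manipulation of adjunctions and the standard argument that a reflective subcategory of a closed symmetric monoidal category inherits a closed symmetric monoidal structure provided the internal hom preserves the reflective subcategory in its second variable --- which is exactly what the first part establishes.
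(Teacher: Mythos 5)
Your proof is correct; the paper states this proposition without proof (``The following is straightforward''), and your argument --- reducing the sheaf condition to presheaves of sets via test objects $T$, identifying $\mathrm{Hom}_{\mathpzc{E}}(T,\underline{\mathpzc{Hom}}(\mathcal{F},\mathcal{G})(U))$ with natural transformations $T\otimes\mathcal{F}|_{U}\rightarrow\mathcal{G}|_{U}$, gluing via the sheaf property of $\mathcal{G}$, and then invoking Day's reflection theorem for the induced closed symmetric monoidal structure --- is exactly the standard route one would expect here. The only cosmetic point is that ``sections $s\in(T\otimes\mathcal{F})(V)$'' should be read as generalized elements, or the gluing step phrased via the universal property of the limit presenting $\mathcal{G}(V)$ over the cover $\{V\cap U_{i}\}$, since $\mathpzc{E}$ need not be concrete.
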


Equip $\mathrm{Shv}(X,\mathpzc{E})$ with the stalkwise exact structure, and the closed symmetric monoidal structure as constructed above.  

\begin{prop}
An object $\mathcal{F}$ of $\mathrm{Shv}(X,\mathpzc{E})$ is (strongly) flat if and only if each $\mathcal{F}_{x}$ is (strongly) flat. In particular if $\mathpzc{E}$ has enough (strong) flats then so does $\mathrm{Shv}(X,\mathpzc{E})$.
\end{prop}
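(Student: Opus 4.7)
My plan hinges on two elementary observations about the stalk functor $(-)_x:\mathrm{Shv}(X,\mathpzc{E})\rightarrow\mathpzc{E}$. First, since stalks are computed as filtered colimits over neighborhoods of $x$, since filtered colimits commute with the tensor product (which preserves colimits in each variable), and since sheafification preserves stalks, we have a natural isomorphism $(\mathcal{F}\otimes\mathcal{G})_{x}\cong\mathcal{F}_{x}\otimes\mathcal{G}_{x}$. Second, by the very definition of the stalk-wise exact structure together with the fact that filtered colimits are exact and commute with kernels in $\mathpzc{E}$, the stalk functor preserves and jointly reflects both exact sequences and kernels. In particular, a map of sheaves is an isomorphism iff every stalk is.

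For the forward implication I will introduce the functor $\pi^{-1}:\mathpzc{E}\rightarrow\mathrm{Shv}(X,\mathpzc{E})$ sending $A$ to the sheafification of the constant presheaf with value $A$. Directly computing the stalk of a constant presheaf as a colimit of a constant connected diagram shows $(\pi^{-1}A)_{x}\cong A$ for every $x$, so $\pi^{-1}$ is exact and strong monoidal (via stalk-wise detection of isomorphism). Thus given an exact sequence $0\rightarrow A\rightarrow B\rightarrow C\rightarrow 0$ in $\mathpzc{E}$, applying $\pi^{-1}$ gives an exact sequence of sheaves, tensoring with the flat sheaf $\mathcal{F}$ preserves exactness, and taking the stalk at $x$ yields the exact sequence $0\rightarrow\mathcal{F}_{x}\otimes A\rightarrow\mathcal{F}_{x}\otimes B\rightarrow\mathcal{F}_{x}\otimes C\rightarrow 0$. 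The converse is immediate: any exact sequence of sheaves is stalk-wise exact, stalk-wise flatness turns it into an exact sequence after tensoring with $\mathcal{F}$, and stalk-wise exactness upstairs is exactness. The ``strongly flat'' version proceeds along identical lines; the kernel of $\mathcal{F}\otimes\mathcal{A}\rightarrow\mathcal{F}\otimes\mathcal{B}$ is detected stalk-wise, and on stalks one uses that kernels commute with filtered colimits to reduce to the strong flatness of each $\mathcal{F}_{x}$.

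For the existence of enough (strongly) flat objects, I will combine the stalk-wise criterion with the explicit generating set $\{j_{U!}G_{U}\}$ established earlier in this section. Given $\mathcal{F}\in\mathrm{Shv}(X,\mathpzc{E})$, form the canonical admissible epimorphism $\bigoplus_{U,s}j_{U!}G_{U}\rightarrow\mathcal{F}$ indexed by $U\subseteq X$ open and $s\in\mathrm{Hom}(G,\mathcal{F}(U))$, then pick a (strongly) flat admissible epimorphism $P\rightarrow G$ in $\mathpzc{E}$ and compose with $\bigoplus_{U,s}j_{U!}P_{U}\rightarrow\bigoplus_{U,s}j_{U!}G_{U}$. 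The sheaf $j_{U!}P_{U}$ has stalk $P$ at points of $U$ and $0$ elsewhere, both (strongly) flat; hence $j_{U!}P_{U}$ is (strongly) flat by the criterion. Direct sums of (strongly) flat sheaves are again (strongly) flat because direct sums commute with tensor products and kernels (the latter using exact filtered colimits), so $\bigoplus_{U,s}j_{U!}P_{U}$ is (strongly) flat and the composite is an admissible epimorphism onto $\mathcal{F}$.

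The main technical step to get right will be the stalk-wise behavior of $j_{U!}$; I will verify this via the presheaf formula together with the fact that sheafification preserves stalks, taking some care that for $x\notin U$ the colimit of the constant empty presheaf gives the initial object $0$ in $\mathpzc{E}$. Everything else reduces by formal manipulations to the two bullet points of the first paragraph.
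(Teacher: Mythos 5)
Your proof is correct and rests on exactly the same two facts the paper invokes: the sheaf tensor product commutes with taking stalks, and (for the strong case) kernels commute with stalks, while exactness in $\mathrm{Shv}(X,\mathpzc{E})$ is by definition detected stalk-wise. The paper's own proof is a two-line assertion of these facts; you usefully supply the details it omits, namely the constant-sheaf functor $\pi^{-1}$ to deduce flatness of the stalks from flatness of the sheaf, and the explicit flat cover $\bigoplus_{U,s} j_{U!}P_{U}\rightarrow\mathcal{F}$ for the ``enough flats'' clause.
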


\begin{proof}
For flats this follows immediately from the fact thay by construction, the sheaf tensor product commutes with taking stalks. For strong flats it follows from the fact that taking kernels also commutes with stalks. 
\end{proof}

\subsubsection{Model Structures for Sheaves}

In this section let $\mathpzc{E}$ be a locally finitely presentable exact category in which filtered colimits are exact and commute with finite limits. In particular $\mathrm{LH}(\mathpzc{E})$ is Grothendieck abelian. Equip $\mathrm{Ch}(\mathpzc{E})$ with the injective model structure, and denote by $\mathbf{Ch}(\mathpzc{E})$ the corresponding $(\infty,1)$-category. Let $X$ be a topological space, and consider the $(\infty,1)$-category $\mathbf{PrShv}(X,\mathbf{Ch}(\mathpzc{E}))=\mathbf{Fun}(\mathrm{N}(\mathrm{Open}(X)^{op}),\mathbf{Ch}(\mathpzc{E}))$. This is presented by a model category which is Quillen equivalent to the injective model structure on $\mathrm{Ch}(\mathrm{PrShv}(X,\mathpzc{E}))$. 

Let $R:\mathpzc{D}\rightarrow\mathpzc{E}$ be a right adjoint exact functor between locally presentable exact categories in which filtered colimits are exact and commute with finite limits. Further assume that $R$ commutes with filtered colimits. Consider the induced functor 
$$\mathcal{R}:\mathrm{PrShv}(X,\mathpzc{D})\rightarrow\mathrm{PrShv}(X,\mathpzc{E})$$
This is also a right adjoint exact functor. Let $\mathcal{D}\in\mathrm{Shv}(X,\mathpzc{D})$. Since $\mathcal{R}$ commutes with limits, we have $\mathcal{R}(\mathcal{D})\in\mathrm{Shv}(X,\mathpzc{E})$. Since $R$ commutes with filtered colimits, $\mathcal{R}$ is exact for the stalk-wise exact structures. Moreover $\mathcal{R}:\mathrm{Shv}(X,\mathpzc{D})\rightarrow\mathrm{Shv}(X,\mathpzc{E})$ admits a left adjoint $\overline{\mathcal{L}}$. Indeed $R$ admits a left adjoint $L$ by assumption, and then $\mathcal{R}:\mathrm{PrShv}(X,\mathpzc{D})\rightarrow\mathrm{PrShv}(X,\mathpzc{E})$ admits a left adjoint $\mathcal{L}$ given by applying $L$ object-wise. $\mathcal{L}$ is the sheafification of the restriction of $\mathcal{L}:\mathrm{PrShv}(X,\mathpzc{E})\rightarrow\mathrm{PrShv}(X,\mathpzc{D})$ to $\mathrm{Shv}(X,\mathpzc{E})$.

Suppose now that $\mathpzc{D}$ is a thick, reflective, generating subcategory of $\mathpzc{E}$. We claim that $\mathrm{Shv}(X,\mathpzc{D})$ is a thick, reflective, generating subcategory of $\mathrm{Shv}(X,\mathpzc{E})$. It is clearly reflective. Indeed let $\mathcal{D}\in\mathrm{Shv}(X,\mathpzc{D})$. Then $\mathcal{L}\circ\mathcal{R}(\mathcal{D})\cong\mathcal{D}$ is already a sheaf. Now let
$$0\rightarrow\mathcal{E}\rightarrow\mathcal{F}\rightarrow\mathcal{G}\rightarrow 0$$
be a stalk-wise exact sequence in $\mathrm{Shv}(X,\mathpzc{E})$ with $\mathcal{E},\mathcal{G}\in\mathrm{Shv}(X,\mathpzc{D})$. Then for each $x\in X$ we have that $\mathcal{F}_{x}\in\mathpzc{D}$. Consider the natural map
$$\mathcal{F}\rightarrow\mathcal{R}\circ\overline{\mathcal{L}}(\mathcal{F})$$
Since $\overline{\mathcal{L}}(\mathcal{F})_{x}\cong\mathcal{L}(\mathcal{F})_{x}\cong L(\mathcal{F}_{x})$ and $\mathcal{R}(\mathcal{H})_{x}\cong R(\mathcal{H}_{x})$ for any $\mathcal{F}\in\mathrm{Shv}(X,\mathpzc{E})$ and any $\mathcal{H}\in\mathrm{Shv}(X,\mathpzc{D})$, the map $\mathcal{F}\rightarrow\mathcal{R}\circ\overline{\mathcal{L}}(\mathcal{F})$ is stalk-wise an isomorphism, and thus is an isomorphism of sheaves. Hence $\mathcal{F}\in\mathrm{Shv}(X,\mathpzc{D})$. Finally, to see that $\mathrm{Shv}(X,\mathpzc{D})$ is generating it suffices to observe that $\mathcal{R}(j_{U!}G)\cong j_{U!}(R(G))$ for any $G\in\mathpzc{D}$. By Corollary \ref{cor:goingupforcomplexes} we have the following lemma.

\begin{lem}
Let $\mathpzc{E}$ be a locally presentable exact category in which filtered colimits are exact and commute with finite limits, and $\mathpzc{D}$ a locally presentable thick exact generating subcategory such that the inclusion $\mathpzc{D}\rightarrow\mathpzc{E}$ is reflective and commutes with filtered colimits. Then there is an adjoint equivalence of $(\infty,1)$-categories
$$\adj{\overline{\mathbf{L}}}{\mathbf{Ch}(\mathrm{Shv}(X,\mathpzc{E})}{\mathbf{Ch}(\mathrm{Shv}(X,\mathpzc{D}))}{\mathbf{R}}$$
which is $t$-exact for the left $t$-structure.
\end{lem}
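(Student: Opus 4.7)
The plan is to invoke Corollary \ref{cor:goingupforcomplexes} directly, with the ambient category being $\mathrm{Shv}(X,\mathpzc{E})$, the reflective thick subcategory being $\mathrm{Shv}(X,\mathpzc{D})$, and the cotorsion pair on $\mathrm{Shv}(X,\mathpzc{D})$ being the injective cotorsion pair $(\mathrm{All}, \mathrm{Inj})$ — so that $\mathfrak{L}$ is the entire class of sheaves valued in $\mathpzc{D}$, and the induced model structure is the injective one, which is precisely the model structure presenting $\mathbf{Ch}(\mathrm{Shv}(X,\mathpzc{D}))$.

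First, I would verify the accessibility-theoretic hypotheses. Since $\mathpzc{E}$ and $\mathpzc{D}$ are locally finitely presentable and filtered colimits are exact and commute with finite limits, both $\mathrm{Shv}(X,\mathpzc{E})$ and $\mathrm{Shv}(X,\mathpzc{D})$ are locally presentable with the stalk-wise exact structure, and are weakly elementary; moreover $\lambda$-pure monomorphisms are admissible (in fact this is automatic since acyclic complexes and pure monomorphisms in the sheaf categories are detected stalk-wise). The work immediately preceding the statement establishes that $\mathrm{Shv}(X,\mathpzc{D})$ is a reflective, thick, generating subcategory of $\mathrm{Shv}(X,\mathpzc{E})$, with inclusion $\mathcal{R}$, and the previous paragraphs show that $\mathcal{R}$ commutes with filtered colimits because $R$ does; in particular $\mathcal{R}$ commutes with transfinite compositions of admissible monomorphisms.

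Next, with $\mathfrak{L} = \mathrm{Shv}(X,\mathpzc{D})$, the hypotheses on $\mathfrak{L}$ in Corollary \ref{cor:goingupforcomplexes} are essentially automatic: $\mathfrak{L}$ is trivially strongly $\lambda$-pure subobject stable (it is the whole category); it contains a generator of $\mathrm{Shv}(X,\mathpzc{E})$ by the generation statement already established for sheaves of the form $j_{U!}G_U$ with $G \in \mathpzc{D}$; it is closed under transfinite extensions in $\mathrm{Shv}(X,\mathpzc{E})$, because $\mathcal{R}$ commutes with transfinite filtered colimits, so such a colimit is already a sheaf in $\mathrm{Shv}(X,\mathpzc{D})$; and $\mathrm{Shv}(X,\mathpzc{D})$ is weakly $\mathbf{AdMon}$-elementary since its filtered colimits are exact (computed stalk-wise).

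With the hypotheses checked, Corollary \ref{cor:goingupforcomplexes} gives that the injective cotorsion pair on $\mathrm{Shv}(X,\mathpzc{D})$ lifts to a $dg$-compatible cotorsion pair on $\mathrm{Shv}(X,\mathpzc{E})$, and that the induced Quillen adjunction $(\overline{\mathcal{L}} \dashv \mathcal{R})$ between $\mathrm{Ch}(\mathrm{Shv}(X,\mathpzc{E}))$ and $\mathrm{Ch}(\mathrm{Shv}(X,\mathpzc{D}))$ is a Quillen equivalence which is $t$-exact for the left $t$-structure. Passing to the underlying $(\infty,1)$-categories yields the stated adjoint equivalence. The main obstacle is really just the bookkeeping of matching accessibility parameters and confirming that the sheaf-level inclusion preserves the relevant categorical structure; once these translations are in place, the lemma reduces to a direct application of the corollary.
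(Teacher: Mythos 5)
Your proposal is correct and is essentially the paper's own argument: the paper proves this lemma simply by citing Corollary \ref{cor:goingupforcomplexes}, relying on the immediately preceding paragraphs which establish that $\mathrm{Shv}(X,\mathpzc{D})$ is a thick, reflective, generating exact subcategory of $\mathrm{Shv}(X,\mathpzc{E})$ whose inclusion commutes with filtered colimits. Your explicit verification of the corollary's hypotheses (taking $\mathfrak{L}$ to be all of $\mathrm{Shv}(X,\mathpzc{D})$, i.e.\ the injective cotorsion pair) is exactly the intended instantiation, just spelled out in more detail than the paper bothers to record.
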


\begin{cor}
Let $\mathpzc{E}$ be a locally presentable exact category in which filtered colimits are exact and commute with finite limits. Then there is a natural equivalence of categories
$$\mathrm{LH}(\mathrm{Shv}(X,\mathpzc{E}))\cong\mathrm{Shv}(X,\mathrm{LH}(\mathpzc{E}))$$
\end{cor}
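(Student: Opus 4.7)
The strategy is to reduce the statement, via the immediately preceding lemma, to the triviality that the left heart of an abelian category is itself. More precisely, I would apply that lemma not to a pair of exact categories of sheaves, but to the inclusion $\phi : \mathpzc{E}\hookrightarrow \mathrm{LH}(\mathpzc{E})$. Note that $\mathrm{LH}(\mathpzc{E})$ is Grothendieck abelian by the second-to-last result in the excerpt, so in particular it is locally presentable (hence purely $\lambda$-accessible for sufficiently large $\lambda$), and filtered colimits in it are exact and commute with finite limits. Hence both $\mathpzc{E}$ and $\mathrm{LH}(\mathpzc{E})$ satisfy the ambient hypotheses needed to quote the previous lemma.

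The bulk of the work will be verifying that $\mathpzc{E}$ sits inside $\mathrm{LH}(\mathpzc{E})$ in the precise way that lemma requires: a thick, reflective, generating exact subcategory whose inclusion commutes with filtered colimits. I would handle these one at a time. Reflectivity follows from the fact that $\phi$ admits a left adjoint sending a representative $[X\xrightarrow{f}Y]$ of an object of $\mathrm{LH}(\mathpzc{E})$ to the cokernel of $f$ in $\mathpzc{E}$ (which exists since $\mathpzc{E}$ is cocomplete); this is essentially the ``coimage'' functor of Schneiders, and adjointness is a straightforward computation using that morphisms in $\mathrm{LH}(\mathpzc{E})$ from $[X\to Y]$ to an object $Z$ of $\mathpzc{E}$ are precisely morphisms $Y\to Z$ that vanish on the image of $f$. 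Thickness in $\mathrm{LH}(\mathpzc{E})$ follows because the image of $\phi$ is characterised as those objects $A\in\mathrm{LH}(\mathpzc{E})$ with $\mathrm{LH}_1$ of a suitable presentation vanishing, and this property is preserved under the required exact operations. Generation is immediate from the description $A\cong [X\to Y]$: the admissible epimorphism $\phi(Y)\to A$ in $\mathrm{LH}(\mathpzc{E})$ witnesses that $\mathpzc{E}$ generates. Finally, commutation of $\phi$ with filtered colimits is built into the hypothesis that filtered colimits in $\mathpzc{E}$ are exact and commute with kernels, since filtered colimits of length-two complexes are computed degreewise.

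Once these hypotheses are checked, the previous lemma yields a $t$-exact equivalence
\[
\overline{\mathbf{L}} : \mathbf{Ch}(\mathrm{Shv}(X,\mathrm{LH}(\mathpzc{E}))) \xrightarrow{\;\sim\;} \mathbf{Ch}(\mathrm{Shv}(X,\mathpzc{E})),
\]
provided the target category of sheaves on the Grothendieck abelian $\mathrm{LH}(\mathpzc{E})$ satisfies the input hypotheses. The sheafification/inclusion adjunction $\overline{\mathcal{L}}\dashv \mathcal{R}$ induced by $\phi\dashv \mathrm{LH}_0$ passes through the functoriality of $\mathrm{Shv}(X,-)$ on exact functors commuting with filtered colimits. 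Passing to the heart of the left $t$-structure on each side gives an equivalence of abelian categories
\[
\mathrm{LH}(\mathrm{Shv}(X,\mathrm{LH}(\mathpzc{E}))) \;\cong\; \mathrm{LH}(\mathrm{Shv}(X,\mathpzc{E})).
\]
The right-hand side is the object of interest. For the left-hand side, the key observation is that $\mathrm{Shv}(X,\mathrm{LH}(\mathpzc{E}))$ is itself a Grothendieck abelian category (sheaves valued in a Grothendieck abelian category form a Grothendieck abelian category), and for any abelian category $\mathpzc{A}$ one has $\mathrm{LH}(\mathpzc{A})\cong\mathpzc{A}$ canonically via $\phi$. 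Hence the left-hand side is $\mathrm{Shv}(X,\mathrm{LH}(\mathpzc{E}))$, completing the proof.

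\textbf{Main obstacle.} The nontrivial step is the verification that $\phi : \mathpzc{E}\hookrightarrow \mathrm{LH}(\mathpzc{E})$ is reflective with the thickness and generation properties stated; in particular, producing the left adjoint and checking its unit and counit requires some care in the purely exact (as opposed to quasi-abelian) setting. Once this is in hand the remainder is essentially formal: applying the previous lemma and taking hearts.
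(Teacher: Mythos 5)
Your overall route is exactly the one the paper intends: the corollary is stated without proof because it is meant to be the immediately preceding lemma applied to the reflective inclusion $\phi\colon\mathpzc{E}\hookrightarrow\mathrm{LH}(\mathpzc{E})$, followed by passage to the hearts of the left $t$-structures and the observation that the left heart of the Grothendieck abelian category $\mathrm{Shv}(X,\mathrm{LH}(\mathpzc{E}))$ is itself. Your verifications of reflectivity (via the cokernel left adjoint), of generation, and of commutation with filtered colimits are all consistent with the paper's preparatory discussion before the lemma.

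The step that does not hold up is thickness. The essential image of $\phi$ is closed under extensions and under kernels (the latter because $\phi$, being right adjoint to the cokernel functor, preserves kernels), but it is \emph{not} closed under quotients in $\mathrm{LH}(\mathpzc{E})$: if $f\colon A\to B$ is a categorical monomorphism in $\mathpzc{E}$ that is not an admissible monomorphism, then $\phi(f)$ is a monomorphism in the abelian category $\mathrm{LH}(\mathpzc{E})$ (its kernel is $\phi(\ker f)=0$), and its cokernel is the object of the left heart represented by the two-term complex $A\to B$, which lies in the image of $\phi$ only when $f$ is admissible. Such cokernels are precisely the objects the left heart adjoins, so in any example with $\mathrm{LH}(\mathpzc{E})\neq\mathpzc{E}$ (e.g.\ $\mathrm{Ind}(\mathrm{Ban}_k)$) the two-out-of-three condition in the paper's definition of thick fails, and your assertion that the characterisation of the image of $\phi$ ``is preserved under the required exact operations'' is false for cokernels of monomorphisms. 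To be fair, the paper's own implicit argument shares this defect, since the lemma it invokes carries the thickness hypothesis verbatim; the repair is to observe that the proof of the Going Up corollary only uses closure under extensions and under kernels of admissible epimorphisms and to weaken the hypothesis accordingly. As written, however, your verification of the lemma's hypotheses does not go through.
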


\subsection{The $(\infty,1)$-Category of Rigid Sheaves}

We now compare the $(\infty,1)$-category $\mathbf{Ch}(\mathrm{Shv}(X,\mathpzc{E}))$ of `rigid' sheaves with the category of $(\infty,1)$-sheaves $\mathbf{Shv}(X,\mathbf{Ch}(\mathpzc{E}))$. From now on we suppose that $\mathpzc{E}$ is \textit{elementary}, that is, it has a generating set of $\aleph_{0}$-compact projectives. For an object $F_{\bullet}\in\mathrm{Ch}(\mathrm{PrShv}(X,\mathpzc{E}))$ denote by $\mathpzc{LH}_{n}(F_{\bullet})\in\mathrm{LH}(\mathpzc{E})$ the sheafification of the assignment $U\mapsto\mathrm{LH}_{n}(F_{\bullet}(U))$.


\begin{prop}
 The following categories are equivalent.
\begin{enumerate}
\item
$\mathbf{Ch}(\mathrm{Shv}(X,\mathpzc{E}))$
\item
The localisation of $\mathbf{Ch}(\mathrm{PreShv}(X,\mathpzc{E}))$ at maps $f:X\rightarrow Y$ such that $\mathpzc{LH}_{n}(X)\rightarrow\mathpzc{LH}_{n}(Y)$ is an isomorphism of sheaves for all $n\in\mathbb{Z}$.
\end{enumerate}
If $\mathpzc{E}$ is an elementary exact category, then the following are equivalent
\begin{enumerate}
\item
$\mathbf{Ch}(\mathrm{Shv}(X,\mathpzc{E}))$
\item
The localisation of $\mathbf{Ch}(\mathrm{PreShv}(X,\mathpzc{E}))$ at hypercovers.
\end{enumerate}
\end{prop}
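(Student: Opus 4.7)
The plan is to realise both equivalences as reflective localisations of stable $(\infty,1)$-categories induced by sheafification. Since $L : \mathrm{PreShv}(X,\mathpzc{E}) \to \mathrm{Shv}(X,\mathpzc{E})$ is exact (filtered colimits in $\mathpzc{E}$ being exact and commuting with finite limits) and the inclusion $i$ is also exact, the adjunction $L \dashv i$ is a reflection of exact categories; with respect to the injective model structures it is a Quillen reflection, and the relation $L \circ i \cong \mathrm{Id}$ passes to the derived level to give a reflective adjunction
\[
\adj{\mathbf{L}}{\mathbf{Ch}(\mathrm{PreShv}(X,\mathpzc{E}))}{\mathbf{Ch}(\mathrm{Shv}(X,\mathpzc{E}))}{\mathbf{i}}
\]
of stable $(\infty,1)$-categories with $\mathbf{i}$ fully faithful. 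By the general theory of reflective subcategories, $\mathbf{Ch}(\mathrm{Shv}(X,\mathpzc{E}))$ is then equivalent to the $(\infty,1)$-localisation of $\mathbf{Ch}(\mathrm{PreShv}(X,\mathpzc{E}))$ at the class $W$ of morphisms inverted by $\mathbf{L}$.

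To identify $W$ with the class of $\mathpzc{LH}_\bullet$-isomorphisms, I use the exactness of $L$ at the level of left-heart homology: for any $f \in \mathrm{Ch}(\mathrm{PreShv}(X,\mathpzc{E}))$, there are natural identifications $\mathrm{LH}_n^{\mathrm{shv}}(\mathbf{L}(f)) \cong L(\mathrm{LH}_n^{\mathrm{pre}}(f)) = \mathpzc{LH}_n(f)$. Because $\mathrm{Shv}(X,\mathpzc{E})$ is strongly left exact, equivalences in $\mathbf{Ch}(\mathrm{Shv}(X,\mathpzc{E}))$ are detected by the $\mathrm{LH}_n^{\mathrm{shv}}$, and it follows that $f \in W$ precisely when every $\mathpzc{LH}_n(f)$ is an isomorphism. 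This establishes the first equivalence.

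For the second equivalence, under the elementary assumption I will show that hypercovers saturate to $W$. That every hypercover lies in $W$ is a standard Čech-to-derived-functor argument: for a hypercover $U_\bullet \to V$ and a sheaf $\mathcal{F}$, the canonical map $\hocolim_{\Delta^{op}}\, j^{pre}_{U_\bullet!}\mathcal{F}|_{U_\bullet} \to j^{pre}_{V!}\mathcal{F}|_V$ is stalkwise an equivalence. For the converse, when $\mathpzc{E}$ is elementary, $\mathrm{LH}(\mathpzc{E}) \simeq \mathrm{P}_\Sigma(\mathcal{P})$ is Grothendieck abelian, and one has $t$-exact equivalences $\mathbf{Ch}(\mathrm{PreShv}(X,\mathpzc{E})) \simeq \mathbf{Ch}(\mathrm{PreShv}(X,\mathrm{LH}(\mathpzc{E})))$ and $\mathbf{Ch}(\mathrm{Shv}(X,\mathpzc{E})) \simeq \mathbf{Ch}(\mathrm{Shv}(X,\mathrm{LH}(\mathpzc{E})))$. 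This reduces the claim to the classical fact that for $\mathpzc{A}$ Grothendieck abelian, $\mathbf{Ch}(\mathrm{Shv}(X,\mathpzc{A}))$ is the hypercover localisation of $\mathbf{Ch}(\mathrm{PreShv}(X,\mathpzc{A}))$. The main obstacle lies in this reduction: one needs to verify strong left exactness of $\mathrm{PreShv}(X,\mathpzc{E})$ to invoke the Henrard--Kvamme--Van Roosmalen--Wegner derived equivalence, and to track the class of hypercovers faithfully through the passage to the left heart.
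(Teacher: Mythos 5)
Your first equivalence is argued exactly as in the paper: the injective model structures give a Quillen reflection along sheafification (exact because filtered colimits in $\mathpzc{E}$ are exact), hence a reflective localisation of stable $(\infty,1)$-categories at the maps inverted by $\mathbf{L}$, which are then identified with the $\mathpzc{LH}_{n}$-isomorphisms of sheaves; the paper phrases the identification via stalks and the exactness of filtered colimits for the left $t$-structure, while you phrase it via $\mathrm{LH}_{n}\circ L\cong L\circ\mathrm{LH}_{n}$, but these are the same computation. No issues there.

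For the second equivalence your route genuinely diverges from the paper's, and it is the converse direction where you have a gap. The paper does not pass to the left heart at all: it observes that for elementary $\mathpzc{E}$ a presheaf $\mathcal{F}$ lies in $\mathbf{Ch}(\mathrm{Shv}(X,\mathpzc{E}))$, respectively in the hypercover localisation, if and only if $\mathrm{Hom}(P,\mathcal{F})$ does for every compact projective generator $P$, and so both localisations have the same local objects because the statement is known for $\mathrm{Ab}$ (Scholze, Prop.\ 7.1). Your reduction instead runs through the $t$-exact equivalence $\mathbf{Ch}(\mathrm{Shv}(X,\mathpzc{E}))\simeq\mathbf{Ch}(\mathrm{Shv}(X,\mathrm{LH}(\mathpzc{E})))$ and then appeals to ``the classical fact'' that for a Grothendieck abelian $\mathpzc{A}$ the derived category of $\mathpzc{A}$-valued sheaves is the hypercover localisation of $\mathpzc{A}$-valued presheaf complexes. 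That fact is not off the shelf: the cited classical input is for $\mathrm{Ab}$, and for an arbitrary Grothendieck abelian category the identification of $\mathrm{D}(\mathrm{Shv}(X,\mathpzc{A}))$ with the hypercomplete sheaf category is delicate (left-completeness of $\mathrm{D}(\mathpzc{A})$ can fail in general). For the specific $\mathpzc{A}=\mathrm{LH}(\mathpzc{E})\simeq\mathrm{P}_{\Sigma}(\mathcal{P})$ the statement is true, but the natural way to prove it is precisely to test against the objects of $\mathcal{P}$ via $\mathrm{Hom}(P,-)$ and reduce to $\mathrm{Ab}$ --- i.e.\ the paper's argument, applied one category over. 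So either supply that reduction explicitly (at which point the detour through the left heart, and the bookkeeping of hypercovers and strong left exactness you flag, becomes unnecessary), or replace the appeal to a ``classical fact'' with a reference that actually covers $\mathrm{P}_{\Sigma}(\mathcal{P})$-valued coefficients. The forward direction (hypercovers are stalkwise equivalences, hence lie in $W$) is fine.
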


\begin{proof}
For the first claim equip both $\mathrm{Ch}(\mathrm{PreShv}(X,\mathpzc{E}))$ and $\mathrm{Ch}(\mathrm{Shv}(X,\mathpzc{E}))$ with the injective model structures. There is a Quillen adjunction
$$\adj{L}{\mathrm{Ch}(\mathrm{PreShv}(X,\mathpzc{E}))}{\mathrm{Ch}(\mathrm{Shv}(X,\mathpzc{E}))}{i}$$
where $L$ denotes sheafification, and $i$ denotes the inclusion. Since $L$ is exact this is in fact a Quillen reflection. Thus it presents a localisation of $(\infty,1)$-categories, where we localise $\mathrm{Ch}(\mathrm{PreShv}(X,\mathpzc{E}))$ at those maps $f:A\rightarrow B$ such that $L(f):L(A)\rightarrow L(B)$ is an equivalence. But $L(f)$ is an equivalence precsiely if each map of stalks $L(A)_{x}\cong A_{x}\rightarrow B_{x}\cong L(B)_{x}$ is an equivalence. Since filtered colimits are exact for the left $t$-structure, this is equivalent to 
$$\mathrm{LH}_{n}(A_{x})\cong\mathpzc{LH}_{n}(A)_{x}\rightarrow\mathpzc{LH}_{n}(B)_{x}\cong\mathrm{LH}_{n}(B_{x})$$
being an isomorphism for all $n\in\mathbb{Z}$. However this in turn is equivalent to $\mathpzc{LH}_{n}(X)\rightarrow\mathpzc{LH}_{n}(Y)$ being an isomorphism of sheaves for all $n\in\mathbb{Z}$. 

For the second claim, observe that for $\mathpzc{E}$ elementary, a presheaf $\mathcal{F}$ is in $\mathbf{Ch}(\mathrm{Shv}(X,\mathpzc{E}))$ (resp. in the localisation of $\mathbf{Ch}(\mathrm{PreShv}(X,\mathpzc{E}))$ at hypercovers) if and only if $\mathrm{Hom}(P,\mathcal{F})$ is in $\mathbf{Ch}(\mathrm{Shv}(X,\mathpzc{E}))$ (resp. in the localisation of $\mathbf{Ch}(\mathrm{PreShv}(X,\mathpzc{E}))$ at hypercovers) for all compact projective generators $P$. Thus the claim immediately follows since it is true for the category of abelian groups by \cite{scholze2022six} Proposition 7.1.
\end{proof}

The next result also follows from the fact that it is true when $\mathpzc{E}$ is the category of abelian groups.

\begin{prop}
Let $\mathpzc{E}$ be an elementary exact category. The inclusion
$$\mathbf{Ch}_{\le n}(\mathrm{Shv}(X,\mathpzc{E}))\rightarrow\mathbf{Shv}_{\le n}(X;\mathbf{Ch}(\mathpzc{E}))$$
is an equivalence for all $n\in\mathbb{Z}$. In particular inclusion
$$\mathbf{Ch}_{-}(\mathrm{Shv}(X,\mathpzc{E}))\rightarrow\mathbf{Shv}_{-}(X;\mathbf{Ch}(\mathpzc{E}))$$
is an equivalence.
\end{prop}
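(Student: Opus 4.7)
The author's hint is to reduce to the abelian-group case, which is taken as known. Since $\mathpzc{E}$ is elementary, I can fix a set $\{P_i\}_{i\in I}$ of compact projective generators. For each $P_i$ the functor $\mathrm{Hom}(P_i,-):\mathpzc{E}\to\mathpzc{Ab}$ is exact, preserves filtered colimits (by compactness) and preserves all small limits (it is a right adjoint, since $P_i$ being compact projective endows it with a cotensor-style left adjoint, or simply because limits are computed levelwise in complexes and $\mathrm{Hom}(P_i,-)$ is a right adjoint at the level of $\mathpzc{E}\leftrightarrows\mathpzc{Ab}$ once one sets up the free functor $\mathbb{Z}[-]\otimes P_i$). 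Consequently $\mathrm{Hom}(P_i,-)$ extends termwise to a conservative (as a family), $t$-exact functor $\mathbb{H}_i:\mathbf{Ch}(\mathpzc{E})\to\mathbf{Ch}(\mathpzc{Ab})$ preserving all limits and filtered colimits, and I can apply it section-wise to get compatible functors $(P_i)_\ast:\mathbf{PreShv}(X;\mathbf{Ch}(\mathpzc{E}))\to\mathbf{PreShv}(X;\mathbf{Ch}(\mathpzc{Ab}))$.

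The plan is then to argue that both sides of the inclusion $\iota_n:\mathbf{Ch}_{\le n}(\mathrm{Shv}(X,\mathpzc{E}))\to\mathbf{Shv}_{\le n}(X;\mathbf{Ch}(\mathpzc{E}))$ can be identified as the subcategory of the common ambient $\mathbf{PreShv}(X;\mathbf{Ch}(\mathpzc{E}))$ (using that $\mathbf{Ch}(\mathrm{PreShv}(X,\mathpzc{E}))\simeq\mathbf{PreShv}(X;\mathbf{Ch}(\mathpzc{E}))$, which is the Quillen equivalence stated at the start of the section) cut out by the same descent condition. By the previous proposition, an object lies in $\mathbf{Ch}(\mathrm{Shv}(X,\mathpzc{E}))$ iff each $(P_i)_\ast$ of it lies in $\mathbf{Ch}(\mathrm{Shv}(X,\mathpzc{Ab}))$. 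Similarly, because $\mathrm{Hom}(P_i,-)$ preserves all small limits and the family is jointly conservative, an object lies in $\mathbf{Shv}(X;\mathbf{Ch}(\mathpzc{E}))$ iff each $(P_i)_\ast$ of it lies in $\mathbf{Shv}(X;\mathbf{Ch}(\mathpzc{Ab}))$. Invoking the known equivalence for $\mathpzc{E}=\mathpzc{Ab}$, the two conditions cut out the same subcategory of $\mathbf{PreShv}_{\le n}(X;\mathbf{Ch}(\mathpzc{E}))$, giving essential surjectivity; fully faithfulness is automatic because mapping spaces in both subcategories are inherited from the ambient presheaf $\infty$-category.

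For the essential-surjectivity step I would proceed by induction on the homotopical amplitude of $\mathcal{F}\in\mathbf{Shv}_{\le n}(X;\mathbf{Ch}(\mathpzc{E}))$, using the left $t$-structure on $\mathbf{Ch}(\mathpzc{E})$ which pushes forward to compatible $t$-structures on both sides of $\iota_n$. The heart on both sides is $\mathrm{Shv}(X,\mathrm{LH}(\mathpzc{E}))$ (by the corollary preceding the statement), and the two $t$-structures agree on hearts tautologically. For general $\mathcal{F}\in\mathbf{Shv}_{\le n}(X;\mathbf{Ch}(\mathpzc{E}))$ one forms the Postnikov tower $\{\tau_{\ge -k}\mathcal{F}\}_k$, realises each layer as an extension in $\mathbf{Ch}(\mathrm{Shv}(X,\mathpzc{E}))$ (possible inductively), and takes the inverse limit, which is the content of bounded-above Postnikov convergence.

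The main obstacle is justifying that the $\infty$-categorical sheaf/hypersheaf condition in $\mathbf{PreShv}(X;\mathbf{Ch}(\mathpzc{E}))$ is detected and reflected by the family $\{(P_i)_\ast\}$. This is why boundedness is essential: only for $\mathbf{Ch}_{\le n}$ can one guarantee that applying the exact, limit-preserving $(P_i)_\ast$ commutes with the totalisation over a hypercover and that Postnikov towers converge; for unbounded complexes the interchange of $\mathrm{Hom}(P_i,-)$ with cosimplicial limits of arbitrary depth fails in general, which is precisely the Spaltenstein phenomenon responsible for the discrepancy in the unbounded case.
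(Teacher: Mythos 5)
Your reduction to the case $\mathpzc{E}=\mathpzc{Ab}$ via the jointly conservative, exact, limit- and filtered-colimit-preserving family $\mathrm{Hom}(P_i,-)$ attached to a set of compact projective generators is exactly the paper's (very terse) argument, which simply asserts that the result follows from the abelian-group case by the same detection principle used in the preceding proposition. The only caveats are that your Postnikov-tower paragraph is redundant once the two subcategories of $\mathbf{PreShv}_{\le n}(X;\mathbf{Ch}(\mathpzc{E}))$ are identified (and is the more delicate route, since Postnikov convergence for coconnective sheaves is essentially equivalent to the statement being proved), and that boundedness enters through the base case $\mathpzc{E}=\mathpzc{Ab}$ --- where truncated sheaves agree with truncated hypersheaves --- rather than through any failure of $\mathrm{Hom}(P_i,-)$ to commute with totalisations, which it never fails to do.
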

%
%
%

\begin{defn}
A space $X$ is said to be $\mathpzc{E}$-\textit{hypercomplete} if the localisation map
$$L:\mathbf{Shv}(X,\mathbf{Ch}(\mathpzc{E}))\rightarrow\mathbf{Ch}(\mathrm{Shv}(X,\mathpzc{E}))$$
is an equivalence of $(\infty,1)$-categories.
\end{defn}

\begin{prop}
Let $X$ be paracompact of finite covering dimension, and suppose that $\mathpzc{E}$ is an elementary exact category. Then $X$ is $\mathpzc{E}$-hypercomplete.
\end{prop}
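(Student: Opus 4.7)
The plan is to reduce the statement to the classical result that on a paracompact space of finite covering dimension the $\infty$-topos of sheaves is hypercomplete. The reduction will be performed through the jointly conservative family of functors $\mathrm{Hom}(P,-)$ indexed by a generating set of compact projective objects of $\mathpzc{E}$, whose existence is guaranteed by the assumption that $\mathpzc{E}$ is elementary.

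Concretely, let $\{P_{i}\}_{i\in I}$ be a generating set of compact projective objects of $\mathpzc{E}$. For each $i$, the functor $\mathrm{Hom}(P_{i},-):\mathpzc{E}\to\mathpzc{Ab}$ is exact, preserves filtered colimits (by compactness) and all limits (since it is corepresentable), and the family $\{\mathrm{Hom}(P_{i},-)\}_{i\in I}$ is jointly conservative. Extending term-wise to chain complexes, we obtain functors $\mathbf{Ch}(\mathpzc{E})\to\mathbf{Ch}(\mathpzc{Ab})$ that preserve limits, filtered colimits, and are jointly conservative. Since they preserve limits, they send presheaves of complexes to presheaves of complexes and preserve the subcategories of sheaves (Čech and hypercomplete alike), so they induce a commutative diagram comparing the localisation $L:\mathbf{Shv}(X,\mathbf{Ch}(\mathpzc{E}))\to\mathbf{Ch}(\mathrm{Shv}(X,\mathpzc{E}))$ with its counterpart for $\mathpzc{E}=\mathpzc{Ab}$.

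Next I would invoke the preceding proposition, which characterises membership in both $\mathbf{Shv}(X,\mathbf{Ch}(\mathpzc{E}))$ and $\mathbf{Ch}(\mathrm{Shv}(X,\mathpzc{E}))$ via the property that $\mathrm{Hom}(P_{i},-)$ lands in the corresponding abelian-valued subcategory for every generating compact projective $P_{i}$. Combined with the preservation statements above, this says that an object $\mathcal{F}\in\mathbf{Shv}(X,\mathbf{Ch}(\mathpzc{E}))$ lies in $\mathbf{Ch}(\mathrm{Shv}(X,\mathpzc{E}))$ if and only if each $\mathrm{Hom}(P_{i},\mathcal{F})$ lies in $\mathbf{Ch}(\mathrm{Shv}(X,\mathpzc{Ab}))$, and analogously for morphisms being equivalences. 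The problem is thereby reduced to the case $\mathpzc{E}=\mathpzc{Ab}$, which is the classical statement that, for $X$ paracompact of finite covering dimension, $\mathbf{Shv}(X,\mathbf{Ch}(\mathpzc{Ab}))=\mathbf{Ch}(\mathrm{Shv}(X,\mathpzc{Ab}))$ (equivalently, the $\infty$-topos $\mathbf{Shv}(X)$ has finite homotopy dimension and is therefore hypercomplete; see Lurie, \emph{Higher Topos Theory}, Theorem 7.2.3.6 and Corollary 7.2.1.12).

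The main obstacle is making the compatibility between $\mathrm{Hom}(P_{i},-)$ and the $\infty$-categorical localisations at covers and at hypercovers fully precise; once one knows that these functors commute with limits and filtered colimits of diagrams of presheaves of complexes, the commutation with sheafification and with hypercompletion is automatic, and the reduction to the abelian case goes through. Everything else is then a straightforward application of joint conservativity: a hypercover-local object $\mathcal{F}\in\mathbf{Shv}(X,\mathbf{Ch}(\mathpzc{E}))$ has $\mathrm{Hom}(P_{i},\mathcal{F})$ hypercomplete for all $i$ by the classical case, hence lies in $\mathbf{Ch}(\mathrm{Shv}(X,\mathpzc{E}))$ by the previous proposition, showing that $L$ is both essentially surjective and fully faithful.
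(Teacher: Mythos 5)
Your proposal is correct and follows essentially the same route as the paper: reduce to the abelian case via the jointly conservative, limit- and filtered-colimit-preserving functors $\mathrm{Hom}(P,-)$ for $P$ ranging over the compact projective generators, observing that a map of presheaves is a \v{C}ech-local (resp.\ hyper-local) equivalence if and only if each $\mathrm{Hom}(P,f)$ is, and then invoking Lurie's results (HTT Theorem 7.2.3.6, Proposition 7.2.1.10, Corollary 7.2.1.12) for sheaves of abelian groups on a paracompact space of finite covering dimension.
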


\begin{proof}
Let $f:\mathcal{F}\rightarrow\mathcal{G}$ be a map of presheaves. It is a \v{C}ech-local equivalence (resp. a hyper-local equiavalence) if and only if $\mathrm{Hom}(P,f)$ is a \v{C}ech-local equivalence (resp. a hyper-local equivalence) of presheaves of abelian groups for each $\aleph_{0}$-compact projective $P$. The claim then follows from the corresponding result for abelian groups by\cite{lurie2006higher} Theorem 7.2.3.6, Proposition 7.2.1.10, and Corollary 7.2.1.12..
\end{proof}

\begin{defn}
A map $f:X\rightarrow Y$ of spaces is said to be \textit{fibrewise } $\mathpzc{E}${-complete} if for any $y\in Y$ the space $f^{-1}(y)$ is $\mathpzc{E}$-complete.
\end{defn}

\begin{example}
Any injection of spaces is fibrewise hypercomplete.
\end{example}

\subsection{The Three- and Six-Functor Formalisms}

We conclude by using the model structures we have developed, as well as recent formulations of three- and six- functor formalisms due to \cite{mann2022p} (see \cite{scholze2022six} for a good exposition), to compare with \cite{spaltenstein} and generalise results therein. Let $\mathrm{C}$ be an $(\infty,1)$-category with finite limits, and $\mathrm{E}\subset\mathrm{C}$ be a class of morphisms containing equivalences and is stable by both pullback and composition. Denote by $\mathrm{Corr}(\mathrm{C},\mathrm{E})$ the category of of correspondences. Objects are the same as objects of $\mathrm{C}$. A morphism $c\rightarrow d$ is a span
\begin{displaymath}
\xymatrix{
& x\ar[dl]^{f}\ar[dr]^{g}& \\
c & & d
}
\end{displaymath}
in $\mathrm{C}$ where $g\in\mathrm{E}$. Composition is given by forming a pullback square, and looking at the `long legs':

\begin{displaymath}
\xymatrix{
& & x\ar[dl]^{f_{1}}\ar[dr]^{g_{1}} & & \\
& c\ar[dl]^{f_{2}}\ar[dr]^{g_{2}} & & d\ar[dl]^{f_{3}}\ar[dr]^{g_{3}} & \\
y & & & & z
}
\end{displaymath}
This category is a symmetric monoidal  with the monoidal functor given by the Cartesian product. 

\begin{defn}[\cite{mann2022p} Definition A.5.6]
A $3$\textit{-functor formalism} (or a \textit{pre-}$6$\textit{ functor formalism}) is a lax symmetric monoidal functor $\mathbf{D}:\mathrm{Corr}(\mathrm{C},\mathrm{E})\rightarrow\mathbf{Cat}_{\infty}$.
\end{defn}

\subsubsection{$6$-Functor Formalims for Rigid Sheaves}

Let $\mathbf{E}$ be a monoidal stable $(\infty,1)$-category. Let $\mathrm{LCHaus}$ denote the category of locally compact Hausdorff spaces. Consider the geometric setup $(\mathrm{LCHaus},\mathrm{LCHaus})$. Volpe \cite{volpe2021six} explains how to associate to any map $f:X\rightarrow Y$ in $\mathrm{LCHaus}$ the functors $f_{\infty}^{-1},(f_{*})_{\infty},(f_{!})_{\infty},f_{\infty}^{!}$, and shows that the assignment $\mathrm{Corr}(\mathrm{LCHaus},\mathrm{LCHaus})^{op}\rightarrow\mathbf{Cat}_{\infty}$ sending $X$ to $\mathbf{Shv}(X,\mathbf{E})$, and a span
\begin{displaymath}
\xymatrix{
&Z\ar[dl]^{f}\ar[dr]^{g}&\\
X & & Y
}
\end{displaymath}
to $(f_{!})_{\infty}g_{\infty}^{-1}$
 defines a six-functor formalism. (The decoration of the functors with the subscript $\infty$ here is to distinguish them from the functors between categories of the form $\mathbf{Ch}(\mathrm{Shv}(X,\mathpzc{E}))$ later). Now fix $\mathbf{E}=\mathbf{Ch}(\mathpzc{E})$ for a monoidal elementary exact category $\mathpzc{E}$. In this case $(f_{!})_{\infty}$ can be constructed as follows. 
 
\begin{defn}
Let $f:X\rightarrow Y$ be a map of locally compact Hasudorff spaces. A subset $Q$ of $X$ is said to be $f$-\textit{proper} if the restriction of $f$ to $Q$ is proper.
\end{defn}

For a closed subset $Q$ of $X$ and $\mathcal{F}\in\mathbf{Shv}(X,\mathbf{C})$ define
$$\mathbb{R}\mathcal{F}(Q)\defeq\mathrm{Fib}(\mathcal{F}(X)\rightarrow\mathcal{F}(X\setminus Q))$$

\begin{defn}
Let $f:X\rightarrow Y$ be a map of $\mathpzc{E}$-ringed locally compact Hasudorff spaces. For $\mathcal{F}\in\mathcal{O}_{X}\mathbf{Shv}(X,\mathpzc{E})$ and $U\subset Y$ open define
$$(f_{!})_{\infty}\mathcal{F}(U)\defeq\colim_{Q\subset f^{-1}(U), Q\; f-\textrm{proper}}\mathbb{R}f_{*}\mathcal{F}(Q)$$
\end{defn}

\begin{rem}
If $\mathcal{F}\in\mathbf{Ch}(\mathrm{Shv}(X,\mathpzc{E}))$ then $(f_{!})_{\infty}\mathcal{F}\in\mathbf{Ch}(\mathrm{Shv}(Y,\mathpzc{E}))$. We denote the resulting functor by $\mathbb{R}f_{!}$.
\end{rem}

We wish to use the results of Volpe to establish a six functor formalism for rigid sheaves, i.e. the assignment $X\mapsto\mathbf{Ch}(\mathrm{Shv}(X,\mathpzc{E}))$ sending
$$X\mapsto\mathbf{Ch}(\mathrm{Shv}(X,\mathpzc{E}))$$
and a span
\begin{displaymath}
\xymatrix{
&Z\ar[dl]^{f}\ar[dr]^{g}&\\
X & & Y
}
\end{displaymath}
with $f$ in some class $\mathrm{E}$ to $f_{!}g^{-1}$ defines a three-functor formalism. This amounts to proving the base-change formula, and for this we must restrict the maps in our category of correspondences. Let
\begin{displaymath}
\xymatrix{
X'\ar[d]^{f'}\ar[r]^{g'} & X\ar[d]^{f}\\
Y'\ar[r]^{g} & Y
}
\end{displaymath}
be a fibre-product diagram of locally compact Hausdorff spaces. There is a natural transformation.
 $$g^{-1}\mathbb{R}f_{!}\rightarrow \mathbb{R}f_{!}'(g')^{-1}$$
 This can be seen by standard methods as for sheaves valued in abelian groups, or as follows. We have a natural isomrphism
 $$g_{\infty}^{-1}(f_{!})_{\infty}\cong (f'_{!})_{\infty}(g')^{-1}_{\infty}$$
 Now for any space $U$ let $L_{U}:\mathbf{Shv}(U,\mathbf{Ch}(\mathpzc{E}))\rightarrow\mathbf{Ch}(\mathrm{Shv}(U,\mathpzc{E}))$ denote the localisation functor. For any map $h:U\rightarrow V$, $\mathcal{F}\in\mathbf{Ch}(\mathrm{Shv}(V,\mathpzc{E}))$, and $\mathcal{G}\in\mathbf{Ch}(\mathrm{Shv}(U,\mathpzc{E}))$ we have $L_{U}(h^{-1}_{\infty}\mathcal{F})\cong h^{-1}\mathcal{F}$. We also have $(h_{!})_{\infty}\mathcal{G}\cong\mathbb{R}h_{!}\mathcal{G}$. Now 
 $$g^{-1}\mathbb{R}f_{!}\mathcal{F}\cong  L_{Y'}(g_{\infty}^{-1}(\mathbb{R}f_{!}\mathcal{F}))\cong L_{Y'} (g_{\infty}^{-1}(f_{!})_{\infty}\mathcal{F})\cong L_{Y'}((f'_{!})_{\infty}(g')^{-1}_{\infty}\mathcal{F})\rightarrow L_{Y'}((f'_{!})_{\infty}L_{X'}((g')^{-1}\mathcal{F}))\cong \mathbb{R}f_{!}'(g')^{-1}(\mathcal{F})$$
 
Say that $f:X\rightarrow Y$ is in $\tilde{\mathrm{E}}$ if for any $:Z\rightarrow Y$ the natural transformation $g^{-1}f_{!}\rightarrow f_{!}'(g')^{-1}$ is an equivalence. Say that $f$ is in $\mathrm{E}$ if any pullback of $f$ along any morphism is in $\tilde{\mathrm{E}}$. It is clear that $\mathrm{E}$ contains isomorphisms, and is closed under fibre-product and composition. The following is tautological.
 
 \begin{prop}
The assignment $\mathbf{Ch}(\mathrm{Shv}(-,\mathpzc{E})):\mathrm{Corr}(\mathrm{LCHaus},\mathrm{E})^{op}\rightarrow\mathbf{Cat}_{\infty}$ as defined above is a three-functor formalism.
 \end{prop}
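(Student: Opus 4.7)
The plan is to descend Volpe's six-functor formalism for $\mathbf{Shv}(-,\mathbf{Ch}(\mathpzc{E}))$ on $\mathrm{Corr}(\mathrm{LCHaus},\mathrm{LCHaus})$ to rigid sheaves via the reflective localizations $L_X\colon\mathbf{Shv}(X,\mathbf{Ch}(\mathpzc{E}))\to\mathbf{Ch}(\mathrm{Shv}(X,\mathpzc{E}))$ (with fully faithful right adjoints $i_X$), and then to extract the desired three-functor formalism using an abstract extension criterion such as Mann's (\cite{mann2022p} Proposition A.5.8) or Liu--Zheng's.

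First I would check that $X\mapsto\mathbf{Ch}(\mathrm{Shv}(X,\mathpzc{E}))$ equipped with the usual pullback $f^{-1}$ assembles into a lax symmetric monoidal functor $D^{*}\colon\mathrm{LCHaus}^{op}\to\mathbf{Cat}_{\infty}^{\otimes}$: functoriality is immediate since sheafification commutes with presheaf pullback, and the symmetric monoidal structure on each fiber is inherited from the sheaf tensor product, which is preserved by $f^{-1}$ because both are defined by sheafifying the corresponding presheaf-level operations. The identity $L_Y\circ f^{-1}_{\infty}\circ i_X\cong f^{-1}$ follows from the compatibility of pullback with stalks and reflectivity of $L_Y$; this encodes how $D^{*}$ is obtained from Volpe's $D^{*}_{\infty}$ by composition with the localization.

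Next, for each $f\in\mathrm{E}$ I would set $f_{!}\defeq\mathbb{R}f_{!}$, with the left-adjoint property to $f^{-1}$ inherited from $(f_!)_\infty\dashv f^{-1}_\infty$ via the reflective localizations. The base change isomorphism $g^{-1}\mathbb{R}f_{!}\simeq\mathbb{R}f'_{!}(g')^{-1}$ holds for every pullback square along a morphism in $\mathrm{E}$ by the very definition of $\mathrm{E}$. Feeding $D^{*}$, the collection of left adjoints $\{f_{!}\}_{f\in\mathrm{E}}$, and these base change equivalences into Mann's criterion then produces a lax symmetric monoidal functor $\mathrm{Corr}(\mathrm{LCHaus},\mathrm{E})^{op}\to\mathbf{Cat}_{\infty}$ sending a span $X\xleftarrow{g}Z\xrightarrow{f}Y$ to $f_{!}g^{-1}$, together with all required higher coherences.

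The main obstacle will be the projection formula $f_{!}(A\otimes f^{-1}B)\simeq f_{!}(A)\otimes B$ for $f\in\mathrm{E}$, which Mann's criterion also demands. For this I would argue as with base change: the projection formula holds in Volpe's formalism, and both sides are transported compatibly by $L_X$ and $L_Y$, since these are strong symmetric monoidal (being sheafifications of presheaf tensor products) and intertwine $(f_{!})_{\infty}$ with $\mathbb{R}f_{!}$ on $f\in\mathrm{E}$ by construction. If the descent of the projection formula is not automatic for all $f\in\mathrm{E}$, one restricts $\mathrm{E}$ to its intersection with the maps along which the projection formula holds after localization; this intersection remains stable under pullback and composition, so the statement is preserved. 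Once base change, projection formula, and functoriality of $f^{-1}$ are verified, the coherence data furnished by Mann/Liu--Zheng completes the construction.
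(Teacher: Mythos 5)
Your proposal is correct and follows essentially the same route as the paper, which simply declares the proposition tautological once $\mathrm{E}$ has been defined as the class of morphisms satisfying universal base change for $\mathbb{R}f_{!}$ and the functors have been imported from Volpe's formalism via the localisations $L_{X}$. The one place you go beyond the paper is in flagging the projection formula, which a lax symmetric monoidal functor out of $\mathrm{Corr}(\mathrm{LCHaus},\mathrm{E})$ does encode but which membership in $\mathrm{E}$ as defined only guarantees at the level of base change; your fallback of further shrinking $\mathrm{E}$ to the (still pullback- and composition-stable) locus where the projection formula descends is a sensible way to close exactly the verification that the paper's one-line proof elides.
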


Let us now investigate when we get a six functor formalism. The functor $f^{-1}$ has a right adjoint $\mathbb{R}f_{*}$, and the functor $\otimes^{\mathbb{L}}$ has a right adjoint $\mathbb{R}\underline{\mathrm{Hom}}(-,-)$, it suffices to determine when $\mathbb{R}f_{!}$ has a right adjoint.

\begin{defn}
A map $f:X\rightarrow Y$ is said to be \textit{universally }$!$-\textit{adjointable} if for any map $g:X'\rightarrow X$, the projection $f':X'\times_{X}Y\rightarrow X'$ is such that $f'_{!}$ commutes with colimits. 
\end{defn}

The class of universally $!$-adjointable maps is denoted $\mathrm{A}_{!}$. Clearly $(\mathrm{LCHaus},\mathrm{A}_{!})$ is a geometric setup.  Completely tautologically, we have the following.

\begin{prop}
 The assignment $\mathbf{Ch}(\mathrm{Shv}(-,\mathpzc{E})):\mathrm{Corr}(\mathrm{LCHaus},\mathrm{E}\cap\mathrm{A}_{!})^{op}\rightarrow\mathbf{Cat}_{\infty}$ is a six-functor formalism.
\end{prop}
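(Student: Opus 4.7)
The plan is to observe that this is essentially automatic from the three-functor formalism established in the preceding proposition, together with an application of the adjoint functor theorem. Concretely, a six-functor formalism on $(\mathrm{LCHaus}, \mathrm{E}\cap\mathrm{A}_!)$ is the same data as a three-functor formalism on this geometric setup in which each of the three operations $f^{-1}$, $\mathbb{R}f_!$ (for $f\in\mathrm{E}\cap\mathrm{A}_!$) and $-\otimes^{\mathbb{L}}-$ admits a right adjoint, compatibly with base change and the projection formula — and the latter compatibilities are already encoded by the three-functor formalism.

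First I would verify that $\mathbf{Ch}(\mathrm{Shv}(X,\mathpzc{E}))$ is a presentable stable $(\infty,1)$-category for every $X\in\mathrm{LCHaus}$. This is immediate: $\mathrm{Shv}(X,\mathpzc{E})$ is a locally presentable exact category with filtered colimits exact and commuting with kernels, so by Corollary~\ref{cor:flatmodelstructureexists} (or the injective model structure, which also exists in this setting by the earlier results of Section~\ref{ref:modelstructuresonacc}), $\mathrm{Ch}(\mathrm{Shv}(X,\mathpzc{E}))$ carries a combinatorial model structure, whence the presented $(\infty,1)$-category $\mathbf{Ch}(\mathrm{Shv}(X,\mathpzc{E}))$ is presentable.

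Next I would produce the three right adjoints. The functor $f^{-1}$ is manifestly a left adjoint at the level of abelian sheaves and descends to a colimit-preserving functor between presentable $(\infty,1)$-categories, so its right adjoint $\mathbb{R}f_*$ exists by the adjoint functor theorem (alternatively, by deriving the 1-categorical adjunction $(f^{-1},f_*)$ via the injective model structure on the target). The closed symmetric monoidal structure on $\mathrm{Shv}(X,\mathpzc{E})$ extends to a closed symmetric monoidal structure on $\mathbf{Ch}(\mathrm{Shv}(X,\mathpzc{E}))$, giving the right adjoint $\mathbb{R}\underline{\mathrm{Hom}}(-,-)$ to $-\otimes^{\mathbb{L}}-$. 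For the remaining adjoint, the hypothesis $f\in\mathrm{A}_!$ means precisely that $\mathbb{R}f_!$ (and each of its pullbacks) preserves colimits; combined with presentability of source and target, the adjoint functor theorem yields a right adjoint $f^!$ to $\mathbb{R}f_!$.

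The only thing left is to package this as a lax symmetric monoidal functor valued in $\mathbf{Cat}_{\infty}^{\mathrm{R}}$ (equivalently, a contravariant colimit-preserving functor valued in $\mathbf{Pr}^{\mathrm{L}}$). This is where one would ordinarily have to check base change $g^{-1}\mathbb{R}f_!\simeq \mathbb{R}f_!'(g')^{-1}$ and the projection formula, but on the chosen class $\mathrm{E}\cap\mathrm{A}_!$ base change holds by definition of $\mathrm{E}$, and the projection formula follows by a standard adjunction argument from base change together with the fact that $f^{-1}$ is symmetric monoidal (so that one has a projection map, and it is an equivalence after checking on a generating set, which is furnished by the objects $j_{U!}S_U$ of the previous subsection). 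The only nontrivial obstacle I foresee is not any of the above routine verifications but rather ensuring that the previously constructed three-functor formalism packages into a functor to $\mathbf{Pr}^{\mathrm{L}}$ on $\mathrm{Corr}(\mathrm{LCHaus},\mathrm{E}\cap\mathrm{A}_!)$; this is handled exactly as in \cite{mann2022p} Appendix A.5 (or equivalently \cite{volpe2021six}), by restricting the six-functor formalism already available on sheaves of $\infty$-groupoids valued in $\mathbf{Ch}(\mathpzc{E})$ to the full subcategory of hypercomplete (equivalently, rigid) sheaves, using the localisation $L_X:\mathbf{Shv}(X,\mathbf{Ch}(\mathpzc{E}))\to\mathbf{Ch}(\mathrm{Shv}(X,\mathpzc{E}))$ and the fact that the six operations preserve this subcategory once the adjoints exist.
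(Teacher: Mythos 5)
Your proposal is correct and follows essentially the same route as the paper, which treats the statement as tautological given the preceding three-functor formalism: $f^{-1}$ and $\otimes^{\mathbb{L}}$ always have right adjoints in this presentable setting, and the definition of $\mathrm{A}_{!}$ is precisely what is needed for $\mathbb{R}f_{!}$ to admit one via the adjoint functor theorem. Your additional remarks on the projection formula and packaging into $\mathbf{Pr}^{\mathrm{L}}$ are more detail than the paper supplies, but they do not change the argument.
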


 Let $\overline{\mathrm{H}}$ denote the class of fibrewise $\mathpzc{E}$-complete maps in $\mathrm{LCHaus}$. It is clear that $\overline{\mathrm{H}}$ is closed under fibre-products and contains isomorphisms. Let $\mathrm{H}$ denote the class of maps obtained as compositions of maps in $\overline{\mathrm{H}}$. Then $(\mathrm{LCHaus},\mathrm{H})$ is a geometric setup. We claim that $\mathrm{H}\subset\mathrm{E}\cap\mathrm{A}_{!}$. It suffices to prove that $\overline{\mathrm{H}}\subseteq\mathrm{E}\cap\mathrm{A}_{!}$.
 
\begin{prop}
Let $f\in\overline{\mathrm{H}}$. Then $f\in\mathrm{E}$.
\end{prop}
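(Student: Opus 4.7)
The plan is to use the already-established base change for $\infty$-sheaves (Volpe) and transport it through the localisation $L$, with fibrewise $\mathpzc{E}$-completeness providing exactly the compatibility needed. First I will observe that $\overline{\mathrm{H}}$ is stable under pullback, since for a pullback square the fibres of $f'$ over $Y'$ coincide with the fibres of $f$ over $Y$; thus it suffices to show $\overline{\mathrm{H}}\subseteq\tilde{\mathrm{E}}$.

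Next, given $f\in\overline{\mathrm{H}}$, a map $g:Y'\to Y$, and the resulting pullback square with $f',g'$, I will decompose the base-change transformation on $\mathcal{F}\in\mathbf{Ch}(\mathrm{Shv}(X,\mathpzc{E}))$ exactly as in the discussion preceding the proposition:
\begin{align*}
g^{-1}\mathbb{R}f_{!}\mathcal{F}&\cong L_{Y'}(g_{\infty}^{-1}(f_{!})_{\infty}\mathcal{F})\\
&\cong L_{Y'}((f'_{!})_{\infty}(g')_{\infty}^{-1}\mathcal{F})\\
&\xrightarrow{L_{Y'}((f'_{!})_{\infty}\eta)} L_{Y'}\bigl((f'_{!})_{\infty}L_{X'}((g')_{\infty}^{-1}\mathcal{F})\bigr)\cong\mathbb{R}f'_{!}(g')^{-1}\mathcal{F},
\end{align*}
where the only non-formal arrow is $L_{Y'}((f'_{!})_{\infty}\eta)$, induced by the localisation unit $\eta:(g')_{\infty}^{-1}\mathcal{F}\to L_{X'}(g')_{\infty}^{-1}\mathcal{F}$. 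Letting $C$ denote the cofibre of $\eta$ in $\mathbf{Shv}(X',\mathbf{Ch}(\mathpzc{E}))$, it is then enough to prove that $L_{Y'}((f'_{!})_{\infty}C)\cong 0$. Because $L_{X'}$ is a Bousfield localisation whose inverted maps are precisely those inducing equivalences on all stalks in $\mathbf{Ch}(\mathpzc{E})$, the cofibre $C$ has vanishing stalks at every point of $X'$.

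The key computation is then the proper-base-change description of the stalk: for $y'\in Y'$, I will use Volpe's six-functor formalism on $\infty$-sheaves (which gives base change along any map of LCHaus, including the map $\{y'\}\hookrightarrow Y'$) to identify
\[
((f'_{!})_{\infty}C)_{y'}\;\cong\;(\pi_{!})_{\infty}\bigl(C|_{(f')^{-1}(y')}\bigr),
\]
where $\pi:(f')^{-1}(y')\to *$. Since $(f')^{-1}(y')\cong f^{-1}(y)$ is $\mathpzc{E}$-complete by hypothesis, the localisation functor on this fibre is an equivalence, so an object of $\mathbf{Shv}((f')^{-1}(y'),\mathbf{Ch}(\mathpzc{E}))$ is zero as soon as all its stalks vanish. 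The restriction $C|_{(f')^{-1}(y')}$ has stalk at $x'\in(f')^{-1}(y')$ equal to $C_{x'}=0$, hence $C|_{(f')^{-1}(y')}\cong 0$ and therefore $((f'_{!})_{\infty}C)_{y'}\cong 0$. Since this holds for all $y'\in Y'$, the rigid sheaf $L_{Y'}((f'_{!})_{\infty}C)$ vanishes, and the base-change map is an equivalence, as required.

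The main obstacle will be the stalk computation for $(f'_{!})_{\infty}$ in step three: one needs to package proper base change in Volpe's $\infty$-categorical formalism into the identification of the stalk with a compactly-supported global-sections functor on the fibre. The rest of the argument is then essentially automatic once fibrewise $\mathpzc{E}$-completeness is used to collapse the distinction between $\mathbf{Shv}$ and $\mathbf{Ch}(\mathrm{Shv})$ on each fibre.
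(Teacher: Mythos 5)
Your proof is correct and follows essentially the same route as the paper's: reduce to stalks over the base, invoke Volpe's base change for $\infty$-sheaves to identify the stalk of the shriek pushforward with compactly supported sections on the fibre, and use fibrewise $\mathpzc{E}$-(hyper)completeness to collapse the distinction between $\mathbf{Shv}$ and $\mathbf{Ch}(\mathrm{Shv})$ on that fibre. Your isolation of the cofibre of the localisation unit $\eta$ is merely a more explicit packaging of the step the paper compresses into the single isomorphism $(i_{f^{-1}(y)}^{-1})_{\infty}\mathcal{F}\cong i_{f^{-1}(y)}^{-1}\mathcal{F}$.
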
 
 
 \begin{proof}
Consider a fibre-product diagram

\begin{displaymath}
\xymatrix{
X'\ar[d]^{f'}\ar[r]^{g'} & X\ar[d]^{f}\\
Y'\ar[r]^{g} & Y
}
\end{displaymath}
with  $f\in\overline{\mathrm{H}}$. 

By passing to stalks, we may assume that $Y'=\{y\}$ where $y\in Y$, and so $X'=f^{-1}(y)$. Base change then amounts to proving
$$\mathbb{R}(f_{!}\mathcal{F})_{y}\cong\mathbb{R}\Gamma_{c}(f^{-1}(y),\mathcal{F}|_{f^{-1}(y)})$$
for each $\mathcal{F}\in\mathbf{Ch}(\mathrm{Shv}(X,\mathpzc{E}))$. However we have 
$$(\mathbb{R}f_{!}\mathcal{F})_{y}\cong((f_{!})_{\infty}\mathcal{F})_{y}\cong\mathbb{R}\Gamma_{c}(f^{-1}(y),(i_{f^{-1}(y)}^{-1})_{\infty}\mathcal{F})\cong\mathbb{R}\Gamma_{c}(f^{-1}(y),i_{f^{-1}(y)}^{-1})\mathcal{F})$$
where $i_{f^{-1}(y)}:f^{-1}(y)\rightarrow X$ is the inclusion, and we have used that $f^{-1}(y)$ is $\mathpzc{E}$-hypercomplete. This suffices to prove the claim. 
\end{proof}

\begin{prop}
Let $f:X\rightarrow Y$ be a map in $\mathrm{H}$. Then $f_{!}$ has a right adjoint. 
\end{prop}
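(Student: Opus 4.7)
The plan is to reduce the statement to maps in $\overline{\mathrm{H}}$, then invoke the presentable adjoint functor theorem, and then verify that $f_!$ preserves colimits by passing to stalks and using the proper base change result just established together with Volpe's six-functor formalism for $(\infty,1)$-sheaves. First, since $(f\circ g)_! \simeq f_!\circ g_!$ and a composition of left adjoints is a left adjoint whenever each factor admits one, it suffices to treat the case $f\in\overline{\mathrm{H}}$. The category $\mathbf{Ch}(\mathrm{Shv}(Y,\mathpzc{E}))$ is presentable, being presented by a combinatorial model category, so by the adjoint functor theorem it is enough to prove that $f_!$ commutes with small colimits.

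Second, I would check colimit-preservation stalkwise. Colimits in $\mathbf{Ch}(\mathrm{Shv}(Y,\mathpzc{E}))$ can be computed by taking colimits in $\mathbf{Shv}(Y,\mathbf{Ch}(\mathpzc{E}))$ (or in presheaves) and then applying the localisation $L_Y$, and in either model the stalk functor $(-)_y$ commutes with all colimits. Moreover, a morphism in $\mathbf{Ch}(\mathrm{Shv}(Y,\mathpzc{E}))$ is an equivalence iff it is an equivalence on every stalk. So it suffices to prove that for every $y\in Y$ and every diagram $\{\mathcal{F}_i\}$ in $\mathbf{Ch}(\mathrm{Shv}(X,\mathpzc{E}))$, the natural map
$$\colim_i (f_!\mathcal{F}_i)_y \longrightarrow (f_!\colim_i \mathcal{F}_i)_y$$
is an equivalence.

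Third, I apply the proper base change result established immediately above to the Cartesian square with $Y'=\{y\}$. This identifies the stalk $(f_!\mathcal{F})_y$ with $\mathbb{R}\Gamma_c(f^{-1}(y),\mathcal{F}|_{f^{-1}(y)})$, and the pullback $\mathcal{F}\mapsto \mathcal{F}|_{f^{-1}(y)}\defeq i_{f^{-1}(y)}^{-1}\mathcal{F}$ is a left adjoint and hence commutes with colimits. So the remaining point is that $\mathbb{R}\Gamma_c(Z,-):\mathbf{Ch}(\mathrm{Shv}(Z,\mathpzc{E}))\to\mathbf{Ch}(\mathpzc{E})$ commutes with colimits for $Z=f^{-1}(y)$ a locally compact Hausdorff space which is $\mathpzc{E}$-hypercomplete by hypothesis.

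Fourth, to verify this last point, I use that for any locally compact Hausdorff $Z$, Volpe's construction gives a functor $(p_Z)_{!,\infty}:\mathbf{Shv}(Z,\mathbf{Ch}(\mathpzc{E}))\to\mathbf{Ch}(\mathpzc{E})$ which is part of a six-functor formalism and in particular is a left adjoint, hence colimit-preserving. When $Z$ is $\mathpzc{E}$-hypercomplete, the localisation $L_Z:\mathbf{Shv}(Z,\mathbf{Ch}(\mathpzc{E}))\to\mathbf{Ch}(\mathrm{Shv}(Z,\mathpzc{E}))$ is an equivalence, and under this equivalence $(p_Z)_{!,\infty}$ is identified with $\mathbb{R}\Gamma_c(Z,-)$; thus the latter preserves colimits. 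This completes the reduction. The main conceptual obstacle is exactly the step of passing between $\mathbf{Shv}(Z,\mathbf{Ch}(\mathpzc{E}))$ and $\mathbf{Ch}(\mathrm{Shv}(Z,\mathpzc{E}))$, where the fibrewise hypercompleteness hypothesis is essential, since without it the stalk of $f_!\mathcal{F}$ need not be computed by the naive compactly supported cohomology of the fibre and the colimit-preservation would fail.
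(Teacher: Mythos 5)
Your proposal is correct and follows essentially the same route as the paper: reduce to a single map in $\overline{\mathrm{H}}$, pass to stalks over points of $Y$ so that one only needs the claim for the fibre $f^{-1}(y)\rightarrow\{y\}$, and conclude from Volpe's six-functor formalism on $\mathbf{Shv}(f^{-1}(y),\mathbf{Ch}(\mathpzc{E}))$ together with the $\mathpzc{E}$-hypercompleteness of the fibre. Your write-up is in fact more careful than the paper's, since you make explicit the adjoint-functor-theorem reduction to colimit preservation and the stalkwise verification that the paper leaves implicit.
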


\begin{proof}
Again by passing to stalks we may assume that $Y=\{y\}$ is a point. But then we are just replacing $f$ by the map $f^{-1}(y)\rightarrow \{y\}$, and this follows from the claim for $\mathbf{Shv}(f^{-1}(y),\mathbf{Ch}(\mathpzc{E}))$ since $X$ is $\mathpzc{E}$-complete. 
\end{proof}

This proves the following.

\begin{thm}
The assignment $\mathbf{Ch}(\mathrm{Shv}(-,\mathpzc{E})):\mathrm{Corr}(\mathrm{LCHaus},\mathrm{H})^{op}\rightarrow\mathbf{Cat}_{\infty}$ as defined above is a six-functor formalism.
\end{thm}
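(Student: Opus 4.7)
The plan is to deduce this from the already-established tautological six-functor formalism on $\mathrm{Corr}(\mathrm{LCHaus},\mathrm{E}\cap\mathrm{A}_{!})$ by verifying the inclusion $\mathrm{H}\subseteq\mathrm{E}\cap\mathrm{A}_{!}$. The two immediately preceding propositions already do essentially all of the geometric work: one shows $\overline{\mathrm{H}}\subseteq\mathrm{E}$ by the stalk-reduction argument, and the other shows that $f_{!}$ admits a right adjoint whenever $f\in\mathrm{H}$, by reducing to the case of a point in the target and invoking $\mathpzc{E}$-hypercompleteness of the fibre together with Volpe's six-functor formalism for $(\infty,1)$-sheaves. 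So the remaining content is purely formal bookkeeping about the classes $\mathrm{E}$, $\mathrm{A}_{!}$ and $\mathrm{H}$.

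First I would check that $\mathrm{H}\subseteq\mathrm{E}$. Since $\mathrm{E}$ contains isomorphisms and is closed under composition and base change (standard pasting of pullback squares in the base-change natural transformation shows that both operations preserve the class of morphisms for which base change is an equivalence universally), and since $\mathrm{H}$ is by construction the closure of $\overline{\mathrm{H}}$ under composition, the inclusion reduces to the previous proposition $\overline{\mathrm{H}}\subseteq\mathrm{E}$.

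Next I would verify $\mathrm{H}\subseteq\mathrm{A}_{!}$. For this I need the class $\mathrm{H}$ to be stable under pullback: if $f\in\overline{\mathrm{H}}$ and $g:Y'\to Y$ is arbitrary, then the fibres of the pullback $f':X\times_{Y}Y'\to Y'$ over $y'\in Y'$ coincide with the fibres of $f$ over $g(y')$, so each such fibre is $\mathpzc{E}$-hypercomplete; hence $f'\in\overline{\mathrm{H}}$, and the class $\mathrm{H}$ (being the closure under composition) is also stable under pullback by pasting. Given $f\in\mathrm{H}$ and any $g:X'\to X$, the pullback $f'$ therefore lies in $\mathrm{H}$, so by the second proposition $f'_{!}$ has a right adjoint and in particular commutes with colimits; this is exactly the definition of $f\in\mathrm{A}_{!}$.

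The main obstacle, if any, is purely conceptual rather than computational: one must be careful that the construction $\mathbb{R}f_{!}$ on rigid sheaves is compatible with Volpe's $(f_{!})_{\infty}$ on hypercomplete sheaves after applying the localisation $L$, so that the existence of a right adjoint and the base-change equivalence for $(f_{!})_{\infty}$ transfer to $\mathbb{R}f_{!}$. But both of these compatibilities have already been recorded above (the isomorphism $(f_{!})_{\infty}\mathcal{F}\cong\mathbb{R}f_{!}\mathcal{F}$ for $\mathcal{F}\in\mathbf{Ch}(\mathrm{Shv}(X,\mathpzc{E}))$, and the stalk identification $(\mathbb{R}f_{!}\mathcal{F})_{y}\cong\mathbb{R}\Gamma_{c}(f^{-1}(y),\mathcal{F}|_{f^{-1}(y)})$), so the theorem follows by combining the two preceding propositions with the tautological six-functor formalism on $\mathrm{E}\cap\mathrm{A}_{!}$.
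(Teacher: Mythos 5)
Your proposal is correct and follows the paper's own route: the theorem is deduced by establishing $\mathrm{H}\subseteq\mathrm{E}\cap\mathrm{A}_{!}$ via the two preceding propositions and then restricting the tautological six-functor formalism on $\mathrm{Corr}(\mathrm{LCHaus},\mathrm{E}\cap\mathrm{A}_{!})$. The extra bookkeeping you supply (stability of $\overline{\mathrm{H}}$ under pullback, closure of $\mathrm{E}$ under composition and base change, and that a right adjoint for $f'_{!}$ gives colimit-preservation) is exactly what the paper leaves implicit.
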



\subsubsection{Ringed Spaces}

Finally we generalise this to ringed spaces

\begin{defn}
Let $X$ be a locally compact Hausdorff space, and let $\mathcal{O}_{X}\in\mathrm{Alg}_{\mathfrak{Comm}}(\mathrm{Ch}(\mathrm{Shv}(X,\mathpzc{E})))$. 
\begin{enumerate}
\item
A pair $(X,\mathcal{O}_{X})$ is called a \textit{dg-}$\mathpzc{E}$-\textit{ringed space}.
\item
A $dg-\mathpzc{E}$ ringed space $(X,\mathcal{O}_{X})$ is said to be a $\mathpzc{E}$-\textit{ringed space} if $\mathcal{O}_{X}$ is concentrated in degree $0$.
\end{enumerate}
\end{defn}

Let $f:X\rightarrow Y$ be a map of locally compact Hausdorff spaces, and $\mathcal{R}$ an object of $\mathrm{Alg}_{\mathfrak{Comm}}(\mathrm{Ch}(\mathrm{Shv}(Y;\mathpzc{E})))$. Then $f^{-1}\mathcal{R}$ has a natural structure as an object of $\mathrm{Alg}_{\mathfrak{Comm}}(\mathrm{Ch}(\mathrm{Shv}(X,\mathpzc{E})))$. 

\begin{defn}
A morphism $f=(\overline{f},f^{\#}):(X,\mathcal{O}_{X})\rightarrow (Y,\mathcal{O}_{Y})$ of $dg-\mathpzc{E}$ locally ringed spaces is a pair $(\overline{f},f^{\#})$ where $\overline{f}:X\rightarrow Y$ is a map of spaces, and $f^{\#}:f^{-1}\mathcal{O}_{Y}\rightarrow\mathcal{O}_{X}$ is a map in $\mathrm{Alg}_{\mathfrak{Comm}}(\mathrm{Ch}(\mathrm{Shv}(X,\mathpzc{E})))$.
\end{defn}

A map $f:(X,\mathcal{O}_{X})\rightarrow (Y,\mathcal{O}_{Y})$ gives rise to an adjunction
$$\adj{f^{*}}{{}_{\mathcal{O}_{Y}}\mathrm{Mod}(\mathrm{Ch}(\mathrm{Shv}(Y;\mathpzc{E})))}{{}_{\mathcal{O}_{X}}\mathrm{Mod}(\mathrm{Ch}(\mathrm{Shv}(X,\mathpzc{E})))}{f_{*}}$$
which is in fact a Quillen adjunction for the flat model structures. By \cite{scholze2022six} Remark 3.13, the functor
$$f_{!}:\mathbf{Ch}(\mathrm{Shv}(X,\mathpzc{E}))\rightarrow\mathbf{Ch}(\mathrm{Shv}(Y;\mathpzc{E}))$$
is $\mathbf{Ch}(\mathrm{Shv}(Y;\mathpzc{E}))$-linear, where $\mathbf{Ch}(\mathrm{Shv}(Y;\mathpzc{E}))$ acts on $\mathbf{Ch}(\mathrm{Shv}(X,\mathpzc{E}))$ by pullback. Together with the projection formula, this implies that given a map $f:(X,\mathcal{O}_{X})\rightarrow (Y,\mathcal{O}_{Y})$ of ringed spaces, $f_{!}$ naturally induces a functor
$$f_{!}:\mathcal{O}_{X}\mathrm{Mod}(\mathbf{Ch}(\mathrm{Shv}(X,\mathpzc{E})))\rightarrow\mathcal{O}_{Y}\mathrm{Mod}(\mathbf{Ch}(\mathrm{Shv}(Y;\mathpzc{E})))$$
(whose underlying functor $\mathbf{Ch}(\mathrm{Shv}(X,\mathpzc{E}))\rightarrow\mathbf{Ch}(\mathrm{Shv}(Y;\mathpzc{E}))$ coincides with the shriek push-forward along $f$ defined above). Let

\begin{displaymath}
\xymatrix{
(X',\mathcal{O}_{X'})\ar[d]^{f'}\ar[r]^{g'} & (X,\mathcal{O}_{X})\ar[d]^{f}\\
(Y',\mathcal{O}_{Y'})\ar[r]^{g} & (Y,\mathcal{O}_{Y})
}
\end{displaymath}
be a fibre-product diagram of $dg-\mathpzc{E}$-ringed spaces. Again there is a natural transformation
 $$g^{*}f_{!}\rightarrow f_{!}'(g')^{*}$$
Indeed there is a natural map
$$g^{-1}f_{!}\rightarrow f_{!}'(g')^{*}$$
which is $g^{-1}(\mathcal{O}_{Y})$-linear. Thus this induces a natural map
  $$g^{*}f_{!}\rightarrow f_{!}'(g')^{*}$$

%

\begin{defn}
We say that $f:(X\,\mathcal{O}_{X})\rightarrow(Y,\mathcal{O}_{Y})$ is \textit{flat} if $\mathcal{O}_{X,x}$ is flat as a $\mathcal{O}_{Y,f(x)}$-module for each $x\in X$. We say that $f$ is \textit{strongly flat} if $\mathcal{O}_{X,x}$ is a filtered colimit of free $\mathcal{O}_{Y,f(x)}$-modules on projective objects of $\mathpzc{E}$. 
\end{defn}

\begin{defn}
A map $f:(X,\mathcal{O}_{X})\rightarrow (Y,\mathcal{O}_{Y})$ is said to be \textit{a stalk-wise projective base-change map} if for any $\mathcal{F}\in{}_{\mathcal{O}_{X}}\mathrm{Mod}$, any projective $P\in\mathpzc{E}$, and any $y\in Y$, we have that the natural maps
$$(f_{!}\mathcal{F})_{y}\rightarrow\Gamma_{c}(f^{-1}(y),\mathcal{F}|_{f^{-1}(y)})$$
$$P\otimes\mathbb{R}\Gamma_{c}(f^{-1}(y),\mathcal{F}|_{f{-1}(y)})\rightarrow\mathbb{R}\Gamma_{c}(f^{-1}(y),P\otimes \mathcal{F}|_{f{-1}(y)})$$
are equivalences.
\end{defn}

\begin{example}
If $f=(\overline{f},f^{\#}):(X,\mathcal{O}_{X})\rightarrow (Y,\mathcal{O}_{Y})$ is such that the underlying map $\overline{f}:X\rightarrow Y$ is universally base-change, then $f$ is a stalk-wise projective base-change map. Indeed using the projection formula we have
$$P\otimes\mathbb{R}\Gamma_{c}(f^{-1}(y),\mathcal{F}|_{f^{-1}(y)})\cong P\otimes \mathbb{R}pt_{!}(\mathcal{F}|_{f^{-1}(g(y'))})\cong \mathbb{R}pt_{!}(pt^{-1}(P)\otimes\mathcal{F}|_{f^{-1}(g(y'))})\cong\mathbb{R}\Gamma_{c}(f^{-1}(y),P\otimes \mathcal{F}|_{f{-1}(y)})$$
\end{example}

Exactly as in \cite{spaltenstein} Proposition 6.20 we have the following.

\begin{lem}
Let $f:(X,\mathcal{O}_{X})\rightarrow (Y,\mathcal{O}_{Y})$, $g:(Y',\mathcal{O}_{Y'})\rightarrow (Y,\mathcal{O}_{Y})$ be a a stalk-wise projective base-change map of $\mathpzc{E}$-ringed spaces which is universally left-adjointable. If $g$ is strongly flat, then the map
  $$g^{*}f_{!}\rightarrow f_{!}'(g')^{*}$$
  is an equivalence. 
\end{lem}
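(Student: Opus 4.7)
The strategy is to check that $g^{*}f_{!}\rightarrow f'_{!}(g')^{*}$ is an equivalence stalk-wise on $Y'$. Fix $y'\in Y'$ with $y=g(y')$. On the left, by definition of $g^{*}$ and the fact that $(-)_{y'}$ commutes with derived tensor products and with $g^{-1}$, we get
\[
(g^{*}f_{!}\mathcal{F})_{y'}\;\cong\;\mathcal{O}_{Y',y'}\otimes^{\mathbb{L}}_{\mathcal{O}_{Y,y}}(f_{!}\mathcal{F})_{y}.
\]
Applying the first clause of the stalk-wise projective base-change hypothesis on $f$ yields $(f_{!}\mathcal{F})_{y}\cong\mathbb{R}\Gamma_{c}(f^{-1}(y),\mathcal{F}|_{f^{-1}(y)})$. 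On the right, since the base change $(X',\mathcal{O}_{X'})\to (Y',\mathcal{O}_{Y'})$ of $f$ along $g$ is again universally left-adjointable (this is a condition stable under further base change) and since the underlying topological fiber $(f')^{-1}(y')$ is canonically identified with $f^{-1}(y)$, the same fact applied to $f'$ gives
\[
(f'_{!}(g')^{*}\mathcal{F})_{y'}\;\cong\;\mathbb{R}\Gamma_{c}\bigl(f^{-1}(y),((g')^{*}\mathcal{F})|_{f^{-1}(y)}\bigr).
\]

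Next I would unpack the structure-sheaf twist on the fiber. Stalk-wise along $f^{-1}(y)$, $((g')^{*}\mathcal{F})|_{f^{-1}(y)}$ is $\mathcal{F}|_{f^{-1}(y)}\otimes^{\mathbb{L}}_{\mathcal{O}_{Y,y}}\mathcal{O}_{Y',y'}$. Now invoke the hypothesis that $g$ is strongly flat: write $\mathcal{O}_{Y',y'}$ as a filtered colimit $\colim_{i}P_{i}\otimes\mathcal{O}_{Y,y}$ with $P_{i}$ projective objects of $\mathpzc{E}$. Both sides of the candidate equivalence then become filtered colimits: the left is $\colim_{i}\,P_{i}\otimes\mathbb{R}\Gamma_{c}(f^{-1}(y),\mathcal{F}|_{f^{-1}(y)})$, while the right is $\colim_{i}\,\mathbb{R}\Gamma_{c}(f^{-1}(y),P_{i}\otimes\mathcal{F}|_{f^{-1}(y)})$. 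Here I use that $(g')^{*}$ and $\mathbb{R}\Gamma_{c}$ commute with filtered colimits, the former tautologically and the latter because $f'$ is universally left-adjointable.

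Having reduced to a filtered colimit, it suffices to compare the two expressions term by term. But the map
\[
P_{i}\otimes\mathbb{R}\Gamma_{c}\bigl(f^{-1}(y),\mathcal{F}|_{f^{-1}(y)}\bigr)\;\longrightarrow\;\mathbb{R}\Gamma_{c}\bigl(f^{-1}(y),P_{i}\otimes\mathcal{F}|_{f^{-1}(y)}\bigr)
\]
is an equivalence by the second clause of the stalk-wise projective base-change hypothesis on $f$. Passing to the filtered colimit over $i$ yields the desired equivalence, completing the verification.

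The main obstacle, and the reason the argument is a little more delicate than in the classical abelian setting of Spaltenstein's Proposition 6.20, is the step identifying $(f'_{!}(g')^{*}\mathcal{F})_{y'}$ with compactly-supported cohomology on the fiber equipped with its twisted structure sheaf; one must verify that the stalk-wise projective base-change property and universal left-adjointability really do transfer from $f$ to the base change $f'$, and that the identification of fibers as topological spaces is compatible with the derived structure sheaves used to form $(g')^{*}$. Once this bookkeeping is done, the strong flatness of $g$ reduces the entire computation to the two compatibilities built into the definition of a stalk-wise projective base-change map.
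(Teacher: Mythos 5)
Your proof is correct and follows essentially the same route as the paper's: pass to stalks at $y'$, identify both sides with compactly supported cohomology on the fibre via the first clause of the stalk-wise projective base-change condition, use strong flatness of $g$ together with universal $!$-adjointability to reduce to the case $\mathcal{O}_{Y',y'}\cong\mathcal{O}_{Y,g(y')}\otimes P$ with $P$ projective, and conclude by the second clause. You in fact spell out the bookkeeping (transferring the hypotheses to the base change $f'$ and identifying the fibres) more explicitly than the paper does.
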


\begin{proof}
By passing to stalks, we reduce to showing that 
$$\mathcal{O}_{Y',y'}\otimes_{\mathcal{O}_{Y,g(y')}}^{\mathbb{L}}\mathbb{R}\Gamma_{c}(f^{-1}(y),\mathcal{F}|_{f^{-1}(g(y'))})\cong\mathbb{R}\Gamma_{c}(f^{-1}(g(y')),\mathcal{O}_{Y',y'}\otimes_{\mathcal{O}_{Y,g(y')}}^{\mathbb{L}}\mathcal{F}|_{f^{-1}(y)})$$
By assumption $\overline{f}$ is universally $!$-adjointable, so $\mathbb{R}\Gamma_{c}$ commutes with colimits, and we may assume that $\mathcal{O}_{Y',y'}\cong \mathcal{O}_{Y,g(y')}\otimes P$. Now this follows from the assumption that $f$ is a stalk-wise projective base-change map. 
\end{proof}

Note that this does not quite gives us a three-functor formalism as in a span
\begin{displaymath}
\xymatrix{
(X',\mathcal{O}_{X'})\ar[d]^{f'}\ar[r]^{g'} & (X,\mathcal{O}_{X})\ar[d]^{f}\\
(Y',\mathcal{O}_{Y'})\ar[r]^{g} & (Y,\mathcal{O}_{Y})
}
\end{displaymath}
we have restrictions on both $f$ and $g$. However all the usual coherences for $\mathbb{R}f_{*},\mathbb{L}f^{*},\mathbb{R}f_{!},f^{!},\otimes^{\mathbb{L}},\mathbb{R}\underline{\mathpzc{Hom}}$ can also be deduced. 

%

Let us finally make a precise connection with condition 6.14 (2) of \cite{spaltenstein}

\begin{prop}
Let $I_{\bullet}$ be an injective object of ${}_{\mathcal{O}_{X}}\mathrm{Mod}(\mathrm{Ch}(\mathrm{Shv}(X,\mathpzc{E})))$. Then $I_{\bullet}$ is soft, and in particular $c$-soft.
\end{prop}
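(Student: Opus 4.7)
The plan is to establish that $I_{\bullet}$ is \emph{flabby} (i.e.\ that for every open $V\subset X$ and every $n\in\mathbb{Z}$ the restriction $I_n(X)\to I_n(V)$ is an admissible, in fact split, epimorphism in $\mathpzc{E}$), and then invoke the classical Godement-style reduction from flabbiness to softness and $c$-softness on a locally compact Hausdorff space.

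First I would establish flabbiness. Fix $n\in\mathbb{Z}$, an open inclusion $j:V\hookrightarrow X$, and a test object $A\in\mathpzc{E}$. Writing $A_V$ (respectively $A_X$) for the constant sheaf on $V$ (resp.\ $X$) with value $A$, consider the canonical map of $\mathcal{O}_X$-modules
$$j_!(A_V\otimes_k\mathcal{O}_V)\;\longrightarrow\;A_X\otimes_k\mathcal{O}_X.$$
Stalkwise at $x$ this is either the identity $A\otimes\mathcal{O}_{X,x}\to A\otimes\mathcal{O}_{X,x}$ (for $x\in V$) or the admissible monomorphism $0\hookrightarrow A\otimes\mathcal{O}_{X,x}$ (for $x\notin V$), so it is an admissible monomorphism for the stalkwise exact structure. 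Applying the disk-complex functor $D^n(-)$ produces an admissible monomorphism of chain complexes of $\mathcal{O}_X$-modules
$$D^n\bigl(j_!(A_V\otimes\mathcal{O}_V)\bigr)\;\hookrightarrow\;D^n(A_X\otimes\mathcal{O}_X).$$
By injectivity of $I_{\bullet}$ and the standard identification $\mathrm{Hom}_{\mathrm{Ch}}(D^n(B),I_{\bullet})\cong\mathrm{Hom}_{\mathcal{O}_X\mathrm{Mod}}(B,I_n)$ (a chain map out of $D^n(B)$ is determined by its degree-$n$ component), together with the free/forgetful adjunction, the $(j_!,j^{-1})$-adjunction, and the adjunction between constant sheaves and global sections, the induced surjection on Hom sets unwinds to a surjection
$$\mathrm{Hom}_\mathpzc{E}(A,I_n(X))\;\twoheadrightarrow\;\mathrm{Hom}_\mathpzc{E}(A,I_n(V)).$$
Specialising to $A=I_n(V)$ and lifting $\mathrm{id}_{I_n(V)}$ gives a section of $I_n(X)\to I_n(V)$, hence this restriction is a split (in particular admissible) epimorphism. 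This establishes that each $I_n$ is flabby in $\mathrm{Shv}(X,\mathpzc{E})$.

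Second I would pass from flabbiness to softness and $c$-softness. On a locally compact Hausdorff space, the classical Godement patching argument (extending a section on a closed $K$ to an open neighbourhood by local extendability and patching on overlaps using flabbiness) produces an admissible epimorphism $I_n(X)\to I_n(K)$ for $K$ (compact) closed. For sheaves valued in $\mathpzc{E}$ this can be run directly in the stalkwise exact structure, or cleanly reduced to the classical abelian case by applying the conservative, section-preserving family $\mathrm{Hom}_\mathpzc{E}(P,-):\mathrm{Shv}(X,\mathpzc{E})\to\mathrm{Shv}(X,\mathpzc{Ab})$ as $P$ ranges over a generating set of compact projectives of the elementary exact category $\mathpzc{E}$. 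Consequently each $I_n$ is soft and $c$-soft, so $I_{\bullet}$ is soft, and in particular $c$-soft, in the sense of Spaltenstein.

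The main obstacle is the bookkeeping in the first step: injectivity of $I_{\bullet}$ a priori gives only a statement about Hom \emph{sets} of chain complexes of $\mathcal{O}_X$-modules, whereas we need the \emph{enriched} conclusion that $I_n(X)\to I_n(V)$ is admissible in $\mathpzc{E}$. This is precisely why one tests against arbitrary $A\in\mathpzc{E}$ via the tensoring construction above rather than applying $\mathrm{Hom}(\mathcal{O}_X,-)$ directly. A secondary, minor point is the Godement patching for $\mathpzc{E}$-valued sheaves, which is handled uniformly by descending to the abelian setting through the compact projective generators of $\mathpzc{E}$.
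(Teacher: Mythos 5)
Your proposal is correct and follows essentially the same route as the paper: the paper likewise deduces the result from the right-lifting property of $I_{\bullet}$ against the admissible monomorphisms $P_{U}\rightarrow P_{V}$ for opens $U\subset V$ (your flabbiness step), and then invokes the standard passage from flabby to soft on a locally compact Hausdorff space. The only cosmetic difference is that the paper first reduces to $\mathcal{O}_{X}=k_{X}$ by forgetting to underlying complexes of sheaves, whereas you stay in the module category by tensoring your test objects with $\mathcal{O}$ — which, if anything, sidesteps the question of whether the forgetful functor preserves injectives.
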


\begin{proof}
Since injective complexes of $\mathcal{O}_{X}$-modules are in particular injective complexes of sheaves, it suffices to prove this in the case that $\mathcal{O}_{X}=k_{X}$. In this case, it follows from the fact that $I_{\bullet}$ has the right-lifting property against $P_{U}\rightarrow P_{V}$ for any object $P$, and any $U\subset V$ with $U,V$ open. 
\end{proof}

Now the following can be proven exactly as in \cite{spaltenstein} Section 6.

\begin{prop}
Let $f=(\overline{f},f^{\#}):(X,\mathcal{O}_{X})\rightarrow (Y,\mathcal{O}_{Y})$ be such that for each $y\in Y$, $(f^{-1}(y),\mathcal{O}_{X}|_{f^{-1}(y)})$ satisfy the following: for every acyclic complex of $c$-soft sheaves $\mathcal{L}_{\bullet}\in{}_{\mathcal{O}_{f^{-1}(y)}}\mathrm{Mod}$, and each $d_{i}:\mathcal{L}_{i}\rightarrow\mathcal{L}_{i-1}$, $\mathrm{Ker} d_{i}$ is $c$-soft. Then $f$ is universally $!$-adjointable and a stalk-wise projective base-change map.
\end{prop}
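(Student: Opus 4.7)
The plan is to adapt the classical argument of \cite{spaltenstein} Proposition 6.14 to this exact-categorical setting. First I would pass to stalks on $Y$: both universal $!$-adjointability and the stalk-wise projective base-change condition reduce, fibrewise over $y \in Y$, to two assertions about the proper pushforward on $(f^{-1}(y), \mathcal{O}_{X}|_{f^{-1}(y)})$: (a) $\mathbb{R}\Gamma_{c}(f^{-1}(y),-)$ commutes with arbitrary colimits, and (b) for every projective $P \in \mathpzc{E}$ and every complex $\mathcal{F}_{\bullet}$ of $\mathcal{O}_{f^{-1}(y)}$-modules, the natural map
$$P \otimes \mathbb{R}\Gamma_{c}(f^{-1}(y), \mathcal{F}_{\bullet}) \to \mathbb{R}\Gamma_{c}(f^{-1}(y), P \otimes \mathcal{F}_{\bullet})$$
is an equivalence.

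The key intermediate step is to show that $\mathbb{R}\Gamma_{c}$ on $f^{-1}(y)$ can be computed by $c$-soft resolutions. I would pick an injective resolution $\mathcal{F}_{\bullet} \to \mathcal{I}_{\bullet}$; by the preceding proposition each $\mathcal{I}_{n}$ is soft, hence $c$-soft, and by hypothesis the kernels of the differentials in the acyclic cone of this resolution are also $c$-soft. A short-exact-sequence induction using the classical exactness of $\Gamma_{c}$ on short exact sequences of $c$-soft sheaves on a locally compact Hausdorff space then shows that $\Gamma_{c}$ sends the acyclic cone to an acyclic complex, giving $\mathbb{R}\Gamma_{c}(f^{-1}(y), \mathcal{F}_{\bullet}) \simeq \Gamma_{c}(f^{-1}(y), \mathcal{I}_{\bullet})$; the same argument shows that $\Gamma_{c}$ applied to any $c$-soft resolution computes $\mathbb{R}\Gamma_{c}$.

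Assertion (a) then follows because $\Gamma_{c}$ commutes with filtered colimits of $c$-soft sheaves on a locally compact Hausdorff space, and $c$-softness is preserved under filtered colimits and direct sums; together with the $c$-soft resolution description, this forces $\mathbb{R}\Gamma_{c}$ to commute with all colimits. For assertion (b), I would tensor a $c$-soft resolution by $P$ and verify that $P \otimes \mathcal{L}$ remains $c$-soft when $\mathcal{L}$ is $c$-soft, together with the identity $\Gamma_{c}(P \otimes \mathcal{L}) \cong P \otimes \Gamma_{c}(\mathcal{L})$.

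The main obstacle is this last identity for an arbitrary projective $P \in \mathpzc{E}$: the classical argument only uses the free finitely-generated case, whereas here $P$ can be quite large. The strategy will be to exploit that $\mathpzc{E}$ is elementary---so $P$ is a retract of a coproduct of compact projective generators---and then combine the fact that compact objects commute with filtered colimits, that $\Gamma_{c}$ commutes with coproducts on $c$-soft sheaves, and that both sides are compatible with retracts, reducing the identity to the case of a single compact projective generator where it can be checked directly on stalks.
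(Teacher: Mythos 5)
Your proposal is essentially the paper's intended argument: the paper gives no details and simply defers to \cite{spaltenstein} Section 6, and your outline (c-soft resolutions compute $\mathbb{R}\Gamma_{c}$ via the hypothesis on kernels, commutation of $\Gamma_{c}$ with filtered colimits of $c$-soft sheaves, and reduction of the projection formula to compact projective generators using that $\mathpzc{E}$ is elementary) is precisely the adaptation of that argument to this setting. The one phrase to tighten is "checked directly on stalks" at the end — $\Gamma_{c}(P\otimes\mathcal{L})\cong P\otimes\Gamma_{c}(\mathcal{L})$ is a global-sections statement, so for a compact projective generator $G$ you should instead argue that $G\otimes(-)$ commutes with the limits and filtered colimits defining $\Gamma_{c}$ — but this is a presentational point, not a gap.
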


\bibliographystyle{amsalpha}
\bibliography{FlatModelv3.bib}

\end{document}